\newtheorem*{theorem*}{Theorem}
\newtheorem*{proposition*}{Proposition}
\newtheorem*{definition*}{Definition}
\newtheorem*{example*}{Example}
\newtheorem*{remark*}{Remark}
\newtheorem*{corollary*}{Corollary}
\newtheorem*{lemma*}{Lemma}
\newtheorem*{warning*}{Warning}
\newtheorem*{principle*}{Principle}
\newtheorem*{question*}{Question}
\newtheorem*{conjecture*}{Conjecture}
\newtheorem{theorem}{Theorem}[subsection]
\newtheorem{proposition}[theorem]{Proposition}
\newtheorem{definition}[theorem]{Definition}
\newtheorem{example}[theorem]{Example}
\newtheorem{remark}[theorem]{Remark}
\newtheorem{corollary}[theorem]{Corollary}
\newtheorem{lemma}[theorem]{Lemma}
\newtheorem{warning}[theorem]{Warning}
\newtheorem{principle}[theorem]{Principle}
\newtheorem{question}[theorem]{Question}
\newtheorem{conjecture}[theorem]{Conjecture}
\newcommand{\oast}{\circledast}
\newcommand\frontmatter{
  \cleardoublepage
  \pagenumbering{roman}
}
\newcommand\mainmatter{
  \cleardoublepage
  \pagenumbering{arabic}
}
\begin{document}
\frontmatter

\thispagestyle{empty}

\begin{center}

\vspace*{1in}

Categorified algebra and equivariant homotopy theory
\\[12mm]
John D. Berman
\\
Wilmington, North Carolina
\\[12mm]
Bachelor of Science, Massachusetts Institute of Technology, 2013
\\[22mm]
A Dissertation presented to the Graduate Faculty
\\
of the University of Virginia in Candidacy for the Degree of
\\
Doctor of Philosophy
\\[12mm]
Department of Mathematics
\\[8mm]
University of Virginia
\\
May, 2018

\end{center}

\vfill\

\begin{flushright}
\rule{6cm}{.01cm} \\[5mm]
\rule{6cm}{.01cm} \\[5mm]
\rule{6cm}{.01cm} \\[5mm]
\rule{6cm}{.01cm} \\[5mm]
\end{flushright}

\newpage
\chapter*{Abstract}
\noindent This dissertation comprises three collections of results, all united by a common theme. The theme is the study of categories via algebraic techniques, considering categories themselves as algebraic objects. This algebraic approach to category theory is central to noncommutative algebraic geometry, as realized by recent advances in the study of noncommutative motives.

We have success proving algebraic results in the general setting of symmetric monoidal and semiring $\infty$-categories, which categorify abelian groups and rings, respectively. For example, we prove that modules over the semiring category Fin of finite sets are cocartesian monoidal $\infty$-categories, and modules over Burn (the Burnside $\infty$-category) are additive $\infty$-categories.

As a consequence, we can regard Lawvere theories as cyclic $\text{Fin}^\text{op}$-modules, leading to algebraic foundations for the higher categorical study of Lawvere theories. We prove that Lawvere theories function as a home for an \emph{algebraic Yoneda lemma}.

Finally, we provide evidence for a formal duality between naive and genuine equivariant homotopy theory, in the form of a group-theoretic Eilenberg-Watts Theorem. This sets up a parallel between equivariant homotopy theory and motivic homotopy theory, where Burnside constructions are analogous to Morita theory. We conjecture that this relationship could be made precise within the context of noncommutative motives over the field with one element.

In fact, the connections equivariant homotopy theory and the field with one element recur throughout the thesis. There are promising suggestions that each of these two subjects can be advanced by further work in this area of \emph{algebraic category theory}.

\newpage\par\vspace*{.35\textheight}{\centering For my father, the original mathematician in my life.\par}

\chapter*{Acknowledgments}
I would like to thank Mike Hill for his support and all the helpful conversations, and for agreeing to advise me despite his cross-country move early in my graduate studies. I also want to thank Nick Kuhn for his unofficial role as advisor-in-residence, Clark Barwick for challenging me to learn more about higher category theory (and for his generous support from the beginning), Ben Antieau for nudging me towards derived algebraic geometry, and Jack Morava for suggesting that my results are related to the field with one element.

This thesis has benefited from conversations with countless others, including (to name just a few) Julie Bergner, Prasit Bhattacharya, Andrew Blumberg, Peter Bonventre, Saul Glasman, Rune Haugseng, Ben Knudsen, Bogdan Krstic, Lennart Meier, Denis Nardin, Christina Osborne, Jay Shah, Brian Thomas, and Dylan Wilson.

I am grateful for the kindness of my friends and family in Virginia and elsewhere. Most importantly, none of this would be possible without the example and steady love of my parents.


\tableofcontents{}

\mainmatter
\chapter*{Introduction}
\addcontentsline{toc}{chapter}{Introduction}
\noindent This thesis is comprised of three chapters of new results, each representing roughly one paper, and each with its own introduction. In this introduction, we will outline in broad strokes the unifying theme of the thesis, and its position in the body of work surrounding equivariant homotopy theory, noncommutative motives, and higher category theory.

The unifying theme is the employment of purely algebraic techniques to study categories (thinking of them as categorifications of abelian groups), rather than what we might call explicit object-morphism arguments. These techniques involve computing tensor products of symmetric monoidal categories, as well as ideas related to homological algebra, flatness, and descent. We illustrate the flavor of these results with two theorems and a conjecture. These may be taken as algebraic definitions, although it is by no means obvious that they agree with the usual categorical definitions.

\begin{theorem*}[Theorem \ref{2T4}]
An \emph{additive $\infty$-category} is a module over the semiring $\infty$-category Burn (the Burnside $\infty$-category, or virtual spans of finite sets).
\end{theorem*}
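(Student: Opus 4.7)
The plan is to build up the $\mathrm{Burn}$-module structure in two stages, bootstrapping from the parallel result (stated earlier in the chapter) that $\mathrm{Fin}$-modules are cocartesian monoidal $\infty$-categories.

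First, I would factor the argument through the \emph{effective} Burnside $\infty$-category $A = \mathrm{Span}(\mathrm{Fin})$, i.e.\ spans of finite sets without group completion. There is a semiring functor $\mathrm{Fin} \to A$ sending a finite set $X$ to the identity span on $X$, together with a contravariant inclusion $\mathrm{Fin}^{\mathrm{op}} \to A$ encoding transfer spans; these together exhibit $A$ as the universal semiring $\infty$-category extending $\mathrm{Fin}$ by right adjoints to fold maps. Consequently an $A$-module structure on an $\infty$-category $\mathcal{C}$ amounts to a $\mathrm{Fin}$-module structure (which by the preceding theorem equips $\mathcal{C}$ with finite coproducts), together with the extra data of a right adjoint to each fold map $X \sqcup X \to X$. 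Supplying such a right adjoint is exactly requiring the coproduct to be a biproduct, so $A$-modules are precisely semiadditive $\infty$-categories.

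Next, I would pass from $A$ to $\mathrm{Burn}$ via group completion of mapping $\infty$-groupoids. Each $\mathrm{Map}_A(X,Y)$ is a commutative monoid under disjoint union of spans, and $\mathrm{Burn}$ is obtained by levelwise replacement with its group completion. I would verify that $A \to \mathrm{Burn}$ is the universal semiring $\infty$-category map out of $A$ in which every hom-commutative-monoid becomes grouplike. Granting this, a $\mathrm{Burn}$-module structure on $\mathcal{C}$ is exactly an $A$-module structure (equivalently, a semiadditive structure on $\mathcal{C}$) whose hom-commutative-monoids are grouplike --- which is the definition of an additive $\infty$-category. Together with the first step, this yields the desired identification.

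The hardest part will be checking that group completion is well-behaved with respect to the tensor product of semiring $\infty$-categories and their modules: concretely, that the forgetful functor from $\mathrm{Burn}$-modules to $A$-modules is a reflective localization onto the full subcategory of semiadditive $\infty$-categories with grouplike hom-monoids, and that this reflection is implemented by $\mathrm{Burn} \otimes_A (-)$. Group completion of $E_\infty$-spaces is a smashing localization, so one expects a corresponding smashing localization of semiring $\infty$-categories, but tracking this compatibility through the module tensor product and verifying that no additional relations arise is where the delicate homotopical work lies.
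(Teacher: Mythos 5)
Your first stage is essentially the paper's Theorem \ref{2T4}: $\text{Span}(\text{Fin})\cong\text{Fin}\otimes\text{Fin}^\text{op}$ is solid and its modules are the semiadditive $\infty$-categories (though the paper derives this from solidity of $\text{Fin}$ and $\text{Fin}^\text{op}$ together with Glasman's identification of $\text{Span}(\text{Fin})$ as the free semiadditive $\infty$-category on one object, not from a ``universal adjunction of right adjoints to fold maps'' property, which you would still have to prove). The gap is in your second stage, and it sits exactly where you flag it. Two things are missing. First, solidity of $\text{Burn}$ over $A$: you want to deduce $\text{Burn}\otimes_A\text{Burn}\simeq\text{Burn}$ from the fact that group completion of $\mathbb{E}_\infty$-spaces is a smashing localization, but transporting that idempotency hom-wise through the tensor product of semiring $\infty$-categories and their modules is precisely the unproven compatibility. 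The paper does not attempt this directly; instead it establishes a symmetric monoidal equivalence $\text{CycMod}_{\text{Burn}^\text{eff}}\simeq\text{SRing}(\text{Top})$ (Corollary \ref{SemiaddLawv}, resting on HA 4.8.5.8 and 4.8.5.11), under which $\text{Burn}$ corresponds to $\Omega^\infty\mathbb{S}$, so idempotency of $\text{Burn}$ is inherited from idempotency of the sphere with no hom-wise bookkeeping. Note also that the paper's working definition is $\text{Burn}=\text{Sp}^\circ=\text{Burn}_{\Omega^\infty\mathbb{S}}$; the ``group-complete the mapping spaces of $\text{Span}(\text{Fin})$'' description is only asserted roughly (Remark \ref{RmkBurnGpCp}), so constructing your levelwise group completion as a semiring $\infty$-category and identifying it with the paper's $\text{Burn}$ is itself work your sketch presupposes.

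Second, and more seriously, you never construct the $\text{Burn}$-module structure on an additive $\infty$-category. You write that an $A$-module with grouplike hom-monoids ``is the definition of an additive $\infty$-category,'' but the content of the theorem is precisely that this \emph{property} coincides with the \emph{structure} of a $\text{Burn}$-module; asserting the coincidence is circular. The paper's argument for this direction (Theorem \ref{2T7}) runs as follows: if $\mathcal{C}$ is additive, every semiadditive functor $\mathcal{C}^\text{op}\rightarrow\text{CMon}_\infty$ factors through $\text{Mod}_{\Omega^\infty\mathbb{S}}$, because each $\text{End}(X)$ is an $\Omega^\infty\mathbb{S}$-algebra and the sphere is solid; hence $\text{Hom}(\mathcal{C}^\text{op},\text{Top})$ is a presentable $\text{Mod}_{\Omega^\infty\mathbb{S}}$-module, therefore a $\text{Burn}$-module, and $\mathcal{C}$ inherits the module structure as a full subcategory closed under direct sums. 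Some argument of this shape --- or an actual proof of your reflective-localization claim --- is needed to close the proof.
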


\begin{theorem*}[Theorem \ref{2T3}, Example \ref{LawvereExample}]
A \emph{Lawvere theory} is a cyclic module over the semiring category $\text{Fin}^\text{op}$ (the opposite category of finite sets).
\end{theorem*}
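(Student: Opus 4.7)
The plan is to invoke Theorem \ref{2T3} (in its $\text{Fin}^\text{op}$-form, dual to the Fin-module statement mentioned in the abstract) to translate the classical definition of a Lawvere theory into the language of cyclic modules, then verify that cyclicity corresponds precisely to the existence of a generating object under products.

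First, I would use the identification of $\text{Fin}^\text{op}$-modules with cartesian monoidal $\infty$-categories, with the action $(n, x) \mapsto x^n$ implementing the categorical $n$-fold power. This is the dual of the identification of $\text{Fin}$-modules with cocartesian monoidal $\infty$-categories, and should be an immediate consequence (or mild variant) of Theorem \ref{2T3}.

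Next, I would unpack what it means for such a module to be cyclic. A cyclic $\text{Fin}^\text{op}$-module is one admitting a generator $x \in L$, i.e., an essentially surjective module map $\text{Fin}^\text{op} \to L$ classifying $x$, where $\text{Fin}^\text{op}$ is the free $\text{Fin}^\text{op}$-module on one generator (this is itself a small lemma, following from $\text{Fin}^\text{op}$ being the free cartesian monoidal $\infty$-category on a single object). Because this classifying map is a morphism of $\text{Fin}^\text{op}$-modules, it automatically preserves finite products, and its essential surjectivity says exactly that every object of $L$ is (equivalent to) a power $x^n$ of the distinguished generator. This is precisely the classical data of a Lawvere theory: a cartesian $\infty$-category $L$ together with a product-preserving functor $\text{Fin}^\text{op} \to L$ hitting all objects.

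The main obstacle is reconciling the strict classical formulation (where the structure functor $\text{Fin}^\text{op} \to L$ is required to be \emph{identity on objects}) with the homotopy-invariant condition (essential surjectivity) that naturally arises on the module side. Since modules are only well-defined up to equivalence, one must check that cyclic $\text{Fin}^\text{op}$-modules are in bijection with Lawvere theories modulo the appropriate equivalence relation (product-preserving equivalences identifying the generating objects). A related subtlety is verifying the identification of the free cyclic $\text{Fin}^\text{op}$-module on a generator with $\text{Fin}^\text{op}$ itself in the $\infty$-categorical setting; this is where the algebraic bookkeeping of the semiring structure on $\text{Fin}^\text{op}$ and its modules must be used most carefully, but once established, the rest of the argument reduces to matching definitions.
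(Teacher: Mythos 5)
Your proposal matches the paper's own route: Corollary \ref{2T3b} (the $\text{Fin}^\text{op}$ dual of Theorem \ref{2T3}) identifies $\text{Fin}^\text{op}$-modules with cartesian monoidal $\infty$-categories, and the paper then, exactly as you do, unwinds cyclicity as an essentially surjective module map $\text{Fin}^\text{op}\rightarrow\mathcal{L}$, which is automatically product-preserving and hits every object as a power of the generator --- i.e.\ the definition of a Lawvere theory. The ``strictness'' obstacle you flag is not actually present, since the paper's definition of a Lawvere theory is already the equivalence-invariant one (generated under finite products by a single object), so the argument really is just definition matching once Theorem \ref{2T3} is in hand.
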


\begin{conjecture*}[Conjecture \ref{Conj2}]
An \emph{$\infty$-operad} is a cyclic module over $\text{Fin}^\text{op}$ which is trivial over $\text{Burn}^\text{eff}$ (the \emph{effective} Burnside 2-category, or spans of finite sets).
\end{conjecture*}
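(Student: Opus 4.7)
The plan is to identify $\infty$-operads with those cyclic $\text{Fin}^\text{op}$-modules $L$ whose extension of scalars along the canonical semiring functor $\text{Fin}^\text{op} \to \text{Burn}^\text{eff}$ (sending a map $f \colon n \to m$ in $\text{Fin}$ to the span $m \xleftarrow{f} n = n$) is trivial, meaning $L \otimes_{\text{Fin}^\text{op}} \text{Burn}^\text{eff} \simeq \ast$ as a $\text{Burn}^\text{eff}$-module.

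First I would construct a functor $O \mapsto L_O$ from $\infty$-operads to cyclic $\text{Fin}^\text{op}$-modules, so that $L_O$ is the Lawvere theory whose algebras coincide with $O$-algebras: concretely $L_O(n, 1) = \bigsqcup_k O(k) \times_{\Sigma_k} n^k$, with the $\text{Fin}^\text{op}$-action encoding variable substitution. By Theorem \ref{2T3} this packages into a cyclic $\text{Fin}^\text{op}$-module, and one recovers $O(k) \subset L_O(k, 1)$ as the subspace of \emph{primitive} operations (those not factoring through a nontrivial $\text{Fin}^\text{op}$-morphism), which should make the functor fully faithful. The forward direction then amounts to verifying that $L_O \otimes_{\text{Fin}^\text{op}} \text{Burn}^\text{eff}$ is trivial: the intuition is that tensoring with $\text{Burn}^\text{eff}$ formally adjoins ``sum-of-operations'' morphisms via the backwards-pointing spans, but an operad supplies no canonical such sums, so the imposed relations should force the base change to collapse.

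The main obstacle will be the converse: reconstructing an operad from an arbitrary cyclic $\text{Fin}^\text{op}$-module $L$ with trivial base change. The natural strategy is comonadic descent for the adjunction between $\text{Fin}^\text{op}$- and $\text{Burn}^\text{eff}$-modules, applying Barr-Beck-Lurie to realize the fiber over the trivial $\text{Burn}^\text{eff}$-module as coalgebras for the descent comonad, and then matching these coalgebras with $\infty$-operads in some reference model, e.g.\ Lurie's $\infty$-operads as cocartesian fibrations over $N(\text{Fin}_*)$ satisfying the Segal conditions. The hard step is establishing this final identification: showing that the span combinatorics of $\text{Burn}^\text{eff}$ correspond exactly to the Segal conditions on symmetric sequences. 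This is likely to require substantially new combinatorial input, perhaps mediated by dendroidal or analytic-functor presentations of $\infty$-operads, in order to translate cleanly between the module-theoretic formulation and the classical one.
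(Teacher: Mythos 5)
This statement is Conjecture \ref{Conj2}: the paper does not prove it. It establishes only the forward direction (the Lawvere theory $\mathcal{L}_\mathcal{O}$ of a reduced $\infty$-operad satisfies $\mathcal{L}_\mathcal{O}\otimes\text{Burn}^\text{eff}\cong\text{Burn}^\text{eff}$, citing \cite{Berman} 3.24) and offers only ``idle speculation'' for the converse, namely reconstructing the operad as the pullback $\mathcal{L}\times_{\text{Burn}^\text{eff}}\text{Fin}_\ast$ (Proposition \ref{PropOpL}, itself conditional on Conjecture \ref{Conj1}) and then trying to show this pullback is a fibration over $\text{Fin}_\ast$ satisfying Lurie's axioms. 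So there is no complete argument for you to match, and your proposal, which also leaves the converse as an acknowledged open problem, cannot be faulted merely for being incomplete.

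However, there is a concrete error in your formulation of the hypothesis, and it breaks both directions. You take ``trivial over $\text{Burn}^\text{eff}$'' to mean $\mathcal{L}\otimes_{\text{Fin}^\text{op}}\text{Burn}^\text{eff}\simeq\ast$, i.e.\ that the base change \emph{vanishes}. The intended meaning is that the structure map $\text{Burn}^\text{eff}\cong\text{Fin}^\text{op}\otimes\text{Burn}^\text{eff}\rightarrow\mathcal{L}\otimes\text{Burn}^\text{eff}$ is an \emph{equivalence} --- the base change is free of rank one, a ``trivial line bundle,'' not the zero module; equivalently the characteristic semiring of $\mathcal{L}$ is $\text{Fin}^\text{iso}$, not $0$. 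With your reading the forward direction is already false for the commutative operad: its Lawvere theory is $\text{Fin}\otimes\text{Fin}^\text{op}\cong\text{Burn}^\text{eff}$ itself, which is idempotent, so its base change is $\text{Burn}^\text{eff}$ and certainly not contractible. Your heuristic that ``an operad supplies no canonical sums, so the relations force the base change to collapse'' is therefore pointing the wrong way: adjoining the backwards spans freely adds sums without imposing collapsing relations. Conversely, Lawvere theories whose base change \emph{does} vanish --- such as $\mathcal{L}_\text{CRing}$, the theory of connective $\mathbb{E}_\infty$-rings, whose characteristic semiring is $0$ --- are precisely examples that are \emph{not} operadic, so your criterion would also admit the wrong objects. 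Your descent strategy for the converse is a genuinely different route from the paper's pullback-along-$\text{Fin}_\ast$ strategy and might be worth pursuing, but it must be run relative to the corrected triviality condition; as stated, the comonadic fiber over the zero module is not the class of operadic theories.
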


\subsubsection{Context}
\noindent The idea to treat categories as algebraic objects in their own right is not a new one; it is an idea that plays an important role in the theory of \emph{motives}, introduced by Grothendieck to clarify his 1960's work on the Weil Conjectures. More recently, \emph{algebraic category theory} (or \emph{categorified algebra}) is a central focus of work on noncommutative motives and algebraic K-theory (such as by Blumberg, Gepner, and Tabuada \cite{BGT}), and it is also foundational in Lurie's $\infty$-categorical approach to homotopy theory (HA\footnote{We will use special conventions to refer to Lurie's two books, which we reference often: HTT for \textit{Higher Topos Theory} \cite{HTT} and HA for \textit{Higher Algebra} \cite{HA}.}).

An early milestone was the definition in the 1950's of an abelian category (first by Buchsbaum \cite{Buchsbaum}, then streamlined by Grothendieck \cite{GrothAbelian}). Weil had suggested that the Weil conjectures could be solved by constructing a well-behaved cohomology theory \cite{Weil}. The notion of an abelian category, by axiomatizing the structure of a category of modules over a ring ($\text{Mod}_R$), allowed Grothendieck to distill the most important properties of cohomology theories to realize Weil's program.

Over time, the search for new algebraic cohomology theories led to a zoo of homological invariants of (noncommutative) rings, including K-theory, Hochschild homology, cyclic homology, periodic homology, and so on. There is some hope that one of these (noncommutative) cohomology theories is a Weil cohomology theory, in the sense that it can be used to reprove the Weil conjectures. For example, Blumberg and Mandell recently proved a Kunneth theorem (one of the requirements for a Weil cohomology theory) for TP \cite{BlumMand}. There is also a well-understood theory of duality for noncommutative motives \cite{BGT}, a key ingredient for Poincare duality.

However, to make sense of these results, it is necessary to work with stable $\infty$-categories, rather than rings, as a basic object of study. There were hints of this as far back as the 1950's in the guise of Morita theory. Now we know that the invariants of rings just mentioned (K-theory, Hochschild homology, etc.) all have something in common: they are \emph{Morita invariant} \cite{Tabuada}, in the sense that they may be regarded as invariants of the derived category of chain complexes of modules $\text{Mod}_R^d$ (which is a stable $\infty$-category). That is, if the $\infty$-categories $\text{Mod}_R^d$ and $\text{Mod}_S^d$ are equivalent, then $R$ and $S$ have equivalent K-theory, Hochschild homology, etc.

Therefore, we are led to consider the stable $\infty$-category $\text{Mod}_R^d$ as a basic object of study in noncommutative algebraic geometry --- rather than the ring $R$. Roughly, such a stable $\infty$-category is a \emph{noncommutative motive}. In addition to the theoretical results just mentioned related to duality and the Kunneth theorem, these ideas are also leading to new computations in algebraic K-theory (via trace methods); for recent work see Nikolaus-Scholze \cite{NikScholze}. \\

\noindent Notice the direction these developments have taken us. Categories were invented as a book-keeping tool, to keep track of large amounts of algebraic structure and therefore avoid excessively long and technical exposition. Now there are signs that they should be regarded as objects of central algebraic interest in their own right.

This is a story as old as mathematics. At each leap in abstraction, a new structure is defined to aid in book-keeping (fractions to organize work in geometry, functions for calculus, groups for algebra), but in time the structure becomes itself a basic object of study. (Consider all the work put into classifying finite simple groups.) \\

\noindent For most of the thesis, we study symmetric monoidal categories, treating them as categorifications of abelian groups. This algebraic approach is only possible due to recent developments in higher category theory. Notably, the following fact is \emph{not} true of ordinary categories, without 2-categorical technicalities.

\begin{proposition*}[HA 2.4.2.6]
A symmetric monoidal $\infty$-category is a commutative monoid in the $\infty$-category of $\infty$-categories.
\end{proposition*}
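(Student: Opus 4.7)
The plan is to reduce the proposition to an instance of the straightening/unstraightening equivalence applied to $\mathrm{Fin}_*$. First, I would recall the two definitions being compared. Lurie's symmetric monoidal $\infty$-category is a cocartesian fibration $p\colon \mathcal{C}^\otimes \to \mathrm{Fin}_*$ such that for every $\langle n\rangle$, the inert maps $\rho^i\colon \langle n\rangle \to \langle 1\rangle$ (collapsing all but the $i$-th point) induce an equivalence of fibers $\mathcal{C}^\otimes_{\langle n\rangle} \xrightarrow{\simeq} (\mathcal{C}^\otimes_{\langle 1\rangle})^n$. On the other side, a commutative monoid in an $\infty$-category $\mathcal{D}$ with finite products is a functor $M\colon \mathrm{Fin}_* \to \mathcal{D}$ satisfying the Segal condition: the maps $M(\langle n\rangle) \to M(\langle 1\rangle)^n$ induced by the same inert morphisms are equivalences.

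Next, I would invoke straightening/unstraightening (HTT 3.2.0.1), which provides an equivalence between the $\infty$-category of cocartesian fibrations over $\mathrm{Fin}_*$ and the functor $\infty$-category $\mathrm{Fun}(\mathrm{Fin}_*, \mathrm{Cat}_\infty)$. Under this equivalence, a cocartesian fibration $p$ corresponds to a functor $M$ with $M(\langle n\rangle) \simeq \mathcal{C}^\otimes_{\langle n\rangle}$, and the map $M(\langle n\rangle) \to M(\langle 1\rangle)$ induced by an inert morphism is (canonically equivalent to) the functor on fibers induced by cocartesian transport along that morphism. Consequently, the Segal map for $M$ is an equivalence if and only if the inert-transport map on fibers of $p$ is, i.e.\ the two Segal conditions match exactly.

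Combining these steps, the straightening equivalence restricts to an equivalence between the full subcategory of cocartesian fibrations satisfying Lurie's condition and the full subcategory of $\mathrm{Fun}(\mathrm{Fin}_*, \mathrm{Cat}_\infty)$ satisfying the Segal condition, which is by definition $\mathrm{CMon}(\mathrm{Cat}_\infty)$. This identifies symmetric monoidal $\infty$-categories with commutative monoids in $\mathrm{Cat}_\infty$.

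The main obstacle is the care needed to see that the inert maps $\rho^i$ really do implement the comparison in both settings in a compatible way. In particular, one must verify that cocartesian transport in an unstraightened fibration recovers, up to canonical equivalence, the functor coming from applying $M$ to a morphism in $\mathrm{Fin}_*$; this is a general fact about straightening, but it is the crux of the argument and is where the proof in HA spends its energy. Once that naturality is in place, matching the two Segal conditions is a direct comparison.
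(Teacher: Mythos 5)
Your argument is correct and is precisely the standard straightening-based proof underlying the citation HA 2.4.2.6; the thesis itself offers no proof of this proposition, only the reference to Lurie (and to the fibrational machinery of \S\ref{1.3.2}). Identifying cocartesian transport in the unstraightened fibration with the straightened functor's action on the inert morphisms of $\text{Fin}_\ast$, and thereby matching the two Segal conditions, is exactly how the result is established there.
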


\noindent Nonetheless, our approach does lead to new results even for ordinary categories. They can only be proven, as far as we know, using the machinery of $\infty$-categories.

The author is aware of connections between this work and at least three active areas of research.
\begin{itemize}
\item noncommutative algebraic geometry (especially motives)
\item the field with one element
\item equivariant homotopy theory
\end{itemize}

\noindent The first two will be in the background throughout the thesis, but rarely discussed explicitly after this introduction. (The second in particular remains a distant goal.)

Equivariant homotopy theory will recur as a motivating example throughout the thesis, but the reader will find a more systematic discussion in the appendix.

\subsubsection{Organization}
\noindent The thesis comprises four chapters. The first chapter is an expository account of $\infty$-categories, as developed in the last fifteen years by Joyal, Lurie, Rezk, Barwick, and others. There are no new results and essentially no proofs.

The second chapter sets up the foundation of our commutative algebra approach to category theory. We show that many flavors of symmetric monoidal categories can be regarded as modules over particular semiring categories. Here we are regarding symmetric monoidal categories as categorified abelian groups and semiring categories as categorified rings.

Examples include cocartesian monoidal categories (modules over Fin), cartesian monoidal categories (modules over $\text{Fin}^\text{op}$), semiadditive categories (modules over $\text{Burn}^\text{eff}$), additive categories (modules over Burn), and connective spectra (modules over the sphere spectrum, regarded as an $\infty$-category).

The third chapter uses techniques from Chapter \ref{3} to study Lawvere theories in the guise of cyclic $\text{Fin}^\text{op}$-modules. We regard the framework of Lawvere theories as providing a sort of \emph{algebraic Yoneda lemma}, in the form of a comparison between the commutative algebra of finitely generated $\infty$-categories (of Chapter \ref{3}) and the commutative algebra of presentable $\infty$-categories (due to Lurie, HA 4.8.1).

Chapter \ref{3} and parts of Chapter \ref{4} are taken from the author's paper \cite{Berman}, but organized slightly differently. Other parts of Chapter \ref{4} are new and will appear in a separate paper.

In the fourth chapter, we introduce the following conjecture. Actually, the terms used in the statement have not yet been defined, but we rigorously state and prove a weak variant of the conjecture. We will revisit the weak variant later in this introduction.

\begin{conjecture*}[Conjecture \ref{DualConj}]
If $G$ is a finite group, the $\infty$-category $_G\text{Top}$ of genuine left $G$-spaces and $\text{Top}^G$ of naive (or Borel) right $G$-spaces are dual as noncommutative motives over the field with one element.
\end{conjecture*}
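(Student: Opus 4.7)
The plan is to realize the conjecture as a duality in the symmetric monoidal $\infty$-category of Fin-modules (equivalently, cocartesian presentable $\infty$-categories) developed in Chapter 2, under the expectation that this provides the proper $\mathbb{F}_1$-analogue of the category of dg modules over a ring. The first step is to make precise the ambient symmetric monoidal $\infty$-category in which the duality is claimed, to define its tensor product $\otimes_{\text{Fin}}$ via Day convolution, and to specify the finiteness/dualizability condition that both ${}_G\text{Top}$ and $\text{Top}^G$ must satisfy. Duality then means, in the standard sense, the existence of an evaluation ${}_G\text{Top} \otimes_{\text{Fin}} \text{Top}^G \to \text{Top}$ and a coevaluation $\text{Top} \to \text{Top}^G \otimes_{\text{Fin}} {}_G\text{Top}$ satisfying the triangle identities.

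Next, I would identify both sides algebraically, invoking Elmendorf's theorem: ${}_G\text{Top} \simeq \text{Fun}(\mathcal{O}_G^{\text{op}}, \text{Top})$ is the free Fin-module on the orbit category, while $\text{Top}^G \simeq \text{Fun}(BG, \text{Top})$ is the free Fin-module on $BG$. One should think of $\mathcal{O}_G$ as a kind of ``Morita Fin-envelope'' of $BG$, analogous to how finitely generated projectives enlarge a ring to a Morita-equivalent additive category. The evaluation map would then be the colimit-preserving extension of the homotopy-orbits pairing $X \otimes_G Y$, using the restriction $BG \hookrightarrow \mathcal{O}_G$; and the coevaluation would encode the regular $G$-biset as a sum over conjugacy classes of subgroups of appropriate spans in Burn$^{\text{eff}}$. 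Verifying the triangle identities reduces to a computation of composites of spans, where Theorem 2T4 (additive categories as Burn-modules) controls the span-calculus and makes the manipulations formal.

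The main obstacle --- and the reason the author promises only a weak variant --- is that ``noncommutative motives over the field with one element'' is not yet rigorously defined; a full proof requires isolating the correct finiteness condition under which Fin-modules are dualizable (playing the role of perfect complexes), and checking that both ${}_G\text{Top}$ and $\text{Top}^G$ meet it. The weak variant I would pursue first is an Eilenberg--Watts type statement: that every cocontinuous, Fin-linear functor $\text{Top}^G \to \text{Top}$ is represented by a genuine $G$-space. This should follow directly by extending a Fin-linear functor from $BG$ along the inclusion into $\mathcal{O}_G$ (or Burn$^{\text{eff}}$) and applying the universal property of presheaf categories, together with the \emph{algebraic Yoneda lemma} forthcoming in Chapter 3. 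Once this weak form is established, upgrading to genuine duality will hinge on the still-missing $\mathbb{F}_1$-finiteness framework, which I view as the single substantive gap between the weak variant and the full conjecture.
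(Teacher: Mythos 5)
Your plan is pitched in the wrong ambient category, and this is not a cosmetic issue but the point where the argument would fail. If you work in $\text{Mod}_{\text{Fin}}$ (equivalently in $\text{Pr}^{\text{L}}$ with the Lurie tensor product) and ask only for cocontinuous, coproduct-preserving functors, then the dual of a presheaf category $\mathcal{P}(\mathcal{C})$ is $\mathcal{P}(\mathcal{C}^{\text{op}})$; in particular the dual of $\text{Top}^G\cong\text{Fun}(BG,\text{Top})$ is again a category of \emph{naive} $G$-spaces (of the opposite handedness), not the genuine category $_G\text{Top}$. Your proposed weak variant reflects this: every colimit-preserving functor $\text{Top}^G\rightarrow\text{Top}$ is, by the universal property of presheaf categories, classified by a naive $G$-space, so the Eilenberg--Watts statement you would obtain never sees fixed-point data and cannot distinguish genuine from naive equivariance. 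The asymmetry that makes the conjecture interesting is invisible at this level.

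The weak variant actually proved in the paper (Theorem \ref{PerfPair}, Corollary \ref{PerfPairCor}) lives in the 2-category $\text{Cat}_\text{pce}$ of categories admitting pullbacks and finite coproducts, with functors required to preserve pullbacks, finite coproducts, \emph{and epimorphisms}; this extra exactness is the $\mathbb{F}_1$-analogue of ``right exact'' in the classical Eilenberg--Watts theorem, and it is precisely what cuts the functors $\text{Fin}_G\rightarrow\text{Fin}$ down from arbitrary coefficient systems to those of the form $-\times_G X$ with $X$ a finite \emph{free} left $G$-set. The technical heart is a torsor argument (Lemma \ref{EpiTors}): every epimorphism out of $H^{\text{op}}\times G$ in ${}^H\text{Fin}_G$ is a $K$-torsor, so that a pullback-, coproduct-, and epimorphism-preserving functor is determined by its value on $H^{\text{op}}\times G$ and is forced to be of tensor form, with freeness of the resulting biset likewise extracted from the torsor condition. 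Nothing in your outline identifies this exactness structure or supplies a substitute for the torsor lemma, so the proposal as written does not establish the weak variant on which the conjecture's evidence rests. (Your diagnosis of the remaining obstruction --- the missing definition of $\text{NMo}_{\mathbb{F}_1}$ --- does agree with the paper's own assessment.)
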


\subsubsection{Open problems}
\noindent There are also a variety of open problems and conjectures that appear throughout the thesis. Aside from the conjecture above, the open problems broadly arise in two ways:
\begin{itemize}
\item Computations in this area seem fairly difficult. For example, we have some success computing very specific tensor products of symmetric monoidal $\infty$-categories via universal properties, but we don't know how to compute tensor products explicitly. They seem to be related to the span construction. (See \S\ref{3.2.2}.)
\item There are some hints of a theory of categorified algebraic geometry (including notions of flatness, descent, and 2-vector bundles), which may help with computations, but we do not yet understand any details. (See \S\ref{4.3}.)
\end{itemize}

\noindent We take the rest of this introduction to discuss in broad strokes some of the background and goals of the thesis. For more concrete discussion, skip to the introductions to each chapter.

\section*{Noncommutative motives}
\addcontentsline{toc}{section}{Noncommutative motives}
\noindent Grothendieck proposed in the 1960's that there should be a universal Weil cohomology theory $\text{Sch}\rightarrow\text{Mo}$, where Sch is some category of schemes, and Mo is a category of motives. That is, any other Weil cohomology theory $\text{Sch}\rightarrow\mathcal{C}$ should factor through Mo. We should be able to replace `Weil cohomology theories' with other types of cohomology theories to obtain different categories of motives. Of course we are suppressing many technicalities.

Since Mo is the home of a universal Weil cohomology theory, its objects should satisfy properties that imply Poincare duality, the Kunneth theorem, etc. These properties are encapsulated in Grothendieck's \emph{standard conjectures}, one of the goals of which was to prove the Riemann Hypothesis for curves over a finite field. (This form of the Riemann Hypothesis was later proven by Deligne using a different approach.)

From the beginning, there have been suggestions of a relationship between the Weil conjectures and the usual Riemann hypothesis. That is, the Riemann zeta function is the zeta function for $\text{Spec}(\mathbb{Z})$, which we are to think of as a curve over the (nonexistent) finite field $\mathbb{F}_1$. So in some sense, the Riemann hypothesis should be a consequence of (hypothetical) Weil conjectures for curves over $\mathbb{F}_1$.

In the 1990's, Deninger outlined this program in more detail \cite{Deninger}, describing the properties that a Weil cohomology theory over $\mathbb{F}_1$ would have to satisfy. Such a cohomology theory would incorporate ideas from both algebraic geometry and functional analysis. \\

\noindent There is a major difficulty in studying $\mathbb{F}_1$: there is no convincing notion of what it should be. On the other hand, we \emph{can} say something about what an $\mathbb{F}_1$-\emph{module} should be. For example, a finitely generated free $\mathbb{F}_1$-module is a finite set.

This situation is reminiscent of noncommutative motives, as discussed earlier. A noncommutative motive (in the sense of Blumberg, Gepner, and Tabuada \cite{BGT}) is roughly a type of stable $\infty$-category. The noncommutative motive attached to a ring $R$ is its derived $\infty$-category of modules $\text{Mod}_R^d$ (or the associated $\infty$-category of perfect complexes of modules).

So we might imagine that the noncommutative motive associated to $\mathbb{F}_1$ is related to the category Fin of finite sets. Of course, Fin is not really a noncommutative motive, because it is not a stable $\infty$-category (or even additive)! However, we might hope to enlarge the $\infty$-category of noncommutative motives to an $\infty$-category of \emph{noncommutative motives over $\mathbb{F}_1$}. (Presumably this would include at least stripping away the additivity condition, and probably carefully incorporating some functional analysis as well.) If we could then construct something like a Weil cohomology theory for such objects, we might begin to fantasize about the Riemann Hypothesis. \\

\noindent Of course, this is all hypothetical, but it suggests the need for an algebraic understanding of $\infty$-categories which are \emph{not necessarily stable}, which should assemble into an $\infty$-category $\text{NMo}_{\mathbb{F}_1}$ of noncommutative motives over $\mathbb{F}_1$. We presume that $\text{NMo}_{\mathbb{F}_1}$ would come equipped with a symmetric monoidal structure for which something like Fin is the unit (in its guise as the category of finitely generated $\mathbb{F}_1$-modules).

In other words, noncommutative motives over $\mathbb{F}_1$ will be Fin-modules in some sense that we cannot yet make precise. However, along these lines we will prove:

\begin{theorem*}[Theorem \ref{2T3}]
A module over the semiring $\infty$-category Fin is precisely a cocartesian monoidal $\infty$-category.

A module over the semiring $\infty$-category Burn (of finitely generated free $H\mathbb{Z}$-modules) is an additive $\infty$-category.
\end{theorem*}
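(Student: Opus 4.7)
The plan is to exploit the universal properties of Fin and Burn as free cocartesian and additive monoidal $\infty$-categories on one generator, respectively. Building on the symmetric monoidal structure on $\mathrm{SMCat}_\infty$ (the $\infty$-category of small symmetric monoidal $\infty$-categories) developed earlier in Chapter 2, Fin and Burn are both commutative algebras therein---Fin with addition given by disjoint union and multiplication by cartesian product, and Burn analogously with spans. A Fin-module structure on a symmetric monoidal $\infty$-category $M$ is then, by definition, a symmetric monoidal functor $\alpha: \mathrm{Fin} \otimes M \to M$ satisfying the usual action axioms, and similarly for Burn.

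The first key step is to prove the universal property: Fin is the free cocartesian monoidal $\infty$-category on one generator, meaning that for cocartesian monoidal $C$, symmetric monoidal functors $\mathrm{Fin} \to C$ correspond to objects of $C$ via evaluation at $*$. I would approach this in stages, first identifying $\mathrm{Fin}^\simeq \simeq \bigsqcup_n B\Sigma_n$ as the free symmetric monoidal $\infty$-groupoid on a point, then formally adjoining finite colimits to obtain Fin. The analog for Burn (free additive $\infty$-category on a generator) should follow by group-completing at the level of symmetric monoidal structures, rather than merely at the level of underlying groupoids.

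Given the universal property, both directions fall out. Forward: if $M$ is a Fin-module, then for each $x \in M$ the slice $\alpha(-, x): \mathrm{Fin} \to M$ is symmetric monoidal, so the universal property forces $M$ to admit formal iterated coproducts $n \cdot x$ equipped with inclusion and fold maps (coming from $* \to n$ and $n \to *$ in Fin). Bilinearity of $\alpha$, combined with distributivity of multiplication over addition in Fin, propagates these to honest finite coproducts of arbitrary objects and forces the tensor product on $M$ to distribute over and agree with them, so $M$ is cocartesian. Backward: if $M$ is cocartesian monoidal, the universal property produces a canonical action $\alpha$ with $n \cdot x = x^{\sqcup n}$, and uniqueness of such $\alpha$ follows from the same universal property. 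The Burn case runs in parallel, with ``coproduct'' replaced by ``biproduct'' and the extra data of additive inverses on hom-spaces coming from group completion.

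The main obstacle is establishing the universal property of Fin (and Burn) at the $\infty$-categorical level, where classical 1-categorical arguments do not suffice: one must track higher coherences carefully, likely via Lurie's symmetric monoidal envelope machinery (HA 2.2.4) and the symmetric monoidal structure on free cocompletions. A secondary difficulty is that the tensor product on $\mathrm{SMCat}_\infty$ is notoriously hard to compute explicitly (as the introduction notes among its open problems), so the proof must proceed entirely via universal properties rather than through any direct manipulation of objects.
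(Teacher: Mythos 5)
Your overall strategy (argue entirely through universal properties, never compute the tensor product directly) matches the spirit of the paper, but the specific universal property you lean on is not strong enough, and the step where you deduce that a Fin-module is cocartesian monoidal has a genuine gap. The freeness statement you propose to prove --- symmetric monoidal functors $\text{Fin}^\amalg\rightarrow\mathcal{C}^\oast$ correspond to objects of $\mathcal{C}$ \emph{when $\mathcal{C}^\oast$ is cocartesian monoidal} --- only controls maps into cocartesian targets. The paper instead uses the stronger operadic fact (Fin is the symmetric monoidal envelope of the commutative operad, HA 2.1.1.18--20) that $\text{Hom}(\text{Fin}^\amalg,\mathcal{C}^\oast)\cong\text{CAlg}(\mathcal{C}^\oast)^\oast$ for an \emph{arbitrary} symmetric monoidal $\mathcal{C}^\oast$, together with the fact that $\text{CAlg}(\mathcal{C}^\oast)^\oast$ is always cocartesian monoidal (HA 3.2.4). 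Feeding these two facts into a general recognition lemma for solid semirings (if $\text{Hom}(A,X)$ always lands in the subcategory $\mathcal{P}$ and the counit $\text{Hom}(A,X)\rightarrow X$ is an equivalence for $X\in\mathcal{P}$, then $A$ is idempotent and $\text{Mod}_A\cong\mathcal{P}$) simultaneously yields solidity of Fin and the identification of its modules. In your forward direction, asserting that the fold and inclusion maps $n\cdot x$ inherits from $\ast\rightarrow n$ and $n\rightarrow\ast$ ``propagate to honest finite coproducts'' is precisely the claim that needs proof: possessing unit and codiagonal maps does not give the universal property $\text{Map}(x\oast y,z)\simeq\text{Map}(x,z)\times\text{Map}(y,z)$. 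The clean way around this is the cofree one: a Fin-module $M$ satisfies $M\cong\text{Hom}(\text{Fin}^\amalg,M)\cong\text{CAlg}(M)^\oast$, which is known to be cocartesian monoidal.

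The assertion that ``the Burn case runs in parallel'' is the more serious underestimate. There is no analogue of the envelope formula computing $\text{Hom}(\text{Burn},\mathcal{C}^\oast)$ for arbitrary symmetric monoidal $\mathcal{C}^\oast$, so the argument above does not transport. The paper's proof of the Burn statement is deferred to Chapter 4 and routes through entirely different machinery: first $\text{Span}(\text{Fin})\cong\text{Fin}\otimes\text{Fin}^\text{op}$ identifies modules over the effective Burnside category with semiadditive $\infty$-categories (using Glasman's theorem that $\text{Span}(\text{Fin})$ is the free semiadditive $\infty$-category on one object); then semiadditive $\infty$-categories are shown to be enriched in $\text{CMon}_\infty$, semiadditive Lawvere theories are classified as $\text{Burn}_R$ for associative semiring spaces $R$ (via HA 4.8.5.8), and finally $\text{Burn}=\text{Burn}_{\Omega^\infty\mathbb{S}}$-modules are identified with $\Omega^\infty\mathbb{S}$-additive, i.e.\ additive, $\infty$-categories by a solidity argument on Hom-objects. ``Group-completing at the level of symmetric monoidal structures'' is a slogan for this, not a proof; in particular the passage from semiadditive to additive happens one Hom-object at a time and requires knowing that the enrichment exists and that $\Omega^\infty\mathbb{S}$ is a solid semiring space.
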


\section*{Commutative algebra of categories}
\addcontentsline{toc}{section}{Commutative algebra of categories}
\noindent We have discussed the connection between what might be called \emph{algebraic category theory} and motives. Here we discuss another important application of algebraic techniques in category theory, developed by Lurie.

Until the 1990's, the construction of a well-behaved smash product on the category of spectra was a major and long-standing open problem. Although the problem was solved a decade earlier, Lurie's solution (HA \S4.8.2) is completely formal and very simple by comparison, using the commutative algebra of presentable $\infty$-categories.

A presentable $\infty$-category is roughly an $\infty$-category which can be presented by a model category. (More specifically, it is closed under small homotopy limits and homotopy colimits and generated under homotopy colimits by a small set of objects.) The following theorem ultimately underlies most of the algebraic techniques we consider in this thesis. As far as this author knows, it is original to Lurie.

\begin{theorem*}[HA \S4.8.1]
The $\infty$-category of presentable $\infty$-categories itself has a symmetric monoidal structure $\text{Pr}^{\text{L},\otimes}$, and there is an equivalence of $\infty$-categories $$\text{CAlg}(\text{Pr}^{\text{L},\otimes})\cong\text{ClSymMonPr}^\text{L},$$ between commutative algebras in $\text{Pr}^{\text{L},\otimes}$ and presentable $\infty$-categories with closed symmetric monoidal structure.
\end{theorem*}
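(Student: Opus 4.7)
The plan is to proceed in two stages: first, construct the symmetric monoidal structure on $\mathrm{Pr}^L$; second, translate commutative algebras in this structure into closed symmetric monoidal presentable $\infty$-categories via the universal property of the tensor product.

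For the first stage, I would characterize the tensor product $\mathcal{C} \otimes \mathcal{D}$ by the universal property that colimit-preserving functors $\mathcal{C} \otimes \mathcal{D} \to \mathcal{E}$ correspond to functors $\mathcal{C} \times \mathcal{D} \to \mathcal{E}$ preserving small colimits separately in each variable. To exhibit such an object, I would use that every presentable $\infty$-category is an accessible localization of a presheaf $\infty$-category $\mathrm{P}(\mathcal{C}_0)$. For presheaves one can take $\mathrm{P}(\mathcal{C}_0) \otimes \mathrm{P}(\mathcal{D}_0) = \mathrm{P}(\mathcal{C}_0 \times \mathcal{D}_0)$, which satisfies the desired universal property by the usual Yoneda/left Kan extension argument: colimit-preserving functors out of $\mathrm{P}(\mathcal{C}_0)$ correspond to arbitrary functors out of $\mathcal{C}_0$. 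For a general presentable $\infty$-category, which is an accessible left-exact localization of a presheaf category, the tensor product is obtained by localizing the presheaf tensor product at the appropriate set of morphisms. The unit of the structure is $\mathcal{S} = \mathrm{P}(\mathrm{pt})$.

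Turning this construction into a coherent symmetric monoidal $\infty$-category (rather than just a binary operation on objects) requires more work. The most efficient route is to embed $\mathrm{Pr}^L$ into a larger symmetric monoidal $\infty$-category of $\infty$-categories-with-colimits under a Day-convolution-style tensor product, and then exhibit $\mathrm{Pr}^L$ as the essential image of a symmetric monoidal localization. Alternatively, one builds $\mathrm{Pr}^{L,\otimes}$ directly as a coCartesian fibration over $\mathrm{N}(\mathrm{Fin}_*)$ whose fibers are the appropriate products. Either way, functoriality comes from the characterization that a morphism in $\mathrm{Pr}^L$ is a colimit-preserving functor, and the tensor product of two such functors preserves colimits in each variable, hence descends to a colimit-preserving functor between tensor products.

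For the second stage, suppose $\mathcal{C}$ is a commutative algebra in $\mathrm{Pr}^{L,\otimes}$. Its multiplication is a colimit-preserving functor $\mathcal{C} \otimes \mathcal{C} \to \mathcal{C}$, which by the universal property corresponds to a functor $\mathcal{C} \times \mathcal{C} \to \mathcal{C}$ preserving colimits separately in each variable. The coherence data of the algebra structure package this into a symmetric monoidal structure on $\mathcal{C}$. Since each translation $c \otimes (-) \colon \mathcal{C} \to \mathcal{C}$ is a colimit-preserving functor between presentable $\infty$-categories, the adjoint functor theorem (HTT 5.5.2.9) furnishes a right adjoint, which is the desired internal hom; this is exactly the closed condition. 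Conversely, for any closed symmetric monoidal presentable $\infty$-category, the tensor product is a left adjoint in each variable and thus colimit-preserving in each variable, so it factors through $\mathcal{C} \otimes \mathcal{C}$ and defines a commutative algebra structure in $\mathrm{Pr}^{L,\otimes}$. One then checks that the two constructions are mutually inverse and assemble into an equivalence of $\infty$-categories.

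The main obstacle is the first stage --- producing an honest symmetric monoidal structure on $\mathrm{Pr}^L$ with all higher coherences, rather than merely a binary tensor product on objects. The $\infty$-categorical machinery (operadic fibrations, Day convolution, or symmetric monoidal localizations) needed to handle this coherence cleanly is precisely what was unavailable before the foundational work of Joyal and Lurie, and is the reason this theorem is of relatively recent vintage. Once the universal property is promoted to a symmetric monoidal structure, the second stage is a formal consequence of the adjoint functor theorem.
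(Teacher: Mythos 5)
The paper does not prove this theorem at all: it is stated in the introduction as a black box with the attribution ``HA \S 4.8.1,'' and the only ingredients the thesis later uses are the explicit formula $\mathcal{C}\otimes\mathcal{D}\cong\text{Fun}^\text{R}(\mathcal{C}^\text{op},\mathcal{D})$ (HA 4.8.1.15) and the identification of commutative algebras with closed symmetric monoidal presentable $\infty$-categories. Your sketch is therefore not comparable to anything in the paper itself, but it is a faithful outline of Lurie's actual argument: the universal property via functors preserving colimits separately in each variable, the computation $\mathcal{P}(\mathcal{C}_0)\otimes\mathcal{P}(\mathcal{D}_0)\cong\mathcal{P}(\mathcal{C}_0\times\mathcal{D}_0)$, descent to general presentable $\infty$-categories by localizing, the coherence problem solved by realizing $\text{Pr}^\text{L}$ inside the symmetric monoidal $\infty$-category $\widehat{\text{Cat}}(K)^\otimes$ of $\infty$-categories with $K$-indexed colimits, and the adjoint functor theorem converting ``colimit-preserving in each variable'' into ``closed.'' Two small corrections. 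First, a presentable $\infty$-category is an accessible localization of a presheaf category, not an accessible \emph{left-exact} localization --- the left-exact ones are precisely the $\infty$-topoi, a proper subclass; nothing in your argument actually uses left-exactness, so this is a misstatement rather than a failure. Second, the step where ``the coherence data of the algebra structure package this into a symmetric monoidal structure on $\mathcal{C}$'' deserves a sentence of justification, since $\text{Pr}^{\text{L},\otimes}$ is not cartesian monoidal: one needs that the forgetful functor $\text{Pr}^{\text{L},\otimes}\rightarrow\widehat{\text{Cat}}^\times$ is lax symmetric monoidal (the thesis records a version of this as Lemma \ref{LemPrRing}), so that a commutative algebra in $\text{Pr}^{\text{L},\otimes}$ has an underlying commutative monoid in $\widehat{\text{Cat}}^\times$, i.e.\ an honest symmetric monoidal $\infty$-category, and conversely that a symmetric monoidal structure whose tensor preserves colimits in each variable lifts along this functor. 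With those two repairs your outline matches the source.
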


\noindent As an example of its application, the $\infty$-category Sp of spectra is the universal example of a stable presentable $\infty$-category, and therefore (for formal reasons) has a commutative monoid structure in $\text{Pr}^{\text{L},\otimes}$, which endows Sp with a closed symmetric monoidal structure (the smash product). \\

\noindent This sort of formal algebraic argument is a staple of higher category theory. Since it can be technically overwhelming to specify a symmetric monoidal structure on an $\infty$-category directly (which was in some sense the source of the difficulties in constructing the smash product all along), it is useful instead to give a formal argument.

Fortunately, this sort of formal algebraic argument is actually \emph{easier} in the setting of $\infty$-categories than in classical category theory --- where the analogue of the above theorem is simply not true (locally presentable categories do not assemble into a well-behaved symmetric monoidal category). \\

\noindent As another application of these ideas, Gepner, Groth, and Nikolaus \cite{GGN} have endowed the $\infty$-category SymMon of symmetric monoidal $\infty$-categories with a symmetric monoidal tensor product. (The analogue in classical category theory is extremely technical, and as far as we know has never been written up.)

This allows us to study commutative algebra in $\text{SymMon}^\otimes$. A commutative semiring $\infty$-category $\mathcal{R}$ is a commutative monoid in $\text{SymMon}^\otimes$, and we may ask for a symmetric monoidal $\infty$-category to have the further structure of an $\mathcal{R}$-module. This is the language used in the theorems stated earlier in the introduction.

We study this categorified commutative algebra in depth in Chapter \ref{3}, and compare it to Lurie's commutative algebra of $\text{Pr}^{\text{L},\otimes}$ in Chapter \ref{4}. The comparison relies heavily on the study of \emph{Lawvere theories}, which are the subject of Chapter \ref{4}.

\section*{Lawvere theories}
\addcontentsline{toc}{section}{Lawvere theories}
\noindent The study of algebraic theories arises out of the desire to understand various sorts of algebraic structure (the structures of group, ring, module, Lie algebra, etc.) in a unified way. There are a few technical frameworks available. Three of these are:
\begin{enumerate}
\item PROPs (the most general but least refined)
\item Lawvere theories (the simplest to define)
\item Operads (the least general but best behaved)
\end{enumerate}

\noindent A particular PROP, Lawvere theory, or operad is meant to encode one flavor of algebraic structure (for example, the structure of an abelian group, or of a module over a fixed ring $R$). (1)-(3) are progressively less general in the sense that any algebraic structure encoded by an operad is also encoded by a Lawvere theory, and any structure encoded by a Lawvere theory is also encoded by a PROP.

On the other hand, there are examples of structures which are encoded by a PROP but not a Lawvere theory (for example, the structure of a cogroup or Hopf algebra), and others which are encoded by a Lawvere theory but not an operad (for example, the structure of a commutative ring or $R$-module).

Between the three, Lawvere theories are particularly simple to understand conceptually.

\begin{definition*}
A \emph{Lawvere theory} is a category $\mathcal{L}$ which admits finite products, and its objects are generated under finite products by a single object.

A \emph{model} of $\mathcal{L}$ is a product-preserving functor $\mathcal{L}^\text{op}\rightarrow\text{Set}$.
\end{definition*}

\noindent For example, the Lawvere theory associated to the structure of \emph{set} is $\text{Fin}^\text{op}$, while the Lawvere theory associated to the structure of \emph{abelian group} is the Burnside category Burn. That is, a set is a product-preserving functor $\text{Fin}^\text{op}\rightarrow\text{Set}$, and an abelian group is a product-preserving functor $\text{Burn}\rightarrow\text{Set}$.

In Chapters \ref{3} and \ref{4}, we will give an algebraic description of the definition: a Lawvere theory is a cyclic module over the commutative semiring category $\text{Fin}^\text{op}$. We will also prove this for $\infty$-categorical Lawvere theories --- the story here is the same, except that a model is a product-preserving functor into the $\infty$-category Top of topological spaces.

Although Lawvere theories are less technically powerful than operads, there are at least two benefits to using them:
\begin{itemize}
\item It is typically easy to pass back and forth between a Lawvere theory and its category of models. That is, the category of models of $\mathcal{L}$ is $\text{Fun}^\times(\mathcal{L},\text{Top})$. On the other hand, if $\mathcal{C}$ is a category of models over a Lawvere theory (such as Set, Gp, Ab), $\mathcal{L}^\text{op}$ can be recovered as the full subcategory of $\mathcal{C}$ spanned by finitely generated free objects.

Note that any such $\mathcal{C}$ is a presentable category. We prove (Theorem \ref{ThmLawvAdj}) that the correspondence takes the form of an adjunction $$\text{Lawv}\rightleftarrows\text{Pr}^\text{L}$$ such that the left adjoint is fully faithful. That is, the $\infty$-category of ($\infty$-)Lawvere theories is a colocalization of the $\infty$-category of presentable $\infty$-categories.

Such a colocalization generalizes many of the fundamentals of Lawvere theories to higher category theory, and also includes algebraic analogues of the Yoneda lemma and Day convolution.
\item Algebraic definitions are much more complicated in the setting of $\infty$-categories than in classical category theory, because it is less practical to give explicit point-set definitions. For example, a connective spectrum is a higher categorical analogue of an abelian group, but it is difficult to describe explicitly as `abelian group objects in Top'. Instead, we may turn to Lawvere theories: A connective spectrum is a product-preserving functor $\text{Burn}\rightarrow\text{Top}$.
\end{itemize}

\noindent Our results suggest Lawvere theories are fundamental to our algebraic perspective on category theory. However, there are many cases where we would like to study operads in this framework. We will introduce a conjectural algebraic characterization of an operad:

\begin{conjecture*}[Conjecture \ref{Conj2}]
An operad is a Lawvere theory $\mathcal{L}$ such that the structure map $\text{Fin}^\text{op}\rightarrow\mathcal{L}$ becomes an equivalence after tensoring with the effective Burnside category $\text{Burn}^\text{eff}$.
\end{conjecture*}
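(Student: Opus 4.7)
The approach is to interpret the conjecture as an equivalence between the $\infty$-category of operads and the full subcategory of Lawv spanned by cyclic $\text{Fin}^\text{op}$-modules $\mathcal{L}$ with $\mathcal{L}\otimes_{\text{Fin}^\text{op}}\text{Burn}^\text{eff}\simeq\text{Burn}^\text{eff}$. I would construct functors in both directions and show they are mutually inverse. In one direction, one sends an $\infty$-operad $O$ to its associated Lawvere theory $\mathcal{L}(O)$, defined as the opposite of the full subcategory of $O$-algebras in Top spanned by the free algebras on finite sets (in the style of Gepner--Haugseng--Kock's analytic monads); the cyclic generator is the free $O$-algebra on one generator, and the structure map $\text{Fin}^\text{op}\to\mathcal{L}(O)$ sends $n$ to the free $O$-algebra on $n$ generators. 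In the reverse direction, given such an $\mathcal{L}$, I would define an operad $O(\mathcal{L})$ by $O(\mathcal{L})(n) = \text{Hom}_\mathcal{L}(n,1)$ with its natural $\Sigma_n$-action, with operadic composition inherited from composition in $\mathcal{L}$.

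The main technical task is verifying $\mathcal{L}(O)\otimes_{\text{Fin}^\text{op}}\text{Burn}^\text{eff}\simeq\text{Burn}^\text{eff}$ for each operad $O$. Using Theorem \ref{2T3}, a module over this tensor product is a semi-additive $\infty$-category $\mathcal{M}$ equipped with a compatible $\mathcal{L}(O)$-module structure, equivalently an $O$-algebra structure on each object compatible with the semi-additive structure. But in any semi-additive $\infty$-category, the fold maps $X^{\oplus n}\to X$ endow every object with a canonical $\text{Comm}$-algebra structure, and hence (by restriction along the essentially unique map $O\to\text{Comm}$, since $\text{Comm}$ is terminal among operads) with a canonical $O$-algebra structure. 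So the substance of the condition is that this canonical structure is essentially the \emph{unique} $\mathcal{L}(O)$-module structure extending the $\text{Burn}^\text{eff}$-module structure. I would approach this rigidity statement via universal properties rather than direct computation, by showing the space of compatible $\mathcal{L}(O)$-structures on any semi-additive $\mathcal{M}$ is contractible --- perhaps by resolving $O$ as a colimit of free operads and reducing to the evident case.

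The main obstacle is precisely this tensor product computation, which the author has explicitly flagged as generally difficult (\S\ref{3.2.2}). For the converse direction, given $\mathcal{L}$ satisfying the condition, the challenge is to show the morphism spaces decompose in the operadic form $\text{Hom}_\mathcal{L}(n,m)\simeq\coprod_{f:m\to n}\prod_{j}\text{Hom}_\mathcal{L}(f^{-1}(j),1)$. This decomposition characterizes Lawvere theories arising from operads, in contrast to theories (like that of commutative rings) that involve genuinely multivariable structure such as diagonals. The $\text{Burn}^\text{eff}$-triviality condition is designed to enforce precisely this decomposition: after base change, $\text{Hom}_{\mathcal{L}\otimes\text{Burn}^\text{eff}}$ must reduce to $\text{Hom}_{\text{Burn}^\text{eff}}$, and unwinding this through the pushout square should force the operadic form of $\text{Hom}_\mathcal{L}$. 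Making this implication precise requires a careful combinatorial analysis of how the span calculus in $\text{Burn}^\text{eff}$ interacts with composition in $\mathcal{L}$, and this is the step I would expect to be the most delicate.
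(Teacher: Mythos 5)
You should first be aware that the statement you are addressing is Conjecture \ref{Conj2}: the paper contains no proof of it. What the paper establishes (modulo Conjecture \ref{Conj1}) is only the forward implication --- that the Lawvere theory $\mathcal{L}_\mathcal{O}\cong\text{Env}(\mathcal{O})\otimes\text{Fin}^\text{op}$ of a reduced $\infty$-operad is trivial over $\text{Burn}^\text{eff}$ --- together with a speculative ``disjunctive fibration'' strategy for the converse at the end of \S\ref{4.3.1}. Your forward direction (operad $\mapsto$ opposite of finitely generated free algebras, generator the free algebra on one element) agrees with the paper's.

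Your plan for the key computation $\mathcal{L}(O)\otimes\text{Burn}^\text{eff}\simeq\text{Burn}^\text{eff}$ has two concrete problems. First, you propose to analyze \emph{modules over} the tensor product, but $\mathcal{L}(O)$ is a cyclic $\text{Fin}^\text{op}$-module, not in general a commutative semiring $\infty$-category, so $\text{Mod}_{\mathcal{L}(O)\otimes\text{Burn}^\text{eff}}$ is not defined; the tensor product must instead be identified by mapping \emph{out} of it. This is how the paper proceeds (citing \cite{Berman} 3.24): $\text{Hom}(\text{Env}(\mathcal{O})\otimes\text{Fin},\mathcal{C}^\oast)\cong\text{Alg}_\mathcal{O}(\text{CAlg}(\mathcal{C}^\oast)^\oast)\cong\text{CAlg}(\mathcal{C}^\oast)$, since $\text{CAlg}(\mathcal{C}^\oast)^\oast$ is cocartesian monoidal and $\mathcal{O}$-algebras in a cocartesian monoidal $\infty$-category are no extra data for $\mathcal{O}$ reduced (HA 2.4.3.9); the Yoneda lemma then gives $\text{Env}(\mathcal{O})\otimes\text{Fin}\cong\text{Fin}$ and hence $\mathcal{L}_\mathcal{O}\otimes\text{Burn}^\text{eff}\cong\text{Burn}^\text{eff}$ using solidity of $\text{Fin}$ and $\text{Fin}^\text{op}$. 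Second, your proposed reduction ``resolve $O$ by free operads'' cannot work, because the rigidity statement you need is special to \emph{unital/reduced} operads and fails for free ones: an algebra in $\mathcal{C}^\amalg$ over the free operad on a single binary operation is an object $X$ together with a map $X\amalg X\to X$, i.e.\ two endomorphisms of $X$, so the space of such structures is far from contractible. The uniqueness is not a property you can check generator-by-generator.

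For the converse, defining $O(\mathcal{L})(n)=\text{Hom}_\mathcal{L}(n,1)$ ``with composition inherited from $\mathcal{L}$'' is the $1$-categorical move; in the $\infty$-categorical setting one cannot specify an operad by its spaces of operations and then verify coherence by hand. The paper's intended route is fibrational: recover $[\mathcal{O}^\otimes]$ as the pullback $\mathcal{L}\times_{\text{Burn}^\text{eff}}\text{Fin}_\ast$ of the structure map $\mathcal{L}\to\mathcal{L}\otimes\text{Burn}^\text{eff}\simeq\text{Burn}^\text{eff}$ along $\text{Fin}_\ast\subseteq\text{Burn}^\text{eff}$ (Proposition \ref{PropOpL}), and then show this inner fibration over $\text{Fin}_\ast$ satisfies the conditions of HA 2.1.1.10. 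Your observation that triviality over $\text{Burn}^\text{eff}$ should force the operadic decomposition of $\text{Hom}_\mathcal{L}(n,1)$ is the right heuristic, but as written it restates the difficulty rather than resolving it --- and indeed neither your sketch nor the paper closes this gap, which is why the statement remains a conjecture.
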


\subsubsection{Lawvere theories and equivariant homotopy theory}
\addcontentsline{toc}{subsection}{Lawvere theories and equivariant homotopy theory}
\noindent There is a third benefit of studying Lawvere theories, for those interested in equivariant homotopy theory. Homotopy theory comes with (at least) two distinct flavors of equivariance. This is a situation which may be unfamiliar to algebraists. On one hand, a \emph{naive}\footnote{Some people use the word Borel where we use naive, and then use naive to refer to structures intermediary between Borel and genuine equivariance. This three-way distinction becomes particularly important when working with model categories.} $G$-space is a functor $$BG\rightarrow\text{Top}$$ and a \emph{naive} $G$-spectrum is a functor $$BG\rightarrow\text{Sp}.$$ On the other hand, a \emph{genuine} $G$-space is a product-preserving functor $$\text{Fin}_G\rightarrow\text{Top},$$ and a \emph{genuine} $G$-spectrum is a product-preserving functor $$\text{Burn}_G\rightarrow\text{Sp},$$ where $\text{Fin}_G$ is the category of finite $G$-sets, and $\text{Burn}_G$ the Burnside $\infty$-category.

The last two definitions are really theorems (due to Elmendorf \cite{Elmendorf} and Guillou-May \cite{GMay2}, respectively), because work in equivariant homotopy theory predates work in $\infty$-categories. The theorems hold (to put it tersely) that the original definitions of model categories agree with these definitions of $\infty$-categories. \\

\noindent The descriptions we have given for genuine equivariant homotopy theory are Lawvere-theoretic, in the following sense:

\begin{principle*}[Principle \ref{LawvMachPr}]
Suppose $\mathcal{C}$ is an $\infty$-category with associated Lawvere theory $\mathcal{L}$, and $\mathcal{L}$ can be built by applying a combinatorial machine $\mathcal{M}$ to the category Fin of finite sets.

Then the naive equivariant analogue of $\mathcal{C}$ is $$\text{Fun}^\times(\mathcal{M}(\null^G\text{Free}),\text{Top})$$ and the genuine equivariant analogue is $$\text{Fun}^\times(\mathcal{M}(\text{Fin}_G),\text{Top}).$$ Here $^G\text{Free}$ is the category of finite \emph{free} left $G$-sets, while $\text{Fin}_G$ denotes finite right $G$-sets (not necessarily free).
\end{principle*}

\begin{example*}
If $\mathcal{C}=\text{Top}$, then $\mathcal{M}$ is the `opposite category' construction. If $\mathcal{C}$ is the $\infty$-category of connective spectra, then $\mathcal{M}$ is the `Burnside category' construction.
\end{example*}

\noindent In order to make the principle precise, we should explain what we mean by `combinatorial machine'. We won't do that in this thesis, but see the appendix for more examples.

\subsubsection{Duality}
\addcontentsline{toc}{subsection}{Duality}
\noindent We have just described the sense in which $^G\text{Free}^\text{op}$ is the Lawvere theory for naive $G$-spaces, and $\text{Fin}_G^\text{op}$ is like a Lawvere theory for genuine $G$-spaces. In Chapter \ref{5}, we will give evidence for a duality between naive and genuine equivariant homotopy theory. Our evidence for this duality comes from the following theorem:

\begin{theorem*}[Corollary \ref{PerfPairCor}]
As categories with pullbacks, coproducts, and epimorphisms (\S\ref{5.2}), $^G\text{Free}$ is dual to $\text{Fin}_G$.
\end{theorem*}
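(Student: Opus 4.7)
The plan is to establish the duality by exhibiting a natural ``perfect pairing'' in the sense of \S\ref{5.2}, then verifying the structural conditions. A natural candidate for the pairing is the balanced tensor product
\[
\Phi \colon {}^G\text{Free} \times \text{Fin}_G \longrightarrow \text{Fin}, \qquad (X, Y) \longmapsto Y \otimes_G X,
\]
where $X$ is viewed as a right $G$-set via inversion of the action. Concretely, if $X = G \times A$ is free on $A$, then $\Phi(X, Y) \cong Y \times A$ functorially in both variables. This bifunctor is manifestly separately functorial and behaves well with respect to the relevant categorical structure, so it is the most natural source of a duality.

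The key steps are then: (i) verify that $\Phi$ preserves finite coproducts in each variable; (ii) compute the pairing on the generating objects---the single orbit $G$ on the ${}^G\text{Free}$ side, and the orbits $G/H$ on the $\text{Fin}_G$ side---and show that it recovers the correct hom-sets via a Yoneda-type identification; (iii) conclude that the induced functors
\[
{}^G\text{Free} \longrightarrow \text{Fun}(\text{Fin}_G, \text{Fin})^{\text{op}}, \qquad \text{Fin}_G \longrightarrow \text{Fun}({}^G\text{Free}, \text{Fin})
\]
are fully faithful onto the classes of structure-preserving functors specified in \S\ref{5.2}. Since ${}^G\text{Free}$ is freely coproduct-generated by the single object $G$ (with automorphism group $G$) and $\text{Fin}_G$ is coproduct-generated by the orbits $G/H$, verification on generators will do most of the heavy lifting.

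The main obstacle is the compatibility of the pairing with epimorphisms and pullbacks simultaneously. Pullbacks in ${}^G\text{Free}$ are inherited from $G$-Set (free-ness passes to any pullback), and pullbacks in $\text{Fin}_G$ are the ordinary $G$-set pullbacks; epimorphisms in both categories are the underlying-set surjections. One must verify that $\Phi$ carries a pullback on one side to the appropriate dual structure on the other, and similarly that it matches up epimorphisms correctly. This reduces, via orbit decomposition, to combinatorial identities involving double cosets $H \backslash G / K$ of subgroups of $G$, which is where the bulk of the work lies. If the framework in \S\ref{5.2} is cleanly formulated, these identities should follow from standard Mackey-style decompositions, but setting up the bookkeeping to match the three pieces of structure (pullbacks, coproducts, epimorphisms) at once is the delicate part of the argument.
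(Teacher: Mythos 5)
Your pairing is the right one (it is exactly the paper's $-\times_G-$, and your observation that $Y\times_G(G\times A)\cong Y\times A$ is how one sees that the functors in the image preserve pullbacks), and the overall architecture --- check the structure-preservation of the pairing, then show the two induced functors are equivalences by working with generators --- matches the paper. But there is a genuine gap in the hard direction, essential surjectivity of $\text{Fin}_G\;(\text{really }{}^G\text{Fin})\to\text{Fun}_\text{pce}(\text{Fin}_G,\text{Fin})$. You write that ``verification on generators will do most of the heavy lifting'' because $\text{Fin}_G$ is coproduct-generated by the orbits $G/H$; the problem is that the non-free orbits $G/H$ are \emph{not} coproducts of copies of the free orbit $G$, so knowing $F$ on $G$ plus coproduct-preservation tells you nothing a priori about $F(G/H)$. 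What actually determines $F(G/H)$ is the following: the quotient $G\to G/H$ is an epimorphism fitting into the pullback square $G\times_{G/H}G\cong G^{\amalg|H|}$, i.e.\ it is an $H$-torsor in the sense of Definition \ref{KTors}; any functor preserving pullbacks, coproducts, and epimorphisms preserves torsors; and $K$-torsors in $\text{Fin}$ are precisely the free quotients $X\to X/K$ (Example \ref{ExFinTors}). This forces $F(G/H)\cong F(G)/H$ and, taking $H=G$, forces the $G$-action on $F(G)$ to be \emph{free} --- a point you never address, but which is needed for $F(G)$ to land in ${}^G\text{Fin}$ at all (the Remark after Theorem \ref{PerfPair} stresses that for a non-free biset the pairing fails to preserve pullbacks, so freeness is not a formality).

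Your proposed substitute --- ``combinatorial identities involving double cosets $H\backslash G/K$'' and ``standard Mackey-style decompositions'' --- is the wrong tool for this step: the double coset formula describes products and pullbacks \emph{of orbits against each other}, which is relevant to checking that the pairing itself preserves pullbacks, but it does not explain why an abstract pce-functor is determined by its value on the single free orbit, nor why its value on $G/H$ must be a free $H$-quotient. The torsor lemma (Lemma \ref{EpiTors} together with Example \ref{ExFinTors}) is the missing ingredient, and without it the essential-surjectivity and fullness arguments do not close. The easy direction $\text{Fin}_G\to\text{Fun}_\text{pce}({}^G\text{Fin},\text{Fin})$ is fine as you describe it, since ${}^G\text{Fin}$ really is freely coproduct-generated by $BG$.
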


\noindent If $R$ is a ring and $\mathcal{L}$ the corresponding Lawvere theory (so $\mathcal{L}^\text{op}$ is the category of finitely generated free $R$-modules), there is a Morita equivalence between $\mathcal{L}^\text{op}$ and $\text{Mod}_R\cong\text{Fun}^\times(\mathcal{L},\text{Set})$. That is, these are equivalent as noncommutative motives.

Similarly, we might expect $^G\text{Free}$ to be Morita equivalent to naive (left) $G$-spectra, and $\text{Fin}_G$ to be Morita equivalent to genuine (right) $G$-spectra. Therefore, we would hope for a duality (in noncommutative motives over $\mathbb{F}_1$) between genuine and naive equivariant spaces. \\

\noindent Such a duality would be both familiar and unfamiliar from `ordinary' noncommutative motives. If $R$ is a ring, then Blumberg, Gepner, and Tabuada \cite{BGT} proved that $\text{Mod}_R$ and $_R\text{Mod}\cong\text{Mod}_{R^\text{op}}$ are dual as noncommutative motives.

If $G$ is instead a finite group, we have just seen evidence for a similar duality, except that in addition to transferring between a left and right group action, we also turn genuine group actions into naive ones, and vice versa!

\subsubsection{Lawvere theories and noncommutative algebra}
\addcontentsline{toc}{subsection}{Lawvere theories and noncommutative algebra}
\noindent In our discussion of duality, we suggested some sort of relationship between Lawvere theories and noncommutative motives. Further evidence for such a relationship comes in the form of the following theorem:

\begin{theorem*}[Corollary \ref{SemiaddLawv}]
There is an equivalence of categories between additive Lawvere theories and rings (not necessarily commutative).
\end{theorem*}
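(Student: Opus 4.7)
The plan is to construct mutually inverse functors between the category of additive Lawvere theories and the category of rings. In one direction, send a ring $R$ to $\mathcal{L}_R$, the opposite of the full subcategory of right $R$-modules spanned by the finitely generated free modules $R^n$; the generator is $x = R$, and $\text{End}_{\mathcal{L}_R}(x) \cong R^{\text{op}}$. In the other direction, send an additive Lawvere theory $\mathcal{L}$ with generator $x$ to the ring $R(\mathcal{L}) = \text{End}_{\mathcal{L}}(x)^{\text{op}}$. On morphisms, a map of Lawvere theories is a finite-product-preserving functor carrying generator to generator, which restricts to a ring map on endomorphisms of $x$; conversely, a ring map $R \to S$ induces an additive functor $\mathcal{L}_R \to \mathcal{L}_S$ by extension of scalars on free modules.

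The core step is to show that, for an additive Lawvere theory $\mathcal{L}$ with generator $x$ and $R = R(\mathcal{L})$, the evident comparison functor $\mathcal{L}_R \to \mathcal{L}$ (sending $R^n \mapsto x^n$) is an equivalence. Since both categories are generated by their distinguished object under finite products, essential surjectivity is automatic. For full faithfulness, one uses additivity: in an additive category finite products are also finite coproducts, so
$$\text{Hom}_{\mathcal{L}}(x^n, x^m) \;\cong\; \prod_{j=1}^m \text{Hom}_{\mathcal{L}}(x^n, x) \;\cong\; \prod_{j=1}^m \bigoplus_{i=1}^n \text{End}_{\mathcal{L}}(x) \;\cong\; M_{m \times n}(R^{\text{op}}),$$
which is exactly $\text{Hom}_{\mathcal{L}_R}(R^n, R^m)$. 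A direct diagram chase using the universal properties of product and coproduct shows that composition in $\mathcal{L}$ corresponds, under this bijection, to matrix multiplication, so the comparison is a functor and indeed an equivalence. The other composite, $R \mapsto R(\mathcal{L}_R)$, is the identity up to the double-opposite $(R^{\text{op}})^{\text{op}} = R$ implicit in our conventions.

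The main obstacle is the full-faithfulness calculation: one must leverage the interplay between the Lawvere-theoretic structure (finite products and a distinguished generator) and the additive structure (biproducts and bilinear composition) to pin down morphisms between $x^n$ and $x^m$ as matrices over $R$. Everything else is formal bookkeeping: once matrices appear, checking that matrix multiplication matches composition reduces, by bilinearity, to the single associativity identity in $\text{End}_{\mathcal{L}}(x)$, and functoriality of the constructions follows because a product-preserving functor between additive Lawvere theories is automatically additive (finite products being biproducts). One should also verify that natural transformations between such functors correspond to elements of the center only in the commutative case, but at the level of $1$-morphisms no such subtlety arises and the equivalence of categories drops out.
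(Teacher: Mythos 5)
Your argument is the standard classical one, and as a proof of the $1$-categorical statement it is essentially sound: in an additive category generated by a single object under biproducts, $\text{Hom}(x^n,x^m)$ is forced to be $M_{m\times n}(\text{End}(x))$ with composition given by matrix multiplication, and the comparison $\mathcal{L}_R\to\mathcal{L}$ follows. (One caveat even classically: Lawvere theories naturally form a $2$-category, so an equivalence with the $1$-category of rings requires checking that the relevant mapping categories are discrete, i.e.\ that there are no nonidentity natural transformations between pointed additive functors $\mathcal{L}_R\to\mathcal{L}_S$; your parenthetical about the center gestures at this but does not settle it.)

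However, the result labeled Corollary \ref{SemiaddLawv} in the text is the $\infty$-categorical statement (cyclic $\text{Burn}^\text{eff}$-modules are equivalent to associative semiring spaces, with the additive/ring case as a specialization), and there your central step --- ``a direct diagram chase\dots shows that composition corresponds to matrix multiplication, so the comparison is a functor'' --- is exactly the step that is not available. For $\infty$-categories one cannot define a functor by prescribing values on objects and morphisms and verifying composition; one must produce the comparison $\text{Burn}_R\to\mathcal{L}$ together with all higher coherence data. The text explicitly flags this obstruction (``The difficulty is in constructing such a functor'') and routes around it: identify a semiadditive Lawvere theory with its presentable $\infty$-category of models $\text{Mdl}(\mathcal{L})$, which by Theorem \ref{ThmLawvDet} carries a conservative, sifted-colimit-preserving right adjoint to $\text{CMon}_\infty$; apply the recognition theorem HA 4.8.5.8 to conclude $\text{Mdl}(\mathcal{L})\cong\text{Mod}_R$ for $R=\text{End}(X)$; and then recover $\mathcal{L}\cong\text{Mdl}(\mathcal{L})^\circ\cong\text{Burn}_R$ by passing to finitely generated free objects, with functoriality and the monoidal comparison supplied by HA 4.8.5.11 and 4.8.4.6. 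If you intend only the classical statement your approach is fine, but to prove the statement as it is used in this thesis you would need either to supply the coherence data for your matrix functor by hand or to adopt a monadicity-style argument of the above kind.
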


\noindent This is not a difficult result, at least classically; we prove it in the setting of $\infty$-categories, which is slightly more subtle.

Recall that noncommutative motives are roughly stable $\infty$-categories, and noncommutative motives over $\mathbb{F}_1$ should somehow be built by stripping away the additivity requirement for stable $\infty$-categories.

Similarly, Corollary \ref{SemiaddLawv} suggests that Lawvere theories are a generalization of rings obtained by stripping away the additivity requirement. In this sense, we can regard Lawvere theories as being a very rough first approximation to $\mathbb{F}_1$-algebras, suggesting a connection between our work on Lawvere theories and the noncommutative motive perspective.

\chapter{A Survey of Higher Category Theory}\label{1}\setcounter{subsection}{0}
\noindent In this chapter, we review the theory of $\infty$-categories. There are no new results, and essentially no proofs. Our goal is to treat the subject in a tiered approach. First we will illustrate the ways in which $\infty$-categories behave just like (and, in some cases, better than) ordinary categories. Then we will study quasicategories, a particular model for them, demonstrating that quasicategories are an effective framework for the key $\infty$-category constructions. Finally, we will discuss some aspects of the theory (fibrations) which may be less familiar from classical category theory.

For the most part, working with $\infty$-categories is very much the same as working with ordinary categories. There are two major differences:
\begin{enumerate}
\item There is a certain class of problems in category theory that are complicated by the idea of \emph{laxness}. For example, it is tempting to suggest that a monoidal category should be just a category $\mathcal{C}$, a functor $\otimes:\mathcal{C}\times\mathcal{C}\rightarrow\mathcal{C}$, and a unit object 1 of $\mathcal{C}$. This data should satisfy $X\otimes 1=1\otimes X=X$ (unitality) and $X\otimes(Y\otimes Z)=(X\otimes Y)\otimes Z$ (associativity). But such a definition is too strict to apply to any meaningful examples; unitality and associativity should really only hold up to isomorphism. But then we should pick a specific isomorphism (one for each $X$), and require those isomorphisms to satisfy properties ensuring we have chosen the \emph{right} isomorphisms. This sort of problem does not occur in the world of $\infty$-categories, because laxness is built into the theory. A monoidal category is literally in monoid in Cat (the $\infty$-category of small $\infty$-categories).
\item Arguments that require manipulating objects and morphisms explicitly are much harder in the $\infty$-categorical setting. For example, there is little hope of describing a functor $F:\mathcal{C}\rightarrow\text{Top}$ into the $\infty$-category of spaces simply by describing a space $F(X)$ for each object $X\in\mathcal{C}$, and continuous maps for each morphism. Instead, we should try to describe the associated Grothendieck fibration $\int F\rightarrow\mathcal{C}$. This crucial reliance on Grothendieck fibrations is one of the primary difficulties in working with $\infty$-categories (compared to ordinary categories). Fortunately, we can use classical categories for inspiration, by attempting to frame classical constructions and proofs in category theory in terms of Grothendieck fibrations.
\end{enumerate}

\noindent There is a give-and-take between these two differences. Writing explicit examples of $\infty$-categories and (especially) general constructions producing $\infty$-categories can be subtle. However, once we have made these constructions, a new $\infty$-categorical toolbox makes high-level arguments easier than in classical category theory.

All this being said, when comparing $\infty$-categories and classical categories, the similarities far outweigh the differences. To emphasize this, we begin in \S\ref{1.1} by collecting some of these similarities. We will also see some phenomena which are simplified by working with $\infty$-categories. Introducing these \emph{features} of $\infty$-categories before the definition of a quasicategory is intended to make the theory more digestible.

In \S\ref{1.2}, we give a rough treatment of quasicategories as a model for $\infty$-categories. Ideally, we would prove that quasicategories satisfy all the properties of \S\ref{1.1}, but instead we give only the key points and refer the reader to Lurie's \textit{Higher Topos Theory} \cite{HTT} for the rest.

Finally, we discuss fibrations in \S\ref{1.3}, addressing difference (2) above between $\infty$-categories and ordinary categories. These will play only a minimal role in the thesis, but they are always working in the background.

The standard references are Lurie's books \textit{Higher Topos Theory} \cite{HTT} and \textit{Higher Algebra} \cite{HA}, which we refer to as HTT and HA, respectively. These are encyclopedic, but the first chapter of HTT is an accessible introduction. The second chapter is also crucial but can be intimidating. Barwick and Shah \cite{BarFib} have written an excellent survey to supplement it.

For symmetric monoidal $\infty$-categories, we use HA (beginning in Chapter 2), but this is more difficult reading, largely because of the heavy reliance on $\infty$-operads. It is possible to set up the theory of symmetric monoidal $\infty$-categories without the language of $\infty$-operads, using instead the following definition: a symmetric monoidal $\infty$-category is a product-preserving functor from the Burnside category to Cat. Such a treatment is inspired by the formalism of Lawvere theories, although it is not necessary to use the language of Lawvere theories. The author hopes to write all this up in the future.

Another source we use for symmetric monoidal $\infty$-categories is Gepner, Groth, and Nikolaus's paper \cite{GGN} which constructs some common symmetric monoidal structures and is a particularly accessible example of algebraic techniques in category theory. Finally, for the span construction and Burnside categories, we use Barwick's paper on Mackey functors \cite{BarMack1}.

\section{Features of \texorpdfstring{$\infty$-}{infinity }categories}\label{1.1}
In \S\ref{1.1.2}, we collect some of the basic results on $\infty$-categories, which (in almost all cases) carry over verbatim from classical category theory. In \S\ref{1.1.3}, we repeat for symmetric monoidal $\infty$-categories, and in \S\ref{1.1.4}, for presentable $\infty$-categories.

Presentable $\infty$-categories provide a home for three of the most powerful formal toolboxes we have at our disposal: the Yoneda lemma, the adjoint functor theorem, and Bousfield localization. They also make available another set of tools just as powerful and robust as these three, but which is unfamiliar from classical category theory. The key idea is that the $\infty$-category $\text{Pr}^\text{L}$ of presentable $\infty$-categories itself admits a symmetric monoidal structure, which is important because $\text{Pr}^{\text{L},\otimes}$ (roughly) governs higher notions of rings: highly structured ring spectra, semiring $\infty$-categories (which we discuss at length in Chapter \ref{3}), and so on.

\subsection{\texorpdfstring{$\infty$-}{Infinity }categories}\label{1.1.2}
\subsubsection{The basics}
We will leave any formal definitions of $\infty$-categories to \S\ref{1.2}. For now, think of an $\infty$-category as a category such that, given morphisms $f,g:X\rightarrow Y$, we may speak of 2-morphisms $\zeta:f\rightarrow g$. Given 2-morphisms $\zeta,\eta:f\rightarrow g$, we may speak of 3-morphisms $\zeta\rightarrow\eta$, and so forth. A consequence is that we should avoid speaking of two \emph{anythings} as being the same, but only the same `up to equivalence'. (We also do not speak of isomorphisms of objects, but rather equivalences, thinking of equivalences of categories or homotopy equivalences of spaces.)

We won't worry ourselves excessively with set-theoretic issues, but we will at least try to be accurate. By a small $\infty$-category, we mean (roughly) that there is a set of objects, a set of morphisms, etc. Otherwise, we will speak of a large $\infty$-category. \\

\noindent Here are some basic examples.
\begin{enumerate}
\item Any category $\mathcal{C}$ is an $\infty$-category, with no higher morphisms except identities.
\item Any 2-category in which all 2-morphisms are invertible is an $\infty$-category. For example, the 2-category $\text{Cat}_1$ of small categories (where 2-morphisms are natural isomorphisms) is a (large) $\infty$-category.
\item Any topological space $X$ is an $\infty$-category. Roughly, the objects are points of $X$, morphisms are paths between points, 2-morphisms are homotopies, etc.
\item There are (large) $\infty$-categories Top of topological spaces and Cat of small $\infty$-categories.
\end{enumerate}

\noindent Notice we are referring to $\infty$-categories as Cat and categories as $\text{Cat}_1$. We will also typically refer to categories as 1-categories for emphasis, since we will far more often speak of $\infty$-categories.

Examples (1) and (3) each induce fully faithful functors of $\infty$-categories $\text{Top}\rightarrow\text{Cat}$ and $\text{Cat}_1\rightarrow\text{Cat}$. In other words, each has a sort of converse:
\begin{itemize}
\item A category is an $\infty$-category such that the only $n$-morphisms ($n\geq 2$) $\alpha:\zeta\rightarrow\eta$ are (equal to, not just equivalent to) identity morphisms.
\item A topological space is an $\infty$-category such that every morphism is invertible (HTT 1.2.5.1).
\end{itemize}

\noindent The second point identifies topological spaces with \emph{$\infty$-groupoids}, and is sometimes referred to as the homotopy hypothesis, a name due to Baez \cite{HomoHypo}. Here are some consequences of the homotopy hypothesis:
\begin{itemize}
\item An $\infty$-groupoid is equivalent to the trivial category (one object, one morphism) if and only if it is contractible as a space.
\item For any two objects $X,Y$ of an $\infty$-category $\mathcal{C}$, there is an $\infty$-groupoid (space) of morphisms $\text{Map}_\mathcal{C}(X,Y)$.
\end{itemize}

\noindent The homotopy hypothesis allows us to model $\infty$-categories conceptually as follows:

\begin{definition}
\label{Def1}
An $\infty$-category is a category enriched in topological spaces (topological category).
\end{definition}

\noindent This is not our preferred model, because it does not admit a good theory of fibrations. In \S\ref{1.2} we will introduce a more robust model which is equivalent (HTT 2.2.5.1) to this one.

\subsubsection{Limits and colimits}
\noindent Let $\mathcal{C}$ denote an $\infty$-category. We say that an object $X\in\mathcal{C}$ is \emph{initial} if for all $Y\in\mathcal{C}$, $\text{Map}_\mathcal{C}(X,Y)$ is contractible. Dually, $X$ is \emph{terminal} if $\text{Map}_\mathcal{C}(Y,X)$ is contractible for all $Y$.

If they exist, initial and terminal objects are \emph{unique up to contractible choice}. That is, the full subcategory spanned by initial (respectively terminal) objects is a contractible $\infty$-groupoid (HTT 1.2.12.9).

We can use initial and terminal objects to define limits and colimits. If $K$ is a small $\infty$-category, and $p:K\rightarrow\mathcal{C}$ a functor, then we may speak of a limit or colimit of $p$.

There is an $\infty$-category (overcategory) $\mathcal{C}_{/p}$, of which an object is (roughly) given by the following data: an object $T\in\mathcal{C}$, morphisms $T\rightarrow p(X)$ for all $X\in K$, 2-morphisms exhibiting that the following triangle commutes for each morphism $f:X\rightarrow Y$ in $I$, and so forth. $$\xymatrix{
&T\ar[ld]\ar[rd] & \\
p(X)\ar[rr]_{p(f)} &&p(Y)
}$$ Similarly, there is an undercategory $\mathcal{C}_{p/}$.

\begin{definition}
The limit of $p:K\rightarrow\mathcal{C}$ is (if it exists) the terminal object of $\mathcal{C}_{/p}$, and the colimit is the initial object of $\mathcal{C}_{p/}$.
\end{definition}

\noindent Note:
\begin{itemize}
\item If $K$ and $\mathcal{C}$ are both 1-categories, limits are the same as in classical category theory.
\item If $K$ is a 1-category, the limit of a diagram $p:K\rightarrow\text{Top}$ is the homotopy limit of classical homotopy theory.
\end{itemize}

\begin{example}
The following are examples of limits (colimits):
\begin{itemize}
\item If $K$ is discrete (a 1-category with no morphisms except the identities), a limit (colimit) over $K$ is called a product (coproduct).
\item If $K$ is the 1-category with three objects and two nontrivial morphisms ($X\to Z$, $Y\to Z$), a limit over $K$ (colimit over $K^\text{op}$) is called a pullback (pushout).
\item Let $\Delta$ denote the 1-category of \emph{nonempty} totally ordered finite sets and order-preserving functions between them (\emph{the simplex category}). A limit over $\Delta$ is called a totalization. A colimit over $\Delta^\text{op}$ is called a geometric realization.
\item If $K$ is the empty category, and $p:K\rightarrow\mathcal{C}$ the only possible functor, the limit of $p$ is the terminal object of $\mathcal{C}$ (if it exists) and the colimit is the initial object.
\end{itemize}
\end{example}

\noindent Totalization and geometric realization are analogous to equalizers and coequalizers in classical category theory.

Limits and colimits need not exist. If all limits (colimits) of functors $p:K\rightarrow\mathcal{C}$ exist (where $K$ is small), we say $\mathcal{C}$ is closed under small limits (colimits). This is true in many cases of interest:

\begin{example}
Each of Top, Cat, and $\text{Cat}_1$ are closed under small limits and colimits.
\end{example}

\noindent If $\mathcal{C}$ is closed under products ($K$ is discrete) and pullbacks ($K$ is the 1-category with three objects and two nontrivial maps $X\to Z$, $Y\to Z$), then $\mathcal{C}$ is closed under small limits (HTT 4.4.2.6). Dually for colimits.

\subsubsection{Functors and adjunctions}
\noindent For any $\infty$-categories $\mathcal{C}$ and $\mathcal{D}$, there is an $\infty$-category $\text{Fun}(\mathcal{C},\mathcal{D})$ of functors from $\mathcal{C}$ to $\mathcal{D}$.

If $F:\mathcal{C}\rightarrow\mathcal{D}$ is such a functor, $F$ may admit a right adjoint $G:\mathcal{D}\rightarrow\mathcal{C}$. If so, there are equivalences of spaces $\text{Map}_\mathcal{D}(FX,Y)\cong\text{Map}_\mathcal{C}(X,GY)$ for all $X\in\mathcal{C}$ and $Y\in\mathcal{D}$. Moreover, the right adjoint (if it exists) is uniquely determined up to contractible choice in $\text{Fun}(\mathcal{D},\mathcal{C})$.

Similarly, we may speak of left adjoints, which are also essentially unique. If $G$ is right adjoint to $F$, then $F$ is left adjoint to $G$.

For such an adjunction we write $F:\mathcal{C}\rightleftarrows\mathcal{D}:G$ or just $F\dashv G$ if the $\infty$-categories are understood. In either case, we take care to put the left adjoint ($F$) on the left. \\

\noindent A very important adjunction is the nerve/homotopy category adjunction $$h:\text{Cat}\rightleftarrows\text{Cat}_1:N.$$ The right adjoint sends a category to its associated $\infty$-category (its nerve). The left adjoint sends an $\infty$-category to its homotopy category.

That is, if $\mathcal{C}$ is an $\infty$-category, $h\mathcal{C}$ has the same objects as $\mathcal{C}$, and a morphism $X\rightarrow Y$ is a \emph{connected component} of the space $\text{Map}_\mathcal{C}(X,Y)$. \\

\noindent If $K$ is discrete (the nerve of a category with no nontrivial morphisms), a limit or colimit of shape $K$ is called a product or coproduct. Products and coproducts in an $\infty$-category agree with products and coproducts in the homotopy category; that is, the functor $\mathcal{C}\rightarrow h\mathcal{C}$ preserves products and coproducts (HTT 1.2.13.1).

If $\mathcal{C},\mathcal{D}$ are $\infty$-categories, and $\mathcal{D}$ admits all limits of shape $K$, then so does $\text{Fun}(\mathcal{C},\mathcal{D})$. Moreover, the limit of $p:K\rightarrow\text{Fun}(\mathcal{C},\mathcal{D})$, which is itself a functor, sends (in a precise sense) $X\in\mathcal{C}$ to $\text{lim}(p(-)(X))$. That is, limits are taken pointwise, and similarly for colimits (HTT 5.1.2.3).

\subsubsection{Subcategories and equivalences}
\noindent To specify a subcategory of an $\infty$-category $\mathcal{C}$, we need only specify which objects and morphisms are in the subcategory. This is true even though $\infty$-categories include data of 2-morphisms, 3-morphisms, etc. Precisely, we need only specify a subcategory $h\mathcal{D}$ of the homotopy category $h\mathcal{C}$, and then the subcategory of $\mathcal{C}$ is the pullback (in Cat) $$\xymatrix{
\mathcal{D}\ar[r]\ar[d] &\mathcal{C}\ar[d] \\
h\mathcal{D}\ar[r] &h\mathcal{C}.
}$$ A fully faithful functor $F:\mathcal{D}\rightarrow\mathcal{C}$ is one such that $\text{Map}_\mathcal{D}(X,Y)\rightarrow\text{Map}_\mathcal{C}(F(X),F(Y))$ is a homotopy equivalence of spaces for all $X,Y\in\mathcal{D}$.

Equivalently, a fully faithful functor is a subcategory inclusion such that $h\mathcal{D}\rightarrow h\mathcal{C}$ is fully faithful. In order to specify a fully faithful subcategory, we need only announce which objects of $\mathcal{C}$ are in the subcategory.

A functor $F:\mathcal{C}\rightarrow\mathcal{D}$ is invertible up to equivalence if and only if it is fully faithful and essentially surjective (HTT 3.1.3.2). In this case, we call $F$ an equivalence of $\infty$-categories. \\

\noindent We already mentioned that the inclusions $\text{Cat}_1\rightarrow\text{Cat}$ (the nerve) and $\text{Top}\rightarrow\text{Cat}$ are fully faithful. Each of these functors has a left adjoint. These are the homotopy category construction $h:\text{Cat}\rightarrow\text{Cat}_1$ and the classifying space construction $|-|:\text{Cat}\rightarrow\text{Top}$.

This sort of adjunction is very common. In general, we may have a subcategory inclusion $F:\mathcal{D}\rightarrow\mathcal{C}$ with a left adjoint. In such a case, we may refer to $F$ as a \emph{forgetful functor} and the left adjoint as a \emph{free functor}.

In the further event that $F$ is fully faithful (which is true in both examples above), we may refer to the left adjoint as a \emph{localization functor}. More on this in \S\ref{1.1.4}.

\subsection{Symmetric monoidal \texorpdfstring{$\infty$-}{infinity }categories}\label{1.1.3}
\subsubsection{The basics}
Formal definitions will be delayed until Chapter 2. Roughly, a symmetric monoidal $\infty$-category consists of an $\infty$-category $\mathcal{C}$, functor $\oast:\mathcal{C}\times\mathcal{C}\rightarrow\mathcal{C}$, object $1$ in $\mathcal{C}$, and equivalences $X\oast(Y\oast Z)\cong(X\oast Y)\oast Z$ (associativity), $X\oast 1\cong 1\oast X\cong X$ (unitality) and $X\oast Y\cong Y\oast X$ (commutativity). These equivalences (which are themselves choices of morphisms) should satisfy additional identities, which force us to choose more equivalences, and so forth ad infinitum. We may write $\mathcal{C}^\oast$ to emphasize that $\mathcal{C}$ has been endowed with all of this additional structure.

Symmetric monoidal $\infty$-categories are meant to encompass examples like the following:
\begin{enumerate}
\item If $\mathcal{C}^\oast$ is a symmetric monoidal category, its nerve is a symmetric monoidal $\infty$-category, also called $\mathcal{C}^\oast$.
\item If $X$ is an $\mathbb{E}_\infty$-space (homotopy coherently commutative monoid space), the associated $\infty$-groupoid is a symmetric monoidal $\infty$-category.
\item For example, if $E$ is a spectrum, $\Omega^\infty E$ is a symmetric monoidal $\infty$-category.
\item Suppose $\mathcal{C}$ is an $\infty$-category which admits finite products. (Note that the terminal object is an example of a finite product --- namely, a product indexed by the empty set.) Then there is a symmetric monoidal $\infty$-category $\mathcal{C}^\times$ with symmetric monoidal operation given by product.
\item If $\mathcal{C}$ admits finite coproducts, there is a symmetric monoidal $\infty$-category $\mathcal{C}^\amalg$, with symmetric monoidal operation given by coproduct.
\item The $\infty$-category Sp of spectra admits a symmetric monoidal structure given by smash product $\wedge$ (HA 4.8.2.19).
\end{enumerate}

\noindent Symmetric monoidal $\infty$-categories arising from example (4) are called \emph{cartesian monoidal}, and from (5) are \emph{cocartesian monoidal}.

A symmetric monoidal functor is a functor which preserves all of the structure of the symmetric monoidal $\infty$-category. That is, there are equivalences $F(X)\oast F(Y)\cong F(X\oast Y)$, etc. There is an $\infty$-category SymMon whose objects are symmetric monoidal $\infty$-categories and morphisms are symmetric monoidal functors. 

\begin{remark}[HA 2.4.1.8]
\label{RmkCartMon}
If $\mathcal{C}^\times$ and $\mathcal{D}^\times$ are cartesian monoidal $\infty$-categories, symmetric monoidal functors $\mathcal{C}^\times\rightarrow\mathcal{D}^\times$ correspond to functors $\mathcal{C}\rightarrow\mathcal{D}$ which preserve finite products. That is, if CartMon denotes the full subcategory of SymMon spanned by cartesian monoidal $\infty$-categories, the forgetful functor $$\text{CartMon}\rightarrow\text{Cat}$$ is also a subcategory inclusion --- the subcategory of $\infty$-categories with finite products, and finite-product-preserving functors between them.
\end{remark}

\noindent If $\mathcal{C}^\oast$ is a symmetric monoidal $\infty$-category, then $h\mathcal{C}^\oast$ is a symmetric monoidal 1-category.

It is not difficult to check whether a symmetric monoidal $\infty$-category is cartesian or cocartesian monoidal. In fact, $\mathcal{C}^\oast$ is cartesian (cocartesian) monoidal if and only if the homotopy category $h\mathcal{C}^\oast$ is cartesian (cocartesian) monoidal as an ordinary symmetric monoidal category.

\subsubsection{Closed symmetric monoidal $\infty$-categories}
\noindent If $\mathcal{C}^\oast$ is a symmetric monoidal $\infty$-category and $X$ an object, there is a functor $-\oast X:\mathcal{C}\rightarrow\mathcal{C}$. If this functor admits a right adjoint $\text{Hom}(X,-)$ for all $X$, then $\mathcal{C}$ is called \emph{closed symmetric monoidal}.

In this case, the mapping spaces $\text{Map}_\mathcal{C}(X,Y)$ can be endowed with the structure of objects of $\mathcal{C}$, via $\text{Hom}(X,Y)$. This makes $\mathcal{C}$ into an $\infty$-category enriched over itself in the sense of \cite{GepHaug}.

If a closed symmetric monoidal $\infty$-category $\mathcal{C}^\times$ is also cartesian monoidal, we say that $\mathcal{C}$ is cartesian closed.

Examples:
\begin{enumerate}
\item Top, Cat, and $\text{Cat}_1$ are cartesian closed.
\item $\text{Sp}^\wedge$ is closed symmetric monoidal.
\item If $\mathcal{C}^\oast$ is a closed symmetric monoidal 1-category, then its nerve is a closed symmetric monoidal $\infty$-category.
\end{enumerate}

\subsubsection{Algebras}
\noindent Given a symmetric monoidal $\infty$-category, a commutative algebra in $\mathcal{C}^\oast$ is (roughly) all of the following: an object $X\in\mathcal{C}^\oast$, morphisms $1\rightarrow X$ (the unit) and $X\oast X\rightarrow X$ (the product) satisfying commutativity, associativity, and unitality. These assemble into an $\infty$-category $\text{CAlg}(\mathcal{C}^\oast)$.

\begin{warning}
\label{Warning1}
There is a difference in terminology between category theory and higher category theory. What are called \emph{commutative algebra} objects in the higher category theory literature are called \emph{commutative monoid} objects in the classical category theory literature. This use of `commutative algebra' is a term taken from the theory of operads, which were designed more to study rings (i.e., algebras) than groups (i.e., monoids).

We follow Lurie in using the term \emph{commutative monoid} to refer to a commutative algebra in the special case that $\mathcal{C}^\times$ is cartesian monoidal.
\end{warning}

\noindent The following are examples:
\begin{enumerate}
\item If $\mathcal{C}^\oast$ is a symmetric monoidal 1-category, $\text{CAlg}(\mathcal{C}^\oast)$ is a 1-category: the category of commutative monoids in $\mathcal{C}^\oast$, in the sense of classical category theory.
\item $\text{CAlg}(\text{Top}^\times)$ is equivalent to the $\infty$-category of $\mathbb{E}_\infty$-spaces, as in homotopy theory. For example, if $E$ is a spectrum, its infinite loop space $\Omega^\infty E$ is a symmetric monoidal $\infty$-category.
\item $\text{CAlg}(\text{Cat}^\times)\cong\text{SymMon}$
\item $\text{CAlg}(\text{Cat}_1^\times)\cong\text{SymMon}_1$, the classical 2-category of symmetric monoidal categories (where 2-morphisms are monoidal natural isomorphisms).
\item $\text{CAlg}(\text{Sp}^\wedge)$ is the $\infty$-category of $\mathbb{E}_\infty$-ring spectra.
\item If $\mathcal{C}^\amalg$ is cocartesian monoidal, every object $X\in\mathcal{C}$ has a canonical and essentially unique commutative monoid structure. The unit $1\rightarrow X$ is the initial morphism, and the product $X\amalg X\rightarrow X$ is the codiagonal. Succinctly, the forgetful functor $\text{CAlg}(\mathcal{C}^\amalg)\rightarrow\mathcal{C}^\amalg$ is an equivalence.
\end{enumerate}

\noindent Examples (3) and (4) deserve special attention! It is \emph{not} true classically that symmetric monoidal categories are commutative monoids in the category of categories. They are instead commutative monoids in the 2-category of categories, which is exactly what example (4) states. But in the world of $\infty$-categories, there is no such confusion.

\subsubsection{Limits and colimits of algebras}
\noindent If $p:K\rightarrow\text{CAlg}(\mathcal{C}^\oast)$ is a functor such that the restriction to $p^\prime:K\rightarrow\mathcal{C}$ has a limit, then $p$ has a limit, and the underlying object of $\text{lim}(p)$ is the limit of $p^\prime$ (HA 3.2.2.1).

In other words, the forgetful functor $\text{CAlg}(\mathcal{C}^\oast)\rightarrow\mathcal{C}$ (which sends a commutative algebra $X$ to the object $X$) preserves all small limits that exist in $\mathcal{C}$ (HA 3.2.2.1).

Likewise, if $\mathcal{C}$ admits small colimits \emph{and $\oast$ preserves colimits} (which is true if, for example $\mathcal{C}^\oast$ is closed symmetric monoidal), then $\text{CAlg}(\mathcal{C}^\oast)$ admits small colimits (HA 3.2.3.3). However, it is not necessarily easy to describe these colimits, except in one case:

$\text{CAlg}(\mathcal{C}^\oast)$ always admits finite coproducts. The coproduct of $X$ and $Y$ has underlying object $X\oast Y$ (HA 3.2.4.7). That is, $\text{CAlg}(\mathcal{C}^\oast)$ itself has a symmetric monoidal structure given by the same $\oast$, and $\text{CAlg}(\mathcal{C}^\oast)^\oast$ is cocartesian monoidal.

\subsubsection{Modules}
\noindent If $\mathcal{C}^\oast$ is a symmetric monoidal $\infty$-category and $A\in\text{CAlg}(\mathcal{C}^\oast)$, there is a notion of module over $A$. Roughly, an $A$-module is an object $M\in\mathcal{C}$ with a product $A\oast M\rightarrow M$ which is associative and unital. These assemble into an $\infty$-category $\text{Mod}_A$. For example:
\begin{enumerate}
\item If $\mathcal{C}^\oast$ is a symmetric monoidal 1-category and $A$ a commutative algebra, $\text{Mod}_A$ is a 1-category --- the category of left $A$-modules (or equivalently right $A$-modules) in the sense of classical category theory.
\item If $A$ is an $\mathbb{E}_\infty$-ring spectrum (a commutative algebra in $\text{Sp}^\wedge$), $\text{Mod}_A$ is the $\infty$-category of $A$-module spectra, in the sense of homotopy theory.
\item If $R$ is a commutative ring and $HR$ the corresponding Eilenberg-Maclane spectrum, $\text{Mod}_{HR}$ is the $\infty$-category of chain complexes of $R$-modules (HA 7.1.2.13).
\end{enumerate}

\noindent If $p:K\rightarrow\text{Mod}_A$ is a functor such that the restriction $p^\prime:K\rightarrow\mathcal{C}$ has a limit, then $p$ has a limit whose restriction to $\mathcal{C}$ is $p^\prime$ (HA 4.2.3.3). The same is true for colimits provided $\oast$ \emph{preserves colimits} (HA 4.2.3.5). The condition that $\oast$ preserve colimits holds, for example, if $\mathcal{C}^\oast$ is closed symmetric monoidal.

That is, if $\oast$ preserves colimits, then the forgetful functor $\text{Mod}_A\rightarrow\mathcal{C}$ preserves all limits and colimits that exist in $\mathcal{C}$.

\subsubsection{Relative tensor products}
\noindent If $\mathcal{C}$ admits small colimits and $\oast$ preserves small colimits (for example, $\mathcal{C}^\oast$ is closed symmetric monoidal), then for every commutative algebra object $A$, $\text{Mod}_A$ has a symmetric monoidal structure given by relative tensor product $\oast_A$ (HA 4.5.2.1).

For example, suppose $\mathcal{C}^\oast=\text{Ab}^\otimes$ is the 1-category of abelian groups. Then a commutative algebra is a commutative ring $R$, and $\text{Mod}_R^{\otimes_R}$ is the ordinary category of $R$-modules under relative tensor product.

There is an equivalence of $\infty$-categories between commutative algebras in $\text{Mod}_A^{\oast_A}$ and commutative algebras in $\mathcal{C}^\oast$ with a map from $A$: $$\text{CAlg}(\text{Mod}_A^{\oast_A})\cong\text{CAlg}(\mathcal{C}^\oast)_{A/}.$$ An object of one of these equivalent $\infty$-categories is called a commutative $A$-algebra, which assemble into an $\infty$-category $\text{CAlg}_A$. \\

\noindent If $\mathcal{C}^\oast$ is closed symmetric monoidal and $A$ a commutative algebra, then in most cases $\text{Mod}_A^{\oast_A}$ is also closed symmetric monoidal. (For example, this is true if $\mathcal{C}$ is presentable by a careful application of the adjoint functor theorem.) We may denote the internal Hom by $\text{Hom}_A(-,-)$.

\subsubsection{Tensor up and Hom up}
\begin{proposition}
\label{TupHup}
Suppose that $\mathcal{C}^\oast$ is symmetric monoidal. If $R\rightarrow A$ is a morphism of commutative algebras, there is a restriction functor $\text{Fgt}:\text{Mod}_A\rightarrow\text{Mod}_R$. If $\mathcal{C}$ is closed under small colimits, and $\oast$ preserves colimits (for example, $\mathcal{C}^\oast$ is closed symmetric monoidal), then Fgt has a left adjoint `\emph{tensor up}' functor. The left adjoint is even a symmetric monoidal functor, and it takes the form $$A\oast_R -:\text{Mod}_R^{\oast_R}\rightarrow\text{Mod}_A^{\oast_A}.$$ In the further event that $\mathcal{C}^\oast$ is closed symmetric monoidal and presentable, then Fgt has a right adjoint `\emph{Hom up}' functor which takes the form $$\text{Hom}_R(A,-):\text{Mod}_R\rightarrow\text{Mod}_A.$$ To see this, note that $\text{Mod}_\mathcal{A}\rightarrow\text{Mod}_\mathcal{R}$ preserves colimits (HA 3.4.4), so by the adjoint functor theorem, it has a right adjoint. The restriction of the right adjoint along $\text{Mod}_\mathcal{A}\rightarrow\text{Mod}_\mathcal{R}$ is itself right adjoint to $\mathcal{A}\otimes_\mathcal{R} -$, so indeed it takes the form $\text{Hom}_\mathcal{R}(\mathcal{A},-)$.
\end{proposition}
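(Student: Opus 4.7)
The plan is to translate the statement into the familiar setting of modules over an algebra in a fixed symmetric monoidal $\infty$-category, and then invoke the corresponding results of HA \S4.5. Concretely, using the equivalence $\mathrm{CAlg}(\mathrm{Mod}_R^{\oast_R})\cong\mathrm{CAlg}(\mathcal{C}^\oast)_{R/}$ recorded just above, the morphism $R\to A$ corresponds to a commutative algebra $A'$ in the (closed) symmetric monoidal $\infty$-category $\mathrm{Mod}_R^{\oast_R}$, and HA 3.4.1.7 yields a canonical equivalence $\mathrm{Mod}_A\cong\mathrm{Mod}_{A'}(\mathrm{Mod}_R^{\oast_R})$ under which Fgt becomes the usual forgetful functor from $A'$-modules back to $\mathrm{Mod}_R$. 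This reduction handles the construction of the restriction functor and simultaneously sets up the context for the two adjoints.

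For the tensor up, I would apply the general free/forgetful adjunction for modules over an algebra in a symmetric monoidal $\infty$-category (HA 4.2.4.8), which produces a left adjoint to Fgt given by $A'\oast_R(-)=A\oast_R(-)$. The assumption that $\oast$ preserves colimits in each variable is inherited by $\oast_R$ on $\mathrm{Mod}_R$ (HA 4.5.2.1) and is exactly what guarantees that this free module construction exists. The symmetric monoidal enhancement is then HA 4.5.3.1: for any morphism $R\to A$ of commutative algebras, the base-change functor $A\oast_R(-):\mathrm{Mod}_R^{\oast_R}\to\mathrm{Mod}_A^{\oast_A}$ lifts canonically to a symmetric monoidal functor. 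I expect this step — tracking the symmetric monoidal coherences through nested module categories — to be the main obstacle, although the work has essentially been done once in HA and we just need to confirm that our colimit-preservation hypotheses are sufficient for the cited results to apply.

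For the Hom up, I would argue as the author sketches. Once the left adjoint $A\oast_R(-)$ exists, Fgt automatically preserves limits; it also preserves small colimits because colimits in $\mathrm{Mod}_A$ are computed on underlying objects in $\mathcal{C}$ whenever $\oast$ preserves colimits (HA 4.2.3.5, or HA 3.4.4 as cited). Presentability of $\mathcal{C}$ propagates to $\mathrm{Mod}_R$ and $\mathrm{Mod}_A$ via HA 4.2.3.7, so the adjoint functor theorem supplies a right adjoint $G$. To identify its underlying object with $\mathrm{Hom}_R(A,-)$, apply Yoneda to the chain
$$\mathrm{Map}_A(M,G(N))\cong\mathrm{Map}_R(\mathrm{Fgt}(M),N)\cong\mathrm{Map}_R(M\oast_R A,N)\cong\mathrm{Map}_R(M,\mathrm{Hom}_R(A,N)),$$
valid for $M\in\mathrm{Mod}_A$ and $N\in\mathrm{Mod}_R$. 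This identifies $G(N)$ as an object of $\mathrm{Mod}_R$ with $\mathrm{Hom}_R(A,N)$, and its $A$-module structure comes from the canonical $A$-action on $\mathrm{Hom}_R(A,-)$ obtained by precomposition with multiplication $A\oast_R A\to A$.
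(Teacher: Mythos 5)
Your overall strategy is the same as the paper's: reduce the tensor-up to the free/forgetful adjunction of HA (after identifying $\text{Mod}_A$ with modules over the image of $A$ in $\text{CAlg}(\text{Mod}_R^{\oast_R})$), and obtain the Hom-up from the adjoint functor theorem followed by an identification of the right adjoint. The reduction to $\text{Mod}_{A'}(\text{Mod}_R^{\oast_R})$, the citations for the free module functor and for the symmetric monoidal enhancement of base change, and the verification that $\text{Fgt}$ preserves colimits so that the adjoint functor theorem applies, are all in order.

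The one step that fails as written is the middle equivalence in your Yoneda chain: $\text{Map}_R(\text{Fgt}(M),N)\cong\text{Map}_R(M\oast_R A,N)$ presupposes $\text{Fgt}(M)\cong M\oast_R A$ as $R$-modules, which is false for a general $A$-module $M$ (take $R=\mathbb{Z}$, $A=M=\mathbb{Z}[x]$: the left-hand object is $\mathbb{Z}[x]$, the right-hand one is $\mathbb{Z}[x]\otimes_{\mathbb{Z}}\mathbb{Z}[x]\cong\mathbb{Z}[x,y]$). What is true is that $\text{Fgt}(A\oast_R P)\cong A\oast_R P$ for $P\in\text{Mod}_R$, so your chain becomes valid after restricting to free $A$-modules $M=A\oast_R P$; since $P$ then ranges over all of $\text{Mod}_R$, Yoneda applied in the variable $P$ still identifies the underlying $R$-module of $G(N)$ with $\text{Hom}_R(A,N)$. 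Equivalently --- and this is exactly the paper's phrasing --- compose the two adjunctions $(A\oast_R -)\dashv\text{Fgt}\dashv G$: the endofunctor $\text{Fgt}\circ G$ of $\text{Mod}_R$ is right adjoint to $\text{Fgt}\circ(A\oast_R -)\cong A\oast_R -$, whose right adjoint is $\text{Hom}_R(A,-)$ by definition of the internal Hom, so $\text{Fgt}\circ G\cong\text{Hom}_R(A,-)$ by uniqueness of adjoints. With that repair your argument goes through.
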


\subsubsection{Lax symmetric monoidal functors}
\noindent Sometimes a functor between symmetric monoidal functors may still be well-behaved in terms of algebras and modules, even though it is not itself symmetric monoidal.

Such a functor $F:\mathcal{C}^\oast\rightarrow\mathcal{D}^\oast$ is called \emph{lax symmetric monoidal} if, roughly, there are morphisms $1_\mathcal{D}\rightarrow F(1_\mathcal{C})$ and $F(X)\oast F(Y)\rightarrow F(X\oast Y)$ which are \emph{not necessarily} equivalences, but otherwise preserve the symmetric monoidal structure.

\begin{warning}
In HA, Lurie refers to lax symmetric monoidal functors as `$\mathcal{C}^\oast$-algebras in $\mathcal{D}^\oast$,' due to a connection with operads which we will not discuss.
\end{warning}

\noindent A commutative algebra in $\mathcal{C}^\oast$ is precisely a lax symmetric monoidal functor $\ast\rightarrow\mathcal{C}^\oast$, where $\ast$ is trivial (one object, one morphism) symmetric monoidal category. Therefore, if $F:\mathcal{C}^\oast\rightarrow\mathcal{D}^\oast$ is lax symmetric monoidal, it induces $$F_\ast:\text{CAlg}(\mathcal{C}^\oast)\rightarrow\text{CAlg}(\mathcal{D}^\oast).$$ Moreover, for each $A\in\text{CAlg}(\mathcal{C}^\oast)$ and $A$-module $M$, $F(M)$ inherits an $F(A)$-module structure, inducing another functor $$F_\ast:\text{Mod}_A\rightarrow\text{Mod}_{F(A)}.$$ Lax symmetric monoidal functors arise frequently from adjunctions. If $F$ is a symmetric monoidal functor, then its right adjoint (if there is one) is always lax symmetric monoidal, but usually not symmetric monoidal \cite{GepHaug} A.5.11.

\subsection{Presentable \texorpdfstring{$\infty$-}{infinity }categories}\label{1.1.4}
\noindent Here we will discuss some results that are deep even in classical category theory. For a classical treatment of locally presentable categories, see \cite{LocalPresent}. As a historical note, work on presentable $\infty$-categories predates all other work on $\infty$-categories, via the formalism of model categories. There is a rough correspondence between presentable $\infty$-categories and model categories, which we will not discuss. See HTT A.2.

\subsubsection{The basics}
\noindent Certain set-theoretic issues are unavoidable in any flavor of category theory. For example, there are a variety of large $\infty$-categories that we study regularly: Top, Cat, $\text{Cat}_1$, Set, Ab (abelian groups), Sp (spectra), and so forth. These are large in the sense that their objects do not constitute a set. However, in each of these cases, the large $\infty$-categories are completely controlled by a small $\infty$-category of \emph{compact objects}. Take Set for example. Every set can be written as a small coproduct of copies of the singleton set.

\begin{definition}
A presentable $\infty$-category is a large $\infty$-category $\mathcal{C}$ which is closed under small limits and colimits and generated under colimits by a (small) set of compact objects.
\end{definition}

\noindent For the purposes of this thesis, we will not need to worry about what a compact object is.

\begin{warning}
Here there is a difference in terminology between higher category theory and category theory. Presentable $\infty$-categories are higher analogues of what are classically called \emph{locally presentable} categories.
\end{warning}

\noindent Here are some examples.
\begin{itemize}
\item Any locally presentable 1-category is a presentable $\infty$-category.
\item For example, the following 1-categories are presentable $\infty$-categories: Set (sets), Grp (groups), Ab (abelian groups), Ring (rings), CRing (commutative rings), Sch (schemes), etc.
\item Top, Cat, $\text{Cat}_1$, and $\text{Gpoid}_1$ (groupoids) are presentable $\infty$-categories.
\item If $\mathcal{D}$ is a presentable $\infty$-category and $\mathcal{C}$ any $\infty$-category, then $\text{Fun}(\mathcal{C},\mathcal{D})$ is presentable (HTT 5.5.3.6).
\item If $\mathcal{C}^\oast$ is a closed symmetric monoidal presentable $\infty$-category, then $\text{CAlg}(\mathcal{C}^\oast)$ is presentable (HA 3.2.3.5). For each commutative algebra $A$, $\text{Mod}_A$ and $\text{Alg}_A$ are presentable (HA 4.2.3.7).
\end{itemize}

\noindent The benefit of presentable $\infty$-categories is that they are large enough to admit all small colimits, but small enough to be handled without set-theoretic contradictions. As a result, there are a variety of extremely useful high-level theorems involving presentable $\infty$-categories. We will discuss four; the first three are familiar from classical category theory. The fourth is not, but is similarly powerful.

\subsubsection{The adjoint functor theorem}
\begin{theorem}[adjoint functor theorem HTT 5.5.2.9]
If $\mathcal{C}$ and $\mathcal{D}$ are presentable $\infty$-categories, a functor $F:\mathcal{C}\rightarrow\mathcal{D}$ has a right adjoint if and only if it preserves all small colimits.

On the other hand, $F$ has a left adjoint if and only if it preserves small limits and filtered colimits.
\end{theorem}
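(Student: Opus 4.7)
The plan is to split the biconditional into its easy and hard directions. The easy directions --- that any left adjoint preserves colimits and any right adjoint preserves limits --- are formal consequences of the adjunction equivalence $\text{Map}_\mathcal{D}(FX, Y) \simeq \text{Map}_\mathcal{C}(X, GY)$ together with the fact that $\text{Map}$ sends colimits in its first variable and limits in its second to limits of spaces. In particular, left and right adjoints automatically preserve filtered colimits of the appropriate variance, so no separate argument is needed for the filtered-colimit condition on the ``only if'' side.

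For the hard direction of the right-adjoint criterion, suppose $F : \mathcal{C} \to \mathcal{D}$ preserves all small colimits. For each $Y \in \mathcal{D}$, consider the functor $\phi_Y : \mathcal{C}^\text{op} \to \text{Top}$ defined by $X \mapsto \text{Map}_\mathcal{D}(FX, Y)$. Since $F$ preserves colimits and $\text{Map}$ sends colimits in the first slot to limits, $\phi_Y$ preserves small limits. I would then invoke the representability theorem for presentable $\infty$-categories (HTT 5.5.2.2): a limit-preserving functor $\mathcal{C}^\text{op} \to \text{Top}$ on a presentable $\mathcal{C}$ is represented by an object $G(Y) \in \mathcal{C}$. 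Allowing $Y$ to vary, these representing objects assemble into a functor $G : \mathcal{D} \to \mathcal{C}$, and the natural equivalence $\text{Map}_\mathcal{D}(F-, Y) \simeq \text{Map}_\mathcal{C}(-, GY)$ realizes $G$ as right adjoint to $F$; functoriality is forced by the essential uniqueness of representing objects.

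For the hard direction of the left-adjoint criterion, the strategy is formally dual, but it requires extra care because the opposite of a presentable $\infty$-category is \emph{not} generally presentable --- so the representability theorem cannot be applied to $\mathcal{C}$ regarded as $(\mathcal{C}^\text{op})^\text{op}$. This is precisely where the hypothesis that $F$ preserve \emph{filtered} colimits enters: it says that $F$ is \emph{accessible}, i.e.\ commutes with $\kappa$-filtered colimits for some regular cardinal $\kappa$. Combined with limit preservation, for each $Y \in \mathcal{D}$ the functor $\psi_Y : \mathcal{C} \to \text{Top}$, $X \mapsto \text{Map}_\mathcal{D}(Y, FX)$, is a limit-preserving, accessible functor on a presentable $\infty$-category. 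A solution-set argument then locates an essentially small initial subcategory of the comma $\infty$-category $Y/F$, using the small set of $\kappa$-compact generators of $\mathcal{C}$ to bound the size of candidate morphisms $Y \to FX$; its colimit (or, equivalently, the initial object it produces in $Y/F$) defines the value of the left adjoint at $Y$.

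The main obstacle is this last step: the delicacy of the solution-set argument, and the necessity of the filtered-colimit hypothesis, reflect the fundamental asymmetry of presentability --- presentable $\infty$-categories admit all small (co)limits but are generated by a \emph{small} set of compact objects, a condition manifestly not self-dual. Once the representability and solution-set steps are in hand, naturality and functoriality of the constructed adjoint are routine, so essentially all the content is concentrated in invoking HTT 5.5.2.2 on one side and the accessible-functor solution-set lemma on the other.
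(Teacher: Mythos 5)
The paper gives no proof of this statement: it appears in the expository survey chapter and is simply cited as HTT 5.5.2.9, so the relevant comparison is with Lurie's argument, which your sketch reproduces faithfully --- the representability criterion (HTT 5.5.2.2) for the right-adjoint direction and the accessible-corepresentability/solution-set argument (HTT 5.5.2.7) for the left-adjoint direction are exactly the two pillars of the standard proof.

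One soft spot: you claim that ``no separate argument is needed for the filtered-colimit condition on the only-if side.'' For the left-adjoint criterion, the only-if direction requires showing that a functor $F$ admitting a left adjoint preserves filtered colimits. This is \emph{not} a formal consequence of the adjunction identity --- a right adjoint formally preserves limits, and filtered colimits are not limits. The accessibility of right adjoints between presentable $\infty$-categories is a genuine lemma (HTT 5.4.7.7), proved by observing that the left adjoint carries a small set of $\kappa$-compact generators of $\mathcal{D}$ to $\tau$-compact objects of $\mathcal{C}$ for some regular cardinal $\tau$, whence mapping out of them commutes with $\tau$-filtered colimits and these objects jointly detect equivalences. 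Relatedly, the conclusion one gets is only preservation of $\tau$-filtered colimits for some sufficiently large $\tau$ (i.e.\ accessibility), which is how HTT 5.5.2.9 is actually phrased; the version stated here, with ``filtered colimits'' unqualified, is a standard but slightly loose paraphrase. Neither point affects the architecture of your proof, but the only-if half of the second claim does need its own argument.
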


\noindent Let $\widehat{\text{Cat}}$ denote the (very large) $\infty$-category of large $\infty$-categories. Let $\text{Pr}^\text{L}$ (respectively $\text{Pr}^\text{R}$) denote the subcategory whose objects are presentable $\infty$-categories and morphisms are left adjoint functors (respectively right adjoint functors).

By the adjoint functor theorem, morphisms in $\text{Pr}^\text{L}$ are (equivalently) functors that preserve small colimits. Because adjoint functors are uniquely determined, there is an equivalence of $\infty$-categories (HTT 5.5.3.4) $$\text{Pr}^\text{L}\cong(\text{Pr}^\text{R})^\text{op}.$$ This equivalence identifies an object in $\text{Pr}^\text{L}$ with the same object in $\text{Pr}^\text{R}$ and a functor in $\text{Pr}^\text{L}$ with its right adjoint in $\text{Pr}^\text{R}$.

$\text{Pr}^\text{L}$ is closed under small limits and colimits, and they are both calculated as limits in $\widehat{\text{Cat}}$ (HTT 5.5.3.13, 5.5.3.18). That is, the forgetful functor $\text{Pr}^\text{L}\rightarrow\widehat{\text{Cat}}$ preserves small limits, and $$\text{Pr}^\text{L}\cong(\text{Pr}^\text{R})^\text{op}\rightarrow\widehat{\text{Cat}}^\text{op}$$ preserves small colimits. (Colimits in $\widehat{\text{Cat}}^\text{op}$ are limits in $\widehat{\text{Cat}}$).

\begin{warning}
Although $\text{Pr}^\text{L}$ admits small limits and colimits, it is not itself a presentable $\infty$-category, because it is not a large $\infty$-category. Instead, it is a very large $\infty$-category. It is \emph{not} closed under \emph{large} limits and colimits.
\end{warning}

\subsubsection{The Yoneda lemma}
\noindent There are a few ideas which together form a sort of `Yoneda lemma package.' Typically, the first two together are called the Yoneda lemma.

\begin{theorem}[Yoneda lemma HTT 5.1.3.1]
If $\mathcal{C}$ is an $\infty$-category, then $\text{Fun}(\mathcal{C}^\text{op},\text{Top})$ is presentable and there is a functor $$Y:\mathcal{C}\rightarrow\text{Fun}(\mathcal{C}^\text{op},\text{Top})$$ which sends $X$ to the representable functor $\text{Map}_\mathcal{C}(-,X)$. The functor $Y$ is fully faithful.
\end{theorem}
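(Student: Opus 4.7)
The plan is to address the statement in three steps: presentability of $\text{Fun}(\mathcal{C}^\text{op},\text{Top})$, construction of the functor $Y$, and fully faithfulness. Presentability is immediate: the excerpt has already cited HTT 5.5.3.6, which says that $\text{Fun}(\mathcal{A},\mathcal{D})$ is presentable whenever $\mathcal{D}$ is presentable, and Top is presentable. So the only genuine work is in constructing $Y$ and checking it is fully faithful.

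To construct $Y$, I would work in the topologically-enriched model of $\infty$-categories from Definition 1 (after, if necessary, replacing mapping spaces by Kan-complex models to be homotopically well-behaved). In that model, the assignment $(X,Z)\mapsto\text{Map}_\mathcal{C}(X,Z)$ is a bifunctor $\mathcal{C}^\text{op}\times\mathcal{C}\to\text{Top}$ essentially by definition of enrichment; currying in the $\mathcal{C}$-variable then yields $Y:\mathcal{C}\to\text{Fun}(\mathcal{C}^\text{op},\text{Top})$ with $Y(X)=\text{Map}_\mathcal{C}(-,X)$. A more intrinsic, model-independent construction runs through the twisted arrow $\infty$-category of $\mathcal{C}$, which classifies the mapping-space bifunctor as a left fibration over $\mathcal{C}^\text{op}\times\mathcal{C}$; unstraightening converts this to the desired $Y$.

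For fully faithfulness, I would prove the stronger statement often called \emph{the} Yoneda lemma: for every presheaf $F\in\text{Fun}(\mathcal{C}^\text{op},\text{Top})$ and every $X\in\mathcal{C}$, there is an equivalence
$$\text{Map}_{\text{Fun}(\mathcal{C}^\text{op},\text{Top})}(Y(X),F)\simeq F(X),$$
natural in both variables. Fully faithfulness is the special case $F=Y(Z)$, where the right-hand side is $Y(Z)(X)=\text{Map}_\mathcal{C}(X,Z)$. To prove the strong form, I would pass through the straightening/unstraightening equivalence between presheaves on $\mathcal{C}$ and right fibrations over $\mathcal{C}$: under this equivalence $Y(X)$ corresponds to the overcategory $\mathcal{C}_{/X}\to\mathcal{C}$, and $\text{id}_X$ is a terminal object of $\mathcal{C}_{/X}$. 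The terminality forces maps of right fibrations out of $\mathcal{C}_{/X}$ into the right fibration corresponding to $F$ to be determined, up to contractible choice, by the image of $\text{id}_X$ — which is a point of the fiber $F(X)$. This is the $\infty$-categorical analogue of the classical one-line proof (``a natural transformation out of $\text{Hom}(-,X)$ is determined by where it sends $\text{id}_X$'').

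The main obstacle is entirely in the foundational machinery, not in the logical structure of the argument. Producing $Y$ as an honest functor of $\infty$-categories requires either a careful passage through simplicial/topological enrichment with attendant fibrant replacements, or the twisted arrow construction; either way one must deliver infinitely many coherence cells. Similarly, the identification of $Y(X)$ with the right fibration $\mathcal{C}_{/X}\to\mathcal{C}$ rests on the straightening/unstraightening equivalence of HTT Chapter 2, which is itself highly nontrivial. Once those foundations are in place, the argument for fully faithfulness reduces to the classical terminal-object observation above.
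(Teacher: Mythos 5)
The paper does not prove this statement: Chapter 1 is expressly a survey ``with no new results and essentially no proofs,'' and the theorem is simply cited from HTT 5.1.3.1. So there is no in-paper argument to compare against. Your outline is nonetheless a correct sketch of the standard proof: presentability is indeed immediate from HTT 5.5.3.6, the construction of $Y$ via the mapping-space bifunctor (through simplicial/topological enrichment, or equivalently via the twisted arrow left fibration over $\mathcal{C}^\text{op}\times\mathcal{C}$) is how Lurie and Barwick do it, and deducing fully faithfulness from the strong form $\text{Map}_{\mathcal{P}(\mathcal{C})}(Y(X),F)\simeq F(X)$ via the identification of $Y(X)$ with the right fibration $\mathcal{C}_{/X}\rightarrow\mathcal{C}$ and the terminality of $\text{id}_X$ is the standard $\infty$-categorical rendering of the classical one-line argument. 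You correctly flag that all the real content sits in the foundational inputs (coherence of $Y$, straightening/unstraightening, and the compatibility of $Y(X)$ with the slice construction), which is exactly why the paper outsources the result to HTT rather than proving it.
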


\noindent We will sometimes write $\mathcal{P}(\mathcal{C})=\text{Fun}(\mathcal{C}^\text{op},\text{Top})$ and refer to this as the \emph{$\infty$-category of presheaves on $\mathcal{C}$}.

\begin{theorem}[Yoneda lemma continued]
Suppose $X\in\mathcal{C}$ and $\underline{X}$ is the corresponding representable object in $\mathcal{P}(\mathcal{C})$. If $Y\in\mathcal{P}(\mathcal{C})$, then $\text{Map}_{\mathcal{P}(\mathcal{C})}(\underline{X},Y)\cong Y(X)$.
\end{theorem}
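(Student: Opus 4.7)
The plan is to imitate the classical Yoneda proof, but to handle the $\infty$-categorical subtleties using the straightening/unstraightening equivalence between presheaves and right fibrations (HTT 2.2.1.2, discussed in \S\ref{1.3}). Under this equivalence, $\mathcal{P}(\mathcal{C}) = \text{Fun}(\mathcal{C}^\text{op},\text{Top})$ corresponds to the $\infty$-category of right fibrations over $\mathcal{C}$. The representable presheaf $\underline{X}$ corresponds to the overcategory projection $\mathcal{C}_{/X}\rightarrow\mathcal{C}$, and a general presheaf $Y$ corresponds to some right fibration $\widetilde{Y}\rightarrow\mathcal{C}$ whose fiber over $X$ is precisely the space $Y(X)$.

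First, I would translate both sides of the desired equivalence into this fibrational language. The right-hand side $Y(X)$ becomes, by construction, the fiber $\widetilde{Y}_X$. The left-hand side $\text{Map}_{\mathcal{P}(\mathcal{C})}(\underline{X},Y)$ becomes the space of maps of right fibrations over $\mathcal{C}$ from $\mathcal{C}_{/X}\rightarrow\mathcal{C}$ to $\widetilde{Y}\rightarrow\mathcal{C}$. Second, I would construct the comparison map: restrict such a map of right fibrations to the fiber over $X$ and evaluate at the object $\text{id}_X\in (\mathcal{C}_{/X})_X$. This produces an element of $\widetilde{Y}_X=Y(X)$, and mirrors the classical recipe $\eta\mapsto\eta_X(\text{id}_X)$.

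Third, I would show this comparison map is a homotopy equivalence. The essential point is that $\text{id}_X$ is a \emph{terminal} object of $\mathcal{C}_{/X}$, so that the entire overcategory contracts down to it: a map of right fibrations out of $\mathcal{C}_{/X}$ is determined, up to contractible choice, by where it sends the terminal object. Equivalently, the projection $\mathcal{C}_{/X}\rightarrow\mathcal{C}$ is the \emph{free} right fibration over $\mathcal{C}$ generated by a single point sitting over $X$, so mapping out of it to another right fibration is the same as specifying a point of the target fiber over $X$.

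The main obstacle is the third step, which is the genuinely $\infty$-categorical content and is not automatic: it requires a lifting/extension argument for right fibrations, showing that any object of the fiber $\widetilde{Y}_X$ extends essentially uniquely to a map of right fibrations $\mathcal{C}_{/X}\rightarrow\widetilde{Y}$. This is a manifestation of the fact that right fibrations model presheaves of spaces and that overcategories of a terminal object are contractible; in HTT it is packaged via the theory of cofinal maps and the formula $\mathcal{C}_{/X}\simeq\mathcal{C}_{/\text{id}_X}$. Once this technical lemma is available, the natural comparison map is a fiberwise equivalence between two right fibrations over a point, and hence an equivalence of spaces, which completes the argument.
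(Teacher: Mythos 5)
Your argument is correct, and there is nothing in the paper to compare it against: this statement appears in the expository survey chapter, which explicitly contains ``essentially no proofs,'' so the result is simply quoted from HTT. Your fibrational route is in fact the standard one there. Two small points worth making precise if you were to write this out in full. First, you should note that the identification of $\text{Map}_{\mathcal{P}(\mathcal{C})}(\underline{X},Y)$ with the space of maps of right fibrations over $\mathcal{C}$ is not a separate construction but is exactly what the straightening/unstraightening equivalence provides, so no compatibility check is needed beyond invoking that it is an equivalence of $\infty$-categories. Second, the ``technical lemma'' you defer in the third step is precisely the assertion that the vertex inclusion $\{\text{id}_X\}\hookrightarrow\mathcal{C}_{/X}$ is right anodyne (because $\text{id}_X$ is a terminal object of $\mathcal{C}_{/X}$, HTT 1.2.12), whence restriction along it induces a trivial Kan fibration
$$\text{Map}_{/\mathcal{C}}(\mathcal{C}_{/X},\widetilde{Y})\rightarrow\text{Map}_{/\mathcal{C}}(\Delta^0,\widetilde{Y})\cong\widetilde{Y}_X$$
for any right fibration $\widetilde{Y}\rightarrow\mathcal{C}$; this is cleaner than routing through cofinality or the equivalence $\mathcal{C}_{/X}\simeq\mathcal{C}_{/\text{id}_X}$, which is a consequence rather than the engine of the argument. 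With that lemma named, your comparison map $\eta\mapsto\eta_X(\text{id}_X)$ is an equivalence exactly as you describe.
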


\noindent We use $\text{Fun}^\text{L}(\mathcal{C},\mathcal{D})$ to refer to the full subcategory of $\text{Fun}(\mathcal{C},\mathcal{D})$ spanned by functors that are left adjoints (have right adjoints). When $\mathcal{C}$ and $\mathcal{D}$ are presentable, these may also be described as colimit-preserving functors.

\begin{theorem}[HTT 5.1.5.6]
\label{Yoneda3}
For any $\infty$-category $\mathcal{C}$ and presentable $\infty$-category $\mathcal{D}$, composition with the Yoneda embedding induces an equivalence of $\infty$-categories $$\text{Fun}^\text{L}(\mathcal{P}(\mathcal{C}),\mathcal{D})\rightarrow\text{Fun}(\mathcal{C},\mathcal{D}).$$ That is, $\mathcal{P}(\mathcal{C})$ is the `presentable $\infty$-category freely generated by $\mathcal{C}$'
\end{theorem}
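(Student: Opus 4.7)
The plan is to exhibit an inverse to the restriction functor $R:\text{Fun}^\text{L}(\mathcal{P}(\mathcal{C}),\mathcal{D})\to\text{Fun}(\mathcal{C},\mathcal{D})$ given by precomposition with $Y$. The candidate inverse $L$ should send a functor $F:\mathcal{C}\to\mathcal{D}$ to its \emph{left Kan extension} along $Y$, which, pointwise, is the colimit
$$L(F)(X)=\underset{(c,\,Y(c)\to X)\in\mathcal{C}_{/X}}{\text{colim}}F(c).$$
This colimit exists because $\mathcal{D}$ is presentable (and in particular cocomplete), and $\mathcal{C}$ is small, so the indexing slice $\mathcal{C}_{/X}$ is essentially small. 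The first two verifications are then: (i) $L(F)\circ Y\cong F$, which is immediate from $Y$ being fully faithful because the slice $\mathcal{C}_{/Y(c)}$ admits a terminal object $(c,\mathrm{id})$ and the colimit collapses onto $F(c)$; and (ii) $L(F)$ preserves small colimits. For (ii), the cleanest route is to exhibit a right adjoint to $L(F)$ explicitly as the functor $\mathcal{D}\to\mathcal{P}(\mathcal{C})$ sending $d\mapsto\text{Map}_\mathcal{D}(F(-),d)$, whose adjointness is a direct consequence of the Yoneda lemma (previous theorem). Colimit-preservation then follows from the adjoint functor theorem.

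Next I would check that $L\dashv R$ in a way that produces the required equivalence. The unit $F\to R\,L(F)$ is the equivalence supplied by (i). The counit $L\,R(G)\to G$, for a colimit-preserving $G:\mathcal{P}(\mathcal{C})\to\mathcal{D}$, is the comparison map arising from the universal property of the colimit defining $L(R(G))(X)$. To see this counit is an equivalence, I would invoke the \emph{density theorem} for the Yoneda embedding: every presheaf $X\in\mathcal{P}(\mathcal{C})$ is canonically the colimit of the tautological diagram $\mathcal{C}_{/X}\to\mathcal{C}\xrightarrow{Y}\mathcal{P}(\mathcal{C})$. Applying the colimit-preserving $G$ to this presentation, one has $G(X)\cong\text{colim}\,G(Y(-))\cong L(R(G))(X)$ naturally in $X$, as required. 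Upgrading from pointwise equivalences to an equivalence of functor categories uses that colimits and equivalences in $\text{Fun}(\mathcal{P}(\mathcal{C}),\mathcal{D})$ are computed pointwise.

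The main obstacle is the density theorem itself (HTT 5.1.5.3), i.e.\ proving that the identity functor of $\mathcal{P}(\mathcal{C})$ is the left Kan extension of $Y$ along itself. In ordinary category theory this is an elementary diagram chase, but in the $\infty$-categorical setting one must produce a coherent colimit cone indexed by the slice $\infty$-category $\mathcal{C}_{/X}$ and verify universality up to all higher homotopies. This is where the machinery of straightening/unstraightening and the careful theory of Kan extensions in quasicategories (HTT \S4.3) does the real work. Once the density theorem is in hand, however, the rest of the argument is essentially formal: the extension $L$ is determined up to contractible choice by the universal property of left Kan extension, the unit is an equivalence by full faithfulness of $Y$, and the counit is an equivalence by density, so $L$ and $R$ are mutually inverse equivalences of $\infty$-categories.
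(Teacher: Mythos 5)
The paper offers no proof of this statement: Chapter 1 is expository, and the theorem is simply cited from Lurie (HTT 5.1.5.6), so there is nothing in the text to compare your argument against line by line. Your proposal is, however, a correct outline of the standard argument, and it matches the proof Lurie actually gives: construct the inverse by left Kan extension along $Y$, observe that full faithfulness of $Y$ makes the unit an equivalence (the slice $\mathcal{C}_{/Y(c)}$ has a terminal object), exhibit the right adjoint $d\mapsto\text{Map}_\mathcal{D}(F(-),d)$ to see that the extension is colimit-preserving, and reduce the counit to the density theorem (HTT 5.1.5.3), which you rightly identify as the genuinely nontrivial input where the quasicategorical Kan-extension machinery of HTT \S 4.3 does the work. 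Two small points of hygiene: colimit-preservation of $L(F)$ follows from its being a left adjoint, which is elementary and does not require the adjoint functor theorem (that theorem is the converse direction, and invoking it here is a slight overreach, though harmless since both $\mathcal{P}(\mathcal{C})$ and $\mathcal{D}$ are presentable); and the adjointness verification for $d\mapsto\text{Map}_\mathcal{D}(F(-),d)$ itself quietly uses density (to rewrite $\text{Map}_{\mathcal{P}(\mathcal{C})}(X,\text{Map}_\mathcal{D}(F(-),d))$ as a limit over $\mathcal{C}_{/X}$), not just the representable case of the Yoneda lemma, so the logical dependence on HTT 5.1.5.3 is even more pervasive than your write-up suggests. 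Finally, to get an equivalence of functor $\infty$-categories rather than a pointwise bijection one needs the functoriality of Kan extension (HTT 4.3.2.17), which you gesture at with "determined up to contractible choice"; that is the right thing to cite.
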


\noindent The last theorem is suggestive of an adjunction $\mathcal{P}:\text{Cat}\rightleftarrows\text{Pr}^\text{L}:\text{Fgt}$. In reality, there is no such adjunction, because the objects of $\text{Pr}^\text{L}$ are \emph{large} $\infty$-categories, which are not in $\text{Cat}$. But it is true that there is a `free' functor $$\mathcal{P}:\text{Cat}\rightarrow\text{Pr}^\text{L}.$$ It sends an $\infty$-category to its $\infty$-category of presheaves, and we can also describe how it acts on morphisms of $\text{Cat}$:

For any functor $F:\mathcal{C}\rightarrow\mathcal{D}$ between small $\infty$-categories, Theorem \ref{Yoneda3} guarantees the existence of an (essentially unique) left adjoint $\mathcal{P}(F)$ completing the commutative diagram $$\xymatrix{
\mathcal{C}\ar[r]^{\subseteq}\ar[d]_F &\mathcal{P}(\mathcal{C})\ar@{-->}[d]^{\mathcal{P}(F)} \\
\mathcal{D}\ar[r]_{\subseteq} &\mathcal{P}(\mathcal{D}).
}$$ The right adjoint to $\mathcal{P}(F)$ is given by precomposition by $F$ (HTT 5.2.6.3).

\subsubsection{Bousfield Localization}
\noindent Suppose $\mathcal{C}$ and $\mathcal{D}$ are presentable, and $L:\mathcal{C}\rightarrow\mathcal{D}$ a left adjoint (i.e., colimit-preserving) functor. If the right adjoint to $L$ is fully faithful, $L$ is called an \emph{accessible localization functor} and $\mathcal{D}$ an \emph{accessible localization} of $\mathcal{C}$.

Objects of $\mathcal{C}$ in the full subcategory $\mathcal{D}\subseteq\mathcal{C}$ are called \emph{local objects}. Morphisms $f$ of $\mathcal{C}$ such that $Lf$ is an equivalence are called \emph{local morphisms}. \\

\noindent Local objects and morphisms determine each other as follows: (HTT 5.5.4.2)
\begin{itemize}
\item a morphism $f:X\rightarrow Y$ is local if and only if the precomposition by $f$ induces an equivalence $-\circ f:\text{Map}_\mathcal{C}(Y,Z)\rightarrow\text{Map}_\mathcal{C}(X,Z)$ for all local $Z$.
\item an object $Z$ is local if and only if $-\circ f:\text{Map}_\mathcal{C}(Y,Z)\rightarrow\text{Map}_\mathcal{C}(X,Z)$ is an equivalence for all local morphisms $f:X\rightarrow Y$.
\end{itemize}

\noindent The localization functor $L$ satisfies the following universal property: it is initial among functors out of $\mathcal{C}$ which invert all the local morphisms (HTT 5.2.7.12, 5.5.4.20).

Related to this observation, if $L$ is a localization, then $L$ is idempotent (HTT 5.2.7.4). That is, $LX\cong X$ if $X\in\mathcal{D}$, or the composition $\mathcal{D}\subseteq\mathcal{C}\xrightarrow{L}\mathcal{D}$ is equivalent to the identity functor. \\

\noindent The universal property suggests that we may specify a localization of $\mathcal{C}$ by specifying the local morphisms. This approach originates from Bousfield \cite{BousLocal} who studied such localizations of model categories.

\begin{theorem}[HTT 5.5.4.15]
Let $S$ be a collection of morphisms of $\mathcal{C}$ (which is presentable). Suppose that small colimits of morphisms in $S$ are in $S$, morphisms in $S$ are stable under pushout (by \emph{any} morphism), and morphisms in $S$ satisfy the two-out-of-three property (that is, if $f\cong g\circ h$ and two of $f,g,h$ are in $S$, so is the third).

If $S$ is also generated (under these three properties) by a small set of morphisms, then there is a universal localization functor $L:\mathcal{C}\rightarrow\mathcal{D}$ for which morphisms are local if and only if they are in $S$, and $\mathcal{D}$ is presentable.
\end{theorem}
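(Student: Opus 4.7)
The plan is to realize the desired localization as reflection onto the full subcategory of $S$-local objects, then identify the class of morphisms inverted by this reflection with $S$ itself. Let $\mathcal{D}\subseteq\mathcal{C}$ denote the full subcategory spanned by objects $Z$ such that $-\circ f:\text{Map}_\mathcal{C}(Y,Z)\rightarrow\text{Map}_\mathcal{C}(X,Z)$ is an equivalence of spaces for every $f:X\to Y$ in $S$. I will construct a left adjoint $L:\mathcal{C}\rightarrow\mathcal{D}$ to the inclusion $\iota:\mathcal{D}\hookrightarrow\mathcal{C}$ and then verify that the class $T$ of morphisms $f$ with $Lf$ an equivalence coincides with $S$.

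The first task is to show that $\mathcal{D}$ is a presentable subcategory of $\mathcal{C}$. Choose a small set $S_0\subseteq S$ generating $S$ under the three closure operations. The condition that $\text{Map}_\mathcal{C}(-,Z)$ invert a given morphism is itself stable under colimits of the morphism, pushouts of the morphism, and two-out-of-three — exactly the closure properties assumed for $S$ — so $Z$ is $S$-local if and only if it is $S_0$-local. This presents $\mathcal{D}$ as the intersection, over the small set $S_0$, of full subcategories of $\mathcal{C}$ each cut out by a single equivalence condition; each of these is closed under small limits and accessible, so the intersection is a presentable subcategory and $\iota$ is an accessible limit-preserving functor. The adjoint functor theorem then furnishes the left adjoint $L$, and the fully faithfulness of $\iota$ makes $L$ into an accessible localization in the sense introduced earlier.

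The main obstacle is the equality $T=S$. The inclusion $S\subseteq T$ is essentially formal: $T$ contains $S_0$ by construction, and because $L$ preserves colimits and pushouts (as a left adjoint) and equivalences trivially satisfy two-out-of-three, $T$ is closed under the three operations defining saturation; the minimality of $S$ among such classes containing $S_0$ forces $S\subseteq T$. For the reverse containment, the crux is to show that every unit $\eta_X:X\to\iota LX$ already lies in $S$. I would prove this via a small object argument against $S_0$: iteratively form pushouts of coproducts of generating morphisms, indexed over diagrams witnessing failure of $S_0$-locality at each stage, producing a transfinite composite $X=X_0\to X_1\to\cdots\to X_\alpha$ whose colimit is $S_0$-local, hence $S$-local, and therefore must be $\iota LX$ by the universal property of $L$. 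Each stage lies in $S$ by closure under pushouts and coproducts, and $\eta_X$ lies in $S$ as a transfinite composite of such. Given $\eta_X,\eta_Y\in S$ and $f:X\to Y$ with $Lf$ an equivalence, the naturality square $\iota Lf\circ\eta_X=\eta_Y\circ f$ combined with two-out-of-three — applied after observing that equivalences lie in $S$, since the identity of any initial object is an empty colimit and general identities are pushouts of it — forces $f\in S$.

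The universal property that $L$ is initial among functors out of $\mathcal{C}$ inverting the morphisms of $S$ then follows formally from the characterization of local objects and local morphisms together with the adjunction. The delicate step throughout is the small object argument producing $\eta_X\in S$: this is the unique place in the proof where the small-generation hypothesis on $S$ is truly essential, since without a set-theoretically controlled index of generating morphisms the transfinite iteration cannot even be initiated.
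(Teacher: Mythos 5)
The paper does not actually prove this result --- Chapter 1 is expository and the statement is quoted directly from HTT 5.5.4.15 --- so there is no in-paper argument to compare against; your sketch reproduces the standard proof from HTT \S 5.5.4 (reflect onto the full subcategory of $S$-local objects, then identify the class of inverted morphisms with $S$ by showing each unit $\eta_X$ lies in $S$ via a transfinite small object argument and closing up with two-out-of-three), and it is correct in outline. The one step you rightly flag as delicate deserves emphasis: in the $\infty$-categorical small object argument, attaching cells along generating morphisms must force each $\text{Map}_\mathcal{C}(Y,Z)\rightarrow\text{Map}_\mathcal{C}(X,Z)$ to become an \emph{equivalence of spaces}, not merely surjective on components, and the transfinite colimit is only seen to be $S_0$-local because the essentially small set of domains and codomains of $S_0$ consists of $\kappa$-compact objects for a regular cardinal $\kappa$ exceeding the length of the composition --- this is precisely where Lurie's Lemmas 5.5.4.17--5.5.4.19 do the real work, and where your sketch would need to be filled in to be complete.
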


\noindent In fact, localizations play a fundamental role in the theory of presentable $\infty$-categories, as proven by Simpson \cite{Simpson}:

\begin{theorem}[HTT 5.5.1.1]
A (large) $\infty$-category is presentable if and only if it is an accessible localization of $\mathcal{P}(\mathcal{C})$ for some small $\infty$-category $\mathcal{C}$.
\end{theorem}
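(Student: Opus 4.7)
The plan is to establish each direction of the equivalence separately, and I will use freely the results already collected in \S\ref{1.1.4}.

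For the forward direction, suppose $\mathcal{D}$ is an accessible localization of $\mathcal{P}(\mathcal{C})$ for some small $\mathcal{C}$, with adjunction $L:\mathcal{P}(\mathcal{C})\rightleftarrows\mathcal{D}:R$ and $R$ fully faithful. First, $\mathcal{P}(\mathcal{C})=\text{Fun}(\mathcal{C}^\text{op},\text{Top})$ is presentable, since Top is and functor categories into presentable $\infty$-categories remain presentable. Identifying $\mathcal{D}$ with its essential image under $R$, the fully faithfulness of $R$ realizes $\mathcal{D}$ as a full subcategory of $\mathcal{P}(\mathcal{C})$ closed under all small limits (since right adjoints preserve limits). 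Colimits in $\mathcal{D}$ are computed by first taking the colimit in $\mathcal{P}(\mathcal{C})$ and then applying $L$; this works because $L\circ R\cong\text{id}_\mathcal{D}$. Finally, the images under $L$ of the representable presheaves generate $\mathcal{D}$ under colimits, since every object of $\mathcal{P}(\mathcal{C})$ is a colimit of representables and $L$ preserves colimits. Their compactness in $\mathcal{D}$ — the ingredient needed for presentability — is precisely what the word \emph{accessible} in ``accessible localization'' encodes.

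For the backward direction, suppose $\mathcal{D}$ is presentable and let $\mathcal{C}\subseteq\mathcal{D}$ be a small generating full subcategory of compact objects (whose existence is the defining feature of presentability). By Theorem \ref{Yoneda3}, the inclusion $\iota:\mathcal{C}\hookrightarrow\mathcal{D}$ extends, essentially uniquely, to a colimit-preserving functor $L:\mathcal{P}(\mathcal{C})\rightarrow\mathcal{D}$. Because $\mathcal{P}(\mathcal{C})$ and $\mathcal{D}$ are both presentable and $L$ preserves colimits, the adjoint functor theorem guarantees a right adjoint $R$, which a short mapping-space calculation identifies with $Y\mapsto\text{Map}_\mathcal{D}(\iota(-),Y)$.

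It remains to show $R$ is fully faithful, equivalently that the counit $\varepsilon:LR\Rightarrow\text{id}_\mathcal{D}$ is an equivalence. Given $Y\in\mathcal{D}$, the fact that $\mathcal{C}$ generates $\mathcal{D}$ under colimits produces a canonical diagram $\mathcal{C}_{/Y}\rightarrow\mathcal{D}$ whose colimit is $Y$. Applying $R$ (and using the description of $R$ in terms of mapping spaces) gives the canonical colimit presentation of $RY$ in $\mathcal{P}(\mathcal{C})$ by representables. Since $L$ preserves colimits and restricts to $\iota$ on representables, $L(RY)$ is identified with the original colimit computing $Y$, so $\varepsilon_Y$ is an equivalence. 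Thus $\mathcal{D}$ is an accessible localization of $\mathcal{P}(\mathcal{C})$.

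The main obstacle is the density argument at the end, namely exhibiting every $Y\in\mathcal{D}$ as the colimit of the tautological diagram $\mathcal{C}_{/Y}\rightarrow\mathcal{D}$ and controlling the indexing category well enough to conclude both that $L$ preserves this colimit and that the comparison with $Y$ is an equivalence. In the $\infty$-categorical setting this requires care with slice $\infty$-categories and compactness; it is the higher analogue of the classical fact that any presentable category is the category of sheaves for a small-generator topology on its compact objects.
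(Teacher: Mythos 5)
The paper itself offers no proof of this statement: it appears in the expository Chapter \ref{1}, attributed to Simpson and cited as HTT 5.5.1.1, so your proposal can only be judged on its own merits. Your forward direction is essentially sound (the only looseness is the final sentence: accessibility of the localization gives $\kappa$-compact generators for some regular cardinal $\kappa$, not compact ones on the nose, but that is exactly what the definition of presentable requires).

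The backward direction, however, has a genuine gap at the step ``let $\mathcal{C}\subseteq\mathcal{D}$ be a small generating full subcategory of compact objects,'' followed by the claim that the tautological diagram $\mathcal{C}_{/Y}\rightarrow\mathcal{D}$ has colimit $Y$. Generation under colimits (the smallest colimit-closed full subcategory containing $\mathcal{C}$ is all of $\mathcal{D}$) does \emph{not} imply this density statement, and without it the counit need not be an equivalence. Concretely: take $\mathcal{D}=\text{Ab}$ and $\mathcal{C}=\{\mathbb{Z}\}$. Then $\mathcal{P}(\mathcal{C})\cong\text{Set}$, your $L$ is the free abelian group functor and your $R$ is the forgetful functor, which is not fully faithful --- $\text{Ab}$ is not a reflective subcategory of $\text{Set}$, even though $\mathbb{Z}$ is a compact generator. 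The repair is to take $\mathcal{C}$ to be (a skeleton of) the full subcategory $\mathcal{D}^\kappa$ of \emph{all} $\kappa$-compact objects for a regular cardinal $\kappa$ such that $\mathcal{D}$ is $\kappa$-accessible; this $\mathcal{C}$ is closed under $\kappa$-small colimits, one has $\mathcal{D}\cong\text{Ind}_\kappa(\mathcal{C})$, the restricted Yoneda embedding identifies $\mathcal{D}$ with the full subcategory of $\mathcal{P}(\mathcal{C})$ of presheaves sending $\kappa$-small colimits in $\mathcal{C}$ to limits, and one then checks that this subcategory is reflective and accessible. Your closing remark that the density argument ``requires care'' correctly locates the hard point, but the care required is not merely technical: the statement you need is false for the $\mathcal{C}$ you chose.
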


\begin{example}
\label{CompSegSp}
The presentable $\infty$-category Cat is an accessible localization of the $\infty$-category $\mathcal{P}(\Delta)=\text{Fun}(\Delta^\text{op},\text{Top})$ of \emph{simplicial spaces}. The local maps are generated by the Segal maps $\text{Sp}^n\rightarrow\Delta^n$ (where $\text{Sp}^n$ is the \emph{spine} of the $n$-simplex) and the Rezk map $\text{Eq}\rightarrow\ast$, where $\text{Eq}$ is the contractible $\infty$-category on two objects (the nerve of the 1-category $\bullet\cong\bullet$). The local objects are \emph{complete Segal spaces}.

Thus $\infty$-categories can be modeled as complete Segal spaces. See \cite{CSegalSp}.
\end{example}

\begin{remark}
If $\mathcal{C}$ is a presentable $\infty$-category, given by an accessible localization of $\mathcal{P}(S)$, then for any presentable $\infty$-category $\mathcal{D}$, there is a full subcategory inclusion $$\text{Fun}^\text{L}(\mathcal{C},\mathcal{D})\subseteq\text{Fun}^\text{L}(\mathcal{P}(S),\mathcal{D})\cong\text{Fun}(S,\mathcal{D})$$ spanned by those functors $S\rightarrow\mathcal{D}$ such that the induced left adjoint functor from $\mathcal{P}(S)$ inverts all local morphisms.

For example, taking the complete Segal space perspective as in the last example, a left adjoint functor $\text{Cat}\rightarrow\mathcal{D}$ can be described as a cosimplicial object in $\mathcal{D}$, $F_\bullet:\Delta\rightarrow\mathcal{D}$, such that the Segal maps $$F_1\amalg_{F_0}\cdots\amalg_{F_0}F_1\rightarrow F_n$$ and Rezk map $$\text{colim}_{[n]\in(\Delta/\text{Eq})^\text{op}}F_n\rightarrow F_0$$ are all equivalences.
\end{remark}

\subsubsection{Tensor products of presentable $\infty$-categories}
\noindent The $\infty$-category of presentable $\infty$-categories admits a symmetric monoidal structure $\text{Pr}^{\text{L},\otimes}$ for which $$\mathcal{C}\otimes\mathcal{D}=\text{Fun}^R(\mathcal{C}^\text{op},\mathcal{D}),$$ where $\text{Fun}^R$ denotes right adjoint functors (HA 4.8.1.15).

As far as this author is aware, this observation is due to Lurie and has no earlier analogue in classical category theory.

\begin{warning}
Since $\mathcal{C}^\text{op}$ is the opposite of a presentable $\infty$-category while $\mathcal{D}$ is itself presentable, the adjoint functor theorem does not apply: right adjoint functors $\mathcal{C}^\text{op}\rightarrow\mathcal{D}$ are \emph{not} necessarily just those that preserve small limits and filtered colimits.

However, in many cases we can get a handle on $\text{Fun}^R(\mathcal{C}^\text{op},\mathcal{D})$ as follows: $\mathcal{D}$ is an accessible localization of some presheaf $\infty$-category $\mathcal{P}(\mathcal{A})$ at some collection $S$ of local morphisms. Therefore, to specify such a right adjoint functor we must specify a left adjoint functor $\mathcal{P}(\mathcal{A})\rightarrow\mathcal{C}^\text{op}$ which inverts morphisms in $S$.

But left adjoint functors out of $\mathcal{P}(\mathcal{A})$ correspond to ordinary functors $\mathcal{A}\rightarrow\mathcal{C}^\text{op}$ (by a strengthening of Theorem \ref{Yoneda3} also at HTT 5.1.5.6).
\end{warning}

\noindent Since $\text{Pr}^\text{L}$ is a subcategory of $\widehat{\text{Cat}}$, a commutative algebra in $\text{Pr}^{\text{L},\otimes}$ is a symmetric monoidal $\infty$-category $\mathcal{C}^\oast$ (commutative algebra in $\widehat{\text{Cat}}^\times$) such that $\mathcal{C}$ is presentable, and all the maps $-\oast X:\mathcal{C}\rightarrow\mathcal{C}$ are left adjoints.

That is, $\text{CAlg}(\text{Pr}^{\text{L},\otimes})$ is the $\infty$-category of presentable $\infty$-categories with closed symmetric monoidal structure. Examples include:
\begin{itemize}
\item $\text{Set}^\times$, $\text{Top}^\times$, $\text{Cat}_1^\times$, $\text{Cat}^\times$, $\text{Gpoid}^\times$ (which are cartesian closed)
\item $\text{Ab}^\otimes$ (abelian groups)
\item $\text{CMon}_\infty^\wedge$ ($\mathbb{E}_\infty$-spaces)
\item $\text{Ab}_\infty^\wedge$ (grouplike $\mathbb{E}_\infty$-spaces, also known as infinite loop spaces or connective spectra)
\item $\text{Sp}^\wedge$ (spectra)
\end{itemize}

\noindent For a construction of (3)-(5), see HA 4.8.2.19 and \cite{GGN}.

The examples $\text{CMon}_\infty$, $\text{Ab}_\infty$, and $\text{Sp}$ are of particular interest, due to the following result of Gepner, Groth, and Nikolaus:

\begin{theorem}[\cite{GGN} 4.6]
\label{ThmGGN}
If $\mathcal{C}$ is a presentable $\infty$-category, then $$\text{CMon}(\mathcal{C})\cong\text{CMon}_\infty\otimes\mathcal{C}$$ $$\text{Ab}(\mathcal{C})\cong\text{Ab}_\infty\otimes\mathcal{C}$$ $$\text{Sp}(\mathcal{C})\cong\text{Sp}\otimes\mathcal{C}$$ as presentable $\infty$-categories. Here CMon, Ab, and Sp refer to commutative monoid objects, abelian group objects, and spectrum objects in $\mathcal{C}$.
\end{theorem}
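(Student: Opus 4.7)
The plan is to prove all three equivalences uniformly by establishing universal properties of $\text{CMon}_\infty$, $\text{Ab}_\infty$, and $\text{Sp}$ as objects of $\text{Pr}^L$ and then invoking the closed structure of $\text{Pr}^{L,\otimes}$. I will spell out the argument for CMon; the cases of Ab and Sp are entirely parallel, with the relevant universal property substituted at each step.

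The central input is a pair of universal properties. First, $\text{CMon}_\infty$ is characterized as the free presentable $\infty$-category on a commutative monoid: for any presentable $\mathcal{D}$, there is a natural equivalence $\text{Fun}^L(\text{CMon}_\infty, \mathcal{D}) \cong \text{CMon}(\mathcal{D})$. (The analogous statement for Sp as the free presentable stable $\infty$-category appears in HA 4.8.2.18, and a similar statement holds for $\text{Ab}_\infty$.) Second, and more substantively, I need the generalized universal property: for presentable $\mathcal{C}$ and $\mathcal{D}$, there is a natural equivalence $\text{Fun}^L(\text{CMon}(\mathcal{C}), \mathcal{D}) \cong \text{CMon}(\text{Fun}^L(\mathcal{C}, \mathcal{D}))$. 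The idea is that $\text{CMon}(\mathcal{C}) \to \mathcal{C}$ is monadic for the free commutative monoid monad, so specifying a left adjoint out of $\text{CMon}(\mathcal{C})$ amounts to specifying a left adjoint out of $\mathcal{C}$ equipped with a lift of the monad action. This lift is precisely a commutative monoid structure on the underlying functor, viewed as an object of the presentable $\infty$-category $\text{Fun}^L(\mathcal{C}, \mathcal{D})$.

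With these universal properties in hand, the theorem follows from a formal chain of natural equivalences. For any presentable $\mathcal{D}$,
$$\text{Fun}^L(\text{CMon}_\infty \otimes \mathcal{C}, \mathcal{D}) \cong \text{Fun}^L(\text{CMon}_\infty, \text{Fun}^L(\mathcal{C}, \mathcal{D})) \cong \text{CMon}(\text{Fun}^L(\mathcal{C}, \mathcal{D})) \cong \text{Fun}^L(\text{CMon}(\mathcal{C}), \mathcal{D}),$$
using in turn the closed structure of $\text{Pr}^{L,\otimes}$, the universal property of $\text{CMon}_\infty$, and the generalized universal property. Since this is natural in $\mathcal{D}$, the Yoneda lemma in $\text{Pr}^L$ yields $\text{CMon}_\infty \otimes \mathcal{C} \cong \text{CMon}(\mathcal{C})$. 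The same chain of reasoning, applied to $\text{Ab}_\infty$ and $\text{Sp}$, proves the remaining two equivalences.

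The main obstacle is the generalized universal property. Verifying it requires a careful monadicity argument in the presentable setting: one must show that a commutative monoid object in $\text{Fun}^L(\mathcal{C}, \mathcal{D})$ extends \emph{uniquely and colimit-preservingly} to a functor out of $\text{CMon}(\mathcal{C})$, rather than merely to some arbitrary functor. This relies on the $\infty$-categorical Barr-Beck theorem, using that the free CMon functor $\mathcal{C} \to \text{CMon}(\mathcal{C})$ is a left adjoint and that the forgetful functor is continuous, accessible, and conservative. Once this monadic compatibility is established, the rest of the argument is entirely formal.
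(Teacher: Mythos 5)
Your argument has a fatal gap at its foundation: the ``universal property'' $\text{Fun}^\text{L}(\text{CMon}_\infty,\mathcal{D})\cong\text{CMon}(\mathcal{D})$ (and its analogues for $\text{Ab}_\infty$ and Sp) is false for a general presentable $\mathcal{D}$. The functor $\mathcal{D}\mapsto\text{CMon}(\mathcal{D})$ is computed by \emph{tensoring} with $\text{CMon}_\infty$ --- which is exactly the content of the theorem --- not by mapping out of it; you have conflated $\text{CMon}_\infty\otimes\mathcal{D}$ with the internal hom $\text{Fun}^\text{L}(\text{CMon}_\infty,\mathcal{D})$, and these do not agree. Concretely: $\text{Sp}$ and $\text{CMon}_\infty$ each have a zero object, so any colimit-preserving $F:\text{Sp}\rightarrow\text{Top}$ satisfies $F(0)=\emptyset$, and since every spectrum admits a map to $0$, every $F(X)$ maps to $\emptyset$ and hence is empty. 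Thus $\text{Fun}^\text{L}(\text{Sp},\text{Top})$ is contractible, while $\text{Sp}(\text{Top})=\text{Sp}$. (What HA 4.8.2.18 and \cite{GGN} actually give is $\text{Fun}^\text{L}(\text{Sp},\mathcal{D})\cong\mathcal{D}$ for $\mathcal{D}$ \emph{stable}, and likewise for $\text{CMon}_\infty$ with $\mathcal{D}$ preadditive; your statement only holds under those hypotheses, where it reduces to the smashing-localization statement rather than proving it.) The same counterexample, with $\mathcal{C}=\text{Top}$, refutes your ``generalized universal property'' $\text{Fun}^\text{L}(\text{CMon}(\mathcal{C}),\mathcal{D})\cong\text{CMon}(\text{Fun}^\text{L}(\mathcal{C},\mathcal{D}))$, so the chain of equivalences breaks at its second and third links. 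The underlying category-theoretic error also shows up in your monadicity sketch: lifts of a functor against a monadic forgetful functor classify functors \emph{into} the category of algebras (right adjoints into $\text{CMon}(\mathcal{C})$ do correspond to commutative monoids in a functor category, because $\text{CMon}(-)$ is a limit construction), not functors \emph{out of} it.

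The repair is to run the argument through right adjoints instead of left adjoints, which is what \cite{GGN} do. Using $\mathcal{A}\otimes\mathcal{B}\cong\text{Fun}^\text{R}(\mathcal{A}^\text{op},\mathcal{B})$ and symmetry of the tensor product, one has $$\text{CMon}_\infty\otimes\mathcal{C}\cong\text{Fun}^\text{R}(\mathcal{C}^\text{op},\text{CMon}(\text{Top}))\cong\text{CMon}\bigl(\text{Fun}^\text{R}(\mathcal{C}^\text{op},\text{Top})\bigr)\cong\text{CMon}(\mathcal{C}),$$ where the middle equivalence holds because $\text{CMon}$ is cut out by finite-product (i.e.\ limit) conditions, which commute with the limit-preservation and accessibility conditions defining $\text{Fun}^\text{R}$, and the last equivalence is representability of limit-preserving accessible presheaves. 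The Ab and Sp cases are identical, with abelian group objects and spectrum objects (towers under $\Omega$) likewise being finite-limit constructions.
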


\begin{corollary}[\cite{GGN} 5.1]
\label{CorGGN}
If $\mathcal{C}^\otimes$ is a \emph{closed symmetric monoidal} presentable $\infty$-category, then $\text{CMon}(\mathcal{C})$, $\text{Ab}(\mathcal{C})$, and $\text{Sp}(\mathcal{C})$ inherit closed symmetric monoidal structures. That is, they are commutative algebras in $\text{Pr}^{\text{L},\otimes}$, since they are tensor products of commutative algebras. These symmetric monoidal structures are uniquely determined by the condition that the free functor is symmetric monoidal: $$\text{Free}:\mathcal{C}^\otimes\rightarrow\text{CMon}(\mathcal{C})^\otimes.$$
\end{corollary}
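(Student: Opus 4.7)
The plan is to derive everything from Theorem \ref{ThmGGN} combined with the fact (recalled in Section \ref{1.1.3}, HA 3.2.4.7) that finite coproducts in commutative algebras over any symmetric monoidal $\infty$-category are computed by the underlying tensor product. The argument will be identical in the three cases, so I will focus on $\text{CMon}(\mathcal{C})$; replacing $\text{CMon}_\infty$ by $\text{Ab}_\infty$ or $\text{Sp}$ yields the other two.

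For existence, observe that $\mathcal{C}$ is an object of $\text{CAlg}(\text{Pr}^{\text{L},\otimes})$ by hypothesis, and $\text{CMon}_\infty$ is one by the list of examples immediately preceding Theorem \ref{ThmGGN}. Hence their coproduct in $\text{CAlg}(\text{Pr}^{\text{L},\otimes})$ exists, and by HA 3.2.4.7 its underlying presentable $\infty$-category is the tensor product $\text{CMon}_\infty \otimes \mathcal{C}$. Transporting the resulting closed symmetric monoidal structure across the equivalence $\text{CMon}(\mathcal{C}) \simeq \text{CMon}_\infty \otimes \mathcal{C}$ of Theorem \ref{ThmGGN} endows $\text{CMon}(\mathcal{C})$ with a closed symmetric monoidal presentable structure, proving the first assertion.

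Next I would identify the free functor with a canonical coproduct inclusion. The structure map $\mathcal{C} \to \text{CMon}_\infty \otimes \mathcal{C}$ in the coproduct is tautologically a morphism in $\text{CAlg}(\text{Pr}^{\text{L},\otimes})$, hence symmetric monoidal; matching it with Free reduces, by the uniqueness of left adjoints, to checking that its right adjoint is the forgetful functor (equivalently, that its left adjoint applied to the monoidal unit of $\mathcal{C}$ recovers the free commutative monoid on the unit), which one can see by tracing through the construction of the GGN equivalence via the unit map $\text{Top} \to \text{CMon}_\infty$ in $\text{CAlg}(\text{Pr}^{\text{L},\otimes})$. For uniqueness, suppose $\text{CMon}(\mathcal{C})$ carries any closed symmetric monoidal structure in $\text{Pr}^{\text{L},\otimes}$ for which Free is symmetric monoidal; then $\text{Free}(X) \otimes \text{Free}(Y) \simeq \text{Free}(X \otimes_{\mathcal{C}} Y)$ for all $X, Y \in \mathcal{C}$. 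Since the forgetful functor $\text{CMon}(\mathcal{C}) \to \mathcal{C}$ is monadic, every object of $\text{CMon}(\mathcal{C})$ is a sifted colimit of free objects, and since the tensor product preserves colimits in each variable by the closed hypothesis, the monoidal structure is determined on all of $\text{CMon}(\mathcal{C})$ by its values on free objects, and hence up to equivalence by the condition on Free.

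I expect the main obstacle to be the uniqueness step: making rigorous the monadic presentation of $\text{CMon}(\mathcal{C})$ over $\mathcal{C}$ and the colimit-extension argument in a coherently symmetric monoidal way is technical. A cleaner alternative, which I would fall back on if the direct argument becomes unwieldy, is to phrase the claim via the universal property of the coproduct in $\text{CAlg}(\text{Pr}^{\text{L},\otimes})$: the space of pairs consisting of a closed symmetric monoidal structure on the presentable $\infty$-category $\text{CMon}(\mathcal{C})$ and a symmetric monoidal refinement of Free is equivalent to the space of $\text{CAlg}(\text{Pr}^{\text{L},\otimes})$-morphisms out of $\mathcal{C}$, and once one exhibits the canonical such morphism as exhibiting the target as the coproduct with $\text{CMon}_\infty$, both existence and uniqueness drop out simultaneously.
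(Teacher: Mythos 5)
Your proposal is correct and matches the justification the paper itself gives (the paper offers no proof beyond the citation to \cite{GGN} and the remark that these $\infty$-categories are tensor products of commutative algebras in $\text{Pr}^{\text{L},\otimes}$, hence coproducts in $\text{CAlg}(\text{Pr}^{\text{L},\otimes})$ by HA 3.2.4.7). Your fallback via the universal property of the coproduct is exactly the rigorous route taken in \cite{GGN}, and it correctly sidesteps the coherence issues you flag in the sifted-colimit version of the uniqueness argument.
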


\begin{warning}
Recall from Warning \ref{Warning1}: When we write (for example) $\text{CMon}(\mathcal{C})$, we are referring to commutative monoids in $\mathcal{C}^\times$, \emph{not} commutative algebras with respect to the closed symmetric monoidal structure $\mathcal{C}^\otimes$.
\end{warning}

\begin{example}
Since Cat is cartesian closed, and $\text{SymMon}\cong\text{CMon}(\text{Cat})$, symmetric monoidal $\infty$-categories admit (externally) a closed symmetric monoidal structure $\text{SymMon}^\otimes$. We will use this extensively in Chapter \ref{3} to study the commutative algebra of symmetric monoidal $\infty$-categories.
\end{example}

\noindent We may also be able to make good sense of modules over commutative algebras: It is conjectured that if $\mathcal{V}^\otimes\in\text{CAlg}(\text{Pr}^{\text{L},\otimes})$, then $\mathcal{V}^\otimes$-modules play the role of `presentable $\mathcal{V}^\otimes$-enriched $\infty$-categories.' This conjecture seems to be known to experts, but is not accessible to current techniques in enriched higher category theory.

One aspect of this story that is already established is that any $\mathcal{V}^\otimes$-module is naturally $\mathcal{V}^\otimes$-enriched \cite{GepHaug} 7.4.13.

\section{Quasicategories}\label{1.2}
\noindent In Definition \ref{Def1}, we defined $\infty$-categories to be topological categories (categories enriched in topological spaces). However, this definition is poorly suited to proving many of the properties listed in \S\ref{1.1}. In classical category theory, a category is really a combinatorial object, and most of the properties of \S\ref{1.1} are combinatorial properties. But a topological category mixes that combinatorics with a large dose of homotopy theory.

However, it is possible to model topological spaces in a purely combinatorial way. Simplicial sets were developed, starting in the 1960s to do exactly this. We will presuppose familiarity with simplicial sets; a classical reference is \cite{QuillenHA}.

Let $\Delta$ denote the category of finite, nonempty, totally ordered sets. That is, $$\langle n\rangle=\{0<1<\cdots<n\}$$ is an object of $\Delta$ for each nonnegative integer $n$, and every object of $\Delta$ is isomorphic to some $\langle n\rangle$. A morphism of $\Delta$ is a function $f:\langle n\rangle\rightarrow\langle m\rangle$ satisfying $f(i)\leq f(j)$ any time $i\leq j$.

A simplicial set is a functor $X_\cdot:\Delta^\text{op}\rightarrow\text{Set}$. Simplicial sets are a model for topological spaces (in the sense that there is a Quillen equivalence of model categories); more properly, a topological space is a simplicial set \emph{satisfying one condition}. Therefore, we have another (more combinatorial) model for $\infty$-categories:

\begin{definition}
An $\infty$-category is a simplicial category (category enriched in simplicial sets).
\end{definition}

\noindent Simplicial categories are a bit easier to work with than topological categories. They are discussed at some length in HTT Appendix 3. However, there is a still simpler model: $\infty$-categories can themselves be modeled by simplicial sets satisfying a condition (slightly weaker than the topological space condition). Such a simplicial set is a quasicategory. Being a purely combinatorial model for $\infty$-categories, divorced of most homotopy theory, quasicategories are well-suited to the study of higher category theory.

In \S\ref{1.2.1}, we introduce quasicategories and some basic constructions involving them. In \S\ref{1.2.2}, we demonstrate how some of the more involved constructions work. The point is not to be thorough, but to emphasize that the theory is relatively user-friendly. For details, consult HTT.

Although the notion of quasicategory is due to Boardman and Vogt \cite{BoardmanVogt}, most of the important results in \S\ref{1.2.1} and \S\ref{1.2.2} are due to Joyal \cite{Joyal}, including especially the homotopy hypothesis (identification of $\infty$-groupoids with Kan complexes) and the construction of limits and colimits.

In classical category theory, categories of spans, also known as correspondences or Burnside categories, play an important role in the background. Those who do not delve deep into category theory can usually avoid working with these objects, typically by using ad hoc constructions in their place. But in representation theory, group cohomology, and equivariant homotopy theory (that is, wherever there are group actions; see the appendix), they are harder to avoid.

In higher category theory, ad hoc constructions tend to be harder to handle, and the span construction is of more central importance. Thus, in the final part of this section (\S\ref{1.2.3}), we introduce the span construction. In the setting of $\infty$-categories, these ideas are due to Barwick \cite{BarMack1}.

\subsection{Quasicategories}\label{1.2.1}
\subsubsection{Simplicial sets}
\noindent A simplicial set is a functor $\Delta^\text{op}\rightarrow\text{Set}$, where $\Delta$ is the simplex category: the category of finite, nonempty, totally ordered sets. We write $$\text{sSet}=\text{Fun}(\Delta^\text{op},\text{Set}),$$ and, given a simplicial set $X_\cdot$, we write $X_n$ for $X_\cdot$ evaluated at $\langle n\rangle$. We call $X_n$ the set of \emph{$n$-simplices} in $X_\cdot$. There are two families of examples that are especially critical:

\begin{example}
Let $X$ be a topological space, and let $\Delta^n$ denote the solid $n$-simplex (as a topological space). We may label the vertices of $\Delta^n$ by $0,1,\ldots,n$, and choosing a $k$-dimensional face of $\Delta^n$ consists of choosing $k+1$ vertices on that face.

The \emph{singular simplicial complex} $S_\cdot(X)$ is a simplicial set whose $n$-simplices are continuous maps $S_n(X)=\text{Map}(\Delta^n,X)$. For any order-preserving function $f:\langle n\rangle\rightarrow \langle m\rangle$, there is a corresponding continuous map $\Delta^n\rightarrow\Delta^m$ which sends vertex $i$ of $\Delta^n$ to vertex $f(i)$ of $\Delta^m$. Precomposition with this continuous map determines a function $S_m(X)\rightarrow S_n(X)$, endowing $S_\cdot(X)$ with the structure of a simplicial set.
\end{example}

\begin{example}
Let $\mathcal{C}$ be a (small) category, and $\vec{\Delta}^n$ the category $0\to 1\to\cdots\to n$; that is, if $i\leq j$, then there is exactly one morphism $i\to j$, and otherwise there are none.

The \emph{nerve} $N_\cdot(\mathcal{C})$ is a simplicial set with $n$-simplices $N_n(X)=\text{Fun}(\vec{\Delta}^n,\mathcal{C})$. For any $f:\langle n\rangle\rightarrow\langle m\rangle$, there is a corresponding functor $\vec{\Delta}^n\rightarrow\vec{\Delta}^m$. As above, this makes $N_\cdot(\mathcal{C})$ into a simplicial set.
\end{example}

\noindent We can recover a (small) category $\mathcal{C}$ from its nerve, as follows:

The set of objects is $N_0(\mathcal{C})$, and the set of morphisms is $N_1(\mathcal{C})$. The two order-preserving functions $\langle 0\rangle\rightarrow\langle 1\rangle$ induce functions $N_1(\mathcal{C})\rightarrow N_0(\mathcal{C})$ that identify the source and target of a morphism.

Given 1-simplices (morphisms) $f,g\in N_1(\mathcal{C})$, if the source of $f$ is the same as the target of $g$, there is a (unique) 2-simplex $\sigma\in N_2(\mathcal{C})$ such that the two shorter faces $\sigma$ are $f$ and $g$. That is, the map $\langle 1\rangle\rightarrow\langle 2\rangle$ which sends $\{0,1\}$ to $\{0,1\}$ (respectively $\{1,2\}$) sends $\sigma$ to $g$ (respectively $f$). The third face of $\sigma$ (restriction along the third map $\langle 1\rangle\rightarrow\langle 2\rangle$) is the composition $f\circ g$.

On the other hand, if $X$ is a space, $S_\cdot(X)$ retains enough information to recover any homotopical invariant of $X$. For example, the singular homology is defined in terms of the data of $S_\cdot(X)$: that is, simplices in $X$ and their faces.

$S_\cdot(X)$ also includes data about homotopies. Suppose $\ell,\ell^\prime\in S_1(X)$ are two 1-simplices (paths in $X$) which have the same endpoints (that is, are the same upon either restriction $\langle 0\rangle\rightarrow\langle 1\rangle$). So they are paths from $x$ to $y$. Let $\bar{y}$ be the constant path at $y$ (obtained from $y$ by restricting along $\langle 1\rangle\rightarrow\langle 0\rangle$). A homotopy from $x$ to $y$ is a 2-simplex whose faces are $\ell$, $\ell^\prime$, and $\bar{y}$: $$\xymatrix{
&y \\
x\ar@{-}[ru]^{\ell}\ar@{-}[r]_{\ell^\prime} &y\ar@{-}[u]_{\text{constant}}.
}$$ Using such notions, we can recover the homotopy groups of $X$.

\subsubsection{Horn filling}
\noindent Conversely, we can determine exactly when a simplicial set is the nerve of a category, or the singular complex of a space. We must first make a definition.

\begin{definition}
For $n\geq 0$, let $\Delta^n$ denote the nerve of the category $\vec{\Delta}^n$. The simplicial set morphisms $\text{Map}_\text{sSet}(\vec{\Delta}^n, X_\cdot)$ are in bijection with $X_n$ ($n$-simplices), so we think of $\Delta^n$ as a solid $n$-simplex.
\end{definition}

\begin{definition}
When $0\leq i\leq n$, there is a simplicial set $\Lambda^n_i$, called the $i$-horn in the $n$-simplex. The $k$-simplices are functors $F:\vec{\Delta}^k\rightarrow\vec{\Delta}^n$ such that there is at least one $j\in\vec{\Delta}^n$ which is not equal to $i$ and not hit by $F$.

We think of $\Lambda^n_i$ as a hollow $n$-simplex with one face removed (the face opposite the vertex $i$). 

A horn in the simplicial set $X_\cdot$ is a simplicial set morphism $\Lambda^n_i\rightarrow X_\cdot$.
\end{definition}

\begin{proposition}
A simplicial set $X_\cdot$ is the nerve of a category if and only if, for each $0<i<n$, each $i$-horn in $X_\cdot$ lifts uniquely to an $n$-simplex. That is, for any diagram of the following form, there is a unique dotted line lifting it: $$\xymatrix{
\Lambda_i^n\ar[r]\ar[d] &X_\cdot \\
\Delta^n.\ar@{-->}[ru] &
}$$ We say that \emph{all inner horns have a unique lift}.
\end{proposition}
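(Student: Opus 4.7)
My plan is to prove both directions of the biconditional separately, with the main work going into the reverse direction (unique inner horn filling implies nerve-of-a-category).

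\textbf{Forward direction.} Suppose $X_\cdot=N_\cdot(\mathcal{C})$ for some small category $\mathcal{C}$. An $n$-simplex is a functor $F\colon\vec{\Delta}^n\to\mathcal{C}$, and such a functor is uniquely determined by the composable sequence of morphisms $F(0)\to F(1)\to\cdots\to F(n)$, i.e.\ by its restriction to the spine $\mathrm{Sp}^n=\Delta^{\{0,1\}}\cup\cdots\cup\Delta^{\{n-1,n\}}$. For $0<i<n$, the horn $\Lambda^n_i$ contains every edge of $\Delta^n$ (each edge, being a $2$-element subset, misses at least one vertex other than $i$), so in particular it contains the spine. Any map $\Lambda^n_i\to N_\cdot(\mathcal{C})$ therefore specifies the $n$-simplex uniquely on the spine, forcing the unique extension to $\Delta^n$, and one checks that this extension agrees with the horn on the remaining non-$i$ faces because every $k$-simplex of $N_\cdot(\mathcal{C})$ is itself determined by its $1$-skeleton.

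\textbf{Reverse direction.} Suppose $X_\cdot$ has unique inner horn fillers. I build a category $\mathcal{C}$: take $\mathrm{Ob}(\mathcal{C})=X_0$, $\mathrm{Mor}(\mathcal{C})=X_1$ with source/target from the two face maps $X_1\to X_0$, and identities from degenerate $1$-simplices $s_0\colon X_0\to X_1$. To define composition, given $f\colon x\to y$ and $g\colon y\to z$, assemble a horn $\Lambda^2_1\to X_\cdot$ with edges $f$ on $\{0,1\}$ and $g$ on $\{1,2\}$; fill it uniquely to a $2$-simplex $\sigma$, and let $g\circ f$ be the $\{0,2\}$-face. Unitality ($f\circ\mathrm{id}=f$, $\mathrm{id}\circ f=f$) follows because the $2$-simplices $s_1 f$ and $s_0 f$ are degenerate fillers of the relevant horns, hence by uniqueness must be the fillers we used.

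For associativity, given $f,g,h$ composable, use unique filling of $\Lambda^3_1$ (or $\Lambda^3_2$): three of the four faces are determined by the pairwise compositions $g\circ f$, $h\circ g$, and one of $(h\circ g)\circ f$ or $h\circ(g\circ f)$, and the unique $3$-simplex filling exhibits the missing face as the other triple composition, equating the two.

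\textbf{Recovering $X_\cdot$ as $N_\cdot(\mathcal{C})$.} There is a canonical map $\varphi\colon X_\cdot\to N_\cdot(\mathcal{C})$ sending an $n$-simplex to the functor $\vec{\Delta}^n\to\mathcal{C}$ read off from its $1$-skeleton (well-defined because on each $2$-simplex the induced composition in $\mathcal{C}$ agrees with the spanning edge, by definition of composition). I show $\varphi$ is a bijection in each degree by induction on $n$: the cases $n=0,1$ are tautological, and for $n\geq 2$, an $n$-simplex $\tau$ of $X_\cdot$ is uniquely reconstructed from its spine by iteratively forming $\Lambda^k_1$ horns ($2\leq k\leq n$) whose first two edges are already placed and filling them uniquely, so $\tau$ is determined by its image under $\varphi$; surjectivity is the same construction run forward from an arbitrary chain of morphisms in $\mathcal{C}$.

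\textbf{Main obstacle.} The subtlest step is the inductive reconstruction of an $n$-simplex from its spine, which requires checking that the successive inner horn fillers glue coherently with the already-constructed lower-dimensional faces of $\tau$; this is where uniqueness of inner horn fillers is used most crucially, and is precisely the combinatorial content that distinguishes nerves of \emph{categories} from nerves of more general structures.
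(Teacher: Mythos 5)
Your proof is essentially correct, and it is the standard argument for this classical fact; the paper itself states the proposition without proof (Chapter 1 is explicitly expository, with ``essentially no proofs''), so there is nothing in the source to compare against. Both directions are organized the right way: the forward direction reduces to the fact that inner horns contain the spine and that simplices of a nerve are determined by their spines, and the reverse direction builds the category from $X_0$, $X_1$, degeneracies, and unique $\Lambda^2_1$-fillers, verifies unitality via $s_0f$, $s_1f$ and associativity via a $\Lambda^3_1$-filler, and then identifies $X_\cdot$ with $N_\cdot(\mathcal{C})$ degreewise using the fact that the spine inclusion $\mathrm{Sp}^n\hookrightarrow\Delta^n$ is built from inner horn inclusions. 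You correctly isolate that last gluing step as the one place where real combinatorial care is needed.

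One small inaccuracy in the forward direction: it is not true that $\Lambda^n_i$ contains \emph{every} edge of $\Delta^n$. For $n=2$, $i=1$, the long edge $\{0,2\}$ satisfies $\{0,2\}\cup\{1\}=[2]$ and so is precisely the missing face of $\Lambda^2_1$; your parenthetical justification (``misses at least one vertex other than $i$'') only works for $n\geq 3$. This does not damage the argument, since all you actually use is that the spine edges $\{j,j+1\}$ lie in every inner horn, which is true in all dimensions; but the existence half of the forward direction for $n\geq 3$ also quietly uses that the non-spine edges of the horn are already forced to be the appropriate composites (via the $2$-simplices present in the horn), and it would be worth saying that explicitly rather than folding it into ``one checks.''
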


\begin{proposition}
\label{DefKanCx}
A simplicial set $X_\cdot$ is the singular complex of a topological space if and only if, for each $0\leq i\leq n$, each $i$-horn in $X_\cdot$ lifts to an $n$-simplex. So we may choose a dotted line in the above diagram, but not uniquely.

We say that \emph{all horns have a lift}, and call $X_\cdot$ a \emph{Kan complex}.
\end{proposition}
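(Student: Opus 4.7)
The plan is to handle the two directions separately. The forward implication is a direct geometric retraction argument, while the backward implication is significantly deeper and really amounts to invoking (a substantial piece of) the Quillen equivalence between simplicial sets and topological spaces.

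For the forward direction, suppose $X_\cdot = S_\cdot(X)$ for some topological space $X$, and consider a horn $h : \Lambda^n_i \to S_\cdot(X)$ with $0 \leq i \leq n$. Applying the adjunction $|{-}| \dashv S_\cdot$ (where $|{-}|$ is geometric realization), $h$ corresponds to a continuous map $\bar h : |\Lambda^n_i| \to X$. The key topological fact I would invoke is that the inclusion $|\Lambda^n_i| \hookrightarrow |\Delta^n|$ admits a strong deformation retraction $r : |\Delta^n| \to |\Lambda^n_i|$ — geometrically, one projects the solid simplex radially from a point in the interior of the missing face onto the union of the remaining faces. The composite $\bar h \circ r : |\Delta^n| \to X$ then extends $\bar h$, and its adjunct is an $n$-simplex of $S_\cdot(X)$ filling $h$.

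For the backward direction, the statement is technically somewhat loose: a Kan complex $X_\cdot$ need not literally be of the form $S_\cdot(X)$ on the nose. What is true, and what I would prove, is that for any Kan complex $X_\cdot$ the unit of the $|{-}| \dashv S_\cdot$ adjunction,
\[
\eta_{X_\cdot} : X_\cdot \longrightarrow S_\cdot(|X_\cdot|),
\]
is a weak homotopy equivalence, so that up to the natural notion of equivalence $X_\cdot$ is indeed represented by the singular complex of the space $|X_\cdot|$. To show this I would define combinatorial homotopy groups $\pi_n(X_\cdot, x)$ intrinsically for Kan complexes, as equivalence classes of based maps $\Delta^n/\partial \Delta^n \to X_\cdot$ under simplicial homotopy — the horn-filling property is exactly what is needed to make composition well-defined and to verify the group axioms. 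One then checks that $\eta_{X_\cdot}$ induces a bijection on $\pi_0$ and on each $\pi_n$, comparing the intrinsic combinatorial groups with the topological homotopy groups of $|X_\cdot|$.

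The main obstacle is this last comparison. Showing that $\pi_n^{\text{comb}}(X_\cdot, x) \to \pi_n^{\text{top}}(|X_\cdot|, x)$ is a bijection requires a careful analysis of cellular approximation in $|X_\cdot|$, essentially the simplicial approximation theorem applied to maps out of spheres and disks, so that every topological spheroid can be pushed into the image of a simplicial $\Delta^n/\partial \Delta^n$ and every topological homotopy can likewise be replaced by a simplicial one. This is genuinely nontrivial; everything else — the definition of Kan complex, the retraction argument for the forward direction, and the combinatorial definition of $\pi_n$ — is formal once horn filling is in hand. I would, in a survey-style presentation, outline the combinatorial $\pi_n$ construction explicitly and then refer to a standard source (for example \cite{QuillenHA}) for the comparison with topological homotopy groups.
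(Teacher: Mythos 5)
Your proposal is correct, and it is worth noting that the paper itself offers no proof here: this proposition sits in the expository survey chapter, which explicitly disclaims proofs and simply points to the Quillen equivalence between simplicial sets and topological spaces (\cite{QuillenHA}) in the following sentence. Your forward direction is the complete standard argument: the adjunction $|{-}|\dashv S_\cdot$ plus the deformation retraction of $|\Delta^n|$ onto $|\Lambda^n_i|$ (valid for all $0\leq i\leq n$, including the outer horns) yields the filler. Your treatment of the converse is in fact \emph{more} careful than the statement being proved: as literally written, ``$X_\cdot$ is the singular complex of a topological space'' is false on the nose for a general Kan complex, and your reinterpretation --- that the unit $X_\cdot\to S_\cdot(|X_\cdot|)$ is a weak equivalence, proved by comparing combinatorial homotopy groups of the Kan complex with topological homotopy groups of its realization --- is the standard and correct reading. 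You rightly flag that this comparison is the genuinely nontrivial content (it is a substantial piece of the Quillen equivalence), and deferring it to \cite{QuillenHA} is exactly what a survey-level treatment should do; the paper does the same, only without spelling out the forward direction or the necessary weakening of the converse as you have.
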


\noindent In fact, there is an equivalence of model categories between simplicial sets and topological spaces \cite{QuillenHA}, which allows us to study homotopy theory using Kan complexes instead of spaces.

Notice two differences between the propositions. In a Kan complex, all horns lift, but not uniquely. In the nerve of a category, only inner horns lift ($0<i<n$), but they do so uniquely.

\subsubsection{Simplicial sets and $\infty$-categories}
Similarly, given an $\infty$-category $\mathcal{C}$, there should be an associated simplicial set $\mathcal{C}_\cdot$. The construction is just as above: the $n$-simplices are functors $\vec{\Delta}^n\rightarrow\mathcal{C}$ (remember $\vec{\Delta}^n$ is a 1-category).

As before, the associated simplicial set should retain all information about the $\infty$-category. For example, $\mathcal{C}_0$ is the set of objects, and $\mathcal{C}_1$ the set of morphisms. A 2-simplex $\sigma$ with faces $\sigma_{01}$, $\sigma_{12}$, and $\sigma_{02}$ (which are thought of as morphisms) recovers an equivalence of morphisms $\sigma_{02}\cong\sigma_{12}\circ\sigma_{01}$ which tells us how to compose morphisms \emph{up to equivalence}. In doing so, the 2-simplices retain all the information about 2-morphisms. The 3-simplices retain information about the 3-morphisms, and so on.

The simplicial set $\mathcal{C}_\cdot$ has some special properties. For example, any horn $\sigma:\Lambda_1^2\rightarrow\mathcal{C}_\cdot$ records the data of two morphisms $\sigma_{01}$ and $\sigma_{12}$ which can be composed. Thus there will be some lift to $\Delta^2$ recording the composition $(\sigma_{12}\circ\sigma_{01})=\sigma_{12}\circ\sigma_{01}$.

However, there will also be a lift of $\sigma$ to $\Delta^2$ for every morphism equivalent to $\sigma_{12}\circ\sigma_{01}$. We are led to the following definition, generalizing Proposition \ref{DefKanCx}:

\begin{definition}
A \emph{quasicategory} is a simplicial set such that all inner horns have lifts (not necessarily unique).
\end{definition}

\begin{theorem}
\label{InftyModels}
There are equivalences of model categories between topological categories, simplicial categories, and simplicial sets. Quasicategories are the fibrant objects in the model structure on simplicial sets. Therefore, quasicategories are a model for $\infty$-categories.
\end{theorem}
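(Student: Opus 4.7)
The plan is to package the statement into three Quillen equivalences plus a characterization of fibrant objects, proving each in turn.

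First I would handle the equivalence between topological categories and simplicial categories. Since the singular complex/geometric realization adjunction $\lvert{-}\rvert \dashv S_\cdot$ is a monoidal Quillen equivalence between topological spaces and simplicial sets (with the Kan--Quillen model structure), applying it hom-wise to enriched categories produces an adjunction between the categories of topologically enriched and simplicially enriched categories. One then equips both with the Bergner model structure (weak equivalences are DK-equivalences, fibrations are characterized levelwise plus a lifting condition on $\pi_0$) and verifies that the enriched Quillen equivalence lifts to a Quillen equivalence on enriched categories. The main point is that the model structures are defined uniformly in the enrichment.

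Next, for the equivalence between simplicial categories and simplicial sets, the key construction is the coherent nerve/rigidification adjunction $\mathfrak{C}:\mathrm{sSet}\rightleftarrows\mathrm{Cat}_\Delta:N^{\mathrm{hc}}$. Given a simplicial category $\mathcal{D}$, an $n$-simplex of $N^{\mathrm{hc}}(\mathcal{D})$ is a simplicially enriched functor $\mathfrak{C}(\Delta^n)\to\mathcal{D}$, where $\mathfrak{C}(\Delta^n)$ is a carefully chosen cofibrant resolution of $\vec{\Delta}^n$ (its mapping spaces are nerves of certain poset categories of subsets of $\{0,1,\ldots,n\}$). I would equip $\mathrm{sSet}$ with the Joyal model structure (cofibrations are monomorphisms, fibrant objects are quasicategories) and verify that $\mathfrak{C} \dashv N^{\mathrm{hc}}$ is a Quillen equivalence to the Bergner model structure. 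This is the content of HTT 2.2.5.1 and is by far the most delicate step: one must match Joyal weak equivalences with DK-equivalences, which requires a careful combinatorial analysis of $\mathfrak{C}$ applied to inner anodyne extensions.

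Finally, to identify quasicategories as the fibrant objects of the Joyal model structure, I would use the characterization of that model structure via inner anodyne extensions: fibrations to the point are precisely those maps with the right lifting property against inner horn inclusions $\Lambda^n_i\hookrightarrow\Delta^n$ for $0<i<n$. By definition, this is exactly the quasicategory condition. Combining the three Quillen equivalences with this last observation produces the desired conclusion that quasicategories model $\infty$-categories.

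The principal obstacle is the second step: establishing that $\mathfrak{C}\dashv N^{\mathrm{hc}}$ is a Quillen equivalence. Proving that $\mathfrak{C}$ sends inner anodyne extensions to Bergner trivial cofibrations, and that the unit/counit are weak equivalences, requires extensive combinatorial bookkeeping with the cubical mapping spaces of $\mathfrak{C}(\Delta^n)$. For this I would follow Lurie's approach in HTT \S 2.2, which decomposes the problem by induction over simplicial dimension and reduces verifications to explicit facts about the posets of subsets controlling $\mathfrak{C}$.
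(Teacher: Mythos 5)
Your outline is correct, but note that the paper itself offers no proof of this statement: the surrounding chapter is an expository survey, and the theorem is justified only by citation (HTT 2.2.5.1 for the Joyal--Lurie equivalence between quasicategories and simplicial categories, and Ilias for the comparison with topological categories). Your three-step decomposition --- hom-wise application of $|{-}| \dashv S_\cdot$ to enriched categories with the Bergner model structure, the rigidification/coherent nerve Quillen equivalence against the Joyal model structure, and the identification of fibrant objects --- is precisely the standard argument carried out in those references, and you correctly flag HTT \S 2.2 as the hard combinatorial core. One small caution on your last step: the Joyal model structure is \emph{not} generated by the inner anodyne maps (its trivial cofibrations properly contain them), so the assertion that a simplicial set is Joyal-fibrant if and only if it is a quasicategory is itself a nontrivial theorem of Joyal rather than an immediate consequence of the definition of the model structure; as written, your phrase ``fibrations to the point are precisely those maps with the right lifting property against inner horn inclusions'' conflates inner fibrations with categorical fibrations and should be replaced by an appeal to that theorem.
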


\noindent The equivalence between quasicategories and simplicial categories is due to unpublished work of Joyal (HTT 2.2.5.1). The equivalence of these to topological categories is due to Ilias \cite{Ilias}.

Henceforth, we will use the terms `quasicategory' and `$\infty$-category' interchangeably. If we need to use another model for $\infty$-categories, we will say so explicitly.

\begin{warning}
A quasicategory does not retain information about how to compose two morphisms, but only how to compose them \emph{up to equivalence}, via a choice of lift of the inner horn $\Lambda_1^2$. It turns out this is all we will ever need.
\end{warning}

\noindent On the other hand, given a quasicategory $\mathcal{C}_\cdot$, the map $\langle 1\rangle\rightarrow\langle 0\rangle$ describes a function $\mathcal{C}_0\rightarrow\mathcal{C}_1$ which sends an object $X\in\mathcal{C}$ to the identity morphism $\text{Id}:X\rightarrow X$. So there is a good notion of identity morphisms.

As indicated in \S\ref{1.1.2}, any category is an $\infty$-category (via the nerve) and any topological space is an $\infty$-category (via the singular complex).

\subsubsection{Constructions}
If $X$ and $Y$ are objects in an $\infty$-category $\mathcal{C}$, the simplicial set of morphisms from $X$ to $Y$ is defined as follows: the $n$-simplices are $(n+1)$-simplices in $\mathcal{C}$ such that the $(n+1)^\text{st}$ vertex is $Y$ and the face opposite that vertex is the constant face at $X$. This object is a Kan complex, or space (HTT 1.2.2.3), and is denoted $\text{Map}_\mathcal{C}(X,Y)$. For example, a morphism in $\text{Map}_\mathcal{C}(X,Y)$ is any 2-simplex of the following form: $$\xymatrix{
&X\ar[d] \\
X\ar[r]\ar[ru]^{\text{Id}} &Y.
}$$ The opposite of an $\infty$-category $\mathcal{C}$ is the simplicial set $$\Delta^\text{op}\xrightarrow{\text{op}}\Delta^\text{op}\xrightarrow{\mathcal{C}}\text{Set},$$ where the first functor gives a finite totally ordered set the opposite total order. This simplicial set is a quasicategory $\mathcal{C}^\text{op}$ (HTT 1.2.1).

The homotopy category of $\mathcal{C}$ is the category whose objects are the objects of $\mathcal{C}$. A morphism $X\rightarrow Y$ is an element of $\text{Map}_\mathcal{C}(X,Y)$, with two elements declared the same if there is a path between them (HTT 1.2.3.8).

We call a morphism $f:X\rightarrow Y$ of $\mathcal{C}$ an \emph{equivalence} if the following horn (which is not an inner horn) has a lift: $$\xymatrix{
&Y\ar@{-->}[d]^{?} \\
X\ar[ru]^f\ar[r]_{\text{Id}} &X.
}$$ A morphism of $\mathcal{C}$ is an equivalence if and only if the corresponding morphism in $h\mathcal{C}$ is an isomorphism. This is a nontrivial result attributed to Joyal (see HTT 1.2.4.3). As a consequence,

\begin{proposition}[HTT 1.2.5.1]
For a quasicategory $\mathcal{C}$, the following are equivalent:
\begin{itemize}
\item $\mathcal{C}$ is a Kan complex;
\item every morphism has an inverse up to equivalence;
\item $h\mathcal{C}$ is a groupoid.
\end{itemize}
\end{proposition}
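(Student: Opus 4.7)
The plan is to reduce everything to an outer-horn lifting statement. The equivalence of (b) and (c) is a direct consequence of the fact quoted immediately before the proposition (attributed to Joyal): a morphism of $\mathcal{C}$ is an equivalence if and only if its image in $h\mathcal{C}$ is an isomorphism. Hence every morphism of $\mathcal{C}$ has an inverse up to equivalence iff every morphism of $h\mathcal{C}$ is invertible iff $h\mathcal{C}$ is a groupoid. So only the equivalence (a) $\iff$ (b) requires proof.

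For (a) $\Rightarrow$ (b), I would simply unwind the definitions. Given a morphism $f : X \to Y$ in $\mathcal{C}$, the defining lifting problem for $f$ being an equivalence is precisely the outer horn $\Lambda^2_0 \to \mathcal{C}$ pictured just above the proposition. If $\mathcal{C}$ is a Kan complex, then by the definition of a Kan complex every horn (inner or outer) lifts, so this one does, producing a weak right inverse to $f$. A symmetric argument using $\Lambda^2_2$ produces a weak left inverse, and these two are equivalent by the usual monoidoid argument in $h\mathcal{C}$.

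The substantive direction is (b) $\Rightarrow$ (a). Because $\mathcal{C}$ is already a quasicategory, all inner horns $\Lambda^n_i$ with $0 < i < n$ lift, so the task is to produce fillers for the outer horns $\Lambda^n_0$ and $\Lambda^n_n$ for each $n \geq 1$. The tool I would apply is the outer-horn lifting lemma (HTT 2.4.1.5): an outer horn $\sigma : \Lambda^n_0 \to \mathcal{C}$ whose restriction to the edge $\Delta^{\{0,1\}}$ is an equivalence admits a filler, and dually for $\Lambda^n_n$ and the edge $\Delta^{\{n-1,n\}}$. Granted this lemma, hypothesis (b) says every edge of every horn is an equivalence, so the special-edge assumption is automatic and every outer horn extends.

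The main obstacle is the outer-horn lifting lemma itself. The plan is to prove it by induction on $n$. The base case $n=2$ reads: given an equivalence $f : X \to Y$ and a morphism $g : X \to Z$, there exists $h : Y \to Z$ with $h \circ f \simeq g$. One produces $h$ by extracting a weak inverse $f^{-1}$ from a $\Lambda^2_0$-filler (which exists because $f$ is an equivalence) and then using inner-horn filling in an auxiliary $3$-simplex to assemble the required $2$-simplex. For $n > 2$, the inductive step splices in $f^{-1}$ along the $0$--$1$ edge by means of an auxiliary $(n+1)$-simplex, converting the $\Lambda^n_0$-extension problem into a finite cascade of inner horn fillings whose feasibility follows from the inductive hypothesis together with the quasicategory axiom. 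This simplicial combinatorics is the only genuinely nontrivial step; everything else in the proof is formal.
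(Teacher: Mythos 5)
The paper itself offers no proof here: this is a survey chapter ("essentially no proofs"), and the proposition is quoted from HTT 1.2.5.1 as a consequence of the result it attributes to Joyal. Your overall architecture is exactly the one the cited sources use: (b)$\iff$(c) is formal given the identification of equivalences in $\mathcal{C}$ with isomorphisms in $h\mathcal{C}$; (a)$\Rightarrow$(b) is immediate from the definition of a Kan complex; and (b)$\Rightarrow$(a) reduces to Joyal's special outer horn lifting theorem (stated as HTT 1.2.4.3 --- your citation 2.4.1.5 is the relative version for inner fibrations, but the reduction is the same). Up to that point your proposal is correct, and your base case $n=2$ (splicing a weak inverse $f'$ and a witness $f'f\simeq\mathrm{id}$ into a $\Lambda^3_2$-horn alongside a degenerate $2$-simplex) is a complete and correct argument.

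The gap is in your proof of the outer horn lifting lemma itself for $n\geq 3$. The claim that a single auxiliary $(n+1)$-simplex "converts the $\Lambda^n_0$-extension problem into a finite cascade of inner horn fillings" is precisely where the entire difficulty of the theorem is concentrated, and as stated it does not go through. When you attach the inverse edge along $\Delta^{\{0,1\}}$ and try to fill the faces of the auxiliary simplex in some order, the horns you encounter are not all inner: some are outer horns of lower dimension (which is fine, that is the induction), but others require compatibility data for the inverse beyond a single $2$-simplex witness --- effectively you need a map out of the walking isomorphism $E$ (the nerve of $\bullet\cong\bullet$, which has nondegenerate simplices in every dimension), not just out of $\Lambda^2_0\cup\Lambda^2_2$. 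The known proofs either carry out a genuine saturation argument for the class of maps generated by the inner horns together with $\{1\}\hookrightarrow E$ (Joyal's original route --- note that this proposition is the main theorem of his paper \cite{Joyal}, proved there by a substantial combinatorial development, which is why the paper flags the input as "a nontrivial result"), or they detour through the theory of left fibrations and marked simplicial sets as Lurie does in HTT \S 2.1--2.2. Your proposal should either quote the special outer horn theorem as a black box (which would make the whole argument correct and complete modulo that citation) or supply the $E$-anodyne machinery; the one-paragraph induction as written is not a proof of it.
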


\noindent Correspondingly, we use the terms \emph{topological space}, \emph{Kan complex}, and \emph{$\infty$-groupoid} interchangeably. \\

\noindent A functor of quasicategories $F:\mathcal{C}\rightarrow\mathcal{D}$ is just a morphism of simplicial sets (a natural transformation of functors $\Delta^\text{op}\rightarrow\text{Set}$), and we say that $F$ is an equivalence of quasicategories if the induced functor $hF:h\mathcal{C}\rightarrow h\mathcal{D}$ is an equivalence of categories.

\begin{definition}
\label{HomSSet}
If $X_\cdot$, $Y_\cdot$ are simplicial sets, there is a simplicial set $\text{Hom}_\text{sSet}(X_\cdot,Y_\cdot)$ whose $n$-simplices are maps of simplicial sets $\Delta^n\times X_\cdot\rightarrow Y_\cdot$.
\end{definition}

\noindent If $\mathcal{C}_\cdot$ and $\mathcal{D}_\cdot$ are quasicategories, then $\text{Hom}_\text{sSet}(\mathcal{C}_\cdot,\mathcal{D}_\cdot)$ is a quasicategory (HTT 1.2.7.3), called the $\infty$-category of functors $\text{Fun}(\mathcal{C},\mathcal{D})$. Its objects are functors, and its morphisms are called \emph{natural transformations}.

If $\mathcal{C}_\cdot$ and $\mathcal{D}_\cdot$ are moreover Kan complexes, then $\text{Fun}(\mathcal{C},\mathcal{D})$ is a Kan complex.\\

\noindent Finally, given an $\infty$-category $\mathcal{C}$, let $\mathcal{C}^\text{iso}$ denote the subcategory of all objects and just those morphisms which are equivalences. Note $\mathcal{C}^\text{iso}$ is the maximal $\infty$-groupoid contained in $\mathcal{C}$.

\subsection{Limits and colimits}\label{1.2.2}
\subsubsection{Preliminary constructions}
\noindent In \S\ref{1.1.2}, we defined initial and terminal objects, subcategories, and full subcategories. These definitions were model-independent and we will say no more about them.

We also defined the limit of a functor $p:K\rightarrow\mathcal{C}$ to be the terminal object of an overcategory $\mathcal{C}_{/p}$. We will now describe $\mathcal{C}_{/p}$ as a quasicategory.

First, notice that $\mathcal{C}_{/p}$ should satisfy a universal property. A functor $F:\mathcal{D}\rightarrow\mathcal{C}_{/p}$ ought to consist of the following information:
\begin{itemize}
\item for every object $D\in\mathcal{D}$ \emph{or} $k\in K$, an object $F(D)$ or $p(k)$ in $\mathcal{C}$;
\item for every morphism in $\mathcal{D}$ \emph{or} in $K$, a morphism in $\mathcal{C}$;
\item for every object $D\in\mathcal{D}$ \emph{and} $k\in K$, a morphism $F(D)\rightarrow p(k)$ in $\mathcal{C}$; etc.
\end{itemize}
\noindent This data is equivalent to the data of a functor into $\mathcal{C}$ from some $\infty$-category we shall call $\mathcal{D}\star K$, which agrees with $p$ when restricted along an inclusion $K\rightarrow\mathcal{D}\star K$.

Intuitively, $\mathcal{D}\star K$ is a sort of disjoint union of the categories $\mathcal{D}$ and $K$, except that there are also (unique) morphisms from each object of $\mathcal{D}$ to each object of $K$. (There are no morphisms from objects of $K$ to objects of $\mathcal{D}$.)

We don't wish to dwell on the definition (see HTT 1.2.8.1), but we give a summary to emphasize that the simplicial set $\mathcal{D}\star K$ is not technical:

If $\mathcal{D}$ and $K$ are quasicategories, an $n$-simplex in $\mathcal{D}\star K$ consists of the following: a partition of $\langle n\rangle$ into two disjoint subsets $T_0,T_1$, with every element of $T_0$ less than every element of $T_1$; and a $T_0$-simplex in $\mathcal{D}$ and $T_1$-simplex in $K$. This defines a quasicategory (HTT 1.2.8.3).

\begin{definition}
If $p:K\rightarrow\mathcal{C}$ is a functor of $\infty$-categories, then $\mathcal{C}_{/p}$ is the simplicial set whose $n$-simplices are given by dotted line lifts in the following diagram of simplicial sets $$\xymatrix{
K\ar[d]\ar[r]^p &\mathcal{C} \\
\Delta^n\star K.\ar@{-->}[ru] &
}$$
\end{definition}

\begin{proposition}[HTT 1.2.9.2, 1.2.9.3]
$\mathcal{C}_{/p}$ is a quasicategory, and satisfies the following universal property: for all $\infty$-categories $\mathcal{D}$, $\text{Fun}(\mathcal{D},\mathcal{C}_{/p})$ is \emph{equal} (as a simplicial set) to the $\infty$-category of lifts $$\xymatrix{
K\ar[d]\ar[r]^p &\mathcal{C} \\
\mathcal{D}\star K.\ar@{-->}[ru] &
}$$
\end{proposition}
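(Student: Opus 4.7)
The plan is to establish the universal property first — which is essentially tautological from the defining formula — and then deduce the quasicategory property as a formal consequence. For the universal property, observe that by definition of $\mathcal{C}_{/p}$, an $n$-simplex is precisely a lift $\Delta^n \star K \to \mathcal{C}$ extending $p$. Writing a general simplicial set $Y$ as the colimit of its simplices and chasing naturality gives a bijection $\text{Hom}_{\text{sSet}}(Y, \mathcal{C}_{/p}) = \{\text{lifts } Y \star K \to \mathcal{C} \text{ extending } p\}$. Applying this with $Y = \Delta^n \times \mathcal{D}$ and using Definition \ref{HomSSet} yields
$$\text{Fun}(\mathcal{D}, \mathcal{C}_{/p})_n = \text{Hom}_{\text{sSet}}(\Delta^n \times \mathcal{D}, \mathcal{C}_{/p}) = \{\text{lifts } (\Delta^n \times \mathcal{D}) \star K \to \mathcal{C} \text{ extending } p\}.$$
Interpreting the ``$\infty$-category of lifts'' as the simplicial set whose $n$-simplices are exactly such lifts makes the stated equality of simplicial sets literal and natural in $\mathcal{D}$.

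For the quasicategory condition, I need every inner horn $\Lambda^n_i \to \mathcal{C}_{/p}$ (with $0 < i < n$) to extend along $\Lambda^n_i \hookrightarrow \Delta^n$. By the universal property with $\mathcal{D} = \Lambda^n_i$ and $\mathcal{D} = \Delta^n$, this translates to extending a lift $\Lambda^n_i \star K \to \mathcal{C}$ to a lift $\Delta^n \star K \to \mathcal{C}$. Since $\mathcal{C}$ is a quasicategory, any map into it extends along any inner anodyne inclusion, so it suffices to prove that $\Lambda^n_i \star K \hookrightarrow \Delta^n \star K$ is itself inner anodyne for every simplicial set $K$ and every $0 < i < n$.

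This last claim is the real substance of the proposition, and the main obstacle. I would prove it by induction on $K$, filtering by skeleta and reducing (at each attachment of a nondegenerate $m$-simplex) to the statement that the pushout--join inclusion
$$\bigl(\Lambda^n_i \star \Delta^m\bigr) \cup_{\Lambda^n_i \star \partial\Delta^m} \bigl(\Delta^n \star \partial\Delta^m\bigr) \;\hookrightarrow\; \Delta^n \star \Delta^m$$
is inner anodyne. The payoff is the identification $\Delta^n \star \Delta^m \cong \Delta^{n+m+1}$, under which a direct (but careful) combinatorial calculation on nondegenerate simplices identifies the domain with the inner horn $\Lambda^{n+m+1}_i$ --- an inclusion that is inner anodyne by definition, since $0 < i \leq n < n+m+1$. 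Verifying that the pushout--join genuinely recovers the horn $\Lambda^{n+m+1}_i$ (rather than some strictly smaller subcomplex) is the combinatorial heart of the argument; once it is done, extending from finite-dimensional $K$ to arbitrary $K$ by transfinite induction on cells is routine, and both clauses of the proposition follow.
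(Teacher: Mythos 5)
Your proposal is correct, and it is precisely the standard argument from HTT (Propositions 1.2.9.2--1.2.9.3 together with Lemma 2.1.2.3) that the paper cites rather than reproduces: the universal property is definitional, and the quasicategory condition reduces via the skeletal filtration of $K$ to the pushout--join computation identifying $(\Lambda^n_i\star\Delta^m)\cup_{\Lambda^n_i\star\partial\Delta^m}(\Delta^n\star\partial\Delta^m)$ with the inner horn $\Lambda^{n+m+1}_i\subseteq\Delta^{n+m+1}$. The only point worth flagging is in the universal-property step: the $n$-simplices of the ``$\infty$-category of lifts'' are naturally maps out of $\Delta^n\times(\mathcal{D}\star K)$ rather than $(\Delta^n\times\mathcal{D})\star K$, so one should either take the latter description as the definition of that simplicial set (as the paper implicitly does) or supply the comparison map between the two.
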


\noindent The undercategory $\mathcal{C}_{p/}$ is defined dually, or as $((\mathcal{C}^\text{op})_{/p^\prime})^\text{op}$, where $$p^\prime:K^\text{op}\rightarrow\mathcal{C}^\text{op}.$$

\subsubsection{Limits and colimits}
\noindent If $p:K\rightarrow\mathcal{C}$ is a functor, the \emph{limit} of $p$ is the terminal object of $\mathcal{C}_{/p}$ (if it exists), and the \emph{colimit} is the initial object of $\mathcal{C}_{p/}$.

That is, a limit $\text{lim}(p)$ is properly a functor $\Delta^0\star K\rightarrow\mathcal{C}$ which lifts $p:K\rightarrow\mathcal{C}$. The restriction to $\Delta^0\rightarrow\mathcal{C}$ picks out an object of $\mathcal{C}$ which we will also abusively call the \emph{limit of $p$}.

Given any functor $\bar{p}:\Delta^0\star K\rightarrow\mathcal{C}$, we will say that $\bar{p}$ is a \emph{limit diagram} if it is terminal in $\mathcal{C}_{/p}$, where $p$ is the restriction of $\bar{p}$ to $K$.

\begin{definition}
\label{FunK}
A functor $F:\mathcal{C}\rightarrow\mathcal{D}$ preserves limits of shape $K$ if the composition functor $$F\circ -:\text{Fun}(\Delta^0\star K,\mathcal{C})\rightarrow\text{Fun}(\Delta^0\star K,\mathcal{D})$$ sends limit diagrams to limit diagrams. We write $\text{Fun}^K(\mathcal{C},\mathcal{D})$ for the full subcategory of $\text{Fun}(\mathcal{C},\mathcal{D})$ spanned by functors preserving limits of shape $K$.
\end{definition}

\noindent A colimit is a functor $K\star\Delta^0\rightarrow\mathcal{C}$ lifting $p:K\rightarrow\mathcal{C}$, and we define colimit-preserving functors similarly.

\subsubsection{Example: Products}
\noindent A simplicial set is discrete if the functor $\Delta^\text{op}\rightarrow\text{Set}$ is constant (all morphisms are sent to the identity). Equivalently, it is discrete if it is the nerve of a discrete category, or the singular complex of a discrete space.

If $K$ is discrete, we call limits over $K$ \emph{products} and colimits over $K$ \emph{coproducts}. For example:
\begin{itemize}
\item If $K=\emptyset$, there is only one functor $p:K\rightarrow\mathcal{C}$, and $\mathcal{C}_{/p}=\mathcal{C}_{p/}=\mathcal{C}$. The limit (colimit) over $p$ is the terminal (initial) object of $\mathcal{C}$, if it exists.
\item If $K$ is the discrete $\infty$-category on one object, a functor $p:K\rightarrow\mathcal{C}$ corresponds to a choice of object $X\in\mathcal{C}$. We write $\mathcal{C}_{/X}$ and $\mathcal{C}_{X/}$. The overcategory $\mathcal{C}_{/X}$ always has a terminal object (namely $X$), so $\text{lim}(p)$ always exists and is equivalent to $X$. Similarly, $\text{colim}(p)$ is also $X$.
\item All products and coproducts can be computed in the homotopy category. That is, $\mathcal{C}\rightarrow h\mathcal{C}$ preserves products and coproducts (provided they exist).
\end{itemize}

\subsubsection{Example: Pullbacks}
\noindent A limit over the following diagram category $K$ is a \emph{pullback}: $$\xymatrix{
&\bullet\ar[d] \\
\bullet\ar[r] &\bullet.
}$$ A colimit over $K^\text{op}$ is a \emph{pushout}.

Note that $\Delta^0\star K$ is the 1-category given by a single commutative square, so a limit over $K$ is given by a pullback square $\Delta^0\star K\rightarrow\mathcal{C}$.

\subsection{The span construction}\label{1.2.3}
\subsubsection{Spans of 1-categories}
\noindent Let $\mathcal{L}_\text{CMon}$ denote the category of finitely generated free commutative monoids; that is, $\mathbb{N}^n$, for $n\geq 0$. $\mathcal{L}_\text{CMon}$ admits an alternate description which is purely combinatorial, as a category of spans of finite sets.

If $\mathcal{C}$ is a category with pullbacks, there is a category $\text{Span}(\mathcal{C})$ of spans of $\mathcal{C}$. An object is an object of $\mathcal{C}$. A morphism from $X$ to $Y$ is a \emph{span diagram} $$\xymatrix{
&S\ar[ld]\ar[rd] &\\
X &&Y,
}$$ where $S$ is any other object of $\mathcal{C}$. A composition is given by a pullback as follows: $$\xymatrix{
&&P\ar[ld]\ar[rd] &&\\
&S\ar[ld]\ar[rd] &&T\ar[ld]\ar[rd] &\\
X &&Y &&Z.
}$$

\begin{proposition}
\label{BurnCMon}
There is an equivalence of categories between $\mathcal{L}_\text{CMon}$ and $\text{Span}(\text{Fin})$, where Fin denotes the category of finite sets.
\end{proposition}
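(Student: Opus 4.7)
My plan is to construct an explicit equivalence $F\colon\text{Span}(\text{Fin})\to\mathcal{L}_\text{CMon}$ by sending a finite set $X$ to the free commutative monoid $\mathbb{N}[X]\cong\mathbb{N}^{|X|}$, and sending a span $X\xleftarrow{p}S\xrightarrow{q}Y$ to the monoid homomorphism $F(S)\colon\mathbb{N}[X]\to\mathbb{N}[Y]$ determined on generators by
$$x\;\longmapsto\;\sum_{s\in p^{-1}(x)} q(s).$$
This is well-defined on isomorphism classes of spans because the formula depends only on the fibers of $p$ together with the values of $q$ on them, which is preserved by relabeling $S$.

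The next step is to verify functoriality. The identity span $X\xleftarrow{=}X\xrightarrow{=}X$ is sent to the identity homomorphism, so only composition requires thought. Given spans $X\leftarrow S\to Y$ and $Y\leftarrow T\to Z$, their composite is the outer span of the pullback $S\times_Y T$. Using the explicit description $S\times_Y T=\{(s,t)\in S\times T:q(s)=p^\prime(t)\}$ and writing $A_{xy}=|\{s\in S:p(s)=x,\,q(s)=y\}|$ and $B_{yz}=|\{t\in T:p^\prime(t)=y,\,q^\prime(t)=z\}|$, one reads off that the number of elements of $S\times_Y T$ lying over $(x,z)$ is exactly $\sum_y A_{xy}B_{yz}$. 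On the other hand, $F(S)$ and $F(T)$ are the monoid homomorphisms with matrices $A$ and $B$ in the bases of generators, and the composite homomorphism has matrix $AB$ with entries $\sum_y A_{xy}B_{yz}$. Thus $F$ preserves composition.

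It remains to check that $F$ is essentially surjective and fully faithful. Essential surjectivity is immediate: every finitely generated free commutative monoid is $\mathbb{N}^n=\mathbb{N}[\{1,\ldots,n\}]$ for some $n\geq 0$. For fullness and faithfulness, I observe that both hom-sets are naturally in bijection with the set $\text{Mat}_{X,Y}(\mathbb{N})$ of $X\times Y$-indexed matrices of natural numbers. On the Lawvere-theory side, a monoid homomorphism $\mathbb{N}[X]\to\mathbb{N}[Y]$ is freely and uniquely determined by the image of each generator $x\in X$, which is an arbitrary element of $\mathbb{N}[Y]$, i.e.~a function $Y\to\mathbb{N}$; assembling these gives a matrix. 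On the span side, a span $X\leftarrow S\to Y$ up to isomorphism is determined by the function $(x,y)\mapsto|p^{-1}(x)\cap q^{-1}(y)|$, and every matrix arises from such a span (take $S=\coprod_{(x,y)}A_{xy}$ copies of a point, with the evident maps). The bijections through matrices agree with the rule defining $F$ on morphisms, so $F$ induces a bijection on hom-sets.

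The main obstacle is the composition check, since it is where the pullback description of composition of spans has to mesh with multiplication in the matrix algebra. Once the bookkeeping in that step is done cleanly, the rest is routine; the only other mild subtlety is ensuring that $F$ is well-defined on isomorphism classes of spans (as opposed to spans with a specified apex), which amounts to noting that the formula for $F(S)$ is invariant under bijections $S\cong S'$ over $X\times Y$.
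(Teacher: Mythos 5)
Your proof is correct and follows essentially the same route as the paper: both arguments identify morphisms on each side with matrices of natural numbers and observe that composition of spans by pullback corresponds to matrix multiplication. The only cosmetic difference is that you construct the functor from $\text{Span}(\text{Fin})$ to $\mathcal{L}_\text{CMon}$ while the paper goes in the opposite direction, and you spell out the fullness/faithfulness and well-definedness checks that the paper leaves implicit.
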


\begin{warning}
\label{WarningSpan}
As defined, $\text{Span}(\mathcal{C})$ is not really a category, because pullbacks are only well-defined \emph{up to isomorphism}. There are two solutions:
\begin{itemize}
\item We may define a morphism to be an isomorphism class of span diagrams. Then composition is well-defined and produces a category $\text{Span}_1(\mathcal{C})$. This is what we mean in Proposition \ref{BurnCMon}.
\item We may define a bicategory (weak 2-category) $\text{Span}_2(\mathcal{C})$. A 2-morphism between spans $X\leftarrow S\rightarrow Y$ and $X\leftarrow S^\prime\rightarrow Y$ is an isomorphism $S\xrightarrow{\sim}S^\prime$ compatible with the maps to $X$ and $Y$, and each choice of pullback corresponds to a choice of composition (which are, in any case, all the same up to equivalence).
\end{itemize}
\end{warning}

\noindent The warning illustrates a problem with \emph{laxness}. When we describe the span construction in the setting of $\infty$-categories, there will be no such concern. In the event that $\mathcal{C}$ is a 1-category, our $\infty$-category $\text{Span}_\infty(\mathcal{C})$ is in fact the nerve of a bicategory ($\text{Span}_2(\mathcal{C})$ of the warning). And the homotopy category is $h\text{Span}_\infty(\mathcal{C})\cong\text{Span}_1(\mathcal{C})$ of the warning. \\

\noindent We should give some justification for Proposition \ref{BurnCMon}. Consider that a commutative monoid homomorphism $f:\mathbb{N}^m\rightarrow\mathbb{N}^n$ is determined by where it sends each of $m$ generators of $\mathbb{N}^m$; in each case, the generator is sent to an $n$-tuple of natural numbers. That is, morphisms in $\mathcal{L}_\text{CMon}$ can be represented by $m\times n$-matrices of natural numbers.

This observation produces a functor $\mathcal{L}_\text{CMon}\rightarrow\text{Span}_1(\text{Fin})$ as follows: $\mathbb{N}^n$ is sent to $\{1,2,\ldots,n\}$. A morphism $f:\mathbb{N}^m\rightarrow\mathbb{N}^n$ corresponding to a matrix $M_{ij}$ is sent to a span (determined up to isomorphism) $$\xymatrix{
&S\ar[ld]_{\alpha}\ar[rd]^{\beta} & \\
\{1,2,\ldots,m\} &&\{1,2,\ldots,n\},
}$$ where $S$ has precisely $M_{ij}$ elements $x$ such that $\alpha(x)=i$ and $\beta(x)=j$. Composition of spans by pullback corresponds to matrix multiplication, so we have the desired equivalence of categories $$\mathcal{L}_\text{CMon}\xrightarrow{\sim}\text{Span}_1(\text{Fin}).$$ $\text{Span}_1(\text{Fin})$ also satisfies a remarkable universal property:

Recall that a semiadditive category $\mathcal{C}$ is one which has finite products and coproducts, and the two notions agree (so $X\times Y\cong X\amalg Y$ and the initial and terminal objects are the same). For example, $\text{Mod}_R$ is semiadditive for any commutative ring (or semiring) $R$. The product and coproduct are both given by direct sum $\oplus$; the initial and terminal objects are both given by $0$.

CMon is semiadditive, and therefore so is $\text{Span}(\text{Fin})$, since it is equivalent to a full subcategory of CMon. Products and coproducts in $\text{Span}_1(\text{Fin})$ are given by disjoint union of finite sets.

\begin{proposition}
$\text{Span}(\text{Fin})$ is the free semiadditive category on one generator. That is, if $\mathcal{C}$ is semiadditive and $\text{Fun}^\times$ denotes product-preserving functors, the forgetful functor $$\text{Fun}^\times(\text{Span}(\text{Fin}),\mathcal{C})\rightarrow\mathcal{C}$$ which sends $F$ to $F(\{0\})$, is an equivalence of categories.
\end{proposition}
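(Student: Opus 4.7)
The plan is to combine Proposition \ref{BurnCMon} with the classical fact that every object of a semiadditive category carries a unique commutative monoid structure, thereby reducing the claim to the statement that $\text{CMon}(\mathcal{C})\to\mathcal{C}$ is an equivalence when $\mathcal{C}$ is semiadditive.

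By Proposition \ref{BurnCMon}, $\text{Span}(\text{Fin})$ is equivalent to $\mathcal{L}_\text{CMon}$. Since the span construction is self-dual (reversing each span induces an isomorphism $\text{Span}(\text{Fin})^\text{op}\simeq\text{Span}(\text{Fin})$), we can equally identify $\text{Span}(\text{Fin})$ with $\mathcal{L}_\text{CMon}^\text{op}$. The defining universal property of the Lawvere theory for commutative monoids then identifies $\text{Fun}^\times(\text{Span}(\text{Fin}),\mathcal{C})$ with $\text{CMon}(\mathcal{C})$, sending a product-preserving $F$ to $F(\{*\})$ endowed with the commutative monoid structure induced by the fold span $\{*,*\}\leftarrow\{*,*\}\to\{*\}$ and the unit span $\emptyset\leftarrow\emptyset\to\{*\}$. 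Under this equivalence, the forgetful functor in the statement becomes the forgetful functor $\text{CMon}(\mathcal{C})\to\mathcal{C}$.

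To finish, I show this last forgetful functor is an equivalence. For essential surjectivity, each object $X\in\mathcal{C}$ admits a canonical commutative monoid structure: unit $0\to X$ (the unique morphism out of the initial-terminal object) and multiplication $X\times X\cong X\amalg X\xrightarrow{\nabla}X$, using the biproduct identification from semiadditivity. The monoid axioms are routine consequences of the universal properties of the biproduct. For full faithfulness, every morphism $f:X\to Y$ automatically respects these canonical structures: compatibility with the unit is automatic, and compatibility with the multiplication is the naturality of the fold.

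The main obstacle is the uniqueness part of the preceding argument: one must show that any \emph{a priori} commutative monoid structure on an object of a semiadditive category coincides with the biproduct-induced one. This is a variant of the Eckmann-Hilton argument, using the interaction between an arbitrary multiplication $\mu:X\times X\to X$ and the coproduct inclusions $X\to X\amalg X\simeq X\times X$ to force $\mu$ to agree with the fold. Once this is in hand, the composite $\text{Fun}^\times(\text{Span}(\text{Fin}),\mathcal{C})\simeq\text{CMon}(\mathcal{C})\simeq\mathcal{C}$ recovers the functor described in the statement.
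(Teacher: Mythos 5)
Your argument is correct, and it is a genuine proof where the paper supplies none: the text explicitly omits the 1-categorical proofs, and the $\infty$-categorical analogue (Theorem \ref{2T4}) outsources the freeness of $\text{Span}(\text{Fin})$ to Glasman together with an idempotency argument for $\text{Fin}\otimes\text{Fin}^\text{op}$. Your route --- identify $\text{Span}(\text{Fin})$ with $\mathcal{L}_\text{CMon}\cong\mathcal{L}_\text{CMon}^\text{op}$, invoke functorial semantics to get $\text{Fun}^\times(\text{Span}(\text{Fin}),\mathcal{C})\cong\text{CMon}(\mathcal{C})$, and then check that $\text{CMon}(\mathcal{C})\rightarrow\mathcal{C}$ is an equivalence when $\mathcal{C}$ is semiadditive --- is the standard classical argument, and all three steps are sound, including the Eckmann--Hilton-style uniqueness (the unit is forced because the terminal object is initial, and $\mu\circ i_1=\mu\circ i_2=\text{id}$ then forces $\mu=\nabla$ by the universal property of the coproduct). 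One remark on logical order: the paper deduces $\text{Fun}^\times(\text{Span}(\text{Fin}),\mathcal{C})\cong\text{CMon}(\mathcal{C})$ (Corollary \ref{BurnCMon2}) \emph{from} this proposition, whereas you go the other way; this is not circular only because you justify the identification by the external universal property of the Lawvere theory of commutative monoids (that $\mathcal{L}_\text{CMon}^\text{op}$ is the walking commutative monoid object in a category with finite products) rather than by citing the corollary --- you should make that dependence explicit. It would also be worth one sentence noting that, since all the finite (co)products in sight are biproducts and $\text{Span}(\text{Fin})$ is self-dual, product-preservation and coproduct-preservation coincide, so the variance bookkeeping in the identification is harmless. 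What each approach buys: yours is elementary and self-contained at the 1-categorical level, but the uniqueness step does not lift verbatim to $\infty$-categories (where it becomes contractibility of a space of structures, which is exactly the content of Glasman's theorem that the paper cites); the paper's eventual tensor-product argument is built to survive that passage at the cost of relying on the external input.
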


\begin{corollary}
\label{BurnCMon2}
If $\mathcal{C}$ admits finite products, then $\text{Fun}^\times(\text{Span}(\text{Fin}),\mathcal{C})$ is equivalent to the category of commutative monoids in $\mathcal{C}^\times$.
\end{corollary}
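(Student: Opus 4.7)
The strategy is to bootstrap from the preceding proposition by replacing $\mathcal{C}$ with $\text{CMon}(\mathcal{C}^\times)$, which is always semiadditive, and showing that product-preserving functors out of the semiadditive category $\text{Span}(\text{Fin})$ cannot tell the two apart.

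First I would verify that $\text{CMon}(\mathcal{C}^\times)$ is semiadditive whenever $\mathcal{C}$ has finite products. Products of commutative monoids are computed on the underlying objects in $\mathcal{C}$, and every $M\in\text{CMon}(\mathcal{C}^\times)$ carries a canonical commutative monoid structure in $\text{CMon}(\mathcal{C}^\times)$ itself, whose multiplication $M\times M\to M$ serves as a codiagonal. This makes finite products and coproducts coincide, with initial and terminal object both equal to the trivial monoid $1$.

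Next, applying the preceding proposition to the semiadditive category $\mathcal{D}=\text{CMon}(\mathcal{C}^\times)$, evaluation at $\{0\}$ yields an equivalence
$$\text{Fun}^\times(\text{Span}(\text{Fin}),\text{CMon}(\mathcal{C}^\times))\xrightarrow{\sim}\text{CMon}(\mathcal{C}^\times).$$
It therefore suffices to show that post-composition with the forgetful functor $U:\text{CMon}(\mathcal{C}^\times)\to\mathcal{C}$ induces an equivalence
$$U_*:\text{Fun}^\times(\text{Span}(\text{Fin}),\text{CMon}(\mathcal{C}^\times))\xrightarrow{\sim}\text{Fun}^\times(\text{Span}(\text{Fin}),\mathcal{C}).$$
Since $U$ preserves products, $U_*$ is well-defined; to build a quasi-inverse, given a product-preserving $F:\text{Span}(\text{Fin})\to\mathcal{C}$ I would note that $\{0\}\in\text{Span}(\text{Fin})$ carries a canonical commutative monoid structure: the multiplication is the span $\{0,1\}\xleftarrow{\text{id}}\{0,1\}\to\{0\}$ and the unit is $\emptyset\xleftarrow{\text{id}}\emptyset\to\{0\}$. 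Because $F$ is product-preserving, this structure transports to a commutative monoid structure on $F(\{0\})$, producing the desired lift $\tilde{F}:\text{Span}(\text{Fin})\to\text{CMon}(\mathcal{C}^\times)$.

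The main obstacle is showing that $\tilde F$ is determined on \emph{all} morphisms, not just on the monoid structure maps at $\{0\}$, and that this lift is essentially unique. The point is that every morphism in $\text{Span}(\text{Fin})$ decomposes, via products and composition, into the canonical spans encoding multiplication, unit, and projection/diagonal (this is essentially Proposition \ref{BurnCMon}, which presents $\text{Span}(\text{Fin})$ as the Lawvere-theoretic home of commutative monoids). Hence any product-preserving $F$ out of $\text{Span}(\text{Fin})$ is determined by $F(\{0\})$ together with the images of these generating spans, and those images already exhibit $F(\{0\})$ as a commutative monoid object in $\mathcal{C}^\times$. Combining with the proposition gives the corollary.
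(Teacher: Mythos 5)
Your plan is correct. The paper omits the $1$-categorical proof at this point (``we omit the proofs\dots since we will later repeat them for $\infty$-categories''), and when the statement reappears in Chapter \ref{3} it is proved by a different route: there $\text{Span}(\text{Fin})$ is identified with $\text{Fin}\otimes\text{Fin}^\text{op}$ (Theorem \ref{2T4}), and the equivalence $\text{Hom}(\text{Span}(\text{Fin})^\amalg,\mathcal{C}^\times)\cong\text{CMon}(\mathcal{C})$ falls out of the tensor--Hom adjunction together with solidity of $\text{Fin}$ and $\text{Fin}^\text{op}$ (cf.\ Corollary \ref{CofreeSemiadd}). You instead deduce the corollary directly from the preceding proposition by passing to the cofree semiadditive category $\text{CMon}(\mathcal{C}^\times)$. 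This is the natural $1$-categorical derivation and is more elementary --- it needs no tensor product of symmetric monoidal categories --- at the cost of the explicit bookkeeping that every span is generated under products and composition by folds, units, diagonals, and projections (which is what Proposition \ref{BurnCMon}'s matrix description provides). Both routes ultimately funnel through $\text{CMon}(\mathcal{C}^\times)$. The one step you should flesh out is fully faithfulness of $U_*$: to see that a natural transformation of underlying functors is automatically a transformation of $\text{CMon}$-valued functors, you need that for any product-preserving lift $\tilde F$ the monoid structure on $\tilde F(\{0\})$, as an object of $\text{CMon}(\mathcal{C}^\times)$, coincides with the structure induced by $\tilde F$ applied to the fold span. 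This holds because $\tilde F$ preserves codiagonals and, in the semiadditive category $\text{CMon}(\mathcal{C}^\times)$, the codiagonal of a monoid is its multiplication; it is routine, but it is where the essential uniqueness of the lift actually lives.
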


\noindent We omit the proofs (which are not hard) since we will later repeat them for $\infty$-categories.

\begin{remark}
Using Corollary \ref{BurnCMon2} as inspiration, we may define symmetric monoidal $\infty$-categories to be product-preserving functors $\text{Span}(\text{Fin})\rightarrow\text{Cat}$. This works because (as in \S\ref{1.1.3}), symmetric monoidal $\infty$-categories are commutative monoids in Cat.
\end{remark}

\subsubsection{Spans of $\infty$-categories}
\noindent This construction is due to Barwick \cite{BarMack1}.

We will denote by $\text{Tw}(\Delta^n)$ the following \emph{twisted $n$-simplex category}.

Objects are pairs $0\leq i\leq j\leq n$. There is a morphism $ij\rightarrow i^\prime j^\prime$ (which is unique if it exists) if and only if $i\leq i^\prime$ and $j\geq j^\prime$. For example, $\text{Tw}(\Delta^3)$ is the diagram category $$\xymatrix{
&&&03\ar[ld]\ar[rd] &&&\\
&&02\ar[ld]\ar[rd] &&13\ar[ld]\ar[rd] &&\\
&01\ar[ld]\ar[rd] &&12\ar[ld]\ar[rd] &&23\ar[ld]\ar[rd] &\\
00 &&11 &&22 &&33.
}$$ We will call the evident squares \emph{distinguished squares}; in this case, when $n=3$, there are three distinguished squares.

As usual, we think of $\text{Tw}(\Delta^n)$ as an $\infty$-category via the nerve.

\begin{definition}
Let $\mathcal{C}$ be an $\infty$-category which admits pullbacks. Denote by $\text{Span}(\mathcal{C})$ the simplicial set whose $n$-simplices are functors $\text{Tw}(\Delta^n)\rightarrow\mathcal{C}$ taking distinguished squares to pullbacks.
\end{definition}

\noindent We have the following results:
\begin{itemize}
\item $\text{Span}(\mathcal{C})$ is a quasicategory \cite{BarMack1} (3.4).
\item If $\mathcal{C}$ is a 1-category, $\text{Span}(\mathcal{C})$ is (the nerve of) the bicategory $\text{Span}_2(\mathcal{C})$ of Warning \ref{WarningSpan}.
\item If $\mathcal{C}$ is disjunctive (for example, Fin), $\text{Span}(\mathcal{C})$ is semiadditive \cite{BarMack1} 4.3.
\item $\text{Span}(\text{Fin})$ is the free semiadditive $\infty$-category on one object \cite{Glasman} A.1.
\end{itemize}

\noindent We will prove two additional properties in Chapter \ref{3}:
\begin{itemize}
\item If $\mathcal{D}$ admits finite products, $\text{Fun}^\times(\text{Span}(\text{Fin}),\mathcal{D})\cong\text{CMon}(\mathcal{D})$.
\item $\text{Span}(\text{Fin})$ is the $\infty$-category of finitely generated free $\mathbb{E}_\infty$-spaces.
\end{itemize}

\section{Fibrations}\label{1.3}
\noindent In the last section, we introduced quasicategories and saw that they are a fairly robust model for $\infty$-categories: we can use them without tremendous difficulty to work with most of the basic $\infty$-category constructions. For comparison, working with limits, colimits, and the span construction is much more difficult in the setting of topological or simplicial categories.

However, there is one very important obstacle: The two $\infty$-categories Top and Cat play a critical role in the theory, but they do not admit convenient descriptions as quasicategories.

They \emph{do} admit very simple descriptions as simplicial categories, but this is typically not much help to us. In \S\ref{1.1.4} for example, we frequently spoke of functors $\mathcal{C}\rightarrow\text{Top}$ or even $\mathcal{C}\rightarrow\text{Cat}$. But if $\mathcal{C}$ is a quasicategory while Top and Cat are simplicial categories, just writing down such a functor can be very difficult. There is a type mismatch, so we have to pass back and forth along the Quillen equivalence between quasicategories and simplicial categories.

Consider the following related situation: if $M$ is a manifold and $G$ is a group, we often have occasion to consider continuous functions $M\rightarrow BG$. But working with such maps is once again complicated by a type mismatch: $M$ has an explicit description as a topological space, while $BG$ has a construction via some algebraic universal property. In cases where we want to write down explicit topological maps, it is easier to produce a principal $G$-bundle over $M$.

Similarly, instead of studying the functor $F:\mathcal{C}\rightarrow\text{Top}$, we will study the Grothendieck construction $p:\int F\rightarrow\mathcal{C}$, which can be regarded as a kind of fibration with classifying object Top.

We may think of the $\infty$-category $\int F$ as follows: an object is a pair $(X,x)$ where $X\in\mathcal{C}$ and $x\in F(X)$. A morphism $(X,x)\rightarrow(Y,y)$ consists of a morphism $\phi:X\rightarrow Y$ in $\mathcal{C}$, and a path $F(\phi)(x)\rightarrow y$ in the space $F(Y)$.

The forgetful functor $p:\int F\rightarrow\mathcal{C}$ retains all of the information about the functor $F$. For example, the fiber of $p$ over $X$ is $F(X)$. \\

\noindent Grothendieck constructions are well-suited to descriptions in terms of quasicategories. Suppose, for example, that we are given an object $X$ of a quasicategory $\mathcal{C}$, and we would like to write down the corepresentable functor $$\text{Map}_\mathcal{C}(X,-):\mathcal{C}\rightarrow\text{Top}.$$ Ordinarily, to write a functor, we would need only to write down a morphism of simplicial sets. However, in this case, we do not have a good model for Top as a simplicial set, so this is not a feasible option.

However, the associated Grothendieck construction $p:\int F\rightarrow\mathcal{C}$ is as follows: an object of $\int F$ is a pair $(Y,f)$ with $Y\in\mathcal{C}$ and $f\in\text{Map}_\mathcal{C}(X,Y)$. That is, $\int F\cong\mathcal{C}_{X/}$. Therefore, the Grothendieck construction is modeled by the forgetful functor $$\mathcal{C}_{X/}\rightarrow\mathcal{C}.$$ As in \S\ref{1.2.2}, this functor \emph{can} be described in terms of quasicategories. \\

\noindent In this section, we will show that we can effectively study Grothendieck constructions instead of functors into Top and Cat, first for Top in \S\ref{1.3.1}, then for Cat in \S\ref{1.3.2}. We will also use the Grothendieck construction to give a description of limits and colimits in Top and Cat.

When first learning this material, there is a tendency to think, ``Surely, these ideas are only technical points which can be overlooked!'' This is not so. To illustrate this point, we use the Grothendieck construction in \S\ref{1.3.3} to study adjunctions of $\infty$-categories. \\

\noindent The ideas of this section originate in classical category theory. They were initiated by Grothendieck's work on descent, in SGA I \cite{SGA1}.

\subsection{Functors to Top}\label{1.3.1}
\noindent The category sSet is naturally enriched in itself (Definition \ref{HomSSet}) and therefore a simplicial category. Let Top be the full subcategory of sSet spanned by Kan complexes. Then Top is also a simplicial category, therefore a (large) $\infty$-category.

\subsubsection{The Grothendieck construction}
\noindent If $\ast$ denotes the space with one point, call the undercategory $\text{Top}_{\ast/}$ the $\infty$-category of \emph{pointed spaces}. We will usually write just $\text{Top}_\ast$.

\begin{definition}
\label{GrothTop}
If $\mathcal{C}$ is an $\infty$-category and $F:\mathcal{C}\rightarrow\text{Top}$ a functor, define the Grothendieck construction $p$ via pullback (in sSet) along the forgetful functor (on the right): $$\xymatrix{
\int F\ar[r]\ar[d]_p &\text{Top}_\ast\ar[d]^{\text{Fgt}} \\
\mathcal{C}\ar[r]_-{F} &\text{Top}.
}$$
\end{definition}

\noindent Intuitively, an object of $\int F$ is an object $X\in\mathcal{C}$ and a point of $F(X)$, but we should not expect to use the definition to make serious computations of Grothendieck constructions. Instead, we will typically describe the Grothendieck construction first, and deduce the existence of the functor $F$.

\subsubsection{Left fibrations}
\noindent We may think of a functor $p:\int F\rightarrow\mathcal{C}$ (as in Definition \ref{GrothTop}) as an \emph{$\infty$-category cofibered in $\infty$-groupoids}, because each fiber $p^{-1}(X)$ is an $\infty$-groupoid and these fibers vary functorially in $X$.

Such a functor $p$ satisfies an important lifting property. If $f:X\rightarrow Y$ is a morphism in $\mathcal{C}$, and $(X,x)$ an object of $\int F$ lifting $X$, there is a morphism $(X,x)\rightarrow(Y,F(f)(x))$ in $\int F$ lifting $f$. More generally, $p$ is a \emph{left fibration}:

\begin{definition}
\label{DefLeftFib}
A map of simplicial sets $p:S\rightarrow\mathcal{C}$ is called a \emph{left fibration} if it has the right lifting property with respect to all horns $\Lambda_i^n$ for $i\neq n$. That is, for every diagram of simplicial sets of this form, there is a dotted line making a commutative square: $$\xymatrix{
\Lambda_i^n\ar[d]\ar[r] &S\ar[d]^p \\
\Delta^n\ar@{-->}[ru]\ar[r] &\mathcal{C}.
}$$
\end{definition}

\noindent If $\mathcal{C}$ is a quasicategory and $p:S\rightarrow\mathcal{C}$ is a left fibration, then $S$ is certainly also a quasicategory.

\begin{example}[HTT 2.1.1.3]
A functor of 1-categories $p:S\rightarrow\mathcal{C}$ exhibits $S$ as a category cofibered in groupoids if and only if the nerve of $p$ is a left fibration of simplicial sets.
\end{example}

\begin{example}[HTT 2.1.2.2]
If $p:K\rightarrow\mathcal{C}$ is a functor of $\infty$-categories, then the forgetful functor $\mathcal{C}_{p/}\rightarrow\mathcal{C}$ is a left fibration.

For example, $\text{Top}_\ast\rightarrow\text{Top}$ is a left fibration.
\end{example}

\noindent This last example is nontrivial; in fact, Joyal proves $\mathcal{C}_{p/}\rightarrow\mathcal{C}$ is a left fibration first, and only then concludes that $\mathcal{C}_{p/}$ is an $\infty$-category!

Left fibrations are stable under pullback. That is, if $S\rightarrow\mathcal{C}$ is a left fibration and $\mathcal{D}\rightarrow\mathcal{C}$ any map of simplicial sets, then $S\times_\mathcal{C}\mathcal{D}\rightarrow\mathcal{D}$ is a left fibration.

\begin{corollary}
If $F:\mathcal{C}\rightarrow\text{Top}$ is a functor, the Grothendieck construction $p:\int F\rightarrow\mathcal{C}$ is a left fibration.
\end{corollary}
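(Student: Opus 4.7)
The plan is to observe that the statement is an immediate consequence of the two facts just recorded in the excerpt: (i) the forgetful functor $\text{Top}_\ast\rightarrow\text{Top}$ is a left fibration, and (ii) left fibrations are stable under pullback.

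First I would note that $\text{Top}_\ast = \text{Top}_{\ast/}$ by definition, i.e.\ it is the undercategory of $\text{Top}$ over the functor $\ast\rightarrow\text{Top}$ picking out a one-point space. By the cited fact (HTT 2.1.2.2, recorded just above as an example), the projection from any such undercategory $\mathcal{C}_{p/}\rightarrow\mathcal{C}$ is a left fibration. Applying this with $\mathcal{C}=\text{Top}$ and $p$ the inclusion of the one-point space gives that $\text{Top}_\ast\rightarrow\text{Top}$ is a left fibration.

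Next, by Definition \ref{GrothTop}, the Grothendieck construction fits into a pullback square of simplicial sets
$$\xymatrix{
\int F\ar[r]\ar[d]_p &\text{Top}_\ast\ar[d]^{\text{Fgt}} \\
\mathcal{C}\ar[r]_-{F} &\text{Top}.
}$$
Since left fibrations are characterized by a right lifting property (Definition \ref{DefLeftFib}), the class of left fibrations is closed under pullback along arbitrary morphisms of simplicial sets, as recorded in the excerpt. Applying this stability property to the right-hand vertical map $\text{Top}_\ast\rightarrow\text{Top}$ along $F:\mathcal{C}\rightarrow\text{Top}$ yields that the left-hand vertical map $p:\int F\rightarrow\mathcal{C}$ is a left fibration, which is exactly the claim.

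There is essentially no obstacle here — the proof is a one-line formal consequence of the preceding two bullet points, and no further combinatorial verification is required. The only mild subtlety is making sure one reads $\text{Top}_\ast$ as an undercategory (so that the undercategory-projection theorem applies) rather than as something defined ad hoc, but this is built into the notation $\text{Top}_{\ast/}$ introduced at the start of \S\ref{1.3.1}.
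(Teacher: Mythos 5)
Your proof is correct and is exactly the argument the paper intends: the Grothendieck construction is by definition the pullback of the left fibration $\text{Top}_\ast\rightarrow\text{Top}$ along $F$, and left fibrations are stable under pullback. Nothing further is needed.
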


\subsubsection{Joyal's correspondence}
\noindent Let $\text{LFib}(\mathcal{C})$ denote the full subcategory of $\text{sSet}_{/\mathcal{C}}$ spanned by left fibrations over $\mathcal{C}$. This inherits the structure of a simplicial category, thus an $\infty$-category. The following theorem is due to unpublished work of Joyal, but see \cite{BarFib} 1.4.

\begin{theorem}
\label{GrJoyal}
The Grothendieck construction describes an equivalence of $\infty$-categories $$\textstyle{\int}:\text{Fun}(\mathcal{C},\text{Top})\rightarrow\text{LFib}(\mathcal{C}).$$
\end{theorem}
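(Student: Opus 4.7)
The plan is to realize $\int$ as the underlying equivalence of a Quillen equivalence between two model structures. On the fibration side, $\text{sSet}/\mathcal{C}$ admits a \emph{covariant model structure} whose fibrant objects are exactly the left fibrations over $\mathcal{C}$ (by Definition~\ref{DefLeftFib}, these are precisely the objects lifting against inner horns and left horns). On the functor side, let $\mathcal{C}^\Delta$ denote the rigidification of $\mathcal{C}$ to a simplicial category (the left adjoint to the simplicial nerve functor alluded to in Theorem~\ref{InftyModels}); then $\text{Fun}(\mathcal{C},\text{Top})$ is modeled by the projective model structure on simplicial functors $\mathcal{C}^\Delta\rightarrow\text{sSet}$, whose fibrant objects land in Kan complexes.

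\textbf{Step 1: build an explicit unstraightening.} First I would construct a right adjoint \emph{unstraightening} functor $\text{Un}:\text{Fun}(\mathcal{C}^\Delta,\text{sSet})\rightarrow\text{sSet}/\mathcal{C}$ whose value on a simplicial functor $F$ is a simplicial set over $\mathcal{C}$ whose $n$-simplices are pairs consisting of an $n$-simplex $\sigma:\Delta^n\rightarrow\mathcal{C}$ together with a compatible choice of simplex in $F$ along $\sigma$ (a weighted-limit construction). A direct calculation shows the fiber of $\text{Un}(F)\to\mathcal{C}$ over $X\in\mathcal{C}$ is equivalent to $F(X)$, matching the pullback square of Definition~\ref{GrothTop}. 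Thus after restricting to fibrant objects and passing to underlying $\infty$-categories, $\text{Un}$ recovers the Grothendieck construction.

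\textbf{Step 2: Quillen adjunction, then equivalence.} Next I would verify that $(\text{St},\text{Un})$ form a Quillen adjunction, where $\text{St}$ is the left adjoint produced by general adjoint functor theorem considerations in the presentable category $\text{sSet}/\mathcal{C}$. That $\text{St}$ preserves (trivial) cofibrations reduces to a combinatorial lifting check against generating horns; preservation of monomorphisms is formal. To upgrade to a Quillen \emph{equivalence} it suffices, by a standard criterion, to verify the derived unit is a weak equivalence on a set of generators of the covariant model structure, and dually for the derived counit on the projective side. The natural test objects on the left fibration side are the slice fibrations $\mathcal{C}_{X/}\to\mathcal{C}$ for $X\in\mathcal{C}$; these generate in the appropriate sense, and the computation one must carry out is that $\text{St}$ applied to such an object recovers the corepresentable simplicial functor $\text{Map}_\mathcal{C}(X,-)$ in $\mathcal{C}^\Delta$.

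\textbf{Main obstacle.} The hard step is precisely this last verification: identifying the straightening of $\mathcal{C}_{X/}\to\mathcal{C}$ with the corepresentable mapping space computed in the rigidification $\mathcal{C}^\Delta$. Both sides model $\text{Map}_\mathcal{C}(X,-)$ but through different combinatorial procedures --- one via slicing in simplicial sets, the other via the rigidification of $\mathcal{C}$ --- and reconciling them is the content of the deepest part of Lurie's proof (essentially HTT~2.2.2). Once this comparison is established, essential surjectivity of $\int$ follows because every left fibration is equivalent to $\text{Un}$ of its fiberwise straightening, and full faithfulness follows because mapping spaces in both $\text{Fun}(\mathcal{C},\text{Top})$ and $\text{LFib}(\mathcal{C})$ can be computed from the respective simplicial enrichments and are identified by $\text{Un}$. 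Everything else in the argument is formal bookkeeping around this central comparison.
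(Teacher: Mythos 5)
The paper does not prove this theorem: it attributes the result to unpublished work of Joyal and refers the reader to Barwick--Shah for a survey (the whole chapter is expository, with ``essentially no proofs''). So there is no proof in the text to compare yours against; what you have written is an outline of the standard straightening/unstraightening argument of HTT \S2.1--2.2, and as an outline it is sound. You correctly identify the two model structures (covariant on $\text{sSet}_{/\mathcal{C}}$, projective on simplicial functors out of the rigidification), the direction of the adjunction in Lurie's conventions ($\text{St}$ left adjoint, $\text{Un}$ right adjoint), and the genuine crux: reconciling the straightening of $\mathcal{C}_{X/}\rightarrow\mathcal{C}$ with the corepresentable simplicial functor computed in $\mathcal{C}^\Delta$.

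Two places where your sketch conceals more work than the single ``main obstacle'' you name. First, the conclusion is an equivalence of $\infty$-categories with source $\text{Fun}(\mathcal{C},\text{Top})$ as the paper defines it (a quasicategory of functors into the simplicial category Top), whereas your Quillen equivalence lands in the projective model structure on $\text{Fun}(\mathcal{C}^\Delta,\text{sSet})$; identifying the underlying $\infty$-category of the latter with the former is itself a nontrivial instance of the rigidification/coherent-nerve comparison and is not formal bookkeeping. Second, the ``check the derived unit on generators'' criterion for upgrading a Quillen adjunction to a Quillen equivalence requires justification: you need that the slice fibrations $\mathcal{C}_{X/}\rightarrow\mathcal{C}$ generate the covariant homotopy theory under homotopy colimits, that the derived functors commute with these colimits, and a separate argument (conservativity of one derived adjoint, or the dual check on corepresentables) to close the loop. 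Both points are standard, but they are of the same order of difficulty as the comparison you flag, so the proposal should not present them as automatic.
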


\noindent Thus, we may think of functors into Top as left fibrations of quasicategories.

\begin{example}
The corepresentable functor $\text{Map}_\mathcal{C}(X,-):\mathcal{C}\rightarrow\text{Top}$ corresponds to the left fibration $\mathcal{C}_{X/}\rightarrow\mathcal{C}$.
\end{example}

\begin{example}[twisted arrow construction \cite{BarMack1}]
If $\mathcal{C}$ is an $\infty$-category, there is a functor $\text{Map}_\mathcal{C}(-,-):\mathcal{C}\rightarrow\text{Top}$. The corresponding left fibration is given by Barwick's \emph{twisted arrow} $\infty$-category $\text{Tw}(\mathcal{C})$.

An $n$-simplex in $\text{Tw}(\mathcal{C})$ is a map $(\Delta^n)^\text{op}\star\Delta^n\rightarrow\mathcal{C}$.
\end{example}

\noindent One example of the twisted arrow construction appeared in the span construction of \S\ref{1.2.3}: the \emph{twisted simplex category} $\text{Tw}(\Delta^n)$ is the twisted arrow category of $\Delta^n$.

\subsubsection{Right fibrations}
\noindent We have seen that $\mathcal{C}_{X/}\rightarrow\mathcal{C}$ is a left fibration classifying the corepresentable functor $\text{Map}_\mathcal{C}(X,-)$. Dually, the \emph{representable} functor $$\text{Map}_\mathcal{C}(-,X):\mathcal{C}^\text{op}\rightarrow\text{Top}$$ has an associated left fibration $(\mathcal{C}^\text{op})_{X/}\rightarrow\mathcal{C}^\text{op}$. However, it will be easier to speak of the opposite \emph{right fibration} $\mathcal{C}_{/X}\rightarrow\mathcal{C}$.

\begin{definition}
A map of simplicial sets $p:S\rightarrow\mathcal{C}$ is called a \emph{right fibration} if it has the right lifting property with respect to all horns $\Lambda_i^n$ for $i\neq 0$. (Compare Definition \ref{DefLeftFib}.)
\end{definition}

\noindent By definition, $S\rightarrow\mathcal{C}$ is a right fibration if and only if $S^\text{op}\rightarrow\mathcal{C}^\text{op}$ is a left fibration. Therefore, Theorem \ref{GrJoyal} can be reformulated as follows: there is an equivalence $$\text{Fun}(\mathcal{C}^\text{op},\text{Top})\cong\text{RFib}(\mathcal{C}).$$ For example, the right fibration $\mathcal{C}_{/X}\rightarrow\mathcal{C}$ corresponds to the representable functor $\text{Map}_\mathcal{C}(-,X)$.

\subsubsection{Limits and colimits in Top}
\noindent We asserted in \S\ref{1.1.2} that the $\infty$-category Top admits all small limits and colimits. In fact, we can describe them using left fibrations.

For the next two theorems, let $K$ be a small $\infty$-category, $p:K\rightarrow\text{Top}$ a functor, and $q:\int p\rightarrow K$ the corresponding left fibration.

\begin{theorem}[HTT 3.3.4.6]
The colimit of $p$ is equivalent to the Grothendieck construction $\int p$.
\end{theorem}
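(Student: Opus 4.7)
The plan is to verify the universal property of the colimit directly, by checking that mapping out of $\int p$ into any space computes the space of cocones on $p$. Throughout, $\int p$ is viewed as a space via the classifying-space functor (left adjoint to the inclusion $\text{Top}\hookrightarrow\text{Cat}$); when $K$ is itself a Kan complex, $\int p$ is already a Kan complex because left fibrations with Kan-complex base are Kan.

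Fix a space $X$ and let $\underline{X}:K\rightarrow\text{Top}$ denote the constant functor. By the definition of colimit, $\text{Map}_{\text{Top}}(\text{colim}\,p,X)\simeq\text{Nat}(p,\underline{X})$, so it suffices to produce an equivalence $\text{Nat}(p,\underline{X})\simeq\text{Map}_{\text{Top}}(\int p,X)$ natural in $X$; then the Yoneda lemma in Top (Theorem \ref{Yoneda3}, specialized to representable functors) gives the result.

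To produce this equivalence I would apply Theorem \ref{GrJoyal} to pass from natural transformations to maps of left fibrations: $\text{Nat}(p,\underline{X})$ is the space of maps of left fibrations over $K$ from $q:\int p\rightarrow K$ to $\int\underline{X}\rightarrow K$. The key observation is that $\int\underline{X}\cong K\times X$, with structure map the projection; this follows immediately from the pullback definition of the Grothendieck construction (Definition \ref{GrothTop}), since pulling back the left fibration $\text{Top}_\ast\rightarrow\text{Top}$ along a constant map $K\rightarrow\text{Top}$ yields the projection from $K\times X$. The universal property of the product in $\text{sSet}_{/K}$ then identifies maps of left fibrations $\int p\rightarrow K\times X$ over $K$ with simplicial-set maps $\int p\rightarrow X$, since the component to $K$ is forced. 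Finally, because $X$ is a Kan complex and $\text{Top}\subseteq\text{Cat}$ is reflective, such maps factor uniquely through the classifying space of $\int p$, producing the required equivalence with $\text{Map}_{\text{Top}}(\int p,X)$.

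The main obstacle is invoking Theorem \ref{GrJoyal} with enough strength to preserve mapping spaces (not merely to exhibit an equivalence of $\infty$-categories of functors and fibrations), together with identifying the unstraightening of the constant functor explicitly as $K\times X\rightarrow K$. These are technical points about the straightening construction that are ordinarily absorbed into the black-box statement of Theorem \ref{GrJoyal}, and they require a careful inspection of the pullback definition of $\int$ combined with the stability of left fibrations under pullback. The remaining steps --- the product identification in $\text{sSet}_{/K}$, the reflective-subcategory argument for $X$ a Kan complex, and the closing Yoneda application --- are formal and should go through with no additional input.
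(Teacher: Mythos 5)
The paper does not actually prove this statement: Chapter \ref{1} is expository, and this theorem is a black-box citation of HTT 3.3.4.6. So your proposal can only be judged on its own terms, and on those terms it is correct. The route you take — identify $\text{Map}_\text{Top}(\text{colim}\,p,X)$ with $\text{Nat}(p,\underline{X})$ via the $(\text{colim}\dashv\text{const})$ adjunction, transport across the straightening equivalence of Theorem \ref{GrJoyal} (which, being an equivalence of $\infty$-categories, does preserve mapping spaces), identify $\int\underline{X}\simeq K\times X$, observe $\text{Fun}_K(\int p,K\times X)\cong\text{Fun}(\int p,X)$, and conclude by corepresentability since $X$ is an $\infty$-groupoid — is the standard modern argument, and is essentially the mapping-space form of Lurie's own proof (which phrases the same content as: under unstraightening the constant-diagram functor becomes pullback along $K\rightarrow\Delta^0$, so its left adjoint, the colimit, becomes postcomposition with $K\rightarrow\Delta^0$ followed by fibrant replacement, i.e.\ the classifying space of $\int p$). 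A few small points to tidy rather than gaps: the final step needs only full faithfulness of the Yoneda embedding (HTT 5.1.3.1 in the paper's numbering), not Theorem \ref{Yoneda3}; the pullback in Definition \ref{GrothTop} along a constant map literally yields $K\times F$ with $F$ the strict fiber of $\text{Top}_\ast\rightarrow\text{Top}$ over $X$, which is equivalent to (not equal to) $X$, harmless up to equivalence of left fibrations; and the naturality in $X$ of your composite equivalence, which the Yoneda step requires, should be stated explicitly, though it holds because every link in the chain is an instance of a natural adjunction or equivalence. Finally, note that $\text{Fun}(\int p,X)$ is automatically a Kan complex for $X$ a Kan complex, so it already computes the mapping space in $\text{Cat}$, and the reflective-subcategory adjunction then converts it to $\text{Map}_\text{Top}(|{\textstyle\int}p|,X)$ exactly as you say.
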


\begin{theorem}[HTT 3.3.3.4]
Let $\text{Fun}_K(K,\int p)$ denote the \emph{$\infty$-category of sections} of $q$; that is, the full subcategory of $\text{Fun}(K,\int p)$ spanned by those functors which are (equal to, not just equivalent to) the identity when composed $$K\rightarrow\textstyle{\int}\,p\xrightarrow{q} K.$$ The limit of $p$ is equivalent to $\text{Fun}_K(K,\int p)^\text{iso}$, the \emph{space of sections} of $q$.
\end{theorem}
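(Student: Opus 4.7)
The plan is to chain together the universal property of the limit, Joyal's Grothendieck correspondence (Theorem \ref{GrJoyal}), and an explicit description of mapping spaces in $\text{LFib}(K)$. The overall strategy is to move the computation out of $\text{Top}$ and into $\text{LFib}(K)$, where the relevant mapping space becomes transparently a space of sections.

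First I would invoke the defining adjunction of $\lim$: the constant diagram functor $c:\text{Top}\to\text{Fun}(K,\text{Top})$ is left adjoint to $\lim$, so for any space $X$ one has $\text{Map}_{\text{Top}}(X,\lim p)\cong\text{Map}_{\text{Fun}(K,\text{Top})}(c(X),p)$. Specializing to $X=*$ and using $\text{Map}_{\text{Top}}(*,\lim p)\cong\lim p$, this gives an equivalence
\[
\lim p \;\cong\; \text{Map}_{\text{Fun}(K,\text{Top})}(c(*),p).
\]

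Next I would transport this computation across Joyal's equivalence $\int:\text{Fun}(K,\text{Top})\xrightarrow{\sim}\text{LFib}(K)$ of Theorem \ref{GrJoyal}, so that
\[
\text{Map}_{\text{Fun}(K,\text{Top})}(c(*),p) \;\cong\; \text{Map}_{\text{LFib}(K)}\bigl(\textstyle{\int} c(*),\,\int p\bigr).
\]
I would then identify $\int c(*)$ with the identity left fibration $\text{id}_K:K\to K$. This is immediate from Definition \ref{GrothTop}: the pullback defining $\int c(*)$ factors through the pullback of $\text{Top}_\ast\to\text{Top}$ over the point, which is contractible, so $\int c(*)\cong K\times_*\!*\cong K$ over $K$.

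The last and most delicate step is to identify $\text{Map}_{\text{LFib}(K)}(\text{id}_K,q)$ with the Kan complex $\text{Fun}_K(K,\int p)^{\text{iso}}$. The full subcategory $\text{LFib}(K)\subseteq\text{sSet}_{/K}$ inherits the simplicial enrichment from the slice, so its mapping \emph{spaces} are the maximal Kan complexes inside the simplicial $\text{Hom}$'s of $\text{sSet}_{/K}$. A map of simplicial sets over $K$ from $\text{id}_K$ to $q$ is by definition a section of $q$, so the $n$-simplices of $\text{Hom}_{\text{sSet}_{/K}}(\text{id}_K,\int p)$ are precisely the $n$-simplices of $\text{Fun}_K(K,\int p)$. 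Passing to the maximal Kan complex yields $\text{Fun}_K(K,\int p)^{\text{iso}}$, as required.

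The main obstacle is the last step: one must be careful that the mapping space in $\text{LFib}(K)$, regarded as a sub-$\infty$-category of $\text{sSet}_{/K}$, really is the maximal Kan complex of the simplicially enriched $\text{Hom}$ (and not, say, the entire quasicategory $\text{Fun}_K(K,\int p)$). This is where one needs that a morphism in $\text{LFib}(K)$ between left fibrations corresponds, under $\int$, to a natural transformation of $\text{Top}$-valued functors --- and every such natural transformation is automatically pointwise an equivalence only after passing to the maximal subgroupoid on both sides. Once this bookkeeping is done, composing the equivalences yields $\lim p\simeq\text{Fun}_K(K,\int p)^{\text{iso}}$.
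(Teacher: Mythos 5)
Your argument is correct, and it is worth noting that the paper itself offers no proof of this statement: it appears in the expository survey chapter and is simply cited from HTT 3.3.3.4, where Lurie's argument is model-categorical (it runs through the straightening/unstraightening Quillen equivalence and the covariant model structure on $\text{sSet}_{/K}$). Your route is a cleaner synthetic one: reduce $\lim p$ to $\text{Map}_{\text{Fun}(K,\text{Top})}(c(\ast),p)$ via the adjunction $c\dashv\lim$, transport across the equivalence $\int:\text{Fun}(K,\text{Top})\to\text{LFib}(K)$ of Theorem \ref{GrJoyal}, identify $\int c(\ast)$ with $\text{id}_K$ (up to equivalence in $\text{LFib}(K)$ --- strictly the pullback is $K\times\text{Map}_{\text{Top}}(\ast,\ast)$, contractible over $K$ rather than literally $K$, but that is all you need), and read off the mapping space from the simplicial enrichment of $\text{sSet}_{/K}$. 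What this buys is a proof that treats the Grothendieck correspondence as a black box; what it costs is that the correspondence itself is the hard input, whereas Lurie's proof extracts the section formula directly from the model structure.

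One clarification on your final step, which you flag as the delicate one: the worry about passing to maximal Kan complexes is moot. Since $q:\int p\rightarrow K$ is a left fibration, the induced map $\text{Fun}(K,\int p)\rightarrow\text{Fun}(K,K)$ is again a left fibration, and $\text{Fun}_K(K,\int p)$ is its fiber over the vertex $\text{id}_K$ --- a left fibration over $\Delta^0$, hence already a Kan complex. The same observation shows that the simplicial Hom-objects of $\text{LFib}(K)$ are Kan complexes (which is exactly why $\text{LFib}(K)$ is a fibrant simplicial category and its homotopy coherent nerve computes the correct mapping spaces), so $\text{Map}_{\text{LFib}(K)}(\text{id}_K,q)$ \emph{is} $\text{Fun}_K(K,\int p)$ on the nose, and applying $(-)^{\text{iso}}$ changes nothing. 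Your statement that natural transformations of sections become equivalences ``only after passing to the maximal subgroupoid'' is backwards: every such transformation is pointwise a morphism in a fiber of $q$, which is a Kan complex, so it is automatically an equivalence already.
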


\subsection{Functors to Cat}\label{1.3.2}
\noindent Let Cat denote the following (large) simplicial category (as in HTT 3.0.0.1): the objects are quasicategories, and for any two quasicategories $\mathcal{C},\mathcal{D}$, the simplicial set of morphisms from $\mathcal{C}$ to $\mathcal{D}$ is the Kan complex $\text{Fun}(\mathcal{C},\mathcal{D})^\text{iso}$.

We will parallel our description of left fibrations (\S\ref{1.3.1}) as much as possible.

\subsubsection{The Grothendieck construction}
\noindent As in \S\ref{1.3.1}, we will want to define the Grothendieck construction of a functor $F:\mathcal{C}\rightarrow\text{Cat}$ via pullback along a functor $\text{Cat}_\ast^\text{lax}\rightarrow\text{Cat}$.

However, by $\text{Cat}_\ast^\text{lax}$ we do \emph{not} mean the undercategory $\text{Cat}_{\ast/}$. Recall that an object of $\int F$ is a pair $(X,x)$ with $X\in\mathcal{C}$ and $x\in F(X)$. A morphism $(X,x)\rightarrow(Y,y)$ consists of a morphism $\phi:X\rightarrow Y$ in $\mathcal{C}$ and a morphism $F(\phi)(x)\rightarrow y$ in $F(Y)$.

If we think about how to define $\int F$ via a pullback, we will find that $\text{Cat}_\ast^\text{lax}$ should be as follows:
\begin{itemize}
\item An object is a pair $(\mathcal{A},A)$, where $\mathcal{A}$ is an $\infty$-category and $A\in\mathcal{A}$.
\item A morphism $(\mathcal{A},A)\rightarrow(\mathcal{B},B)$ is a pair $(F,f)$ where $F:\mathcal{A}\rightarrow\mathcal{B}$ is a functor and $f:F(A)\rightarrow B$ a morphism in $\mathcal{B}$.
\end{itemize}

\noindent We might call $\text{Cat}^\text{lax}_\ast$ a \emph{lax undercategory}. In contrast, in the undercategory $\text{Cat}_{\ast/}$, morphisms $(\mathcal{A},A)\rightarrow(\mathcal{B},B)$ are functors which actually send $A$ to $B$. \\

To construct $\text{Cat}_\ast^\text{lax}$, we will use an alternative description of its objects. An object will be an $\infty$-category $\mathcal{C}$ and a functor $\mathcal{C}^\text{op}\rightarrow\text{Top}$ \emph{which is representable} --- or, equivalently, a \emph{right} fibration $p:S\rightarrow\mathcal{C}$ corresponding to a representable functor to Top.

A morphism from $p$ to $p^\prime$ will correspond to a commutative square $$\xymatrix{
S\ar[d]_p\ar[r] &S^\prime\ar[d]^{p^\prime} \\
\mathcal{C}\ar[r] &\mathcal{C}^\prime.
}$$ The reader may verify with the Yoneda lemma that this encapsulates what we are calling $\text{Cat}_\ast^\text{lax}$.

\begin{definition}
Let RFib denote the full subcategory of $\text{Fun}(\Delta^1,\text{Cat})$ spanned by the right fibrations. An object of RFib will be called \emph{representable} if the corresponding functor $\mathcal{C}^\text{op}\rightarrow\text{Top}$ is equivalent to one which is representable; that is, $\text{Map}_\mathcal{C}(-,X)$ for some $X\in\mathcal{C}$.

$\text{Cat}_\ast^\text{lax}$ is the full subcategory of RFib spanned by the representable right fibrations. It comes equipped with a forgetful functor $\text{Cat}_\ast^\text{lax}\rightarrow\text{Cat}$ which sends a representable right fibration $p:S\rightarrow\mathcal{C}$ to $\mathcal{C}$.
\end{definition}

\begin{definition}
\label{DefGrCat}
If $\mathcal{C}$ is an $\infty$-category and $F:\mathcal{C}\rightarrow\text{Cat}$ a functor, the Grothendieck construction $p$ is the pullback (in sSet) $$\xymatrix{
\int F\ar[r]\ar[d]_p &\text{Cat}_\ast^\text{lax}\ar[d]^{\text{Fgt}} \\
\mathcal{C}\ar[r]_F &\text{Cat}.
}$$
\end{definition}

\subsubsection{Cocartesian fibrations}
\noindent We may think of a functor $p:\int F\rightarrow\mathcal{C}$ (as in Definition \ref{DefGrCat}) as an \emph{$\infty$-category cofibered in $\infty$-categories}. In \S\ref{1.3.1}, we saw that $\infty$-categories cofibered in \emph{$\infty$-groupoids} admit a description as left fibrations of simplicial sets. There is a similar fibrational description of $\infty$-categories cofibered in $\infty$-categories.

The difficulty we had in defining $\text{Cat}_\ast^\text{lax}$ is suggestive that this fibrational description will be more complex than that in \S\ref{1.3.1}. We would like to encode the following property of $p$: if $f:X\rightarrow Y$ is a morphism in $\mathcal{C}$, and $X^\prime$ a lift of $X$ to $\int F$, there is a \emph{distinguished} morphism $(X,x)\rightarrow (Y,F(f)(x))$ in $\int F$ lifting $\mathcal{C}$. We will call such a distinguished morphism \emph{$p$-cocartesian}.

\begin{definition}
A map of simplicial sets is an \emph{inner fibration} if it has the right lifting property with respect to all \emph{inner} horns $\Lambda_i^n\rightarrow\Delta^n$.
\end{definition}

\noindent Compare to the definitions of left and right fibrations, which require lifting not only with respect to inner horns but also the leftmost (respectively, rightmost) horn. In particular, any left or right fibration is an inner fibration.

Additionally, any time $S\rightarrow\mathcal{C}$ is an inner fibration to a quasicategory, then $S$ itself has lifts of inner horns, and is therefore also a quasicategory.

\begin{definition}[\cite{BarFib} 3.1]
Let $p:S\rightarrow\mathcal{C}$ be an inner fibration of quasicategories. Let $f:X\rightarrow Y$ be a morphism of $S$. We say that $f$ is \emph{$p$-cocartesian} if, for all $n\geq 2$ and simplicial set maps $\Lambda_0^n\rightarrow S$ which send the vertices $0$ and $1$ to $X$ and $Y$ (respectively), there is a lift $$\xymatrix{
\Lambda_0^n\ar[d]\ar[r] &S\ar[d]^p \\
\Delta^n\ar[r]\ar@{-->}[ru] &\mathcal{C}.
}$$ In words, supplying a map out of $Y$ (in $S$) amounts to supplying a map out of $p(Y)$ in $\mathcal{C}$, along with a lift of the restriction $p(X)\rightarrow p(Y)\rightarrow\bullet$ to a map out of $X$ (in $S$).
\end{definition}

\begin{definition}
A functor $p:S\rightarrow\mathcal{C}$ is a \emph{cocartesian fibration} if it is an inner fibration, and for every morphism $f:X\rightarrow Y$ in $\mathcal{C}$ and lift of $X$ to $X^\prime\in S$, there exists a $p$-cocartesian morphism $f^\prime:X^\prime\rightarrow Y^\prime$ lifting $f$.
\end{definition}

\noindent Notice that any left fibration is a cocartesian fibration, because all morphisms are $p$-cocartesian.

\begin{example}[HTT 2.4.2.2]
A functor of 1-categories $p:S\rightarrow\mathcal{C}$ exhibits $S$ as a category cofibered in categories if and only if the nerve of $p$ is a cocartesian fibration.
\end{example}

\begin{example}[HTT 2.4.2.3]
If $S\rightarrow\mathcal{C}$ is a cocartesian fibration of $\infty$-categories and $\mathcal{D}\rightarrow\mathcal{C}$ any functor, the fiber product of simplicial sets, $S\times_\mathcal{C}\mathcal{D}\rightarrow\mathcal{D}$, is a cocartesian fibration.
\end{example}

\begin{example}[HTT 2.4.7.12]
The functor $\text{Cat}_\ast^\text{lax}\rightarrow\text{Cat}$ is a cocartesian fibration of $\infty$-categories.
\end{example}

\noindent Combining the last two examples, for any functor $F:\mathcal{C}\rightarrow\text{Cat}$, the Grothendieck construction produces a cocartesian fibration $p:\int F\rightarrow\mathcal{C}$.

\subsubsection{The correspondence}
\noindent In Theorem \ref{GrJoyal}, we learned that $\text{Fun}(\mathcal{C},\text{Top})\cong\text{LFib}(\mathcal{C})$. Thus functors $F:\mathcal{C}\rightarrow\text{Top}$ correspond to left fibrations $p:\int F\rightarrow\mathcal{C}$, and natural transformations $\eta:F\rightarrow F^\prime$ correspond to functors over $\mathcal{C}$ $$\xymatrix{
\int F\ar[rd]_p\ar[rr]^{\int\eta} &&\int F^\prime\ar[ld]^{p^\prime} \\
&\mathcal{C}. &
}$$ There will be a similar correspondence between functors $F:\mathcal{C}\rightarrow\text{Cat}$ and cocartesian fibrations over $\mathcal{C}$. However, in this case natural transformations correspond to functors $\int\eta:\int F\rightarrow\int F^\prime$ over $\mathcal{C}$ \emph{which send $p$-cocartesian morphisms to $p^\prime$-cocartesian morphisms}.

\begin{theorem}[\cite{BarFib} 3.7]
If $\mathcal{C}$ is an $\infty$-category, let $\text{Cocart}(\mathcal{C})$ denote the subcategory of $\text{Cat}_{/\mathcal{C}}$ spanned by cocartesian fibrations and functors between them which send cocartesian morphisms to cocartesian morphisms.

The Grothendieck construction induces an equivalence of $\infty$-categories $$\textstyle{\int}:\text{Fun}(\mathcal{C},\text{Cat})\rightarrow\text{Cocart}(\mathcal{C}).$$
\end{theorem}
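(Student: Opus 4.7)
My plan is to exhibit an equivalence by constructing an explicit quasi-inverse to the Grothendieck construction $\int$ and verifying the two compositions are equivalent to identities. The heuristic inverse, call it $\text{St}$ (the \emph{straightening}), should send a cocartesian fibration $p:S\to\mathcal{C}$ to the functor $\text{St}(p):\mathcal{C}\to\text{Cat}$ sending $X$ to the fiber $S_X=S\times_\mathcal{C}\{X\}$, and sending a morphism $f:X\to Y$ of $\mathcal{C}$ to the transport functor $S_X\to S_Y$ obtained by selecting $p$-cocartesian lifts: given $x\in S_X$, choose $\tilde f:x\to y$ lifting $f$, and set $\text{St}(p)(f)(x)=y$. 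Uniqueness of cocartesian lifts up to contractible choice should make the assignment well-defined.

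First I would verify that $\int F$ really lands in $\text{Cocart}(\mathcal{C})$, and identify the $p$-cocartesian morphisms concretely: they are those morphisms $(X,x)\to(Y,y)$ whose second coordinate is an equivalence $F(f)(x)\xrightarrow{\sim}y$ in $F(Y)$. This follows by pullback-stability of cocartesian fibrations from the fact that $\text{Cat}^\text{lax}_\ast\to\text{Cat}$ is itself a cocartesian fibration (HTT 2.4.7.12, already invoked), whose cocartesian morphisms admit precisely this description.

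Next I would establish that $\int$ is fully faithful on mapping spaces. A natural transformation $\eta:F\Rightarrow F'$ induces a functor $\int\eta:\int F\to\int F'$ over $\mathcal{C}$; the naturality squares of $\eta$ are precisely what guarantee that $\int\eta$ intertwines cocartesian transport, hence preserves cocartesian morphisms. Conversely, any functor $\int F\to\int F'$ over $\mathcal{C}$ restricts to maps of fibers $F(X)\to F'(X)$, and the cocartesian-preservation hypothesis says exactly that these maps commute with the transport functors $F(f)$ and $F'(f)$ for every morphism $f$ of $\mathcal{C}$ --- i.e., assemble into a natural transformation. Making this an equivalence of mapping $\infty$-groupoids (not just a bijection on homotopy categories) will require some bookkeeping, but the content is contained in the pullback definition of $\int F$.

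The main obstacle, as always with this theorem, is essential surjectivity, i.e., constructing $\text{St}(p)$ as a genuine homotopy-coherent functor $\mathcal{C}\to\text{Cat}$ rather than merely a map on objects and 1-morphisms. The prescription above only specifies $\text{St}(p)$ through dimension 1; the higher coherences (composition of transport, homotopies for 2-simplices, and so on) are not visibly computable from any single choice of lifts. The standard resolution is Lurie's marked simplicial set machinery (HTT \S3.2): one promotes $\text{Cocart}(\mathcal{C})$ to the homotopy category of the cocartesian model structure on $\text{sSet}^+_{/\mathcal{C}}$, defines straightening and unstraightening as a Quillen adjunction at the point-set level, and checks it is a Quillen equivalence. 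I would either cite this machinery wholesale or, more conceptually, argue that $\text{Cat}^\text{lax}_\ast\to\text{Cat}$ is the \emph{universal} cocartesian fibration and deduce the correspondence by a representability argument; Barwick--Shah \cite{BarFib} give a streamlined account of the latter approach.
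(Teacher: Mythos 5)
The paper states this result without proof --- it appears in the expository survey chapter and is simply cited to Barwick--Shah (and ultimately to HTT \S 3.2), so there is no in-paper argument to compare against. Your sketch is a sound outline of the standard proof: the identification of the cocartesian edges of $\int F$, the full-faithfulness heuristic, and above all the correct diagnosis that the only genuine difficulty is producing the straightening $\text{St}(p)$ as a homotopy-coherent functor rather than a dimension-$\leq 1$ prescription, which you rightly resolve by deferring to the marked straightening/unstraightening Quillen equivalence or to the universality of $\text{Cat}^{\text{lax}}_\ast\rightarrow\text{Cat}$. Since those citations carry all the real content, your proposal is consistent with (and more informative than) the paper's treatment.
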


\begin{example}[\cite{BarFib} 3.9]
The functor $\text{Fun}(\Delta^1,\mathcal{C})\rightarrow\mathcal{C}$ (which sends a morphism $X\rightarrow Y$ to $Y$) is a cocartesian fibration. The associated functor $\mathcal{C}\rightarrow\text{Cat}$ sends $X$ to $\mathcal{C}_{/X}$.
\end{example}

\subsubsection{Cartesian fibrations}
\noindent In \S\ref{1.3.1}, we saw that functors $\mathcal{C}\rightarrow\text{Top}$ can be described in one of two equivalent ways: as left fibrations to $\mathcal{C}$, or right fibrations to $\mathcal{C}^\text{op}$. It is easy to translate between these two languages: the left fibration $p:\mathcal{D}\rightarrow\mathcal{C}$ corresponds to the right fibration $p^\text{op}:\mathcal{D}^\text{op}\rightarrow\mathcal{C}^\text{op}$.

Similarly, a functor $\mathcal{C}\rightarrow\text{Cat}$ can be described either as a cocartesian fibration to $\mathcal{C}$ or as a cartesian fibration to $\mathcal{C}^\text{op}$.

\begin{definition}
Let $p:S\rightarrow\mathcal{C}$ be an inner fibration of quasicategories. Let $f:X\rightarrow Y$ be a morphism of $S$. We say that $f$ is \emph{$p$-cartesian} if, for all $n\geq 2$ and simplicial set maps $\Lambda_n^n\rightarrow S$ which send the vertices $n-1$ and $n$ to $X$ and $Y$ (respectively), there is a lift $$\xymatrix{
\Lambda_n^n\ar[d]\ar[r] &S\ar[d]^p \\
\Delta^n\ar[r]\ar@{-->}[ru] &\mathcal{C}.
}$$
\end{definition}

\begin{definition}
A functor $p:S\rightarrow\mathcal{C}$ is a \emph{cartesian fibration} if it is an inner fibration, and for every morphism $f:X\rightarrow Y$ in $\mathcal{C}$ and lift of $Y$ to $Y^\prime\in S$, there exists a $p$-cartesian morphism $f^\prime:X^\prime\rightarrow Y^\prime$ lifting $f$.
\end{definition}

\noindent Let $\text{Cat}_\ast^\text{oplax}$ denote the full subcategory of $\text{Fun}(\Delta^1,\text{Cat})$ spanned by left fibrations $S\rightarrow\mathcal{C}$ which correspond to corepresentable functors $\mathcal{C}\rightarrow\text{Top}$.

\begin{theorem}[HTT 3.2.0.1 or \cite{BarFib} 3.7]
\label{GrJoyal2}
There is an equivalence of $\infty$-categories $$\text{Fun}(\mathcal{C}^\text{op},\text{Cat})\rightarrow\text{Cart}(\mathcal{C})$$ which sends $F:\mathcal{C}^\text{op}\rightarrow\text{Cat}$ to $p:S\rightarrow\mathcal{C}$, where $p^\text{op}$ is given by the pullback $$\xymatrix{
S^\text{op}\ar[d]_{p^\text{op}}\ar[r] &\text{Cat}_\ast^\text{oplax}\ar[d] \\
\mathcal{C}^\text{op}\ar[r]_F &\text{Cat}.
}$$
\end{theorem}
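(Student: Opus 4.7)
The plan is to deduce this from the already-stated cocartesian correspondence (the theorem attributed to \cite{BarFib} 3.7) by a duality argument, passing everything through opposite $\infty$-categories.

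First I would check the elementary fact that taking opposites swaps cartesian and cocartesian fibrations. Concretely: if $p\colon S\to\mathcal{C}$ is an inner fibration, then so is $p^{\text{op}}\colon S^{\text{op}}\to\mathcal{C}^{\text{op}}$, because $\Lambda_i^n\to\Delta^n$ is inner iff its opposite is inner. Moreover a morphism $f$ in $S$ is $p$-cartesian precisely when $f^{\text{op}}$ is $p^{\text{op}}$-cocartesian: the defining lifting condition uses $\Lambda_n^n\hookrightarrow\Delta^n$ on one side and $\Lambda_0^n\hookrightarrow\Delta^n$ on the other, and these correspond under $(-)^{\text{op}}$. It follows that $(-)^{\text{op}}$ induces an equivalence
$$(-)^{\text{op}}\colon \text{Cart}(\mathcal{C})\xrightarrow{\sim}\text{Cocart}(\mathcal{C}^{\text{op}}),$$
because a functor over $\mathcal{C}$ preserves cartesian morphisms iff its opposite (a functor over $\mathcal{C}^{\text{op}}$) preserves cocartesian morphisms.

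Next I would invoke the cocartesian correspondence applied to $\mathcal{C}^{\text{op}}$ in place of $\mathcal{C}$ to obtain an equivalence
$$\textstyle\int\colon \text{Fun}(\mathcal{C}^{\text{op}},\text{Cat})\xrightarrow{\sim}\text{Cocart}(\mathcal{C}^{\text{op}}).$$
Composing with the inverse of the opposite equivalence above gives the desired equivalence $\text{Fun}(\mathcal{C}^{\text{op}},\text{Cat})\to\text{Cart}(\mathcal{C})$. By construction, a functor $F\colon\mathcal{C}^{\text{op}}\to\text{Cat}$ is sent to the cartesian fibration $p\colon S\to\mathcal{C}$ whose opposite $p^{\text{op}}\colon S^{\text{op}}\to\mathcal{C}^{\text{op}}$ is the cocartesian Grothendieck construction of $F$.

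To finish I need to match this description with the stated pullback diagram, i.e.\ identify the cocartesian Grothendieck construction of $F$ with the pullback of $\text{Cat}_\ast^{\text{oplax}}\to\text{Cat}$ along $F$. Here the cocartesian Grothendieck construction (Definition~\ref{DefGrCat}) is defined via pullback against $\text{Cat}_\ast^{\text{lax}}\to\text{Cat}$. So the real point is to check that $\text{Cat}_\ast^{\text{oplax}}\to\text{Cat}$ agrees with $\text{Cat}_\ast^{\text{lax}}\to\text{Cat}$ once we keep track of the fact that we have implicitly applied $(-)^{\text{op}}$ fiberwise: a representable right fibration $\mathcal{A}_{/A}\to\mathcal{A}$ becomes, upon taking opposites, the corepresentable left fibration $(\mathcal{A}^{\text{op}})_{A/}\to\mathcal{A}^{\text{op}}$, and this identification extends to an equivalence of $\infty$-categories over $\text{Cat}$ between $\text{Cat}_\ast^{\text{lax}}$ and $\text{Cat}_\ast^{\text{oplax}}$ after swapping source with target. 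Because pullbacks in $\text{sSet}$ commute with $(-)^{\text{op}}$, the pullback defining $S^{\text{op}}$ against $\text{Cat}_\ast^{\text{lax}}$ is exactly the opposite of the pullback defining $S$ against $\text{Cat}_\ast^{\text{oplax}}$, yielding the stated formula.

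The main obstacle I expect is the last step, the bookkeeping needed to identify $\text{Cat}_\ast^{\text{oplax}}$ with $(\text{Cat}_\ast^{\text{lax}})^{\text{op}}$ (in the appropriate sense over $\text{Cat}$, which is itself being identified with $\text{Cat}^{\text{op}}$ via $(-)^{\text{op}}$). Both are defined as full subcategories of $\text{Fun}(\Delta^1,\text{Cat})$ spanned by specific fibrations, but one uses right and the other uses left fibrations, and the matching is not strictly tautological — it requires knowing that the Yoneda-type identifications (objects are pairs $(\mathcal{A},A)$, morphisms involve $f\colon F(A)\to B$ versus $f\colon B\to F(A)$) behave well under opposites. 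Everything else is essentially a formal transport of the cocartesian theorem through the involution $(-)^{\text{op}}$.
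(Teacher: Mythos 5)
Your overall strategy --- transporting the cocartesian correspondence through $(-)^{\text{op}}$ --- does produce \emph{an} equivalence $\text{Fun}(\mathcal{C}^\text{op},\text{Cat})\rightarrow\text{Cart}(\mathcal{C})$: the first two steps are fine, since opposites exchange inner/cartesian/cocartesian data and hence $\text{Cart}(\mathcal{C})\simeq\text{Cocart}(\mathcal{C}^\text{op})$. The gap is exactly where you flagged it, in the final identification, and it is not mere bookkeeping: it fails. Compute the fibers of the two classifying objects over a fixed $\mathcal{A}\in\text{Cat}$. In $\text{Cat}_\ast^\text{lax}$ a morphism $(\mathcal{A},A)\rightarrow(\mathcal{A},B)$ over $\text{id}_\mathcal{A}$ is a map $A\rightarrow B$, so the fiber is $\mathcal{A}$; in $\text{Cat}_\ast^\text{oplax}$ a map of corepresentable left fibrations $\mathcal{A}_{A/}\rightarrow\mathcal{A}_{B/}$ over $\text{id}_\mathcal{A}$ is, by Yoneda, a point of $\text{Map}_\mathcal{A}(B,A)$, so the fiber is $\mathcal{A}^\text{op}$. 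Consequently your composite (cocartesian Grothendieck construction of $F$ over $\mathcal{C}^\text{op}$, followed by $(-)^\text{op}$) is a cartesian fibration over $\mathcal{C}$ with fiber $F(X)^\text{op}$ over $X$, whereas the $S\rightarrow\mathcal{C}$ of the statement has fiber $F(X)$. Your construction therefore classifies $\text{op}\circ F$, not $F$, and the two pullback squares you propose to match are pullbacks of genuinely inequivalent objects over $\text{Cat}$: the claimed equivalence between $\text{Cat}_\ast^\text{lax}$ and $\text{Cat}_\ast^\text{oplax}$ exists only over the autoequivalence $(-)^\text{op}$ of $\text{Cat}$, not over the identity. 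This is precisely the content of the Warning immediately following the theorem: the cocartesian and cartesian fibrations attached to the same functor are \emph{not} opposites of one another.

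Repairing the argument is not a matter of shuffling sources and targets. You can precompose with the autoequivalence of $\text{Fun}(\mathcal{C}^\text{op},\text{Cat})$ given by postcomposition with $(-)^\text{op}$ to correct the fibers, but then verifying the stated pullback formula requires identifying $\text{Cat}_\ast^\text{oplax}$ with the \emph{fiberwise} opposite of $\text{Cat}_\ast^\text{lax}$, and the fiberwise opposite of a cocartesian fibration is exactly the nontrivial dualization of Barwick--Glasman--Nardin described just after this theorem (via spans of cocartesian and vertical morphisms), not the simplicial-set-level involution $(-)^\text{op}$. So either the cartesian case must be proved directly (as in HTT 3.2.0.1 and Barwick--Shah), or the dualization of fibrations must be developed first; the naive involution alone does not yield the stated formula.
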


\begin{warning}
To any functor $F:\mathcal{C}\rightarrow\text{Cat}$ are associated two fibrations: a cocartesian fibration $p:\int F\rightarrow\mathcal{C}$ and a cartesian fibration $p^\vee:\int^\vee F\rightarrow\mathcal{C}^\text{op}$.

However, because we used $\text{Cat}_\ast^\text{lax}$ to define $p$ and $\text{Cat}_\ast^\text{oplax}$ to define $p^\vee$, the two fibrations do not have a straightforward relation to each other. Unlike in \S\ref{1.3.1}, they are not opposites of each other.
\end{warning}

\subsubsection{Dualizing cartesian and cocartesian fibrations}
\noindent Continuing the warning, we will now describe how to transfer between the cocartesian and cartesian fibrations classifying the same functor. In the process, we will learn a bit about the structure of (co)cartesian fibrations.

If $F:\mathcal{C}\rightarrow\text{Cat}$, $\int F$ and $\int^\vee F$ are each built by `gluing together' $F(X)$ over all objects $X\in\mathcal{C}$. For each morphism $f:X\rightarrow Y$ in $\mathcal{C}$ and each object $x\in F(X)$, we `glue' $x$ to $F(f)(x)\in F(Y)$. However, the way we do this is different in the two cases.

In $\int F$, we throw in extra $p$-cocartesian morphisms $x\rightarrow F(f)(x)$. In $\int^\vee F$, we throw in $p$-cartesian morphisms in the other direction $x\leftarrow F(f)(x)$.

This description suggests there are two important types of morphisms in $\int F$ and $\int^\vee F$: the morphisms that were in each $\infty$-category $F(X)$ to begin with, and the $p$-cocartesian (or $p$-cartesian) morphisms that were added.

\begin{definition}
If $p:S\rightarrow\mathcal{C}$ is either a cocartesian or cartesian fibration, a morphism $f$ of $S$ is called \emph{vertical} if $p(f)$ is an equivalence.
\end{definition}

\begin{proposition}[HTT 5.2.8.15]
If $p:S\rightarrow\mathcal{C}$ is a cocartesian fibration, the $p$-cocartesian morphisms and vertical morphisms form a factorization system, in the sense that every morphism factors essentially uniquely as a $p$-cocartesian morphism followed by a vertical morphism.

Dually for cartesian fibrations, morphisms factor as a vertical morphism followed by $p$-cartesian morphism.
\end{proposition}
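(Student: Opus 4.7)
The plan is to prove existence by direct construction and essential uniqueness via the universal property of cocartesian morphisms. For existence, let $f: X \to Y$ be a morphism in $S$. Using the cocartesian fibration property, first lift $p(f): p(X) \to p(Y)$ in $\mathcal{C}$ to a $p$-cocartesian morphism $h: X \to Z$ in $S$ with $p(h) = p(f)$. The morphisms $h$ and $f$ together determine a horn $\sigma: \Lambda_0^2 \to S$ whose two edges are $h$ and $f$. Its image in $\mathcal{C}$ is the $2$-horn whose two edges are both $p(f)$, which fills via the degenerate $2$-simplex with third edge $\text{Id}_{p(Y)}$. Since $h$ is $p$-cocartesian, this data lifts to a $2$-simplex in $S$ whose third edge is a morphism $g: Z \to Y$ with $p(g) = \text{Id}_{p(Y)}$; in particular $g$ is vertical, and the $2$-simplex witnesses $g \circ h \cong f$.

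For essential uniqueness, I would show the appropriate space of factorizations is contractible. Suppose $f \cong g' \circ h'$ is a second factorization with $h'$ cocartesian and $g'$ vertical. Then $p(f) \cong p(g') \circ p(h')$ with $p(g')$ an equivalence, so $p(h')$ is equivalent to $p(f)$ in $\mathcal{C}$. Using the universal property of $p$-cocartesian morphisms (applied to horns $\Lambda_0^n$ for all $n \geq 2$, which encodes essential uniqueness of cocartesian lifts with prescribed source), one obtains an equivalence $h' \cong h$ over $\mathcal{C}$; combining this with the $2$-simplex exhibiting $g' \circ h' \cong f$ and a second application of the cocartesian lifting property of $h$ then produces an equivalence $g' \cong g$.

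The main obstacle is the careful bookkeeping required to promote these pointwise statements into contractibility of the appropriate $\infty$-category of factorizations $(h, g, \alpha: g \circ h \cong f)$. This requires gluing lifts across all simplicial dimensions, exploiting the full strength of the $p$-cocartesian condition (horns $\Lambda_0^n$ for all $n \geq 2$) rather than just the $n=2$ case used for existence. The dual statement for cartesian fibrations follows formally: if $p: S \to \mathcal{C}$ is a cartesian fibration, then $p^{\text{op}}: S^{\text{op}} \to \mathcal{C}^{\text{op}}$ is cocartesian, and a morphism of $S$ is vertical (respectively $p$-cartesian) if and only if its opposite in $S^{\text{op}}$ is vertical (respectively $p^{\text{op}}$-cocartesian). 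Applying the cocartesian case in $S^{\text{op}}$ yields a factorization as cocartesian followed by vertical, which corresponds in $S$ to a factorization as vertical followed by $p$-cartesian.
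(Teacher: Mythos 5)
Your proposal is correct and is essentially the standard argument: the paper itself gives no proof of this proposition (it appears in the expository survey chapter with only the citation to HTT 5.2.8.15), and your sketch reproduces the proof one finds there --- existence via a cocartesian lift of $p(f)$ and a $\Lambda_0^2$-filling over a degenerate $2$-simplex, uniqueness via the essential uniqueness of cocartesian lifts, and the cartesian case by passing to opposites. The one point worth tightening is that $p(h')$ is equivalent to $p(f)$ not as morphisms on the nose but as objects of the undercategory $\mathcal{C}_{p(X)/}$ (connected by the equivalence $p(g')$), which is exactly the form of equivalence needed to invoke uniqueness of cocartesian lifts; you correctly flag that upgrading the pointwise statements to contractibility of the space of factorizations is the remaining bookkeeping.
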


\noindent Given a cocartesian fibration $p:S\rightarrow\mathcal{C}$, it is possible to write down the cartesian fibration $p^\vee:S^\vee\rightarrow\mathcal{C}^\text{op}$ classifying the same functor $\mathcal{C}\rightarrow\text{Cat}$. This involves a slight modification of the span construction, due to Barwick, Glasman, and Nardin \cite{BarGN}. We summarize the construction as an example of the utility of the span construction.

Recall from \S\ref{1.2.3} that $\text{Tw}(\Delta^n)$ is a category of pairs $ij$ ($0\leq i\leq j\leq n$). We call a morphism $ij\rightarrow i^\prime j^\prime$ \emph{egressive} if $i=i^\prime$ and \emph{ingressive} if $j=j^\prime$.

An $n$-simplex of the quasicategory $S^\vee$ is a functor $\text{Tw}(\Delta^n)\rightarrow S$ which sends distinguished squares to pullbacks, egressive morphisms to $p$-cocartesian morphisms, and ingressive morphisms to vertical morphisms. For example, a morphism from $X$ to $Y$ is a span $$\xymatrix{
&T\ar[ld]_{\text{cocart}}\ar[rd]^{\text{vert}} &\\
X &&Y.
}$$ The effect is that $S^\vee$ retains the vertical morphisms of $S$ but reverses the direction of the $p$-cocartesian morphisms (which become $p$-cartesian).

\subsubsection{Limits and colimits in Cat}
\noindent Just as limits and colimits in Top can be described in terms of left fibrations, limits and colimits in Cat can be described in terms of cocartesian fibrations. However, the descriptions are less concrete.

Let $K$ be a small $\infty$-category, $p:K\rightarrow\text{Cat}$ a functor, and $q:\int p\rightarrow K$ the associated cocartesian fibration.

\begin{theorem}[HTT 3.3.4.3]
The colimit of $p$ is the localization of $\int p$ at the $p$-cocartesian morphisms. That is, there is a functor $\int p\rightarrow\text{colim}(p)$ which is initial among functors that send $p$-cocartesian morphisms to equivalences.
\end{theorem}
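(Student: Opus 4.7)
The plan is to identify the localization directly via its universal mapping property, using the straightening equivalence from Theorem \ref{GrJoyal2} and \cite{BarFib} 3.7. Write $L$ for the localization of $\int p$ at the class of $p$-cocartesian morphisms. My goal is to establish, for every $\infty$-category $\mathcal{D}$, a natural equivalence $\text{Fun}(L,\mathcal{D})\simeq\text{Fun}(\text{colim}(p),\mathcal{D})$; the conclusion $L\simeq\text{colim}(p)$ then follows from the Yoneda lemma.

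The right-hand side is, by the universal property defining $\text{colim}(p)$ in Cat, the $\infty$-category $\text{Nat}(p,\text{const}_\mathcal{D})$ of natural transformations from $p$ to the constant functor at $\mathcal{D}$. The left-hand side, by the universal property of localization functors from \S\ref{1.1.4}, is the full subcategory $\text{Fun}^{\text{inv}}(\int p,\mathcal{D})\subseteq\text{Fun}(\int p,\mathcal{D})$ consisting of functors that send every $p$-cocartesian morphism to an equivalence. So it suffices to produce a natural equivalence $\text{Fun}^{\text{inv}}(\int p,\mathcal{D})\simeq\text{Nat}(p,\text{const}_\mathcal{D})$.

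The key construction is the following. The projection $\pi:\mathcal{D}\times K\to K$ is a cocartesian fibration classifying the constant functor $\text{const}_\mathcal{D}:K\to\text{Cat}$, and a morphism $(\phi,\alpha)$ in $\mathcal{D}\times K$ is $\pi$-cocartesian precisely when $\phi$ is an equivalence in $\mathcal{D}$. A functor $F:\int p\to\mathcal{D}$ is the same datum as a functor $\tilde F=(F,q):\int p\to\mathcal{D}\times K$ over $K$; by inspection, $\tilde F$ sends $p$-cocartesian morphisms to $\pi$-cocartesian morphisms if and only if $F\in\text{Fun}^{\text{inv}}(\int p,\mathcal{D})$. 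Under the straightening equivalence applied to cocartesian fibrations over $K$, cocartesian-preserving morphisms of cocartesian fibrations correspond to natural transformations of the classifying functors to Cat, which in our case identifies cocartesian-preserving maps $\int p\to\mathcal{D}\times K$ with natural transformations $p\to\text{const}_\mathcal{D}$.

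The hard part will be justifying that last step cleanly. I must distinguish genuine natural transformations from lax ones: the full subcategory of $\text{Fun}_K(\int p,\mathcal{D}\times K)$ of cocartesian-preserving functors is precisely the mapping $\infty$-category in $\text{Cocart}(K)$, and under straightening this is identified with $\text{Nat}(p,\text{const}_\mathcal{D})$. Without restricting to cocartesian-preserving functors, one would instead obtain a lax cone; the passage to the localization $L$ is exactly what strictifies the lax cone into an honest one, so the correspondence between the $p$-cocartesian morphisms inverted on the left and the strictness condition on the right is forced. Once this straightening step is in hand, naturality in $\mathcal{D}$ and the Yoneda argument are formal.
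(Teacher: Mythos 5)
Your argument is correct, but note that the thesis does not prove this statement at all: Chapter 1 is expository and simply cites HTT 3.3.4.3, so the relevant comparison is with Lurie's proof. Lurie works model-categorically, using the marked straightening--unstraightening Quillen equivalence between $(\text{Set}^+_\Delta)_{/K}$ and the projective model structure on marked-simplicial-set-valued diagrams: the colimit is computed by forgetting the map to $K$ and taking a fibrant replacement of $\int p$ with its cocartesian edges marked, and fibrant replacement in $\text{Set}^+_\Delta$ is exactly localization at the marked edges. Your route is the invariant version of the same idea: you test both $\text{colim}(p)$ and the localization against an arbitrary $\mathcal{D}$, recognize $\mathcal{D}\times K\rightarrow K$ as the unstraightening of $\text{const}_\mathcal{D}$, and observe that a functor over $K$ into $\mathcal{D}\times K$ is cocartesian-preserving precisely when its $\mathcal{D}$-component inverts the cocartesian edges. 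What your approach buys is model-independence and a transparent identification of $\text{Fun}(\int p,\mathcal{D})$ with oplax cones and of $\text{Fun}^{\text{inv}}(\int p,\mathcal{D})$ with honest cones; what it costs is that every link in the chain must be an equivalence of mapping \emph{$\infty$-categories}, natural in $\mathcal{D}$, not just of mapping spaces.

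That last point is the one place your write-up should be made explicit rather than gestured at. The cited straightening equivalence gives agreement of mapping spaces; to upgrade to the mapping $\infty$-categories you use (the full subcategory of $\text{Fun}_K(\int p,\mathcal{D}\times K)$ on cocartesian-preserving functors versus $\text{Nat}(p,\text{const}_\mathcal{D})$ defined by cotensoring), apply the space-level statement with $\mathcal{D}$ replaced by $\text{Fun}(A,\mathcal{D})$ for all small $A$, using that $\text{Fun}(A,\mathcal{D})\times K$ is the fiberwise cotensor of $\mathcal{D}\times K$, that $\text{const}_{\text{Fun}(A,\mathcal{D})}=(\text{const}_\mathcal{D})^A$, and that a natural transformation valued in $\text{Fun}(A,\mathcal{D})$ is an equivalence iff it is so pointwise in $A$. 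The same cotensor-compatibility of the adjunction $\text{colim}\dashv\text{const}$ justifies $\text{Fun}(\text{colim}(p),\mathcal{D})\simeq\text{Nat}(p,\text{const}_\mathcal{D})$. With those two remarks supplied, the Yoneda step closes the argument and the canonical functor $\int p\rightarrow\text{colim}(p)$ is obtained from the unit $p\rightarrow\text{const}_{\text{colim}(p)}$.
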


\begin{theorem}[HTT 3.3.3.2]
Let $\text{Fun}_K^{\,\flat}(K,\int p)$ denote the $\infty$-category of sections of $q$ which send all morphisms of $K$ to $p$-cocartesian morphisms in $\int p$. The limit of $p$ is equivalent to $\text{Fun}_K^{\,\flat}(K,\int p)$.
\end{theorem}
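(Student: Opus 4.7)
My plan is to verify the universal property of the limit via Yoneda: I will exhibit, naturally in an $\infty$-category $\mathcal{D}$, an equivalence $\text{Map}(\mathcal{D}, \text{Fun}_K^{\,\flat}(K, \int p)) \simeq \text{Map}(\text{const}_\mathcal{D}, p)$, where the right-hand side is the mapping space in $\text{Fun}(K, \text{Cat})$. By the defining universal property of the limit, the right-hand side is $\text{Map}(\mathcal{D}, \lim p)$, so a natural equivalence of this form forces $\text{Fun}_K^{\,\flat}(K, \int p) \simeq \lim p$.

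Both sides can be rewritten as spaces of functors $\mathcal{D} \times K \to \int p$ over $K$ cut out by the appropriate conditions. By the adjunction between $(-) \times K: \text{Cat} \to \text{Cat}_{/K}$ and $\text{Fun}_K(K, -): \text{Cat}_{/K} \to \text{Cat}$, the space $\text{Fun}(\mathcal{D}, \text{Fun}_K^{\,\flat}(K, \int p))$ identifies with the subspace of $\text{Fun}_K(\mathcal{D} \times K, \int p)$ consisting of those $F$ for which, for every $d \in \mathcal{D}$, the restriction $F|_{\{d\} \times K}$ sends every morphism of $K$ to a $q$-cocartesian morphism of $\int p$. On the other side, the constant functor $\text{const}_\mathcal{D}: K \to \text{Cat}$ is classified by the projection $\text{pr}: \mathcal{D} \times K \to K$, viewed as a cocartesian fibration whose cocartesian morphisms over $f: X \to Y$ are precisely the $(\phi, f): (d_1, X) \to (d_2, Y)$ with $\phi$ an equivalence. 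By Theorem \ref{GrJoyal2}, $\text{Map}(\text{const}_\mathcal{D}, p)$ is thus equivalent to the space of morphisms in $\text{Cocart}(K)$ from $\text{pr}$ to $q$, i.e., functors $\mathcal{D} \times K \to \int p$ over $K$ that preserve cocartesian morphisms.

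To match the two descriptions, I would observe that every cocartesian morphism $(\phi, f)$ in $\mathcal{D} \times K$ factors as $(\phi, \text{id}_Y) \circ (\text{id}_{d_1}, f)$, the first factor being vertical over the identity in $K$ and the second having the horizontal form used in the flatness condition. Any functor over $K$ automatically sends vertical morphisms to vertical ones and equivalences to equivalences, and cocartesian morphisms are closed under composition with vertical equivalences; hence preservation of cocartesian morphisms reduces precisely to the condition that each $(\text{id}_d, f)$ is sent to a cocartesian morphism of $\int p$, i.e., that $F|_{\{d\} \times K}$ is a $\flat$-section for every $d$. The point I expect to be most delicate is not this verification at the level of individual morphisms but the promotion of the bijection to an equivalence of full mapping $\infty$-categories, including higher coherences: this is underwritten by the fact that the straightening equivalence of Theorem \ref{GrJoyal2} and the exponential adjunction are both equivalences of $\infty$-categories, so once the two full subcategory conditions are shown to agree object-wise (as above), the corresponding full sub-$\infty$-categories are matched on the nose, yielding the desired natural equivalence.
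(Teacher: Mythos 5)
Your argument is correct. Note that the paper itself offers no proof of this statement: it appears in the expository Chapter \ref{1}, quoted from HTT 3.3.3.2, so there is nothing to compare against internally. Your route — representing the functor $\mathcal{D}\mapsto\text{Map}_{\text{Fun}(K,\text{Cat})}(\text{const}_\mathcal{D},p)$ by transporting it through straightening to $\text{Map}_{\text{Cocart}(K)}(\mathcal{D}\times K,\int p)$ and then through the exponential adjunction to maps into the section category — is the clean model-independent argument, and the two subcategory conditions do match for exactly the reason you give: cocartesian edges of the projection $\mathcal{D}\times K\rightarrow K$ are the pairs $(\phi,f)$ with $\phi$ an equivalence, every such pair factors as $(\text{id},f)$ followed by a vertical equivalence, equivalences are cocartesian, and cocartesian edges are closed under composition; since both conditions pick out unions of connected components of the same ambient mapping space $\text{Map}_{\text{Cat}_{/K}}(\mathcal{D}\times K,\int p)$ and are invariant under equivalence of functors, the objectwise agreement does suffice. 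This is genuinely different from Lurie's own proof of HTT 3.3.3.2, which works in the marked model structure on $(\text{sSet}^{+})_{/K}$ and identifies the limit with a derived mapping object $\text{Map}^{\flat}_K(K^{\sharp},(\int p)^{\natural})$; your version buys model-independence at the price of taking the full strength of the straightening equivalence as a black box. Two small repairs: the straightening result you need is the cocartesian one (the unlabeled theorem quoted from Barwick--Shah 3.7 in \S\ref{1.3.2}), not Theorem \ref{GrJoyal2}, which is the cartesian version; and you should note explicitly that $\int\text{const}_\mathcal{D}\simeq\mathcal{D}\times K$, which follows from compatibility of the Grothendieck construction with pullback along $K\rightarrow\ast$ (HTT 2.4.2.3, also quoted in \S\ref{1.3.2}).
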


\subsection{Adjunctions}\label{1.3.3}
\noindent To any functor $L:\mathcal{C}\rightarrow\mathcal{D}$ is associated a functor $\Delta^1\rightarrow\text{Cat}$ and therefore a cocartesian fibration $p:S\rightarrow\Delta^1$. An object of $S$ is \emph{either} an object of $\mathcal{C}$ or $\mathcal{D}$. A morphism between objects of $\mathcal{C}$ (respectively $\mathcal{D}$) is just a morphism in $\mathcal{C}$ (or $\mathcal{D}$). There are no morphisms from objects of $\mathcal{D}$ to objects of $\mathcal{C}$.

But all the information of the functor is encapsulated in the morphisms from objects $X\in\mathcal{C}$ to objects $Y\in\mathcal{D}$. In particular, such a morphism in $S$ is just a morphism $L(X)\rightarrow Y$ in $\mathcal{D}$.

Now suppose that $L$ has a right adjoint $F$. Then $$\text{Map}_\mathcal{D}(L(X),Y)\cong\text{Map}_\mathcal{C}(X,R(Y)).$$ Therefore, we can describe maps $X\rightarrow Y$ in $S$ another way: as morphisms $X\rightarrow R(Y)$ in $\mathcal{D}$.

Unpacking, this means that $S\rightarrow\Delta^1$ is both the \emph{cocartesian fibration} associated to $L$ and the \emph{cartesian fibration} associated to $R$. This leads us to our first definition of adjoint $\infty$-functors:

\begin{definition}[HTT 5.2.2.1]
An \emph{adjunction} of $\infty$-categories is a functor $p:S\rightarrow\Delta^1$ which is both a cocartesian fibration and a cartesian fibration.

As a cocartesian fibration, $p$ corresponds to the functor $\Delta^1\rightarrow\text{Cat}$ which picks out $L:\mathcal{C}\rightarrow\mathcal{D}$. As a cartesian fibration, $p$ corresponds to a functor $(\Delta^1)^\text{op}\rightarrow\text{Cat}$ which picks out $R:\mathcal{D}\rightarrow\mathcal{C}$.

We say $L$ is left adjoint to $R$ ($R$ is right adjoint to $L$) and write $L\dashv R$.
\end{definition}

\noindent We can immediately see that left and right adjoints are unique (if they exist). For example, $L$ has a right adjoint precisely if the associated cocartesian fibration $p:S\rightarrow\Delta^1$ is also a cartesian fibration, in which case $R$ can be recovered (up to equivalence) by Theorem \ref{GrJoyal2}.

\subsubsection{Bicartesian fibrations}
\begin{definition}
A bicartesian fibration is a functor which is both a cartesian fibration and a cocartesian fibration.
\end{definition}

\noindent An adjunction is a bicartesian fibration to $\Delta^1$. In general, bicartesian fibrations describe \emph{families} of adjunctions.

Suppose that $F:\mathcal{C}\rightarrow\text{Cat}$ is a functor such that $F(f)$ has a right adjoint for each morphism $f$ in $\mathcal{C}$. Then $F$ factors through $\text{Cat}^\text{L}$, the subcategory of Cat spanned by $\infty$-categories and left adjoint functors between them. Via the equivalence (given by taking right adjoints) $$\text{Cat}^\text{L}\cong(\text{Cat}^\text{R})^\text{op},$$ $F$ determines a functor $$F^R:\mathcal{C}^\text{op}\rightarrow\text{Cat}.$$ If $X$ is an object of $\mathcal{C}$ and $f$ a morphism, then $F^R(X)=F(X)$ and $F^R(f)$ is right adjoint to $F(f)$.

In this situation, the cocartesian fibration specified by $F$ and the cartesian fibration specified by $F^R$ (both of which are functors into $\mathcal{C}$) are equivalent!

\begin{theorem}[HTT 5.2.2.5]
The cocartesian (respectively cartesian) fibration associated to $F:\mathcal{C}\rightarrow\text{Cat}$ is a bifibration if and only if $F(f)$ has a right (left) adjoint for all morphisms $f$ in $\mathcal{C}$.
\end{theorem}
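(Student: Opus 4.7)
The plan is to reduce both directions to the special case $\mathcal{C} = \Delta^1$, where the statement becomes a direct restatement of HTT 5.2.2.1 (the definition of an adjunction as a bicartesian fibration over $\Delta^1$) recorded just above. The two technical ingredients needed for the reduction are (a) stability of (co)cartesian fibrations under pullback, and (b) the fact that pulling back a cocartesian fibration along $f : \Delta^1 \to \mathcal{C}$ recovers $F(f)$ as its classifying functor.

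For the forward direction, assume $p : \int F \to \mathcal{C}$ is bicartesian and fix a morphism $f : X \to Y$ in $\mathcal{C}$. View $f$ as a functor $\Delta^1 \to \mathcal{C}$ and form the pullback $p_f : S_f = \int F \times_{\mathcal{C}} \Delta^1 \to \Delta^1$. By HTT 2.4.2.3 (quoted in the excerpt) and its dual, cocartesian and cartesian fibrations are both stable under pullback, so $p_f$ is bicartesian. Since pullback of cocartesian fibrations corresponds to precomposition of the classifying functor, $p_f$ classifies $F \circ f$, i.e.\ the single functor $F(f)$. Applying HTT 5.2.2.1 to $p_f$ shows that $F(f)$ has a right adjoint.

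For the backward direction, suppose $F(f)$ has a right adjoint $G_f$ for every morphism $f : X \to Y$ in $\mathcal{C}$. I need to produce, for each such $f$ and each object $y$ in the fiber $F(Y)$, a $p$-cartesian lift of $f$ ending at $(Y, y)$. The counit of the adjunction $F(f) \dashv G_f$ provides a morphism $\varepsilon_y : F(f)(G_f(y)) \to y$ in $F(Y)$, which together with $f$ assembles, via the pullback description of $\int F$ in Definition \ref{DefGrCat}, into a morphism $\tilde f : (X, G_f(y)) \to (Y, y)$ in $\int F$ covering $f$. To see that $\tilde f$ is $p$-cartesian, pull $p$ back along $f : \Delta^1 \to \mathcal{C}$; the resulting $p_f$ is bicartesian by HTT 5.2.2.1 applied to the adjunction $F(f) \dashv G_f$, and $\tilde f$ is precisely the $p_f$-cartesian lift of $0 \to 1$ ending at $y$. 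Promoting $p_f$-cartesian to $p$-cartesian then gives the desired lift.

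The main obstacle is exactly this last promotion step: verifying that a morphism which is cartesian in each pullback $S_f$ is cartesian in $\int F$. The lifting condition defining $p$-cartesian morphisms in Definition 1.3 requires, for every $\Lambda^n_n \to \int F$ whose edge $n-1 \to n$ is $\tilde f$, a filler $\Delta^n \to \int F$ compatible with the base. Since the base map $\Lambda^n_n \to \mathcal{C}$ need not factor through $\Delta^1 \xrightarrow{f} \mathcal{C}$, one cannot directly reduce to $p_f$; instead, one first chooses a filler $\Delta^n \to \mathcal{C}$ (available because $\mathcal{C}$ is a quasicategory), then constructs the lift fiberwise by combining the adjoint counit $\varepsilon$ with the cocartesian structure of $p$ along the remaining edges of $\Delta^n$. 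This simplicial bookkeeping is the technical heart of HTT 5.2.2.5; once it is completed, both implications of the theorem follow immediately from HTT 5.2.2.1.
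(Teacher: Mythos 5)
The paper itself offers no proof of this statement: it sits in the expository survey chapter (which announces ``essentially no proofs'') and is quoted directly from HTT 5.2.2.5, so the only meaningful comparison is with Lurie's argument. Your overall strategy is the standard one, and your forward direction is complete and correct: pull back along $f\colon\Delta^1\to\mathcal{C}$, use stability of cocartesian and cartesian fibrations under pullback, observe that the pullback classifies $F(f)$, and read off a right adjoint from the characterization of adjunctions as bicartesian fibrations over $\Delta^1$.

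The backward direction, however, has a genuine gap, and you have located it yourself: what your pullback argument produces is only a \emph{locally} $p$-cartesian lift of each $f$ (an edge that becomes cartesian in $\int F\times_{\mathcal{C}}\Delta^1$), and promoting this to an honest $p$-cartesian edge is the substantive content of the theorem, not bookkeeping that ``follows immediately.'' Your sketch of that step is also off in one detail: in the lifting problem for $\Lambda^n_n$ the map $\Delta^n\to\mathcal{C}$ is already supplied as part of the commutative square, so there is nothing to ``choose'' in the base, and a direct horn-filling argument ``combining the counit with the cocartesian structure'' cannot simply be asserted. The clean way to close the gap is HTT 2.4.2.8: an inner fibration which is a locally cartesian fibration is a cartesian fibration if and only if locally cartesian edges are stable under composition. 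That stability holds here because the composite of the locally cartesian lifts of $f$ and $g$ ending at $y$ starts at $G_f(G_g(y))$, and $G_f\circ G_g$ is right adjoint to $F(g)\circ F(f)\simeq F(g\circ f)$ with the counits composing correctly, so the composite edge is again locally cartesian. Replacing your final paragraph with this composability criterion completes the proof.
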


\noindent We end this chapter with an example. Suppose $f:X\rightarrow Y$ is a morphism in an $\infty$-category $\mathcal{C}$. The functor $f_\ast:\mathcal{C}_{/X}\rightarrow\mathcal{C}_{/Y}$ has a right adjoint given by pullback $X\times_Y -:\mathcal{C}_{/Y}\rightarrow\mathcal{C}_{/X}$, provided all pullbacks along $f$ exist.

The overcategories and functors $f_\ast$ all assemble into a functor $\mathcal{C}\rightarrow\text{Cat}$ that sends $X$ to $\mathcal{C}_{/X}$. Recall that the corresponding cocartesian fibration is $t:\text{Fun}(\Delta^1,\mathcal{C})\rightarrow\mathcal{C}$, which sends a morphism to its target.

\begin{example}
The `target' functor $t:\text{Fun}(\Delta^1,\mathcal{C})\rightarrow\mathcal{C}$ is a cartesian fibration (therefore a bifibration) if and only if $\mathcal{C}$ admits all pullbacks.
\end{example}

\chapter{Commutative Algebra of Categories}\label{3}\setcounter{subsection}{0}
\noindent Our goal in this chapter is to develop a categorification of commutative algebra, as a new toolbox for studying symmetric monoidal $\infty$-categories (which we regard as categorified abelian groups). The results of this chapter and some of the next chapter appear in the author's paper \cite{Berman}, but the exposition is changed and hopefully clarified. Where possible, theorems are cross-referenced with the corresponding results in \cite{Berman}.

Consider the classical situation of the ring $R=\mathbb{Z}[\frac{1}{2}]$. An abelian group $A$ admits the structure of an $R$-module if and only if the `multiplication by 2' homomorphism $2:A\rightarrow A$ is invertible. In this case, admitting an $R$-module structure is a \emph{property} of an abelian group, and not extra data. Equivalently, the unique ring homomorphism $\mathbb{Z}\rightarrow R$ induces a fully faithful functor $$\text{CRing}_{R/}\rightarrow\text{CRing}_{\mathbb{Z}/},$$ which is to say that $\mathbb{Z}\rightarrow\mathbb{Z}[\frac{1}{2}]$ is an \emph{epimorphism} of commutative rings.

Here we come upon an odd phenomenon in the category of commutative rings: epimorphisms are \emph{not} the same as surjections. What is true, however, is that epimorphisms of commutative rings coincide with injections of affine schemes (provided the homomorphism is of finite type \cite{EGA} 17.2.6). Thus we can identify
\begin{itemize}
\item properties of abelian groups which are classified by actions of commutative rings;
\item affine scheme `injections' into $\text{Spec}(\mathbb{Z})$.
\end{itemize}

\noindent Bousfield and Kan call such rings \emph{solid}, and they have classified all of them \cite{Core}. Because $\text{Spec}(\mathbb{Z})$ has just one point for each prime $p$, and a generic point for the prime $(0)$, solid rings are all built out of quotients and localizations of $\mathbb{Z}$ in a suitable way (quotients or localizations depending on whether the generic point of $\text{Spec}(\mathbb{Z})$ is included in the subset).

In this chapter, we prove that many \emph{properties} of symmetric monoidal $\infty$-categories are classified by the actions of solid semiring $\infty$-categories. We will begin by explaining the term \emph{semiring $\infty$-category}.

Most of the categories and $\infty$-categories that arise frequently can be made symmetric monoidal in two different ways. One symmetric monoidal structure (usually the categorical coproduct) behaves additively, the other (usually closed symmetric monoidal) behaves multiplicatively, and the multiplicative structure distributes over the additive structure. For example, Top has a cartesian product which distributes over disjoint union. The $\infty$-category $\text{Ab}_\infty$ (grouplike $\mathbb{E}_\infty$-spaces, or connective spectra) has a smash product which distributes over wedge product. We can make this language precise utilizing results of Gepner, Groth, and Nikolaus \cite{GGN}. See \S\ref{3.1.1} for details.

In particular, each of Top and $\text{Ab}_\infty$ have the structure of a semiring $\infty$-category (and in a natural way).

\begin{principle}
\label{Princ}
Top and $\text{Ab}_\infty$ classify the \emph{properties} of symmetric monoidal $\infty$-categories being cocartesian monoidal (respectively additive).
\end{principle}

\noindent What we have said is not literally true. The problem is that Top and $\text{Ab}_\infty$ are \emph{not finitely generated as semiring $\infty$-categories}, and will therefore have poor algebraic properties. However, there are two ways to resolve this problem, and either way the principle becomes true.

\begin{enumerate}
\item We may observe that Top and $\text{Ab}_\infty$ are finitely generated under colimits; that is, they are \emph{presentable}. Since they are also closed symmetric monoidal, they are commutative algebras in the $\infty$-category $\text{Pr}^{\text{L},\otimes}$ of presentable $\infty$-categories, and in this setting Principle \ref{Princ} holds true (HA 4.8).

There are many benefits to this first approach. Presentable $\infty$-categories have excellent formal properties (see \S\ref{1.1.4}), which make them an ideal setting for proving universal properties. For example, Lurie uses a variant of Principle \ref{Princ} to define a well-behaved symmetric monoidal smash product of spectra (HA 4.8.2). This is in answer to a long-standing open problem of late twentieth century homotopy theory; the first solution \cite{EKMM} predates Lurie's solution by a decade, but his is the first from an $\infty$-categorical perspective and is surprisingly slick compared with its predecessors.
\item We may insist on working just with finitely generated semiring $\infty$-categories, and take the smallest subsemirings of Top and $\text{Ab}_\infty$; that is, the subcategories generated by sums and products of the additive and multiplicative units. In the case of Top, we recover in this way the semiring category Fin of finite sets.

In the case of $\text{Ab}_\infty$, we recover the subcategory of finite wedge powers of the sphere spectrum. This is equivalent to the Burnside $\infty$-category $\text{Burn}$ (see Remark \ref{RmkBurnGpCp}), which can also be described by applying a group-completion to the Hom-objects of $\text{Span}(\text{Fin})$, the 2-category of spans considered in \S\ref{1.2.3}. We sometimes refer to $\text{Span}(\text{Fin})$ as the \emph{effective} Burnside category.
\end{enumerate}

\begin{theorem*}[cf. Theorems \ref{2T1}, \ref{2T2}, \ref{2T3}, \ref{2T4}, \ref{2T5}, \ref{2T7b}]
Each of the following is a semiring $\infty$-category: Fin, $\text{Fin}^\text{op}$, $\text{Fin}^\text{iso}$, $\text{Fin}^\text{inj}$, $\text{Fin}^{\text{inj},\text{op}}$, $\text{Fin}_\ast$, $\text{Fin}_\ast^\text{op}$, $\text{Span}(\text{Fin})$, and Burn. Superscripts denote that we are only allowing injections (inj) or bijections (iso). $\text{Fin}_\ast$ denotes pointed finite sets.

Moreover, for each semiring $\mathcal{R}$ on this list, the forgetful functor $\text{Mod}_R\rightarrow\text{SymMon}$ is fully faithful; that is, being an $\mathcal{R}$-module is a property of a symmetric monoidal $\infty$-category, rather than containing extra structure. These properties are as follows: \\
\begin{tabular}{l r}
$\mathcal{R}$ &$\text{Mod}_\mathcal{R}$ \\ \hline
$\text{Fin}^\text{iso}$ &arbitrary symmetric monoidal \\
$\text{Fin}$ &cocartesian monoidal \\
$\text{Fin}^\text{op}$ &cartesian monoidal \\
$\text{Fin}^\text{inj}$ &semi-cocartesian monoidal \\
$\text{Fin}^{\text{inj},\text{op}}$ &semi-cartesian monoidal \\
$\text{Fin}_\ast$ &cocartesian monoidal with zero object \\
$\text{Fin}_\ast^\text{op}$ &cartesian monoidal with zero object \\
$\text{Span}(\text{Fin})$ &semiadditive (both cocartesian and cartesian monoidal) \\
$\text{Burn}$ &additive \\
\end{tabular}
\end{theorem*}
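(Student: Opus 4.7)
The plan is to realize each entry $\mathcal{R}$ on the list uniformly as an \emph{idempotent} commutative algebra in $\text{SymMon}^\otimes$, arising from a symmetric monoidal Bousfield localization of $\text{SymMon}^\otimes$. Recall from Corollary \ref{CorGGN} that $\text{SymMon} \simeq \text{CMon}(\text{Cat}^\times)$ carries a closed symmetric monoidal structure $\text{SymMon}^\otimes$ whose unit is the free commutative monoid on $\ast \in \text{Cat}$, namely $\text{Fin}^\text{iso}$. For each property $\mathcal{P}$ in the right-hand column, the full subcategory $\text{SymMon}_{\mathcal{P}} \subseteq \text{SymMon}$ of $\mathcal{P}$-type symmetric monoidal $\infty$-categories should be reflective, and if the reflection $L$ is moreover symmetric monoidal then $\mathcal{R} := L(\text{Fin}^\text{iso})$ is automatically an idempotent commutative algebra in $\text{SymMon}^\otimes$. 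By the standard theory of idempotent algebras and smashing localizations, the forgetful functor $\text{Mod}_\mathcal{R} \to \text{SymMon}$ is then fully faithful with essential image precisely $\text{SymMon}_{\mathcal{P}}$, giving both conclusions of the theorem simultaneously.

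In detail, the cases proceed as follows. (1) $\mathcal{R}=\text{Fin}^\text{iso}$ is trivial since it is the unit. (2) $\mathcal{R}=\text{Fin}$: use the universal property of Fin as the free cocartesian monoidal $\infty$-category on one object; cocartesian monoidal categories are exactly those $\mathcal{C}$ admitting a (necessarily essentially unique) symmetric monoidal map $\text{Fin}\to\mathcal{C}$. (3) $\mathcal{R}=\text{Fin}^\text{op}$: dualize, using Remark \ref{RmkCartMon}. (4) $\mathcal{R}=\text{Span}(\text{Fin})$: invoke the universal property from \S\ref{1.2.3} exhibiting $\text{Span}(\text{Fin})$ as the free semiadditive $\infty$-category on one object. (5) $\mathcal{R}=\text{Burn}$: combine the semiadditive case with $\mathbb{E}_\infty$-group-completion of mapping spaces, which exhibits additive $\infty$-categories as a further symmetric monoidal localization of semiadditive ones. (6) For $\mathcal{R}\in\{\text{Fin}^\text{inj},\text{Fin}^{\text{inj},\text{op}},\text{Fin}_\ast,\text{Fin}_\ast^\text{op}\}$, identify each as the free symmetric monoidal $\infty$-category on one generator satisfying the specified extra axiom (initial unit, terminal unit, zero object, and duals), which by the same localization argument produces the appropriate $\mathcal{R}$-module theory.

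The main obstacle is establishing in each case that the reflection $L : \text{SymMon} \to \text{SymMon}_{\mathcal{P}}$ is actually \emph{symmetric monoidal}, not merely a localization of underlying $\infty$-categories. Concretely this reduces to proving that $\text{SymMon}_{\mathcal{P}}$ is closed under the GGN tensor product in $\text{SymMon}^\otimes$, or equivalently that tensoring with an arbitrary symmetric monoidal $\infty$-category preserves the class of local equivalences. Because the GGN tensor product is defined as a representing object for bilinear symmetric monoidal functors rather than constructed explicitly, this compatibility cannot be verified by direct formula; instead the plan is to pass through the universal characterization of $\otimes$ and the concrete universal properties listed above to check closure under tensor product case-by-case (for example, for cocartesian monoidal $\infty$-categories this amounts to showing that a bilinear symmetric monoidal functor out of $\mathcal{C}\times\mathcal{D}$ preserves coproducts in each variable automatically). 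Once this is in hand, idempotence of $\mathcal{R}=L(\text{Fin}^\text{iso})$ follows formally from $\mathcal{R}\otimes\mathcal{R}\simeq L(\mathcal{R}\otimes\text{Fin}^\text{iso})\simeq L(\mathcal{R})\simeq\mathcal{R}$, and the equivalence $\text{Mod}_\mathcal{R}\simeq\text{SymMon}_{\mathcal{P}}$ together with the fully faithfulness of the forgetful functor is then automatic.
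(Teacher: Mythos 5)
Your overall framework (idempotent commutative algebras and smashing localizations in $\text{SymMon}^\otimes$) is the same one the paper uses (Definition \ref{DefSolid}), but your route to verifying it runs through the tensor-product side, and that is where the proposal breaks down. The paper deliberately works on the \emph{Hom} side instead: Lemma \ref{MainLemma} reduces everything to checking that $\text{Hom}(\mathcal{R},X)$ lies in the target subcategory $\mathcal{P}$ for \emph{all} $X$ and that $\text{Hom}(\mathcal{R},X)\to X$ is an equivalence for $X\in\mathcal{P}$, and these are checkable because $\text{Hom}(\mathcal{R}^\amalg,\mathcal{C}^\oast)$ has explicit descriptions via symmetric monoidal envelopes (Example \ref{ExFormulas}): $\mathcal{C}$ itself for $\text{Fin}^\text{iso}$, $\mathcal{C}_{1/}$ for $\text{Fin}^\text{inj}$, $\text{CAlg}(\mathcal{C}^\oast)$ for $\text{Fin}$. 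Your plan instead requires showing that the reflection onto $\text{SymMon}_\mathcal{P}$ is compatible with $\otimes$ and that $\text{SymMon}_\mathcal{P}$ is closed under $\otimes$, and you acknowledge this must be done ``case-by-case via the universal characterization'' --- but this is precisely the part of the problem that has no direct attack, since $\mathcal{C}\otimes\mathcal{D}$ in $\text{SymMon}^\otimes$ admits no explicit description (the paper repeatedly emphasizes that such tensor products are essentially incomputable except through results like Theorem \ref{SMYonedaT}, whose proof itself passes through the presheaf/Hom side). Deferring the entire difficulty to an unexecuted verification is not a proof.

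There are two further concrete problems. First, even granting monoidal compatibility and closure of local objects under mutual tensor products, your formal argument only yields idempotence of $\mathcal{R}=L(\text{Fin}^\text{iso})$ and the inclusion $\text{SymMon}_\mathcal{P}\subseteq\text{Mod}_\mathcal{R}$; the reverse inclusion requires that $X\otimes\mathcal{R}$ be \emph{local for arbitrary} $X$ (the smashing property), which is strictly stronger than what you propose to check and is exactly the statement that $-\otimes\text{Fin}$ produces a cocartesian monoidal $\infty$-category, etc. Lemma \ref{MainLemma} is engineered to get both inclusions from Hom-side conditions, avoiding this. Second, your characterization in case (2) --- that cocartesian monoidal $\infty$-categories are exactly those admitting an essentially unique symmetric monoidal functor from $\text{Fin}$ --- is false: symmetric monoidal functors $\text{Fin}^\amalg\to\mathcal{C}^\oast$ are commutative algebras in $\mathcal{C}^\oast$, and for cocartesian monoidal $\mathcal{C}$ the space of these is $\mathcal{C}^\text{iso}$, not a point; being an $\mathcal{R}$-module is in any case not the property of receiving a symmetric monoidal functor from $\mathcal{R}$. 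The remaining cases in the paper ($\text{Span}(\text{Fin})\simeq\text{Fin}\otimes\text{Fin}^\text{op}$ via Glasman's freeness theorem, $\text{Fin}_\ast\simeq\text{Fin}\otimes\text{Fin}^{\text{inj},\text{op}}$, and Burn via the Chapter \ref{4} analysis of semiadditive Lawvere theories) also do not follow from your outline as written.
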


\noindent We will delay the last result (that Burn-modules are additive categories) until Chapter \ref{4} (Corollary \ref{2T7b}). The others are proven in \S\ref{3.2.1}.

\begin{remark}
All of the results of the last theorem are true for 1-categories as well, although the author is not aware of a proof (even of the 1-categorical results) that does not use the language of $\infty$-categories. Thus this chapter may be of interest to some classical category theorists who are not otherwise interested in $\infty$-categories.
\end{remark}

\noindent We also prove an additional result, which does not have a natural 1-categorical analogue.

\begin{theorem*}[Theorem \ref{MainS}]
There is a (solid) semiring $\infty$-category $\vec{\mathbb{S}}$ for which $\text{Mod}_{\vec{\mathbb{S}}}$ is equivalent to the $\infty$-category of connective spectra.
\end{theorem*}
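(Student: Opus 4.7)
The plan is to take $\vec{\mathbb{S}}$ to be the sphere spectrum $\mathbb{S}$, viewed as a semiring $\infty$-category via the chain of inclusions
$$\text{Ab}_\infty\subseteq\text{CMon}_\infty\cong\text{CMon}(\text{Top}^\times)\subseteq\text{CMon}(\text{Cat}^\times)\cong\text{SymMon}.$$
Concretely, $\vec{\mathbb{S}}$ is the grouplike $\mathbb{E}_\infty$-space $\Omega^\infty\mathbb{S}$, regarded as a symmetric monoidal $\infty$-groupoid, endowed with the further multiplicative structure coming from the $\mathbb{E}_\infty$-ring structure on $\mathbb{S}$. The first step is to check that this data really assembles into a commutative algebra in $\text{SymMon}^\otimes$, which falls out of Corollary \ref{CorGGN} identifying commutative algebra structures on a grouplike $\mathbb{E}_\infty$-space with connective $\mathbb{E}_\infty$-ring spectra.

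I would then identify $\text{Mod}_{\vec{\mathbb{S}}}$ with connective spectra in two steps. First, as a symmetric monoidal $\infty$-category $\vec{\mathbb{S}}$ is itself additive, so by Theorem \ref{2T4} it is a Burn-module; this yields a canonical map $\text{Burn}\to\vec{\mathbb{S}}$ of semirings, and restriction along it exhibits every $\vec{\mathbb{S}}$-module as an additive $\infty$-category. Second, tensoring by $\vec{\mathbb{S}}$ must invert every morphism of $\vec{\mathbb{S}}$ (since $\vec{\mathbb{S}}$ is an $\infty$-groupoid), which in turn forces the underlying module itself to be an $\infty$-groupoid. An additive $\infty$-groupoid is exactly a grouplike $\mathbb{E}_\infty$-space, i.e., a connective spectrum; conversely, any connective spectrum carries a canonical $\vec{\mathbb{S}}$-action from the $\mathbb{E}_\infty$-ring structure of $\mathbb{S}$. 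Solidity then follows from the smash product identity $\mathbb{S}\wedge\mathbb{S}\simeq\mathbb{S}$: by Theorem \ref{ThmGGN} the tensor product in $\text{SymMon}^\otimes$ restricted to grouplike $\mathbb{E}_\infty$-spaces computes the smash product of connective spectra, so the multiplication $\vec{\mathbb{S}}\otimes\vec{\mathbb{S}}\to\vec{\mathbb{S}}$ is an equivalence.

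The hardest step will be the second half of the argument above: rigorously showing that a symmetric monoidal action of the $\infty$-groupoid $\vec{\mathbb{S}}$ on a symmetric monoidal $\infty$-category $\mathcal{M}$ forces $\mathcal{M}$ itself to be an $\infty$-groupoid. The action is a functor in $\text{SymMon}$ rather than a pointwise composition operation on the morphisms of $\mathcal{M}$, so invertibility in $\vec{\mathbb{S}}$ does not propagate to $\mathcal{M}$ by any local argument. I would approach this either by identifying $\vec{\mathbb{S}}$ explicitly as a Bousfield localization of $\text{Burn}$ in $\text{SymMon}^\otimes$ (at the class of all morphisms of Burn) and invoking general base change along localizations, or by comparing $\vec{\mathbb{S}}\otimes\mathcal{M}$ with $\text{Ab}_\infty\otimes\mathcal{M}$ inside $\text{Pr}^{\text{L},\otimes}$ and reducing to Theorem \ref{ThmGGN}.
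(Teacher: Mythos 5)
Your choice of $\vec{\mathbb{S}}$ and your solidity argument (idempotence of $\mathbb{S}$ under smash, transported along the symmetric monoidal inclusion $\text{Ab}_\infty^\wedge\subseteq\text{SymMon}^\otimes$) match the paper. But your first reduction step is false: $\vec{\mathbb{S}}$ is \emph{not} an additive $\infty$-category. Its underlying $\infty$-category is a non-trivial $\infty$-groupoid, which has neither finite products nor finite coproducts, so its symmetric monoidal structure is neither cartesian nor cocartesian monoidal; in particular there is no semiring map $\text{Burn}\rightarrow\vec{\mathbb{S}}$. (Indeed, if $\vec{\mathbb{S}}$ were a Burn-module, solidity of Burn together with Corollary \ref{vanish} would give $\vec{\mathbb{S}}\cong\vec{\mathbb{S}}\otimes\text{Burn}\cong 0$.) The same confusion makes your conclusion inconsistent with the theorem you are proving: $\vec{\mathbb{S}}$-modules are supposed to be connective spectra, i.e.\ $\infty$-groupoids, and a non-trivial $\infty$-groupoid is never an additive $\infty$-category. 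You have conflated ``this symmetric monoidal $\infty$-category \emph{is} a spectrum'' with ``this $\infty$-category is additive''; the two are essentially disjoint classes.

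The step you correctly flag as hardest --- that an $\vec{\mathbb{S}}$-module structure on $\mathcal{M}^\oast$ forces $\mathcal{M}$ to be an $\infty$-groupoid --- is exactly the content of the paper's Lemma (\cite{Berman} 4.7), and neither of your two suggested strategies is what the paper does (the second does not even typecheck, since $\mathcal{M}$ is a small symmetric monoidal $\infty$-category rather than an object of $\text{Pr}^{\text{L},\otimes}$). The paper's argument is elementary and local after all, but the invertibility that propagates is that of \emph{objects} of $\vec{\mathbb{S}}$, not of its morphisms: pass to homotopy categories (the functor $h$ is symmetric monoidal, so $h\mathcal{M}$ is an $h\vec{\mathbb{S}}$-module), let $m_{-1}$ be the endofunctor of $h\mathcal{M}$ given by the action of the object $-1\in h\vec{\mathbb{S}}$, and observe that a chosen isomorphism $\alpha\colon(-1)+1\xrightarrow{\sim}0$ induces a \emph{natural} isomorphism $\alpha_X\colon(-X)+X\xrightarrow{\sim}0$. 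Naturality of $\alpha_X$ in $X$ is the crucial coherence your ``no local argument'' worry overlooks, and it lets one write down an explicit two-sided inverse to $f+0$ for any $f\colon X\rightarrow Y$ by the shearing composite $Y+0\rightarrow X+(-X)+Y\rightarrow X+(-Y)+Y\rightarrow X+0$. (Note that grouplikeness of objects alone does not suffice --- the Picard category of a ring is grouplike but not a groupoid --- so some use of the $\vec{\mathbb{S}}$-action beyond ``objects are invertible'' is unavoidable.) Finally, grouplikeness of the module follows simply because $\pi_0\mathcal{M}$ is a module over $\pi_0\vec{\mathbb{S}}\cong\mathbb{Z}$; you do not need, and cannot use, additivity for this.
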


\noindent The idea of identifying spectra with $\mathbb{S}$-modules in a larger category should be familiar to stable homotopy theorists; in fact, standard constructions of spectra (like Elmendorf-Kriz-Mandell-May \cite{EKMM}) take this approach.\footnote{Theorem \ref{MainS} should not itself be taken as a construction or definition of spectra. We cannot work seriously with symmetric monoidal $\infty$-categories without knowing in advance something about connective spectra, so such a definition would be circular.}

A benefit of such an approach is that it provides for us a way to compare symmetric monoidal $\infty$-categories to spectra, internal to our categorified commutative algebra. For example, we have a free functor from symmetric monoidal $\infty$-categories to spectra, given by tensoring with $\vec{\mathbb{S}}$. This operation can be regarded as a relative of algebraic K-theory.

Roughly, if $\mathcal{C}$ is a symmetric monoidal $\infty$-category, then $\mathcal{C}\otimes\vec{\mathbb{S}}$ is obtained from $\mathcal{C}$ first by formally inverting all morphisms (taking the classifying space) and then group-completing. In contrast, most constructions of algebraic K-theory (such as \cite{Mandell}) operate by throwing out all non-invertible morphisms and then group-completing.

Various notions of categorified rings have appeared before, including to study iterated K-theory \cite{BDRR}, Tannaka duality \cite{FunProGpoid}, and smash products of spectra (HA 4.8). The framework in this chapter is general enough to permit comparisons to many of the other notions of categorified rings; the comparison to what may be called \emph{presentable} categorified rings (as in HA 4.8 and \cite{FunProGpoid}) is the topic of Chapter \ref{4}.

\subsubsection{Organization}
\noindent In \S\ref{3.1}, we will define semiring $\infty$-categories and solid semiring $\infty$-categories. In particular, we will explain what we mean when we say (of a solid semiring $\infty$-category) that being an $\mathcal{R}$-module is a \emph{property} of a symmetric monoidal $\infty$-category, rather than extra structure.

Actually, all of \S\ref{3.1} is presented in more generality, beginning with a presentable and cartesian closed $\infty$-category $\mathcal{V}$, and considering semiring (or solid semiring) objects in $\mathcal{V}$. This introduction has been written with the example $\mathcal{V}=\text{Cat}$ in mind, but we can also study semiring 1-categories by choosing $\mathcal{V}=\text{Cat}_1$. Choosing $\mathcal{V}=\text{Set}$ (respectively $\text{Top}$) recovers the notion of ordinary commutative semirings (respectively $\mathbb{E}_\infty$-semiring spaces).

In \S\ref{3.2} and \S\ref{3.3}, we specialize to $\mathcal{V}=\text{Cat}$, proving the results discussed in the introduction above. \S\ref{3.2} contains all the results except about $\vec{\mathbb{S}}$-modules, which is the content of \S\ref{3.3}.

\section{Semiring objects}\label{3.1}
\subsection{Semirings}\label{3.1.1}
\noindent Suppose that $\mathcal{V}^\otimes$ is a presentable $\infty$-category with a closed symmetric monoidal structure. That is, for each object $X\in\mathcal{V}$, the functor $$-\otimes X:\mathcal{V}\rightarrow\mathcal{V}$$ has a right adjoint $\text{Hom}_\mathcal{V}(X,-)$.

As in \S\ref{1.1.4} (under the heading `Tensor products of presentable $\infty$-categories'), $\mathcal{V}^\otimes$ is then a commutative algebra in $\text{Pr}^{\text{L},\otimes}$. By Theorem \ref{ThmGGN} and its corollary, commutative monoids in $\mathcal{V}$ are endowed with an external tensor product, such that the free functor $$\text{Free}:\mathcal{V}^\otimes\rightarrow\text{CMon}(\mathcal{V})^\otimes$$ is symmetric monoidal.

\begin{definition}
\label{DefCSRing}
A \emph{commutative semiring} in $\mathcal{V}$ is a commutative algebra in $\text{CMon}(\mathcal{V})^\otimes$. For the $\infty$-category thereof, we write $$\text{CSRing}(\mathcal{V})=\text{CAlg}(\text{CMon}(\mathcal{V})^\otimes).$$
\end{definition}

\begin{example}
If $\mathcal{V}=\text{Set}$, a commutative monoid in Set is just a commutative monoid. The tensor product on CMon is the usual one (which agrees with the tensor product of abelian groups, in the case that both commutative monoids are groups). A commutative semiring in Set is just a commutative semiring\footnote{Semirings are also sometimes called rigs, emphasizing `ring with \emph{inverses}'.}.
\end{example}

\begin{example}
\label{ExCSRingCat}
If $\mathcal{V}=\text{Cat}$, a commutative monoid in Cat is a symmetric monoidal $\infty$-category. We will refer to a commutative semiring in Cat as a \emph{commutative semiring $\infty$-category}.

Roughly, a commutative semiring $\infty$-category is an $\infty$-category with two symmetric monoidal operations ($\oplus$ and $\otimes$), where $\otimes$ `distributes' over $\oplus$.

Similarly, a commutative semiring in $\text{Cat}_1$ is a commutative semiring 1-category.
\end{example}

\begin{example}
If $K$ is a collection of small simplicial sets, let $\text{Cat}^K$ denote the $\infty$-category of small $\infty$-categories which admit colimits from $K$ (alternatively limits from $K$). Lurie has constructed a closed symmetric monoidal structure on $\text{Cat}^K$ (HA 4.8.1). For example, $\text{Cat}_\text{pb}$ (the $\infty$-category of small $\infty$-categories which admit pullbacks) has a closed symmetric monoidal structure. Commutative semirings in $\text{Cat}_\text{pb}$ will play a central role in our study of motives for group cohomology (Chapter \ref{5}).
\end{example}

\noindent In this thesis, we are especially concerned with the last two examples: $\mathcal{V}=\text{Cat}$ in \S\ref{3.2}, and $\mathcal{V}=\text{Cat}_\text{pb}$ in \S\ref{5.1}. But using Definition \ref{DefCSRing}, it is not immediately clear how to even construct examples of commutative semirings in Cat and $\text{Cat}_\text{pb}$. To produce examples, we will typically use either the following lemma, or the theory of idempotent monoids as in Lemma \ref{MainLemma}.

\begin{lemma}[\cite{Berman} 2.4]
\label{EasyLemma}
Suppose $\mathcal{V}$ and $\mathcal{W}$ are presentable and cartesian closed $\infty$-categories, and $L:\mathcal{V}\rightleftarrows\mathcal{W}:R$ is an adjunction such that $L$ preserves finite products. Then there is an induced adjunction $$L_\ast:\text{CMon}(\mathcal{V})\rightleftarrows\text{CMon}(\mathcal{W}):R_\ast$$ such that:
\begin{enumerate}
\item the adjunction lifts to a symmetric monoidal adjunction (that is, $L_\ast$ lifts to a symmetric monoidal functor and $R_\ast$ to a lax symmetric monoidal functor);
\item $L_\ast$ (respectively $R_\ast$) agrees with $L$ (respectively $R$) after forgetting commutative monoid structures.
\end{enumerate}
\end{lemma}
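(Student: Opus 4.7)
The plan is to construct the underlying adjunction $L_\ast\dashv R_\ast$ directly from the hypotheses, then upgrade to a symmetric monoidal adjunction by invoking the Gepner--Groth--Nikolaus machinery recalled in \S\ref{1.1.4}.

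First I would use the model suggested at the end of \S\ref{1.2.3} (the $\infty$-categorical form of Corollary \ref{BurnCMon2}), realizing commutative monoids as product-preserving functors $\text{CMon}(\mathcal{V})\simeq\text{Fun}^\times(\text{Span}(\text{Fin}),\mathcal{V})$, and similarly for $\mathcal{W}$. Since $L$ preserves finite products by hypothesis and $R$ preserves all small limits (being a right adjoint), postcomposition by either one sends product-preserving functors to product-preserving functors. This yields
$$L_\ast:=L\circ(-),\qquad R_\ast:=R\circ(-),$$
fitting into squares with the forgetful functors to $\mathcal{V}$ and $\mathcal{W}$ (given by evaluation at the generator of $\text{Span}(\text{Fin})$) that commute by construction, so that (2) holds.

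Next I would establish the adjunction $L_\ast\dashv R_\ast$. The pointwise adjunction equivalences $\text{Map}_\mathcal{W}(LA(s),B(s))\simeq\text{Map}_\mathcal{V}(A(s),RB(s))$ are natural in $s\in\text{Span}(\text{Fin})$. Because mapping spaces in a functor $\infty$-category are computed as limits of pointwise mapping spaces, and because the forgetful functor $\text{CMon}(\mathcal{V})\to\mathcal{V}$ creates all small limits (HA 3.2.2.1), passing to the appropriate limit of the pointwise equivalences yields the required natural equivalence $\text{Map}_{\text{CMon}(\mathcal{W})}(L_\ast A,B)\simeq\text{Map}_{\text{CMon}(\mathcal{V})}(A,R_\ast B)$.

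Finally I would upgrade to (1) via Theorem \ref{ThmGGN}. By Remark \ref{RmkCartMon}, the hypothesis that $L$ preserves finite products is equivalent to $L:\mathcal{V}^\times\to\mathcal{W}^\times$ being symmetric monoidal for the underlying cartesian monoidal structures; since $L$ is also colimit-preserving, it is a morphism in $\text{CAlg}(\text{Pr}^{\text{L},\otimes})$. Theorem \ref{ThmGGN} gives a natural equivalence $\text{CMon}(\mathcal{V})\simeq\text{CMon}_\infty\otimes\mathcal{V}$ in $\text{Pr}^\text{L}$, and Corollary \ref{CorGGN} endows each side with a closed symmetric monoidal structure characterized by the free functor being symmetric monoidal. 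Tensoring $L$ with $\text{id}_{\text{CMon}_\infty}$ inside $\text{CAlg}(\text{Pr}^{\text{L},\otimes})$ therefore produces a symmetric monoidal functor $\text{id}\otimes L$, which one identifies with $L_\ast$. Its right adjoint $R_\ast$ is then automatically lax symmetric monoidal by the general principle recalled at the end of \S\ref{1.1.3} (\cite{GepHaug} A.5.11). The main obstacle is precisely that last identification of $\text{id}\otimes L$ with the hand-built $L_\ast$: it requires tracking the naturality of $\text{CMon}(-)\simeq\text{CMon}_\infty\otimes-$ in $\mathcal{V}\in\text{CAlg}(\text{Pr}^{\text{L},\otimes})$. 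This is essentially bookkeeping, but it is the step where the two halves of the argument -- the explicit construction at the level of underlying adjunctions, and the abstract tensor-product construction that supplies the symmetric monoidal structure -- must be glued together.
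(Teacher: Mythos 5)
Your proposal is correct and follows essentially the same route as the paper: the paper likewise obtains $L_\ast$ by tensoring the morphism $L:\mathcal{V}^\times\rightarrow\mathcal{W}^\times$ of $\text{CAlg}(\text{Pr}^{\text{L},\otimes})$ with $\text{CMon}_\infty$ (Theorem \ref{ThmGGN}), invokes \cite{GepHaug} A.5.11 for the lax symmetric monoidal structure on $R_\ast$, and identifies $L_\ast$ with postcomposition by $L$ (phrased via $\infty$-operad maps $\text{Comm}\rightarrow\mathcal{V}\rightarrow\mathcal{W}$ rather than via $\text{Fun}^\times(\text{Span}(\text{Fin}),-)$) to get compatibility with the forgetful functors. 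The gluing step you flag as the main obstacle is handled in the paper by the same appeal to the naturality of $\text{CMon}(-)\cong\text{CMon}_\infty\otimes(-)$, and part (2) for $R_\ast$ is checked there by taking right adjoints in the commutative square of free functors, which is the mirror image of your direct verification.
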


\begin{remark}
Since $R_\ast$ and $L_\ast$ are both (at least) lax symmetric monoidal, they induce functors $\text{CSRing}(\mathcal{V})\rightleftarrows\text{CSRing}(\mathcal{W})$ which agree with $L$ and $R$ after forgetting commutative semiring structures. Therefore, if $A$ is a commutative semiring in $\mathcal{V}$ (respectively $\mathcal{W}$), $L(A)$ (respectively $R(A)$) inherits a commutative semiring structure. We will give examples at the beginning of \S\ref{3.2}.
\end{remark}

\begin{proof}
Given an adjoint pair $L:\mathcal{V}\rightleftarrows\mathcal{W}:R$ such that $L$ is product-preserving, $L$ lifts to a functor $\mathcal{V}^\times\rightarrow\mathcal{W}^\times$ in $\text{CAlg}(\text{Pr}^{\text{L},\otimes})$; that is, a symmetric monoidal left-adjoint functor. Tensoring with $\text{CMon}_\infty$, we obtain another symmetric monoidal left-adjoint functor \\ $L_\ast:\text{CMon}(\mathcal{V})^\otimes\rightarrow\text{CMon}(\mathcal{W})^\otimes$. Since tensoring with $\text{CMon}_\infty$ is naturally equivalent to taking commutative monoid objects, $L_\ast(X)$ is given by composition of $\infty$-operad maps $$\text{Comm}\xrightarrow{X}\mathcal{V}\xrightarrow{L}\mathcal{W},$$ where Comm is the commutative operad. Thus $L_\ast$ is compatible with the forgetful functor down to $\mathcal{V}$ and $\mathcal{W}$.

Since $L_\ast$ is a symmetric monoidal functor with a right adjoint, by \cite{GepHaug} A.5.11, we have a symmetric monoidal adjunction between $L_\ast$ and $R_\ast$.

By construction there is a commutative diagram of left adjoint functors $$\xymatrix{
\mathcal{V}\ar[r]^L\ar[d]_{\text{Free}} &\mathcal{W}\ar[d]^{\text{Free}} \\
\text{CMon}(\mathcal{V})\ar[r]_{L_\ast} &\text{CMon}(\mathcal{W}).
}$$ Taking right adjoints, we see that $R_\ast$ is compatible with $R$ after forgetting the commutative monoid structures.
\end{proof}

\subsection{Solid semirings}\label{3.1.2}
\noindent Recall the following phenomenon of commutative algebra: For some commutative rings $R$, being an $R$-module is a \emph{property} of an abelian group, and not extra structure. For example, $\mathbb{Z}/p$-modules correspond to abelian groups which are annihilated by $p$, and $\mathbb{Z}[\frac{1}{p}]$-modules correspond to abelian groups for which multiplication by $p$ is invertible.

Bousfield and Kan call such rings \emph{solid}, and they have classified all of them \cite{Core}. The (finitely generated) solid rings are just quotients and localizations of $\mathbb{Z}$, as well as products $\mathbb{Z}[S^{-1}]\times\mathbb{Z}/n$ (where each prime divisor of $n$ is in $S$).

Note that the solid rings $R$ are exactly those such that the homomorphism $\mathbb{Z}\rightarrow R$ is an epimorphism of commutative rings. Equivalently, they are subobjects of the affine scheme $\text{Spec}(\mathbb{Z})$. Thus, each solid ring records some of the algebraic geometry of $\mathbb{Z}$.

In this section, we will generalize solid rings, replacing the category of abelian groups by some symmetric monoidal $\infty$-category $\mathcal{C}^\oast$, and commutative rings by $\text{CAlg}(\mathcal{C}^\oast)$. Usually, we will want to take $\mathcal{C}^\oast=\text{CMon}(\mathcal{V})^\otimes$, for some $\mathcal{V}$ which is presentable and closed symmetric monoidal.

\begin{definition}
\label{DefSolid}
Let $\mathcal{C}^\oast$ be symmetric monoidal. Write 1 for its unit. If $R\in\text{CAlg}(\mathcal{C}^\oast)$, the following are equivalent, in which case we call $R$ \emph{solid}.
\begin{enumerate}
\item the forgetful functor $\text{Mod}_R\rightarrow\mathcal{C}$ is fully faithful;
\item the functor $-\oast R:\mathcal{C}\rightarrow\mathcal{C}$ is a localization (called a \emph{smashing localization});
\item the multiplication map $R\oast R\rightarrow R$ is an equivalence in $\mathcal{C}$;
\item either of the maps $R\rightarrow R\oast R$ induced by tensoring the unit $1\rightarrow R$ with $R$ is an equivalence in $\mathcal{C}$;
\item the map $1\rightarrow R$ is an epimorphism in $\text{CAlg}(\mathcal{C}^\oast)$.
\end{enumerate}
\end{definition}

\noindent Although we are not aware of a specific work which includes all of these conditions under the name `solid', none of the conditions are new. So the reader who objects that this `definition' requires proof (that all the conditions are equivalent) may consult \cite{GGN} for all but the last condition.

As for (5), it is directly equivalent to (3), because $X\rightarrow Y$ is an epimorphism if and only if (by definition) the codiagonal $Y\amalg_X Y\rightarrow Y$ is an equivalence. The unit 1 is initial in $\text{CAlg}(\mathcal{C}^\oast)$ and so $Y\amalg_1 Y\cong Y\amalg Y$. Moreover, the coproduct $\amalg$ in $\text{CAlg}(\mathcal{C}^\oast)$ is just $\oast$, so the equivalence follows directly.

\begin{remark}
\label{georemark}
In commutative rings, we should not expect epimorphisms to look anything like literal surjections. For example, as in the last remark, localizations $R\rightarrow R[S^{-1}]$ are epimorphisms. On the other hand, it is true that any surjection of commutative rings is an epimorphism.

Instead, we might think of epimorphisms as having geometric meaning. Provided that $f:R\rightarrow A$ is finitely generated, $f$ is an epimorphism if and only if $\text{Spec}(A)\rightarrow\text{Spec}(R)$ is injective in the sense that every fiber is either an isomorphism or empty (\cite{EGA} 17.2.6). So in general, we may think of solid objects as capturing some of the `algebraic geometry' of the symmetric monoidal unit of $\mathcal{C}^\oast$.
\end{remark}

\begin{remark}
\label{SolidRingSp}
If $\mathcal{C}^\oast=\text{Sp}^\wedge$, commutative algebras in $\mathcal{C}^\oast$ are $\mathbb{E}_\infty$-ring spectra. Here, epimorphisms have even less in common with surjections. Consider the map of Eilenberg-Maclane ring spectra $\phi:HR\rightarrow HA$ induced by a ring homomorphism $R\rightarrow A$. For $\phi$ to be an epimorphism, we would need $HA\wedge_{HR} HA\rightarrow HA$ to be an equivalence; that is, not only is $R\rightarrow A$ a ring epimorphism ($A\otimes_R A\rightarrow A$ is an isomorphism), but also $\text{Tor}_\ast^R(A,A)\cong 0$ for all $\ast>0$. This is in general \emph{not} true when $R\rightarrow A$ is surjective.

For example, $H\mathbb{Z}\rightarrow HR$ is an epimorphism of ring spectra only when $R$ is a subring of $\mathbb{Q}$ (a localization of $\mathbb{Z}$). While Bousfield and Kan do not prove this, it is a straightforward corollary of their results \cite{Core}, and details can be found in the MathOverflow answer \cite{MathOverflow}.

Note that this is a classification of solid $H\mathbb{Z}$-algebras, not solid ring spectra. But the classification of solid ring spectra is very similar. In particular, a commutative ring spectrum $E$ is solid if and only if it is a Moore spectrum for which $\pi_0 E$ is isomorphic to a subring of $\mathbb{Q}$ \cite{SolidSp}.
\end{remark}

\noindent In the following two lemmas, we will
\begin{itemize}
\item learn that any idempotent object of $\mathcal{C}^\oast$ admits a canonical commutative algebra structure, which is solid;
\item (when $\mathcal{C}=\text{CMon}(\mathcal{V})$ for some presentable cartesian closed $\mathcal{V}$) learn how to recognize solid semirings by identifying the corresponding \emph{property} encoded by their modules (that is, identifying the full subcategory $\text{Mod}_A\subseteq\mathcal{C}$).
\end{itemize}

\noindent These lemmas lie at the heart of most of the main results of this chapter. Typically, we will be given some commutative monoid $A$ in $\mathcal{V}$, and we will know something about the behavior of $\text{Hom}(A,-)$. Using Lemma \ref{MainLemma}, we will conclude that $A$ is a solid commutative semiring, and that the full subcategory $\text{Mod}_A\subseteq\text{CMon}(\mathcal{V})$ has a convenient description.

\begin{definition}
Suppose $1$ denotes the symmetric monoidal unit of $\mathcal{C}^\oast$, and let $1\rightarrow A$ be a morphism in $\mathcal{C}$. We say $\mathcal{A}$ is an \emph{idempotent object} of $\mathcal{C}$ if the induced morphism $1\otimes A\rightarrow A\otimes A$ is an equivalence.
\end{definition}

\begin{lemma}[HA 4.8.2.9]
Given $A\in\text{CAlg}(\mathcal{C}^\oast)$, the unit of $A$ produces a map $1\rightarrow A$. By Definition \ref{DefSolid}, $A$ is solid if and only if $1\rightarrow A$ is idempotent. The induced functor from solid commutative algebras to idempotent objects is an equivalence of $\infty$-categories: $$\text{CAlg}(\mathcal{C}^\oast)^\text{solid}\rightarrow\mathcal{C}_{1/}^\text{idem}.$$
\end{lemma}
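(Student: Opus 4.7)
The plan is to verify each half of the statement separately.

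First, I would dispatch the claim that $A$ is solid if and only if the unit $1\to A$ is idempotent. Tensoring the morphism $1\to A$ with $A$ on the right yields $1\oast A\to A\oast A$; up to the unit equivalence $1\oast A\simeq A$, this is precisely one of the maps $A\to A\oast A$ appearing in condition (4) of Definition \ref{DefSolid}. So idempotence of the unit is a verbatim restatement of solidity.

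Next, the functor $\text{CAlg}(\mathcal{C}^\oast)^\text{solid}\to\mathcal{C}_{1/}^\text{idem}$ is the evident one sending a commutative algebra to its unit map (which is idempotent by the above, so lands in the indicated subcategory). To show it is fully faithful, I would exploit Definition \ref{DefSolid}(1): for $B$ solid, $\text{Mod}_B\hookrightarrow\mathcal{C}$ is fully faithful. Passing to commutative algebras and using the identification $\text{CAlg}(\text{Mod}_B^{\oast_B})\simeq\text{CAlg}(\mathcal{C}^\oast)_{B/}$, it follows that for any commutative algebra $C$ the space $\text{Map}_{\text{CAlg}}(B,C)$ is either empty or contractible, and is inhabited precisely when the underlying object of $C$ lies in $\text{Mod}_B\subseteq\mathcal{C}$. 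When $C=A$ is also solid, this condition matches the corresponding condition in $\mathcal{C}_{1/}^\text{idem}$, where mapping spaces between idempotent objects are likewise either empty or contractible, inhabited exactly when the idempotent $1\to B$ factors through $1\to A$ in $\mathcal{C}$.

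For essential surjectivity, given an idempotent $1\to A$ in $\mathcal{C}$, I would observe that the endofunctor $-\oast A:\mathcal{C}\to\mathcal{C}$ is itself idempotent (since $1\oast A\to A\oast A$ is an equivalence), hence a localization onto a reflective full subcategory $\mathcal{D}\subseteq\mathcal{C}$ with unit $A$. The symmetric monoidal structure on $\mathcal{C}^\oast$ descends to one on $\mathcal{D}^\oast$, and the localization is symmetric monoidal. Since the symmetric monoidal unit of any $\infty$-category carries an essentially unique commutative algebra structure, $A$ acquires such a structure in $\mathcal{D}^\oast$; transporting along the fully faithful, lax symmetric monoidal right adjoint $\mathcal{D}\hookrightarrow\mathcal{C}$ endows $A$ with a commutative algebra structure in $\mathcal{C}^\oast$ whose unit is the prescribed idempotent, and makes $A$ solid by Definition \ref{DefSolid}.

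The main obstacle is the essential surjectivity step, since constructing a symmetric monoidal localization $\mathcal{C}^\oast\to\mathcal{D}^\oast$ and verifying that the resulting commutative algebra structure on $A$ is well-defined up to contractible choice requires coherent $\infty$-operadic machinery; in practice one either invokes the general theory of symmetric monoidal localizations (HA 2.2.1) or appeals directly to Lurie's coherence result HA 4.8.2.9, which is exactly the lemma being stated.
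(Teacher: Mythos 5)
The paper does not prove this lemma at all: it is imported verbatim from Lurie as HA~4.8.2.9, so there is no in-paper argument to compare against. Your sketch is, in substance, a reconstruction of Lurie's own proof. The first step is indeed immediate: condition (4) of Definition \ref{DefSolid} is literally the statement that $1\oast A\to A\oast A$ is an equivalence, i.e.\ that the unit is an idempotent object. The essential-surjectivity step via the idempotent endofunctor $-\oast A$, the induced symmetric monoidal localization onto its essential image $\mathcal{D}$, the essentially unique commutative algebra structure on the unit of $\mathcal{D}^\oast$, and transport back along the lax symmetric monoidal right adjoint is exactly how Lurie proceeds, and you are right that the coherence content there is precisely what the cited result packages. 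Two small corrections to the full-faithfulness step. First, you have the factorization condition reversed: a point of $\text{Map}_{\mathcal{C}_{1/}}(B,A)$ is a map $B\to A$ under $1$, hence exhibits $1\to A$ as factoring through $1\to B$, not the other way around. Second, knowing that both mapping spaces are $(-1)$-truncated is not yet full faithfulness; you must also check that the two nonemptiness conditions coincide. They do, because each is equivalent to the single assertion that the unit-induced map $A\to A\oast B$ is an equivalence (on the algebra side this says $A$ is $B$-local, hence a $B$-algebra in an essentially unique way; on the idempotent side this is HA~4.8.2.7/4.8.2.10), but that identification is the actual content of the step and should be said. With those repairs your outline is a faithful account of the cited proof rather than an independent one, which is consistent with how the paper itself treats the statement.
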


\noindent Suppose $\mathcal{V}^\otimes$ is presentable and closed symmetric monoidal. Denote by $\mathbb{I}$ the free commutative monoid on the terminal object of $\mathcal{V}$, so that $\text{Hom}_{\text{CMon}(\mathcal{V})}(\mathbb{I},X)\cong X$. We will usually write Hom instead of $\text{Hom}_{\text{CMon}(\mathcal{V})}$.

\begin{lemma}
\label{MainLemma}
Let $A$ be a commutative monoid in $\mathcal{V}$, $1\rightarrow A$ a morphism in $\mathcal{V}$ from the terminal object (inducing a commutative monoid morphism $\mathbb{I}\rightarrow A$), and $\mathcal{P}$ a full subcategory of $\mathcal{C}$. Precomposition by $\mathbb{I}\rightarrow A$ induces for each $X\in\text{CMon}(\mathcal{V})$ a commutative monoid morphism $$U_X:\text{Hom}(A,X)\rightarrow X.$$ The following are equivalent:
\begin{enumerate}
\item $\text{Hom}(A,X)\in\mathcal{P}$ for all $X\in\text{CMon}(\mathcal{V})$, and $U_X$ is an equivalence for all $X\in\mathcal{P}$;
\item $A$ is idempotent (therefore a solid semiring), and $\text{Mod}_A\cong\mathcal{P}$ as full subcategories of $\mathcal{C}$.
\end{enumerate}
\end{lemma}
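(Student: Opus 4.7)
The strategy hinges on identifying $U_X$ with the counit of the adjunction $\text{Fgt} \dashv \text{Hom}(A,-)$ that appears (once $A$ is known to be a commutative algebra) in Proposition \ref{TupHup} applied to $\mathbb{I} \to A$. That proposition says $\text{Hom}(A,-)$ lands in $\text{Mod}_A$ and is right adjoint to the forgetful functor; in particular $\text{Hom}(A,X)$ carries a canonical $A$-module structure for every $X \in \mathcal{C}$. Under this adjunction, the precomposition map $U_X: \text{Hom}(A,X) \to \text{Hom}(\mathbb{I},X) = X$ is precisely the counit at $X$. When $A$ is solid, $\text{Fgt}$ is fully faithful, so $U_X$ is an equivalence if and only if $X$ lies in the essential image of $\text{Fgt}$, i.e.\ $X \in \text{Mod}_A$.

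\textbf{$(2) \Rightarrow (1)$.} Assume $A$ is solid and $\text{Mod}_A = \mathcal{P}$. Then $\text{Hom}(A,X) \in \text{Mod}_A = \mathcal{P}$ automatically, and the counit identification shows $U_X$ is an equivalence exactly when $X \in \text{Mod}_A = \mathcal{P}$.

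\textbf{$(1) \Rightarrow (2)$.} First, I show $A$ is idempotent. For any $Y \in \text{CMon}(\mathcal{V})$, the first clause of (1) puts $\text{Hom}(A,Y)$ in $\mathcal{P}$, so the second clause makes $U_{\text{Hom}(A,Y)}: \text{Hom}(A,\text{Hom}(A,Y)) \to \text{Hom}(A,Y)$ an equivalence. The tensor-hom adjunction in $\text{CMon}(\mathcal{V})$ identifies this with the map $\text{Hom}(A \otimes A, Y) \to \text{Hom}(A, Y)$ obtained by precomposition with $\eta \otimes A: A \cong \mathbb{I} \otimes A \to A \otimes A$. Since this is an equivalence for every $Y$, the Yoneda lemma gives that $\eta \otimes A$ is itself an equivalence, which is the idempotence condition. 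By the preceding lemma (HA 4.8.2.9), $A$ then acquires a unique solid commutative algebra structure with unit $\eta$, unlocking Proposition \ref{TupHup} and the counit identification above. To finish, if $X \in \text{Mod}_A$, then $U_X$ is an equivalence, so $X \cong \text{Hom}(A,X) \in \mathcal{P}$ by the first clause of (1); conversely, if $X \in \mathcal{P}$, then $U_X$ is an equivalence by the second clause of (1), so $X \cong \text{Hom}(A,X)$ lies in $\text{Mod}_A$. Hence $\mathcal{P} = \text{Mod}_A$.

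\textbf{Main obstacle.} The one non-formal step is verifying that $U_X$, as defined by the lemma via precomposition with $\eta$, really coincides with the counit of $\text{Fgt} \dashv \text{Hom}(A,-)$. This amounts to checking that the adjoint of $\text{id}_{\text{Hom}(A,X)}$ under the adjunction in Proposition \ref{TupHup} is $-\circ \eta$, which I expect to follow by direct inspection of the construction of the ``Hom up'' adjunction from the tensor-hom adjunction in $\mathcal{C}$. A secondary care point is that in the $(1) \Rightarrow (2)$ direction we cannot invoke Proposition \ref{TupHup} or even speak of $A$-modules until idempotence is established, so the argument must proceed in exactly that order: Yoneda plus tensor-hom first, then the adjunction.
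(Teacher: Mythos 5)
Your proposal is correct and follows essentially the same route as the paper's proof: identify $U_X$ with the counit of the adjunction from Proposition \ref{TupHup} for the direction $(2)\Rightarrow(1)$, and for $(1)\Rightarrow(2)$ use the tensor-Hom adjunction together with the Yoneda lemma to establish idempotence before comparing $\mathcal{P}$ with $\text{Mod}_A$ via the equivalences $X\cong\text{Hom}(A,X)$. Your explicit flagging of the order of operations (Yoneda first, adjunction only after idempotence is known) is a point the paper handles in the same way, just less explicitly.
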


\begin{proof}
First, assume (2). By Proposition \ref{TupHup}, the inclusion $\text{Mod}_A\subseteq\text{CMon}(\mathcal{V})$ has a right adjoint given by $\text{Hom}(A,-)$. Thus $\text{Hom}(A,X)\in\mathcal{P}$ for all $X$. Moreover, if $X\in\text{Mod}_A$, then $U_X:\text{Hom}(A,X)\rightarrow X$ is the counit of the adjunction, which is an equivalence since the left adjoint is fully faithful.

This proves $(2)\to(1)$. Conversely, assume (1). Then $\mathbb{I}\otimes A\rightarrow A\otimes A$ induces, for each $X\in\text{CMon}(\mathcal{V})$, a map $\text{Hom}(A\otimes A,X)\rightarrow\text{Hom}(A,X)$ which, by tensor-Hom adjunction, is equivalent to the map $$U_{\text{Hom}(A,X)}:\text{Hom}(A,\text{Hom}(A,X))\rightarrow\text{Hom}(A,X).$$ Since $\text{Hom}(A,X)\in\mathcal{P}$, this map is an equivalence, so we have equivalences of \emph{spaces} $$\text{Map}_{\text{CMon}(\mathcal{V})}(A\otimes A,X)\rightarrow\text{Map}_{\text{CMon}(\mathcal{V})}(A,X)$$ for all $X$. By the Yoneda lemma, it follows that $A\rightarrow A\otimes A$ is an equivalence, so $A$ is idempotent and therefore has a canonical solid semiring structure.

Now we know that both $\mathcal{P}$ and $\text{Mod}_A$ are full subcategories of $\text{CMon}(\mathcal{V})$. We need only show that a commutative monoid has an $A$-module structure if and only if it is in $\mathcal{P}$. This is basically the same as the implication $(1)\to(2)$.

Specifically, if $X\in\mathcal{P}$, then $X\cong\text{Hom}(A,X)$, which has an $A$-module structure. Conversely, if $X$ is an $A$-module, then $X\cong\text{Hom}(A,X)$, so $X\in\mathcal{P}$.
\end{proof}

\section{Commutative semiring \texorpdfstring{$\infty$-}{infinity }categories}\label{3.2}
\noindent In this section, we will study commutative semiring $\infty$-categories; that is, commutative semiring objects in $\mathcal{V}=\text{Cat}$. Recall that commutative monoid objects in Cat are symmetric monoidal $\infty$-categories. As in Example \ref{ExCSRingCat}, the $\infty$-category SymMon is itself endowed with a symmetric monoidal structure $\otimes$, and commutative algebras in SymMon are commutative semiring $\infty$-categories.

We may notate a commutative semiring $\infty$-category $\mathcal{R}^{\oplus,\otimes}$, to suggest that $\mathcal{R}$ is an $\infty$-category with two symmetric monoidal operations, satisfying some sort of distributive law.

Commutative semiring $\infty$-categories are familiar from a host of examples, such as $\text{Set}^{\amalg,\times}$ and $\text{Ab}^{\oplus,\otimes}$. In general:

\begin{proposition}
\label{PropPrRing}
If $\mathcal{C}^\oast$ is a presentable and closed symmetric monoidal $\infty$-category, then $\mathcal{C}^{\amalg,\oast}$ is a commutative semiring $\infty$-category.
\end{proposition}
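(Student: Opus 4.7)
A commutative semiring $\infty$-category is a commutative algebra in $\text{SymMon}^\otimes$ (Definition \ref{DefCSRing}). By the universal property of $\otimes$ on SymMon from Corollary \ref{CorGGN}, a symmetric monoidal functor $\mathcal{A}\otimes\mathcal{B}\to\mathcal{X}$ corresponds to a functor $\mathcal{A}\times\mathcal{B}\to\mathcal{X}$ which is symmetric monoidal in each variable separately. When $\mathcal{X}$ is cocartesian monoidal, the dual of Remark \ref{RmkCartMon} identifies symmetric monoidal functors into it with coproduct-preserving functors. The plan is to apply this with $\mathcal{A}=\mathcal{B}=\mathcal{X}=\mathcal{C}^\amalg$ (which is well-defined because $\mathcal{C}$, being presentable, admits all coproducts), using $\oast$ as the multiplication and the $\oast$-unit of $\mathcal{C}$ as the unit.

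The first step is to verify that $\oast:\mathcal{C}\times\mathcal{C}\to\mathcal{C}$ preserves coproducts in each variable, so that it defines a symmetric monoidal functor $\mu:\mathcal{C}^\amalg\otimes\mathcal{C}^\amalg\to\mathcal{C}^\amalg$ in SymMon. Since $\mathcal{C}^\oast$ is closed, $-\oast X$ has a right adjoint $\text{Hom}(X,-)$ for each $X\in\mathcal{C}$, and therefore preserves all colimits, including finite coproducts; commutativity of $\oast$ handles the other variable.

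To produce the full commutative algebra structure --- including higher associativity, commutativity, and unit coherences --- I would use the identification (following Corollary \ref{BurnCMon2}) of the symmetric monoidal $\infty$-category $(\mathcal{C},\oast)$ with a product-preserving functor $F:\text{Span}(\text{Fin})\to\text{Cat}^\times$. Each $n$-ary operation appearing in $F$ is built from $\oast$ and diagonals, so by the closedness argument above it preserves coproducts in each of its $n$ variables. Hence $F$ factors through the wide subcategory of $\text{Cat}^\times$ of cocartesian monoidal $\infty$-categories and coproduct-preserving functors; by Remark \ref{RmkCartMon} (dualized) and the universal property of the GGN tensor product, $F$ lifts to a product-preserving functor $\widetilde F:\text{Span}(\text{Fin})\to\text{SymMon}^\otimes$, exhibiting $\mathcal{C}^\amalg$ as a commutative algebra with multiplication $\oast$. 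The main obstacle is this final coherence-level lift: it requires carefully tracking through the construction of $\otimes$ on SymMon, but once that machinery is in hand the entire proof reduces to the single observation that $\oast$ preserves colimits in each variable, which is immediate from closedness.
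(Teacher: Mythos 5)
Your first two steps are sound and identify the right analytic input: closedness of $\oast$ makes each $-\oast X$ a left adjoint, hence coproduct-preserving, so $\oast$ does define a morphism $\mathcal{C}^\amalg\otimes\mathcal{C}^\amalg\rightarrow\mathcal{C}^\amalg$ in SymMon, and the $\oast$-unit gives the unit map. But the step you flag as ``the main obstacle'' --- producing the full commutative algebra structure --- is a genuine gap, and the mechanism you propose for it does not work. A product-preserving functor $\text{Span}(\text{Fin})\rightarrow\text{SymMon}$ (equivalently, one landing in CocartMon) exhibits its value as a commutative \emph{monoid} with respect to the \emph{cartesian} structure on SymMon, not as a commutative algebra with respect to the tensor product $\otimes$. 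Since SymMon is semiadditive and $\text{Span}(\text{Fin})$ is the free semiadditive $\infty$-category on one object, such a functor carries no information beyond its value at a point: by Eckmann--Hilton every object of a semiadditive $\infty$-category is a commutative monoid in an essentially unique way. The distributivity coherences --- which are exactly the content of being a semiring $\infty$-category --- are therefore not encoded by your lift $\widetilde F$, and factoring $F$ through the (non-full) subcategory of coproduct-preserving functors does not supply them.

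The paper's proof (Lemma \ref{LemPrRing}) takes a different route that avoids constructing coherences by hand. The hypotheses on $\mathcal{C}^\oast$ say precisely that it is a commutative algebra in $\text{Pr}^{\text{L},\otimes}$. The forgetful functor $\mathcal{C}\mapsto\mathcal{C}^\amalg$ from $\text{Pr}^{\text{L},\otimes}$ to $\widehat{\text{SymMon}}^\otimes$ is lax symmetric monoidal, because it is the right adjoint of the symmetric monoidal functor $\mathcal{P}_\amalg:\widehat{\text{CocartMon}}\rightarrow\widehat{\text{Cat}}(K)$ (HA 4.8.1.8), composed with the symmetric monoidal inclusion $\text{Pr}^\text{L}\subseteq\widehat{\text{Cat}}(K)$ (HA 4.8.1.15). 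Lax symmetric monoidal functors carry commutative algebras to commutative algebras, so $\mathcal{C}^{\amalg,\oast}$ is a commutative semiring $\infty$-category, with all higher coherences supplied for free by the commutative algebra structure already present in $\text{Pr}^{\text{L},\otimes}$. If you want to repair your direct approach, you would need to replace the $\text{Span}(\text{Fin})$ step with a device that genuinely records multilinearity of all the operations --- and in practice that amounts to establishing the lax monoidality of the forgetful functor, i.e., the paper's argument.
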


\noindent We will prove this proposition in Chapter \ref{4} (see Lemma \ref{LemPrRing}), and in the meantime we will not need to use it.

In general, we write $\text{Fun}(-,-)$ for the internal Hom in Cat, and $\text{Hom}(-,-)$ for the internal Hom in SymMon (being sure to indicate the symmetric monoidal structure on each of the arguments). The internal $\text{Hom}(\mathcal{C}^\oast,\mathcal{D}^\odot)$ is itself a symmetric monoidal $\infty$-category; the symmetric monoidal structure is induced by objectwise $\odot$ (inherited from $\mathcal{D}^\odot$).

\subsubsection{Examples}
\noindent Here we describe a few ways to construct commutative semiring $\infty$-categories from other commutative semiring objects. In \S\ref{3.2.1} we will give specific examples.

Consider the following adjunctions (the first of which is an equivalence): $$(-)^\text{op}:\text{Cat}\rightleftarrows\text{Cat}:(-)^\text{op}$$ $$j:\text{Top}\rightleftarrows\text{Cat}:(-)^\text{iso}$$ $$h:\text{Cat}_\infty\rightleftarrows\text{Cat}:N.$$ The functor $j$ is the inclusion of $\infty$-groupoids into $\infty$-categories, $N$ is the nerve, and $h$ is the formation of homotopy categories. For an $\infty$-category $\mathcal{C}$, $\mathcal{C}^\text{iso}$ is the subcategory spanned by all objects, but only those morphisms which are equivalences.

In each adjunction, the left adjoint is product-preserving, so Lemma \ref{EasyLemma} applies. Thus we have:

\begin{example}
If $\mathcal{C}$ is a commutative semiring 1-category, its nerve (which we also denote $\mathcal{C}$) is a commutative semiring $\infty$-category.
\end{example}

\begin{example}
\label{CSRingExSp}
If $X$ is an $\mathbb{E}_\infty$-space, then $X$ is also a symmetric monoidal $\infty$-category. For example, if $E$ is a spectrum, then $\Omega^\infty E$ is a symmetric monoidal $\infty$-category.

If $E$ is an $\mathbb{E}_\infty$-ring spectrum, then $\Omega^\infty E$ is a commutative semiring $\infty$-category.

When $E$ is connective, we will write $\vec{E}$ for this symmetric monoidal $\infty$-category. The arrow indicates that we are thinking of $\vec{E}$ as an $\infty$-category, not an $\infty$-groupoid (so the morphisms are, in principle, directed).
\end{example}

\begin{example}
\label{hCSRingEx}
If $\mathcal{C}$ is a commutative semiring $\infty$-category, so are $\mathcal{C}^\text{op}$ and $\mathcal{C}^\text{iso}$. The homotopy category $h\mathcal{C}$ is a commutative semiring 1-category.
\end{example}

\subsection{Cartesian monoidal \texorpdfstring{$\infty$-}{infinity }categories}\label{3.2.1}
\noindent The examples of this section arise out of the study of \emph{algebraic theories}, such as $\infty$-operads. However, knowledge of $\infty$-operads is unnecessary to read this section; we use this language in the next two examples for two reasons only:
\begin{itemize}
\item to establish the three formulas of Example \ref{ExFormulas};
\item because in Chapter \ref{4}, we will use the material in this section to study algebraic theories in more depth.
\end{itemize}

\noindent The reader should feel free to verify the three formulas by alternative methods, or take them as black boxes.

\begin{example}
\label{ExOp}
Following HA, an $\infty$-operad $\mathcal{O}$ is a type of fibration $p:[\mathcal{O}^\otimes]\xrightarrow{p}\text{Fin}_\ast$. We are using square brackets to emphasize that $[\mathcal{O}^\otimes]$ is a different $\infty$-category from $\mathcal{O}$ (as opposed to a symmetric monoidal structure on $\mathcal{O}$). Really, these correspond to what are traditionally called \emph{colored operads}. If we wish to insist that $\mathcal{O}$ be single-colored, we must ask that the underlying $\infty$-category $\mathcal{O}$ have just one object up to equivalence.

By HA 2.2.4, to an $\infty$-operad is associated a \emph{symmetric monoidal envelope} $\text{Env}(\mathcal{O})^\oast$. This is a symmetric monoidal $\infty$-category satisfying the universal property $$\text{Hom}(\text{Env}(\mathcal{O})^\oast,\mathcal{C}^\oast)\cong\text{Alg}_{\mathcal{O}}(\mathcal{C}^\oast).$$ As an $\infty$-category, $\text{Env}(\mathcal{O})$ is the subcategory of $[\mathcal{O}^\otimes]$ spanned by all objects and active morphims\footnote{An active morphism in $\text{Fin}_\ast$ is $f:X\rightarrow Y$ such that $f^{-1}(\ast)=\ast$. An active morphism in $[\mathcal{O}^\otimes]$ is a morphism $f$ for which $p(f)$ is active.} between them. That is, it is given by the pullback: $$\xymatrix{
\text{Env}(\mathcal{O})\ar[r]\ar[d] &\text{Fin}\ar[d]^{(-)_{+}} \\
[\mathcal{O}^\otimes]\ar[r]_p &\text{Fin}_\ast.
}$$
\end{example}

\begin{example}[HA 2.1.1.18-20]
\label{ExFormulas}
The symmetric monoidal envelope of the commutative operad is Fin, of the $\mathbb{E}_0$-operad is $\text{Fin}^\text{inj}$, and of the trivial operad is $\text{Fin}^\text{iso}$. Note that Fin denotes finite sets, and the superscript indicates that we are only allowing injections (inj) or bijections (iso) of finite sets. In each case, the symmetric monoidal operation is disjoint union. Thus: $$\text{Hom}(\text{Fin}^{\text{iso},\amalg},\mathcal{C}^\oast)\cong\mathcal{C}^\oast;$$ $$\text{Hom}(\text{Fin}^{\text{inj},\amalg},\mathcal{C}^\oast)\cong\mathcal{C}_{1/}^\oast;$$ $$\text{Hom}(\text{Fin}^\amalg,\mathcal{C}^\oast)\cong\text{CAlg}(\mathcal{C}^\oast)^\oast.$$ In each case, the equivalence is compatible with the forgetful functors to $\mathcal{C}$. (On the left hand sides, the forgetful functors are given by evaluation at a singleton set.)
\end{example}

\noindent We will use these three formulas in conjunction with Lemma \ref{MainLemma} to prove that each of $\text{Fin}^\text{iso}$, $\text{Fin}^\text{inj}$, and Fin are commutative semiring $\infty$-categories, indeed solid commutative semiring $\infty$-categories. We will then treat two additional examples of solid semirings: the effective Burnside $\infty$-category $\text{Span}(\text{Fin})$, and the category $\text{Fin}_\ast$ of finite \emph{pointed} sets.

\subsubsection{The semiring $\text{Fin}^\text{iso}$}
\begin{theorem}[\cite{Berman} 2.9]
\label{2T1}
Under $\amalg$, $\text{Fin}^\text{iso}$ is the unit of $\text{SymMon}^\otimes$. Under $\amalg$ and $\times$, it is the initial object in CSRingCat (commutative semiring $\infty$-categories).
\end{theorem}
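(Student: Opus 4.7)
The theorem makes two assertions: that $\text{Fin}^{\text{iso},\amalg}$ is the unit of $\text{SymMon}^\otimes$, and that endowed with the semiring structure $(\amalg,\times)$ it is initial in $\text{CSRingCat} = \text{CAlg}(\text{SymMon}^\otimes)$. My plan is to obtain the first assertion as a direct Yoneda consequence of the formula $\text{Hom}(\text{Fin}^{\text{iso},\amalg},\mathcal{C}^\oast)\cong\mathcal{C}^\oast$ recorded in Example \ref{ExFormulas}, and then extract the second from the general fact that the unit of any symmetric monoidal $\infty$-category, equipped with its canonical $\mathbb{E}_\infty$-algebra structure, is the initial commutative algebra.

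For the first assertion, recall that $\text{SymMon}^\otimes$ is closed symmetric monoidal by Corollary \ref{CorGGN}. In any closed symmetric monoidal $\infty$-category the unit $\mathbf{1}$ is characterized up to canonical equivalence as the object representing the identity functor, via the tensor-Hom adjunction applied to the unit isomorphism $\mathbf{1}\otimes(-)\cong\text{id}$. Example \ref{ExFormulas} gives an equivalence of functors $\text{Hom}(\text{Fin}^{\text{iso},\amalg},-)\cong\text{id}$ on $\text{SymMon}$, compatible with the forgetful functor down to $\text{Cat}$ via evaluation at a singleton. The Yoneda lemma then forces $\text{Fin}^{\text{iso},\amalg}\simeq\mathbf{1}$.

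For the second assertion, I would appeal to the general principle (HA~3.2.1.9) that the unit of any symmetric monoidal $\infty$-category carries a canonical $\mathbb{E}_\infty$-algebra structure exhibiting it as initial in the $\infty$-category of commutative algebras. Applied to $\text{SymMon}^\otimes$, this gives that $\text{Fin}^{\text{iso},\amalg}$ — equipped with \emph{some} canonical commutative algebra structure — is initial in $\text{CSRingCat}$. It therefore suffices to identify this canonical structure with the second monoidal operation $\times$. One checks directly that $(\text{Fin}^\text{iso},\amalg,\times)$ is a commutative semiring $\infty$-category: the classical natural distributive equivalences $X\times(Y\amalg Z)\cong(X\times Y)\amalg(X\times Z)$ (which restrict from $\text{Fin}$ to the core groupoid $\text{Fin}^\text{iso}$) exhibit $\times$ as a bilinear functor with respect to $\amalg$, thereby promoting $\times$ to a commutative algebra structure in $\text{SymMon}^\otimes$ on $\text{Fin}^{\text{iso},\amalg}$. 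Since the space of commutative algebra structures on the $\otimes$-unit is contractible, this must coincide with the canonical one.

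The only step I expect to require any real care is the very last one: making precise how the classical distributivity on $\text{Fin}^\text{iso}$ produces a commutative algebra in $\text{SymMon}^\otimes$, rather than merely a commutative monoid in $\text{Cat}^\times$. Concretely, one must verify that the bifunctor $\times\colon\text{Fin}^\text{iso}\times\text{Fin}^\text{iso}\to\text{Fin}^\text{iso}$ extends to a symmetric monoidal functor $\text{Fin}^{\text{iso},\amalg}\otimes\text{Fin}^{\text{iso},\amalg}\to\text{Fin}^{\text{iso},\amalg}$, which is exactly what the universal property of $\otimes$ encodes — bilinearity is distributivity. Once this is in place, the uniqueness of algebra structures on the unit closes the argument, and both parts of the theorem are established simultaneously.
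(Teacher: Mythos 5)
Your argument is correct and follows essentially the same route as the paper: both identify $\text{Fin}^{\text{iso},\amalg}$ with the unit by combining the equivalence $\text{Hom}(\text{Fin}^{\text{iso},\amalg},\mathcal{C}^\oast)\cong\mathcal{C}^\oast$ of Example \ref{ExFormulas} with the Yoneda lemma (the paper makes the comparison map explicit as $i:\mathbb{I}\rightarrow\text{Fin}^{\text{iso},\amalg}$ induced by the singleton, which is the cleanest way to get the natural transformation Yoneda needs). Your treatment of the second assertion --- the unit is the initial commutative algebra, plus an identification of the canonical algebra structure with $(\amalg,\times)$ via essential uniqueness of algebra structures on the unit --- is actually more explicit than the paper's proof, which leaves that step implicit.
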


\noindent In particular $\text{Fin}^\text{iso}$ is a solid semiring $\infty$-category, and \emph{all} symmetric monoidal $\infty$-categories are $\text{Fin}^\text{iso}$-modules.

\begin{proof}
Let $\mathbb{I}$ denote the unit of $\text{SymMon}^\otimes$; since $\text{Free}:\text{Cat}^\times\rightarrow\text{SymMon}^\otimes$ is a symmetric monoidal functor, $\mathbb{I}$ is the free symmetric monoidal $\infty$-category on one object. There is a symmetric monoidal functor $i:\mathbb{I}\rightarrow\text{Fin}^{\text{iso},\amalg}$ obtained via adjunction from the functor $1\rightarrow\text{Fin}^\text{iso}$ (which picks out the singleton set). Precomposition with $i$ induces the forgetful functor $$\text{Hom}(\text{Fin}^{\text{iso},\amalg},\mathcal{C}^\oast)\rightarrow\mathcal{C}^\oast,$$ which is an equivalence by Example \ref{ExFormulas}. By the Yoneda lemma, $i$ is an equivalence of symmetric monoidal $\infty$-categories, completing the proof.
\end{proof}

\subsubsection{The semiring $\text{Fin}^\text{inj}$}
\begin{definition}
Let $\mathcal{C}^\oast$ be a symmetric monoidal $\infty$-category with unit $1$. We say that $\mathcal{C}^\oast$ is \emph{semi-cartesian monoidal} if $1$ is a terminal object of $\mathcal{C}$, and \emph{semi-cocartesian monoidal} if it is an initial object.
\end{definition}

\begin{theorem}[\cite{Berman} 3.17]
\label{2T2}
$\text{Fin}^\text{inj}$ is a solid semiring $\infty$-category, and a symmetric monoidal $\infty$-category is a $\text{Fin}^\text{inj}$-module if and only if it is semi-cocartesian monoidal.
\end{theorem}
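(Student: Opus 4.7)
The plan is to apply Lemma \ref{MainLemma} with $\mathcal{V} = \text{Cat}$, taking $A = \text{Fin}^{\text{inj},\amalg}$ and $\mathcal{P} \subseteq \text{SymMon}$ the full subcategory of semi-cocartesian monoidal $\infty$-categories. The structural map $\mathbb{I} \to \text{Fin}^\text{inj}$ is the symmetric monoidal functor picking out a singleton; under the equivalence of Example \ref{ExFormulas}, precomposition by this map becomes the forgetful functor $X_{1/}^\oast \to X^\oast$, which the example describes as ``evaluation at a singleton set.''

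The key input is then the formula $\text{Hom}(\text{Fin}^{\text{inj},\amalg}, X^\oast) \cong X_{1/}^\oast$ of Example \ref{ExFormulas}, which identifies $U_X$ with the forgetful functor from the undercategory. Granting this, condition (1) of Lemma \ref{MainLemma} splits into two routine checks. First, $X_{1/}^\oast$ always lies in $\mathcal{P}$: its monoidal unit is $\text{id}_1: 1 \to 1$, which is the initial object of $X_{1/}$ by the standard fact that an undercategory has its basepoint arrow as initial object. Second, if $X^\oast$ is itself semi-cocartesian, then $1$ is initial in $X$, and the forgetful functor $X_{1/} \to X$ is an equivalence essentially by definition of ``initial.''

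Lemma \ref{MainLemma} then yields the conclusion: $\text{Fin}^\text{inj}$ is idempotent (hence carries a canonical solid commutative semiring structure), and $\text{Mod}_{\text{Fin}^\text{inj}} \cong \mathcal{P}$ as full subcategories of SymMon. The main obstacle I anticipate is verifying the input from Example \ref{ExFormulas} at the \emph{symmetric-monoidal} level, rather than merely at the underlying $\infty$-categorical level, and in particular that the morphism $\mathbb{I} \to \text{Fin}^\text{inj}$ really corresponds to the forgetful functor under the equivalence. This is ultimately delivered by the operad-theoretic identification of $\text{Fin}^{\text{inj},\amalg}$ with the symmetric monoidal envelope of the $\mathbb{E}_0$-operad (HA 2.1.1.19 and 2.2.4), whose universal property encodes $\mathbb{E}_0$-algebras in $X^\oast$ precisely as objects of $X$ equipped with a morphism from the unit.
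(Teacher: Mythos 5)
Your proposal is correct and matches the paper's proof essentially verbatim: both apply Lemma \ref{MainLemma} with $\mathcal{P}$ the semi-cocartesian monoidal $\infty$-categories, using the identification $\text{Hom}(\text{Fin}^{\text{inj},\amalg},\mathcal{C}^\oast)\cong\mathcal{C}_{1/}^\oast$ from Example \ref{ExFormulas} to recognize $U_\mathcal{C}$ as the forgetful functor $\mathcal{C}_{1/}\rightarrow\mathcal{C}$. Your extra remarks on why $\mathcal{C}_{1/}^\oast$ is always semi-cocartesian monoidal and on the $\mathbb{E}_0$-envelope subtlety are sound elaborations of steps the paper leaves implicit.
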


\begin{corollary}
Dually, $\text{Fin}^{\text{inj},\text{op}}$ is a solid semiring $\infty$-category whose modules are semi-cartesian monoidal $\infty$-categories.
\end{corollary}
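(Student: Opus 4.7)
The plan is to derive this corollary from Theorem \ref{2T2} by transporting that result along the opposite-category self-duality of $\text{Cat}$. The functor $(-)^\text{op}:\text{Cat}\rightarrow\text{Cat}$ is a self-equivalence preserving finite products, so by Lemma \ref{EasyLemma} it lifts to a symmetric monoidal self-equivalence of $\text{SymMon}^\otimes = \text{CMon}(\text{Cat})^\otimes$, which itself sends an underlying $\infty$-category $\mathcal{C}$ to $\mathcal{C}^\text{op}$. Applying $\text{CAlg}$ to this equivalence of symmetric monoidal $\infty$-categories, we obtain an induced self-equivalence of $\text{CSRingCat} = \text{CAlg}(\text{SymMon}^\otimes)$. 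Under this equivalence, the commutative semiring $\text{Fin}^\text{inj}$ of Theorem \ref{2T2} (with additive $\amalg$ and multiplicative $\times$ inherited from finite sets) is sent to a commutative semiring structure on $\text{Fin}^{\text{inj},\text{op}}$, which is exactly the structure referred to in the corollary.

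Next, I would observe that both solidity and the passage to modules are preserved. Solidity is an intrinsic property under any symmetric monoidal equivalence, by characterization (3) of Definition \ref{DefSolid}, which only depends on the multiplication map $R\otimes R\rightarrow R$. Moreover, by Lemma \ref{EasyLemma} (or by the lift of the equivalence to $\text{Mod}$-categories), the self-duality $(-)^\text{op}$ fits into a commutative square $$\xymatrix{\text{Mod}_{\text{Fin}^\text{inj}}\ar[r]^-{\simeq}\ar[d] & \text{Mod}_{\text{Fin}^{\text{inj},\text{op}}}\ar[d] \\ \text{SymMon}\ar[r]_-{(-)^\text{op}} & \text{SymMon},}$$ so that a symmetric monoidal $\infty$-category $\mathcal{C}^\oast$ is a $\text{Fin}^{\text{inj},\text{op}}$-module if and only if $\mathcal{C}^{\text{op},\oast}$ is a $\text{Fin}^\text{inj}$-module.

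Finally, I would note that $(-)^\text{op}$ on $\text{SymMon}$ preserves the unit object of the symmetric monoidal structure, but swaps initial and terminal objects of the underlying $\infty$-category. Hence $\mathcal{C}^\oast$ is semi-cartesian (unit terminal) if and only if $\mathcal{C}^{\text{op},\oast}$ is semi-cocartesian (unit initial). Combining these three observations with Theorem \ref{2T2} immediately yields the corollary: $\mathcal{C}^\oast$ is a $\text{Fin}^{\text{inj},\text{op}}$-module iff $\mathcal{C}^{\text{op},\oast}$ is a $\text{Fin}^\text{inj}$-module iff $\mathcal{C}^{\text{op},\oast}$ is semi-cocartesian iff $\mathcal{C}^\oast$ is semi-cartesian. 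The main (mild) obstacle is the bookkeeping in the first paragraph, namely ensuring that the two-step lift (first to $\text{SymMon}^\otimes$, then to $\text{CAlg}$ of it) actually identifies $\text{Fin}^{\text{inj}}$ with the correct semiring structure on its opposite; this is however essentially automatic from the naturality of Lemma \ref{EasyLemma} and the formula $\text{CSRing}(\text{Cat})=\text{CAlg}(\text{CMon}(\text{Cat})^\otimes)$.
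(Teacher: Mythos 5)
Your proposal is correct and is precisely the argument the paper leaves implicit behind the word ``dually'': the paper proves Theorem \ref{2T2} for $\text{Fin}^\text{inj}$ and transports it along the symmetric monoidal self-equivalence $(-)^\text{op}$ of $\text{SymMon}^\otimes$ (which the paper has already set up via Lemma \ref{EasyLemma} and Example \ref{hCSRingEx}). Your spelled-out verification that this equivalence preserves solidity, identifies the module categories, and exchanges semi-cocartesian with semi-cartesian is exactly the intended justification.
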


\begin{proof}
Let the functor $i:1\rightarrow\text{Fin}^\text{inj}$ pick out a singleton set. Precomposition by $i$ induces the forgetful functors $$U_\mathcal{C}:\text{Hom}(\text{Fin}^{\text{inj},\amalg},\mathcal{C}^\oast)\rightarrow\mathcal{C}.$$ By Example \ref{ExFormulas}, these are equivalent to the forgetful functors $$\mathcal{C}_{1/}\rightarrow\mathcal{C}.$$ Therefore, $U_\mathcal{C}$ is an equivalence if $\mathcal{C}$ is semi-cocartesian monoidal. On the other hand, $\text{Hom}(\text{Fin}^{\text{inj},\amalg},\mathcal{C}^\oast)\cong\mathcal{C}_{1/}^\oast$ is always semi-cocartesian monoidal. Lemma \ref{MainLemma} completes the proof.
\end{proof}

\subsubsection{The semiring $\text{Fin}$}
\begin{theorem}[\cite{Berman} 3.1]
\label{2T3}
Fin is a solid semiring $\infty$-category, and a symmetric monoidal $\infty$-category is a Fin-module if and only if it is cocartesian monoidal.
\end{theorem}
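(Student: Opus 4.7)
The plan is to invoke Lemma \ref{MainLemma} in the case $\mathcal{V} = \text{Cat}$, with $A$ the symmetric monoidal $\infty$-category $\text{Fin}^\amalg$ (pointed by the functor $1 \to \text{Fin}$ picking out a singleton, or equivalently by the unit functor $\mathbb{I} \to \text{Fin}^\amalg$ out of the free symmetric monoidal $\infty$-category on one object), and $\mathcal{P} \subseteq \text{SymMon}$ the full subcategory of cocartesian monoidal $\infty$-categories. Since being cocartesian monoidal can be checked on the homotopy category (as noted under ``Closed symmetric monoidal $\infty$-categories'' in \S\ref{1.1.3}), $\mathcal{P}$ is a bona fide full subcategory. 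The lemma then packages both claims of the theorem at once: an idempotent structure on $A$ gives a canonical solid semiring structure, and the modules are precisely the objects of $\mathcal{P}$.

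To apply the lemma I must verify two things about the forgetful functor $U_\mathcal{C}: \text{Hom}(\text{Fin}^\amalg, \mathcal{C}^\oast) \to \mathcal{C}$ induced by the pointing. First, the target lies in $\mathcal{P}$ for every $\mathcal{C}^\oast$. Second, $U_\mathcal{C}$ is an equivalence whenever $\mathcal{C}^\oast$ itself lies in $\mathcal{P}$. Both will follow immediately from the identification
\[
\text{Hom}(\text{Fin}^\amalg, \mathcal{C}^\oast) \cong \text{CAlg}(\mathcal{C}^\oast)^\oast
\]
of Example \ref{ExFormulas}, which is moreover compatible with forgetful functors to $\mathcal{C}$ (so that $U_\mathcal{C}$ corresponds to the evident forgetful functor $\text{CAlg}(\mathcal{C}^\oast) \to \mathcal{C}$).

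For the first point, $\text{CAlg}(\mathcal{C}^\oast)^\oast$ is always cocartesian monoidal: its coproducts exist and are computed by $\oast$ on underlying objects (recall the paragraph ``Limits and colimits of algebras'' in \S\ref{1.1.3}, quoting HA 3.2.4.7). For the second point, if $\mathcal{C}^\oast = \mathcal{C}^\amalg$ is already cocartesian monoidal, then every object of $\mathcal{C}$ admits a canonical and essentially unique commutative algebra structure (with unit $\emptyset \to X$ and product the codiagonal $X \amalg X \to X$), so the forgetful functor $\text{CAlg}(\mathcal{C}^\amalg) \to \mathcal{C}^\amalg$ is an equivalence (item (6) under ``Algebras'' in \S\ref{1.1.3}). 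Both ingredients are already established, so once the pointing is written down the argument is formal.

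I do not expect a serious obstacle; the only thing to double-check is bookkeeping about which symmetric monoidal structure appears where. In particular, Lemma \ref{MainLemma} produces the solid commutative semiring structure on $\text{Fin}$ abstractly, and it should be confirmed that its multiplicative monoidal operation is $\times$ (so that the semiring really is $\text{Fin}^{\amalg, \times}$ with cartesian product distributing over disjoint union); this can be read off from the induced $\text{Fin}$-action on the unit semiring $\text{Fin}^{\text{iso},\amalg,\times}$ of Theorem \ref{2T1}, using that $\text{Hom}(\text{Fin}^\amalg, \text{Fin}^{\text{iso},\amalg}) \cong \text{CAlg}(\text{Fin}^{\text{iso},\amalg})$ recovers $\text{Fin}^\amalg$ itself. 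The corollary for $\text{Fin}^\text{op}$ and cartesian monoidal $\infty$-categories then follows by taking opposites (as in Example \ref{hCSRingEx}).
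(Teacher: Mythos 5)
Your proposal is correct and follows essentially the same route as the paper: apply Lemma \ref{MainLemma} to $\text{Fin}^\amalg$ pointed by the singleton, use the identification $\text{Hom}(\text{Fin}^\amalg,\mathcal{C}^\oast)\cong\text{CAlg}(\mathcal{C}^\oast)^\oast$ from Example \ref{ExFormulas}, and conclude via the two facts that $\text{CAlg}(\mathcal{C}^\oast)^\oast$ is always cocartesian monoidal and that $\text{CAlg}(\mathcal{C}^\amalg)\rightarrow\mathcal{C}$ is an equivalence when $\mathcal{C}^\oast$ is cocartesian monoidal. Your closing remark about confirming that the multiplicative structure is $\times$ is a reasonable extra check that the paper leaves implicit, but it does not change the argument.
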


\begin{corollary}
\label{2T3b}
Dually, $\text{Fin}^\text{op}$ is a solid semiring $\infty$-category whose modules are cartesian monoidal $\infty$-categories.
\end{corollary}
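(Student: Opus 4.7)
The plan is to deduce the corollary from Theorem \ref{2T3} by transferring along the op duality on $\text{SymMon}$. The functor $(-)^\text{op}:\text{Cat}^\times\rightarrow\text{Cat}^\times$ is a symmetric monoidal self-equivalence (in particular, product-preserving), being its own inverse. Applying Lemma \ref{EasyLemma} to the self-adjunction $(-)^\text{op}\dashv(-)^\text{op}$, I obtain an induced symmetric monoidal self-equivalence
$$\Phi:\text{SymMon}^\otimes\rightarrow\text{SymMon}^\otimes$$
which, after forgetting commutative monoid structures, sends a symmetric monoidal $\infty$-category $\mathcal{C}^\oast$ to $(\mathcal{C}^\text{op})^\oast$ (the opposite $\infty$-category, equipped with the symmetric monoidal structure induced by $\oast$).

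Because $\Phi$ is a symmetric monoidal equivalence, it transports every piece of algebraic structure available in $\text{SymMon}^\otimes$. In particular it preserves all of the equivalent criteria of Definition \ref{DefSolid} (e.g.\ the condition that the multiplication $R\otimes R\rightarrow R$ be an equivalence), so it carries solid commutative semiring $\infty$-categories to solid commutative semiring $\infty$-categories; and for each commutative algebra $R$ it induces an equivalence $\text{Mod}_R\xrightarrow{\sim}\text{Mod}_{\Phi(R)}$ compatible with the forgetful functors to $\text{SymMon}$. Feeding in $R=\text{Fin}^\amalg$ (solid by Theorem \ref{2T3}), I obtain a solid commutative semiring structure on $\Phi(\text{Fin}^\amalg)=\text{Fin}^\text{op}$, and an identification of the full subcategory of $\text{Fin}^\text{op}$-modules in $\text{SymMon}$ with the $\Phi$-image of the full subcategory of Fin-modules.

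It remains to identify this image. By Theorem \ref{2T3}, Fin-modules are precisely the cocartesian monoidal $\infty$-categories, i.e.\ those $\mathcal{C}^\oast$ for which $\oast$ is the categorical coproduct in $\mathcal{C}$. Under $\Phi$, coproducts in $\mathcal{C}$ become products in $\mathcal{C}^\text{op}$, so the image consists exactly of the cartesian monoidal $\infty$-categories, as claimed. The only obstacle worth flagging is to confirm that Lemma \ref{EasyLemma} applies cleanly with $L=R=(-)^\text{op}$, so that $\Phi$ is genuinely symmetric monoidal with respect to Lurie's external tensor product $\otimes$ on $\text{SymMon}$; but this is immediate, since the op functor on Cat is both product-preserving and a (self-)adjoint equivalence, so the lemma produces the required symmetric monoidal adjunction, which is then automatically an equivalence.
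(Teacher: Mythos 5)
Your proposal is correct and is exactly the paper's intended argument: the paper's entire justification for the corollary is the word ``dually,'' and the duality it has in mind is precisely the symmetric monoidal self-equivalence of $\text{SymMon}^\otimes$ induced by $(-)^\text{op}$ via Lemma \ref{EasyLemma} (cf.\ the list of adjunctions at the start of \S\ref{3.2} and Example \ref{hCSRingEx}), which transports solidity and module categories and exchanges cocartesian with cartesian monoidal structures. You have simply made explicit what the paper leaves implicit.
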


\begin{proof}
Let the functor $i:1\rightarrow\text{Fin}$ pick out a singleton set. Precomposition by $i$ induces the forgetful functors $$U_\mathcal{C}:\text{Hom}(\text{Fin}^\amalg,\mathcal{C}^\oast)\rightarrow\mathcal{C}.$$ By Example \ref{ExFormulas}, these are equivalent to the forgetful functors $$\text{CAlg}(\mathcal{C}^\oast)\rightarrow\mathcal{C}.$$ Thus $U_\mathcal{C}$ is an equivalence if $\mathcal{C}$ is cocartesian monoidal (HA 2.4.3.9). We also know that $\text{Hom}(\text{Fin}^\amalg,\mathcal{C}^\oast)\cong\text{CAlg}(\mathcal{C}^\oast)^\oast$ is always cocartesian monoidal (HA 3.2.4). Lemma \ref{MainLemma} completes the proof.
\end{proof}

\subsubsection{The semiring $\text{Span}(\text{Fin})$}
\noindent Consult \S\ref{1.2.3} for the span construction. Recall that $\text{Span}(\text{Fin})$, the $\infty$-category of spans of finite sets, is really a 2-category (the \emph{effective Burnside 2-category}).

\begin{definition}
Suppose that $\mathcal{C}^\oast$ is a symmetric monoidal $\infty$-category. We will say that $\mathcal{C}^\oast$ is \emph{semiadditive} if it is both cartesian monoidal and cocartesian monoidal.
\end{definition}

\begin{warning}
We are departing from standard terminology in regarding a semiadditive $\infty$-category as a type of symmetric monoidal $\infty$-category rather than a type of $\infty$-category. The latter structure is called preadditive by Gepner, Groth, and Nikolaus (\cite{GGN} Definition 2.1). However, the two definitions are equivalent: an $\infty$-category $\mathcal{C}$ is preadditive if and only if its homotopy category $h\mathcal{C}$ is preadditive/semiadditive (\cite{GGN} Example 2.2), which is true if and only if $h\mathcal{C}$ has cocartesian monoidal and cartesian monoidal structures which agree.
\end{warning}

\begin{theorem}[\cite{Berman} 3.4]
\label{2T4}
There is an equivalence of symmetric monoidal $\infty$-categories, $\text{Span}(\text{Fin})^\amalg\cong\text{Fin}^\amalg\otimes\text{Fin}^{\text{op},\amalg}$.

Therefore, $\text{Span}(\text{Fin})$ is a solid semiring $\infty$-category, and a symmetric monoidal $\infty$-category is a $\text{Span}(\text{Fin})$-module if and only if it is semiadditive.
\end{theorem}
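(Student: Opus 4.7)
The plan is to deduce the equivalence via the theory of solid commutative semirings from \S\ref{3.1.2}, combined with the universal property of $\text{Span}(\text{Fin})$ as the free semiadditive $\infty$-category on one object (\S\ref{1.2.3}). First, observe that the tensor product of two solid commutative algebras in any symmetric monoidal $\infty$-category is again solid: if $A, B$ are solid, then $(A \otimes B) \otimes (A \otimes B) \cong (A \otimes A) \otimes (B \otimes B) \cong A \otimes B$, verifying criterion (3) of Definition \ref{DefSolid}. Moreover, since each of ``is an $A$-module'' and ``is a $B$-module'' is a property, an $(A \otimes B)$-module structure is precisely the conjunction of the two. Applied to Theorem \ref{2T3} and Corollary \ref{2T3b}, this shows $\text{Fin}^\amalg \otimes \text{Fin}^{\text{op},\amalg}$ is a solid commutative semiring $\infty$-category whose modules are exactly the symmetric monoidal $\infty$-categories that are both cocartesian and cartesian monoidal---namely, the semiadditive ones.

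Next, construct a comparison map. Endow $\text{Span}(\text{Fin})$ with the semiring structure in which the additive operation is disjoint union of finite sets and the multiplicative operation is Cartesian product. The covariant inclusion $\text{Fin} \hookrightarrow \text{Span}(\text{Fin})$, $f \mapsto (S \xleftarrow{\mathrm{id}} S \xrightarrow{f} T)$, and the contravariant inclusion $\text{Fin}^{\text{op}} \hookrightarrow \text{Span}(\text{Fin})$, $f \mapsto (T \xleftarrow{f} S \xrightarrow{\mathrm{id}} S)$, are readily verified to be semiring maps. Together they exhibit $\text{Span}(\text{Fin})^\amalg$ as a module over both $\text{Fin}^\amalg$ and $\text{Fin}^{\text{op},\amalg}$, hence over their tensor product, yielding a semiring map $\phi \colon \text{Fin}^\amalg \otimes \text{Fin}^{\text{op},\amalg} \to \text{Span}(\text{Fin})^\amalg$.

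Finally, to show $\phi$ is an equivalence, produce an inverse using the universal property of $\text{Span}(\text{Fin})$. Since the tensor product is semiadditive by the first step, the freeness result $\text{Fun}^\times(\text{Span}(\text{Fin}), \mathcal{D}) \cong \mathcal{D}$ from \S\ref{1.2.3} supplies a product-preserving functor $\psi \colon \text{Span}(\text{Fin}) \to \text{Fin}^\amalg \otimes \text{Fin}^{\text{op},\amalg}$ sending the singleton to the distinguished tensor-generator. Because the $\amalg$-structure on $\text{Span}(\text{Fin})$ is its cocartesian/cartesian monoidal structure, $\psi$ is automatically symmetric monoidal. Both composites $\psi \circ \phi$ and $\phi \circ \psi$ send their respective distinguished generators to themselves, so by the universal properties (initial solid semiring classifying ``semiadditive'' on one side, free semiadditive $\infty$-category on one object on the other) they are homotopic to the identity, giving the equivalence. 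The main obstacle I expect is the precise matching between these two universal properties: the tensor product $\text{Fin}^\amalg \otimes \text{Fin}^{\text{op},\amalg}$ is semiadditive as a consequence of the first step, but identifying its cocartesian and cartesian structures with the symmetric monoidal operation carried by the tensor product itself is delicate, and effectively requires unwinding the formation of tensor products in $\text{SymMon}^\otimes$. A Yoneda-style alternative---computing $\mathrm{Hom}(-, \mathcal{C}^\oast)$ on both sides and exhibiting natural equivalences for all $\mathcal{C}^\oast$---would circumvent this and yield the equivalence at the semiring level, not merely as symmetric monoidal $\infty$-categories.
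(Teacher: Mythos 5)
Your proposal is correct and follows essentially the same route as the paper: both arguments rest on (i) the tensor product of the solid semirings $\text{Fin}$ and $\text{Fin}^{\text{op}}$ being solid, with modules precisely the semiadditive $\infty$-categories, and (ii) Glasman's theorem that $\text{Span}(\text{Fin})$ is the free semiadditive $\infty$-category on one generator. The paper avoids your explicit construction of $\phi$ and $\psi$ (and the delicate composite-checking you flag) by noting that $\text{Fin}^\amalg\otimes\text{Fin}^{\text{op},\amalg}$ is itself the free semiadditive $\infty$-category on one object, so the equivalence follows from uniqueness of the object satisfying that universal property.
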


\begin{proof}
Since Fin and $\text{Fin}^\text{op}$ are idempotent in $\text{SymMon}^\otimes$, so is their tensor product. Therefore $\text{Fin}\otimes\text{Fin}^\text{op}$ is a solid semiring $\infty$-category, and a symmetric monoidal $\infty$-category is a $(\text{Fin}\otimes\text{Fin}^\text{op})$-module if and only if it is both a $\text{Fin}$-module and a $\text{Fin}^\text{op}$-module; that is, if and only if it is semiadditive.

Now we need only show that $\text{Span}(\text{Fin})\cong\text{Fin}\otimes\text{Fin}^\text{op}$. However, we have just established that $\text{Fin}\otimes\text{Fin}^\text{op}$ is the free semiadditive $\infty$-category on one object. (We are referring to the free functor $\text{Cat}\rightarrow\text{SemiaddCat}$.) Glasman has already done the hard work here by showing that $\text{Span}(\text{Fin})$ is \emph{also} the free semiadditive $\infty$-category on one generator \cite{Glasman} (Theorem A.1). This completes the proof.
\end{proof}

\begin{remark}
This is one of the only examples we know of an explicit computation of a tensor product in $\text{SymMon}^\otimes$. Notice that the tensor product of two 1-categories is not necessarily still a 1-category. In this case, it is a 2-category. The author does not know whether tensor products of symmetric monoidal 1-categories are always 2-categories.
\end{remark}

\begin{corollary}
\label{CofreeSemiadd}
If $\mathcal{C}^\oast$ is symmetric monoidal, $\text{Hom}(\text{Span}(\text{Fin})^\amalg,\mathcal{C}^\oast)$ is the $\infty$-category of \emph{commutative-cocommutative bialgebras} in $\mathcal{C}^\oast$; that is, $\text{CCoalg}(\text{CAlg}(\mathcal{C}^\oast)^\oast)$.
\end{corollary}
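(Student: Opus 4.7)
The plan is to apply Theorem \ref{2T4} to reduce the computation to an iterated internal Hom in $\text{SymMon}^\otimes$, then invoke (the third formula of) Example \ref{ExFormulas} together with its coalgebra dual. By Theorem \ref{2T4} we have a symmetric monoidal equivalence $\text{Span}(\text{Fin})^\amalg \cong \text{Fin}^\amalg \otimes \text{Fin}^{\text{op},\amalg}$, so the closed structure on $\text{SymMon}^\otimes$ gives
\[ \text{Hom}(\text{Span}(\text{Fin})^\amalg, \mathcal{C}^\oast) \;\cong\; \text{Hom}\!\bigl(\text{Fin}^{\text{op},\amalg},\, \text{Hom}(\text{Fin}^\amalg, \mathcal{C}^\oast)\bigr). \]

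For the inner Hom, Example \ref{ExFormulas} supplies a symmetric monoidal equivalence $\text{Hom}(\text{Fin}^\amalg, \mathcal{C}^\oast) \cong \text{CAlg}(\mathcal{C}^\oast)^\oast$, where by HA 3.2.4.7 the right-hand side is cocartesian monoidal with $\oast$ as its coproduct. It remains to establish the dual formula $\text{Hom}(\text{Fin}^{\text{op},\amalg}, \mathcal{D}^\odot) \cong \text{CCoalg}(\mathcal{D}^\odot)$ for any symmetric monoidal $\mathcal{D}^\odot$. I would deduce this from Example \ref{ExFormulas} itself: since $(-)^\text{op}$ is a symmetric monoidal self-equivalence of $\text{SymMon}^\otimes$ (Lemma \ref{EasyLemma} applied to the product-preserving involution on $\text{Cat}$), symmetric monoidal functors $\text{Fin}^{\text{op},\amalg} \to \mathcal{D}^\odot$ correspond to symmetric monoidal functors $\text{Fin}^\amalg \to \mathcal{D}^{\text{op},\odot}$, with natural transformations reversing direction. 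By Example \ref{ExFormulas} the latter $\infty$-category is $\text{CAlg}(\mathcal{D}^{\text{op},\odot})$, which by the very definition of a commutative coalgebra equals $\text{CCoalg}(\mathcal{D}^\odot)^\text{op}$. The two instances of $(-)^\text{op}$ cancel, yielding the dual formula.

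Taking $\mathcal{D}^\odot = \text{CAlg}(\mathcal{C}^\oast)^\oast$ in the dual formula and combining with the first identification gives
\[ \text{Hom}(\text{Span}(\text{Fin})^\amalg, \mathcal{C}^\oast) \;\cong\; \text{CCoalg}(\text{CAlg}(\mathcal{C}^\oast)^\oast), \]
which is the $\infty$-category of commutative-cocommutative bialgebras in $\mathcal{C}^\oast$ by definition. The main obstacle is the careful bookkeeping in the duality step: one must verify at the $\infty$-categorical (not merely homotopy-categorical) level that the opposite involution on $\text{SymMon}^\otimes$ behaves as expected on internal Homs, reversing the direction of natural transformations so that the two $(-)^\text{op}$'s cancel. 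Everything else is a direct application of tools already in hand.
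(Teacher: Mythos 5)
Your proposal is correct and is exactly the argument the paper intends: the corollary is stated as an immediate consequence of Theorem \ref{2T4}, obtained by rewriting $\text{Hom}(\text{Fin}^\amalg\otimes\text{Fin}^{\text{op},\amalg},\mathcal{C}^\oast)$ as an iterated internal Hom and applying the third formula of Example \ref{ExFormulas} together with its coalgebra dual, with $\text{CCoalg}(\mathcal{D}^\odot)=\text{CAlg}(\mathcal{D}^{\text{op},\odot})^\text{op}$ by definition. Your care about the two cancelling instances of $(-)^\text{op}$ in the duality step is the right thing to flag, but it poses no obstruction since $(-)^\text{op}$ is a symmetric monoidal involution of $\text{SymMon}^\otimes$ compatible with internal Homs.
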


\subsubsection{The semiring $\text{Fin}_\ast$}
\noindent We will let $\text{Fin}_\ast$ denote the category of finite \emph{pointed} sets. Recall that the coproduct in $\text{Fin}_\ast$ is the wedge product $\vee$, obtained by taking a disjoint union, then identifying the two basepoints.

\begin{definition}
A cartesian (respectively cocartesian) monoidal $\infty$-category \emph{with zero} is a cartesian (cocartesian) monoidal $\infty$-category such that the terminal object is also initial (or vice versa). In this case, we call the initial and terminal object a \emph{zero object}.
\end{definition}

\begin{theorem}[\cite{Berman} 3.18]
\label{2T5}
There is an equivalence of symmetric monoidal $\infty$-categories, $\text{Fin}_\ast^\vee\cong\text{Fin}^\amalg\otimes\text{Fin}^{\text{inj},\text{op},\amalg}$.

Therefore, $\text{Fin}_\ast$ is a solid semiring $\infty$-category, whose modules are cocartesian monoidal $\infty$-categories with zero.
\end{theorem}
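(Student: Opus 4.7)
The plan is to apply Lemma \ref{MainLemma} directly to $\text{Fin}_\ast^\vee$ with $\mathcal{P} \subseteq \text{SymMon}$ the full subcategory spanned by cocartesian monoidal $\infty$-categories with zero, and then to deduce the tensor product identification from the fact that a solid semiring is determined by its module subcategory.

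First, I would pick out the symmetric monoidal functor $i\colon \mathbb{I} \to \text{Fin}_\ast^\vee$ corresponding to the pointed set $\{*,1\}$ (one non-basepoint element); this yields, by precomposition, symmetric monoidal forgetful functors $U_{\mathcal{D}}\colon \text{Hom}(\text{Fin}_\ast^\vee,\mathcal{D}^\oast) \to \mathcal{D}^\oast$ for every symmetric monoidal $\mathcal{D}^\oast$. The heart of the argument is the natural identification
\[
\text{Hom}(\text{Fin}_\ast^\vee,\mathcal{D}^\oast) \cong \text{CAlg}(\mathcal{D}^\oast)_{/1},
\]
compatible with $U_{\mathcal{D}}$ (where on the right we restrict an augmented algebra $R \to 1$ to the underlying object). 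I would extract this from the factorization system on $\text{Fin}_\ast$ in which every morphism decomposes uniquely as an inert map (a surjection onto a subset containing the basepoint, i.e., a morphism of $\text{Fin}^{\text{inj},\text{op}}$) followed by an active map (one whose only preimage of the basepoint is the basepoint, i.e., a morphism of $\text{Fin}$). Combined with Example \ref{ExFormulas} and its dual, which identify symmetric monoidal functors out of $\text{Fin}^\amalg$ with commutative algebras and out of $\text{Fin}^{\text{inj},\text{op},\amalg}$ with objects augmented to $1$, this factorization reconstructs any symmetric monoidal functor out of $\text{Fin}_\ast^\vee$ as precisely the datum of a commutative algebra in $\mathcal{D}^\oast$ together with an augmentation to the unit.

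Granting this identification, both hypotheses of Lemma \ref{MainLemma} fall out easily. For (1): $\text{CAlg}(\mathcal{D}^\oast)$ is always cocartesian monoidal (HA 3.2.4.7), and the slice over $1$ inherits this cocartesian structure while forcing the initial object (the unit $1 \to 1$) to coincide with the terminal object, hence producing a zero. For (2): when $\mathcal{D}^\oast \in \mathcal{P}$, the forgetful functor $\text{CAlg}(\mathcal{D}^\oast) \to \mathcal{D}$ is an equivalence (by Theorem \ref{2T3}) and the zero object of $\mathcal{D}$ is terminal, so $\mathcal{D}_{/0} \cong \mathcal{D}$ and $U_{\mathcal{D}}$ is an equivalence. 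Lemma \ref{MainLemma} then gives that $\text{Fin}_\ast$ is solid with $\text{Mod}_{\text{Fin}_\ast}=\mathcal{P}$. The tensor product identification follows formally: by Theorems \ref{2T3} and \ref{2T2}, $\text{Fin}^\amalg \otimes \text{Fin}^{\text{inj},\text{op},\amalg}$ is a tensor product of solid semirings, hence idempotent and solid, whose modules are simultaneously cocartesian monoidal (from the Fin factor) and semi-cartesian monoidal (from the $\text{Fin}^{\text{inj},\text{op}}$ factor) --- that is, exactly $\mathcal{P}$. Two solid commutative algebras with the same module subcategory are canonically equivalent, since both induce the same smashing localization onto that subcategory and so evaluate to the same object on the symmetric monoidal unit.

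The main obstacle is the identification $\text{Hom}(\text{Fin}_\ast^\vee,\mathcal{D}^\oast) \cong \text{CAlg}(\mathcal{D}^\oast)_{/1}$: the informal inert-then-active decomposition has to be promoted to a coherent $\infty$-categorical equivalence, which ultimately depends on the operadic structure of $\text{Fin}_\ast$ built into the $\infty$-operad formalism of HA. An alternative route that avoids this computation would be to construct the comparison map $\text{Fin}^\amalg \otimes \text{Fin}^{\text{inj},\text{op},\amalg} \to \text{Fin}_\ast^\vee$ directly (using that $\text{Fin}_\ast$ is both cocartesian monoidal under $\vee$ and semi-cartesian monoidal with terminal unit $\{*\}$) and check that both sides satisfy the same universal property, but this still reduces in the end to the same factorization analysis.
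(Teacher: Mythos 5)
Your argument is correct, and it is a fleshed-out version of the \emph{first} of the two routes the paper itself sketches after the theorem statement (``use Lemma \ref{MainLemma}, noting that $\text{Hom}(\text{Fin}_\ast^\vee,\mathcal{C}^\oast)$ is Segal's $\Gamma$-object construction''), whereas the proof the paper actually defers to (\cite{Berman} 3.18) takes the second route, via the symmetric monoidal Yoneda lemma of Theorem \ref{SMYonedaT}. Your verification of the two hypotheses of Lemma \ref{MainLemma} is sound: $\text{CAlg}(\mathcal{D}^\oast)_{/1}$ is cocartesian monoidal under objectwise $\oast$ (coproducts in a slice are computed in $\text{CAlg}(\mathcal{D}^\oast)$, where they are given by $\oast$, HA 3.2.4.7) with $\text{id}_1$ a zero object, and when $\mathcal{D}^\oast$ is cocartesian monoidal with zero the composite $\text{CAlg}(\mathcal{D}^\oast)_{/1}\cong\mathcal{D}_{/0}\cong\mathcal{D}$ identifies $U_\mathcal{D}$ with an equivalence. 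Your endgame---two solid algebras with the same full subcategory of modules induce the same smashing localization, hence agree---is also the cleanest way to get the tensor decomposition, and matches the spirit of the paper's proof of Theorem \ref{2T4}. The trade-off between the two routes is exactly where you locate it: your approach front-loads the operadic/Segal identification $\text{Hom}(\text{Fin}_\ast^\vee,\mathcal{D}^\oast)\cong\text{CAlg}(\mathcal{D}^\oast)_{/1}$ via the inert--active factorization on $\text{Fin}_\ast$ (real but standard work, parallel to Example \ref{ExFormulas}), while the Yoneda route computes $\text{Fin}^{\text{inj},\text{op},\amalg}\otimes\text{Fin}^\amalg$ directly as the full subcategory of free objects in $\text{Hom}(\text{Fin}^{\text{inj},\amalg},\text{Top}^\times)\cong\text{Top}_{\ast}$, avoiding any appeal to the operadic structure of $\text{Fin}_\ast$ but requiring Theorem \ref{SMYonedaT} (whose proof is itself deferred). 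Either is acceptable; just be sure, as you note, that the augmented-algebra identification is proved independently of the tensor decomposition so the argument is not circular.
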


\begin{corollary}
Dually, $\text{Fin}_\ast^\text{op}\cong\text{Fin}^\text{op}\otimes\text{Fin}^\text{inj}$ is a solid semiring $\infty$-category whose modules are cartesian monoidal $\infty$-categories with zero.
\end{corollary}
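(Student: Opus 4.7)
The plan is to deduce the corollary from Theorem \ref{2T5} by applying the symmetric monoidal autoequivalence $(-)^\text{op}$ of $\text{SymMon}^\otimes$. First I would observe that $(-)^\text{op}\colon\text{Cat}\to\text{Cat}$ is an equivalence that preserves finite products, so by Lemma \ref{EasyLemma} it lifts to a symmetric monoidal autoequivalence of $\text{SymMon}^\otimes=\text{CMon}(\text{Cat})^\otimes$. As already noted in Example \ref{hCSRingEx}, this functor therefore sends commutative semiring $\infty$-categories to commutative semiring $\infty$-categories and preserves tensor products of symmetric monoidal $\infty$-categories.

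Next, I would apply this autoequivalence to the equivalence of Theorem \ref{2T5}:
\[
\text{Fin}_\ast^\vee \;\cong\; \text{Fin}^\amalg \otimes \text{Fin}^{\text{inj},\text{op},\amalg}.
\]
Taking opposites of both sides (and using that $(-)^\text{op}$ is a symmetric monoidal autoequivalence of $\text{SymMon}^\otimes$) yields
\[
\text{Fin}_\ast^{\vee,\text{op}} \;\cong\; \text{Fin}^{\amalg,\text{op}} \otimes \text{Fin}^{\text{inj},\text{op},\amalg,\text{op}} \;=\; \text{Fin}^{\text{op}} \otimes \text{Fin}^{\text{inj}},
\]
where the right-hand factors carry the symmetric monoidal structures inherited as opposites of $\amalg$.

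To conclude solidity and identify the modules, I would invoke that both factors are already known to be solid: $\text{Fin}^\text{op}$ by Corollary \ref{2T3b} and $\text{Fin}^\text{inj}$ by Theorem \ref{2T2}. Tensor products of solid (equivalently idempotent) commutative algebras are again idempotent, so $\text{Fin}^{\text{op}}\otimes\text{Fin}^{\text{inj}}$ is a solid commutative semiring $\infty$-category, exactly as in the proof of Theorem \ref{2T4}. Moreover, since each factor is solid, a symmetric monoidal $\infty$-category is a module over the tensor product if and only if it is a module over each factor separately. By Corollary \ref{2T3b} being a $\text{Fin}^\text{op}$-module means being cartesian monoidal, and by Theorem \ref{2T2} being a $\text{Fin}^\text{inj}$-module means being semi-cocartesian monoidal, i.e., having the symmetric monoidal unit be initial. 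Combining these two properties, the unit is both terminal (from the cartesian structure) and initial, so it is a zero object; conversely any cartesian monoidal $\infty$-category with zero clearly satisfies both conditions.

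The only mild obstacle is checking that $(-)^\text{op}$ really does act the way I claim on the right-hand side of Theorem \ref{2T5} — in particular that the tensor product in $\text{SymMon}^\otimes$ is genuinely compatible with taking opposites pointwise, which is immediate from Lemma \ref{EasyLemma} applied to the self-adjoint equivalence $(-)^\text{op}\dashv(-)^\text{op}$. Everything else is formal consequence of the duality together with the already-established theorems.
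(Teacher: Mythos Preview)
Your proof is correct and matches the paper's approach: the paper gives no explicit proof of this corollary, relying on the single word ``Dually'' to indicate precisely the argument you spell out---apply the symmetric monoidal autoequivalence $(-)^{\text{op}}$ of $\text{SymMon}^\otimes$ (available via Lemma~\ref{EasyLemma} and Example~\ref{hCSRingEx}) to Theorem~\ref{2T5}. Your additional step of re-deriving solidity and the module identification from the tensor factorization $\text{Fin}^{\text{op}}\otimes\text{Fin}^{\text{inj}}$ (rather than simply dualizing the conclusion of Theorem~\ref{2T5} directly) is a harmless variation, exactly parallel to the proof of Theorem~\ref{2T4}, and gives the same answer.
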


\noindent There are two approaches to prove the theorem. We may use Lemma \ref{MainLemma}, noting that $\text{Hom}(\text{Fin}_\ast^\vee,\mathcal{C}^\oast)$ is Segal's $\Gamma$-object construction \cite{SegalGamma}.

Alternatively, we may use a variant of the Yoneda lemma. This is not difficult, but for clarity of exposition we are delaying it to Theorem \ref{SMYonedaT}. This is the approach taken in \cite{Berman} 3.18; see there for a proof.

\subsubsection{Semiring 1-categories}
\noindent All of the results of \S\ref{3.2.1} above hold for 1-categories as well. That is, the following are solid semiring categories with modules as indicated:
\begin{itemize}
\item $\text{Fin}^\text{iso}$ is the initial object in $\text{CSRingCat}_1$ (so all symmetric monoidal categories are modules);
\item $\text{Fin}^\text{inj}$ and $\text{Fin}^{\text{inj},\text{op}}$ (modules are semi-cocartesian or semi-cartesian monoidal categories);
\item Fin and $\text{Fin}^\text{op}$ (modules are cocartesian or cartesian monoidal categories);
\item $\text{Fin}_\ast$ and $\text{Fin}^\text{op}_\ast$ (modules are cocartesian or cartesian monoidal categories with zero);
\item the effective Burnside 1-category $h\text{Span}(\text{Fin})$ (modules are semiadditive categories).
\end{itemize}

\noindent These results all follow from the following two observations combined with Lemma \ref{MainLemma}:
\begin{itemize}
\item formation of the homotopy category $h:\text{SymMon}^\otimes\rightarrow\text{SymMon}_1^\otimes$ is symmetric monoidal;
\item a symmetric monoidal $\infty$-category is cartesian monoidal (cocartesian monoidal, semi-cartesian monoidal, etc.) if and only if its homotopy category is.
\end{itemize}

\subsection{Problems}\label{3.2.2}
\noindent We have just seen eight examples of solid semiring $\infty$-categories $\mathcal{R}$. In each case, $\mathcal{R}$-modules encode some property en route to semiadditivity: (co)cartesian monoidal, semi-(co)cartesian monoidal, etc. These solid semirings are all closely related, as follows:

A morphism in $\text{Span}(\text{Fin})$ from $X$ to $Y$ (finite sets) takes the form $X\leftarrow T\rightarrow Y$, for some other finite set $T$. We will say that such a morphism is, for example, (injective, arbitrary) if $T\rightarrow X$ is injective and $T\rightarrow Y$ is arbitrary. By considering functions which are injective, bijective, or arbitrary, we obtain in this way nine types of structured morphisms in $\text{Span}(\text{Fin})$. For each type of structured morphism, we may consider the subcategory of $\text{Span}(\text{Fin})$ spanned by all objects, but only morphisms of the specified type. In this way, we recover each of the 8 solid semiring $\infty$-categories we have considered, and one that we haven't:
\begin{itemize}
\item $\text{Fin}^\text{inj}$ -- (bijective,injective) morphisms;
\item $\text{Fin}^{\text{inj},\text{op}}$ -- (injective,bijective) morphisms;
\item $\text{Fin}$ -- (bijective,arbitrary) morphisms;
\item $\text{Fin}^\text{op}$ -- (arbitrary,bijective) morphisms;
\item $\text{Fin}_\ast\cong\text{Fin}\otimes\text{Fin}^{\text{inj},\text{op}}$ -- (injective,arbitrary) morphisms;
\item $\text{Fin}_\ast^\text{op}\cong\text{Fin}^\text{op}\otimes\text{Fin}^\text{inj}$ -- (arbitrary,injective) morphisms;
\item $\text{Fin}^\text{iso}$ -- (bijective,bijective) morphisms;
\item $\text{Fin}_\ast^\text{inj}$ -- (injective,injective) morphisms;
\item $\text{Burn}^\text{eff}\cong\text{Fin}\otimes\text{Fin}^\text{op}$ -- (arbitrary,arbitrary) morphisms.
\end{itemize}

\noindent The new $\infty$-category, $\text{Fin}_\ast^\text{inj}$, can also be described as follows: objects are finite pointed sets. A morphism is a function $f:X\rightarrow Y$ such that $f^{-1}(y)$ is either empty or a singleton for all $y$ \emph{except possibly the basepoint}. In \cite{Berman} 3.30, we provide some evidence for an equivalence $\text{Fin}_\ast^\text{inj}\cong\text{Fin}^\text{inj}\otimes\text{Fin}^{\text{inj},\text{op}}$ of symmetric monoidal $\infty$-categories.

These examples suggest a close relationship between the tensor product of symmetric monoidal $\infty$-categories and the span construction:

\begin{remark}
If $P$ and $Q$ are properties chosen from the list \{bijective, injective, arbitrary\}, and $\text{Span}^{P,Q}(\text{Fin})$ denotes the subcategory of $\text{Span}(\text{Fin})$ spanned by all objects and $(P,Q)$ morphisms, then $$\text{Span}^{P,Q}(\text{Fin})\cong\text{Fin}^{P,\text{op}}\otimes\text{Fin}^Q,$$ except possibly when $P$ and $Q$ are both `injective' (in which case we conjecture the identity still holds).
\end{remark}

\begin{question}
Is there a general class of tensor products of symmetric monoidal $\infty$-categories which can be computed via span constructions?
\end{question}

\noindent An affirmative answer to this question may provide us with many new examples of tensor products, which a priori seem difficult to compute.

On the other hand, span constructions are readily computable but are not known to satisfy convenient universal properties. For example, $\text{Span}(\text{Fin}_G)$, also called the effective Burnside category for a finite group $G$, governs equivariant stable homotopy theory. By a theorem of Guillou-May (\cite{GMay2} 0.1), equivariant spectra can be described as product-preserving functors $\text{Span}(\text{Fin}_G)\rightarrow\text{Sp}$. (See \cite{BarMack1} for this language). A universal property for $\text{Span}(\text{Fin}_G)$ would be convenient for equivariant stable homotopy theory.

\begin{question}
Let $G$ be a finite group and $\text{Fin}_G$ the category of finite $G$-sets. Can $\text{Burn}_G^\text{eff}=\text{Span}(\text{Fin}_G)$ be decomposed as a tensor product? Is it true that $$\text{Burn}_G^\text{eff}\cong\text{Fin}_G\otimes_{\text{Fin}_G^\text{iso}}\text{Fin}_G^\text{op}?$$
\end{question}

\noindent This last question is equivalent to asking whether $$\text{Hom}(\text{Burn}_G^{\text{eff},\amalg},\text{Top}^\times)\cong\text{Hom}_{\text{Fin}_G^\text{iso}}(\text{Fin}_G^\amalg,\text{Hom}(\text{Fin}_G^{\text{op},\amalg},\text{Top}^\times));$$ or whether equivariant $\mathbb{E}_\infty$-spaces coincide with `$G$-commutative monoids' in equivariant spaces. Here we mean $G$-commutative monoids in the sense of Mazur \cite{Mazur} and Kaledin \cite{Kaledin}. The more refined notions of Hill and Hopkins \cite{HillHopkins} or Barwick et al. \cite{Barwick} require that we work with \emph{equivariant} symmetric monoidal $\infty$-categories (see \S\ref{A.4}). In that fully equivariant setting, we very much expect to have a statement of the form $\text{Burn}_G^\text{eff}\cong\text{Fin}_G\otimes_{\text{Fin}_G^\text{iso}}\text{Fin}_G^\text{op}$, but the successes of \cite{Mazur} and \cite{Kaledin} suggest that there is some hope of proving such a statement even for ordinary symmetric monoidal $\infty$-categories.

\section{Group-completion of categories}\label{3.3}
\noindent In the last section, we saw that various properties of symmetric monoidal $\infty$-categories (en route to full-fledged additive $\infty$-categories) correspond to the structure of modules over a solid semiring $\infty$-category.

In this section, we consider another example of a solid semiring $\infty$-category with dramatically different properties.

Suppose that $E$ is a spectrum, so that $\Omega^\infty E$ is an $\mathbb{E}_\infty$-space (or symmetric monoidal $\infty$-groupoid). As in Example \ref{CSRingExSp}, $\Omega^\infty E$ can be regarded as a symmetric monoidal $\infty$-category. In the event that $E$ is \emph{connective}, we can recover $E$ from $\Omega^\infty E$; that is, there is an equivalence of $\infty$-categories $$\text{Ab}_\infty\cong\text{Sp}_{\geq 0}$$ between grouplike $\mathbb{E}_\infty$-spaces and connective spectra.

If $E$ is a connective spectrum, we use $\vec{E}$ (rather than $\Omega^\infty E$) to denote the corresponding symmetric monoidal $\infty$-category. If $E$ is an $\mathbb{E}_\infty$-ring spectrum, then $\vec{E}$ is a commutative semiring $\infty$-category. See Example \ref{CSRingExSp}.

We have described a full subcategory inclusion $$\text{Ab}_{\infty}\rightarrow\text{SymMon}.$$ We will say (somewhat abusively) that a symmetric monoidal $\infty$-category \emph{is} a connective spectrum if it is in this full subcategory.

\begin{remark}
A symmetric monoidal $\infty$-category $\mathcal{C}^\oast$ is a connective spectrum if and only if both: all objects are invertible up to equivalence ($\mathcal{C}^\oast$ is grouplike); \emph{and} all morphisms are invertible up to equivalence ($\mathcal{C}$ is an $\infty$-groupoid).

But both of these conditions are simultaneously necessary: $\text{Fin}^\text{iso}$ is a symmetric monoidal $\infty$-category which is an $\infty$-groupoid but not grouplike. On the other hand, for an example of a symmetric monoidal category which is grouplike but not an $\infty$-groupoid, take the full subcategory of $\text{Mod}_R$ spanned by invertible modules (under $\otimes$).

If, however, $\mathcal{C}$ is a commutative semiring $\infty$-category which is grouplike (a \emph{ring $\infty$-category}), then $\mathcal{C}$ is an $\infty$-groupoid by Corollary \ref{CorGpoid} below.
\end{remark}

\noindent Our goal in this section is to show that the sphere spectrum $\vec{\mathbb{S}}$ is a solid semiring $\infty$-category, and its modules are precisely connective spectra. We expect this result to have connections with algebraic K-theory as follows:
\begin{itemize}
\item the localization functor $-\otimes\vec{\mathbb{S}}:\text{SymMon}\rightarrow\text{Ab}_\infty$ is a variant of algebraic K-theory, sending a symmetric monoidal $\infty$-category to the group-completion of its classifying space;
\item the colocalization functor $\text{Hom}(\vec{\mathbb{S}},-):\text{SymMon}\rightarrow\text{Ab}_\infty$ assigns to a symmetric monoidal $\infty$-category its \emph{Picard spectrum}.
\end{itemize}

\subsection{Group-completion of \texorpdfstring{$\infty$-}{infinity }categories}\label{3.3.1}
\noindent We should say a word about the group completion of commutative monoids, since we are discussing a categorified version of that story.

Commutative monoids are endowed with a tensor product; this satisfies the property that if $A$ and $B$ are abelian groups, $A\otimes B$ is the ordinary tensor product of abelian groups. The unit of $\text{CMon}^\otimes$ is the additive monoid of natural numbers, $\mathbb{N}$. There is also a commutative monoid $\mathbb{Z}$ of all integers, which deserves to be called the \emph{group completion} of $\mathbb{N}$.

As it turns out, $\mathbb{Z}$ is idempotent under $\otimes$, and therefore a solid semiring. A module over $\mathbb{Z}$ is precisely an \emph{abelian group}, and the forgetful functor $\text{Ab}\rightarrow\text{CMon}$ has left adjoint given by $$-\otimes\mathbb{Z}:\text{CMon}\rightarrow\text{Ab}.$$ Therefore, the free abelian group on a commutative monoid $M$ (also called the \emph{group completion} of $M$) is $M\otimes\mathbb{Z}$. \\

\noindent This is the story we hope to generalize. In SymMon, the unit is $\text{Fin}^\text{iso}$, which plays the role of $\mathbb{N}$. By the Barratt-Priddy Theorem \cite{BPQ}, $\vec{\mathbb{S}}$ is what we might call the `group-completion' of $\text{Fin}^\text{iso}$; it plays the role of $\mathbb{Z}$.

Since the inclusions $\text{Ab}_\infty^\wedge\rightarrow\text{CMon}_\infty^\wedge\rightarrow\text{SymMon}^\otimes$ are symmetric monoidal (Lemma \ref{EasyLemma} and Example \ref{CSRingExSp}), they preserve idempotent objects. Therefore $\vec{\mathbb{S}}$ is a solid semiring $\infty$-category (because $\mathbb{S}$ is an idempotent spectrum). So being an $\vec{\mathbb{S}}$-module is a \emph{property} of a symmetric monoidal $\infty$-category.

\begin{definition}
Call an $\vec{\mathbb{S}}$-module a \emph{group-complete} symmetric monoidal $\infty$-category, and the \emph{group-completion} of a symmetric monoidal $\infty$-category is given by the free (smashing localization) functor $-\otimes\vec{\mathbb{S}}:\text{SymMon}\rightarrow\text{Mod}_{\vec{\mathbb{S}}}$.
\end{definition}

\noindent Naively, we might expect a symmetric monoidal $\infty$-category to be group-complete precisely when all its objects are invertible up to equivalence (when it is \emph{grouplike}). However, this is not enough! In fact, a group-complete spectrum is one which is both grouplike and an $\infty$-groupoid; that is, a connective spectrum.

\begin{lemma}[\cite{Berman} 4.7]
Every group-complete symmetric monoidal $\infty$-category (that is, $\vec{\mathbb{S}}$-module) is an $\infty$-groupoid.
\end{lemma}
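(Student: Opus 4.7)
The plan is to exploit the solidity characterization from Lemma \ref{MainLemma}, which identifies $\vec{\mathbb{S}}$-modules as precisely those symmetric monoidal $\infty$-categories $\mathcal{C}^\oast$ for which the evaluation map $U_\mathcal{C} : \text{Hom}(\vec{\mathbb{S}},\mathcal{C}^\oast) \to \mathcal{C}^\oast$ (induced by the unit $\text{Fin}^\text{iso}\to\vec{\mathbb{S}}$) is an equivalence. So it suffices to prove that $\text{Hom}(\vec{\mathbb{S}},\mathcal{C}^\oast)$ is always an $\infty$-groupoid on underlying $\infty$-categories; the conclusion then transfers to $\mathcal{C}$ via this equivalence.

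The key input is that $\vec{\mathbb{S}}$ itself is an $\infty$-groupoid. By construction (Example \ref{CSRingExSp}), its underlying $\infty$-category is $\Omega^\infty\mathbb{S}$, which is a topological space, hence an $\infty$-groupoid by the homotopy hypothesis. Equivalently, every morphism of $\vec{\mathbb{S}}$ is an equivalence.

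From this I would argue that every symmetric monoidal functor $F:\vec{\mathbb{S}}\to\mathcal{C}^\oast$ factors through the maximal sub-$\infty$-groupoid $\mathcal{C}^\text{iso}$. The latter inherits a symmetric monoidal structure $\mathcal{C}^{\text{iso},\oast}$ from $\mathcal{C}^\oast$ by Lemma \ref{EasyLemma} applied to the product-preserving adjunction $j \dashv (-)^\text{iso}$ (the same observation used in Example \ref{hCSRingEx}), and the inclusion $\mathcal{C}^{\text{iso},\oast}\hookrightarrow\mathcal{C}^\oast$ is a symmetric monoidal fully faithful functor. Since underlying functors preserve equivalences, $F$ lands in $\mathcal{C}^\text{iso}$ on objects and arrows, and its symmetric monoidal coherence data $F(X)\otimes F(Y)\xrightarrow{\sim} F(X\otimes Y)$ also lies in $\mathcal{C}^\text{iso}$ (these structure maps being equivalences by definition). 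This yields a natural equivalence $\text{Hom}(\vec{\mathbb{S}},\mathcal{C}^\oast) \cong \text{Hom}(\vec{\mathbb{S}},\mathcal{C}^{\text{iso},\oast})$.

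To finish, I would observe that the right-hand side is an $\infty$-groupoid: its forgetful functor into the plain functor $\infty$-category $\text{Fun}(\vec{\mathbb{S}},\mathcal{C}^\text{iso})$ is conservative (a symmetric monoidal natural transformation is invertible as soon as its underlying natural transformation is), and $\text{Fun}(\vec{\mathbb{S}},\mathcal{C}^\text{iso})$ is an $\infty$-groupoid because its target is. The main obstacle is the bookkeeping required to make the factorization through $\mathcal{C}^{\text{iso},\oast}$ truly an equivalence of internal Homs in SymMon rather than only of underlying $\infty$-categories, which amounts to checking that every symmetric monoidal functor out of an $\infty$-groupoid lies in the essential image of the symmetric monoidal fully faithful inclusion $\mathcal{C}^{\text{iso},\oast}\hookrightarrow\mathcal{C}^\oast$. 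Granted this, combining the three equivalences $\mathcal{C} \cong \text{Hom}(\vec{\mathbb{S}},\mathcal{C}^\oast) \cong \text{Hom}(\vec{\mathbb{S}},\mathcal{C}^{\text{iso},\oast})$ delivers the lemma.
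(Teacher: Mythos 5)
There is a genuine gap, and it is fatal to the strategy as written. Your reduction via solidity to showing that $\text{Hom}(\vec{\mathbb{S}}^{+},\mathcal{C}^\oast)$ is an $\infty$-groupoid is fine, but the argument you then give uses only that $\vec{\mathbb{S}}$ is an $\infty$-groupoid, and that hypothesis cannot suffice: $\text{Fin}^\text{iso}$ is also a symmetric monoidal $\infty$-groupoid, yet $\text{Hom}(\text{Fin}^{\text{iso},\amalg},\mathcal{C}^\oast)\cong\mathcal{C}^\oast$ (Theorem \ref{2T1}), so your reasoning applied verbatim to $\text{Fin}^\text{iso}$ would prove that \emph{every} symmetric monoidal $\infty$-category is an $\infty$-groupoid. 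The precise error is the claim that $\mathcal{C}^{\text{iso},\oast}\hookrightarrow\mathcal{C}^\oast$ is fully faithful and hence that $\text{Hom}(\vec{\mathbb{S}}^{+},\mathcal{C}^{\text{iso},\oast})\rightarrow\text{Hom}(\vec{\mathbb{S}}^{+},\mathcal{C}^\oast)$ is an equivalence. The inclusion $\mathcal{C}^\text{iso}\subseteq\mathcal{C}$ is wide but not full, so while every symmetric monoidal functor $\vec{\mathbb{S}}\rightarrow\mathcal{C}^\oast$ does factor through $\mathcal{C}^\text{iso}$ (functors preserve invertibility), a monoidal natural transformation $\eta:F\rightarrow G$ has components $\eta_x:F(x)\rightarrow G(x)$ that are arbitrary morphisms of $\mathcal{C}$, with no reason to be equivalences. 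So the comparison functor is essentially surjective but not full, and the conservativity/groupoid argument at the end does not get off the ground. Note also that Proposition \ref{cofree2} in the paper runs exactly your style of argument, but only \emph{after} citing this lemma to know that $\text{Hom}(\vec{\mathbb{S}}^{+},\mathcal{D}^\oast)$ is an $\vec{\mathbb{S}}$-module and hence a groupoid; used as a proof of the lemma itself, that step is circular.

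What your proposal never uses, and what any proof must use, is that $\vec{\mathbb{S}}$ is \emph{grouplike}: there is an object $-1$ and an equivalence $\alpha:(-1)+1\xrightarrow{\sim}0$. The paper's proof passes to homotopy categories (it suffices to show $h\mathcal{M}$ is a groupoid, since $h$ is symmetric monoidal) and then inverts an arbitrary morphism $f:X\rightarrow Y$ by a shearing argument: the module structure gives endofunctors $m_n$ of $h\mathcal{M}$ with $m_0$ constant at the unit and natural isomorphisms $\alpha_X:(-X)+X\xrightarrow{\sim}0$, and one checks by a diagram chase that conjugating $X+(-f)+Y$ by the $\alpha$'s produces a two-sided inverse to $f+0$. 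If you want to salvage your outline, the missing content is precisely a proof that the components of monoidal natural transformations out of a \emph{grouplike} symmetric monoidal $\infty$-groupoid are invertible, and establishing that is essentially the shearing argument again.
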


\begin{theorem}[\cite{Berman} 4.5]
\label{MainS}
A symmetric monoidal $\infty$-category is group-complete if and only if it is a connective spectrum. That is, $$\text{Mod}_{\vec{\mathbb{S}}}\cong\text{Ab}_\infty\cong\text{Sp}_{\geq 0}$$ are equivalent as full subcategories of SymMon.
\end{theorem}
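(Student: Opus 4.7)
The plan is to prove both containments $\text{Ab}_\infty \subseteq \text{Mod}_{\vec{\mathbb{S}}}$ and $\text{Mod}_{\vec{\mathbb{S}}} \subseteq \text{Ab}_\infty$ as full subcategories of $\text{SymMon}$. Given the solidity of $\vec{\mathbb{S}}$ (established in the text immediately above the theorem) together with the lemma stated just above that every $\vec{\mathbb{S}}$-module is an $\infty$-groupoid (\cite{Berman} 4.7), the theorem reduces to a standard smashing-localization argument carried out internally to $\text{CMon}_\infty^\wedge$.

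For the containment $\text{Ab}_\infty \subseteq \text{Mod}_{\vec{\mathbb{S}}}$, let $E \in \text{Ab}_\infty$. Because $\mathbb{S}$ is the unit of the smash product on connective spectra, $E \wedge \mathbb{S} \cong E$. The composite inclusion $\text{Ab}_\infty^\wedge \hookrightarrow \text{CMon}_\infty^\wedge \hookrightarrow \text{SymMon}^\otimes$ is strong symmetric monoidal by iterated application of Lemma \ref{EasyLemma} and Example \ref{CSRingExSp}, so pushing the equivalence forward yields $\vec{E} \otimes \vec{\mathbb{S}} \cong \vec{E}$ in $\text{SymMon}$, which is exactly the statement that $\vec{E}$ is a $\vec{\mathbb{S}}$-module.

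For the containment $\text{Mod}_{\vec{\mathbb{S}}} \subseteq \text{Ab}_\infty$, let $\mathcal{C}$ be a $\vec{\mathbb{S}}$-module. The cited $\infty$-groupoid lemma places $\mathcal{C}$ in the essential image of $j_*\colon \text{CMon}_\infty \hookrightarrow \text{SymMon}$; write $\mathcal{C} \cong j_*(M)$. Since $j_*$ is strong symmetric monoidal and $\vec{\mathbb{S}} = j_*(\mathbb{S})$, the module identity $\mathcal{C} \otimes \vec{\mathbb{S}} \cong \mathcal{C}$ descends to $M \wedge \mathbb{S} \cong M$ in $\text{CMon}_\infty$. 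Now $\mathbb{S}$ is idempotent in $\text{CMon}_\infty^\wedge$ with $\text{Mod}_\mathbb{S}^{\text{CMon}_\infty} \cong \text{Ab}_\infty$ — realizing $\text{Ab}_\infty \hookrightarrow \text{CMon}_\infty$ as a smashing localization whose generator is $\mathbb{S}$, in exact parallel with $\mathbb{N} \hookrightarrow \mathbb{Z}$ — so $M \in \text{Ab}_\infty$, i.e.\ $\mathcal{C}$ is a connective spectrum.

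The main obstacle is the preceding $\infty$-groupoid lemma, which I have used as a black box. To prove it I would apply Lemma \ref{MainLemma} with $A = \vec{\mathbb{S}}$ and $\mathcal{P}$ equal to the image of $\text{Ab}_\infty$ in $\text{SymMon}$, and identify the internal Hom $\text{Hom}^{\text{SymMon}}(\vec{\mathbb{S}}, \mathcal{C})$ with the Picard spectrum $\text{Pic}(\mathcal{C})$ for every $\mathcal{C} \in \text{SymMon}$. This identification rests on the enriched adjunction $j_* \dashv (-)^{\text{iso}}_*$ together with the universal property of $\mathbb{S}$ as the free grouplike $\mathbb{E}_\infty$-space on one generator: a symmetric monoidal functor $\vec{\mathbb{S}} \to \mathcal{C}$ factors through the maximal $\infty$-groupoid of $\mathcal{C}$ and picks out a $\otimes$-invertible object. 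Since $\text{Pic}(\mathcal{C})$ manifestly lies in $\text{Ab}_\infty$ and the counit $\text{Pic}(\mathcal{C}) \to \mathcal{C}$ is an equivalence when $\mathcal{C}$ is itself a connective spectrum, Lemma \ref{MainLemma} would yield the $\infty$-groupoid statement and Theorem \ref{MainS} in a single stroke.
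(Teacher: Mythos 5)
Your proof of the theorem itself is essentially the paper's: the easy containment is verbatim (push the $\mathbb{S}$-module structure on $E$ through the symmetric monoidal inclusion $\text{Ab}_\infty^\wedge\subseteq\text{SymMon}^\otimes$), and the converse likewise leans entirely on the $\infty$-groupoid lemma. The only divergence in the converse is how you extract grouplikeness: you descend the full module identity to $\text{CMon}_\infty$ and invoke the space-level smashing localization $\text{Mod}_{\mathbb{S}}(\text{CMon}_\infty^\wedge)\cong\text{Ab}_\infty$, whereas the paper gets away with the cheaper observation that $\pi_0\mathcal{C}$ is a module over $\pi_0\vec{\mathbb{S}}\cong\mathbb{Z}$ and hence a group. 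Both are fine; yours uses a (standard, GGN-style) input one categorical level down, the paper's uses only $\pi_0$.

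The genuine issue is your proposed proof of the prerequisite lemma. Deriving ``every $\vec{\mathbb{S}}$-module is an $\infty$-groupoid'' from the identification $\text{Hom}(\vec{\mathbb{S}}^{+},\mathcal{C}^\oast)\cong\text{Pic}(\mathcal{C}^\oast)$ is circular as sketched: the hard part of that identification is showing that every \emph{morphism} of $\text{Hom}(\vec{\mathbb{S}}^{+},\mathcal{C}^\oast)$ is invertible (objects factoring through $\mathcal{C}^\text{iso}$ is easy, since $\vec{\mathbb{S}}$ is an $\infty$-groupoid), and the paper's own proof of Proposition \ref{cofree2} establishes exactly that step by citing the lemma --- $\text{Hom}(\vec{\mathbb{S}}^{+},\mathcal{C}^\oast)$ is an $\vec{\mathbb{S}}$-module by Proposition \ref{TupHup}, hence an $\infty$-groupoid. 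The enriched adjunction $j_\ast\dashv(-)^\text{iso}$ only identifies the \emph{maximal subgroupoid} of the internal Hom with $\text{Map}_{\text{CMon}_\infty}(\mathbb{S},\mathcal{C}^\text{iso})$; it says nothing about the non-invertible natural transformations a priori present. To make your route non-circular you would need the independent fact that a monoidal natural transformation between strong monoidal functors is invertible at every $\otimes$-invertible object of the source (the usual duality argument: $\eta_{X^{-1}}$ dualizes to an inverse of $\eta_X$), applied to $\vec{\mathbb{S}}$ where every object is invertible. The paper avoids all of this by proving the lemma directly: pass to $h\mathcal{M}$ as an $h\vec{\mathbb{S}}$-module and exhibit an explicit inverse to each $f:X\to Y$ as a shift of $-f:-Y\to -X$ using the additive inverse $-1\in h\vec{\mathbb{S}}$ and the isomorphism $\alpha:(-1)+1\cong 0$. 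Either supply the invertibility-on-invertibles argument or fall back on the paper's diagram chase; as written, your lemma sketch assumes what it is meant to prove.
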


\begin{proof}[Proof of lemma.]
Suppose $\mathcal{M}^{+}$ is an $\vec{\mathbb{S}}^{+,\times}$-module. Recall that the homotopy category construction $h:\text{SymMon}_\infty^\otimes\rightarrow\text{SymMon}^\otimes$ is symmetric monoidal (Example \ref{hCSRingEx}), so $h\mathcal{M}^{+}$ is an $h\vec{\mathbb{S}}$-module (as 1-categories). It suffices to show that $h\mathcal{M}$ is a groupoid.

Let 0 and 1 denote the additive and multiplicative units of $h\vec{\mathbb{S}}$, and $-1$ an additive inverse of 1, with a chosen isomorphism $\alpha:(-1)+1\xrightarrow{\sim}0$. An integer $n$ denotes (as an object of $h\vec{\mathbb{S}}$) $1^{+n}$ if $n$ is positive or $(-1)^{+|n|}$ if $n$ is negative.

The module structure on $h\mathcal{M}$ induces symmetric monoidal functors $h\vec{\mathbb{S}}^{+}\otimes h\mathcal{M}^{+}\rightarrow h\mathcal{M}^{+}$ and therefore $$m_{-}:h\vec{\mathbb{S}}^{+}\rightarrow\text{Hom}(h\mathcal{M}^{+},h\mathcal{M}^{+})$$ such that $m_1$ is the identity functor and $m_0$ is the constant functor sending all of $h\mathcal{M}$ to the unit $0\in h\mathcal{M}$. Denote $m_{-1}X$ by $-X$, and note that $\alpha:(-1)+1\xrightarrow{\sim}0$ induces a natural isomorphism $\alpha_X:(-X)+X\xrightarrow{\sim}0$.

Suppose $f:X\rightarrow Y$ in $h\mathcal{M}$. The inverse to $f$ will be $-f:-Y\rightarrow -X$ after an appropriate shift. Specifically, the inverse to $f+0:X+0\rightarrow Y+0$ is the composition $$Y+0\xrightarrow{\alpha_X^{-1}+Y}X+(-X)+Y\xrightarrow{X+(-f)+Y}X+(-Y)+Y\xrightarrow{X+\alpha_Y}X+0.$$ To prove it is inverse amounts to a diagram chase; the diagram proving that $f^{-1}\circ f=\text{id}$ is as follows (the right square commutes by naturality of $\alpha_X$): $$\xymatrix{
X+0\ar[rrr]^-{\alpha_X^{-1}+X}\ar[d]_-{f} &&&X+(-X)+X\ar[r]^-{X+\alpha_X}\ar[d]_-{X+(-f)+f} &X+0\ar[d]^-{\text{id}} \\
Y+0\ar[r]_-{\alpha_X^{-1}+Y} &X+(-X)+Y\ar[rr]_-{X+(-f)+Y} &&X+(-Y)+Y\ar[r]_-{X+\alpha_Y} &X+0.
}$$
\end{proof}

\begin{proof}[Proof of theorem.]
Any connective spectrum is an $\mathbb{S}$-module in $\text{Ab}_\infty^\wedge$. Since $\text{Ab}_\infty^\wedge\subseteq\text{SymMon}^\otimes$ is symmetric monoidal, any connective spectrum in $\text{SymMon}^\otimes$ is an $\vec{\mathbb{S}}$-module (thus group-complete).

Conversely, suppose $\mathcal{C}^\oast$ is group-complete. By the lemma, it is an $\infty$-groupoid. Since $\pi_0\mathcal{C}$ is a module over $\pi_0\vec{\mathbb{S}}\cong\mathbb{Z}$, $\mathcal{C}^\oast$ is also grouplike, completing the proof.
\end{proof}

\begin{corollary}
\label{CorGpoid}
Suppose $\mathcal{R}$ is a commutative ring $\infty$-category -- that is, a commutative semiring $\infty$-category which is grouplike (every object has an additive inverse up to equivalence). Then $\mathcal{R}$ is an $\infty$-groupoid.
\end{corollary}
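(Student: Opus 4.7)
The plan is to adapt the proof of the immediately preceding Lemma \cite{Berman} 4.7 (which showed every $\vec{\mathbb{S}}$-module is an $\infty$-groupoid) almost verbatim. That proof only used the following formal structure on a target symmetric monoidal $\infty$-category $\mathcal{M}^{+}$: a distinguished object $1$, an additive inverse $-1$ with an equivalence $\alpha:(-1)+1\xrightarrow{\sim}0$, and a symmetric monoidal endofunctor $m_{-1}:\mathcal{M}^{+}\to\mathcal{M}^{+}$ acting as ``multiplication by $-1$.'' I claim each of these is available for a commutative ring $\infty$-category $\mathcal{R}$ without reference to $\vec{\mathbb{S}}$, which reduces the corollary to re-running that diagram chase.

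First I would extract the needed data internal to $\mathcal{R}$. The multiplicative unit $1\in\mathcal{R}$ exists because $\mathcal{R}$ is a commutative algebra in $\text{SymMon}^\otimes$; grouplike-ness of the additive structure supplies an object $-1$ with an equivalence $\alpha:(-1)\oplus 1\xrightarrow{\sim}0$ in $h\mathcal{R}$. Because $\mathcal{R}$ is a commutative semiring $\infty$-category, the multiplication $\otimes$ is a morphism in $\text{SymMon}^\otimes$, so for any $X$ the functor $X\otimes-:\mathcal{R}^\oplus\to\mathcal{R}^\oplus$ is symmetric monoidal; in particular it preserves the additive unit, giving $0\otimes X\cong 0$ naturally in $X$. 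Setting $m_{-1}=(-1)\otimes -$ and $-X=m_{-1}(X)$, tensoring $\alpha$ with $X$ produces natural equivalences $\alpha_X:(-X)\oplus X\xrightarrow{\sim}0$.

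Next I would reuse the lemma's construction of inverses. Given $f:X\to Y$ in $h\mathcal{R}$, write $-f=m_{-1}(f):-X\to-Y$; the candidate two-sided inverse of $f\oplus\mathrm{id}_0$ is
$$Y\oplus 0\xrightarrow{\alpha_X^{-1}\oplus Y}X\oplus(-X)\oplus Y\xrightarrow{X\oplus(-f)\oplus Y}X\oplus(-Y)\oplus Y\xrightarrow{X\oplus\alpha_Y}X\oplus 0.$$
The same two diagram chases as in \cite{Berman} 4.7, which rely only on naturality of $\alpha$ (now supplied by the symmetric monoidality of $m_{-1}$), verify that this is indeed inverse to $f\oplus\mathrm{id}_0$ in $h\mathcal{R}$. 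Hence every morphism of $h\mathcal{R}$ is an isomorphism, so $h\mathcal{R}$ is a groupoid, and therefore $\mathcal{R}$ is an $\infty$-groupoid.

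The main obstacle, and essentially the only nontrivial point, is bookkeeping: confirming that the $\vec{\mathbb{S}}$-module structure in the lemma's proof is used only through the three pieces of data above, and that each piece is genuinely present for any commutative ring $\infty$-category. The delicate part is the natural transformation $\alpha_X$, which needs both distributivity of $\otimes$ over $\oplus$ and the identity $0\otimes X\cong 0$; once these are in hand, nothing else in the original argument depends on having $\vec{\mathbb{S}}$ act externally, so the same diagram chase concludes.
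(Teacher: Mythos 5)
Your argument is correct, but it takes a different route from the paper. You re-run the diagram chase of the group-completion lemma internally, observing that the multiplication of $\mathcal{R}$ adjoints to a symmetric monoidal functor $m_{-}:h\mathcal{R}^{\oplus}\rightarrow\text{Hom}(h\mathcal{R}^{\oplus},h\mathcal{R}^{\oplus})$ with $m_1=\text{id}$ and $m_0=\text{const}_0$, so that an additive inverse of the multiplicative unit supplies the natural equivalence $\alpha_X:(-X)\oplus X\xrightarrow{\sim}0$ and the lemma's construction of inverses goes through verbatim; in effect you treat $\mathcal{R}$ as a module over itself. The paper instead applies the lemma as a black box: it notes that $\mathcal{R}^{\text{iso}}$ is a grouplike commutative semiring $\infty$-groupoid, hence a connective commutative ring spectrum, so the unit map $\vec{\mathbb{S}}\rightarrow\mathcal{R}^{\text{iso}}$ composed with the semiring inclusion $\mathcal{R}^{\text{iso}}\rightarrow\mathcal{R}$ exhibits $\mathcal{R}$ as an $\vec{\mathbb{S}}$-algebra, hence an $\vec{\mathbb{S}}$-module, hence an $\infty$-groupoid by Theorem \ref{MainS}. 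Your approach is more self-contained and slightly more general (you only need the multiplicative unit to be additively invertible, not full grouplikeness, and you avoid invoking the identification of grouplike semiring $\infty$-groupoids with connective ring spectra); the paper's approach is a two-line formal deduction once the lemma is in place, which is the economy one wants from the solidity formalism. The one point worth stating explicitly in your write-up is that the naturality of $\alpha_X$ in $X$ comes from $m_\alpha$ being a $2$-morphism in $\text{Hom}(h\mathcal{R}^{\oplus},h\mathcal{R}^{\oplus})$, i.e.\ from applying the symmetric monoidal functor $m_{-}$ to the morphism $\alpha$, rather than from tensoring object by object; as you note, this is exactly where distributivity and $0\otimes X\cong 0$ enter.
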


\begin{proof}
If $\mathcal{R}$ is a commutative ring $\infty$-category, $\mathcal{R}^\text{iso}$ is a commutative ring $\infty$-groupoid (connective commutative ring spectrum), and by Lemma \ref{EasyLemma} the inclusion $\mathcal{R}^\text{iso}\rightarrow\mathcal{R}$ is a semiring functor\footnote{Specifically, the semiring functor $\mathcal{R}^\text{iso}\rightarrow\mathcal{R}$ is the counit of the symmetric monoidal adjunction \\$i:\text{CMon}_\infty^{\wedge}\rightleftarrows\text{SymMon}_\infty^\otimes:(-)^\text{iso}$ evaluated at $\mathcal{R}$.}. The composite semiring functor $$\vec{\mathbb{S}}\rightarrow\mathcal{R}^\text{iso}\rightarrow\mathcal{R},$$ exhibits $\mathcal{R}$ as an $\vec{\mathbb{S}}$-algebra, thus an $\vec{\mathbb{S}}$-module. So $\mathcal{R}$ is an $\infty$-groupoid.
\end{proof}

\subsection{Comparison to K-theory}\label{3.3.2}
\noindent By Proposition \ref{TupHup}, the fully faithful inclusion $\text{Ab}_\infty\subseteq\text{SymMon}$ has both a left adjoint (localization) $$-\otimes\vec{\mathbb{S}}:\text{SymMon}\rightarrow\text{Ab}_\infty$$ and a right adjoint (colocalization) $$\text{Hom}(\vec{\mathbb{S}},-):\text{SymMon}\rightarrow\text{Ab}_\infty.$$ These can be described in terms of algebraic K-theory and the Picard spectrum (or Picard $\infty$-groupoid), respectively:

\begin{proposition}
\label{cofree2}
If $\mathcal{C}^\oast$ is a symmetric monoidal $\infty$-category, $\text{Hom}(\vec{\mathbb{S}}^{+},\mathcal{C}^\oast)$ is the symmetric monoidal subcategory of $\mathcal{C}^\oast$ spanned by those objects which have inverses (up to equivalence) and morphisms which have inverses (also up to equivalence). This is sometimes known as the \emph{Picard spectrum} $\text{Pic}(\mathcal{C}^\oast)$.
\end{proposition}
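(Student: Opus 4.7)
The plan is to identify $\operatorname{Hom}(\vec{\mathbb{S}}^+, \mathcal{C}^\oast)$ with the Picard construction by showing they both satisfy the same universal property as right adjoint to the inclusion $\mathrm{Ab}_\infty \subseteq \mathrm{SymMon}$. By Proposition \ref{TupHup} applied to the semiring map $\mathrm{Fin}^\mathrm{iso} \to \vec{\mathbb{S}}$ from the unit of $\mathrm{SymMon}^\otimes$, together with the solidity of $\vec{\mathbb{S}}$ established in Theorem \ref{MainS}, the right adjoint to $\mathrm{Mod}_{\vec{\mathbb{S}}} = \mathrm{Ab}_\infty \hookrightarrow \mathrm{SymMon}$ is precisely $\operatorname{Hom}(\vec{\mathbb{S}}, -)$. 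Thus it suffices to verify that the ``invertible objects / invertible morphisms'' construction is also right adjoint to this inclusion.

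First I would construct $\operatorname{Pic}(\mathcal{C}^\oast)$ in two stages: take the full sub-symmetric-monoidal $\infty$-category of $\mathcal{C}$ spanned by $\oast$-invertible objects, then pass to the maximal sub-$\infty$-groupoid on invertible morphisms. Both steps preserve the symmetric monoidal structure, since the unit is invertible, tensor products of invertible objects are invertible, and tensor products of equivalences are equivalences. By construction every object of $\operatorname{Pic}(\mathcal{C}^\oast)$ has an inverse up to equivalence (grouplike) and every morphism is an equivalence ($\infty$-groupoid), so $\operatorname{Pic}(\mathcal{C}^\oast)$ is a grouplike $\mathbb{E}_\infty$-space, i.e. a connective spectrum.

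Next I verify the universal property. For any connective spectrum $E$, the symmetric monoidal $\infty$-category $\vec{E}$ is itself grouplike and an $\infty$-groupoid, so any symmetric monoidal functor $F\colon \vec{E} \to \mathcal{C}^\oast$ must send each object to an $\oast$-invertible object (symmetric monoidal functors preserve tensor-inverses) and each morphism to an equivalence. Hence $F$ factors essentially uniquely through the inclusion $\operatorname{Pic}(\mathcal{C}^\oast) \hookrightarrow \mathcal{C}^\oast$, and combining with full faithfulness of $\mathrm{Ab}_\infty \subseteq \mathrm{SymMon}$ this gives a natural equivalence
\[ \mathrm{Map}_{\mathrm{SymMon}}(\vec{E}, \mathcal{C}^\oast) \simeq \mathrm{Map}_{\mathrm{Ab}_\infty}(E, \operatorname{Pic}(\mathcal{C}^\oast)). \]
By uniqueness of right adjoints, $\operatorname{Pic}(\mathcal{C}^\oast) \simeq \operatorname{Hom}(\vec{\mathbb{S}}^+, \mathcal{C}^\oast)$.

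The main obstacle is the construction of $\operatorname{Pic}(\mathcal{C}^\oast)$ as a genuine symmetric monoidal $\infty$-category inheriting its structure from $\mathcal{C}^\oast$ — in particular, verifying that both the restriction to invertible objects and the passage to the maximal subgroupoid produce operadic structures compatible with the inclusion. At the $\infty$-operad level one must check that the relevant wide subcategories of $[\mathcal{C}^{\oast,\otimes}]$ define sub-$\infty$-operads, which is essentially formal but tedious; alternatively, one can avoid this bookkeeping entirely by simply defining $\operatorname{Pic}(\mathcal{C}^\oast)$ to be $\operatorname{Hom}(\vec{\mathbb{S}}^+, \mathcal{C}^\oast)$ and then proving a posteriori — using solidity of $\vec{\mathbb{S}}$ and fully-faithfulness of $\operatorname{Hom}(\vec{\mathbb{S}}, \mathcal{C}) \to \mathcal{C}$ — that its objects are exactly the $\oast$-invertibles and its morphisms are exactly the equivalences.
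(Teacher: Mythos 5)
Your argument is correct, and it reaches the result by a genuinely different route from the paper's. The paper computes the internal Hom directly: it first replaces $\mathcal{C}$ by the full subcategory $\mathcal{D}$ of invertible objects (legitimate because every object of $\vec{\mathbb{S}}$ is invertible), then shows that the inclusion $\text{Hom}(\vec{\mathbb{S}}^{+},\mathcal{D}^{\text{iso},\oast})\subseteq\text{Hom}(\vec{\mathbb{S}}^{+},\mathcal{D}^{\oast})$ is an equivalence --- the delicate point being that \emph{morphisms} of the internal Hom also factor through $\mathcal{D}^{\text{iso}}$, which the paper deduces from the fact that $\text{Hom}(\vec{\mathbb{S}}^{+},\mathcal{D}^{\oast})$ is itself an $\vec{\mathbb{S}}$-module and hence an $\infty$-groupoid --- and finally collapses $\text{Hom}(\vec{\mathbb{S}}^{+},\mathcal{D}^{\text{iso},\oast})\cong\mathcal{D}^{\text{iso}}$ using solidity. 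Your Yoneda/uniqueness-of-adjoints packaging replaces that middle step: by working with external mapping spaces rather than the internal Hom, you only ever encounter invertible natural transformations, so you never have to argue that a general $2$-cell between symmetric monoidal functors out of $\vec{\mathbb{S}}$ has invertible components. The trade-offs are these. First, you must construct $\text{Pic}(\mathcal{C}^\oast)$ as a symmetric monoidal subcategory in advance and verify it lies in $\text{Ab}_\infty$; this is exactly the operadic bookkeeping you flag, and the paper's order of operations --- pass to the full subcategory of invertibles (harmless for full monoidal subcategories closed under $\oast$), then apply $(-)^{\text{iso}}$, which is symmetric monoidal by Lemma \ref{EasyLemma} --- disposes of it more cleanly. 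Second, Yoneda only yields an abstract equivalence $\text{Pic}(\mathcal{C}^\oast)\simeq\text{Hom}(\vec{\mathbb{S}}^{+},\mathcal{C}^\oast)$, whereas the proposition identifies the latter as a specific subcategory of $\mathcal{C}$; to recover that literal statement you should also observe that your natural equivalence matches the counit $\text{Hom}(\vec{\mathbb{S}}^{+},\mathcal{C}^\oast)\rightarrow\mathcal{C}^\oast$ with the inclusion $\text{Pic}(\mathcal{C}^\oast)\hookrightarrow\mathcal{C}^\oast$. Finally, the fallback you mention at the end --- define the Picard object to be the internal Hom and identify its objects and morphisms a posteriori --- is essentially the strategy the paper actually follows.
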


\begin{proof}
Let $\mathcal{D}$ be the full subcategory of $\mathcal{C}$ spanned by invertible objects (invertible up to equivalence). Then $\text{Hom}(\vec{\mathbb{S}}^{+},\mathcal{D}^\oast)\rightarrow\text{Hom}(\vec{\mathbb{S}}^{+},\mathcal{C}^\oast)$ is an equivalence, since every object of $\vec{\mathbb{S}}$ is invertible (and therefore is sent by any symmetric monoidal functor to an invertible object of $\mathcal{C}$).

Now we know there is a subcategory inclusion $$i:\text{Hom}(\vec{\mathbb{S}}^{+},\mathcal{D}^{\text{iso},\oast})\subseteq\text{Hom}(\vec{\mathbb{S}}^{+},\mathcal{D}^\oast)$$ because $\mathcal{D}^\text{iso}$ is a subcategory of $\mathcal{D}$. (This follows from the definition of symmetric monoidal functors, HA 2.1.3.7.) Any object of $\text{Hom}(\vec{\mathbb{S}}^{+},\mathcal{D}^\oast)$ factors through $\mathcal{D}^\text{iso}$ (is in the image of $i$, up to equivalence) since every morphism in $\vec{\mathbb{S}}$ is invertible. Moreover, because $\text{Hom}(\vec{\mathbb{S}}^{+},\mathcal{D}^\oast)$ is an $\vec{\mathbb{S}}$-module (Proposition \ref{TupHup}), it is an $\infty$-groupoid. Thus any morphism $\phi:F\rightarrow G$ in $\text{Hom}(\vec{\mathbb{S}}^{+},\mathcal{D}^\oast)$ is in the image of $i$ (up to equivalence), since for any object $x\in\vec{\mathbb{S}}$, the map $\phi_x:F(X)\rightarrow G(X)$ must be an equivalence. Therefore $i$ is an equivalence.

But $\mathcal{D}^{\text{iso},\oast}$ is group-complete, and therefore an $\vec{\mathbb{S}}$-module. Since moreover $\vec{\mathbb{S}}$ is solid, $\text{Hom}(\vec{\mathbb{S}}^{+},\mathcal{D}^{\text{iso},\oast})\cong\mathcal{D}^\text{iso}$. Therefore $$\text{Hom}(\vec{\mathbb{S}}^{+},\mathcal{C}^\oast)\cong\vec{E}\cong\mathcal{D}^\text{iso},$$ as desired.
\end{proof}

\noindent The localization $\mathcal{C}^\oast\otimes\vec{\mathbb{S}}$ is not as straightforward to compute. The inclusion $\text{Top}\rightarrow\text{Cat}$ has (in addition to the right adjoint $-^\text{iso}$) a \emph{left} adjoint which sends $\mathcal{C}\in\text{Cat}$ to its \emph{classifying space} $|\mathcal{C}|$. Roughly, $|\mathcal{C}|$ is obtained from $\mathcal{C}$ by formally adjoining inverses to all morphisms.

Moreover, the inclusion $\text{Ab}_\infty\rightarrow\text{CMon}_\infty$ also has a left adjoint, given by $\infty$-group completion \cite{QuillenGpCp}. We will denote this group completion $K(-)$, for \emph{K-theory}.

Recall that the algebraic K-theory of a symmetric monoidal $\infty$-category $\mathcal{C}^\oast$ is $K(\mathcal{C}^{\text{iso},\oast})$.

\begin{proposition}
If $\mathcal{C}^\oast$ is symmetric monoidal, then $\vec{\mathbb{S}}\otimes\mathcal{C}^\oast\cong K(|\mathcal{C}^\oast|)$.
\end{proposition}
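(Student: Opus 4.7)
The plan is to realize both sides of the claimed equivalence as values of the left adjoint to the fully faithful inclusion $\text{Ab}_\infty \hookrightarrow \text{SymMon}$; by essential uniqueness of adjoints, the two functors must agree. By Theorem \ref{MainS}, $\text{Mod}_{\vec{\mathbb{S}}} \cong \text{Ab}_\infty$ as full subcategories of SymMon, and solidity of $\vec{\mathbb{S}}$ identifies the left adjoint to this inclusion with the smashing localization $-\otimes\vec{\mathbb{S}}$. So it suffices to show that $K \circ |-|$ is also left adjoint to the same inclusion.

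To that end, I would factor the inclusion as
\[
\text{Ab}_\infty \;\hookrightarrow\; \text{CMon}_\infty \;\hookrightarrow\; \text{SymMon}.
\]
The first inclusion has left adjoint $K$ by the classical $\infty$-group-completion adjunction (\cite{QuillenGpCp}). For the second, I would apply Lemma \ref{EasyLemma} to the adjunction $|-|:\text{Cat}\rightleftarrows\text{Top}:j$. Since Top is presentable and cartesian closed and $|-|$ preserves finite products, the lemma yields an induced adjunction $|-|_\ast : \text{SymMon}\rightleftarrows \text{CMon}_\infty : j_\ast$ compatible with the forgetful functors to Cat and Top, so that the underlying functor of $|-|_\ast$ is literally the classifying space. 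Composing the two left adjoints produces $K \circ |-|$ as left adjoint to the composite inclusion $\text{Ab}_\infty\hookrightarrow\text{SymMon}$, and comparison with Step~1 gives the natural equivalence $\mathcal{C}^\oast \otimes \vec{\mathbb{S}} \cong K(|\mathcal{C}^\oast|)$.

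The main obstacle, though it is classical, is verifying that $|-|:\text{Cat}\to\text{Top}$ is product-preserving, since without this the machinery of Lemma \ref{EasyLemma} does not apply. Concretely, one invokes the fact that geometric realization of simplicial sets preserves finite products (after composing with the nerve / complete Segal space functor), which combined with Theorem \ref{ThmGGN} (so that $\text{CMon}(-)$ is computed by tensoring with $\text{CMon}_\infty$ in $\text{Pr}^{\text{L},\otimes}$) guarantees that the induced left adjoint on commutative monoid objects really is the underlying classifying space functor with its natural $\mathbb{E}_\infty$-structure. Once this is in hand, every other step is pure formal nonsense about uniqueness of left adjoints, and no explicit description of either $-\otimes\vec{\mathbb{S}}$ or of $K$ beyond its adjoint characterization is required.
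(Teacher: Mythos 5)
Your proposal is correct and follows essentially the same route as the paper: factor the inclusion $\text{Ab}_\infty\hookrightarrow\text{CMon}_\infty\hookrightarrow\text{SymMon}$, identify the left adjoints of the two pieces as $K$ and $|-|$, and conclude by uniqueness of left adjoints that $-\otimes\vec{\mathbb{S}}\cong K(|-|)$. The only difference is that you spell out, via Lemma \ref{EasyLemma} applied to $|-|:\text{Cat}\rightleftarrows\text{Top}:j$, why the classifying space functor induces the left adjoint to $\text{CMon}_\infty\hookrightarrow\text{SymMon}$, a point the paper asserts in the preceding discussion rather than inside the proof.
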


\begin{proof}
The subcategory inclusion $\text{Ab}_\infty\rightarrow\text{SymMon}_\infty$ factors $$\text{Ab}_\infty\rightarrow\text{CMon}_\infty\rightarrow\text{SymMon}_\infty,$$ so its left adjoint $\vec{\mathbb{S}}\otimes -$ factors as a composition of left adjoints $$\xymatrix{
\text{SymMon}_\infty\ar[r]^{|-|} &\text{CMon}_\infty\ar[r]^{K(-)} &\text{Ab}_\infty.
}$$
\end{proof}

\noindent For example, if $\mathcal{C}$ has an initial or terminal object, its classifying space is contractible. This classical fact can be restated algebraically as follows:

\begin{corollary}
\label{vanish}
$\vec{\mathbb{S}}\otimes\text{Fin}^{\text{inj}}\cong 0\cong\vec{\mathbb{S}}\otimes\text{Fin}^{\text{inj},\text{op}}$. As a result, $\vec{\mathbb{S}}\otimes\mathcal{C}^\oast=0$ for any $\mathcal{C}^\oast$ whose unit is initial or terminal, and in particular for $\mathcal{C}^\oast$ any of the solid semiring $\infty$-categories discussed before: Fin, $\text{Fin}_\ast$, $\text{Fin}^\text{inj}$, $\text{Span}(\text{Fin})$, or their opposites.
\end{corollary}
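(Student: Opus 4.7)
The plan is to combine the preceding proposition (which gives $\vec{\mathbb{S}}\otimes\mathcal{C}^\oast\cong K(|\mathcal{C}^\oast|)$) with the classical fact that an $\infty$-category with an initial or terminal object has contractible classifying space. The key structural input for propagating the vanishing to the list of examples is the solidity of $\text{Fin}^\text{inj}$ and $\text{Fin}^{\text{inj},\text{op}}$ (Theorem \ref{2T2}).

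First I would treat $\text{Fin}^\text{inj}$ itself: the empty set is an initial object, so $|\text{Fin}^\text{inj}|\simeq\ast$, and group-completion of a point is the zero connective spectrum. By the preceding proposition,
$$\vec{\mathbb{S}}\otimes\text{Fin}^{\text{inj},\amalg}\;\cong\; K(|\text{Fin}^\text{inj}|)\;\cong\; K(\ast)\;\cong\; 0.$$
Dually, the empty set is terminal in $\text{Fin}^{\text{inj},\text{op}}$, giving $\vec{\mathbb{S}}\otimes\text{Fin}^{\text{inj},\text{op}}\cong 0$ by the same argument.

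Next I would deduce the general statement. If $\mathcal{C}^\oast$ has initial unit, then $\mathcal{C}^\oast$ is semi-cocartesian monoidal, and hence by Theorem \ref{2T2} carries a (necessarily unique) $\text{Fin}^\text{inj}$-module structure. Since $\text{Fin}^\text{inj}$ is solid, the canonical map $\text{Fin}^\text{inj}\otimes_{\text{Fin}^\text{inj}}\mathcal{C}^\oast\to\mathcal{C}^\oast$ is an equivalence, so
$$\vec{\mathbb{S}}\otimes\mathcal{C}^\oast\;\cong\;\vec{\mathbb{S}}\otimes\text{Fin}^\text{inj}\otimes_{\text{Fin}^\text{inj}}\mathcal{C}^\oast\;\cong\; 0\otimes_{\text{Fin}^\text{inj}}\mathcal{C}^\oast\;\cong\; 0.$$
The dual argument, using $\text{Fin}^{\text{inj},\text{op}}$-modules in place of $\text{Fin}^\text{inj}$-modules, handles the case of terminal unit. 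To finish, I would observe that each of Fin, $\text{Fin}_\ast$, $\text{Fin}^\text{inj}$, $\text{Span}(\text{Fin})$ has $\emptyset$ or the basepoint as initial unit under the stated symmetric monoidal structure, and each of their opposites has the same object as terminal unit, so the vanishing applies.

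The main obstacle is really only the bookkeeping: one must justify that $|\mathcal{D}|\simeq\ast$ for any $\infty$-category $\mathcal{D}$ with initial object (a standard consequence of the existence of a natural transformation from the constant functor at the initial object to the identity, which becomes a homotopy after passing to classifying spaces), and one must verify that tensor and relative-tensor manipulations with a solid commutative semiring $\infty$-category in $\text{SymMon}^\otimes$ behave exactly as in ordinary commutative algebra. Both are routine from the machinery already set up in \S\ref{3.1}, so no genuinely new difficulty appears.
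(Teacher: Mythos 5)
Your proof is correct, and the two halves relate to the paper's argument differently. For the computation $\vec{\mathbb{S}}\otimes\text{Fin}^{\text{inj}}\cong 0$ you do exactly what the paper does: invoke the preceding proposition $\vec{\mathbb{S}}\otimes\mathcal{C}^\oast\cong K(|\mathcal{C}^\oast|)$ together with the contractibility of the classifying space of a category with an initial (or terminal) object. For the general statement, however, the paper simply applies that same classifying-space argument directly to each $\mathcal{C}^\oast$ on the list --- any $\infty$-category with an initial or terminal object has contractible classifying space, so $K(|\mathcal{C}^\oast|)\cong 0$ --- whereas you route through Theorem \ref{2T2}: a symmetric monoidal $\infty$-category with initial unit is a $\text{Fin}^{\text{inj}}$-module, and then $\vec{\mathbb{S}}\otimes\mathcal{C}^\oast\cong(\vec{\mathbb{S}}\otimes\text{Fin}^{\text{inj}})\otimes_{\text{Fin}^{\text{inj}}}\mathcal{C}^\oast\cong 0$ by base change and the fact that $0$ is a zero object of $\text{SymMon}$, which $-\otimes_{\text{Fin}^{\text{inj}}}\mathcal{C}^\oast$ preserves as a left adjoint. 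Your route is valid (the associativity of the relative tensor product you need is available because $\text{SymMon}^\otimes$ is presentable and closed), and it is arguably more in the algebraic spirit of the chapter --- it isolates the reusable principle that vanishing of $\vec{\mathbb{S}}\otimes\mathcal{R}$ propagates to all $\mathcal{R}$-modules, which is essentially the mechanism of Corollary \ref{VanishCor} immediately afterward. What it gives up is a sliver of generality: the direct classifying-space argument kills $\vec{\mathbb{S}}\otimes\mathcal{C}^\oast$ whenever $\mathcal{C}$ has \emph{any} initial or terminal object, whereas your module-theoretic step needs that object to be the unit. Since the corollary as stated only claims the unit case, this costs you nothing here.
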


\noindent Here is a slightly stronger result:

\begin{corollary}
\label{VanishCor}
If $\mathcal{C}^\oast$ is a $\text{Fin}^\text{inj}$-module, then the only object which is invertible (up to equivalence) is the unit.
\end{corollary}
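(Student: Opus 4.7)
The plan is to reduce to a direct argument about initial objects. By Theorem~\ref{2T2}, being a $\text{Fin}^\text{inj}$-module is equivalent to being semi-cocartesian monoidal, i.e., to having the symmetric monoidal unit $1$ as an initial object of $\mathcal{C}$. So the task becomes: if $X \oast Y \cong 1$ for some $Y \in \mathcal{C}$, then $X \cong 1$.

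The key observation is that tensoring with an invertible object is an equivalence of the underlying $\infty$-category. Explicitly, $-\oast Y : \mathcal{C} \to \mathcal{C}$ has essential inverse $-\oast X$, since the composites unwind via associativity, unitality, and the invertibility $X\oast Y\cong 1$ to the identity. Equivalences of $\infty$-categories preserve initial objects, so applying $-\oast Y$ to the initial object $1$ shows that $Y\cong 1\oast Y$ is itself initial in $\mathcal{C}$. Since initial objects in any $\infty$-category are unique up to contractible choice of equivalence (as recalled in \S\ref{1.1.2}), it follows that $Y \cong 1$, and hence $X \cong X\oast 1 \cong X \oast Y \cong 1$.

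I do not anticipate any serious obstacle: once Theorem~\ref{2T2} reinterprets the hypothesis in terms of initial objects, the argument is entirely formal. The only point worth verifying is that the associativity, unit, and invertibility equivalences in $\mathcal{C}^\oast$ really do promote $-\oast Y$ to an equivalence of the ambient $\infty$-category (rather than merely of its homotopy category), but this is immediate from the definition of a symmetric monoidal $\infty$-category. As an alternative, a more abstract route would be to combine Corollary~\ref{vanish} with Proposition~\ref{cofree2}: identifying $\text{Pic}(\mathcal{C}) = \text{Hom}(\vec{\mathbb{S}},\mathcal{C})$ and exhibiting it (through the $\text{Fin}^\text{inj}$-module structure on $\mathcal{C}$) as a module over $\vec{\mathbb{S}}\otimes\text{Fin}^\text{inj}=0$, which forces it to be trivial. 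The direct argument above, however, seems both shorter and more transparent.
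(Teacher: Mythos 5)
Your argument is correct, and your primary route is genuinely different from the one the paper takes. The paper's proof is exactly the ``abstract route'' you mention at the end: by Proposition \ref{cofree2} the maximal subgroupoid of invertible objects is $\text{Hom}(\vec{\mathbb{S}}^{+},\mathcal{C}^\oast)$, which by the tensor--Hom adjunction is identified with $\text{Hom}(\vec{\mathbb{S}}^{+}\otimes\text{Fin}^{\text{inj},\amalg},\mathcal{C}^\oast)$, a module over $\vec{\mathbb{S}}\otimes\text{Fin}^\text{inj}\cong 0$ (Corollary \ref{vanish}) and hence contractible. Your direct argument instead unwinds Theorem \ref{2T2} to the statement that the unit is initial and then proceeds purely formally: $-\oast Y$ is an equivalence of the underlying $\infty$-category (inverse $-\oast X$, via associativity and $X\oast Y\cong 1$), equivalences preserve initial objects, and initial objects are essentially unique, so $Y\cong 1$ and hence $X\cong 1$. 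This is complete and, if anything, proves a cleaner and more portable fact: any symmetric monoidal $\infty$-category whose unit is initial (or, dually, terminal) has no nontrivial invertible objects, with no reference to $\vec{\mathbb{S}}$, group completion, or the semiring formalism. What the paper's route buys is thematic rather than logical: the corollary is positioned to demonstrate that the vanishing $\vec{\mathbb{S}}\otimes\text{Fin}^\text{inj}\cong 0$ has concrete consequences, and its proof exhibits the full Picard space (not merely its set of components) as contractible --- though your argument also yields that with one more line, since $\text{Map}_\mathcal{C}(1,1)$ is contractible when $1$ is initial.
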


\begin{proof}
By Proposition \ref{cofree2}, $\text{Hom}(\vec{\mathbb{S}}^{+}\otimes\text{Fin}^{\text{inj},\amalg},\mathcal{C}^\oast)\cong\text{Hom}(\vec{\mathbb{S}}^{+},\mathcal{C}^\oast)$ is the maximal subgroupoid spanned by invertible objects. But it is also a module over $\vec{\mathbb{S}}\otimes\text{Fin}^\text{inj}\cong 0$, and therefore contractible. So every invertible object is equivalent to the unit.
\end{proof}

\subsection{Group completion of 1-categories}\label{3.3.3}
\noindent We end this section by discussing the group-completion of symmetric monoidal 1-categories, where vestiges of the sphere spectrum remain to wreak havoc.

Just as connective spectra were examples of symmetric monoidal $\infty$-categories, we have:

\begin{example}
If $M$ is a commutative monoid (for example, an abelian group), then $M$ is a (discrete) symmetric monoidal category.

If $R$ is a commutative ring (or a commutative semiring), then $R$ is a (discrete) semiring category.

If $\mathcal{C}\in\text{SymMon}$ is in the full subcategory $\text{Ab}\subseteq\text{SymMon}$, we might say abusively that $\mathcal{C}$ \emph{is} an abelian group.
\end{example}

\begin{warning}
Solid (connective) ring spectra are all solid as semiring $\infty$-categories. However, solid rings are not necessarily solid as semiring categories.

For example, $\mathbb{Z}/2$ is a solid ring, but the map of semiring categories $\mathbb{Z}/2\otimes\mathbb{Z}/2\rightarrow\mathbb{Z}/2$ is \emph{not} an equivalence ($\pi_1(H\mathbb{Z}/2\wedge H\mathbb{Z}/2)\cong\mathbb{Z}/2$, not $0$), and therefore $\mathbb{Z}/2$ is not solid as a semiring category.

The difference is that the inclusion $\text{Top}\rightarrow\text{Cat}_\infty$ has a right adjoint, so Lemma \ref{EasyLemma} applies to show that $\text{Ab}_\infty\rightarrow\text{SymMon}_\infty$ is a symmetric monoidal functor. But $\text{Set}\rightarrow\text{Cat}$ does \emph{not} have a right adjoint.

Thus Ab \emph{cannot} be recovered as a category of modules in SymMon, in contrast to Theorem \ref{MainS}.
\end{warning}

\noindent Very naively, we might expect the following analogue to Theorem \ref{MainS}: the full subcategory $\text{Ab}\subseteq\text{SymMon}$ can be recovered as modules over $\mathbb{Z}$. But the warning tells us this is not true. (More on this in Example \ref{Z}.) Instead, we should expect to recover \emph{2-abelian groups} as modules over the homotopy category $h\vec{\mathbb{S}}$.

\subsubsection{The semiring category $h\vec{\mathbb{S}}$}
\begin{definition}
A \emph{2-abelian group} is a symmetric monoidal category which is a groupoid and every object has an inverse (up to isomorphism).
\end{definition}

\begin{example}
The homotopy category $h\vec{\mathbb{S}}$ is a 2-abelian group which is not an abelian group. As a groupoid, $h\vec{\mathbb{S}}$ has objects labeled by $\mathbb{Z}\cong\pi_0\mathbb{S}$, and all automorphism groups are isomorphic to $\mathbb{Z}/2\cong\pi_1\mathbb{S}$. As a symmetric monoidal category, the symmetry automorphism $\sigma:1+1\xrightarrow{\sim} 1+1$ is the (unique) nontrivial automorphism of 2.
\end{example}

\begin{theorem}
\label{hSThm}
$h\vec{\mathbb{S}}$ is a solid semiring category, and a symmetric monoidal category is an $h\vec{\mathbb{S}}$-module if and only if it is a 2-abelian group.
\end{theorem}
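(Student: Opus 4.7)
The approach will parallel the proof of Theorem~\ref{MainS}, systematically exploiting that the homotopy category functor $h : \text{SymMon}^\otimes \to \text{SymMon}_1^\otimes$ is (strong) symmetric monoidal by Example~\ref{hCSRingEx}, with right adjoint $N$ that is fully faithful. Solidity is then immediate: since $h$ preserves tensor products and $\vec{\mathbb{S}}$ is idempotent in $\text{SymMon}^\otimes$ by Theorem~\ref{MainS}, we obtain
\[ h\vec{\mathbb{S}} \otimes h\vec{\mathbb{S}} \;\cong\; h(\vec{\mathbb{S}} \otimes \vec{\mathbb{S}}) \;\cong\; h\vec{\mathbb{S}} \]
in $\text{SymMon}_1^\otimes$, so $h\vec{\mathbb{S}}$ is idempotent and hence solid by Definition~\ref{DefSolid}.

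For the forward direction, suppose $\mathcal{M}$ is an $h\vec{\mathbb{S}}$-module in $\text{SymMon}_1$. The argument given for the lemma preceding Theorem~\ref{MainS} is phrased entirely at the level of the homotopy category and applies verbatim: using an additive inverse $-1 \in h\vec{\mathbb{S}}$ together with a chosen isomorphism $\alpha : (-1)+1 \xrightarrow{\sim} 0$, one unwinds the action to construct explicit two-sided inverses for arbitrary morphisms of $\mathcal{M}$, so $\mathcal{M}$ is a groupoid. Independently, the set of isomorphism classes of objects of $\mathcal{M}$ is a module over $\pi_0(h\vec{\mathbb{S}}) \cong \pi_0(\mathbb{S}) \cong \mathbb{Z}$, hence an abelian group, so every object is invertible up to isomorphism. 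Combined, $\mathcal{M}$ is a 2-abelian group.

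For the converse, let $\mathcal{M}$ be a 2-abelian group. Viewed in $\text{SymMon}^\otimes$ via $N$, it is a grouplike symmetric monoidal $1$-groupoid, i.e., a $1$-truncated connective spectrum, so Theorem~\ref{MainS} endows $N\mathcal{M}$ canonically with a $\vec{\mathbb{S}}$-module structure in $\text{SymMon}^\otimes$. Applying $h$, which preserves module structures by virtue of being strong symmetric monoidal, and using $hN\mathcal{M} \simeq \mathcal{M}$ transports this to the desired $h\vec{\mathbb{S}}$-module structure on $\mathcal{M}$. The main technical subtlety lies in this converse direction: one must verify that the identification of Picard $1$-groupoids with $1$-truncated connective spectra is genuinely compatible with the $\vec{\mathbb{S}}$- and $h\vec{\mathbb{S}}$-actions when passing across the adjunction $h \dashv N$. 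Granted this coherence check, everything else is formal given the symmetric monoidality of $h$ and the work already done for $\vec{\mathbb{S}}$.
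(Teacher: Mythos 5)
Your proposal is correct and follows essentially the same route as the paper: solidity from idempotence of $\vec{\mathbb{S}}$ transported along the symmetric monoidal functor $h$, the forward direction via the groupoid lemma (whose proof is indeed already carried out at the level of homotopy categories) together with the $\pi_0$-argument, and the converse by transporting the $\vec{\mathbb{S}}$-module structure of Theorem~\ref{MainS} along $h$. The ``coherence check'' you flag in the converse is automatic, since a symmetric monoidal functor induces a functor on modules, so no further verification is needed.
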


\begin{proof}
Since $h:\text{SymMon}\rightarrow\text{SymMon}_1$ is symmetric monoidal, $h\vec{\mathbb{S}}$ is idempotent, therefore a solid semiring category. Moreover, by Theorem \ref{MainS}, $\mathcal{C}^\oast$ is an $\vec{\mathbb{S}}$-module if and only if $h\mathcal{C}^\oast$ is a 2-abelian group. The result follows by Lemma \ref{MainLemma}.
\end{proof}

\subsubsection{The semiring category $\vec{\mathbb{Z}}$}
\begin{example}
\label{Z}
We think of the ring $\mathbb{Z}$ as a commutative semiring $\infty$-category in one of two (equivalent) ways: either as the nerve of the discrete semiring 1-category $\mathbb{Z}$, or as the connective Eilenberg-Maclane spectrum $\vec{H\mathbb{Z}}$. To avoid confusion, we will use the notation $\vec{\mathbb{Z}}$.

Since $\vec{\mathbb{Z}}$ is an $\vec{\mathbb{S}}$-module, any module over $\vec{\mathbb{Z}}$ is a connective spectrum. That is, $$\text{Mod}_{\vec{\mathbb{Z}}}(\text{SymMon}_\infty^\otimes)\cong\text{Mod}_{H\mathbb{Z}}(\text{Ab}_\infty^\wedge),$$ which is the $\infty$-category of chain complexes of abelian groups, concentrated in nonnegative degrees \cite{Shipley}.

But $\vec{\mathbb{Z}}$ is not solid, since $H\mathbb{Z}\wedge H\mathbb{Z}$ is the integral dual Steenrod algebra, and \emph{not} $H\mathbb{Z}$. On the other hand, the map $\pi_\ast(H\mathbb{Z}\wedge H\mathbb{Z})\rightarrow\pi_\ast H\mathbb{Z}$ is an isomorphism for $\ast=0,1$ and only stops being an isomorphism at $\ast=2$ (where $\pi_2(H\mathbb{Z}\wedge H\mathbb{Z})\cong\mathbb{Z}$).

The upshot is this: that $\mathbb{Z}$ \emph{is} solid as a semiring 1-category, but not as a semiring $n$-category for any $n>1$.
\end{example}

\begin{theorem}
\label{Z2}
The semiring (1-)category $\mathbb{Z}$ is solid, and a symmetric monoidal (1-)category $\mathcal{C}^\oast$ admits the structure of a $\mathbb{Z}$-module if and only if $\mathcal{C}^\oast$ is a 2-abelian group and the symmetry isomorphisms $\sigma:X\oast Y\rightarrow Y\oast X$ in $\mathcal{C}$ are all identity morphisms when $X=Y$.
\end{theorem}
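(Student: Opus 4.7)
The plan is to apply Lemma \ref{MainLemma} with $A = \mathbb{Z}$, regarded as a commutative monoid in $\text{Cat}_1$ (discrete, pointed at $1 \in \mathbb{Z}$), and with $\mathcal{P}$ the full subcategory of $\text{SymMon}_1$ on 2-abelian groups $\mathcal{C}^\oast$ whose self-symmetries $\sigma_{X,X}$ are all identities.

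Solidity is essentially formal. The functor $d: \text{Set} \to \text{Cat}_1$ sending a set to the corresponding discrete category is left adjoint to the objects functor and preserves finite products, so Lemma \ref{EasyLemma} lifts it to a symmetric monoidal functor $d_* : \text{CMon} \to \text{SymMon}_1$. Since $\mathbb{Z}$ is a classical solid ring (idempotent in $\text{CMon}^\otimes$), its image $d_*\mathbb{Z}$, which is just $\mathbb{Z}$ as a discrete semiring 1-category, is idempotent in $\text{SymMon}_1^\otimes$, and therefore solid.

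The crux is computing $\text{Hom}(\mathbb{Z},\mathcal{C}^\oast)$ for arbitrary $\mathcal{C}^\oast$. Unwinding the definition, a symmetric monoidal functor $F: \mathbb{Z} \to \mathcal{C}^\oast$ is determined up to monoidal natural isomorphism by $X = F(1)$, and such an $F$ exists if and only if $X$ is $\oast$-invertible (so that $F(-1)$ can be chosen) and $\sigma_{X,X} = \text{id}_{X \oast X}$ (forced by the symmetry coherence, since the symmetry $\sigma_{1,1}$ in the discrete category $\mathbb{Z}$ is the identity on $2$); standard coherence shows these conditions are also sufficient, because once $\sigma_{X,X} = \text{id}$, every shuffle symmetry on $X^{\oast n}$ is an identity. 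Monoidal natural isomorphisms between two such functors correspond to isomorphisms $X \xrightarrow{\sim} X'$ in $\mathcal{C}$. Hence $\text{Hom}(\mathbb{Z},\mathcal{C}^\oast)$ is equivalent to the full subgroupoid $\text{Pic}^s(\mathcal{C}^\oast) \subseteq \mathcal{C}^\text{iso}$ on $\oast$-invertible objects with trivial self-symmetry, with the $\oast$-structure inherited from $\mathcal{C}^\oast$.

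It then suffices to verify the two conditions of Lemma \ref{MainLemma}: (a) $\text{Pic}^s(\mathcal{C}^\oast)$ always lies in $\mathcal{P}$, which reduces to checking closure under $\oast$ and under $\oast$-inverses; for $\oast$-closure one decomposes $\sigma_{X \oast Y,X \oast Y}$ as a shuffle built from $\sigma_{X,X}$, $\sigma_{Y,Y}$, and the coherence identity $\sigma_{X,Y}\sigma_{Y,X} = \text{id}$. (b) If $\mathcal{C}^\oast \in \mathcal{P}$, every object of $\mathcal{C}$ is in $\text{Pic}^s(\mathcal{C}^\oast)$, so the counit $\text{Hom}(\mathbb{Z},\mathcal{C}^\oast) \to \mathcal{C}$ is an equivalence. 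The main obstacle is the symmetric-monoidal coherence bookkeeping in the computation of $\text{Hom}(\mathbb{Z},-)$ and in the verification of $\oast$-closure for $\text{Pic}^s$; both are routine once one commits to a strict normalization $F(n) = X^{\oast n}$ and uses that $\sigma_{X,X} = \text{id}$ trivializes every shuffle on powers of $X$.
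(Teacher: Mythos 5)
Your argument for the classification of $\mathbb{Z}$-modules is a reasonable alternative to the paper's: the paper first decomposes $\mathbb{Z}^{+}\cong\mathbb{N}^{+}\otimes h\vec{\mathbb{S}}^{+}$ so that invertibility of objects is handled by the already-established theory of $h\vec{\mathbb{S}}$-modules and only the $\mathbb{N}$-direction needs the explicit functor $F(X)(n)=X^{\oast n}$, whereas you compute $\text{Hom}(\mathbb{Z},\mathcal{C}^\oast)$ in one step as the Picard subgroupoid with trivial self-symmetry. Be aware, though, that the essential surjectivity of $\text{ev}_1$ --- extending an invertible $X$ with $\sigma_{X,X}=\text{id}$ to a coherent symmetric monoidal functor on all of $\mathbb{Z}$, negative powers included --- is exactly the statement that $\mathbb{Z}$ is the free Picard groupoid on one generator with trivialized self-symmetry; this is the Barratt--Priddy-type input that the paper packages as the lemma $\mathbb{Z}^{+}\cong\mathbb{N}^{+}\otimes h\vec{\mathbb{S}}^{+}$, so your ``standard coherence'' is carrying real weight there.

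The genuine gap is your proof of solidity. The discrete-category functor $d:\text{Set}\rightarrow\text{Cat}_1$ is \emph{not} a left adjoint in the setting of this paper, where $\text{Cat}_1$ is the $(2,1)$-category of categories, functors, and natural isomorphisms: the ``objects'' functor is not invariant under natural isomorphism and so is not a functor on $\text{Cat}_1$ at all, and $\text{Map}_{\text{Cat}_1}(dS,\mathcal{C})\cong(\mathcal{C}^{\text{iso}})^S$ is generally not discrete, so $d$ admits no right adjoint. (If anything, $d$ is a \emph{right} adjoint, to $\pi_0$, which by Lemma \ref{EasyLemma} makes $d_\ast$ only lax symmetric monoidal --- the wrong direction to transport idempotency.) Hence Lemma \ref{EasyLemma} does not produce a symmetric monoidal $d_\ast:\text{CMon}\rightarrow\text{SymMon}_1$, and your argument collapses. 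Indeed, if it worked it would prove too much: $\mathbb{Z}/2$ is idempotent in $\text{CMon}^\otimes$, yet the Warning in \S\ref{3.3.3} shows $\mathbb{Z}/2$ is \emph{not} solid as a semiring $1$-category, because $\pi_1(H\mathbb{Z}/2\wedge H\mathbb{Z}/2)\cong\mathbb{Z}/2\neq 0$. Solidity of $\mathbb{Z}$ as a semiring $1$-category genuinely requires a homotopy-theoretic computation: one uses that $h:\text{SymMon}_\infty^\otimes\rightarrow\text{SymMon}_1^\otimes$ \emph{is} symmetric monoidal, so $\mathbb{Z}\otimes\mathbb{Z}\cong h(\vec{\mathbb{Z}}\otimes\vec{\mathbb{Z}})\cong h(\overrightarrow{H\mathbb{Z}\wedge H\mathbb{Z}})$, and this agrees with $\mathbb{Z}$ precisely because $\pi_0(H\mathbb{Z}\wedge H\mathbb{Z})\cong\mathbb{Z}$ and $\pi_1(H\mathbb{Z}\wedge H\mathbb{Z})=0$ (the failure of idempotency only begins at $\pi_2$). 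This is the content of Example \ref{Z}, and it cannot be replaced by a purely $1$-categorical transport from $\text{CMon}$.
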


\begin{example}
If $A$ is any abelian group, the groupoid $BA$ (with one object and morphisms labeled by $A$) is a symmetric monoidal category and a $\mathbb{Z}$-module.
\end{example}

\begin{lemma}
Let $\mathbb{N}^{+}$ be the discrete symmetric monoidal (1-)category corresponding to the commutative monoid of nonnegative integers under addition. As symmetric monoidal (1-)categories, $\mathbb{Z}^{+}\cong\mathbb{N}^{+}\otimes h\vec{\mathbb{S}}^{+}$.
\end{lemma}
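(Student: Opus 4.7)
The plan is to prove the isomorphism by Yoneda: I will show that for every symmetric monoidal 1-category $\mathcal{C}^\oast$, there is a natural equivalence
$$\text{Hom}(\mathbb{N}^{+}\otimes h\vec{\mathbb{S}}^{+},\mathcal{C}^\oast)\cong\text{Hom}(\mathbb{Z}^{+},\mathcal{C}^\oast),$$
and that both internal Homs are identified with the groupoid of invertible objects $X\in\mathcal{C}$ satisfying $\sigma_{X,X}=\text{id}_{X\oast X}$, together with invertible morphisms between them.

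First I would use the tensor-Hom adjunction in $\text{SymMon}_1^\otimes$ to rewrite the left side as $\text{Hom}(\mathbb{N}^{+},\text{Hom}(h\vec{\mathbb{S}}^{+},\mathcal{C}^\oast))$. By Theorem \ref{hSThm}, $h\vec{\mathbb{S}}$ is solid with modules the 2-abelian groups, so the functor $\text{Hom}(h\vec{\mathbb{S}}^{+},-)$ is right adjoint to the inclusion of 2-abelian groups into $\text{SymMon}_1$. A 1-categorical version of the argument for Proposition \ref{cofree2} then identifies $\text{Hom}(h\vec{\mathbb{S}}^{+},\mathcal{C}^\oast)$ with the Picard subcategory $\mathcal{C}^{\text{Pic}}$, spanned by those objects of $\mathcal{C}$ that are invertible under $\oast$ and those morphisms that are invertible in $\mathcal{C}$.

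Next I would analyze $\text{Hom}(\mathbb{N}^{+},\mathcal{D}^\oast)$ for a general symmetric monoidal 1-category $\mathcal{D}^\oast$. Because $\mathbb{N}$ is generated under $+$ by $1$, a symmetric monoidal functor $F:\mathbb{N}^{+}\to\mathcal{D}^\oast$ is determined up to equivalence by $X=F(1)$; and because $\mathbb{N}^{+}$ is \emph{discrete} so that the symmetry $\sigma_{1,1}:1+1\to 1+1$ is literally the identity, the commutativity coherence forces $\sigma_{X,X}=\text{id}_{X\oast X}$. Conversely, any $X$ with $\sigma_{X,X}=\text{id}$ extends to such an $F$ with $F(n)=X^{\oast n}$: the braiding $\sigma_{X^{\oast n},X^{\oast m}}$ decomposes into adjacent transpositions each built from $\sigma_{X,X}$, hence is trivial, and this trivializes all the higher coherences. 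Monoidal natural transformations $F\to G$ are determined by an arbitrary morphism $F(1)\to G(1)$ in $\mathcal{D}$. Specializing to $\mathcal{D}^\oast=\mathcal{C}^{\text{Pic},\oast}$, the left side classifies invertible $X\in\mathcal{C}$ with trivial self-symmetry, and invertible morphisms between them. On the right side, a symmetric monoidal functor $F:\mathbb{Z}^{+}\to\mathcal{C}^\oast$ picks out $X=F(1)$, which is invertible since $F(1)\oast F(-1)\cong F(0)\cong 1_\mathcal{C}$, and the same discreteness argument forces $\sigma_{X,X}=\text{id}$; a monoidal natural transformation $\eta:F\to G$ is determined by $\eta_1:F(1)\to G(1)$, and must be invertible since $\eta_{-1}$ has to satisfy $\eta_1\oast\eta_{-1}\cong\text{id}_{1_\mathcal{C}}$. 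Thus both sides classify the same data naturally in $\mathcal{C}^\oast$, and the Yoneda lemma in $\text{SymMon}_1$ gives the desired isomorphism.

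The main obstacle will be the coherence verification in the middle paragraph: namely, showing that the single condition $\sigma_{X,X}=\text{id}$ is sufficient (not merely necessary) to extend $X$ to a symmetric monoidal functor out of $\mathbb{N}^{+}$. This reduces to the classical fact that the symmetric group $S_n$ is generated by adjacent transpositions, but it must be handled carefully because $\mathbb{N}^{+}$ is a \emph{strict} symmetric monoidal category (all symmetries and associators are identities) while $\mathcal{C}^\oast$ is only weak. A secondary obstacle is rerunning the proof of Proposition \ref{cofree2} in the 1-categorical setting to identify $\text{Hom}(h\vec{\mathbb{S}}^{+},\mathcal{C}^\oast)$ with $\mathcal{C}^{\text{Pic}}$; this uses that $h\vec{\mathbb{S}}$-modules are precisely 2-abelian groups (Theorem \ref{hSThm}) and that any symmetric monoidal functor out of a 2-abelian group automatically factors through the Picard subcategory of its target.
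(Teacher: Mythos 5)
Your argument is correct in outline, but it takes a genuinely different route from the paper's. The paper works directly inside $\mathcal{T}=\mathbb{N}^{+}\otimes h\vec{\mathbb{S}}^{+}$: since $\pi_0:\text{SymMon}^\otimes\rightarrow\text{CMon}^\otimes$ is symmetric monoidal, $\pi_0(\mathcal{T})\cong\mathbb{N}\otimes\mathbb{Z}\cong\mathbb{Z}$; the unique nontrivial automorphism of each object of $h\vec{\mathbb{S}}$ is a suspension of the symmetry $\sigma:1+1\rightarrow 1+1$, whose image in $\mathcal{T}$ coincides with the image of the \emph{identity} symmetry of $2\in\mathbb{N}^{+}$ and therefore dies; hence the image $\mathcal{T}^\prime$ of $h\vec{\mathbb{S}}$ in $\mathcal{T}$ is the discrete semiring category $\mathbb{Z}$, and a retraction argument using that $\otimes$ is the coproduct of commutative semiring categories gives $\mathcal{T}\cong\mathcal{T}^\prime\cong\mathbb{Z}$. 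You instead compute the corepresented functor $\text{Hom}(-,\mathcal{C}^\oast)$ on both sides and invoke Yoneda. The engine is the same in both proofs---the strictness of the symmetry in $\mathbb{N}^{+}$ kills self-symmetries downstream---but your version front-loads the analysis of $\text{Hom}(\mathbb{N}^{+},-)$ and $\text{Hom}(\mathbb{Z}^{+},-)$ that the paper defers to the proof of Theorem \ref{Z2}; in effect you prove the lemma and most of Theorem \ref{Z2} simultaneously, at the cost of the coherence verification you flag (which the paper also confronts there and dispatches by replacing $\mathcal{C}^\oast$ with an equivalent permutative category). The paper's proof is shorter and avoids all coherence bookkeeping, but says nothing about what either side classifies; yours is longer but yields the classification as a byproduct. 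Two points to nail down in your write-up: (i) to apply Yoneda cleanly, exhibit the natural equivalence as precomposition with an explicit comparison functor, e.g.\ the map $\mathbb{N}^{+}\otimes h\vec{\mathbb{S}}^{+}\rightarrow\mathbb{Z}^{+}$ induced via the coproduct property by the semiring functors $\mathbb{N}\rightarrow\mathbb{Z}$ and $h\vec{\mathbb{S}}\rightarrow\pi_0h\vec{\mathbb{S}}\cong\mathbb{Z}$; (ii) on the $\mathbb{Z}^{+}$ side you must also check that every invertible $\eta_1$ extends uniquely to a monoidal natural transformation, including its components at negative objects---this works because tensoring with the invertible object $F(1)$ is an equivalence, but it is an extra step beyond the $\mathbb{N}^{+}$ case.
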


\begin{proof}[Proof of lemma.]
Denote $\mathcal{T}=\mathbb{N}\otimes h\vec{\mathbb{S}}$. There are semiring functors from $\mathbb{N}$ and $h\vec{\mathbb{S}}$ to $\mathcal{T}$ given by (for example) $$h\vec{\mathbb{S}}\cong\text{Fin}^{\text{iso}}\otimes h\vec{\mathbb{S}}\rightarrow\mathbb{N}\otimes h\vec{\mathbb{S}}\cong\mathcal{T}.$$ First, we will analyze the image of $h\mathbb{S}^{+}$ in $\mathcal{T}$.

Since the `isomorphism classes' functor $\pi_0:\text{Cat}\rightarrow\text{Set}$ is product-preserving, likewise $\pi_0:\text{SymMon}^\otimes\rightarrow\text{CMon}^\otimes$ is symmetric monoidal (by Lemma \ref{EasyLemma}). So $$\pi_0(\mathcal{T})\cong\mathbb{N}\otimes\mathbb{Z}\cong\mathbb{Z}.$$ Recall that every object of $h\vec{\mathbb{S}}$ has exactly one nontrivial automorphism. For $2\in h\vec{\mathbb{S}}$, this is the symmetry isomorphism of the symmetric monoidal structure $\sigma:1\oplus 1\rightarrow 1\oplus 1$. For any other $n\in h\vec{\mathbb{S}}$, it is $\sigma\oplus(n-2)$. However, since the symmetric monoidal functor $\mathbb{N}^{+}\rightarrow\mathcal{T}$ preserves the symmetry automorphism, all symmetry automorphisms in $\mathcal{T}$ are identities. Therefore, every morphism of $h\vec{\mathbb{S}}$ is sent to the identity in $\mathcal{T}$.

Let $\mathcal{T}^\prime$ be the (semiring) subcategory of $\mathcal{T}$ spanned by the image of $h\vec{\mathbb{S}}$. We have just seen that $\mathcal{T}^\prime\cong\mathbb{Z}$. In particular, there are semiring functors $h\vec{\mathbb{S}}\rightarrow\mathcal{T}^\prime$ and $\mathbb{N}\rightarrow\mathcal{T}^\prime$, and therefore an induced semiring functor from the coproduct $\mathcal{T}\rightarrow\mathcal{T}^\prime$, such that $\mathcal{T}\rightarrow\mathcal{T}^\prime\subseteq\mathcal{T}$ is equivalent to the identity on $\mathcal{T}$. So $\mathcal{T}\cong\mathcal{T}^\prime\cong\mathbb{Z}$, as desired.
\end{proof}

\begin{proof}[Proof of theorem.]
We already saw in Example \ref{Z} that $\mathbb{Z}$ is solid. Since it is itself a 2-abelian group (an $h\vec{\mathbb{S}}$-module), all $\mathbb{Z}$-modules are 2-abelian groups.

Say that a 2-abelian group is \emph{strictly commutative} if $\sigma:X\oast X\rightarrow X\oast X$ is the identity for all $X$. We will proceed using Lemma \ref{MainLemma}. So we need to show that $\text{Hom}(\mathbb{Z}^{+},\mathcal{C}^\oast)$ is always strictly commutative, and moreover $\text{Hom}(\mathbb{Z}^{+},\mathcal{C}^\oast)\rightarrow\mathcal{C}^\oast$ is an equivalence if $\mathcal{C}^\oast$ is strictly commutative.

If $\mathcal{C}^\oast$ is any symmetric monoidal category, and $F:\mathbb{Z}^{+}\rightarrow\mathcal{C}^\oast$ a symmetric monoidal functor, then by the definition of a symmetric monoidal functor, the following diagram commutes $$\xymatrix{
F(n)\oast F(n)\ar[r]^\sigma\ar[d]_{\epsilon} &F(n)\oast F(n)\ar[d]^{\epsilon} \\
F(n+n)\ar[r]_{=} &F(n+n).
}$$ Here the bottom morphism is the identity and $\epsilon$ is an isomorphism, so the symmetry isomorphism $\sigma$ in $\mathcal{C}^\oast$ is also the identity. Since this holds for every $n$, the symmetry map $\sigma:F\oast F\rightarrow F\oast F$ in the symmetric monoidal category $\text{Hom}(\mathbb{Z}^{+},\mathcal{C}^\oast)$ is also the identity. Therefore, $\text{Hom}(\mathbb{Z}^{+},\mathcal{C}^\oast)$ is strictly commutative.

On the other hand, suppose that $\mathcal{C}^\oast$ is a strictly commutative 2-abelian group. Note that being a strictly commutative 2-abelian group is invariant under equivalence of symmetric monoidal categories, so we may as well assume $\mathcal{C}^\oast$ is also permutative (or if not, replace it by a permutative category). Then $$\text{Hom}(\mathbb{Z}^{+},\mathcal{C}^\oast)\cong\text{Hom}(\mathbb{N}^{+},\text{Hom}(h\vec{\mathbb{S}}^{+},\mathcal{C}^\oast))\cong\text{Hom}(\mathbb{N}^{+},\mathcal{C}^\oast)$$ by the lemma and the fact that $\mathcal{C}^\oast$ is an $h\vec{\mathbb{S}}$-module (a 2-abelian group). So we need only show that the evaluation at 1 functor $\text{ev}_1:\text{Hom}(\mathbb{N}^{+},\mathcal{C}^\oast)\rightarrow\mathcal{C}$ is an equivalence of categories.

Consider the functor $F:\mathcal{C}\rightarrow\text{Hom}(\mathbb{N}^{+},\mathcal{C}^\oast)$ given by $$F(X)(n)=X^{\oast n}$$ $$F(f)(n)=f^{\oast n}:X^{\oast n}\rightarrow Y^{\oast n}.$$ Then $\text{ev}_1F$ is the identity, so $\text{ev}_1$ is full and essentially surjective. Moreover, $\text{ev}_1$ is faithful, and therefore an equivalence of categories, as desired.
\end{proof}

\chapter{Lawvere Theories}\label{4}\setcounter{subsection}{0}
\noindent This chapter is a systematic treatment of Lawvere theories from an $\infty$-categorical and algebraic point of view. There are two perspectives we could take: on one hand, Lawvere theories encode a general notion of \emph{algebraic theory}.

On the other hand, recall that in our discussion of Principle \ref{Princ}, we described two variants on the commutative algebra of categories: the presentable version (of HA and \cite{GGN}) and the finitely generated version (of Chapter \ref{3}). The second perspective on Lawvere theories is that they provide a context for comparing these two frameworks, via algebraic analogues of the Yoneda lemma and Day convolution.

We will use these ideas to tie up two loose ends from Chapter \ref{3}:
\begin{itemize}
\item Every presentable and closed symmetric monoidal $\infty$-category ($\mathcal{C}^\otimes$) is an example of a commutative semiring $\infty$-category ($\mathcal{C}^{\amalg,\otimes}$).
\item The Burnside category is a solid commutative semiring $\infty$-category whose modules are precisely additive $\infty$-categories.
\end{itemize}

\subsubsection{Classical Lawvere theories}
\noindent Lawvere theories arise originally out of an observation from algebra: we can understand everything about abelian groups just by understanding \emph{finitely generated free} abelian groups. That is, let $\text{Ab}_\text{fgf}$ denote the category of finitely generated free abelian groups (with group homomorphisms between them). Any abelian group $A$ induces a functor $$\text{Hom}(-,A):\text{Ab}_\text{fgf}^\text{op}\rightarrow\text{Set}$$ which preserves finite products. Conversely, any such functor gives rise to an abelian group; that is, there is an equivalence of categories $$\text{Ab}\cong\text{Fun}^\times(\text{Ab}_\text{fgf}^\text{op},\text{Set}).$$ The key here is that $\text{Ab}_\text{fgf}$ is generated under finite coproducts by a single object $\mathbb{Z}$. Therefore, in order to specify a product-preserving functor $F:\text{Ab}_\text{fgf}^\text{op}\rightarrow\text{Set}$, we need only specify a single object $X=F(\mathbb{Z})$, along with functions $X^{\times m}\rightarrow X^{\times n}$ corresponding to unitary operations, binary operations, etc. This process recovers an abelian group structure on $X$.

In principle, this process could be used to define the category Ab. The reader might object this is a circular definition, since we first need to define finitely generated free abelian groups. However, it is possible to define $\text{Ab}_\text{fgf}^\text{op}$ a priori via a purely combinatorial construction. Namely, it is the \emph{virtual Burnside category} Burn, obtained from the \emph{effective Burnside category} $\text{Burn}^\text{eff}=\text{Span}(\text{Fin})$ by group-completing the Hom sets (allowing formal differences of spans).

So we may define $\text{Ab}=\text{Fun}^\times(\text{Burn}^\text{op},\text{Set})$. The reader may make another reasonable objection: that the usual definition of an abelian group is much easier to handle. However, in the setting of $\infty$-categories, where point-set descriptions are unavailable, definitions via the Burnside category are much more appealing. They also interact very well with equivariant structures. For example:
\begin{itemize}
\item a connective spectrum is a product-preserving functor $\text{Burn}\rightarrow\text{Top}$;
\item a symmetric monoidal $\infty$-category is a product-preserving functor \\$\text{Burn}^\text{eff}\rightarrow\text{Cat}$;
\item a $G$-equivariant connective spectrum is a product-preserving functor $\text{Burn}_G\rightarrow\text{Top}$ (see Appendix \ref{A}).
\end{itemize}

\noindent It is no coincidence that we can rebuild Ab from $\text{Ab}_\text{fgf}$. This situation occurs in a wide range of algebraic examples. We call $\text{Ab}_\text{fgf}^\text{op}$ a \emph{Lawvere theory}, and $\text{Ab}$ its \emph{category of models}. See \cite{Lawvere} for the original reference on Lawvere theories.

\begin{warning}
Notice the variance change in $\text{Ab}_\text{fgf}^\text{op}$. This is an unfortunate historical convention which may cause some confusion if we are not careful. Throughout this chapter, we will consistently use $\mathcal{L}$ to refer to a Lawvere theory (such as $\text{Ab}_\text{fgf}^\text{op}$) and $\mathcal{F}$ to refer to its opposite (such as $\text{Ab}_\text{fgf}$). The letter $\mathcal{F}$ stands for `finitely generated free'.
\end{warning}

\noindent We will be generalizing Lawvere theories to the setting of $\infty$-categories. We are aware of two previous treatments of higher Lawvere theories: Cranch's thesis \cite{Cranch} and Gepner-Groth-Nikolaus \cite{GGN}. Our approach is more algebraic, but we rely heavily on HA and \cite{GGN}. \\

\noindent The general setup is as follows: Suppose that $\mathcal{C}$ is a presentable $\infty$-category. We hope to recover $\mathcal{C}$ from its finitely generated free objects. However, in order to do this, we should be able to identify which objects are `finitely generated free'. This requires any of the following equivalent information:
\begin{itemize}
\item a single object $\text{Free}(\ast)\in\mathcal{C}$, which makes $\mathcal{C}$ \emph{pointed};
\item a free (that is, left adjoint) functor $\text{Free}:\text{Top}\rightarrow\mathcal{C}$, which allows us to speak of \emph{free objects} in $\mathcal{C}$;
\item a forgetful (that is, right adjoint) functor $\text{Fgt}:\mathcal{C}\rightarrow\text{Top}$, which makes $\mathcal{C}$ \emph{concrete} (in the sense that objects have underlying spaces).
\end{itemize}
\noindent We call $\mathcal{C}$ with such data a \emph{pointed presentable $\infty$-category}. The $\infty$-category thereof is the undercategory $$\text{Pr}^\text{L}_\ast=(\text{Pr}^\text{L})_{\text{Top}/}.$$ For $\mathcal{C}\in\text{Pr}^\text{L}_\ast$, we say that an object is \emph{finitely generated free} if it is of the form $\ast^{\amalg n}$ for $n\geq 0$, or equivalently if it is in the image of the free functor applied to $\text{Fin}\subseteq\text{Top}$. Write $\mathcal{C}^\circ$ for the $\infty$-category of finitely generated free objects in $\mathcal{C}$.

Since the free functor preserves colimits, $\mathcal{C}^\circ$ is closed under finite coproducts, and moreover its objects are all generated by finite coproducts on a single object. Adopting the language of Chapter \ref{3}, $\mathcal{C}^\circ$ is a \emph{cyclic Fin-module}. Alternatively, $\mathcal{L}_\mathcal{C}=(\mathcal{C}^\circ)^\text{op}$ is a cyclic $\text{Fin}^\text{op}$-module, or a Lawvere theory\footnote{Cyclic $\text{Fin}^\text{op}$-modules are $\infty$-categories generated under finite products by a single object (see Theorem \ref{2T3}).}.

\begin{definition}
Given a cyclic Fin-module $\mathcal{F}^\amalg$, we say that an \emph{algebraic presheaf} on $\mathcal{F}$ is a product-preserving functor $\mathcal{F}^\text{op}\rightarrow\text{Top}$, writing $$\mathcal{P}_\amalg(\mathcal{F})=\text{Hom}(\mathcal{F}^{\text{op},\amalg},\text{Top}^\times).$$ Given a Lawvere theory $\mathcal{L}^\times$, we say that a \emph{model} of $\mathcal{L}$ is a product-preserving functor $\mathcal{L}\rightarrow\text{Top}$, writing $$\text{Mdl}(\mathcal{L})=\text{Hom}(\mathcal{L}^\times,\text{Top}^\times).$$ That is, models of $\mathcal{L}$ are the same as algebraic presheaves of $\mathcal{L}^\text{op}$.
\end{definition}

\noindent For many $\mathcal{C}$ (including Top, $\text{CMon}_\infty$, $\text{Ab}_\infty$, $\text{CRing}_\infty$, or modules over a commutative ring space), there is an equivalence $$\mathcal{C}\cong\mathcal{P}_\amalg(\mathcal{C}^\circ)\cong\text{Mdl}(\mathcal{L}_\mathcal{C}).$$ Our main result will be to formalize this equivalence as follows:

\begin{theorem*}[Theorem \ref{ThmLawvAdj}]
There is an adjunction $$\text{Mdl}:\text{Lawv}\rightleftarrows\text{Pr}^\text{L}_\ast:\mathcal{L}_{(-)},$$ whose left adjoint is fully faithful.
\end{theorem*}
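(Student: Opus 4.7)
\noindent The plan is to establish the adjunction through a natural equivalence of mapping spaces, leveraging the universal property of $\mathcal{P}_\amalg$, and then verify that the unit $\mathcal{L} \rightarrow \mathcal{L}_{\text{Mdl}(\mathcal{L})}$ is an equivalence. First, I would check functoriality. The $\infty$-category $\text{Mdl}(\mathcal{L}) = \text{Fun}^\times(\mathcal{L}, \text{Top})$ is presentable as an accessible localization of $\mathcal{P}(\mathcal{L}^\text{op})$. Its evaluation functor at $1_\mathcal{L}$ preserves limits and filtered colimits, so by the adjoint functor theorem admits a left adjoint (the free functor), making $\text{Mdl}(\mathcal{L})$ pointed by the representable $\underline{1_\mathcal{L}}$. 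A morphism of Lawvere theories induces by precomposition a right adjoint between $\text{Mdl}$-categories commuting with evaluation at $1_\mathcal{L}$; its left adjoint lies in $\text{Pr}^\text{L}_\ast$. Dually, any morphism $F$ in $\text{Pr}^\text{L}_\ast$ preserves colimits and basepoints, so $F(p(\ast)^{\amalg n}) \simeq p^\prime(\ast)^{\amalg n}$, whence $F$ restricts to $\mathcal{C}^\circ \rightarrow (\mathcal{C}^\prime)^\circ$, inducing $\mathcal{L}_\mathcal{C} \rightarrow \mathcal{L}_{\mathcal{C}^\prime}$ in $\text{Lawv}$.

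Next, I would exhibit the adjunction via the universal property of the free sifted cocompletion. Since $\mathcal{L}^\text{op}$ admits finite coproducts, $\text{Mdl}(\mathcal{L}) \simeq \mathcal{P}_\amalg(\mathcal{L}^\text{op})$ is the free presentable $\infty$-category with a finite-coproduct-preserving map from $\mathcal{L}^\text{op}$: for every presentable $\mathcal{D}$, colimit-preserving functors $\text{Mdl}(\mathcal{L}) \rightarrow \mathcal{D}$ correspond to finite-coproduct-preserving functors $\mathcal{L}^\text{op} \rightarrow \mathcal{D}$ (HTT 5.5.8.15). Imposing basepoint preservation on both sides requires $1_\mathcal{L} \mapsto p(\ast)$; since $\mathcal{L}^\text{op}$ is generated under finite coproducts from $1_\mathcal{L}$, such a functor factors uniquely through $\mathcal{C}^\circ$, and dualizing yields a product-preserving generator-preserving functor $\mathcal{L} \rightarrow \mathcal{L}_\mathcal{C}$, i.e., a morphism in $\text{Lawv}$. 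This gives the natural equivalence $\text{Map}_{\text{Pr}^\text{L}_\ast}(\text{Mdl}(\mathcal{L}), \mathcal{C}) \simeq \text{Map}_{\text{Lawv}}(\mathcal{L}, \mathcal{L}_\mathcal{C})$.

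Finally, I would verify the unit is an equivalence by showing $\text{Mdl}(\mathcal{L})^\circ \simeq \mathcal{L}^\text{op}$. The Yoneda embedding $\mathcal{L}^\text{op} \rightarrow \text{Mdl}(\mathcal{L})$ is fully faithful, and a direct Yoneda computation using $F(Y \times Z) \simeq F(Y) \times F(Z)$ for any model $F$ shows that it sends finite products in $\mathcal{L}$ to coproducts in $\text{Mdl}(\mathcal{L})$. The free functor applied to the point is $\underline{1_\mathcal{L}}$ (by the adjunction and Yoneda) and preserves coproducts, so the finitely generated free objects $p(\ast)^{\amalg n}$ of $\text{Mdl}(\mathcal{L})$ are precisely $\underline{1_\mathcal{L}^{\times n}}$, exhausting the image of Yoneda. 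Hence $\text{Mdl}(\mathcal{L})^\circ \simeq \mathcal{L}^\text{op}$ and $\mathcal{L}_{\text{Mdl}(\mathcal{L})} \simeq \mathcal{L}$. The main obstacle I anticipate is coherent bookkeeping in the $\infty$-categorical setting --- ensuring the adjunctions, universal properties, and Yoneda identifications assemble into a genuinely natural equivalence of mapping spaces (not merely a bijection on homotopy classes), and carefully tracking basepoint data through the identification $\text{Mdl}(\mathcal{L}) \simeq \mathcal{P}_\amalg(\mathcal{L}^\text{op})$. Note that the counit is manifestly not an equivalence in general; rather, it exhibits $\text{Mdl}(\mathcal{L}_\mathcal{C}) \rightarrow \mathcal{C}$ as the algebraic reflection of $\mathcal{C}$ onto its Lawvere-theoretic models.
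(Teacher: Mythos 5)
Your proposal is correct and follows essentially the same route as the paper: the adjunction comes from the universal property of $\mathcal{P}_\amalg(\mathcal{L}^\text{op})\cong\text{Mdl}(\mathcal{L})$ as the free presentable $\infty$-category on a finite-coproduct-preserving functor out of $\mathcal{L}^\text{op}$ (your HTT 5.5.8.15 versus the paper's Proposition \ref{SMYoneda3}, i.e.\ HTT 5.3.6.10), and full faithfulness from the identification $\text{Mdl}(\mathcal{L})^\circ\simeq\mathcal{L}^\text{op}$ via the restricted Yoneda embedding (Proposition \ref{SMYoneda2}). Your version simply spells out the functoriality and basepoint bookkeeping that the paper leaves implicit.
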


\noindent In other words, Lawv is a \emph{colocalization} of $\text{Pr}^\text{L}_\ast$. In fact, Gepner, Groth, and Nikolaus have effectively described the full subcategory Lawv of $\text{Pr}^\text{L}_\ast$:

\begin{theorem*}[\cite{GGN} B.7]
A pointed presentable $\infty$-category $\mathcal{C}$ is representable by a Lawvere theory if and only if the forgetful functor $\mathcal{C}\rightarrow\text{Top}$ is conservative and preserves geometric realizations.
\end{theorem*}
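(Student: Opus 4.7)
The plan is to handle the two directions by genuinely different techniques: the forward direction is a direct pointwise-colimit computation, while the reverse direction rests on the Barr--Beck--Lurie monadicity theorem (HA 4.7.3.5) together with a bar-resolution comparison.

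For the forward direction, suppose $\mathcal{C} = \mathrm{Mdl}(\mathcal{L})$ for a Lawvere theory $\mathcal{L}$ with generator $x \in \mathcal{L}$. I would first identify the basepoint: the free functor $\mathrm{Top} \to \mathcal{C}$ from the pointed presentable structure picks out the representable $\mathrm{Map}_{\mathcal{L}}(x,-)$, so by the Yoneda lemma the forgetful functor $\mathrm{Fgt}\colon \mathcal{C} \to \mathrm{Top}$ is simply evaluation at $x$. Conservativity is then immediate: every object of $\mathcal{L}$ is a finite product of copies of $x$, and a natural transformation of product-preserving functors that is an equivalence at $x$ is therefore an equivalence everywhere. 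For geometric realizations, the key input is that sifted colimits commute with finite products in $\mathrm{Top}$ (HTT 5.5.8.11), so $\mathrm{Fun}^\times(\mathcal{L},\mathrm{Top}) \subseteq \mathrm{Fun}(\mathcal{L},\mathrm{Top})$ is closed under sifted colimits and they are computed pointwise; evaluation at $x$ then trivially preserves them.

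For the reverse direction, suppose $\mathrm{Fgt}\colon \mathcal{C}\to\mathrm{Top}$ is conservative and preserves geometric realizations. Since $\mathcal{C}$ is pointed presentable, $\mathrm{Fgt}$ has a left adjoint $\mathrm{Free}$, and the hypotheses are precisely those of the Barr--Beck--Lurie theorem (any $\mathrm{Fgt}$-split simplicial object admits a colimit preserved by $\mathrm{Fgt}$, since it already admits a geometric realization). Thus $\mathcal{C} \simeq \mathrm{LMod}_T(\mathrm{Top})$ for the monad $T = \mathrm{Fgt}\circ\mathrm{Free}$, and in particular every $X \in \mathcal{C}$ is the geometric realization of its monadic bar resolution $\mathrm{Free}\,T^\bullet\,\mathrm{Fgt}(X)$ with terms in the essential image of $\mathrm{Free}$. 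Since $\mathrm{Top} \simeq \mathcal{P}_\Sigma(\mathrm{Fin})$ is the sifted cocompletion of $\mathrm{Fin}$, each $\mathrm{Free}(S)$ for $S \in \mathrm{Top}$ is a sifted colimit (indeed a filtered colimit of finite coproducts, and finite coproducts are themselves geometric realizations when convenient) of objects in $\mathcal{C}^\circ$, the full subcategory generated by $\mathrm{Free}(\ast)$ under finite coproducts.

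Now take $\mathcal{L}_{\mathcal{C}} = (\mathcal{C}^\circ)^{\mathrm{op}}$, which is a Lawvere theory, and consider the counit $\varepsilon\colon \mathrm{Mdl}(\mathcal{L}_{\mathcal{C}}) \to \mathcal{C}$ of the adjunction from Theorem \ref{ThmLawvAdj}. By construction $\varepsilon$ is the identity on $\mathcal{C}^\circ$, it sits over the identity of $\mathrm{Top}$ (both forgetful functors being evaluation at the distinguished generator), and both $\mathrm{Mdl}(\mathcal{L}_{\mathcal{C}}) \to \mathrm{Top}$ (forward direction, just proved) and $\mathcal{C} \to \mathrm{Top}$ (by hypothesis) preserve geometric realizations. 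Writing any $X \in \mathcal{C}$ via its bar resolution and matching it against the analogous bar resolution in $\mathrm{Mdl}(\mathcal{L}_{\mathcal{C}})$, the conservativity of $\mathrm{Fgt}$ reduces the claim that $\varepsilon$ is an equivalence to the statement that the two resolutions agree after forgetting to $\mathrm{Top}$, which holds termwise.

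The main obstacle I anticipate is the sifted-colimit bookkeeping in the comparison step: one wants $\varepsilon$ to preserve all sifted colimits, but the hypothesis only supplies preservation of geometric realizations. The resolution is that the bar resolution of a free object $\mathrm{Free}(S)$ is already a geometric realization whose terms lie in $\mathcal{C}^\circ$ as soon as $S$ is a finite set, and the passage from finite sets to arbitrary spaces in $\mathrm{Top}$ factors through sifted colimits that both $\mathcal{C}$ and $\mathrm{Mdl}(\mathcal{L}_{\mathcal{C}})$ handle in the same way --- one must check that filtered colimits of such resolutions are again of bar form, equivalently that the monad $T$ preserves filtered colimits, which follows from $\mathrm{Free}(\ast)$ being compact in $\mathcal{C}$, and this in turn follows from $\mathcal{L}_{\mathcal{C}}$ being small combined with conservativity (every object of $\mathcal{C}$ is an iterated sifted colimit from $\mathcal{C}^\circ$, so all filtered colimits are detected after restriction to $\mathcal{C}^\circ$).
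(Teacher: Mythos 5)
First, a point of reference: the paper does not prove this statement at all --- it is quoted from Gepner--Groth--Nikolaus (their Appendix B) both here and as Theorem \ref{ThmLawvDet}, so there is no in-paper proof to compare against. Your forward direction is correct and complete (evaluation at the generator, conservativity from product-preservation, pointwise sifted colimits), and your overall architecture for the converse --- Barr--Beck--Lurie monadicity, generation of $\mathcal{C}$ under sifted colimits by $\mathcal{C}^\circ$, and identification of $\mathrm{Mdl}(\mathcal{L}_{\mathcal{C}})$ with the sifted cocompletion $\mathcal{P}_\Sigma(\mathcal{C}^\circ)$ --- is the standard and correct skeleton, essentially the one in [GGN].

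The gap is exactly where you flagged it, and your patch does not work. To conclude that the counit $\mathrm{Mdl}(\mathcal{L}_{\mathcal{C}})\to\mathcal{C}$ is an equivalence (via HTT 5.5.8.22 or any equivalent recognition principle) you need the objects of $\mathcal{C}^\circ$ to be \emph{compact projective}, i.e.\ you need $\mathrm{Fgt}\simeq\mathrm{Map}_{\mathcal{C}}(\mathrm{Free}(\ast),-)$ to preserve \emph{all} sifted colimits, hence all filtered colimits, not just geometric realizations. The hypothesis gives geometric realizations, and accessibility of the right adjoint gives only $\kappa$-filtered colimits for some possibly uncountable $\kappa$; that is strictly weaker. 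Your claimed deduction --- that compactness of $\mathrm{Free}(\ast)$ ``follows from $\mathcal{L}_{\mathcal{C}}$ being small combined with conservativity'' --- is not an argument: $\mathcal{L}_{\mathcal{C}}$ is automatically small, and conservativity says nothing about whether $\mathrm{Map}(\mathrm{Free}(\ast),-)$ commutes with $\omega$-filtered colimits. The $1$-categorical shadow shows the danger: the forgetful functor from sup-lattices (algebras for the covariant power-set monad) to sets is a conservative monadic right adjoint preserving reflexive coequalizers, yet the monad is not finitary and the category is not the models of a Lawvere theory. So either you must strengthen the hypothesis to ``preserves sifted colimits'' (which is how this result is usually stated, and is what the paper's subsequent Remark is implicitly trying to reduce to, via the incorrect claim that right adjoints preserve filtered colimits), or you must supply a genuinely $\infty$-categorical argument that preservation of $\Delta^{\mathrm{op}}$-indexed colimits valued in $\mathrm{Top}$ forces preservation of filtered colimits; nothing in your write-up does this. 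Relatedly, your final step (``conservativity reduces $\varepsilon$ being an equivalence to the bar resolutions agreeing termwise'') conflates checking that a \emph{morphism} is an equivalence with checking that a \emph{functor} is fully faithful; full faithfulness is precisely where compact projectivity of $\mathrm{Free}(\ast)$ enters, so this is the same gap seen from the other side.
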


\noindent Finally, the colocalization is compatible with a symmetric monoidal structure (tensor product) on Lawv. The tensor product of ordinary Lawvere theories is classical. The analogue for higher Lawvere theories is new, and there are at least three ways to construct it:
\begin{enumerate}
\item By Theorem \ref{2T4}, Lawvere theories are cyclic modules over $\text{Fin}^\text{op}$. They therefore inherit a relative tensor product $\otimes_{\text{Fin}^\text{op}}$ from $\text{SymMon}^\otimes$. In fact, since $\text{Fin}^\text{op}$ is solid, $\otimes_{\text{Fin}^\text{op}}=\otimes$.
\item For $K$ a set of simplicial sets, Lurie constructs a tensor product of categories which admit limits in $K$ (HA 4.8.1). If we choose $K$ to be finite discrete simplicial sets, then we have a tensor product of categories which admit finite products.
\item Since Lawv is a full subcategory of $\text{Pr}^\text{L}_\ast$, we may simply define its tensor product to be the tensor product in $\text{Pr}^\text{L}_\ast$.
\end{enumerate}

\noindent We will see that these three approaches are equivalent.

\subsubsection{Colocalizations}
\noindent We may view the colocalization above as an \emph{algebraic Yoneda lemma}, and in this sense Lawvere theories play a central role in the theory of symmetric monoidal $\infty$-categories. It will usually be easier to study cyclic Fin-modules $\mathcal{F}$ (which are much more natural for stating our results) as opposed to Lawvere theories $\mathcal{L}$. But keep in mind there is an equivalence of $\infty$-categories $$(-)^\text{op}:\text{CycMod}_\text{Fin}\xrightarrow{\sim}\text{Lawv}.$$ The ordinary Yoneda lemma very nearly establishes an adjunction $$\mathcal{P}:\text{Cat}\rightleftarrows\text{Pr}^\text{L}:\text{Fgt},$$ in the sense that left adjoint functors from an $\infty$-category of presheaves $\mathcal{P}(\mathcal{C})\rightarrow\mathcal{D}$ can be identified with ordinary functors $\mathcal{C}\rightarrow\mathcal{D}$; the identification is via composition with the Yoneda embedding $\mathcal{C}\subseteq\mathcal{P}(\mathcal{C})$.

The obstacle to producing a genuine adjunction is set-theoretic. The `right adjoint' really lands in \emph{large} $\infty$-categories, while the `left adjoint' takes \emph{small} $\infty$-categories as arguments. See the discussion after Theorem \ref{Yoneda3}.

However, these set-theoretic issues can be bypassed by introducing some size constraints. Let $\text{Pr}^\text{L}_\text{mk}$ denote the $\infty$-category of \emph{marked} presentable $\infty$-categories. An object is a presentable $\infty$-category with a \emph{small} set of marked objects. A morphism is a left adjoint functor which sends marked objects to marked objects. Now there is a \emph{literal} adjunction $$\mathcal{P}^\natural:\text{Cat}\rightleftarrows\text{Pr}^\text{L}_\text{mk}:\text{Mk},$$ where the right adjoint records the full subcategory spanned by the marked objects, and the left adjoint sends $\mathcal{C}$ to its $\infty$-category of presheaves $\mathcal{P}(\mathcal{C})$, where the marked objects are the representable presheaves. The Yoneda lemma can be reformulated as follows:

\begin{lemma*}[Yoneda lemma]
There is an adjunction $\mathcal{P}^\natural:\text{Cat}\rightleftarrows\text{Pr}^\text{L}_\text{mk}:\text{Mk}$, and the left adjoint is fully faithful.
\end{lemma*}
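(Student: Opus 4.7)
The plan is to deduce both statements essentially as a reformulation of Theorem \ref{Yoneda3} (HTT 5.1.5.6). First I would verify that $\mathcal{P}^\natural$ and $\text{Mk}$ are honest functors between the $\infty$-categories in question. For $\text{Mk}$, given a morphism $F : (\mathcal{D},S) \to (\mathcal{D}',S')$ in $\text{Pr}^\text{L}_\text{mk}$ (a left-adjoint functor with $F(S) \subseteq S'$), the restriction to full subcategories spanned by the marked objects is a functor of small $\infty$-categories, since $S$ is small by hypothesis. For $\mathcal{P}^\natural$, a functor $F : \mathcal{C} \to \mathcal{C}'$ in Cat is sent to $\mathcal{P}(F) : \mathcal{P}(\mathcal{C}) \to \mathcal{P}(\mathcal{C}')$ (left Kan extension along $F$); by the construction of $\mathcal{P}(F)$ recalled in \S\ref{1.1.4}, this sends a representable $\underline{X}$ to $\underline{F(X)}$, hence is a morphism of marked presentable $\infty$-categories.

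Next I would establish the adjunction by computing mapping spaces. A morphism $\mathcal{P}^\natural(\mathcal{C}) \to (\mathcal{D},S)$ in $\text{Pr}^\text{L}_\text{mk}$ is, by definition, a left-adjoint functor $G : \mathcal{P}(\mathcal{C}) \to \mathcal{D}$ whose restriction to representables factors through $S$. By Theorem \ref{Yoneda3}, restriction along the Yoneda embedding induces an equivalence
\[
\text{Fun}^\text{L}(\mathcal{P}(\mathcal{C}),\mathcal{D}) \xrightarrow{\sim} \text{Fun}(\mathcal{C},\mathcal{D}),
\]
under which $G$ corresponds to $G|_\mathcal{C} : \mathcal{C} \to \mathcal{D}$. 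The constraint that $G$ send representables into $S$ translates precisely to the condition that $G|_\mathcal{C}$ factor through the full subcategory $\text{Mk}(\mathcal{D},S) \subseteq \mathcal{D}$. Since $\text{Mk}(\mathcal{D},S)$ is full, this factorization exists if and only if $G|_\mathcal{C}$ lands pointwise in $S$, giving a canonical equivalence
\[
\text{Map}_{\text{Pr}^\text{L}_\text{mk}}\bigl(\mathcal{P}^\natural(\mathcal{C}),(\mathcal{D},S)\bigr) \xrightarrow{\sim} \text{Map}_\text{Cat}\bigl(\mathcal{C},\text{Mk}(\mathcal{D},S)\bigr),
\]
natural in both variables. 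One should check this equivalence is compatible with composition (so as to yield an adjunction of $\infty$-categories, not merely an equivalence of mapping spaces); the cleanest route is to exhibit the unit $\eta_\mathcal{C} : \mathcal{C} \to \text{Mk}(\mathcal{P}^\natural(\mathcal{C}))$ as the Yoneda embedding (corestricted to the full subcategory of representables) and invoke the usual adjunction-from-unit characterization.

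For fully faithfulness of $\mathcal{P}^\natural$, I would show that this unit $\eta_\mathcal{C}$ is an equivalence. By construction $\text{Mk}(\mathcal{P}^\natural(\mathcal{C}))$ is the full subcategory of $\mathcal{P}(\mathcal{C})$ spanned by the representable presheaves, and $\eta_\mathcal{C}$ is the corestricted Yoneda embedding. Since Yoneda is fully faithful (Theorem 1.1.4 of the Yoneda lemma package) and essentially surjective onto its image by definition, $\eta_\mathcal{C}$ is an equivalence; this immediately implies $\mathcal{P}^\natural$ is fully faithful by a standard $\infty$-categorical argument ($\text{Map}_\text{Cat}(\mathcal{C},\mathcal{C}') \cong \text{Map}_\text{Cat}(\mathcal{C},\text{Mk}\mathcal{P}^\natural\mathcal{C}') \cong \text{Map}_{\text{Pr}^\text{L}_\text{mk}}(\mathcal{P}^\natural\mathcal{C},\mathcal{P}^\natural\mathcal{C}')$).

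The main obstacle I anticipate is not a mathematical one but a bookkeeping one: ensuring that $\text{Pr}^\text{L}_\text{mk}$ is well-defined as an $\infty$-category (specifying the composition of morphisms on marked sets requires care, since a left adjoint only preserves a set of objects up to equivalence) and that the equivalence of mapping spaces above upgrades to an adjunction in the strict sense of \S\ref{1.3.3} (i.e., a bicartesian fibration over $\Delta^1$). Both issues can be handled by presenting $\text{Pr}^\text{L}_\text{mk}$ as a pullback $\text{Pr}^\text{L} \times_{\widehat{\text{Cat}}} \text{Fun}(\Delta^1,\widehat{\text{Cat}})^{\mathrm{ff}}$ (marked inclusions of small full subcategories into presentable $\infty$-categories), and then assembling the adjunction fibrationally from the cocartesian/cartesian fibration classifying $\mathcal{P}^\natural$ and $\text{Mk}$ as in \S\ref{1.3.3}; the underlying adjunction data is exactly what Theorem \ref{Yoneda3} provides.
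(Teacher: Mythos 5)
Your argument is correct and is exactly the derivation the paper intends: the lemma is stated as a size-corrected reformulation of HTT 5.1.5.6 (Theorem \ref{Yoneda3}), and your proof---restricting the equivalence $\text{Fun}^\text{L}(\mathcal{P}(\mathcal{C}),\mathcal{D})\simeq\text{Fun}(\mathcal{C},\mathcal{D})$ to the components respecting the markings, and identifying the unit with the corestricted Yoneda embedding to get fully faithfulness---is precisely what the marking device is designed to enable. The bookkeeping issues you flag about constructing $\text{Pr}^\text{L}_\text{mk}$ are real but are adequately handled by your pullback presentation.
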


\noindent In this sense, the Yoneda lemma describes Cat as a colocalization of $\text{Pr}^\text{L}_\text{mk}$. This is useful because $\text{Pr}^\text{L}_\text{mk}$ is in many ways easier to work with than Cat. In fact, in a sense presentable $\infty$-categories were studied prior to $\infty$-categories themselves, in the guise of model categories.

Generally speaking, we might attempt to formulate different flavors of higher category theory as colocalizations of $\text{Pr}^\text{L}$ or its relatives; these colocalizations encode versions of the Yoneda lemma. For example, the following conjecture seems to be known to experts in enriched higher category theory, although this author is not sure whether it has been formulated before (and certainly not in this language). It is hinted at in \cite{GepHaug} (see especially 7.4.13 for a description of the right adjoint in the conjectured colocalization).

\begin{conjecture}[Enriched Yoneda lemma]
\label{Conj0}
If $\mathcal{V}^\otimes$ is a closed symmetric monoidal presentable $\infty$-category (that is, a commutative algebra in $\text{Pr}^{\text{L},\otimes}$), $\text{Cat}^\mathcal{V}$ is the $\infty$-category of $\mathcal{V}$-enriched $\infty$-categories, and $\text{Mod}^\text{L}_{\mathcal{V},\text{mk}}$ is the $\infty$-category of $\mathcal{V}$-modules (in $\text{Pr}^\text{L}$) with some objects marked, then there is an adjunction $$\mathcal{P}^\mathcal{V}:\text{Cat}^\mathcal{V}\rightleftarrows\text{Mod}^\text{L}_{\mathcal{V},\text{mk}}:\text{Mk}.$$ The left adjoint is fully faithful and takes the form $\mathcal{P}^\mathcal{V}(\mathcal{C})=\text{Fun}^\mathcal{V}(\mathcal{C}^\text{op},\mathcal{V})$.
\end{conjecture}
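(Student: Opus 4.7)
The plan is to mimic the proof strategy of the unenriched Yoneda lemma adjunction $\mathcal{P}^\natural:\text{Cat}\rightleftarrows\text{Pr}^\text{L}_\text{mk}:\text{Mk}$ stated earlier, replacing the ordinary Yoneda lemma with its enriched version from \cite{GepHaug} throughout. First I would make both functors precise. The right adjoint $\text{Mk}$ sends a pair $(\mathcal{M},S)$, with $\mathcal{M}$ a $\mathcal{V}$-module in $\text{Pr}^\text{L}$ and $S$ a small set of objects, to the full $\mathcal{V}$-enriched subcategory of $\mathcal{M}$ on $S$; this uses that any $\mathcal{V}$-module in $\text{Pr}^\text{L}$ is canonically $\mathcal{V}$-enriched (\cite{GepHaug} 7.4.13). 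The left adjoint sends $\mathcal{C}$ to $(\text{Fun}^\mathcal{V}(\mathcal{C}^\text{op},\mathcal{V}),\text{representables})$, where the representables are the image of the $\mathcal{V}$-enriched Yoneda embedding and form a small set indexed by the underlying $\infty$-groupoid of $\mathcal{C}$.

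Next I would check that $\mathcal{P}^\mathcal{V}(\mathcal{C})$ really lives in $\text{Mod}^\text{L}_{\mathcal{V},\text{mk}}$. Presentability of the enriched functor category follows from presentability of $\mathcal{V}$ together with smallness of $\mathcal{C}$, and the $\mathcal{V}$-module structure is given by objectwise tensoring with $\mathcal{V}$. Functoriality of $\mathcal{P}^\mathcal{V}$ on morphisms $F:\mathcal{C}\to\mathcal{D}$ is by $\mathcal{V}$-enriched left Kan extension, which produces a colimit-preserving $\mathcal{V}$-linear functor carrying representables on $\mathcal{C}$ to representables on $\mathcal{D}$.

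The core step is the universal property: colimit-preserving $\mathcal{V}$-linear functors $\mathcal{P}^\mathcal{V}(\mathcal{C})\to\mathcal{M}$ whose restriction along the Yoneda embedding lands in $S\subseteq\mathcal{M}$ should be equivalent to $\mathcal{V}$-enriched functors $\mathcal{C}\to\text{Mk}(\mathcal{M})$. This is the enriched avatar of Theorem \ref{Yoneda3}, asserting that $\mathcal{P}^\mathcal{V}(\mathcal{C})$ is the free $\mathcal{V}$-module in $\text{Pr}^\text{L}$ generated by $\mathcal{C}$ as a $\mathcal{V}$-enriched $\infty$-category. For the general $\mathcal{V}$-linear cocompletion statement I would cite work of Hinich and of Gepner-Haugseng on enriched presheaves; the marked version then follows by restricting to those cocontinuous $\mathcal{V}$-linear functors sending representables into $S$. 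Full faithfulness of $\mathcal{P}^\mathcal{V}$ reduces to showing that the unit $\mathcal{C}\to\text{Mk}(\mathcal{P}^\mathcal{V}(\mathcal{C}))$ is an equivalence, which is precisely the assertion that the enriched Yoneda embedding is fully faithful and identifies $\mathcal{C}$ with the full $\mathcal{V}$-enriched subcategory of representables.

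The main obstacle is the state of the literature on $\mathcal{V}$-enriched $\infty$-categories, which is less developed than either the unenriched theory or Lurie's theory of $\mathcal{V}$-modules in $\text{Pr}^\text{L}$. In particular, while the passage between $\text{Cat}^\mathcal{V}$ and $\text{Mod}^\text{L}_\mathcal{V}$ is outlined in \cite{GepHaug} \S 7.4, the precise compatibility of $\mathcal{V}$-enriched functor categories with the $\mathcal{V}$-linear tensor product on $\text{Pr}^\text{L}$ that is needed to pin down the universal property above is not fully articulated there. I expect the bulk of the work to consist of assembling these pieces carefully, bookkeeping the role of the marking, and verifying that the enriched Yoneda embedding assembles into the unit of the adjunction, rather than facing any single conceptual difficulty.
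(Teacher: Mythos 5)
The statement you are trying to prove is stated in the paper only as a conjecture (Conjecture \ref{Conj0}); the paper offers no proof, remarks that the result ``seems to be known to experts'' but ``is not accessible to current techniques in enriched higher category theory,'' and points to \cite{GepHaug} 7.4.13 only for the construction of the right adjoint. So there is no argument in the paper to compare yours against, and the relevant question is whether your outline actually closes the conjecture. It does not. Your construction of $\text{Mk}$ via \cite{GepHaug} 7.4.13 and your identification of the unit with the enriched Yoneda embedding are the same ingredients the paper gestures at, and they are fine as far as they go.

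The genuine gap is the step you call ``the core step'': the universal property asserting that $\text{Fun}^\mathcal{V}(\mathcal{C}^\text{op},\mathcal{V})$ is the free presentable $\mathcal{V}$-module on the $\mathcal{V}$-enriched $\infty$-category $\mathcal{C}$, i.e.\ the enriched analogue of Theorem \ref{Yoneda3}. You defer this to ``work of Hinich and of Gepner-Haugseng,'' but then concede in your final paragraph that the needed compatibility between enriched functor categories and the $\mathcal{V}$-linear tensor product on $\text{Pr}^\text{L}$ ``is not fully articulated there.'' That concession is exactly the point: without that universal property there is no adjunction at all, only a candidate pair of functors, and full faithfulness of $\mathcal{P}^\mathcal{V}$ cannot even be formulated as ``the unit is an equivalence.'' Even the preliminary claims---that $\text{Fun}^\mathcal{V}(\mathcal{C}^\text{op},\mathcal{V})$ is presentable, that it carries a $\mathcal{V}$-module structure in $\text{Pr}^\text{L}$, that enriched left Kan extension exists and is $\mathcal{V}$-linear and colimit-preserving, and that the enriched Yoneda embedding is fully faithful with the expected mapping objects---are each nontrivial theorems in the enriched $\infty$-categorical setting rather than formal consequences of definitions. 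What you have written is a correct and sensible plan of attack that isolates where the difficulty lies, but it assumes the conjecture's essential content rather than proving it; to turn it into a proof you would need to establish the free-cocompletion universal property of enriched presheaves as a theorem, which is precisely the open problem.
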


\noindent Such a conjecture would provide a framework to study enriched higher category theory without prolific book-keeping and technicalities. Instead, we would work in the familiar setting of the commutative algebra of $\text{Pr}^{\text{L},\otimes}$.

Our main theorem can be viewed as an algebraic (rather than enriched) analogue of the Yoneda lemma. We repeat it here in the language of cyclic Fin-modules, although it has already been stated above in terms of Lawvere theories.

\begin{theorem*}[Algebraic Yoneda lemma, Theorem \ref{ThmLawvAdj}]
There is an adjunction $$\mathcal{P}_\amalg:\text{CycMod}_\text{Fin}\rightleftarrows\text{Pr}^\text{L}_\ast:(-)^\circ,$$ and the left adjoint is fully faithful.
\end{theorem*}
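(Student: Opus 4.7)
The plan is to establish the adjunction by chaining two universal properties, and then deduce full faithfulness from the Yoneda lemma applied to $\mathcal{P}_\amalg$.

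First I would verify the two functors are well defined. For $\mathcal{F} \in \text{CycMod}_\text{Fin}$, the $\infty$-category $\mathcal{P}_\amalg(\mathcal{F}) = \text{Fun}^\times(\mathcal{F}^\text{op},\text{Top})$ is presentable by HTT 5.5.8.10 (it is $\mathcal{P}_\Sigma(\mathcal{F})$), and the structure map $\text{Fin} \to \mathcal{F}$ in $\text{CycMod}_\text{Fin}$ induces, by functoriality of $\mathcal{P}_\amalg$, a left adjoint functor $\text{Top} \cong \mathcal{P}_\amalg(\text{Fin}) \to \mathcal{P}_\amalg(\mathcal{F})$, supplying a canonical pointing. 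Conversely, given $\mathcal{D} \in \text{Pr}^\text{L}_\ast$ with free functor $\text{Free}\colon \text{Top} \to \mathcal{D}$, the full subcategory $\mathcal{D}^\circ$ is the essential image of $\text{Fin} \subseteq \text{Top}$ under Free; since Free is a left adjoint it preserves finite coproducts, so $\mathcal{D}^\circ$ is closed under finite coproducts and cyclic on the object $\text{Free}(*)$, that is, an object of $\text{CycMod}_\text{Fin}$.

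Next I would construct the adjunction by comparing mapping spaces. For $\mathcal{D}\in \text{Pr}^\text{L}_\ast$, a morphism $\mathcal{P}_\amalg(\mathcal{F}) \to \mathcal{D}$ in $\text{Pr}^\text{L}_\ast$ is a left adjoint together with a compatibility with the pointings, so its mapping space is the homotopy fiber
\[
\text{Map}_{\text{Pr}^\text{L}_\ast}(\mathcal{P}_\amalg(\mathcal{F}),\mathcal{D}) = \text{fib}\bigl(\text{Map}_{\text{Pr}^\text{L}}(\mathcal{P}_\amalg(\mathcal{F}),\mathcal{D}) \to \text{Map}_{\text{Pr}^\text{L}}(\text{Top},\mathcal{D})\bigr)
\]
over the pointing $\text{Free}_\mathcal{D}$. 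By the universal property of $\mathcal{P}_\Sigma$ (HTT 5.5.8.15), $\text{Map}_{\text{Pr}^\text{L}}(\mathcal{P}_\amalg(\mathcal{F}),\mathcal{D})$ is the space of finite-coproduct-preserving functors $\mathcal{F} \to \mathcal{D}$, and similarly $\text{Map}_{\text{Pr}^\text{L}}(\text{Top},\mathcal{D}) = \text{Map}(\text{Fin},\mathcal{D})$. Taking the fiber picks out finite-coproduct-preserving functors $\mathcal{F} \to \mathcal{D}$ that send the distinguished generator to $\text{Free}(*)$; these automatically factor through the replete closure of $\mathcal{D}^\circ \subseteq \mathcal{D}$, so the mapping space coincides with $\text{Map}_{\text{CycMod}_\text{Fin}}(\mathcal{F},\mathcal{D}^\circ)$. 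This gives the adjunction.

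For full faithfulness, I would show the unit $\eta\colon \mathcal{F} \to \mathcal{P}_\amalg(\mathcal{F})^\circ$ is an equivalence. By construction $\eta$ is the Yoneda embedding restricted along its factorization through the finitely generated free objects; it is fully faithful by the ordinary Yoneda lemma, and by HTT 5.5.8.10 it preserves finite coproducts. Since $\eta$ sends the distinguished generator of $\mathcal{F}$ to the image of the point of Top, which by definition generates $\mathcal{P}_\amalg(\mathcal{F})^\circ$ under finite coproducts, $\eta$ is essentially surjective onto $\mathcal{P}_\amalg(\mathcal{F})^\circ$, hence an equivalence.

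The main obstacle I anticipate is the careful bookkeeping around the pointed structure: everything must be assembled naturally so that the fiber sequences defining $\text{Map}_{\text{Pr}^\text{L}_\ast}$ and $\text{Map}_{\text{CycMod}_\text{Fin}}$ match up, and so that the whole construction is functorial in $\mathcal{F}$ and $\mathcal{D}$ (not just at the level of homotopy categories). Invoking the cocartesian fibration formalism of \S\ref{1.3.3} to describe both undercategories uniformly, and using that $\text{Fin} \to \mathcal{F}$ is itself a coproduct-preserving functor picking out the generator, should make this bookkeeping tractable; the actual equivalence then follows formally from HTT 5.5.8.15 and the Yoneda lemma.
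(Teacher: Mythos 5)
Your proposal is correct and follows essentially the same route as the paper: both rest on the universal property of the product-preserving presheaf category (the paper's Proposition \ref{SMYoneda3}, your HTT 5.5.8.15) together with the observation that the coproduct-preserving Yoneda embedding identifies $\mathcal{F}$ with $\mathcal{P}_\amalg(\mathcal{F})^\circ$ (the paper's Proposition \ref{SMYoneda2}), which makes the unit an equivalence and hence the left adjoint fully faithful. The only difference is presentational: you spell out the pointed bookkeeping via fiber sequences of mapping spaces, which the paper leaves implicit.
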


\noindent Most of the ingredients of the theorem are known already in Lurie's books HTT and HA; we will review those in \S\ref{4.1.1} (which will also clarify the sense in which this is a \emph{Yoneda lemma}).

\begin{remark}
There should be a similar colocalization replacing `cyclic Fin-modules' (Lawvere theories) by `Fin-modules with marked generators' (colored Lawvere theories), and `pointed presentable $\infty$-categories' with `marked presentable $\infty$-categories'. This would more closely resemble the forms of the regular and enriched Yoneda lemmas described above, but we won't work at this level of generality.
\end{remark}

\subsubsection{Symmetric monoidal colocalizations}
\noindent In fact, the colocalization described above is even \emph{symmetric monoidal}; that is, the left adjoint $\mathcal{P}_\amalg:\text{CycMod}_\text{Fin}\rightarrow\text{Pr}^\text{L}_\ast$ lifts to a symmetric monoidal functor.

If a colocalization of $\text{Pr}^\text{L}_\ast$ (or one of its relatives) encodes a sort of Yoneda lemma, a symmetric monoidal colocalization encodes in addition a theory of Day convolution.

That is, for some Lawvere theories $\mathcal{L}$ (or cyclic Fin-modules $\mathcal{F}=\mathcal{L}^\text{op}$), we can place a symmetric monoidal structure (called Day convolution) on $\text{Mdl}(\mathcal{L})=\mathcal{P}_\amalg(\mathcal{F})$. We will use this to prove that Burn is the solid semiring $\infty$-category classifying additive $\infty$-categories, and more generally to provide comparisons with enriched higher category theory.

\begin{question}
In general, a symmetric monoidal colocalization of $\text{Pr}^\text{L}_\ast$ (or $\text{Pr}^\text{L}_\text{mk}$) which is also presentable is a full subcategory closed under small colimits and tensor products, and generated by a small set of objects. We have said that these encode Yoneda lemmas and Day convolution, and that they include Lawv and Cat. Is it possible to classify all such colocalizations?
\end{question}

\subsubsection{Organization}
In \S\ref{4.1}, we review the comparison between cocartesian monoidal and presentable $\infty$-categories. The results summarized in \S\ref{4.1.1} are due to Lurie (HA), sometimes in new language. The new observation is that his tensor product of cocartesian monoidal $\infty$-categories can be extended to the tensor product of symmetric monoidal $\infty$-categories (which we considered in Chapter \ref{3}).

Due to this comparison between his symmetric monoidal structure and ours, we can use his results (in \S\ref{4.1.2}) to describe $\mathcal{C}^\oast\otimes\text{Fin}^\amalg$, where $\mathcal{C}^\oast$ is an arbitrary symmetric monoidal $\infty$-category. We also show in \S\ref{4.1.3} that any closed symmetric monoidal presentable $\infty$-category is an example of a commutative semiring $\infty$-category.

In \S\ref{4.2}, we discuss the theory of cyclic modules over a commutative semiring $\infty$-category (in \S\ref{4.2.1}), which we then use to study Lawvere theories (in \S\ref{4.2.2}). Using this language, we establish our main result, that Lawv is a symmetric monoidal colocalization of $\text{Pr}^\text{L}_\ast$.

In \S\ref{4.2.3} we show that cyclic modules over the effective Burnside 2-category $\text{Burn}^\text{eff}$ are in bijection with $\mathbb{E}_1$-semiring spaces, and as a consequence we prove that Burn-modules are additive $\infty$-categories; this is the last part of the main theorem from Chapter \ref{3}.

Finally, in \S\ref{4.3}, we discuss some open problems, particularly related to the following two questions. In each case, we present some evidence in favor of our conjectures.
\begin{enumerate}
\item (\S\ref{4.3.1}) Associated to an operad is a Lawvere theory which is trivializable over the Burnside category. Is the converse also true? If so, operads would have a compelling description in terms of categorified algebraic geometry.
\item (\S\ref{4.3.2}) Can a symmetric monoidal $\infty$-category $\mathcal{C}^\oast$ (maybe sufficiently nice) be recovered via descent from $\mathcal{C}\otimes\text{Fin}$, $\mathcal{C}\otimes\text{Fin}^\text{op}$, and $\mathcal{C}\otimes\vec{\mathbb{S}}$? The first two objects are often related to span constructions, while the third is related to algebraic K-theory (as in Chapter \ref{3}). This is really a question about the algebraic geometry of the commutative semiring $\infty$-category $\text{Fin}^\text{iso}$, and it is related to question (1).
\end{enumerate}

\noindent Taken together, these are questions about the foundations of the nascent subject of \emph{categorified algebraic geometry}.

\section{Presentable semiring \texorpdfstring{$\infty$-}{infinity }categories}\label{4.1}
\noindent If $K$ is a set of simplicial sets, let $\text{Cat}(K)$ denote the subcategory of Cat spanned by those small $\infty$-categories which admit $K$-indexed colimits, and functors which preserve $K$-indexed colimits. Let $\text{Fun}^K(\mathcal{C},\mathcal{D})$ denote the full subcategory of $\text{Fun}(\mathcal{C},\mathcal{D})$ on functors which preserve $K$-indexed colimits.

Lurie (HA 4.8.1.6) has constructed a closed symmetric monoidal structure on $\text{Cat}(K)$ (or $\widehat{\text{Cat}}(K)$ for a large set $K$), such that the right adjoint to $-\otimes\mathcal{C}$ is $\text{Fun}^K(\mathcal{C},-)$.

\begin{example}
\label{ExSMPrL}
Choose $K$ to be the (large) set of all small simplicial sets. Then $\text{Pr}^\text{L}$ is a symmetric monoidal full subcategory of $\widehat{\text{Cat}}(K)$. Thus, there is a symmetric monoidal operation on $\text{Pr}^\text{L}$ (HA 4.8.1.15), the same as that considered in \S\ref{1.1.4}.
\end{example}

\begin{example}
\label{ExCfFin}
By Theorem \ref{2T3}, $\text{CocartMon}\cong\text{Mod}_\text{Fin}$, and therefore inherits a closed symmetric monoidal structure $\otimes_\text{Fin}$ from SymMon. Since Fin is solid, $\otimes_\text{Fin}$ agrees with $\otimes$ (the usual tensor product of symmetric monoidal $\infty$-categories).

Choose $K$ to be the set of finite, discrete simplicial sets, so that $\text{Cat}(K)\cong\text{CocartMon}$. This equivalence is compatible with the symmetric monoidal structures $$\text{Cat}(K)^{\otimes_\text{Lurie}}\cong\text{CocartMon}^{\otimes_\text{Fin}},$$ because in either case $-\otimes\mathcal{C}$ is left adjoint to $$\text{Fun}^\amalg(\mathcal{C},-)\cong\text{Hom}(\mathcal{C}^\amalg,-^\amalg).$$
\end{example}

\noindent In summary, if $\mathcal{C}$ and $\mathcal{D}$ are cocartesian monoidal $\infty$-categories, then their tensor product (as symmetric monoidal $\infty$-categories) coincides with their tensor product in the sense of HA 4.8.1.

In \S\ref{4.1.1}, we will exploit this observation along with Example \ref{ExSMPrL} to provide a comparison between symmetric monoidal $\infty$-categories and presentable $\infty$-categories. This is a sort of \emph{(co)cartesian monoidal Yoneda lemma}, and is mostly a reformulation of results in HTT and HA.

Then in \S\ref{4.1.2}, we describe $\text{Fin}^\amalg\otimes\mathcal{C}^\oast$ (where $\mathcal{C}^\oast$ is an arbitrary symmetric monoidal $\infty$-category), which is the \emph{free cocartesian monoidal $\infty$-category generated by $\mathcal{C}^\oast$} (by Proposition \ref{TupHup}).

Finally, in \S\ref{4.1.3}, we prove Proposition \ref{PropPrRing}. That is, if $\mathcal{C}^\otimes$ is presentable and closed symmetric monoidal, then $\mathcal{C}^{\amalg,\otimes}$ is a commutative semiring $\infty$-category. In Chapter \ref{3}, we were only able to construct examples of commutative semiring $\infty$-categories which were solid. This result provides a new source of examples.

\subsection{The cartesian monoidal Yoneda lemma}\label{4.1.1}
\noindent The following propositions might together be called an \emph{algebraic Yoneda lemma}.

\begin{proposition}
\label{SMYoneda1}
The product-preserving presheaf functor is symmetric monoidal\footnote{By $\mathcal{C}^{\text{op},\amalg}$, we mean the coproduct in $\mathcal{C}$, which is a product in $\mathcal{C}^\text{op}$.} $$\text{Hom}(-^{\text{op},\amalg},\text{Top}^\times):\text{CocartMon}^\otimes\rightarrow\text{Pr}^{\text{L},\otimes}.$$ We write $\mathcal{P}_\amalg(\mathcal{C})=\text{Hom}(\mathcal{C}^{\text{op},\amalg},\text{Top}^\times)$.
\end{proposition}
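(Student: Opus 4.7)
The plan is to identify $\mathcal{P}_\amalg$ with the left adjoint to the forgetful functor $\text{Pr}^\text{L}\to\text{CocartMon}$, and then lift this adjunction to a symmetric monoidal adjunction by invoking the framework of HA 4.8.1, using the identifications of the two symmetric monoidal structures provided by Examples \ref{ExSMPrL} and \ref{ExCfFin}.

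First, I would establish that $\mathcal{P}_\amalg(\mathcal{C})=\text{Fun}^\times(\mathcal{C}^\text{op},\text{Top})$ is presentable (being an accessible localization of $\mathcal{P}(\mathcal{C})=\text{Fun}(\mathcal{C}^\text{op},\text{Top})$ at the finite set of morphisms forcing representables to carry coproducts in $\mathcal{C}$ to products in Top), and that the composite $\mathcal{C}\hookrightarrow\mathcal{P}(\mathcal{C})\to\mathcal{P}_\amalg(\mathcal{C})$ enjoys the universal property: for every presentable $\mathcal{D}$, composition with this embedding induces an equivalence $\text{Fun}^\text{L}(\mathcal{P}_\amalg(\mathcal{C}),\mathcal{D})\cong\text{Fun}^\amalg(\mathcal{C},\mathcal{D})$, where $\text{Fun}^\amalg$ denotes functors preserving finite coproducts. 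This is essentially HTT 5.5.8.15 combined with Theorem \ref{Yoneda3}. In particular, $\mathcal{P}_\amalg\dashv\text{Fgt}$.

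Next, I would upgrade this to a symmetric monoidal adjunction. By Example \ref{ExCfFin}, the symmetric monoidal structure on $\text{CocartMon}^\otimes$ coincides with Lurie's $\text{Cat}(K_0)^{\otimes_\text{Lurie}}$, where $K_0$ is the collection of finite discrete simplicial sets. By Example \ref{ExSMPrL}, $\text{Pr}^{\text{L},\otimes}$ is a full symmetric monoidal subcategory of $\widehat{\text{Cat}}(K_\infty)^{\otimes_\text{Lurie}}$, where $K_\infty$ is the large set of all small simplicial sets. Since $K_0\subseteq K_\infty$, Lurie's change-of-$K$ formalism (HA 4.8.1, especially 4.8.1.10 and the construction of the left adjoint around 4.8.1.13--15) produces a symmetric monoidal left adjoint to the evident forgetful functor $\widehat{\text{Cat}}(K_\infty)\to\widehat{\text{Cat}}(K_0)$.

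The remaining step is to verify that, when restricted to small objects in $\text{Cat}(K_0)=\text{CocartMon}$, this abstract left adjoint coincides with $\mathcal{P}_\amalg$ and indeed lands in $\text{Pr}^\text{L}$. Both functors are characterized by the universal property of the first paragraph (free completion under small colimits, subject to preserving finite coproducts on the image of $\mathcal{C}$), so they are equivalent up to contractible choice; and presentability of $\mathcal{P}_\amalg(\mathcal{C})$ was noted above. The main obstacle is purely bookkeeping: threading the identifications between $\text{Cat}(K_0)^{\otimes_\text{Lurie}}$ and $\text{CocartMon}^\otimes$ on one side and between $\text{Pr}^{\text{L},\otimes}$ and $\widehat{\text{Cat}}(K_\infty)^{\otimes_\text{Lurie}}$ on the other, and checking that the composite adjunction $\text{CocartMon}\rightleftarrows\widehat{\text{Cat}}(K_\infty)\supseteq\text{Pr}^\text{L}$ factors through $\text{Pr}^\text{L}$ at the level of symmetric monoidal structures. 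Once this is arranged, symmetric monoidality of $\mathcal{P}_\amalg$ is immediate from HA 4.8.1.
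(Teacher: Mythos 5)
Your proposal is correct and is exactly the paper's argument: the paper's entire proof is the citation ``See HA 4.8.1.8 and Remark \ref{ExCfFin},'' i.e., invoke Lurie's symmetric monoidal change-of-colimit-classes functor together with the identification of $\text{CocartMon}^\otimes$ with $\text{Cat}(K)^{\otimes_\text{Lurie}}$ for $K$ the finite discrete simplicial sets. You have simply spelled out the universal property and bookkeeping that the citation leaves implicit.
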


\begin{remark}[cf. HTT 5.2.6.3]
If $F:\mathcal{C}^\amalg\rightarrow\mathcal{D}^\amalg$ is a cocartesian monoidal functor, then $\mathcal{P}_\amalg(F)$ has right adjoint given by precomposition with $F$, $$\text{Hom}(\mathcal{D}^{\text{op},\amalg},\text{Top}^\times)\xrightarrow{-\circ F}\text{Hom}(\mathcal{C}^{\text{op},\amalg},\text{Top}^\times).$$
\end{remark}

\begin{proof}
See HA 4.8.1.8 and Remark \ref{ExCfFin}.
\end{proof}

\begin{example}[Day convolution]
If $\mathcal{R}^{\amalg,\otimes}$ is a commutative semiring $\infty$-category with cocartesian monoidal additive structure, then $\mathcal{P}(\mathcal{R})$ inherits a closed symmetric monoidal structure $\oast$, called \emph{Day convolution}, because $\mathcal{P}_\amalg(-)$ sends commutative algebras to commutative algebras. See HA 4.8.1.13.
\end{example}

\begin{proposition}
\label{SMYoneda2}
If $\mathcal{C}^\amalg$ is a cocartesian monoidal $\infty$-category, then there are full subcategory inclusions $$\mathcal{C}\subseteq\mathcal{P}_\amalg(\mathcal{C})\subseteq\mathcal{P}(\mathcal{C}),$$ and the composition is the usual Yoneda embedding.
\end{proposition}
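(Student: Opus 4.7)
The plan is to unpack $\mathcal{P}_\amalg(\mathcal{C})$ as a full subcategory of $\mathcal{P}(\mathcal{C})$ using the paper's earlier observations about cartesian monoidal functors, and then verify that the standard Yoneda embedding factors through it.

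First I would identify $\mathcal{P}_\amalg(\mathcal{C}) \subseteq \mathcal{P}(\mathcal{C})$ as a full subcategory. Since both $\mathcal{C}^{\text{op},\amalg}$ (by the footnote to Proposition \ref{SMYoneda1}) and $\text{Top}^\times$ are cartesian monoidal, Remark \ref{RmkCartMon} identifies $\text{Hom}(\mathcal{C}^{\text{op},\amalg},\text{Top}^\times)$ with the full subcategory of $\text{Fun}(\mathcal{C}^\text{op},\text{Top}) = \mathcal{P}(\mathcal{C})$ spanned by finite-product-preserving functors. This gives the second inclusion as a full subcategory.

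Next I would show that the usual Yoneda embedding $y:\mathcal{C}\rightarrow\mathcal{P}(\mathcal{C})$ factors through $\mathcal{P}_\amalg(\mathcal{C})$. For each object $X \in \mathcal{C}$, the representable presheaf $\text{Map}_\mathcal{C}(-,X)$ carries finite coproducts in $\mathcal{C}$ (i.e., finite products in $\mathcal{C}^\text{op}$) to products in Top by the universal property of coproducts: $\text{Map}_\mathcal{C}(Y\amalg Z,X)\simeq \text{Map}_\mathcal{C}(Y,X)\times\text{Map}_\mathcal{C}(Z,X)$. Thus $y(X)$ lies in $\mathcal{P}_\amalg(\mathcal{C})$, yielding a factored functor $\mathcal{C}\rightarrow\mathcal{P}_\amalg(\mathcal{C})$ whose composition with the inclusion $\mathcal{P}_\amalg(\mathcal{C}) \subseteq \mathcal{P}(\mathcal{C})$ recovers $y$ by construction.

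Finally, the classical Yoneda lemma (HTT 5.1.3.1, cited earlier in the thesis) tells us $y$ is fully faithful. Since it factors through a full subcategory inclusion, the factored functor $\mathcal{C} \rightarrow \mathcal{P}_\amalg(\mathcal{C})$ is itself fully faithful, giving the first inclusion. There is no substantive obstacle here; the content of the proposition is essentially the observation that representable functors preserve products, which is entirely formal once the symmetric monoidal Hom has been unpacked via Remark \ref{RmkCartMon}.
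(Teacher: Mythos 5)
Your proof is correct and follows essentially the same route as the paper: the paper's proof simply cites HTT 5.1.3.2 (representable functors preserve limits, hence finite products) to conclude that the Yoneda embedding factors through $\text{Fun}^\times(\mathcal{C}^\text{op},\text{Top})$, which is exactly your second step. Your additional explicit appeal to Remark \ref{RmkCartMon} to identify $\mathcal{P}_\amalg(\mathcal{C})$ as a full subcategory of $\mathcal{P}(\mathcal{C})$ is a point the paper leaves implicit, but it is the same argument.
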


\begin{proof}
By HTT 5.1.3.2, representable functors $\mathcal{C}^\text{op}\rightarrow\text{Top}$ preserve limits, and in particular finite products. Thus if $\mathcal{C}^\amalg$ is cocartesian monoidal, the Yoneda embedding factors through $$\mathcal{C}\rightarrow\text{Fun}^\times(\mathcal{C}^\text{op},\text{Top}).$$
\end{proof}

\begin{example}
\label{ExSpanBurn}
Note that $\text{Span}(\text{Fin})^\amalg$ is cocartesian monoidal and equivalent to its opposite. By corollary \ref{CofreeSemiadd}, $$\text{Hom}(\text{Span}(\text{Fin})^\amalg,\text{Top}^\times)\cong\text{CAlg}(\text{CocCoalg}(\text{Top}^\times)^\times)\cong\text{CAlg}(\text{Top}^\times)\cong\text{CMon}_\infty.$$ Therefore, $\text{Span}(\text{Fin})$ is a full subcategory of $\text{CMon}_\infty$; namely, the subcategory of finitely generated free $\mathbb{E}_\infty$-spaces, or the \emph{effective Burnside 2-category} $\text{Burn}^\text{eff}$.
\end{example}

\begin{example}
Similarly, the 1-category $h\text{Span}(\text{Fin})$ is the full subcategory of CMon spanned by finitely generated free commutative monoids; that is, commutative monoids $\mathbb{N}^k$, as $k\geq 0$ varies. This is the \emph{effective Burnside 1-category} $h\text{Burn}^\text{eff}$.
\end{example}

\begin{corollary}[Enrichment of modules]
\label{CorEnrMod}
Suppose $\mathcal{R}^{\amalg,\otimes}$ is a commutative semiring $\infty$-category whose additive structure is cocartesian monoidal (that is, $\mathcal{R}$ is a commutative Fin-algebra). Then any $\mathcal{R}$-module is naturally enriched in $\mathcal{P}_\amalg(\mathcal{R})$.
\end{corollary}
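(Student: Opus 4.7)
The strategy is to apply the symmetric monoidal functor $\mathcal{P}_\amalg$ of Proposition \ref{SMYoneda1} to transport the problem into $\text{Pr}^{\text{L},\otimes}$, invoke the enrichment theorem \cite{GepHaug} 7.4.13 cited at the end of \S\ref{1.1.4}, and then restrict the resulting enrichment along the Yoneda embedding.

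In detail, because $\mathcal{R}$ is a commutative algebra in $\text{CocartMon}^\otimes$, $\mathcal{P}_\amalg(\mathcal{R})$ is a commutative algebra in $\text{Pr}^{\text{L},\otimes}$. Any $\mathcal{R}$-module $\mathcal{M}$ in SymMon is automatically cocartesian monoidal (via the unit $\text{Fin}\to\mathcal{R}$), so applying $\mathcal{P}_\amalg$ to its action map and using the symmetric monoidality of $\mathcal{P}_\amalg$ endows $\mathcal{P}_\amalg(\mathcal{M})$ with the structure of a $\mathcal{P}_\amalg(\mathcal{R})$-module in $\text{Pr}^\text{L}$. By \cite{GepHaug} 7.4.13, $\mathcal{P}_\amalg(\mathcal{M})$ is therefore canonically $\mathcal{P}_\amalg(\mathcal{R})$-enriched. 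Since the Yoneda embedding $\mathcal{M}\subseteq\mathcal{P}_\amalg(\mathcal{M})$ of Proposition \ref{SMYoneda2} is fully faithful, this enrichment restricts to a $\mathcal{P}_\amalg(\mathcal{R})$-enrichment of $\mathcal{M}$ itself.

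The main obstacle is a concrete identification of the enriched mapping objects, needed both to confirm naturality and to verify that restriction along Yoneda genuinely lands in $\mathcal{P}_\amalg(\mathcal{R})$ rather than merely in presheaves on $\mathcal{R}$. I expect the enriched mapping object to take the form $\underline{\text{Map}}(X,Y)(r)=\text{Map}_\mathcal{M}(r\cdot X,Y)$; this is visibly product-preserving in $r$ because the action is cocartesian monoidal, so $(r\amalg r')\cdot X\simeq(r\cdot X)\amalg(r'\cdot X)$, and hence the presheaf does lie in $\mathcal{P}_\amalg(\mathcal{R})$. To verify this agrees with the abstractly defined enriched Hom in $\mathcal{P}_\amalg(\mathcal{M})$, one uses that the Yoneda embedding is a map of $\mathcal{R}$-modules and that the external $\mathcal{P}_\amalg(\mathcal{R})$-action on representables matches the internal $\mathcal{R}$-action, reducing the computation to the universal property of Day convolution.
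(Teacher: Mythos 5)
Your argument is correct and is essentially the paper's own proof: apply the symmetric monoidal functor $\mathcal{P}_\amalg$ to make $\mathcal{P}_\amalg(\mathcal{M})$ a $\mathcal{P}_\amalg(\mathcal{R})$-module in $\text{Pr}^\text{L}$, invoke \cite{GepHaug} 7.4.13 for the enrichment, and restrict along the fully faithful embedding $\mathcal{M}\subseteq\mathcal{P}_\amalg(\mathcal{M})$. The extra worry in your last paragraph is unnecessary, since the enrichment produced by \cite{GepHaug} 7.4.13 already lives in $\mathcal{P}_\amalg(\mathcal{R})$ and any full subcategory of a $\mathcal{V}$-enriched $\infty$-category inherits the $\mathcal{V}$-enrichment.
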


\begin{proof}
Suppose $\mathcal{M}$ is an $\mathcal{R}$-module. Then $\mathcal{M}$ is a full subcategory of $\mathcal{P}_\amalg(\mathcal{M})$, which is a $\mathcal{P}_\amalg(\mathcal{R})$-module, and therefore enriched in $\mathcal{P}_\amalg(\mathcal{R})$ by \cite{GepHaug} 7.4.13. The full subcategory $\mathcal{M}$ inherits the enrichment.
\end{proof}

\begin{example}
\label{EnrAddEx}
Any semiadditive $\infty$-category is a $\text{Burn}^\text{eff}$-module, and therefore enriched in $\mathcal{P}_\amalg(\text{Burn}^\text{eff})\cong\text{CMon}_\infty$. This is the higher categorical analogue of the classical observation that a semiadditive category is enriched in commutative monoids.
\end{example}

\begin{example}
If $G$ is a finite group, $\mathcal{P}_\amalg(\text{Fin}_G)$ is the $\infty$-category of genuine equivariant right $G$-spaces (Definition \ref{ElmenDef}). Therefore, any $\text{Fin}_G$-module is naturally enriched in right $G$-spaces.

On the other hand, $\mathcal{P}_\amalg(\text{Burn}_G)$ is the $\infty$-category of equivariant connective right $G$-spectra (Definition \ref{GMDef}). Therefore, any $\text{Burn}_G$-module is naturally enriched in right $G$-spectra.
\end{example}

\begin{proposition}[HTT 5.3.6.10]
\label{SMYoneda3}
If $\mathcal{C}^\amalg$ is cocartesian monoidal, and $\mathcal{D}$ is presentable, then precomposition along the Yoneda embedding of Proposition \ref{SMYoneda2} induces an equivalence $$\text{Fun}^\text{L}(\mathcal{P}_\amalg(\mathcal{C}),\mathcal{D})\xrightarrow{\sim}\text{Hom}(\mathcal{C}^\amalg,\mathcal{D}^\amalg).$$
\end{proposition}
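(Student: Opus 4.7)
The plan is to deduce this from the ordinary Yoneda lemma (Theorem \ref{Yoneda3}) by exhibiting $\mathcal{P}_\amalg(\mathcal{C})$ as an accessible localization of $\mathcal{P}(\mathcal{C})$, and then identifying the local morphisms on both sides of the correspondence.

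First I would observe that $\mathcal{P}_\amalg(\mathcal{C}) \subseteq \mathcal{P}(\mathcal{C})$ is precisely the full subcategory of product-preserving presheaves, and that this inclusion is an accessible localization: the local objects are the product-preserving functors, and the localization is generated by the small set $S$ of morphisms in $\mathcal{P}(\mathcal{C})$ consisting of the canonical maps $\underline{X_1}\amalg\cdots\amalg\underline{X_n}\rightarrow\underline{X_1\amalg\cdots\amalg X_n}$ (for all finite collections of objects $X_i\in\mathcal{C}$, including $n=0$, which gives $\emptyset_{\mathcal{P}(\mathcal{C})}\rightarrow\underline{\mathbf{1}}$ for $\mathbf{1}$ the unit of $\amalg$ in $\mathcal{C}$). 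By the universal property of localization described at the end of \S\ref{1.1.4}, $\text{Fun}^\text{L}(\mathcal{P}_\amalg(\mathcal{C}),\mathcal{D})$ is the full subcategory of $\text{Fun}^\text{L}(\mathcal{P}(\mathcal{C}),\mathcal{D})$ consisting of those left adjoints that invert every morphism in $S$.

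Next I would apply the ordinary Yoneda lemma (Theorem \ref{Yoneda3}) to get an equivalence $\text{Fun}^\text{L}(\mathcal{P}(\mathcal{C}),\mathcal{D})\cong\text{Fun}(\mathcal{C},\mathcal{D})$, realized by precomposition with the Yoneda embedding $\mathcal{C}\subseteq\mathcal{P}(\mathcal{C})$. The key computation is to trace through this equivalence: a left adjoint $L\colon\mathcal{P}(\mathcal{C})\rightarrow\mathcal{D}$ is determined (up to contractible choice) by its restriction $F=L|_\mathcal{C}$, and $L$ sends the generating local morphism $\underline{X_1}\amalg\cdots\amalg\underline{X_n}\rightarrow\underline{X_1\amalg\cdots\amalg X_n}$ to the canonical comparison map $F(X_1)\amalg\cdots\amalg F(X_n)\rightarrow F(X_1\amalg\cdots\amalg X_n)$ in $\mathcal{D}$ (using that $L$ preserves colimits and that Yoneda sends the representable of a coproduct to a distinguished object). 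Thus $L$ inverts $S$ precisely when $F$ preserves finite coproducts, so the Yoneda equivalence restricts to $\text{Fun}^\text{L}(\mathcal{P}_\amalg(\mathcal{C}),\mathcal{D})\cong\text{Fun}^\amalg(\mathcal{C},\mathcal{D})$, where $\text{Fun}^\amalg$ denotes finite-coproduct-preserving functors.

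Finally I would invoke the cocartesian version of Remark \ref{RmkCartMon}: since $\mathcal{D}^\amalg$ is cocartesian monoidal (which $\mathcal{D}$ admits, being presentable), symmetric monoidal functors $\mathcal{C}^\amalg\rightarrow\mathcal{D}^\amalg$ coincide with ordinary functors $\mathcal{C}\rightarrow\mathcal{D}$ preserving finite coproducts. Therefore $\text{Hom}(\mathcal{C}^\amalg,\mathcal{D}^\amalg)\cong\text{Fun}^\amalg(\mathcal{C},\mathcal{D})$, and composing with the previous equivalence yields the desired equivalence, manifestly realized by precomposition with the Yoneda embedding of Proposition \ref{SMYoneda2}.

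The main obstacle will be the bookkeeping in the middle step: verifying that the ordinary Yoneda correspondence $L\leftrightarrow F$ really does identify inversion of the generating local morphisms in $\mathcal{P}(\mathcal{C})$ with preservation of finite coproducts in $\mathcal{D}$. This rests on the fact that left Kan extension along the Yoneda embedding computes $L(\underline{X})=F(X)$ and commutes with colimits, together with the compatibility of the Yoneda embedding with finite coproducts of representables; both facts are standard but deserve explicit checking. Everything else is either localization formalism (§\ref{1.1.4}) or the cartesian-monoidal/cocartesian-monoidal translation (Remark \ref{RmkCartMon}).
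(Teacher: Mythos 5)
Your argument is correct. The paper gives no proof of this proposition --- it is quoted directly from HTT 5.3.6.10 --- and your argument (exhibiting $\mathcal{P}_\amalg(\mathcal{C})$ as the accessible localization of $\mathcal{P}(\mathcal{C})$ at the coproduct-comparison maps between representables, applying Theorem \ref{Yoneda3} together with the universal property of localization, and translating finite-coproduct-preservation into symmetric monoidality via the cocartesian dual of Remark \ref{RmkCartMon}) is precisely the standard proof of the cited result, so there is nothing substantive to compare.
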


\subsection{Free cocartesian monoidal \texorpdfstring{$\infty$-}{infinity }categories}\label{4.1.2}
\noindent By Proposition \ref{TupHup} and since $\text{CocartMon}\cong\text{Mod}_\text{Fin}$, the forgetful functor $$\text{CocartMon}\rightarrow\text{SymMon}$$ has both a left adjoint (free functor), given by $-\otimes\text{Fin}^\amalg$, and a right adjoint (cofree functor), given by $\text{Hom}(\text{Fin}^\amalg,-)$.

Each of these adjoints can be described more concretely. The cofree construction amounts to passing to commutative algebra objects, $$\text{Hom}(\text{Fin}^\amalg,\mathcal{C}^\oast)\cong\text{CAlg}(\mathcal{C}^\oast),$$ as in Example \ref{ExFormulas}.

The free construction can be analyzed by means of Proposition \ref{SMYoneda2}: $$\mathcal{C}^\oast\otimes\text{Fin}^\amalg\subseteq\text{Hom}(\mathcal{C}^{\text{op},\oast}\otimes\text{Fin}^{\text{op},\amalg},\text{Top}^\times).$$ Since $\text{Top}^\times$ is cartesian monoidal, $\text{Hom}(\text{Fin}^{\text{op},\amalg},\text{Top}^\times)\cong\text{Top}^\times$. Thus, by the tensor-Hom adjunction, there is a full subcategory inclusion $$\mathcal{C}^\oast\otimes\text{Fin}^\amalg\subseteq\text{Hom}(\mathcal{C}^{\text{op},\oast},\text{Top}^\times).$$ In fact, we can describe the full subcategory $\mathcal{C}\otimes\text{Fin}$ as follows:

\begin{theorem}[\cite{Berman} 3.10]
\label{SMYonedaT}
If $\mathcal{C}^\oast$ is a symmetric monoidal $\infty$-category, the forgetful functor $\text{Fgt}_\mathcal{C}:\text{Hom}(\mathcal{C}^{\text{op},\oast},\text{Top}^\times)\rightarrow\text{Fun}(\mathcal{C}^\text{op},\text{Top})$ has a left adjoint $\text{Free}_\mathcal{C}$. If $X\in\mathcal{C}$, let $\underline{X}$ denote the representable functor $\text{Map}_\mathcal{C}(-,X)$.

Then the free cocartesian monoidal $\infty$-category $\mathcal{C}^\oast\otimes\text{Fin}^\amalg$ is equivalent to the full subcategory of $\text{Hom}(\mathcal{C}^{\text{op},\oast},\text{Top}^\times)$ spanned by the objects $\text{Free}_\mathcal{C}(\underline{X})$, $X\in\mathcal{C}$, along with its cocartesian monoidal structure.
\end{theorem}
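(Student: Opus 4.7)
The plan is to combine the cocartesian monoidal Yoneda embedding (Proposition \ref{SMYoneda2}) with a tensor-Hom rearrangement of $\mathcal{P}_\amalg(\mathcal{C}\otimes\text{Fin})$. First, the existence of $\text{Free}_\mathcal{C}$ comes from the adjoint functor theorem: both $\text{Hom}(\mathcal{C}^{\text{op},\oast},\text{Top}^\times)$ and $\text{Fun}(\mathcal{C}^\text{op},\text{Top})$ are presentable, and $\text{Fgt}_\mathcal{C}$ preserves small limits and filtered colimits (these are computed pointwise and commute with products in $\text{Top}^\times$), so HTT 5.5.2.9 produces the left adjoint $\text{Free}_\mathcal{C}$.

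Next, I would exhibit the chain of equivalences
\begin{align*}
\mathcal{P}_\amalg(\mathcal{C}\otimes\text{Fin})
&\cong\text{Hom}\bigl(\mathcal{C}^{\text{op},\oast}\otimes\text{Fin}^{\text{op},\amalg},\text{Top}^\times\bigr) \\
&\cong\text{Hom}\bigl(\mathcal{C}^{\text{op},\oast},\text{Hom}(\text{Fin}^{\text{op},\amalg},\text{Top}^\times)\bigr) \\
&\cong\text{Hom}\bigl(\mathcal{C}^{\text{op},\oast},\text{Top}^\times\bigr)
\end{align*}
by applying in turn Proposition \ref{SMYoneda1}, the tensor-Hom adjunction in $\text{SymMon}^\otimes$, and Corollary \ref{2T3b} (which gives $\text{Hom}(\text{Fin}^{\text{op},\amalg},\text{Top}^\times)\cong\text{Top}^\times$ because $\text{Top}^\times$ is cartesian monoidal). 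As an equivalence in $\text{Pr}^\text{L}$ the composite preserves coproducts, hence it respects the canonical cocartesian monoidal structure on each side. Composing with the Yoneda embedding of Proposition \ref{SMYoneda2} then yields a fully faithful cocartesian monoidal functor
\[
\Phi:\mathcal{C}^\oast\otimes\text{Fin}^\amalg\hookrightarrow\text{Hom}(\mathcal{C}^{\text{op},\oast},\text{Top}^\times).
\]

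Then, I would identify $\Phi$ on generators. Writing $\iota X$ for the image of $X\in\mathcal{C}$ in $\mathcal{C}\otimes\text{Fin}$, if $F\in\text{Hom}(\mathcal{C}^{\text{op},\oast},\text{Top}^\times)$ corresponds to $F'\in\mathcal{P}_\amalg(\mathcal{C}\otimes\text{Fin})$ under the above equivalence, then $F'(\iota X)\cong F(X)$, because under tensor-Hom one recovers $F$ from $F'$ by evaluating the Fin-slot at the singleton. Therefore
\[
\text{Map}(\Phi(\iota X),F)\cong\text{Map}(\underline{\iota X},F')\cong F'(\iota X)\cong F(X)\cong\text{Map}(\underline{X},\text{Fgt}_\mathcal{C} F)\cong\text{Map}(\text{Free}_\mathcal{C}(\underline{X}),F),
\]
and the Yoneda lemma gives $\Phi(\iota X)\cong\text{Free}_\mathcal{C}(\underline{X})$. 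Since $\mathcal{C}^\oast\otimes\text{Fin}^\amalg$ is generated under finite coproducts by the image of $\mathcal{C}$ (it is the free Fin-module on $\mathcal{C}^\oast$), the essential image of $\Phi$ is precisely the smallest full cocartesian-monoidal subcategory containing the $\text{Free}_\mathcal{C}(\underline{X})$, which is the claim.

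The main obstacle I expect is the bookkeeping around the tensor-Hom rearrangement — specifically, confirming that the displayed equivalence lives in $\text{Pr}^\text{L}$ (and so preserves the canonical cocartesian monoidal structures) rather than merely in $\widehat{\text{Cat}}$, and that $\Phi$ then genuinely underlies a cocartesian monoidal functor. This ultimately rests on the symmetric monoidality of $\mathcal{P}_\amalg$ established in Proposition \ref{SMYoneda1} together with the tensor-Hom adjunction structure on $\text{SymMon}^\otimes$.
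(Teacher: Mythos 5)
Your overall route is the same as the one the paper sketches in \S\ref{4.1.2} immediately before the theorem (the paper itself defers the details to \cite{Berman} 3.10): embed $\mathcal{C}^\oast\otimes\text{Fin}^\amalg$ into $\mathcal{P}_\amalg(\mathcal{C}\otimes\text{Fin})$ via Proposition \ref{SMYoneda2}, rewrite the target by tensor-Hom and $\text{Hom}(\text{Fin}^{\text{op},\amalg},\text{Top}^\times)\cong\text{Top}^\times$, and then identify the generators by a corepresentability computation. The mapping-space chain identifying $\Phi(\iota X)$ with $\text{Free}_\mathcal{C}(\underline{X})$ is correct and clean.

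There is, however, one genuine gap, and it is exactly the point the paper flags as the nontrivial ingredient. At the end you assert that $\mathcal{C}^\oast\otimes\text{Fin}^\amalg$ is generated under finite coproducts by the image of $\mathcal{C}$, justified only by the parenthetical ``it is the free Fin-module on $\mathcal{C}^\oast$.'' That does not follow from the universal property of the free module: the tensor product in $\text{SymMon}^\otimes$ is defined abstractly, and nothing in the universal property says its objects are ``simple tensors.'' What you need is Lemma \ref{LemmaES} (tensoring preserves essential surjectivity), applied to the essentially surjective unit map $\text{Fin}^\text{iso}\rightarrow\text{Fin}$: this gives that $\mathcal{C}\cong\mathcal{C}\otimes\text{Fin}^\text{iso}\rightarrow\mathcal{C}\otimes\text{Fin}$ is essentially surjective, so every object of $\mathcal{C}\otimes\text{Fin}$ is already of the form $\iota X$ (which also explains why the theorem states the image is \emph{spanned by} the $\text{Free}_\mathcal{C}(\underline{X})$ rather than generated by them under coproducts; the two agree because $\iota X\amalg\iota Y\cong\iota(X\oast Y)$). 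Without this lemma your argument only bounds the essential image of $\Phi$ from below, not from above. Separately, your appeal to the adjoint functor theorem for $\text{Free}_\mathcal{C}$ silently uses that $\text{Hom}(\mathcal{C}^{\text{op},\oast},\text{Top}^\times)$ is presentable; this is true but deserves a citation rather than an assertion.
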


\begin{remark}
Dually (by considering $\mathcal{C}^\text{op}$ instead of $\mathcal{C}$), $\mathcal{C}^\oast\otimes\text{Fin}^{\text{op},\amalg}$ is the opposite of the corresponding full subcategory of $\text{Hom}(\mathcal{C}^\oast,\text{Top}^\times)$.
\end{remark}

\begin{proof}
The proof is somewhat technical and requires Lemma \ref{LemmaES} which, while straightforward, we would rather delay to the next section for reasons of exposition. Therefore, see \cite{Berman} Theorem 3.10 for a proof.
\end{proof}

\noindent By Theorem \ref{SMYonedaT}, computation of the free functor $-\otimes\text{Fin}^\amalg:\text{SymMon}\rightarrow\text{CocartMon}$ reduces to computation of another free functor $\text{Fun}(\mathcal{C}^\text{op},\text{Top})\rightarrow\text{Hom}(\mathcal{C}^{\text{op},\oast},\text{Top}^\times)$. In general, this is still a difficult problem, but in the event that $\mathcal{C}^\oast$ is \emph{cyclic}, we can say something more concrete (Proposition \ref{FreeLawv}), and in the further event that $\mathcal{C}^\oast$ is the symmetric monoidal envelope of an operad, we can be very concrete indeed. We will return to this in \S\ref{4.3.1}.

\subsection{Presentable semiring \texorpdfstring{$\infty$-}{infinity }categories}\label{4.1.3}
\noindent Taken together, the three propositions of \S\ref{4.1.1} are suggestive of a symmetric monoidal adjunction $$\mathcal{P}_\amalg:\text{CocartMon}^\otimes\rightleftarrows\text{Pr}^{\text{L},\otimes}:\text{Fgt},$$ such that the unit of the adjunction is fully faithful. However, such an adjunction does not quite exist, due to set-theoretic reasons (see the introduction).

Nonetheless, $\mathcal{P}_\amalg$ and Fgt behave much like an adjunction. For example, any functor with a symmetric monoidal left adjoint is lax symmetric monoidal --- and indeed, Fgt is lax symmetric monoidal:

\begin{lemma}
\label{LemPrRing}
The forgetful functor $\text{Pr}^\text{L}\rightarrow\widehat{\text{SymMon}}$ (which sends a presentable $\infty$-category $\mathcal{C}$ to the cocartesian monoidal $\infty$-category $\mathcal{C}^\amalg$) lifts to a lax symmetric monoidal functor, from $\text{Pr}^{\text{L},\otimes}$ to $\widehat{\text{SymMon}}^\otimes$.\footnote{Recall $\widehat{\text{SymMon}}$ refers to \emph{large} symmetric monoidal $\infty$-categories.}
\end{lemma}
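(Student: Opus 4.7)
The plan is to factor the forgetful functor through the $\infty$-category of (large) cocartesian monoidal $\infty$-categories. Since a presentable $\infty$-category has all small coproducts and left adjoints preserve them, the forgetful functor factors as
\[
\text{Pr}^\text{L}\longrightarrow\widehat{\text{CocartMon}}\hookrightarrow\widehat{\text{SymMon}}.
\]
The second arrow is a symmetric monoidal inclusion: by Example \ref{ExCfFin} and the fact that Fin is solid, the tensor product on $\widehat{\text{CocartMon}}\cong\text{Mod}_{\text{Fin}}$ coincides with the one inherited from $\widehat{\text{SymMon}}$. So it suffices to show the first arrow is lax symmetric monoidal.

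Next I would identify this first arrow with the restriction of one of Lurie's forgetful functors from HA 4.8.1. Let $K$ denote the collection of finite discrete simplicial sets and $K'$ the collection of all small simplicial sets, so $K\subseteq K'$. By Example \ref{ExCfFin}, $\widehat{\text{CocartMon}}^\otimes\cong\widehat{\text{Cat}}(K)^\otimes$, and by Example \ref{ExSMPrL}, $\text{Pr}^{\text{L},\otimes}$ is a symmetric monoidal full subcategory of $\widehat{\text{Cat}}(K')^\otimes$. The forgetful functor $U:\widehat{\text{Cat}}(K')\to\widehat{\text{Cat}}(K)$ admits a left adjoint $L$ which freely adjoins the missing colimits, and $L$ is symmetric monoidal: given $\mathcal{C},\mathcal{D}\in\widehat{\text{Cat}}(K)$, a $K'$-cocontinuous functor $L(\mathcal{C})\otimes_{K'}L(\mathcal{D})\to\mathcal{E}$ corresponds to a functor $\mathcal{C}\times\mathcal{D}\to\mathcal{E}$ which is bi-$K$-cocontinuous, which is the same data as a $K$-cocontinuous functor $\mathcal{C}\otimes_K\mathcal{D}\to\mathcal{E}$, hence to a $K'$-cocontinuous functor $L(\mathcal{C}\otimes_K\mathcal{D})\to\mathcal{E}$. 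Thus $L(\mathcal{C})\otimes_{K'}L(\mathcal{D})\cong L(\mathcal{C}\otimes_K\mathcal{D})$ by the Yoneda lemma, and similarly on units. By the general principle that the right adjoint of a symmetric monoidal functor is lax symmetric monoidal (HA 7.3.2.7, or \cite{GepHaug} A.5.11), $U$ is lax symmetric monoidal, with structure map
\[
\mathcal{C}^\amalg\otimes\mathcal{D}^\amalg\;\longrightarrow\;(\mathcal{C}\otimes_{\text{Pr}^\text{L}}\mathcal{D})^\amalg.
\]

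The hard part will be verifying that $L$ is really symmetric monoidal, because this requires computing free completions under $K'$-colimits explicitly enough to check compatibility with Lurie's tensor product. If the adjoint argument above proves too delicate, I would fall back on a direct construction of the structure map: the universal bilinear functor $\mathcal{C}\times\mathcal{D}\to\mathcal{C}\otimes_{\text{Pr}^\text{L}}\mathcal{D}$ preserves small colimits in each variable, in particular finite coproducts, so by the universal property of $\otimes_K$ it induces the desired functor $\mathcal{C}^\amalg\otimes\mathcal{D}^\amalg\to(\mathcal{C}\otimes_{\text{Pr}^\text{L}}\mathcal{D})^\amalg$ in $\widehat{\text{CocartMon}}$. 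The associativity and unit coherences then follow formally from the uniqueness parts of the two universal properties, since both compositions in any coherence diagram classify the same multilinear functor out of a product $\mathcal{C}_1\times\cdots\times\mathcal{C}_n$.
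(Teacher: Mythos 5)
Your proposal is correct and follows essentially the same route as the paper: the paper also factors through $\widehat{\text{CocartMon}}\cong\widehat{\text{Cat}}(K)$ for $K$ the finite discrete simplicial sets, notes that $\text{Pr}^{\text{L},\otimes}\subseteq\widehat{\text{Cat}}(K')^\otimes$ is symmetric monoidal (HA 4.8.1.15), and obtains lax symmetric monoidality of the forgetful functor as the right adjoint of the symmetric monoidal free-cocompletion functor $\mathcal{P}_\amalg$ (HA 4.8.1.8 plus HTT 5.3.6.10). The only difference is that you sketch by hand the universal-property argument for why the left adjoint is symmetric monoidal, where the paper simply cites HA 4.8.1.8.
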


\begin{proof}
Let $K$ denote the set of all small simplicial sets, so that an object of $\text{Cat}(K)$ is an $\infty$-category which admits all small colimits. By HTT 5.3.6.10, there is an adjunction $$\mathcal{P}_\amalg:\widehat{\text{CocartMon}}\rightleftarrows\widehat{\text{Cat}}(K):\text{Fgt},$$ which is even symmetric monoidal by HA 4.8.1.8. Moreover, the inclusion $\text{Pr}^\text{L}\subseteq\widehat{\text{Cat}}(K)$ is symmetric monoidal by HA 4.8.1.15. Thus the composite is lax symmetric monoidal: $\text{Pr}^{\text{L},\otimes}\rightarrow\widehat{\text{SymMon}}^\otimes$.
\end{proof}

\noindent We may now conclude Proposition \ref{PropPrRing} from Chapter \ref{3}.

\begin{corollary*}[Proposition \ref{PropPrRing}]
If $\mathcal{C}^\oast$ is a presentable and closed symmetric monoidal $\infty$-category, then $\mathcal{C}^{\amalg,\oast}$ is a (large) commutative semiring $\infty$-category.
\end{corollary*}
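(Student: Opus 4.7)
The statement is essentially a corollary of Lemma \ref{LemPrRing}, so the plan is to chain together four previously recorded facts and check that nothing is lost in translation.

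First, I would recall the characterization of closed symmetric monoidal presentable $\infty$-categories from \S\ref{1.1.4}: the hypothesis on $\mathcal{C}^\otimes$ is exactly the statement that $\mathcal{C}^\otimes$ is a commutative algebra object in $\text{Pr}^{\text{L},\otimes}$. So the input data is an object of $\text{CAlg}(\text{Pr}^{\text{L},\otimes})$.

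Next, I would invoke Lemma \ref{LemPrRing}, which produces a lax symmetric monoidal forgetful functor $\text{Fgt} \colon \text{Pr}^{\text{L},\otimes} \to \widehat{\text{SymMon}}^\otimes$ sending a presentable $\infty$-category $\mathcal{D}$ to its underlying cocartesian monoidal $\infty$-category $\mathcal{D}^\amalg$. Lax symmetric monoidal functors send commutative algebras to commutative algebras (this is built into the $\infty$-operadic definition, HA 2.1.3.1), so applying $\text{CAlg}(-)$ yields a functor
\[ \text{CAlg}(\text{Pr}^{\text{L},\otimes}) \longrightarrow \text{CAlg}(\widehat{\text{SymMon}}^\otimes). \]
Tracing through the construction, the image of $\mathcal{C}^\otimes$ is the symmetric monoidal $\infty$-category $\mathcal{C}^\amalg$ equipped with the additional $\otimes$-multiplication and unit inherited from its presentable closed symmetric monoidal structure; in the notation of the chapter, this is precisely $\mathcal{C}^{\amalg,\otimes}$.

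Finally, I would close by identifying commutative algebras in $\widehat{\text{SymMon}}^\otimes$ with (large) commutative semiring $\infty$-categories, which is the content of Definition \ref{DefCSRing} applied to $\mathcal{V} = \widehat{\text{Cat}}$. The main obstacle to this proof was already absorbed into the proof of Lemma \ref{LemPrRing}, namely the nontrivial compatibility of the symmetric monoidal structures on $\text{Pr}^{\text{L},\otimes}$ and $\widehat{\text{Cat}}(K)^\otimes$ (HA 4.8.1.15) and the symmetric monoidality of the presheaf-with-finite-coproducts construction (HA 4.8.1.8); here we only need to cite it and observe that commutative algebra structures are transported along the resulting lax symmetric monoidal functor.
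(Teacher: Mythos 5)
Your proposal is correct and is exactly the paper's intended argument: the paper also deduces the statement by noting that the hypothesis makes $\mathcal{C}^\oast$ a commutative algebra in $\text{Pr}^{\text{L},\otimes}$ and then transporting that structure along the lax symmetric monoidal forgetful functor of Lemma \ref{LemPrRing} into $\widehat{\text{SymMon}}^\otimes$. No gaps.
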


\begin{example}
The following are commutative semiring $\infty$-categories: Set, CMon, Ab, $\text{Cat}_1$, Top, $\text{CMon}_\infty$, $\text{Ab}_\infty$, Sp, and Cat.
\end{example}

\section{Cyclic modules}\label{4.2}
\noindent In this section, we will set up the theory of cyclic modules over a semiring $\infty$-category. A cyclic $\mathcal{R}$-module is an $\mathcal{R}$-module $\mathcal{M}$ equipped with an essentially surjective morphism of $\mathcal{R}$-modules $\mathcal{R}\rightarrow\mathcal{M}$. This amounts to a choice of object $X\in\mathcal{M}$ such that every object of $\mathcal{M}$ is of the form $R\otimes X$ for some $R\in\mathcal{R}$.

\begin{warning}
Classically, if $R$ is a commutative ring, cyclic $R$-modules are all quotients of $R$, and therefore correspond to ideals of $R$. In this way, every cyclic $R$-module even has a ring structure!

Analogous statements are \emph{not} true of cyclic $\mathcal{R}$-modules when $\mathcal{R}$ is a commutative semiring $\infty$-category. For example, cyclic $\mathcal{R}$-modules need not have any semiring structure. Consider the following motivating example:

If $\mathcal{R}=\text{Burn}$ is the Burnside $\infty$-category, then cyclic Burn-modules correspond to associative (or $\mathbb{E}_1$) ring spaces. We will prove this in \S\ref{4.2.3}.

Thus we may view cyclic $\mathcal{R}$-modules as being a sort of generalization of noncommutative algebra.
\end{warning}

\noindent Cyclic $\mathcal{R}$-modules also encode information related to \emph{algebraic theories} relative to $\mathcal{R}$. For example, cyclic $\text{Fin}^\text{iso}$-modules are PROPs, and cyclic $\text{Fin}^\text{op}$-modules are Lawvere theories. If $G$ is a finite group, then cyclic $\text{Fin}_G^\text{op}$-modules can be regarded as \emph{equivariant Lawvere theories}, and appear to play a central role in equivariant homotopy theory (see Appendix \ref{A}).

\subsubsection{Organization}
\noindent In \S\ref{4.2.1}, we set up the general theory of cyclic modules. Then in \S\ref{4.2.2}, we introduce Lawvere theories as cyclic $\text{Fin}^\text{op}$-modules, proving our main result that Lawv is a symmetric monoidal colocalization of $\text{Pr}^\text{L}_\ast$. Results of Gepner, Groth, and Nikolaus \cite{GGN} allow us to recognize and construct many examples of Lawvere theories.

For example, if $R$ is a semiring space, there is a Lawvere theory $\text{Burn}_R$ that models $R$-modules. In \S\ref{4.2.3}, we show that these are the only semiadditive Lawvere theories, providing an equivalence $$\text{CycMod}_{\text{Burn}^\text{eff}}\cong\mathbb{E}_1\text{SRing}(\text{Top}).$$ We conclude that additive $\infty$-categories are modules over the Burnside $\infty$-category, which completes the proof of the main result of Chapter \ref{3}.

\subsection{Cyclic modules}\label{4.2.1}
\begin{definition}
Let $\mathcal{R}$ be a commutative semiring $\infty$-category. A \emph{pointed $\mathcal{R}$-module} is an $\mathcal{R}$-module $\mathcal{M}$ along with a choice of distinguished object $X\in\mathcal{R}$; or, equivalently, an $\mathcal{R}$-module $\mathcal{M}$ together with a distinguished map of $\mathcal{R}$-modules $\mathcal{R}\rightarrow\mathcal{M}$. We denote by $$\text{Mod}_{\mathcal{R},\ast}=(\text{Mod}_\mathcal{R})_{\mathcal{R}/}$$ the $\infty$-category thereof.
\end{definition}

\begin{remark}
$\text{Mod}_{\mathcal{R},\ast}$ can be identified with $\mathbb{E}_0$-algebras in $\text{Mod}_\mathcal{R}$ (HA 2.1.3.10) and therefore inherits a symmetric monoidal structure $\otimes_\mathcal{R}$ from $\text{Mod}_\mathcal{R}$.
\end{remark}

\begin{definition}
A \emph{cyclic $\mathcal{R}$-module} is a pointed $\mathcal{R}$-module such that the distinguished map $\mathcal{R}\rightarrow\mathcal{M}$ is essentially surjective. We denote by $\text{CycMod}_\mathcal{R}$ the full subcategory of $\text{Mod}_{\mathcal{R},\ast}$ spanned by cyclic modules.
\end{definition}

\begin{example}
\label{PROPExample}
A cyclic $\text{Fin}^\text{iso}$-module (or just cyclic symmetric monoidal $\infty$-category) is a symmetric monoidal $\infty$-category $\mathcal{T}^\oast$ with a distinguished object $X$, such that every object of $\mathcal{T}$ is equivalent to $X^{\oast n}$ for some nonnegative integer $n$. Classically, these are called \emph{PROPs} (product and permutation categories) \cite{PROP}.

Given a PROP $\mathcal{T}^\oast$, a \emph{model} of $\mathcal{T}^\oast$ valued in $\mathcal{C}^\oast$ (often $\mathcal{C}^\oast=\text{Set}^\times$, $\text{Ab}^\otimes$, $\text{Top}^\times$, or $\text{Sp}^\wedge$) is a symmetric monoidal functor $\mathcal{T}^\oast\rightarrow\mathcal{C}^\oast$, and the $\infty$-category thereof is $$\text{Mdl}(\mathcal{T}^\oast,\mathcal{C}^\oast)=\text{Hom}(\mathcal{T}^\oast,\mathcal{C}^\oast).$$

When $\mathcal{C}=\text{Top}^\times$, we write just $\text{Mdl}(\mathcal{T}^\oast)=\text{Hom}(\mathcal{T}^\oast,\text{Top}^\times)$.

We can think of the model of $\mathcal{T}^\oast$ as picking out an object of $\mathcal{C}$ (the image of $X$) along with various maps $X^{\oast m}\rightarrow X^{\oast n}$ corresponding to the maps of $\mathcal{T}$. In this way, PROPs model different kinds of algebraic structure (groups, cogroups, abelian groups, rings, commutative rings, Hopf algebras, Lie algebras, etc.).
\end{example}

\begin{example}
Many of the solid semiring $\infty$-categories of Chapter \ref{3} are PROPs:
\begin{itemize}
\item Fin models commutative algebras;
\item $\text{Fin}^\text{op}$ models cocommutative coalgebras;
\item $\text{Burn}^\text{eff}=\text{Span}(\text{Fin})$ models commutative-cocommutative bialgebras (Corollary \ref{CofreeSemiadd});
\item $\text{Fin}^\text{inj}$ models pointed objects;
\item $\text{Fin}_\ast$ models augmented commutative algebras, etc.
\end{itemize}
\end{example}

\noindent With these examples in mind, we often think of cyclic $\mathcal{R}$-modules as `$\mathcal{R}$-indexed algebraic theories'. This perspective is meaningful even when $\mathcal{R}$ is more complex:

\begin{example}
\label{EqEx}
Fix a finite group $G$, and consider the semiring category of finite $G$-sets with isomorphisms $\text{Fin}_G^\text{iso}$. We may think of cyclic $\text{Fin}_G^\text{iso}$-modules as \emph{equivariant PROPs}.

For example, the subcategory inclusions $\text{Fin}_G^\text{iso}\rightarrow\text{Fin}_G^\text{op}$, $\text{Fin}_G^\text{iso}\rightarrow\text{Burn}_G$ are semiring functors, where $\text{Burn}_G$ is the classical Burnside category of spans of finite $G$-sets. These exhibit $\text{Fin}_G^{\text{op},\amalg}$ and $\text{Burn}_G^\amalg$ as $\text{Fin}_G^\text{iso}$-modules, which are certainly cyclic, and therefore equivariant PROPs.

By Elmendorf's theorem (\cite{Elmendorf} Theorem 1), $\text{Fin}_G^\text{op}$-models in spaces are genuine equivariant $G$-spaces: $$\text{Hom}(\text{Fin}_G^{\text{op},\amalg},\text{Top}^\times)\cong\text{Top}_G.$$ And by Guillou-May's Theorem (\cite{GMay2} Theorem 0.1), $\text{Burn}_G$-models in spectra are genuine equivariant $G$-spectra: $$\text{Hom}(\text{Burn}_G^\amalg,\text{Sp}^\wedge)\cong\text{Sp}_G.$$ In general, we can think of $\text{Fin}_G^{\text{op},\amalg}$ as the equivariant PROP modeling coefficient system objects, and $\text{Burn}_G$ as the equivariant PROP modeling Mackey functor objects.
\end{example}

\begin{example}
\label{LawvereExample}
If a PROP $\mathcal{T}^\oast$ is cartesian monoidal, it is called a Lawvere theory \cite{Lawvere}. By Theorem \ref{2T3}, Lawvere theories are cyclic $\text{Fin}^\text{op}$-modules.
\end{example}

\noindent Although 1-categorical PROPs and Lawvere theories have been studied extensively, higher categorical PROPs and Lawvere theories have only begun to be studied in the past few years. The only sources we are aware of are Cranch's thesis \cite{Cranch} and the appendix of \cite{GGN}.

\subsubsection{Cyclic modules are closed under tensor products}
\noindent We have defined $\text{CycMod}_\mathcal{R}$ as a full subcategory of pointed $\mathcal{R}$-modules; now we will show that it inherits good algebraic properties.

\begin{lemma}[\cite{Berman} 2.25]
\label{LemmaES}
Let $\mathcal{R}$ be a commutative semiring $\infty$-category. If $F:\mathcal{A}\rightarrow\mathcal{B}$ is an essentially surjective $\mathcal{R}$-module functor and $\mathcal{C}$ is another $\mathcal{R}$-module, then the functor $F_\ast:\mathcal{C}\otimes_\mathcal{R}\mathcal{A}\rightarrow\mathcal{C}\otimes_\mathcal{R}\mathcal{B}$ is also essentially surjective.
\end{lemma}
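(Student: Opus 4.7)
The strategy is to exploit the universal property of the relative tensor product to show that the essential image of $F_\ast$ exhausts $\mathcal{C} \otimes_\mathcal{R} \mathcal{B}$. First I would observe that the essential image of any $\mathcal{R}$-module symmetric monoidal functor is itself a symmetric monoidal sub-$\mathcal{R}$-module of its target: because $F_\ast(X) \oplus F_\ast(Y) \simeq F_\ast(X \oplus Y)$ and $R \otimes F_\ast(X) \simeq F_\ast(R \otimes X)$, the class of objects equivalent to something in the image of $F_\ast$ is closed under $\oplus$, under the $\mathcal{R}$-action, and contains the unit. Let $\mathcal{X} \subseteq \mathcal{C} \otimes_\mathcal{R} \mathcal{B}$ denote this sub-$\mathcal{R}$-module.

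Next I would show that $\mathcal{X}$ contains every pure tensor, i.e., every object in the image of the canonical $\mathcal{R}$-balanced bilinear functor $\mathcal{C} \times \mathcal{B} \to \mathcal{C} \otimes_\mathcal{R} \mathcal{B}$. This follows directly from essential surjectivity of $F$: any $B \in \mathcal{B}$ is equivalent to $F(A)$ for some $A \in \mathcal{A}$, so $C \otimes B \simeq C \otimes F(A) \simeq F_\ast(C \otimes A)$ lies in $\mathcal{X}$.

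The remainder of the argument is then formal. The bilinear functor $\mathcal{C} \times \mathcal{B} \to \mathcal{C} \otimes_\mathcal{R} \mathcal{B}$ factors through the inclusion $\iota : \mathcal{X} \hookrightarrow \mathcal{C} \otimes_\mathcal{R} \mathcal{B}$, producing an $\mathcal{R}$-balanced bilinear functor into $\mathcal{X}$. By the universal property this is classified by a symmetric monoidal $\mathcal{R}$-module functor $G : \mathcal{C} \otimes_\mathcal{R} \mathcal{B} \to \mathcal{X}$, and the composite $\iota \circ G$ classifies the original pure-tensor functor, so $\iota \circ G$ is equivalent to the identity. Since $\iota$ is a full subcategory inclusion admitting a retraction, it is essentially surjective, whence $\mathcal{X} = \mathcal{C} \otimes_\mathcal{R} \mathcal{B}$ and $F_\ast$ is essentially surjective.

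The main obstacle lies in the last paragraph: making precise the universal property of $\mathcal{C} \otimes_\mathcal{R} \mathcal{B}$ in the form ``symmetric monoidal $\mathcal{R}$-module functors out correspond to $\mathcal{R}$-balanced bilinear functors from $\mathcal{C} \times \mathcal{B}$,'' and verifying that the factored functor through $\mathcal{X}$ really induces a retraction rather than some lax variant. This is an $\infty$-categorical bookkeeping point rather than a mathematical obstacle, and it is built into the structure of $\otimes_\mathcal{R}$ as a symmetric monoidal operation on $\mathcal{R}$-modules compatible with the ambient symmetric monoidal structure on $\text{SymMon}^\otimes$ (cf.\ Proposition \ref{TupHup} and the discussion of module tensor products in \S\ref{1.1.3}).
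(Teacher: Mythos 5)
Your argument is correct and is essentially the paper's proof: the paper likewise takes $\mathcal{S}$ to be the essential image of $F_\ast$, uses essential surjectivity of $F$ to see that the canonical map lands in $\mathcal{S}$, and then invokes the universal property of $\otimes_\mathcal{R}$ to factor the identity of $\mathcal{C}\otimes_\mathcal{R}\mathcal{B}$ through $\mathcal{S}$. The only cosmetic difference is that the paper works with the curried (tensor--Hom adjunction) form of the universal property, showing that $j:\mathcal{C}\rightarrow\text{Hom}_\mathcal{R}(\mathcal{B},\mathcal{C}\otimes_\mathcal{R}\mathcal{B})$ factors through $\text{Hom}_\mathcal{R}(\mathcal{B},\mathcal{S})$, which sidesteps the ``$\mathcal{R}$-balanced bilinear functor'' bookkeeping you flag as the remaining obstacle.
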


\begin{proof}
Let $\mathcal{S}$ be the full subcategory of $\mathcal{C}\otimes_\mathcal{R}\mathcal{B}$ spanned by the image of $F_\ast$. Then the functor $$j:\mathcal{C}\rightarrow\text{Hom}_\mathcal{R}(\mathcal{B},\mathcal{C}\otimes_\mathcal{R}\mathcal{B})$$ (obtained via adjunction from the identity $\mathcal{C}\otimes_\mathcal{R}\mathcal{B}\rightarrow\mathcal{C}\otimes_\mathcal{R}\mathcal{B}$) factors $$\mathcal{C}\rightarrow\text{Hom}_\mathcal{R}(\mathcal{B},\mathcal{S})\subseteq\text{Hom}_\mathcal{R}(\mathcal{B},\mathcal{C}\otimes_\mathcal{R}\mathcal{B}),$$ since the image of $j(X)$ is the same (up to equivalence) as the image of $j(X)\circ F$, but $j(X)\circ F:\mathcal{A}\rightarrow\mathcal{C}\otimes_\mathcal{R}\mathcal{B}$ factors through $\mathcal{S}$.

Undoing the tensor-Hom adjunction, the identity on $\mathcal{C}\otimes_\mathcal{R}\mathcal{B}$ factors $$\mathcal{C}\otimes_\mathcal{R}\mathcal{B}\rightarrow\mathcal{S}\subseteq\mathcal{C}\otimes_\mathcal{R}\mathcal{B}$$ (up to equivalence). Therefore, $\mathcal{S}\subseteq\mathcal{C}\otimes_\mathcal{R}\mathcal{B}$ is essentially surjective. Since $\mathcal{S}$ is the image of $F_\ast$, this completes the proof.
\end{proof}

\noindent Both of the following propositions are direct corollaries of the lemma.

\begin{proposition}
\label{CyclicTensor}
If $\mathcal{M}$ and $\mathcal{N}$ are two cyclic $\mathcal{R}$-modules, $\mathcal{M}\otimes_\mathcal{R}\mathcal{N}$ is also cyclic. The structure map from $\mathcal{R}$ is given by $$\mathcal{R}\cong\mathcal{R}\otimes_\mathcal{R}\mathcal{R}\rightarrow\mathcal{M}\otimes_\mathcal{R}\mathcal{N},$$ the tensor product of the structure maps $\mathcal{R}\rightarrow\mathcal{M},\mathcal{N}$. That is, $\text{CycMod}_\mathcal{R}^{\otimes_\mathcal{R}}$ is symmetric monoidal, and the subcategory inclusion $\text{CycMod}_\mathcal{R}^{\otimes_\mathcal{R}}\subseteq\text{Mod}_{\mathcal{R},\ast}^{\otimes_\mathcal{R}}$ is also symmetric monoidal.
\end{proposition}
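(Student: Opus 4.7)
The plan is to reduce the proposition directly to Lemma \ref{LemmaES} via two applications. Let $f:\mathcal{R}\rightarrow\mathcal{M}$ and $g:\mathcal{R}\rightarrow\mathcal{N}$ denote the structure maps of the two cyclic modules, both essentially surjective by hypothesis. The tensor product $\mathcal{M}\otimes_\mathcal{R}\mathcal{N}$ inherits its pointing by viewing pointed $\mathcal{R}$-modules as $\mathbb{E}_0$-algebras in $\text{Mod}_\mathcal{R}$ (HA 2.1.3.10); concretely, the pointing is the composite $\mathcal{R}\cong\mathcal{R}\otimes_\mathcal{R}\mathcal{R}\xrightarrow{f\otimes g}\mathcal{M}\otimes_\mathcal{R}\mathcal{N}$, which factors as $(f\otimes\mathrm{id}_\mathcal{R})$ followed by $(\mathrm{id}_\mathcal{M}\otimes g)$. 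So it suffices to show each of these is essentially surjective.

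First I would apply Lemma \ref{LemmaES} to $f:\mathcal{R}\rightarrow\mathcal{M}$ with the auxiliary module $\mathcal{C}=\mathcal{R}$, obtaining that $f\otimes\mathrm{id}_\mathcal{R}:\mathcal{R}\otimes_\mathcal{R}\mathcal{R}\rightarrow\mathcal{M}\otimes_\mathcal{R}\mathcal{R}$ is essentially surjective. Then I would apply Lemma \ref{LemmaES} again, this time to $g:\mathcal{R}\rightarrow\mathcal{N}$ with $\mathcal{C}=\mathcal{M}$, obtaining that $\mathrm{id}_\mathcal{M}\otimes g:\mathcal{M}\otimes_\mathcal{R}\mathcal{R}\rightarrow\mathcal{M}\otimes_\mathcal{R}\mathcal{N}$ is essentially surjective. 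Composites of essentially surjective functors are essentially surjective, so $\mathcal{M}\otimes_\mathcal{R}\mathcal{N}$ is cyclic with the indicated structure map.

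For the final assertion, the unit of $\text{Mod}_{\mathcal{R},\ast}^{\otimes_\mathcal{R}}$ is $\mathcal{R}$ pointed by $\mathrm{id}_\mathcal{R}$, which is trivially cyclic. Combined with the closure under $\otimes_\mathcal{R}$ just established, $\text{CycMod}_\mathcal{R}$ is a full subcategory of $\text{Mod}_{\mathcal{R},\ast}$ which contains the unit and is closed under the tensor product; standard nonsense (e.g.\ HA 2.2.1) then promotes the inclusion to a symmetric monoidal functor, giving $\text{CycMod}_\mathcal{R}^{\otimes_\mathcal{R}}$ its symmetric monoidal structure. There is no serious obstacle here: Lemma \ref{LemmaES} does all the work, and the only minor subtlety is identifying the pointing on $\mathcal{M}\otimes_\mathcal{R}\mathcal{N}$ correctly, which requires unwinding the $\mathbb{E}_0$-algebra description of pointed modules.
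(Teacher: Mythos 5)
Your proof is correct and is exactly the argument the paper intends: the proposition is stated there as a direct corollary of Lemma \ref{LemmaES}, obtained by factoring the structure map $f\otimes g$ through $\mathcal{M}\otimes_\mathcal{R}\mathcal{R}$ and applying the lemma to each factor (your first application is actually redundant, since $\mathcal{R}\otimes_\mathcal{R}\mathcal{R}\to\mathcal{M}\otimes_\mathcal{R}\mathcal{R}$ is just $f$ after the unit identifications, but this is harmless). The closing remarks about the unit and the symmetric monoidal inclusion are also as in the paper.
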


\begin{proposition}
For $\mathcal{A}\in\text{CAlg}_\mathcal{R}$, the functor $$\mathcal{A}\otimes_\mathcal{R}-\colon\text{Mod}_\mathcal{R}\rightarrow\text{Mod}_\mathcal{A}$$ restricts to $$\mathcal{A}\otimes_\mathcal{R}-\colon\text{CycMod}_\mathcal{R}\rightarrow\text{CycMod}_\mathcal{A}.$$
\end{proposition}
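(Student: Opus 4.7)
The plan is to apply Lemma \ref{LemmaES} almost verbatim, exactly as was done for Proposition \ref{CyclicTensor}. Suppose $\mathcal{M}$ is a cyclic $\mathcal{R}$-module, so by definition the structure map $\mathcal{R}\to\mathcal{M}$ is an essentially surjective morphism of $\mathcal{R}$-modules. Tensoring this morphism over $\mathcal{R}$ with the $\mathcal{R}$-module $\mathcal{A}$ (viewed as an $\mathcal{R}$-module via the commutative $\mathcal{R}$-algebra structure), we obtain a morphism of $\mathcal{A}$-modules
$$\mathcal{A}\cong\mathcal{A}\otimes_\mathcal{R}\mathcal{R}\longrightarrow\mathcal{A}\otimes_\mathcal{R}\mathcal{M},$$
and this will serve as the structure map exhibiting $\mathcal{A}\otimes_\mathcal{R}\mathcal{M}$ as a pointed $\mathcal{A}$-module.

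The key step is to verify that this map is essentially surjective, which is precisely the content of Lemma \ref{LemmaES} applied to $F\colon\mathcal{R}\to\mathcal{M}$ (essentially surjective by hypothesis) and the auxiliary module $\mathcal{C}=\mathcal{A}$. That lemma hands us directly the fact that $F_\ast\colon\mathcal{A}\otimes_\mathcal{R}\mathcal{R}\to\mathcal{A}\otimes_\mathcal{R}\mathcal{M}$ is essentially surjective, which is exactly the cyclicity condition we need. The only small bookkeeping point is to check that forming $\mathcal{A}\otimes_\mathcal{R}-$ is functorial on the level of pointed modules (it sends a distinguished map $\mathcal{R}\to\mathcal{M}$ to a distinguished map $\mathcal{A}\to\mathcal{A}\otimes_\mathcal{R}\mathcal{M}$), but this is automatic from the discussion following Proposition \ref{TupHup}, since $\mathcal{A}\otimes_\mathcal{R}-$ is a symmetric monoidal left adjoint and therefore carries $\mathbb{E}_0$-algebras to $\mathbb{E}_0$-algebras compatibly with the structure maps.

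There is no real obstacle: the proposition is a corollary of Lemma \ref{LemmaES} in the same way that Proposition \ref{CyclicTensor} was. Indeed, Proposition \ref{CyclicTensor} is the case $\mathcal{A}=\mathcal{R}$ together with a second cyclic module $\mathcal{N}$ playing the role of $\mathcal{A}$, so the present proposition is really just a mild relative reformulation. The proof is therefore a single line once Lemma \ref{LemmaES} is in hand.
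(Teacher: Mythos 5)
Your proof is correct and matches the paper's intended argument exactly: the paper states that this proposition (like Proposition \ref{CyclicTensor}) is a direct corollary of Lemma \ref{LemmaES}, applied precisely as you do with $F\colon\mathcal{R}\to\mathcal{M}$ the essentially surjective structure map and $\mathcal{C}=\mathcal{A}$. No gaps.
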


\begin{corollary}
Higher categorical PROPs and Lawvere theories form symmetric monoidal $\infty$-categories $$\text{PROP}^\otimes=\text{CycMod}_{\text{Fin}^\text{iso}}^\otimes$$ $$\text{Lawv}^\otimes\cong\text{CycMod}_{\text{Fin}^\text{op}}^\otimes$$ under the ordinary tensor product of symmetric monoidal $\infty$-categories. This tensor product has the following universal property for PROPs $$\text{Mdl}(\mathcal{T}\otimes\mathcal{T}^\prime,\mathcal{C}^\oast)\cong\text{Mdl}(\mathcal{T},\text{Mdl}(\mathcal{T}^\prime,\mathcal{C}^\oast))$$ and the slightly simpler universal property for Lawvere theories $$\text{Mdl}(\mathcal{L}\otimes\mathcal{L}^\prime)\cong\text{Mdl}(\mathcal{L},\text{Mdl}(\mathcal{L}^\prime)).$$
\end{corollary}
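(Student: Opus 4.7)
The plan is to build on Proposition \ref{CyclicTensor} together with the closed symmetric monoidal structure on $\text{SymMon}^\otimes$, with minimal additional machinery.

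First, I would apply Proposition \ref{CyclicTensor} with $\mathcal{R}=\text{Fin}^\text{iso}$ and $\mathcal{R}=\text{Fin}^\text{op}$, which exhibits PROP and Lawv as symmetric monoidal subcategories of the respective pointed module $\infty$-categories under $\otimes_\mathcal{R}$. It then remains to identify these relative tensor products with the absolute tensor product in $\text{SymMon}^\otimes$. For PROPs this is tautological, since $\text{Fin}^\text{iso}$ is the $\otimes$-unit of $\text{SymMon}^\otimes$ by Theorem \ref{2T1}, so $\otimes_{\text{Fin}^\text{iso}}=\otimes$ by definition. For Lawvere theories, $\text{Fin}^\text{op}$ is solid by Corollary \ref{2T3b}; by Definition \ref{DefSolid} this makes $\text{Mod}_{\text{Fin}^\text{op}}\to\text{SymMon}$ fully faithful and makes $\text{Fin}^\text{op}\otimes\text{Fin}^\text{op}\to\text{Fin}^\text{op}$ an equivalence, so the forgetful functor preserves tensor products of modules, giving $\mathcal{L}\otimes_{\text{Fin}^\text{op}}\mathcal{L}'\cong\mathcal{L}\otimes\mathcal{L}'$ whenever both are Lawvere theories.

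Next I would derive the universal properties from the closed structure on $\text{SymMon}^\otimes$. For PROPs the tensor-Hom adjunction gives immediately
$$\text{Mdl}(\mathcal{T}\otimes\mathcal{T}',\mathcal{C}^\oast)=\text{Hom}(\mathcal{T}\otimes\mathcal{T}',\mathcal{C}^\oast)\cong\text{Hom}(\mathcal{T},\text{Hom}(\mathcal{T}',\mathcal{C}^\oast))=\text{Mdl}(\mathcal{T},\text{Mdl}(\mathcal{T}',\mathcal{C}^\oast)).$$
For Lawvere theories, the key observation is that $\text{Mdl}(\mathcal{L}')=\text{Hom}(\mathcal{L}'^\times,\text{Top}^\times)$ inherits from $\text{Top}^\times$ a pointwise symmetric monoidal structure which is again cartesian, because finite products in functor $\infty$-categories are computed pointwise. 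Remark \ref{RmkCartMon} then identifies symmetric monoidal functors from $\mathcal{L}^\times$ into $\text{Mdl}(\mathcal{L}')^\times$ with product-preserving functors, and a second application of tensor-Hom produces the equivalence $\text{Mdl}(\mathcal{L}\otimes\mathcal{L}')\cong\text{Mdl}(\mathcal{L},\text{Mdl}(\mathcal{L}'))$.

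The main potential obstacle is mostly bookkeeping: confirming that absolute and relative tensor products really coincide on these modules (trivial for $\text{Fin}^\text{iso}$, via solidity for $\text{Fin}^\text{op}$), and verifying that the internal Hom stays inside $\text{CartMon}$ for the Lawvere case. Both reduce to properties of solid algebras already established in Chapter \ref{3}, so no substantially new input beyond Proposition \ref{CyclicTensor} and the tensor-Hom adjunction in $\text{SymMon}^\otimes$ should be required.
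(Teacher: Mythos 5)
Your proposal is correct and follows essentially the same route the paper intends: the corollary is stated as an immediate consequence of Proposition \ref{CyclicTensor} applied to $\mathcal{R}=\text{Fin}^\text{iso}$ and $\mathcal{R}=\text{Fin}^\text{op}$, with the relative tensor products identified with $\otimes$ via the unit property (Theorem \ref{2T1}) and solidity (Corollary \ref{2T3b}) respectively, and the universal properties extracted from the tensor-Hom adjunction in $\text{SymMon}^\otimes$. Your extra care in checking that $\text{Hom}(\mathcal{L}'^\times,\text{Top}^\times)$ remains cartesian monoidal is a detail the paper leaves implicit, but it is the right thing to verify.
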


\subsection{Lawvere theories}\label{4.2.2}
\noindent We will now turn to Lawvere theories, or cyclic $\text{Fin}^\text{op}$-modules $\mathcal{L}$. However, to maintain the usual notation of the Yoneda lemma, it will be more convenient to study cyclic Fin-modules $\mathcal{F}=\mathcal{L}^\text{op}$. We will try to be careful in using $\mathcal{L}$ vs. $\mathcal{F}$ to make it clear which object we are speaking of, but in any case there is an equivalence of $\infty$-categories $$(-)^\text{op}:\text{CycMod}_\text{Fin}\xrightarrow{\sim}\text{CycMod}_{\text{Fin}^\text{op}}\cong\text{Lawv}.$$ If $\mathcal{F}=\mathcal{L}^\text{op}$, we use either $\mathcal{P}_\amalg(\mathcal{F})$ or $\text{Mdl}(\mathcal{L})$ to refer to $\text{Hom}(\mathcal{L}^\times,\text{Top}^\times)$, depending on context.

Let $\text{Pr}^\text{L}_\ast$ denote the $\infty$-category of \emph{pointed} presentable $\infty$-categories. Since left adjoint functors out of Top are compltely determined by the image of the contractible space, specifying a pointing on a presentable $\infty$-category $\mathcal{C}$ amounts to any of the following equivalent data:
\begin{itemize}
\item a distinguished object $X\in\mathcal{C}$;
\item a left adjoint functor $\text{Free}:\text{Top}\rightarrow\mathcal{C}$;
\item a right adjoint functor $\text{Fgt}:\mathcal{C}\rightarrow\text{Top}$.
\end{itemize}

\noindent By the second point, pointed presentable $\infty$-categories are $\mathbb{E}_0$-algebras in $\text{Pr}^{\text{L},\otimes}$, so $\text{Pr}^\text{L}_\ast$ inherits a symmetric monoidal structure $\otimes$ from $\text{Pr}^\text{L}$.

The algebraic presheaf functor induces $\mathcal{P}_\amalg:\text{CycMod}_\text{Fin}\rightarrow\text{Pr}^\text{L}_\ast$, and the symmetric monoidal functor $\text{Fin}\rightarrow\mathcal{F}$ which exhibits $\mathcal{F}$ as cyclic induces a left adjoint functor $$\text{Top}\cong\mathcal{P}_\amalg(\text{Fin})\rightarrow\mathcal{P}_\amalg(\mathcal{F})$$ which exhibits $\mathcal{P}_\amalg(\mathcal{F})$ as pointed. (The right adjoint to this map is given by precomposition along $\text{Fin}\rightarrow\mathcal{F}$).

There is also a functor $(-)^\circ:\text{Pr}^\text{L}_\ast\rightarrow\text{CycMod}_\text{Fin}$ which sends a presentable $\infty$-category $\mathcal{C}$ (with distinguished object $X$) to the full subcategory spanned by $X^{\amalg n}$ ($n\geq 0$). We call $\mathcal{C}^\circ$ the \emph{skeleton} of $\mathcal{C}$. Our main result is as follows:

\begin{theorem}
\label{ThmLawvAdj}
There is a symmetric monoidal adjunction $$\mathcal{P}_\amalg:\text{CycMod}^\otimes_\text{Fin}\rightleftarrows\text{Pr}^{\text{L},\otimes}_\ast:(-)^\circ,$$ with fully faithful left adjoint. This adjunction takes the equivalent form $$\text{Mdl}:\text{Lawv}^\otimes\rightleftarrows\text{Pr}^{\text{L},\otimes}_\ast:\mathcal{L}_{(-)},$$ which exhibits Lawv as a colocalization of $\text{Pr}^\text{L}_\ast$.
\end{theorem}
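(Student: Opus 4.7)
The plan is to refine the cocartesian-monoidal/presentable Yoneda lemma of \S\ref{4.1.1} by keeping track of distinguished objects, and then enhance the resulting adjunction to be symmetric monoidal. I will work in the $\text{CycMod}_\text{Fin}$ form; the Lawvere form follows by applying $(-)^\text{op}$. Note that by Theorem \ref{2T3}, $\text{CocartMon}^\otimes = \text{Mod}^\otimes_\text{Fin}$ has unit $\text{Fin}$, and $\mathcal{P}_\amalg(\text{Fin}) \cong \text{Top}$ is the unit of $\text{Pr}^{\text{L},\otimes}$; so the symmetric monoidal functor $\mathcal{P}_\amalg$ of Proposition \ref{SMYoneda1} automatically sends units to units.

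First I would spell out the two functors. A cyclic Fin-module is an $\mathcal{F}^\amalg \in \text{CocartMon}$ together with an essentially surjective Fin-algebra map $\text{Fin} \to \mathcal{F}$, i.e.\ the choice of a generator $g \in \mathcal{F}$. Applying $\mathcal{P}_\amalg$ yields a presentable $\infty$-category $\mathcal{P}_\amalg(\mathcal{F})$ equipped with a colimit-preserving functor $\text{Top} \cong \mathcal{P}_\amalg(\text{Fin}) \to \mathcal{P}_\amalg(\mathcal{F})$, i.e.\ a pointing with distinguished object the representable $\underline{g}$. Conversely, $(-)^\circ$ sends $(\mathcal{C}, X) \in \text{Pr}^\text{L}_\ast$ to the full subcategory on $\{X^{\amalg n}\}_{n\geq 0}$, which inherits a cocartesian monoidal structure and is cyclic with generator $X$.

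For the adjunction, I would apply Proposition \ref{SMYoneda3}: colimit-preserving functors $\mathcal{P}_\amalg(\mathcal{F}) \to \mathcal{C}$ correspond to cocartesian monoidal functors $\mathcal{F}^\amalg \to \mathcal{C}^\amalg$. Pointing both sides (equivalently, passing to $\mathbb{E}_0$-algebras over the unit) adds the datum of where the generator $g$ goes; since any cocartesian monoidal functor $\mathcal{F} \to \mathcal{C}$ is determined by where it sends $g$ and has image inside $\mathcal{C}^\circ$, we get the bijection $\text{Map}_{\text{Pr}^\text{L}_\ast}(\mathcal{P}_\amalg(\mathcal{F}), \mathcal{C}) \cong \text{Map}_{\text{CycMod}_\text{Fin}}(\mathcal{F}, \mathcal{C}^\circ)$. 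Full faithfulness of the left adjoint amounts to the unit $\mathcal{F} \to \mathcal{P}_\amalg(\mathcal{F})^\circ$ being an equivalence: it is fully faithful by Proposition \ref{SMYoneda2}, it preserves finite coproducts by applying Proposition \ref{SMYoneda3} to the identity on $\mathcal{F}$, and hence its essential image is exactly the full subcategory on $\{\underline{g}^{\amalg n}\}$, which is $\mathcal{P}_\amalg(\mathcal{F})^\circ$.

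For the symmetric monoidal enhancement, Proposition \ref{SMYoneda1} gives $\mathcal{P}_\amalg: \text{CocartMon}^\otimes \to \text{Pr}^{\text{L},\otimes}$ as a symmetric monoidal functor sending units to units. Passing to $\mathbb{E}_0$-algebras on both sides preserves symmetric monoidal structures (HA 2.1.3.10), yielding $\text{CocartMon}^\otimes_\ast \to \text{Pr}^{\text{L},\otimes}_\ast$, and by Proposition \ref{CyclicTensor} this restricts to the full symmetric monoidal subcategory $\text{CycMod}_\text{Fin}^\otimes$. The right adjoint $(-)^\circ$ is then automatically lax symmetric monoidal. The main obstacle I anticipate is the delicate bookkeeping of pointings under tensor products: one must verify that the distinguished object of $\mathcal{P}_\amalg(\mathcal{F}) \otimes \mathcal{P}_\amalg(\mathcal{F}')$, namely the image of the unit of $\text{Pr}^{\text{L},\otimes}_\ast$ under the algebra structure map, agrees with the representable on $g \otimes g'$ under the equivalence $\mathcal{P}_\amalg(\mathcal{F} \otimes \mathcal{F}') \cong \mathcal{P}_\amalg(\mathcal{F}) \otimes \mathcal{P}_\amalg(\mathcal{F}')$. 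This reduces to chasing the symmetric monoidality of the Yoneda embedding through the unit coherence of $\mathcal{P}_\amalg$ and the identification $\mathcal{P}_\amalg(\text{Fin}) \cong \text{Top}$.
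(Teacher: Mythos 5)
Your proposal is correct and follows essentially the same route as the paper: the adjunction comes from Proposition \ref{SMYoneda3} applied fiberwise over the pointing, full faithfulness from the identification $(\mathcal{P}_\amalg(-))^\circ \cong \mathrm{id}$ via Proposition \ref{SMYoneda2}, and the symmetric monoidal structure from Proposition \ref{SMYoneda1} together with Proposition \ref{CyclicTensor}. The paper's own proof is terser but invokes exactly these ingredients; your extra bookkeeping of the distinguished objects under $\mathbb{E}_0$-algebras and tensor products is a reasonable elaboration of what the paper leaves implicit.
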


\begin{proof}
The composition $(\mathcal{P}_\amalg(-))^\circ$ is the identity by Proposition \ref{SMYoneda2}. Thus we have a functor $$\text{Fun}^\text{L}(\mathcal{P}_\amalg(\mathcal{F}),\mathcal{V})\xrightarrow{(-)^\circ}\text{Hom}_\ast(\mathcal{F},\mathcal{V}^\circ),$$ which is an equivalence by Proposition \ref{SMYoneda3}. This establishes the adjunction, and the left adjoint is fully faithful by taking $\mathcal{V}=\mathcal{P}_\amalg(\mathcal{F}^\prime)$ for another cyclic Fin-module $\mathcal{F}^\prime$. The left adjoint is symmetric monoidal by Proposition \ref{SMYoneda1}.
\end{proof}

\begin{definition}
A \emph{multiplicative Lawvere theory} is a commutative algebra in $\text{Lawv}^\otimes$, or equivalently a commutative semiring $\infty$-category equipped with an essentially surjective commutative semiring functor from $\text{Fin}^\text{op}$.
\end{definition}

\begin{remark}
\label{RmkLawvEnr}
As in Corollary \ref{CorEnrMod}, if $\mathcal{L}$ is a multiplicative Lawvere theory, $\text{Mdl}(\mathcal{L})$ inherits a closed symmetric monoidal Day convolution, and any $\mathcal{L}$-module (or any module over $\mathcal{F}=\mathcal{L}^\text{op}$) is naturally enriched in $\text{Mdl}(\mathcal{L})$.
\end{remark}

\subsubsection{Detecting Lawvere theories}
\noindent A result of Gepner, Groth, and Nikolaus characterizes the image of $\mathcal{P}_\amalg$ in $\text{Pr}^\text{L}_\ast$ (that is, characterizes which pointed presentable $\infty$-categories arise as models over a Lawvere theory).

\begin{theorem}[\cite{GGN} B.7]
\label{ThmLawvDet}
Suppose $\mathcal{C}$ is a presentable $\infty$-category with distinguished object $X$, with corresponding forgetful functor $\text{Fgt}:\mathcal{C}\rightarrow\text{Top}$. Then $\mathcal{C}\cong\text{Mdl}(\mathcal{L})$ for some Lawvere theory $\mathcal{L}$ if and only if Fgt is conservative and preserves sifted colimits.
\end{theorem}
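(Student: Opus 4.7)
The plan is to treat the two directions separately. The forward implication is a direct verification using that sifted colimits in Top commute with finite products. The reverse implication is more substantial and will apply Lurie's $\infty$-categorical Barr-Beck theorem (HA 4.7.3.5), combined with the description of sifted-colimit-preserving monads on Top as $\mathbb{E}_1$-spaces under cartesian product.

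For the forward direction, assume $\mathcal{C}\simeq\text{Mdl}(\mathcal{L})$, so Fgt is evaluation at the generator $1\in\mathcal{L}$. Since sifted colimits in Top commute with finite products (by definition of siftedness), a sifted colimit of product-preserving functors $\mathcal{L}\to\text{Top}$ is itself product-preserving and is computed pointwise; evaluation at any fixed object therefore preserves sifted colimits. For conservativity, every object of $\mathcal{L}$ is a product of copies of the generator, so any model $F$ satisfies $F(n)\simeq F(1)^n$, and a natural transformation $F\to G$ is an equivalence at every $n$ iff it is so at the generator.

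For the reverse implication, Theorem \ref{ThmLawvAdj} supplies a canonical counit $\varepsilon:\mathcal{P}_\amalg(\mathcal{C}^\circ)\to\mathcal{C}$ in $\text{Pr}^\text{L}_\ast$, and it suffices to show $\varepsilon$ is an equivalence. Both forgetful functors to Top are right adjoints, conservative, and preserve sifted colimits (by hypothesis on $\mathcal{C}$, and by the forward direction for $\mathcal{P}_\amalg(\mathcal{C}^\circ)$), so in particular they preserve Fgt-split geometric realizations. Lurie's Barr-Beck theorem therefore presents each as monadic over Top, producing sifted-colimit-preserving monads $T_1$ and $T_2$, and $\varepsilon$ is an equivalence if and only if the induced comparison $T_1\to T_2$ of monads is.

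To compare the two monads, I will use the equivalence between sifted-colimit-preserving endofunctors of Top and Top itself, given by evaluation at the point (this is the universal property of Top as the sifted cocompletion of $\ast$): a straightforward computation with left Kan extensions shows that under composition this is a monoidal equivalence to $(\text{Top},\times)$, so monad structures correspond to $\mathbb{E}_1$-structures on the value at the point. Both $T_i(\ast)$ compute the endomorphism space $\text{Map}_\mathcal{C}(X,X)$ --- directly for $T_2$, and via the Yoneda lemma in $\mathcal{P}_\amalg(\mathcal{C}^\circ)$ combined with full-faithfulness of $\mathcal{C}^\circ\hookrightarrow\mathcal{C}$ for $T_1$ --- and the induced $\mathbb{E}_1$-structure in each case is composition of endomorphisms in $\mathcal{C}$. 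The main obstacle will be promoting this functor equivalence to a genuine monad equivalence: matching units and multiplications requires that composition in the skeleton $\mathcal{C}^\circ$ coincides with composition in $\mathcal{C}$, which is automatic since $\mathcal{C}^\circ\subseteq\mathcal{C}$ is a full subcategory. This yields $T_1\simeq T_2$ as $\mathbb{E}_1$-algebras, hence as monads, and monadicity forces $\varepsilon$ to be an equivalence.
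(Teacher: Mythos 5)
A preliminary remark: the paper does not prove this statement; it is quoted verbatim from Gepner--Groth--Nikolaus (\cite{GGN} B.7), so there is no internal proof to compare against. Judged on its own merits, your forward direction is correct: sifted colimits in Top commute with finite products, so $\text{Fun}^\times(\mathcal{L},\text{Top})$ is closed under pointwise sifted colimits in $\text{Fun}(\mathcal{L},\text{Top})$, and conservativity of evaluation at the generator follows since every object of $\mathcal{L}$ is a finite power of it. The overall skeleton of the reverse direction (Barr--Beck on both sides, then compare the two monads) is also the right one.

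However, the key identification in your reverse direction is false. You claim that evaluation at the point is an equivalence from sifted-colimit-preserving endofunctors of Top to Top, monoidally identifying composition with $(\text{Top},\times)$, so that such monads are $\mathbb{E}_1$-spaces. But Top is the free sifted cocompletion of $\text{Fin}$, not of the point; sifted-colimit-preserving endofunctors of Top are classified by their restriction to $\text{Fin}$, i.e.\ by objects of $\text{Fun}(\text{Fin},\text{Top})$, and evaluation at $\ast$ forgets almost all of this data. Concretely, the free $\mathbb{E}_\infty$-space monad $T(Y)=\coprod_n Y^n_{h\Sigma_n}$ preserves sifted colimits but is not of the form $M\times(-)$; it has the same value at the point as $\bigl(\coprod_n B\Sigma_n\bigr)\times(-)$, whose algebras are merely spaces with an action of the $\mathbb{E}_1$-space $\text{Fin}^\text{iso}$, not $\mathbb{E}_\infty$-spaces. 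So your argument would collapse genuinely distinct Lawvere theories (e.g.\ $\text{Burn}^\text{eff}$ versus the theory of $\text{Fin}^\text{iso}$-modules) and cannot be correct as written.

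The repair is to compare the monads $T_1$ and $T_2$ on all of $\text{Fin}$ rather than only at the point. Since both preserve sifted colimits and every space is a sifted colimit of finite sets, the comparison map $T_1\to T_2$ is an equivalence once it is so on each finite set $n$. There $T_2(n)\simeq\text{Map}_{\mathcal{C}}(X,X^{\amalg n})$, while $T_1(n)\simeq\text{Map}_{\mathcal{P}_\amalg(\mathcal{C}^\circ)}(\underline{X},\underline{X}^{\amalg n})\simeq\text{Map}_{\mathcal{C}^\circ}(X,X^{\amalg n})$ by the Yoneda lemma (Proposition \ref{SMYoneda2}), and these agree because $\mathcal{C}^\circ\subseteq\mathcal{C}$ is full. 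With that substitution the rest of your argument (monadicity forces the counit to be an equivalence) goes through; note that you then no longer need any monoidal classification of the monads, only that the comparison map of underlying functors is an equivalence.
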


\begin{remark}
A functor $R:\mathcal{C}\rightarrow\mathcal{D}$ is called conservative if, any time $f$ is a morphism in $\mathcal{C}$ such that $R(f)$ is an equivalence, then $f$ is also an equivalence.

In the setup of the theorem, $R$ is a right adjoint, so it necessarily preserves filtered colimits. It additionally preserves sifted colimits if and only if it preserves geometric realizations (colimits indexed by $\Delta^\text{op}$).
\end{remark}

\begin{example}
All of the following arise as $\infty$-categories of models over Lawvere theories: Top, $\text{CMon}_\infty$, $\text{Ab}_\infty$, $\text{Alg}_\mathcal{O}(\text{Top}^\times)$ and $\text{Alg}_\mathcal{O}(\text{Ab}_\infty^\otimes)$ for any operad $\mathcal{O}$, and $\text{Mod}_R$ for any commutative semiring space $R$.
\end{example}

\subsubsection{Lawvere theories are self-dual}
\noindent If $\mathcal{C}$ is an $\infty$-category of models over a Lawvere theory, we can directly compute presentable tensor products involving $\mathcal{C}$, also by work of Gepner, Groth, and Nikolaus.

\begin{theorem}[\cite{GGN} B.3]
Let $\mathcal{L}$ be a Lawvere theory, and $\mathcal{C}=\text{Mdl}(\mathcal{L})$. For any $\mathcal{D}\in\text{Pr}^\text{L}$, $$\mathcal{C}\otimes\mathcal{D}\cong\text{Mdl}(\mathcal{L},\mathcal{D})=\text{Hom}(\mathcal{L}^\times,\mathcal{D}^\times).$$
\end{theorem}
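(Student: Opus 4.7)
The plan is to realize $\mathcal{C} = \mathcal{P}_\amalg(\mathcal{F})$ (where $\mathcal{F} = \mathcal{L}^\text{op}$) as an accessible localization of the free presentable $\infty$-category $\mathcal{P}(\mathcal{F})$, then transport that localization through the presentable tensor product with $\mathcal{D}$. Concretely, $\mathcal{P}_\amalg(\mathcal{F}) \hookrightarrow \mathcal{P}(\mathcal{F})$ is the localization at the small collection $S$ of "product-preservation maps" $\underline{X_1} \amalg \cdots \amalg \underline{X_n} \to \underline{X_1 \amalg_\mathcal{F} \cdots \amalg_\mathcal{F} X_n}$ (together with $\emptyset \to \underline{0}$), whose local objects are by definition the finite-product-preserving presheaves.

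First I would establish the free case $\mathcal{P}(\mathcal{F}) \otimes \mathcal{D} \simeq \text{Fun}(\mathcal{F}^\text{op}, \mathcal{D})$. For any $\mathcal{E} \in \text{Pr}^\text{L}$, the tensor-hom adjunction in $\text{Pr}^{\text{L},\otimes}$ together with the universal property of presheaves (Theorem \ref{Yoneda3}) give
$$\text{Fun}^\text{L}(\mathcal{P}(\mathcal{F}) \otimes \mathcal{D}, \mathcal{E}) \cong \text{Fun}^\text{L}(\mathcal{P}(\mathcal{F}), \text{Fun}^\text{L}(\mathcal{D}, \mathcal{E})) \cong \text{Fun}(\mathcal{F}, \text{Fun}^\text{L}(\mathcal{D}, \mathcal{E})),$$
and this last $\infty$-category is in turn $\text{Fun}^\text{L}(\text{Fun}(\mathcal{F}^\text{op}, \mathcal{D}), \mathcal{E})$ by a second application of the same universal property; the Yoneda lemma in $\text{Pr}^\text{L}$ then gives the claimed identification. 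Next I would use that $\text{Pr}^{\text{L}, \otimes}$ is closed, so that $-\otimes \mathcal{D}$ has a right adjoint and therefore preserves epimorphisms; since accessible localizations in $\text{Pr}^\text{L}$ are exactly the left adjoints with fully faithful right adjoints (equivalently, the epimorphisms), this forces $\mathcal{C} \otimes \mathcal{D}$ to be an accessible localization of $\text{Fun}(\mathcal{F}^\text{op}, \mathcal{D})$ whose generating local morphisms are the images of $S$ under $\mathcal{P}(\mathcal{F}) \hookrightarrow \text{Fun}(\mathcal{F}^\text{op}, \mathcal{D})$. Unwinding, a functor $F : \mathcal{F}^\text{op} \to \mathcal{D}$ inverts these images iff the canonical comparison $F(X_1 \amalg \cdots \amalg X_n) \to F(X_1) \times \cdots \times F(X_n)$ is an equivalence in $\mathcal{D}$ for all finite tuples (and $F(0) \simeq \ast$), i.e.\ iff $F$ preserves finite products; thus $\mathcal{C} \otimes \mathcal{D} \simeq \text{Hom}(\mathcal{L}^\times, \mathcal{D}^\times) = \text{Mdl}(\mathcal{L}, \mathcal{D})$.

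The hard part will be the localization-transport step: verifying that $-\otimes \mathcal{D}$ sends the accessible localization $\mathcal{P}(\mathcal{F}) \to \mathcal{P}_\amalg(\mathcal{F})$ to a localization whose generating local maps in $\text{Fun}(\mathcal{F}^\text{op}, \mathcal{D})$ are identifiable with the relevant "product-preservation maps". The first half of this is the stability of epimorphisms under tensoring in a closed symmetric monoidal $\infty$-category; the second, more delicate point, is that finite products and coproducts in $\mathcal{D}$ interact correctly with the bilinear decomposition of $\mathcal{P}(\mathcal{F}) \otimes \mathcal{D}$. The cleanest way around this is to avoid choosing specific generating maps altogether and instead argue via the bilinear characterization of $-\otimes \mathcal{D}$: a colimit-preserving functor $\mathcal{P}(\mathcal{F}) \otimes \mathcal{D} \to \mathcal{E}$ inverts the tensored-up morphisms of $S$ iff the corresponding bilinear functor $\mathcal{P}(\mathcal{F}) \times \mathcal{D} \to \mathcal{E}$ factors through the product-preserving reflection in the first variable, which by currying is exactly a bilinear functor out of $\mathcal{P}_\amalg(\mathcal{F}) \times \mathcal{D}$, i.e.\ a left-adjoint functor out of $\mathcal{C} \otimes \mathcal{D}$.
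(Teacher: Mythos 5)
The paper does not prove this statement; it is quoted from Gepner--Groth--Nikolaus (their Appendix B), so there is no internal proof to compare against. Your argument is a correct reconstruction of the standard one: present $\mathcal{P}_\amalg(\mathcal{F})$ as the accessible localization of $\mathcal{P}(\mathcal{F})$ at the product-comparison maps $\underline{X}\amalg\underline{Y}\to\underline{X\amalg Y}$ (and $\emptyset\to\underline{0}$), identify $\mathcal{P}(\mathcal{F})\otimes\mathcal{D}\cong\text{Fun}(\mathcal{F}^{\text{op}},\mathcal{D})$, push the localization through $-\otimes\mathcal{D}$, and recognize the local objects as the product-preserving functors. One caveat: your first pass at the transport step leans on the assertion that epimorphisms in $\text{Pr}^\text{L}$ coincide with accessible localizations. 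The forward implication (localizations are epimorphisms, and left adjoints preserve epimorphisms since they preserve the codiagonal condition) is fine, but the converse is not justified and should not be relied on. Your own fallback is the correct fix and should be promoted to the main argument: by the universal property, $\text{Fun}^\text{L}(\mathcal{C}\otimes\mathcal{D},\mathcal{E})$ consists of bilinear functors out of $\mathcal{P}(\mathcal{F})\times\mathcal{D}$ that invert $S$ in the first variable, which (checking against a small set of colimit generators $d$ of $\mathcal{D}$) exhibits $\mathcal{C}\otimes\mathcal{D}$ as the accessible localization of $\text{Fun}(\mathcal{F}^{\text{op}},\mathcal{D})$ at the set $\{s\otimes d\}$; the computation $\text{Map}(\underline{X}\otimes d,F)\cong\text{Map}_\mathcal{D}(d,F(X))$ then identifies the local objects with $\text{Hom}(\mathcal{L}^\times,\mathcal{D}^\times)$ as claimed. (Relatedly, the phrase ``images of $S$ under $\mathcal{P}(\mathcal{F})\hookrightarrow\text{Fun}(\mathcal{F}^{\text{op}},\mathcal{D})$'' presumes a pointing of $\mathcal{D}$; the maps $s\otimes d$ over generators $d$ are what you actually want, as your last paragraph implicitly recognizes.)
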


\begin{example}
If $\mathcal{L}$ is a multiplicative Lawvere theory, and $\mathcal{V}^\otimes$ is a closed symmetric monoidal presentable $\infty$-category, then $$\text{Mdl}(\mathcal{L},\mathcal{V})\cong\text{Mdl}(\mathcal{L})\otimes\mathcal{V}$$ inherits a closed symmetric monoidal structure from $\otimes$ and Day convolution on $\text{Mdl}(\mathcal{L})$. We might refer to this closed symmetric monoidal structure on $\text{Mdl}(\mathcal{L},\mathcal{V})$ itself as \emph{Day convolution}.

When $\mathcal{L}=\text{Burn}^\text{eff}$ and $\mathcal{V}^\otimes=\text{Cat}^\times$, this is the source of the tensor product structure on SymMon (discussed in Chapter \ref{3}).
\end{example}

\subsubsection{The Lawvere theory associated to a PROP}
\noindent Suppose $\mathcal{T}^\oast$ is a PROP (cyclic $\text{Fin}^\text{iso}$-module). Then the free cartesian monoidal $\infty$-category on $\mathcal{T}^\oast$ is $\mathcal{T}^\oast\otimes\text{Fin}^\text{op}$ (Proposition \ref{TupHup}), and it is a cyclic $\text{Fin}^\text{op}$-module (therefore a Lawvere theory) by Proposition \ref{CyclicTensor}. It is the \emph{Lawvere theory associated to $\mathcal{T}^\oast$}, in the sense that it is the unique Lawvere theory $\mathcal{L}$ with $$\text{Mdl}(\mathcal{L})\cong\text{Mdl}(\mathcal{T}).$$

\begin{proposition}
\label{FreeLawv}
If $\mathcal{T}^\oast$ is a PROP, the associated Lawvere theory $\mathcal{T}^\oast\otimes\text{Fin}^\text{op}$ is equivalent to $\text{Mdl}(\mathcal{T}^\oast)^\circ$, the $\infty$-category of finitely generated free models of $\mathcal{T}$.
\end{proposition}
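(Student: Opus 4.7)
The plan is to apply the symmetric monoidal adjunction from Theorem \ref{ThmLawvAdj}, whose unit $\mathcal{F} \to \mathcal{P}_\amalg(\mathcal{F})^\circ$ is an equivalence for any cyclic Fin-module $\mathcal{F}$ (since the left adjoint is fully faithful). The target cyclic Fin-module is $\mathcal{F} := \mathcal{T}^{\text{op}} \otimes \text{Fin}$, which corresponds (under the $(-)^{\text{op}}$ equivalence $\text{CycMod}_{\text{Fin}} \cong \text{Lawv}$) to the Lawvere theory $\mathcal{T}^\oast \otimes \text{Fin}^{\text{op}}$. So it suffices to identify $\mathcal{P}_\amalg(\mathcal{T}^{\text{op}} \otimes \text{Fin})$ with $\text{Mdl}(\mathcal{T}^\oast)$ as pointed presentable $\infty$-categories, and then feed this through $(-)^\circ$.

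First I would unpack $\mathcal{P}_\amalg(\mathcal{T}^{\text{op}} \otimes \text{Fin})$ using its definition as $\text{Hom}((\mathcal{T}^{\text{op}} \otimes \text{Fin})^{\text{op},\amalg}, \text{Top}^\times) = \text{Hom}(\mathcal{T}^{\oast} \otimes \text{Fin}^{\text{op},\amalg}, \text{Top}^\times)$. By the tensor--Hom adjunction in $\text{SymMon}^\otimes$, this is equivalent to $\text{Hom}(\mathcal{T}^\oast, \text{Hom}(\text{Fin}^{\text{op},\amalg}, \text{Top}^\times))$. Since $\text{Top}^\times$ is cartesian monoidal, it is a $\text{Fin}^{\text{op}}$-module (Corollary \ref{2T3b}), and solidity of $\text{Fin}^{\text{op}}$ gives $\text{Hom}(\text{Fin}^{\text{op},\amalg}, \text{Top}^\times) \cong \text{Top}^\times$. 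Hence $\mathcal{P}_\amalg(\mathcal{T}^{\text{op}} \otimes \text{Fin}) \cong \text{Hom}(\mathcal{T}^\oast, \text{Top}^\times) = \text{Mdl}(\mathcal{T}^\oast)$. Combined with the unit equivalence, this yields $\mathcal{T}^{\text{op}} \otimes \text{Fin} \cong \text{Mdl}(\mathcal{T}^\oast)^\circ$ in $\text{CycMod}_{\text{Fin}}$, and taking opposites gives the claimed identification of Lawvere theories.

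The main thing to be careful about is the pointed structure — we must verify that all of these equivalences preserve the distinguished maps in a compatible way, since both sides live over $\text{Top}$ (on the presentable side) or $\text{Fin}$ / $\text{Fin}^{\text{op}}$ (on the cyclic module / Lawvere theory side). The Lawvere theory $\mathcal{T}^\oast \otimes \text{Fin}^{\text{op}}$ is pointed via $\text{Fin}^{\text{op}} \cong \text{Fin}^{\text{iso}} \otimes \text{Fin}^{\text{op}} \to \mathcal{T}^\oast \otimes \text{Fin}^{\text{op}}$, induced by the structure map of $\mathcal{T}^\oast$ as a cyclic $\text{Fin}^{\text{iso}}$-module. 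Correspondingly, $\text{Mdl}(\mathcal{T}^\oast)$ is pointed via the left adjoint $\text{Top} \to \text{Mdl}(\mathcal{T}^\oast)$ to precomposition along $\text{Fin}^{\text{iso}} \to \mathcal{T}^\oast$. Both pointings arise by tensoring/homming against the same generating map, so the identification above is automatically pointed; this is essentially a bookkeeping step that follows from naturality of the tensor--Hom adjunction in each variable.

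The principal obstacle is conceptual rather than technical: one must remember the variance conventions (Lawvere theories versus cyclic Fin-modules) to correctly match $\mathcal{T}^\oast \otimes \text{Fin}^{\text{op}}$ against $\text{Mdl}(\mathcal{T}^\oast)^\circ$, which live on opposite sides of $(-)^{\text{op}}$. Once this is sorted out, the proof is a short sequence of adjunctions — the real content is Theorem \ref{ThmLawvAdj} (fully faithfulness of $\mathcal{P}_\amalg$) together with the solidity of $\text{Fin}^{\text{op}}$ in $\text{SymMon}^\otimes$.
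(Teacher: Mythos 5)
Your argument is correct and is essentially the paper's own proof, which simply cites Theorem \ref{ThmLawvAdj}: you expand that citation by computing $\mathcal{P}_\amalg(\mathcal{T}^{\text{op}}\otimes\text{Fin})\cong\text{Mdl}(\mathcal{T}^\oast)$ via the tensor--Hom adjunction and solidity of $\text{Fin}^{\text{op}}$, then apply the unit equivalence of the colocalization. Your explicit handling of the variance and of the pointed structures is a welcome clarification of a point the paper leaves implicit.
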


\begin{proof}
By Theorem \ref{ThmLawvAdj}; see also Theorem \ref{SMYonedaT} of which this is a special case.
\end{proof}

\begin{example}
Consider the PROP $\text{Fin}^\amalg$, which is an algebraic theory for \emph{commutative algebras}. The associated Lawvere theory $\text{Fin}\otimes\text{Fin}^\text{op}\cong\text{Span}(\text{Fin})$ is the $\infty$-category of finitely generated free $\mathbb{E}_\infty$-spaces. Yet again, we see that $\text{Span}(\text{Fin})\cong\text{Burn}^\text{eff}$.

This generalizes as follows: The symmetric monoidal envelope of an operad is a PROP, and its associated Lawvere theory can be described via a span construction. We will discuss this further in \S\ref{4.3.1}.
\end{example}

\subsection{Semiadditive Lawvere theories}\label{4.2.3}
\noindent If $R$ is an associative (that is, $\mathbb{A}_\infty$ or $\mathbb{E}_1$) semiring space, let $\text{Mod}_R$ be the $\infty$-category of $R$-modules, which is made into a pointed presentable $\infty$-category via the forgetful functor $\text{Mod}_R\rightarrow\text{Top}$. Since the forgetful functor is conservative (HA 4.2.3.2) and preserves sifted colimits (HA 4.2.3.5), there is an associated Lawvere theory $\text{Burn}_R=\text{Mod}_R^\circ$.

\begin{remark}
\label{RmkBurnGpCp}
We use the notation $\text{Burn}_R$ because this Lawvere theory can be obtained from $\text{Burn}^\text{eff}$ (roughly) by tensoring all Hom-$\mathbb{E}_\infty$-spaces with $R$.

Let $R=\Omega^\infty\mathbb{S}$ (the group-completion of $\text{Fin}^\text{iso}$ \cite{BPQ}). The \emph{virtual Burnside $\infty$-category} (or just \emph{the Burnside $\infty$-category}) is $\text{Burn}=\text{Burn}_{\Omega^\infty\mathbb{S}}$, which is roughly obtained from $\text{Burn}^\text{eff}$ by group-completing Hom-objects. Since $\Omega^\infty\mathbb{S}$-modules are connective spectra, Burn is a full subcategory of spectra: $$\text{Burn}\cong\text{Sp}^\circ.$$
\end{remark}

\begin{theorem}
Every cyclic $\text{Burn}^\text{eff}$-module (that is, every semiadditive Lawvere theory) is equivalent to $\text{Burn}_R$ for some associative semiring space $R$.
\end{theorem}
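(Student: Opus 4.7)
The strategy is to recover the semiring space $R$ as the endomorphism object of the distinguished generator of $\mathcal{L}$, identify $\mathcal{C} = \text{Mdl}(\mathcal{L})$ with $\text{Mod}_R$ via a Barr--Beck argument, and then pass to skeletons using Theorem \ref{ThmLawvAdj}. Fix a semiadditive Lawvere theory $\mathcal{L}$ with distinguished generator $X$; take $\mathcal{C} = \text{Mdl}(\mathcal{L})$ pointed by the free model on a point, which corresponds to $X \in \mathcal{L}$ under $\mathcal{L} \cong \mathcal{C}^\circ$. By Theorem \ref{ThmLawvDet}, the forgetful functor $U : \mathcal{C} \to \text{Top}$ (evaluation at $X$) is conservative and preserves sifted colimits.

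\textbf{Enrichment and the semiring $R$.} Since $\mathcal{L}$ is a $\text{Burn}^\text{eff}$-module, Remark \ref{RmkLawvEnr} (cf.\ Example \ref{EnrAddEx}) shows that $\mathcal{L}$, hence also $\mathcal{C}$, is canonically enriched in $\mathcal{P}_\amalg(\text{Burn}^\text{eff}) \cong \text{CMon}_\infty$. I would therefore lift $U$ to a conservative, sifted-cocontinuous functor $U' : \mathcal{C} \to \text{CMon}_\infty$, which admits a left adjoint $F'$ by the adjoint functor theorem. Setting $R = U'(X) \in \text{CMon}_\infty$, composition of endomorphisms of $X$ equips $R$ with an $\mathbb{E}_1$-algebra structure, so that $R$ is an associative semiring space. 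Postcomposition with the $R$-action refines $U'$ to a functor $\widetilde{U} : \mathcal{C} \to \text{Mod}_R$ whose left adjoint is $-\otimes_R X$. Invoking the $\infty$-categorical Barr--Beck theorem (HA 4.7.3.5) --- whose hypotheses are inherited from those of $U$ --- yields an equivalence $\mathcal{C} \cong \text{Mod}_R$ in $\text{Pr}^\text{L}_\ast$, and Theorem \ref{ThmLawvAdj} then transports this to the desired equivalence of skeletons $\mathcal{L} \cong \mathcal{C}^\circ \cong \text{Mod}_R^\circ = \text{Burn}_R$.

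\textbf{The main obstacle.} The technical heart is the identification of the monad $U' F'$ on $\text{CMon}_\infty$ with tensoring against the associative semiring space $R$ --- equivalently, the claim that $\widetilde{U}$ is an equivalence rather than merely monadic. This is a Schwede--Shipley-style recognition result, internal to the closed symmetric monoidal $\infty$-category $\text{CMon}_\infty^\wedge$: since $U' F'$ is sifted-cocontinuous and determined by its restriction to free $\mathbb{E}_\infty$-spaces, it is controlled by its value on the monoidal unit, where by construction it returns $R$ with its composition $\mathbb{E}_1$-structure. Making this rigorous --- so that all higher coherences of the $R$-action are accounted for, and so that no information about $\mathcal{L}$ is lost in passing to $R$ --- is the step requiring real care, and essentially dictates why the output is an $\mathbb{E}_1$- rather than an $\mathbb{E}_\infty$- semiring space.
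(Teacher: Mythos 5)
Your proposal follows essentially the same route as the paper: identify $\text{Mdl}(\mathcal{L})$ as a presentable $\text{CMon}_\infty$-module equipped with a conservative, sifted-colimit-preserving right adjoint to $\text{CMon}_\infty$ (via Theorem \ref{ThmLawvDet} and the enrichment), recognize it as $\text{Mod}_R$ for $R=\text{End}(X)$, and pass to skeletons via Theorem \ref{ThmLawvAdj}. The ``main obstacle'' you flag --- identifying the monad with $-\otimes R$ and tracking the higher coherences of the $\mathbb{E}_1$-structure --- is exactly what the paper disposes of by citing HA 4.8.5.8, Lurie's Schwede--Shipley-style recognition theorem internal to a presentable monoidal $\infty$-category, so your argument is complete once that reference is supplied.
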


\begin{proof}
If $\mathcal{L}$ is a semiadditive Lawvere theory with distinguished object $X$, let $R=\text{End}(X)$. Since $\mathcal{L}$ is $\text{CMon}_\infty$-enriched, $R$ is a semiring space. If $\mathcal{L}^\prime$ is another semiadditive Lawvere theory with the same associated semiring space, then any semiadditive functor $\mathcal{L}\rightarrow\mathcal{L}^\prime$ which is an equivalence on $\text{End}(X)$ will be an equivalence of Lawvere theories (by semiadditivity). The difficulty is in constructing such a functor.

To avoid this technicality, we use a more opaque proof strategy. Note that a semiadditive Lawvere theory can be identified with a right adjoint functor $\text{Mdl}(\mathcal{L})\rightarrow\text{CMon}_\infty$ from a presentable $\text{CMon}_\infty$-module $\text{Mdl}(\mathcal{L})$ which is conservative and preserves sifted colimits (Theorem \ref{ThmLawvDet}). By HA 4.8.5.8, $\text{Mdl}(\mathcal{L})$ takes the form $\text{Mod}_R$ for some $R$, and therefore $$\mathcal{L}\cong\text{Mdl}(\mathcal{L})^\circ\cong\text{Burn}_R.$$
\end{proof}

\begin{corollary}
\label{SemiaddLawv}
As symmetric monoidal $\infty$-categories, $$\text{CycMod}_{\text{Burn}^\text{eff}}^\otimes\cong\text{SRing}(\text{Top})^\otimes.$$
\end{corollary}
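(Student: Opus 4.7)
The preceding theorem gives essential surjectivity of the assignment $\Phi: R \mapsto \text{Burn}_R = \text{Mod}_R^\circ$; the plan is to promote this to a symmetric monoidal equivalence of $\infty$-categories. First I would verify functoriality of $\Phi : \text{SRing}(\text{Top}) \to \text{CycMod}_{\text{Burn}^\text{eff}}$: a semiring map $R \to R'$ induces extension of scalars $\text{Mod}_R \to \text{Mod}_{R'}$, a pointed left adjoint which restricts on skeleta to a Burn$^\text{eff}$-module functor $\text{Burn}_R \to \text{Burn}_{R'}$ preserving the distinguished generator.

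For full faithfulness of $\Phi$ I would invoke the symmetric monoidal colocalization of Theorem \ref{ThmLawvAdj}: morphisms of cyclic Burn$^\text{eff}$-modules $\text{Burn}_R \to \text{Burn}_{R'}$ correspond bijectively (by the fully faithful embedding $\mathcal{P}_\amalg$) to pointed, colimit-preserving $\text{CMon}_\infty$-module functors $\text{Mod}_R \to \text{Mod}_{R'}$. By HA 4.8.5.8 (the $\infty$-categorical Morita result already used in the proof of the preceding theorem), such functors are precisely extensions of scalars along semiring maps $R \to R'$. Combined with essential surjectivity from the preceding theorem, this gives the underlying equivalence of $\infty$-categories.

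For symmetric monoidal compatibility I would argue as follows. By Theorem \ref{ThmLawvAdj}, $\mathcal{P}_\amalg$ is a symmetric monoidal functor sending the relative tensor product $\otimes_{\text{Burn}^\text{eff}}$ to the presentable tensor product. On the other side, the identification $R \mapsto \text{Mod}_R$ is symmetric monoidal in the sense that $\text{Mod}_R \otimes_{\text{CMon}_\infty} \text{Mod}_{R'} \simeq \text{Mod}_{R \otimes R'}$ (HA 4.8.5). Composing these two identifications exhibits $\Phi$ as a symmetric monoidal equivalence, sending the unit Burn$^\text{eff}$ to the initial semiring space (the image of $\text{Fin}^\text{iso}$ in $\text{CMon}_\infty$).

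The main obstacle will be making sense of the symmetric monoidal structure on $\text{SRing}(\text{Top})^\otimes$ itself --- since $\mathbb{E}_1$-algebras in a merely symmetric monoidal $\infty$-category do not automatically inherit a symmetric monoidal structure without $\mathbb{E}_2$-coherence. The cleanest resolution is to \emph{define} $\otimes$ on $\text{SRing}(\text{Top})$ as the transport of $\otimes_{\text{Burn}^\text{eff}}$ across $\Phi$, and then verify \emph{a posteriori} that on underlying commutative monoids it agrees with $\otimes_{\text{Fin}^\text{iso}}$ computed in $\text{CMon}_\infty^\otimes$ (which is the expected tensor product of semiring spaces). All other steps are essentially repackagings of the preceding theorem and Theorem \ref{ThmLawvAdj}.
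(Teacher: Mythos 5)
Your proposal is correct and follows essentially the same route as the paper: the underlying equivalence of $\infty$-categories comes from the preceding theorem together with the Morita-theoretic full faithfulness of $R\mapsto\text{Mod}_R$ (the paper cites HA 4.8.5.11, you reassemble it from Theorem \ref{ThmLawvAdj} and HA 4.8.5.8), and the monoidal compatibility reduces in both cases to $\text{Mod}_R\otimes\text{Mod}_S\cong\text{Mod}_{R\otimes S}$, which the paper gets from HA 4.8.4.6 via $\text{Mod}_R\otimes\text{Mod}_S\cong\text{Mod}_R(\text{Mod}_S)$. Your closing worry about the tensor product on $\text{SRing}(\text{Top})$ is unnecessary: for any symmetric monoidal $\infty$-category, $\mathbb{E}_1$-algebras inherit a pointwise symmetric monoidal structure (HA 3.2.4.4), so no transport of structure is required.
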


\begin{proof}
Combine the theorem with HA 4.8.5.11 to see that these are equivalent as $\infty$-categories. To obtain an equivalence of \emph{symmetric monoidal} $\infty$-categories, we must show that $\text{Mod}_R\otimes\text{Mod}_S\cong\text{Mod}_{R\otimes S}$, where the first $\otimes$ is of presentable $\infty$-categories. By HA 4.8.4.6, $\text{Mod}_R\otimes\text{Mod}_S\cong\text{Mod}_R(\text{Mod}_S)$, which is $\text{Mod}_{R\otimes S}$ by definition.
\end{proof}

\subsubsection{Additive \texorpdfstring{$\infty$-}{infinity }categories}
\noindent We are finally ready to prove that additive $\infty$-categories are modules over the Burnside category, as advertised in Chapter \ref{3}.

A semiadditive category is naturally enriched in commutative monoids. Recall from classical category theory that a semiadditive category is called \emph{additive} if all the Hom-commutative monoids are in fact abelian groups. We can make the same definition for $\infty$-categories, and we may replace `abelian groups' by $R$-modules for any solid semiring space $R$.

\begin{definition}
If $R$ is a solid commutative semiring space, and $\mathcal{C}$ is a semiadditive $\infty$-category, we say that $\mathcal{C}$ is \emph{$R$-additive} if each Hom-object of $\mathcal{C}$, which is a priori an $\mathbb{E}_\infty$-space (by Example \ref{EnrAddEx}), is in fact an $R$-module.
\end{definition}

\noindent For example, a $\text{Fin}^\text{iso}$-additive $\infty$-category is just a semiadditive $\infty$-category, and a $\Omega^\infty\mathbb{S}$-additive $\infty$-category is just an additive $\infty$-category (a semiadditive $\infty$-category all of whose Hom-$\mathbb{E}_\infty$-spaces are grouplike).

\begin{theorem}
\label{2T7}
If $R$ is a solid commutative semiring space, then $\text{Burn}_R$ is a solid semiring $\infty$-category, and a symmetric monoidal $\infty$-category $\mathcal{C}^\oast$ is a $\text{Burn}_R$-module if and only if $\mathcal{C}$ is $R$-additive (with $\oast=\oplus$ the direct sum).
\end{theorem}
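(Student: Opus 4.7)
The plan is to combine the solidity machinery of \S\ref{3.1.2} with Corollary \ref{SemiaddLawv}, proceeding in three stages: endowing $\text{Burn}_R$ with a commutative semiring structure, verifying solidity, and identifying the modules via Lemma \ref{MainLemma}.

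First I would transport the commutative algebra structure on $R$ across the symmetric monoidal equivalence $\text{CycMod}_{\text{Burn}^\text{eff}}^\otimes\cong\text{SRing}(\text{Top})^\otimes$ of Corollary \ref{SemiaddLawv}. Since $R$ is commutative, this makes $\text{Burn}_R$ into a commutative algebra in $\text{CycMod}_{\text{Burn}^\text{eff}}^\otimes$, i.e., a multiplicative Lawvere theory. Because $\text{Burn}^\text{eff}$ is solid (Theorem \ref{2T4}), the tensor product over $\text{Burn}^\text{eff}$ agrees with the tensor product in SymMon on $\text{Burn}^\text{eff}$-modules, so $\text{Burn}_R$ inherits the structure of a commutative semiring $\infty$-category. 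Solidity of $R$ as a commutative semiring space then translates, via the same equivalence, into solidity of $\text{Burn}_R$ in SymMon: the multiplication $R\otimes R\to R$ and the multiplication $\text{Burn}_R\otimes\text{Burn}_R\to\text{Burn}_R$ correspond, and the former is an equivalence by hypothesis.

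Next I apply Lemma \ref{MainLemma} with $\mathcal{P}$ the full subcategory of $R$-additive symmetric monoidal $\infty$-categories. For any $\mathcal{C}^\oast$, $\text{Hom}(\text{Burn}_R^\oplus,\mathcal{C}^\oast)$ is a cofree $\text{Burn}_R$-module by Proposition \ref{TupHup}, and so by Remark \ref{RmkLawvEnr} inherits an enrichment in $\text{Mdl}(\text{Burn}_R)\cong\text{Mod}_R$; that is, it is $R$-additive. Conversely, when $\mathcal{C}^\oast$ is $R$-additive (in particular semiadditive, with $\oast=\oplus$), the counit $\text{Hom}(\text{Burn}_R^\oplus,\mathcal{C}^\oplus)\to\mathcal{C}$ factors through $\text{Hom}(\text{Burn}^{\text{eff},\oplus},\mathcal{C}^\oplus)$, which maps to $\mathcal{C}$ by an equivalence via Theorem \ref{2T4} and Corollary \ref{CofreeSemiadd}. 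The remaining question is whether the restriction $\text{Hom}(\text{Burn}_R^\oplus,\mathcal{C}^\oplus)\to\text{Hom}(\text{Burn}^{\text{eff},\oplus},\mathcal{C}^\oplus)$ is an equivalence on the component containing the identity commutative-cocommutative bialgebra, i.e., whether the canonical $\text{Burn}^\text{eff}$-action on $\mathcal{C}$ extends uniquely to a $\text{Burn}_R$-action. For this, I would argue that the needed structure is precisely a promotion of the canonical $\mathbb{E}_\infty$-enrichment on $\mathcal{C}$ to a $\text{Mod}_R$-enrichment; by $R$-additivity every Hom-$\mathbb{E}_\infty$-space is an $R$-module, and because $R$ is solid, such an $R$-module refinement is a property rather than additional data, unique when it exists.

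The main obstacle is this last step: bridging the gap between the pointwise $R$-module structures on Hom-spaces and a coherent $\text{Burn}_R$-action on $\mathcal{C}^\oplus$. The cleanest route is probably to embed $\mathcal{C}$ fully faithfully into its algebraic presheaf category $\mathcal{P}_\amalg(\mathcal{C}^\oplus)$ via Proposition \ref{SMYoneda2}; the presheaf category is presentable, its $R$-additivity becomes the condition that it is a $\text{Mod}_R$-module in $\text{Pr}^\text{L}$, and Theorem \ref{ThmLawvAdj} together with solidity of $\text{Mod}_R$ in $\text{Pr}^\text{L,\otimes}$ (Remark \ref{SolidRingSp} applied over $H\mathbb{S}$) furnishes a canonical $\text{Burn}_R$-action there. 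Restricting this action back to the finitely generated free subcategory $\mathcal{C}$ gives the desired $\text{Burn}_R$-module structure and completes the application of Lemma \ref{MainLemma}.
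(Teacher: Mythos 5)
Your proposal is correct and follows essentially the same route as the paper: establish idempotency of $\text{Burn}_R$ via Corollary \ref{SemiaddLawv} (using that $R$ is idempotent and $\text{Burn}^\text{eff}$ is solid), get one direction from the $\text{Mod}_R$-enrichment of Remark \ref{RmkLawvEnr}, and for the converse embed $\mathcal{C}$ into its algebraic presheaf category, use solidity of $R$ to upgrade the pointwise $R$-module structures on Hom-objects to a factorization through $\text{Mod}_R$, and restrict the resulting $\text{Burn}_R$-action back to $\mathcal{C}$. The detour through the restriction map of bialgebra categories in your middle paragraph is unnecessary, but your ``cleanest route'' is exactly what the paper does.
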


\begin{proof}
First, $\text{Burn}_R$ is idempotent in Lawv by Corollary \ref{SemiaddLawv} (since $R$ itself is idempotent). Now it suffices to show that $\mathcal{C}^\oast$ has a $\text{Burn}_R$-module structure if and only if it is $R$-additive.

Any $\text{Burn}_R$-module is enriched in $\text{Mdl}(\text{Burn}_R)\cong\text{Mod}_R$ by Remark \ref{RmkLawvEnr}, and therefore $R$-additive.

Conversely, suppose $\mathcal{C}$ is $R$-additive, and let $F:\mathcal{C}^\text{op}\rightarrow\text{CMon}_\infty$ be a semiadditive functor. For any object $X\in\mathcal{C}$, $\text{End}(X)$ is an $R$-algebra, so there are semiring maps $$R\rightarrow\text{End}(X)\rightarrow\text{End}(F(X))$$ which exhibit $F(X)$ as an $R$-module. Since $R$ is solid, this implies that $F$ factors through the full subcategory $\text{Mod}_R\subseteq\text{CMon}_\infty$. Therefore $$\text{Hom}(\mathcal{C}^\text{op},\text{Top})\cong\text{Hom}(\mathcal{C}^\text{op},\text{CMon}_\infty)\cong\text{Hom}(\mathcal{C}^\text{op},\text{Mod}_R),$$ where the first equivalence is because $\mathcal{C}$ is semiadditive. However, $\text{Hom}(\mathcal{C}^\text{op},\text{Mod}_R)$ is a presentable $\text{Mod}_R$-module. Since there is a functor of commutative semiring $\infty$-categories $\text{Burn}_R\xrightarrow{\subseteq}\text{Mod}_R$, it follows that $\text{Hom}(\mathcal{C}^\text{op},\text{Top})$ is a $\text{Burn}_R$-module.

Finally, $\mathcal{C}$ is a full subcategory of $\text{Hom}(\mathcal{C}^\text{op},\text{Top})$ which is closed under direct sum, so it too is a $\text{Burn}_R$-module. This completes the proof.
\end{proof}

\begin{corollary}
\label{2T7b}
Let $\text{Burn}=\text{Burn}_{\Omega^\infty\mathbb{S}}$ be the full subcategory of Sp spanned by finite wedge powers of the sphere spectrum $\mathbb{S}^{\vee n}$. (Roughly, Burn is obtained from $\text{Burn}^\text{eff}$ by group-completing the Hom-objects.) Then Burn is a solid semiring $\infty$-category, and Burn-modules are additive $\infty$-categories.
\end{corollary}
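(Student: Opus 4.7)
The plan is to deduce Corollary \ref{2T7b} as a direct specialization of Theorem \ref{2T7} to the case $R=\Omega^\infty\mathbb{S}$. There are three things to check: (i) that $\Omega^\infty\mathbb{S}$ qualifies as a solid commutative semiring space; (ii) that an $\Omega^\infty\mathbb{S}$-additive $\infty$-category in the sense of the preceding definition is exactly an additive $\infty$-category; and (iii) that the Lawvere theory $\text{Burn}_{\Omega^\infty\mathbb{S}}=\text{Mod}_{\Omega^\infty\mathbb{S}}^\circ$ is equivalent to the claimed full subcategory of $\text{Sp}$ spanned by the wedge powers $\mathbb{S}^{\vee n}$.

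For (i), I would use the symmetric monoidal free functor $\text{CMon}_\infty^\wedge \to \text{Ab}_\infty^\wedge$ (group completion), which by Lemma \ref{EasyLemma} preserves idempotent commutative algebras. Since $\text{Fin}^\text{iso}$ is the unit of $\text{CMon}_\infty^\wedge$ (it is the free $\mathbb{E}_\infty$-space on a point), its group completion $\Omega^\infty\mathbb{S}$ is the image of the unit under a symmetric monoidal localization and is therefore an idempotent object of $\text{CMon}_\infty^\wedge$, hence a solid commutative semiring space. Equivalently, $\mathbb{S}\wedge\mathbb{S}\cong\mathbb{S}$ in connective spectra, and group completion transports this equivalence to the statement that $\Omega^\infty\mathbb{S}\otimes\Omega^\infty\mathbb{S}\to\Omega^\infty\mathbb{S}$ is an equivalence.

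For (ii), I would recall that under the equivalence $\text{Ab}_\infty\cong\text{Sp}_{\geq 0}$, a connective spectrum is exactly an $\Omega^\infty\mathbb{S}$-module, so saying that each Hom-$\mathbb{E}_\infty$-space of a semiadditive $\infty$-category $\mathcal{C}$ is an $\Omega^\infty\mathbb{S}$-module is equivalent to saying each is grouplike, which is the definition of additivity. Applying Theorem \ref{2T7} with $R=\Omega^\infty\mathbb{S}$ then gives that $\text{Burn}_{\Omega^\infty\mathbb{S}}$ is a solid semiring $\infty$-category and its modules are precisely the additive $\infty$-categories.

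Finally, for (iii), I would unpack the definition $\text{Burn}_R=\text{Mod}_R^\circ$: the skeleton of a pointed presentable $\infty$-category is the full subcategory spanned by the coproducts $X^{\amalg n}$ of the distinguished object, which here is $R$ itself regarded as a free $R$-module of rank one. For $R=\Omega^\infty\mathbb{S}$, the distinguished object corresponds to $\mathbb{S}$ under $\text{Mod}_{\Omega^\infty\mathbb{S}}\cong\text{Sp}_{\geq 0}$, and coproducts of modules match wedge sums of spectra, so $\text{Burn}_{\Omega^\infty\mathbb{S}}$ is the full subcategory of $\text{Sp}_{\geq 0}\subseteq\text{Sp}$ spanned by the $\mathbb{S}^{\vee n}$, matching the definition in the statement. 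I do not anticipate a serious obstacle here; the only subtle point is (i), where one must be careful that group completion really is a symmetric monoidal localization on $\text{CMon}_\infty^\wedge$, but this is handled uniformly by Lemma \ref{EasyLemma} applied to the adjunction between $\mathbb{E}_\infty$-spaces and connective spectra.
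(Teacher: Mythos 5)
Your proposal is correct and matches the paper's (implicit) proof: the corollary is exactly Theorem \ref{2T7} specialized to $R=\Omega^\infty\mathbb{S}$, with the solidity of $\Omega^\infty\mathbb{S}$ and the identification $\text{Burn}\cong\text{Sp}^\circ$ already established in \S\ref{3.3.1} and Remark \ref{RmkBurnGpCp}. The only quibble is that the symmetric monoidality of group completion is really Corollary \ref{CorGGN} (from \cite{GGN}) rather than a direct instance of Lemma \ref{EasyLemma}, whose hypotheses ask for a product-preserving left adjoint between cartesian closed $\infty$-categories; this does not affect the argument.
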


\section{Problems}\label{4.3}
\noindent In this section, we will introduce a system of conjectures on higher algebra, which would necessitate a more serious understanding of some of the `categorified algebraic geometry' alluded to in Chapter \ref{3}. In \S\ref{4.3.1}, we propose a reinterpretation of $\infty$-operads as particularly well-behaved Lawvere theories. That is, we conjecture that an $\infty$-operad is a Lawvere theory which is trivial over $\text{Burn}^\text{eff}$; as such, there is a fibration $\mathcal{L}\xrightarrow{p}\text{Burn}^\text{eff}$ which encodes some of the disjunctive behavior of $\infty$-operads a la Lurie.

To prove this conjecture, we would hope to see that $\infty$-operads have good behavior computationally in the commutative algebra of $\infty$-categories. This can be viewed as a statement about the `flatness' of $\infty$-operads. In \S\ref{4.3.2}, we discuss some conjectures surrounding the algebraic geometry of the initial commutative semiring $\infty$-category $\text{Fin}^\text{iso}$, and speculate that a version of flat descent would imply some of the conjectures from \S\ref{4.3.1}.

\subsection{Operads}\label{4.3.1}
\subsubsection{Operadic Lawvere theories via span constructions}
\noindent Recall from Example \ref{ExOp} and HA that an $\infty$-operad is a type of fibration $[\mathcal{O}^\otimes]\rightarrow\text{Fin}_\ast$. The square brackets emphasize that $[\mathcal{O}^\otimes]$ is itself an $\infty$-category, rather than a symmetric monoidal structure on an $\infty$-category $\mathcal{O}$.

Associated to any $\infty$-operad is a symmetric monoidal $\infty$-category $\text{Env}(\mathcal{O})^\oast$, the \emph{symmetric monoidal envelope} of $\mathcal{O}^\otimes$, satisfying the universal property that $$\text{Hom}(\text{Env}(\mathcal{O})^\oast,\mathcal{C}^\oast)\cong\text{Alg}_\mathcal{O}(\mathcal{C}^\oast)$$ for any $\mathcal{C}^\oast\in\text{SymMon}$.

We will assume all our $\infty$-operads are single-colored (\emph{reduced}, in the language of HA; see also \cite{Berman} 3.23), so the underlying $\infty$-category of colors $\mathcal{O}$ is contractible, and $[\mathcal{O}^\otimes]$ has objects indexed by the nonnegative integers. The symmetric monoidal envelope is defined via the pullback $$\xymatrix{
\text{Env}(\mathcal{O})\ar[d]\ar[r] &\text{Fin}\ar[d] \\
[\mathcal{O}^\otimes]\ar[r] &\text{Fin}_\ast.
}$$ In particular, $\text{Env}(\mathcal{O})^\oast$ is the PROP modeling $\mathcal{O}$-algebras. Therefore, the Lawvere theory modeling $\mathcal{O}$-algebras is $$\mathcal{L}_\mathcal{O}\cong\text{Env}(\mathcal{O})^\oast\otimes\text{Fin}^\text{op}.$$ According to Proposition \ref{FreeLawv}, $\mathcal{L}_\mathcal{O}$ is the $\infty$-category of finitely generated free $\mathcal{O}$-algebras (in Top).

Classically, if $X$ is a finite set, the free $\mathcal{O}$-algebra generated by $X$ admits an explicit description; the underlying set is $$\coprod_{T\in\text{Fin}}\mathcal{O}(T)\times_{\Sigma_T}X^T,$$ where $\Sigma_T$ denotes the symmetric group $\text{Aut}(T)$ and $\mathcal{O}(T)$ is the set of ways to multiply the objects of $T$ via the operad. In the event that $\mathcal{O}$ is a classical (1-categorical) operad, we can use this description of the free algebra to describe the Lawvere theory $\mathcal{L}_\mathcal{O}$ as a span construction.

\begin{definition}
If $\mathcal{O}$ is an operad, let $\text{Span}_\mathcal{O}$ be the category whose objects are finite sets, and a morphism from $X$ to $Y$ consists of the following data (up to isomorphism): a finite set $T$, functions $X\leftarrow T\xrightarrow{f} Y$, and for each $y\in Y$, an object of $\mathcal{O}(T_y)$, where $T_y=f^{-1}(y)$. We say that this last structure makes $f$ \emph{operadic}.

A composition of two morphisms $X\rightarrow Y\rightarrow Z$ is given by a pullback $$\xymatrix{
&&S\times_Y T\ar[ld]\ar[rd] &&\\
&S\ar[ld]\ar[rd] &&T\ar[ld]\ar[rd] &\\
X &&Y &&Z.
}$$ Each fiber of $S\times_Y T\rightarrow T$ is canonically in bijection with a fiber of $S\rightarrow Y$; thus $S\times_Y T\rightarrow T$ inherits an operadic structure from $S\rightarrow Y$, which makes the total span $X\leftarrow S\times_Y T\rightarrow Z$ a morphism in $\text{Span}_\mathcal{O}$. 
\end{definition}

\noindent Notice that the set $\text{Span}_\mathcal{O}(1,X)$ is in bijection with $$\coprod_{T\in\text{Fin}}\mathcal{O}(T)\times_{\Sigma_T}X^T,$$ where here we are summing over possible choices of the spanning object $T$.

\begin{proposition}
If $\mathcal{O}$ is an operad, $\text{Span}_\mathcal{O}^\text{op}$ is equivalent to each of the following:
\begin{enumerate}
\item the full subcategory of $\text{Alg}_\mathcal{O}$ on the finitely generated free algebras;
\item the Lawvere theory for $\mathcal{O}$-algebras;
\item $\text{Env}(\mathcal{O})\otimes\text{Fin}^\text{op}$.
\end{enumerate}
\end{proposition}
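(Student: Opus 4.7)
The equivalences $(2) \Leftrightarrow (3)$ and $(1) \Leftrightarrow (2)$ are essentially already in hand. Indeed, the Lawvere theory for $\mathcal{O}$-algebras is by definition the cyclic $\text{Fin}^\text{op}$-module that corepresents $\text{Alg}_\mathcal{O}(-)$ on cartesian monoidal $\infty$-categories, and by the universal property of the symmetric monoidal envelope together with Proposition \ref{TupHup} (applied to $\text{Fin}^\text{iso} \to \text{Fin}^\text{op}$) this is $\text{Env}(\mathcal{O})^\oast \otimes \text{Fin}^\text{op}$. Likewise, Proposition \ref{FreeLawv} applied to the PROP $\text{Env}(\mathcal{O})^\oast$ identifies $\text{Env}(\mathcal{O})^\oast \otimes \text{Fin}^\text{op}$ with $\text{Mdl}(\text{Env}(\mathcal{O})^\oast)^\circ$, and by the defining universal property of $\text{Env}(\mathcal{O})$ this is the full subcategory of $\text{Alg}_\mathcal{O}$ spanned by the finitely generated free $\mathcal{O}$-algebras. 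So the real content is to identify $\text{Span}_\mathcal{O}^\text{op}$ with (1).

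The plan is to build an explicit functor $\Phi \colon \text{Span}_\mathcal{O}^\text{op} \to \text{Alg}_\mathcal{O}$ sending a finite set $X$ to the free $\mathcal{O}$-algebra $F(X)$, and to show that it is fully faithful with essential image the finitely generated free algebras. Essential surjectivity is immediate from the definitions. For fullness and faithfulness, I would use the free/forgetful adjunction together with the classical formula $F(Y) \cong \coprod_{T} \mathcal{O}(T) \times_{\Sigma_T} Y^T$ to compute
\[
\text{Alg}_\mathcal{O}(F(X), F(Y)) \;\cong\; F(Y)^X \;\cong\; \prod_{x \in X}\; \coprod_{T_x} \mathcal{O}(T_x) \times_{\Sigma_{T_x}} Y^{T_x}.
\]
An element of the right-hand side is exactly the data of, for each $x \in X$, a finite set $T_x$ together with a point of $\mathcal{O}(T_x)$ and a function $T_x \to Y$, all taken up to the action of $\Sigma_{T_x}$. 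Packaging the $T_x$ into a single finite set $T = \coprod_{x \in X} T_x$ equipped with maps $X \leftarrow T \to Y$ and with each fiber $T_x$ labelled by an element of $\mathcal{O}(T_x)$, we recognize precisely a morphism $Y \to X$ in $\text{Span}_\mathcal{O}$, i.e.\ a morphism $X \to Y$ in $\text{Span}_\mathcal{O}^\text{op}$. This gives the bijection on $\text{Hom}$-sets, and defines $\Phi$ on morphisms.

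The only nontrivial step is then checking that $\Phi$ preserves composition: that composing two operadic spans by the pullback recipe given in the definition of $\text{Span}_\mathcal{O}$ corresponds to composing the associated free algebra maps in $\text{Alg}_\mathcal{O}$. Unwinding, if $X \leftarrow S \to Y$ (operadic over $X$) and $Y \leftarrow T \to Z$ (operadic over $Y$) are the two morphisms, the free algebra composite takes $x \in X$ to the element of $F(Z)$ obtained by substituting, into the operation $\mathcal{O}(S_x)$, for each $s \in S_x$ the element of $F(Z)$ determined by $\mathcal{O}(T_{f(s)})$ and the map $T_{f(s)} \to Z$. By operadic associativity this is the operation in $\mathcal{O}\bigl(\coprod_{s \in S_x} T_{f(s)}\bigr) = \mathcal{O}((S \times_Y T)_x)$ applied to the composed map $(S \times_Y T)_x \to Z$, which is exactly the operadic structure on the pullback span. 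This is the step I expect to be the main obstacle, since it requires a careful combinatorial check matching the $\Sigma_T$-equivariance, the operadic substitution maps, and the pullback-of-sets formula; I would handle it by reducing to the case $|X|=1$ via the product decomposition of $\text{Hom}(F(X),F(Z))$, where the assertion is exactly the associativity axiom of the operad $\mathcal{O}$.
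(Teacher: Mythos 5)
Your proposal matches the paper's proof in essence: both construct the functor $X \mapsto \mathrm{Free}(X)$ out of $\text{Span}_\mathcal{O}^\text{op}$, identify hom-sets between free algebras with operadic spans via the explicit formula $\mathrm{Free}(Y) \cong \coprod_T \mathcal{O}(T)\times_{\Sigma_T}Y^T$, and isolate the compatibility of pullback-composition with operadic substitution as the remaining check. You actually spell out that last step (reduction to $|X|=1$ and operad associativity) in more detail than the paper, which simply asserts it, and your preliminary reduction of (2) and (3) to (1) via Proposition \ref{FreeLawv} is the same as the paper's closing remark that ``(2) and (3) follow.''
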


\begin{proof}
There is a functor $I:\text{Span}_\mathcal{O}^\text{op}\rightarrow\text{Alg}_\mathcal{O}$ which sends $X$ to the free $\mathcal{O}$-algebra $\text{Free}(X)$, spans $X\xleftarrow{g} Y=Y$ to $\text{Free}(g)$, and spans $X=X\xrightarrow{f} Y$ to the morphism $\text{Free}(Y)\rightarrow\text{Free}(X)$ adjoint to $f^\ast:Y\rightarrow\text{Free}(X)$. Here $f^\ast(y)$ corresponds to the object $$(o,f)\in\mathcal{O}(X_y)\times_{\Sigma_{X_y}}Y^{X_y}\subseteq\text{Free}(X),$$ with $o\in\mathcal{O}(X_y)$ determined by the operadic structure on $f$. We may check $I$ is a functor by verifying that pullback squares correspond to composition.

Moreover, $I:\text{Span}_\mathcal{O}(Y,X)\rightarrow\text{Free}(X)^Y$ is an equivalence, as discussed before the proposition statement. Therefore $I$ is fully faithful, proving (1). (2) and (3) follow.
\end{proof}

\begin{conjecture}
\label{Conj1}
If $\mathcal{O}$ is an $\infty$-operad (even colored), the associated Lawvere theory $\text{Env}(\mathcal{O})\otimes\text{Fin}^\text{op}$ can be described via a span construction $\text{Span}_\mathcal{O}^\text{op}$ as above.
\end{conjecture}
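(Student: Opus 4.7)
The plan is to extend the 1-categorical proof by adapting Barwick's span construction (\S\ref{1.2.3}) to the operadic setting, then verify the equivalence by a direct mapping-space computation. The target equivalence is a specific instance of the general observation (coming from the dual of Theorem \ref{SMYonedaT}) that $(\text{Env}(\mathcal{O})^\oast \otimes \text{Fin}^{\text{op},\amalg})^\text{op}$ embeds fully faithfully into $\text{Hom}(\text{Env}(\mathcal{O})^\oast,\text{Top}^\times) \cong \text{Alg}_\mathcal{O}(\text{Top}^\times)$ as the full subcategory of finitely generated free $\mathcal{O}$-algebras (Proposition \ref{FreeLawv}). So the task reduces to identifying this full subcategory with $\text{Span}_\mathcal{O}^\text{op}$.

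First I would construct $\text{Span}_\mathcal{O}$ as a quasicategory by a pullback-style definition. Using the forgetful functor $\text{Env}(\mathcal{O}) \to \text{Fin}$ (which sends an $n$-ary operation to the cardinality-$n$ set of its inputs), declare an $n$-simplex of $\text{Span}_\mathcal{O}$ to be a functor $\text{Tw}(\Delta^n) \to \text{Env}(\mathcal{O})$ whose underlying $n$-simplex in $\text{Fin}$ is an $n$-simplex of $\text{Span}(\text{Fin})$ (in the sense of \S\ref{1.2.3}), and which sends egressive morphisms to equivalences. Concretely, a morphism from $X$ to $Y$ is a span $X \leftarrow T \to Y$ in Fin together with a lift of the rightward leg $T \to Y$ to a morphism in $\text{Env}(\mathcal{O})$; for colored operads, one asks for the same data after replacing finite sets by finite sets with a map to the space $\mathcal{O}$ of colors. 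Composition by pullback works exactly as in \cite{BarMack1} once one checks that the operadic structure on the rightward leg transports across pullback squares, which follows from the cocartesian fibration $[\mathcal{O}^\otimes] \to \text{Fin}_\ast$ being closed under restriction along inert maps.

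Next I would construct $I \colon \text{Span}_\mathcal{O}^\text{op} \to \text{Alg}_\mathcal{O}(\text{Top}^\times)$ sending $X$ to $\text{Free}_\mathcal{O}(X)$. The construction proceeds in two pieces: the ``horizontal'' spans $X = X \xrightarrow{f} Y$ with trivial operadic data give, via the free-forgetful adjunction, a map adjoint to the assignment $y \mapsto (\text{id}, f) \in \mathcal{O}(f^{-1}(y)) \times_{\Sigma} X^{f^{-1}(y)} \subseteq \text{Free}_\mathcal{O}(X)$; and the ``vertical'' spans $X \xleftarrow{g} Y = Y$ map to $\text{Free}_\mathcal{O}(g)$. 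Coherence of this pairing (including the mingling of $p$-cocartesian and vertical morphisms across distinguished squares) should be encoded by the functoriality of the free algebra formula $\text{Free}_\mathcal{O}(X) \simeq \coprod_n \mathcal{O}(n) \times_{\Sigma_n} X^n$ (HA 3.1.3). Then I would verify that $I$ is fully faithful by matching mapping spaces: in the target,
\[ \text{Map}_{\text{Alg}_\mathcal{O}}(\text{Free}_\mathcal{O}(Y), \text{Free}_\mathcal{O}(X)) \simeq \text{Map}_{\text{Top}}(Y, \text{Free}_\mathcal{O}(X)) \simeq \prod_{y \in Y}\coprod_{n \geq 0} \mathcal{O}(n) \times_{\Sigma_n} X^n, \]
and in the source, morphisms $X \to Y$ in $\text{Span}_\mathcal{O}$ unwind (by decomposing $T$ along its fibers over $Y$) to precisely the same expression. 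Essential surjectivity onto finitely generated free algebras is immediate.

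The main obstacle is the rigor of the first step: constructing $\text{Span}_\mathcal{O}$ as a quasicategory and verifying, at the level of simplicial sets, that the would-be construction of $I$ is functorial and factors through $\text{Alg}_\mathcal{O}$. Barwick's machinery handles the case of spans in an $\infty$-category with pullbacks, but here we need a relative version that carries the extra operadic data along the ingressive direction, and this extra data interacts with pullback in a way that depends on the operad structure rather than just on the span in Fin. A clean treatment will likely require identifying $\text{Span}_\mathcal{O}$ intrinsically, e.g.\ as a left fibration over $\text{Span}(\text{Fin})$ classified by some functor $\text{Span}(\text{Fin})^\text{op} \to \text{Top}$ built from the operad spaces $\mathcal{O}(n)$ and their $\Sigma_n$-actions. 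The colored case introduces the additional bookkeeping of a map to the space of colors on each leg of the span, which should be automatic from the unified definition but would need to be checked. Once $\text{Span}_\mathcal{O}$ is defined, the mapping-space computation and essential surjectivity should be routine applications of results already in \S\ref{4.1} and \S\ref{4.2}.
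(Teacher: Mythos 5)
This statement is left as a conjecture in the paper precisely because the two steps your proposal defers are the entire content: the paper's own discussion after Conjecture \ref{Conj1} states that ``the difficulty in proving such a conjecture is in constructing $\text{Span}_\mathcal{O}^\text{op}$ and constructing a functor to $\text{Alg}_\mathcal{O}$,'' while the Hom-space matching (your mapping-space computation, which is correct) is the part the paper already concedes is ``not hard to see.'' Your plan reproduces the 1-categorical argument (the Proposition preceding the conjecture) and the easy verification, but the coherent $\infty$-categorical construction of $\text{Span}_\mathcal{O}$ and of the functor $I$ is exactly what remains open, and labeling it ``routine'' or ``likely to require'' a further device does not close it.

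Moreover, the two concrete devices you float do not work as stated. First, an $n$-simplex of $\text{Span}_\mathcal{O}$ cannot be a functor $\text{Tw}(\Delta^n)\rightarrow\text{Env}(\mathcal{O})$ sending egressive morphisms to equivalences: in $\text{Tw}(\Delta^1)$ the egressive morphism is $01\rightarrow 00$, i.e.\ the \emph{leftward} leg $T\rightarrow X$, so this condition forces every span to have invertible left leg and collapses $\text{Span}_\mathcal{O}$ onto (roughly) $\text{Env}(\mathcal{O})$ rather than the Lawvere theory. What is actually wanted is a span simplex in Fin together with a lift of only its ingressive part to $\text{Env}(\mathcal{O})$ --- a decorated span construction for which Barwick's machinery in \S\ref{1.2.3} does not directly apply, since the decoration must be transported across pullback squares. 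Second, the fallback of exhibiting $\text{Span}_\mathcal{O}$ as a left fibration over $\text{Span}(\text{Fin})$ classified by a functor to Top cannot succeed: the fibers over objects are contractible (objects are just finite sets) while the fibers over a span $X\leftarrow T\rightarrow Y$ are $\prod_{y}\mathcal{O}(T_y)$, so the extra data lives on morphisms rather than objects, and a left fibration with contractible object fibers is an equivalence. Some genuinely new fibrational or combinatorial input is needed here, and until it is supplied the argument remains a conjecture, as in the paper.
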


\noindent The difficulty in proving such a conjecture is in constructing $\text{Span}_\mathcal{O}^\text{op}$ and constructing a functor to $\text{Alg}_\mathcal{O}$. In contrast, it is not hard to see that $\text{Span}_\mathcal{O}^\text{op}$ should have Hom-spaces which recover finitely generated free $\mathcal{O}$-algebras, just as above. (For the $\infty$-categorical analogue of the formula for free $\mathcal{O}$-algebras, see HA 3.1.)

\subsubsection{Which Lawvere theories are operadic?}
\noindent Letting $\text{Op}^\otimes$ denote the $\infty$-category of (reduced) $\infty$-operads endowed with the Boardman-Vogt tensor product (HA 2.2.5), we expect fully faithful symmetric monoidal functors $$\text{Op}^\otimes\rightarrow\text{Lawv}^\otimes\xrightarrow{\text{Mdl}}\text{Pr}^{\text{L},\otimes}_\ast.$$ Conjecture \ref{Conj1} addresses the question: What is the Lawvere theory associated to an operad? Conversely, it is natural to ask: Which Lawvere theories arise from operads?

Assuming Conjecture \ref{Conj1}, to any $\infty$-operad (with structural fibration $[\mathcal{O}^\otimes]\xrightarrow{p}\text{Fin}_\ast$), there is a tower of pullbacks $$\xymatrix{
\text{Env}(\mathcal{O})\ar[r]\ar[d] &\text{Fin}\ar[d] \\
[\mathcal{O}^\otimes]\ar[d]\ar[r]^p &\text{Fin}_\ast\ar[d] \\
\mathcal{L}_\mathcal{O}\ar[r] &\text{Burn}^\text{eff}.
}$$

\begin{proposition}
\label{PropOpL}
Assuming Conjecture \ref{Conj1}, if $\mathcal{L}$ is a Lawvere theory arising from an operad, the operad is given by the pullback $\mathcal{L}\times_{\text{Burn}^\text{eff}}\text{Fin}_\ast$.
\end{proposition}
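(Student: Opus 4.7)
The plan is to invoke Conjecture \ref{Conj1}, which under its hypothesis identifies $\mathcal{L}_\mathcal{O}$ with $\text{Span}_\mathcal{O}^\text{op}$, and then to compare the pullback $\mathcal{L}_\mathcal{O} \times_{\text{Burn}^\text{eff}} \text{Fin}_\ast$ directly with $[\mathcal{O}^\otimes]$ by unwinding morphism descriptions on both sides.

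First I would identify the two forgetful functors into $\text{Burn}^\text{eff}$. The functor $\mathcal{L}_\mathcal{O} \to \text{Burn}^\text{eff}$ is the obvious forgetful map that discards the operadic decoration from a span, while $\text{Fin}_\ast \to \text{Burn}^\text{eff}$ views a pointed map $\phi\colon \langle m\rangle_+ \to \langle n\rangle_+$ as the span $\langle m\rangle \hookleftarrow \phi^{-1}(\langle n\rangle^\circ) \to \langle n\rangle$ with injective source leg. Objects of the pullback are then finite pointed sets, and a morphism from $\langle m\rangle_+$ to $\langle n\rangle_+$ consists of compatible data: a pointed map $\phi$ together with an operadic decoration on the underlying span of $\phi$, namely a choice of operation in $\mathcal{O}(\phi^{-1}(i))$ for each $i \in \langle n\rangle^\circ$. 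This is precisely the standard description of a morphism in $[\mathcal{O}^\otimes]$ covering $\phi$, so at the level of underlying combinatorial data the pullback agrees with $[\mathcal{O}^\otimes]$.

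To upgrade this to an equivalence of $\infty$-categories, I would use the pasting lemma: the top square of the tower is a pullback by the very definition of $\text{Env}(\mathcal{O})$ in Example \ref{ExOp}, so the bottom square is a pullback if and only if the outer rectangle is. Showing the outer rectangle is a pullback amounts to identifying $\text{Env}(\mathcal{O})$ with the subcategory of $\mathcal{L}_\mathcal{O} \simeq \text{Span}_\mathcal{O}^\text{op}$ spanned by morphisms whose underlying span in $\text{Burn}^\text{eff}$ comes from an honest function of finite sets, i.e., whose non-operadic leg is an identity. This exactly matches the PROP morphism description of $\text{Env}(\mathcal{O})$, where a morphism $\langle m\rangle \to \langle n\rangle$ is a function $f$ together with an operation in $\mathcal{O}(f^{-1}(i))$ for each $i \in \langle n\rangle$.

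The main obstacle is that the entire argument is conditional on Conjecture \ref{Conj1} being formulated and verified $\infty$-categorically, which in turn requires a functorial construction of $\text{Span}_\mathcal{O}^\text{op}$ generalizing Barwick's twisted-arrow construction of \S\ref{1.2.3} to allow for operadic decorations on one leg. Once this is granted, verifying that the $\infty$-categorical pullback agrees with the combinatorial description should follow from exhibiting the forgetful functor $\text{Span}_\mathcal{O}^\text{op} \to \text{Burn}^\text{eff}$ as a cocartesian fibration (with fiber over a span $X \leftarrow T \to Y$ being the space $\prod \mathcal{O}(T_i)$ of operadic decorations), so that the pullback can be computed fiber-wise and reduces to the verification already sketched. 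A minor subtlety is that the variance conventions for $\text{Span}_\mathcal{O}$ and for the inclusion $\text{Fin}_\ast \to \text{Burn}^\text{eff}$ must be chosen consistently, but this is bookkeeping rather than a genuine obstruction.
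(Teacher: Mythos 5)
The paper supplies no proof of this proposition --- it is simply read off from the conjectural ``tower of pullbacks'' displayed just above it --- so your direct combinatorial identification of $\mathcal{L}_\mathcal{O}\times_{\text{Burn}^\text{eff}}\text{Fin}_\ast$ with $[\mathcal{O}^\otimes]$ (objects are pointed finite sets; a morphism over a pointed map $\phi$ is an operadic decoration of the underlying (injective, arbitrary) span of $\phi$) is exactly the content that has to be supplied, and it matches the paper's intent. However, the step you propose for upgrading this to a rigorous statement is invalid: the pasting lemma runs in the other direction. It says that if the \emph{bottom} square is a pullback, then the top square is a pullback if and only if the outer rectangle is. Knowing only that the top square is a pullback gives no implication between ``bottom is a pullback'' and ``outer is a pullback'': for instance, if the top-right corner of the tower were the empty $\infty$-category, the top square and the outer rectangle would both be pullbacks automatically, with no constraint on the bottom square. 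Since the bottom square is precisely the assertion of the proposition, you cannot obtain it from the outer rectangle this way; you must verify it directly, which is what your combinatorial computation is already doing. (The legitimate use of pasting here is the reverse one: once the bottom square is established, deduce that the outer rectangle is a pullback too.)

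A second, more minor repair: $\text{Span}_\mathcal{O}^\text{op}\rightarrow\text{Burn}^\text{eff}$ is not a cocartesian fibration ``with fiber over a span'' --- fibers are taken over objects, and this functor is bijective on objects with the operadic decorations living on morphism spaces; moreover a span with $\mathcal{O}(T_y)=\emptyset$ for some $y$ admits no lift at all, so cocartesian lifts need not exist. What you actually need in order for the strict (simplicial-set) pullback to compute the $\infty$-categorical one is that one leg be a categorical fibration, and the easier candidate is the inclusion $\text{Fin}_\ast\subseteq\text{Burn}^\text{eff}$: it is a subcategory inclusion containing every equivalence of $\text{Burn}^\text{eff}$ (spans of bijections are (injective, arbitrary)), hence an inner fibration and isofibration. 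With that substitution, and granting Conjecture \ref{Conj1} together with a functorial $\infty$-categorical construction of $\text{Span}_\mathcal{O}$ (which, as you correctly note, is the real outstanding input), your direct computation yields the proposition.
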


\begin{remark}
\label{RmkOdot}
The tower of pullbacks in Proposition \ref{PropOpL} endows both $\text{Env}(\mathcal{O})$ and $[\mathcal{O}^\otimes]$ with symmetric monoidal structures. The symmetric monoidal structure on $\text{Env}(\mathcal{O})$ is the usual one, and the symmetric monoidal structure on $[\mathcal{O}^\otimes]$ is expected to be the one mentioned by Lurie in HA 2.2.4.7.
\end{remark}

\noindent Proposition \ref{PropOpL} describes how to reconstruct an operad from the associated Lawvere theory, but it says nothing about how to determine \emph{whether} a Lawvere theory arises from an operad.

\subsubsection{A recognition principle}
\begin{definition}
Suppose $\mathcal{L}$ is a Lawvere theory. Then $\mathcal{L}\otimes\text{Burn}^\text{eff}$ is a cyclic $\text{Burn}^\text{eff}$-module by Lemma \ref{LemmaES}. Therefore we may identify $\mathcal{L}\otimes\text{Burn}^\text{eff}$ with a semiring space $R$ by Corollary \ref{SemiaddLawv}. We say $R$ is the \emph{characteristic semiring} of $\mathcal{L}$.
\end{definition}

\begin{example}
The characteristic semiring of $\text{Burn}_R$ is $R$ itself.
\end{example}

\begin{example}
If $\mathcal{L}_\text{CRing}$ denotes the Lawvere theory for connective $\mathbb{E}_\infty$-ring spectra, then $$\text{Mdl}(\mathcal{L}_\text{CRing}\otimes\text{Burn}^\text{eff})\cong\text{Hom}(\text{Burn}^\text{eff},\text{CRingSp}_{\geq 0}^\times),$$ which is in fact contractible, since there are no maps to a nontrivial ring from the zero ring (which is the unit of $\text{CRingSp}_{\geq 0}^\times$).

Therefore, the characteristic semiring of $\mathcal{L}_\text{CRing}$ is $0$.
\end{example}

\begin{example}
Suppose $\mathcal{L}_\mathcal{O}$ is the Lawvere theory associated to a (reduced) $\infty$-operad $\mathcal{O}^\otimes$. Then the essentially surjective maps $$\text{Fin}^\text{iso}\rightarrow\text{Env}(\mathcal{O})$$ $$\text{Fin}^\text{op}\rightarrow\mathcal{L}_\mathcal{O}$$ become equivalences after tensoring with $\text{Fin}$. (See \cite{Berman} 3.24.) Thus, $\mathcal{L}_\mathcal{O}\otimes\text{Burn}^\text{eff}\cong\text{Burn}^\text{eff}$, so the characteristic semiring of $\mathcal{L}_\mathcal{O}$ is $\text{Fin}^\text{iso}$.
\end{example}

\begin{conjecture}
\label{Conj2}
A Lawvere theory $\mathcal{L}$ has an associated (reduced) $\infty$-operad if and only if $\mathcal{L}$ is \emph{trivial over $\text{Burn}^\text{eff}$} in the sense that the functor $$\text{Burn}^\text{eff}\cong\text{Fin}^\text{op}\otimes\text{Burn}^\text{eff}\rightarrow\mathcal{L}\otimes\text{Burn}^\text{eff}$$ is an equivalence of $\infty$-categories.
\end{conjecture}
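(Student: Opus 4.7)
The forward implication, that any Lawvere theory of the form $\mathcal{L}_\mathcal{O}$ is trivial over $\text{Burn}^\text{eff}$, is the content of the example immediately preceding the conjecture (since $\text{Env}(\mathcal{O})\otimes\text{Fin}\cong\text{Fin}$ for any reduced $\infty$-operad $\mathcal{O}$, by tensoring the essentially surjective map $\text{Fin}^\text{iso}\to\text{Env}(\mathcal{O})$ which becomes an equivalence after adjoining commutative algebra structure). So the plan is to focus entirely on the reverse direction: given a Lawvere theory $\mathcal{L}$ with $\mathcal{L}\otimes\text{Burn}^\text{eff}\cong\text{Burn}^\text{eff}$, construct an $\infty$-operad $\mathcal{O}$ together with an equivalence $\mathcal{L}_\mathcal{O}\cong\mathcal{L}$.

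The triviality hypothesis supplies a canonical symmetric monoidal functor $q\colon\mathcal{L}\to\mathcal{L}\otimes\text{Burn}^\text{eff}\cong\text{Burn}^\text{eff}$ lifting the structure map $\text{Fin}^\text{op}\to\mathcal{L}$ through $\text{Fin}^\text{op}\to\text{Burn}^\text{eff}$. Proposition \ref{PropOpL} and Remark \ref{RmkOdot} dictate the candidate: form the pullback of simplicial sets
$$[\mathcal{O}^\otimes]\;:=\;\mathcal{L}\times_{\text{Burn}^\text{eff}}\text{Fin}_\ast,$$
where $\text{Fin}_\ast\to\text{Burn}^\text{eff}$ is the forgetful functor sending a pointed finite set to its complement of the basepoint (and is a left fibration classifying the inclusion of the unit). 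I would then proceed in two stages. First, exhibit $[\mathcal{O}^\otimes]\to\text{Fin}_\ast$ as an $\infty$-operad in the sense of HA 2.1 by checking the three conditions: cocartesian lifts for inert morphisms, the Segal-type decomposition $[\mathcal{O}^\otimes]_{\langle n\rangle}\simeq\mathcal{O}^n$, and compatibility of active-inert factorizations with composition in $\mathcal{L}$. These properties should descend from the analogous well-known properties of $\text{Fin}_\ast\to\text{Burn}^\text{eff}$ precisely because $q$ is a symmetric monoidal splitting of $\text{Fin}^\text{op}\to\mathcal{L}$ up to triviality. Second, assuming Conjecture \ref{Conj1}, identify $\mathcal{L}_\mathcal{O}\cong\text{Env}(\mathcal{O})\otimes\text{Fin}^\text{op}$ with $\mathcal{L}$ by noting both sit in compatible pullback diagrams over $\text{Burn}^\text{eff}$ and agree after restriction along $\text{Fin}^\text{op}\to\text{Burn}^\text{eff}$.

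The principal obstacle will be the first stage: verifying that the naked pullback $[\mathcal{O}^\otimes]$ really does inherit an operadic fibration structure from $\text{Fin}_\ast\to\text{Burn}^\text{eff}$. The subtlety is that $\text{Fin}_\ast\to\text{Burn}^\text{eff}$ is not itself presented as a cocartesian fibration classifying an $\infty$-operad; it is a geometric forgetful functor between symmetric monoidal $\infty$-categories. Pulling it back along $q$ produces something with the right underlying groupoid of objects and the right fibers, but recovering cocartesian lifts over inert morphisms requires that $q$ detect those lifts, which in turn seems to require a strengthening of the triviality condition --- something like a flatness property saying that $\mathcal{L}\otimes-$ preserves the factorization system (inert, active) on $\text{Burn}^\text{eff}$. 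I suspect that this amounts to a case of the categorified descent program sketched in \S\ref{4.3.2}: one wants $\text{Fin}^\text{op}$ and $\text{Burn}^\text{eff}$ to behave like a faithfully flat cover of the categorified spectrum of $\text{Fin}^\text{iso}$, so that an object over $\text{Fin}^\text{op}$ equipped with a trivialization over $\text{Burn}^\text{eff}$ descends to extra structure (here, an operadic refinement).

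As an alternative route that sidesteps the fibrational verification, one could try a purely model-theoretic comparison: by Theorem \ref{ThmLawvDet} and HA 4.8, the essential image of $\mathcal{O}\mapsto\text{Mdl}(\mathcal{L}_\mathcal{O})$ inside $\text{Pr}^\text{L}_\ast$ is characterized by preservation of sifted colimits plus a monadic condition over $\text{Top}$ arising from a $\Sigma$-free operad. One would then show that $\mathcal{L}$ trivial over $\text{Burn}^\text{eff}$ forces $\text{Mdl}(\mathcal{L})\to\text{Top}$ to be monadic of exactly this type, using the triviality to prove that the comparison with the free commutative monoid monad on representables is controlled by symmetric-group orbits (the $\mathcal{O}(T)\times_{\Sigma_T}(-)^T$ summands of Conjecture \ref{Conj1}). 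Either way, the conceptual heart of the conjecture is the claim that tensoring with $\text{Burn}^\text{eff}$ precisely discards operadic multiplicities while retaining combinatorial shape, and making this precise will require tools from categorified algebraic geometry that are not yet available.
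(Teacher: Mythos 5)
This statement is a conjecture; the paper offers no proof of it, only a closing paragraph of ``idle speculation,'' so there is nothing to compare your argument against except that sketch. Your forward direction is fine and is exactly the paper's preceding example (the essentially surjective maps $\text{Fin}^\text{iso}\to\text{Env}(\mathcal{O})$ and $\text{Fin}^\text{op}\to\mathcal{L}_\mathcal{O}$ become equivalences after tensoring with $\text{Fin}$, citing \cite{Berman} 3.24), and your candidate for the reverse direction --- the pullback $\mathcal{L}\times_{\text{Burn}^\text{eff}}\text{Fin}_\ast$ --- is the same one dictated by Proposition \ref{PropOpL} and proposed in the paper's own strategy via ``disjunctive fibrations.''

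The genuine gap is the one you name yourself, and it is not a technicality that further care will dispatch: nothing in the hypothesis that $\text{Burn}^\text{eff}\to\mathcal{L}\otimes\text{Burn}^\text{eff}$ is an equivalence visibly controls the functor $\mathcal{L}\to\text{Burn}^\text{eff}$ well enough to guarantee that the pullback against $\text{Fin}_\ast\subseteq\text{Burn}^\text{eff}$ has cocartesian lifts of inert morphisms or the Segal decomposition required by HA 2.1.1.10. Note also that $\text{Fin}_\ast\to\text{Burn}^\text{eff}$ is the non-full subcategory inclusion on (injective, arbitrary) spans from \S\ref{3.2.2}, not a left fibration, so you cannot appeal to stability of fibrations under pullback, and the strict simplicial-set pullback need not compute the homotopy pullback without some fibrancy input. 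Your proposed remedy --- a flatness/descent statement making $\text{Fin}$ and $\text{Fin}^\text{op}$ a cover of $\text{Fin}^\text{iso}$ so that a trivialization over $\text{Burn}^\text{eff}$ descends to an operadic refinement --- is precisely the missing ingredient flagged in \S\ref{4.3.2}, and it is itself conjectural (and fails for some symmetric monoidal $\infty$-categories, e.g. $\vec{\mathbb{S}}$, so some hypothesis beyond the bare statement would be needed to invoke it). The alternative monadic route stalls at the same point: characterizing which sifted-colimit-preserving monads on $\text{Top}$ arise from $\Sigma$-free operads is essentially a restatement of the conjecture rather than a reduction of it. In short, the proposal correctly reproduces the intended strategy and the forward implication, but the reverse implication remains open.
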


\begin{remark}
The conjecture suggests that $\infty$-operads can be regarded as a flavor of 2-vector bundle (in the sense that they are locally free). We should mention that Baas, Dundas, Richter, and Rognes have also studied 2-vector bundles in the context of iterated K-theory \cite{BDRR}.
\end{remark}

\noindent Assuming Conjecture \ref{Conj1} as well, Proposition \ref{PropOpL} would apply so that we could reconstruct the $\infty$-operad via the pullback of $$\mathcal{L}\rightarrow\mathcal{L}\otimes\text{Burn}^\text{eff}\cong\text{Burn}^\text{eff}$$ along $\text{Fin}_\ast\subseteq\text{Burn}^\text{eff}$.

We end this section with idle speculation about a proof strategy for Conjecture \ref{Conj2}. Regard the horizontal functors in Proposition \ref{PropOpL}'s tower of pullbacks as \emph{disjunctive fibrations}, in the sense that $[\mathcal{O}^\otimes]\xrightarrow{p}\text{Fin}_\ast$:
\begin{itemize}
\item is a symmetric monoidal functor (from the symmetric monoidal structure $\odot$ of HA 2.2.4.7 to $\amalg$);
\item is a fibration (HA 2.1.1.13);
\item and records some information about how to map into $X\odot Y$ (HA 2.1.1.14).
\end{itemize}

\noindent Suppose $\mathcal{L}$ is a Lawvere theory trivial over $\text{Burn}^\text{eff}$. Since $\mathcal{L}$ is cartesian monoidal, we know how to map into $X\times Y$ for $X,Y\in\mathcal{L}$. Therefore we have some hope of proving that $\mathcal{L}\rightarrow\text{Burn}^\text{eff}$ is a `disjunctive fibration' (whatever this means). If disjunctive fibrations are stable under pullback, then we might show the pullback along $\text{Fin}_\ast\subseteq\text{Burn}^\text{eff}$ is an $\infty$-operad is the sense of HA 2.1.1.10.

\subsection{Descent}\label{4.3.2}
\noindent A major obstacle in categorified commutative algebra (as discussed in this chapter and the last) is the difficulty in making any computations, such as the computation of tensor products of symmetric monoidal $\infty$-categories. We do have a few techniques for computing tensor products: $\mathcal{C}^\oast\otimes\vec{\mathbb{S}}$ is related to algebraic K-theory (see \S\ref{3.3.2}), which however is famously difficult to compute; and $\mathcal{C}^\oast\otimes\text{Fin}$ and $\mathcal{C}^\oast\otimes\text{Fin}^\text{op}$ are related to $\text{Hom}(\mathcal{C}^\oast,\text{Top}^\times)$ by Theorem \ref{SMYonedaT}. We also know that tensor products are related in many examples to span constructions (see \S\ref{3.2.2} and Conjecture \ref{Conj1}). However, many specific computations remain out of reach.

In classical commutative algebra, many tensor products (over a ring $R$) can be computed by utilizing algebrao-geometric techniques including \emph{flatness} and the geometry of $\text{Spec}(R)$. Here is an example of this sort of technique:

\begin{definition}
A square $$\xymatrix{
A\ar[r]\ar[d] &B\ar[d] \\
C\ar[r] &D
}$$ in a category (or $\infty$-category) $\mathcal{C}$ is called \emph{rigid bicartesian} if the square is both a pullback and a pushout, and each morphism is both a monomorphism and an epimorphism.
\end{definition}

\begin{example}
A rigid bicartesian square of schemes corresponds roughly to a cover of $D$ by two dense open subschemes $B$ and $C$. In such a case, we may try to study a quasicoherent sheaf over $D$ by understanding its restrictions to $B$ and $C$.

For example, the following rigid bicartesian square of commutative rings induces a rigid bicartesian square of affine schemes: $$\xymatrix{
\mathbb{Z}\ar[r]\ar[d] &\mathbb{Z}[\frac{1}{2}]\ar[d] \\
\mathbb{Z}[\frac{1}{3}]\ar[r] &\mathbb{Z}[\frac{1}{6}].
}$$ For any $\mathbb{Z}$-module $A$, applying $-\otimes A$ to the square yields another pullback square, since $\mathbb{Z}[\frac{1}{6}]$ is flat: $$\xymatrix{
A\ar[r]\ar[d] &A[\frac{1}{2}]\ar[d] \\
A[\frac{1}{3}]\ar[r] &A[\frac{1}{6}].
}$$ In this way, we recover $A$ from its restrictions to $\text{Spec}(\mathbb{Z}[\frac{1}{2}])$ and $\text{Spec}(\mathbb{Z}[\frac{1}{3}])$.
\end{example}

\begin{example}
The following is a rigid bicartesian square of commutative semiring $\infty$-categories: $$\xymatrix{
\text{Fin}^\text{iso}\ar[r]\ar[d] &\text{Fin}\ar[d] \\
\text{Fin}^\text{op}\ar[r] &\text{Burn}^\text{eff}.
}$$ By inspection, every map is a monomorphism and the square overall is a pullback. Every map is moreover an epimorphism since each of these semiring $\infty$-categories is solid (Theorems \ref{2T3} and \ref{2T4}), and the square is a pushout by Theorem \ref{2T4}. By analogy with the last example, we might ask:
\end{example}

\begin{question}
For which symmetric monoidal $\infty$-categories $\mathcal{C}^\oast$ is the square $$\xymatrix{
\mathcal{C}\ar[r]\ar[d] &\mathcal{C}\otimes\text{Fin}\ar[d] \\
\mathcal{C}\otimes\text{Fin}^\text{op}\ar[r] &\mathcal{C}\otimes\text{Burn}^\text{eff}
}$$ a pullback of symmetric monoidal $\infty$-categories?
\end{question}

\noindent The question amounts to asking: to what extent do Fin and $\text{Fin}^\text{op}$ `cover' $\text{Fin}^\text{iso}$ in a geometric sense? If the last square is a pullback, we will say that $\mathcal{C}^\oast$ \emph{admits cartesian monoidal descent}.

Notice that any symmetric monoidal $\infty$-category which admits cartesian monoidal descent can be described explicitly by a pullback of $\infty$-categories, rather than resorting to a (potentially unwieldy) description of the associated $\infty$-operad a la Lurie. That is, in order to specify a cartesian (or cocartesian) monoidal $\infty$-category, we need only specify the underlying $\infty$-category, but specifying a generic symmetric monoidal $\infty$-category is a priori much more difficult. It would be useful to have a technique for building any symmetric monoidal $\infty$-category from (co)cartesian monoidal pieces.

However, unlike in ordinary commutative algebra, not all $\mathcal{C}^\oast$ admit cartesian monoidal descent.

\begin{example}
The square $$\xymatrix{
\vec{\mathbb{S}}\ar[r]\ar[d] &\vec{\mathbb{S}}\otimes\text{Fin}\ar[d] \\
\vec{\mathbb{S}}\otimes\text{Fin}^\text{op}\ar[r] &\vec{\mathbb{S}}\otimes\text{Burn}^\text{eff}
}$$ is not a pullback; indeed, all terms but $\vec{\mathbb{S}}$ are contractible by Corollary \ref{vanish}, so the pullback is $0$.
\end{example}

\begin{question}
Can any $\mathcal{C}^\otimes$ be recovered from $\mathcal{C}\otimes\text{Fin}$ (a cocartesian monoidal $\infty$-category), $\mathcal{C}\otimes\text{Fin}^\text{op}$ (a cartesian monoidal $\infty$-category) and $\mathcal{C}\otimes\vec{\mathbb{S}}$ (a spectrum)?

If $\mathcal{C}\otimes\vec{\mathbb{S}}\cong 0$, does $\mathcal{C}$ necessarily admit cartesian monoidal descent?
\end{question}

\subsubsection{Descent for operads}
\noindent We end this chapter by mentioning the relationship between cartesian monoidal descent and the conjectures of \S\ref{4.3.1}. If $\mathcal{O}^\otimes$ is a (reduced) $\infty$-operad, then $\text{Env}(\mathcal{O})$ is a $\text{Fin}^\text{inj}$-module (\cite{Berman} 3.23). Therefore, $\text{Env}(\mathcal{O})\otimes\vec{\mathbb{S}}\cong 0$, so it seems possible that $\text{Env}(\mathcal{O})$ may admit cartesian monoidal descent.

\begin{proposition}
Assume Conjecture \ref{Conj1}, and also assume that $\text{Env}(\mathcal{O})\otimes\text{Fin}^{\text{inj},\text{op}}$ admits cartesian monoidal descent for some operad $\mathcal{O}^\otimes$. Then $[\mathcal{O}^\otimes]^\odot\cong\text{Env}(\mathcal{O})\otimes\text{Fin}^{\text{inj},\text{op}}$, where $\odot$ is the symmetric monoidal structure on $[\mathcal{O}^\otimes]$ as in Remark \ref{RmkOdot}. That is, $[\mathcal{O}^\otimes]$ satisfies the universal property $$\text{Hom}([\mathcal{O}^\otimes]^\odot,\mathcal{C}^\oast)\cong\text{Alg}_\mathcal{O}(\mathcal{C}_{/1}^\oast).$$ Moreover, the tower of pullbacks of Proposition \ref{PropOpL} takes the form $$\xymatrix{
\text{Env}(\mathcal{O})\ar[r]\ar[d] &\text{Env}(\mathcal{O})\otimes\text{Fin}\ar[d] &\text{Fin}\ar[l]_-{\sim}\ar[d] \\
\text{Env}(\mathcal{O})\otimes\text{Fin}^{\text{inj},\text{op}}\ar[r]\ar[d] &\text{Env}(\mathcal{O})\otimes\text{Fin}_\ast\ar[d] &\text{Fin}_\ast\ar[l]_-{\sim}\ar[d] \\
\text{Env}(\mathcal{O})\otimes\text{Fin}^\text{op}\ar[r] &\text{Env}(\mathcal{O})\otimes\text{Burn}^\text{eff} &\text{Burn}^\text{eff}.\ar[l]_-{\sim}
}$$
\end{proposition}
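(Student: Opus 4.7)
The plan is to exhibit both $[\mathcal{O}^\otimes]$ (as a pullback via Proposition \ref{PropOpL} and Conjecture \ref{Conj1}) and $\text{Env}(\mathcal{O})\otimes\text{Fin}^{\text{inj},\text{op}}$ (as a pullback via the descent hypothesis) as the \emph{same} pullback square, and then to read off the universal property and full tower.

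The first step is a bookkeeping lemma assembling a handful of tensor-product identifications. Because $\mathcal{O}$ is reduced, the essentially surjective semiring map $\text{Fin}^\text{iso}\to\text{Env}(\mathcal{O})$ becomes an equivalence after tensoring with $\text{Fin}$ (the discussion preceding Conjecture \ref{Conj2}, or \cite{Berman} 3.24), so $\text{Env}(\mathcal{O})\otimes\text{Fin}\cong\text{Fin}$; combined with Theorems \ref{2T5} and \ref{2T4} this yields $\text{Env}(\mathcal{O})\otimes\text{Fin}_\ast\cong\text{Fin}_\ast$ and $\text{Env}(\mathcal{O})\otimes\text{Burn}^\text{eff}\cong\text{Burn}^\text{eff}$. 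On the other side, since $\text{Fin}^{\text{inj},\text{op}}$ and $\text{Fin}^\text{op}$ are both solid (Theorems \ref{2T2}, \ref{2T3}) and every cartesian monoidal $\infty$-category is semi-cartesian monoidal, the solid tensor $\text{Fin}^{\text{inj},\text{op}}\otimes\text{Fin}^\text{op}$ classifies the stronger property and hence equals $\text{Fin}^\text{op}$; likewise $\text{Fin}^{\text{inj},\text{op}}\otimes\text{Burn}^\text{eff}\cong\text{Burn}^\text{eff}$, while $\text{Fin}^{\text{inj},\text{op}}\otimes\text{Fin}\cong\text{Fin}_\ast$ is Theorem \ref{2T5} itself.

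With these identifications in hand, the descent hypothesis applied to $\mathcal{C}=\text{Env}(\mathcal{O})\otimes\text{Fin}^{\text{inj},\text{op}}$ says exactly that the square with corners $\text{Env}(\mathcal{O})\otimes\text{Fin}^{\text{inj},\text{op}}$, $\text{Env}(\mathcal{O})\otimes\text{Fin}_\ast$, $\mathcal{L}_\mathcal{O}=\text{Env}(\mathcal{O})\otimes\text{Fin}^\text{op}$, and $\text{Env}(\mathcal{O})\otimes\text{Burn}^\text{eff}$ is a pullback; after applying the equivalences $\text{Env}(\mathcal{O})\otimes\text{Fin}_\ast\cong\text{Fin}_\ast$ and $\text{Env}(\mathcal{O})\otimes\text{Burn}^\text{eff}\cong\text{Burn}^\text{eff}$, this is literally the pullback defining $[\mathcal{O}^\otimes]=\mathcal{L}_\mathcal{O}\times_{\text{Burn}^\text{eff}}\text{Fin}_\ast$ from Proposition \ref{PropOpL}. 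Hence $[\mathcal{O}^\otimes]\cong\text{Env}(\mathcal{O})\otimes\text{Fin}^{\text{inj},\text{op}}$. The universal property then follows from the tensor-Hom adjunction together with the dual of Example \ref{ExFormulas} (which gives $\text{Hom}(\text{Fin}^{\text{inj},\text{op},\amalg},\mathcal{C}^\oast)\cong\mathcal{C}^\oast_{/1}$) and the defining universal property of $\text{Env}(\mathcal{O})$. The full tower is assembled by taking Proposition \ref{PropOpL}'s tower as the right column, tensoring with $\text{Env}(\mathcal{O})$ to form the middle column (connected to the right by the equivalences of the first paragraph), and attaching the descent-derived left column.

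The delicate point, and what I expect to be the main obstacle, is the compatibility of symmetric monoidal structures. The argument above produces an equivalence of underlying $\infty$-categories between $[\mathcal{O}^\otimes]$ and $\text{Env}(\mathcal{O})\otimes\text{Fin}^{\text{inj},\text{op}}$, but upgrading it to an equivalence of symmetric monoidal $\infty$-categories (with $\odot$ defined as in Remark \ref{RmkOdot}) requires the descent square to remain a pullback in commutative algebras of $\text{SymMon}^\otimes$, not just in $\widehat{\text{SymMon}}$. This ought to be implicit in the descent hypothesis, since all four corners are already commutative semirings under $\amalg$ and every map in the square is a semiring functor; but I would state this explicitly and, if necessary, slightly strengthen the descent hypothesis so that the pullback is taken in $\text{CSRing}(\text{Cat})$.
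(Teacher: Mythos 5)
Your proposal is correct and follows the same route as the paper's (much terser) proof: apply the descent hypothesis to $\mathcal{C}=\text{Env}(\mathcal{O})\otimes\text{Fin}^{\text{inj},\text{op}}$, identify the resulting square with the bottom-left square of the tower via the solidity computations $\text{Fin}^{\text{inj},\text{op}}\otimes\text{Fin}\cong\text{Fin}_\ast$, $\text{Fin}^{\text{inj},\text{op}}\otimes\text{Fin}^\text{op}\cong\text{Fin}^\text{op}$, $\text{Fin}^{\text{inj},\text{op}}\otimes\text{Burn}^\text{eff}\cong\text{Burn}^\text{eff}$, and then invoke Proposition \ref{PropOpL} to recognize the pullback as $[\mathcal{O}^\otimes]$. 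The symmetric monoidal subtlety you flag at the end is already absorbed by the descent hypothesis as stated (the square is required to be a pullback of symmetric monoidal $\infty$-categories, and since limits of commutative algebras are computed on underlying objects, this simultaneously gives the Cat-level pullback of Proposition \ref{PropOpL} and the structure $\odot$ of Remark \ref{RmkOdot}), so no strengthening is needed.
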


\begin{proof}
Since $\text{Env}(\mathcal{O})\otimes\text{Fin}^{\text{inj},\text{op}}$ admits cartesian monoidal descent, the bottom left square in the proposition is a pullback. But by Proposition \ref{PropOpL}, the same pullback is equivalent to $[\mathcal{O}^\otimes]$. Therefore the two are equivalent, and everything else follows.
\end{proof}

\chapter{Duality in Equivariant Homotopy Theory}\label{5}\setcounter{subsection}{0}
\noindent If $G$ is a (discrete) group, let $\text{Fin}_G$ denote the category of finite right $G$-sets, and $^G\text{Fin}$ the category of \emph{free} left $G$-sets with finitely many orbits. We may also use, for example, $_G\text{Fin}$ for finite left $G$-sets, keeping in mind that $_G\text{Fin}\cong\text{Fin}_{G^\text{op}}$.

In general, we will use superscripts to denote free group actions, and subscripts to denote arbitrary actions. For example, $^G\text{Fin}_H$ denotes the category of $(G,H)$-bisets such that the left $G$-action is free.

We will always assume that $G$ is finite. In this case, $\text{Fin}^G$ is a full subcategory of $\text{Fin}_G$.

If $X$ is a right $G$-set and Y is a left $G$-set, then we write $X\times_G Y=X\times Y/\sim$, where the equivalence relation is defined by $(xg,y)\sim(x,gy)$. We will prove two analogues of the Eilenberg-Watts Theorem for groups. First recall the classical Eilenberg-Watts Theorem:

\begin{theorem}[Eilenberg-Watts \cite{EWThm}]
If $R$ and $S$ are rings and $F:\text{Mod}_R\rightarrow\text{Mod}_S$ a functor, the following are equivalent:
\begin{itemize}
\item $F$ is right exact and preserves finite coproducts (direct sums);
\item $F=-\otimes_R M$ for some $M\in\null_R\text{Mod}_S$.
\end{itemize}
\end{theorem}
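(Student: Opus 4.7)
The forward direction is routine: for any $(R,S)$-bimodule $M$, the functor $-\otimes_R M$ is left adjoint to $\operatorname{Hom}_S(M,-)$ and therefore preserves all colimits, so in particular it is right exact and preserves coproducts. The content of the theorem lies entirely in the converse, and my proof would proceed in three steps: construct the bimodule, construct a natural comparison map, and show the comparison is an isomorphism.

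Given $F$ as in the first bullet, set $M := F(R)$. As an object of $\operatorname{Mod}_S$, it is automatically a right $S$-module. To put a left $R$-action on $M$, observe that for each $r \in R$, left multiplication $\ell_r : R \to R$, $x \mapsto rx$, is a \emph{right} $R$-module endomorphism, and $r \mapsto \ell_r$ is a ring homomorphism $R \to \operatorname{End}_{\operatorname{Mod}_R}(R)$. Applying $F$ yields a ring homomorphism $R \to \operatorname{End}_{\operatorname{Mod}_S}(M)$, i.e.\ a left $R$-action on $M$ commuting with its right $S$-structure; hence $M \in {}_R\operatorname{Mod}_S$.

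Next I would construct a natural transformation $\eta : -\otimes_R M \to F$. For a right $R$-module $X$ and element $x \in X$, let $\rho_x : R \to X$ denote the right $R$-linear map $r \mapsto xr$, and define $\eta_X(x \otimes m) := F(\rho_x)(m)$. The relation $\rho_{xr} = \rho_x \circ \ell_r$ shows that this is $R$-balanced, $S$-linearity is inherited from $F(\rho_x)$, and naturality in $X$ follows from functoriality of $F$ and the identity $\rho_{f(x)} = f \circ \rho_x$ for any $R$-linear $f$. Taking $X = R$ with $x = 1$, $\rho_1 = \mathrm{id}_R$, so $\eta_R$ is the canonical isomorphism $R \otimes_R M \xrightarrow{\sim} M = F(R)$.

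Finally, to prove $\eta_X$ is an isomorphism for all $X$, present $X$ as a cokernel $R^{(I)} \to R^{(J)} \to X \to 0$. Because both $-\otimes_R M$ and $F$ preserve coproducts and cokernels, applying them yields right exact sequences fitting into a commutative diagram whose components at $R^{(I)}$ and $R^{(J)}$ are isomorphisms (coming from the fact that $\eta_R$ is one), so the five-lemma forces $\eta_X$ to be an isomorphism. The main technical point --- and what I would expect to be the principal obstacle --- is that a general module has only an \emph{infinite} free presentation, so in practice one wants $F$ to preserve \emph{small} coproducts, not merely finite ones; as literally stated with only finite coproducts, the argument above gives the identification on the subcategory of finitely presented modules, and the extension to all of $\operatorname{Mod}_R$ requires either a filtered-colimit argument (every module is a filtered colimit of finitely presented ones) or a mild strengthening of the hypothesis that is standard in classical references.
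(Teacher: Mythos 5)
The paper does not prove this statement; it is recalled as a classical result with a citation to Eilenberg (and Watts), so there is no in-paper argument to compare yours against. Your proof is the standard one --- extract $M=F(R)$ with its left $R$-action from $\mathrm{End}_{\mathrm{Mod}_R}(R)\cong R$, build the comparison map $\eta_X(x\otimes m)=F(\rho_x)(m)$, and propagate the isomorphism $\eta_R$ along a free presentation using right exactness and preservation of coproducts --- and it is correct.

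Your closing caveat is well taken and worth emphasizing: it is not merely that \emph{this} proof needs arbitrary direct sums, but that the theorem as literally stated (with only \emph{finite} coproducts) is false. Every additive functor preserves finite direct sums, so the first bullet would reduce to ``right exact additive,'' and, for example, the functor $X\mapsto X^{\mathbb{N}}$ on $\mathrm{Mod}_{\mathbb{Z}}$ is exact and additive but is not naturally isomorphic to $-\otimes_{\mathbb{Z}}M$ for any $M$ (it fails to commute with infinite direct sums, e.g.\ on $\bigoplus_n\mathbb{Z}$, whereas any tensor functor does). The correct hypothesis, as in Watts's original formulation, is that $F$ be right exact and preserve \emph{arbitrary} (small) direct sums; with that reading your presentation argument goes through verbatim for an arbitrary free presentation $R^{(I)}\to R^{(J)}\to X\to 0$ and no filtered-colimit detour is needed. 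Note also that the proposed fallback via filtered colimits would not rescue the weaker hypothesis, since right exactness plus finite coproducts does not imply preservation of filtered colimits. So: correct proof of the correctly stated theorem, and a correctly diagnosed defect in the statement as printed.
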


\begin{theorem}[Lemma \ref{PropGlo}]
If $G$ and $H$ are finite groups and $F:\text{Fin}_G\rightarrow\text{Fin}_H$ a functor, the following are equivalent:
\begin{itemize}
\item $F$ preserves pullbacks, epimorphisms, and finite coproducts (disjoint unions);
\item $F=-\times_G X$ for some $X\in\null^G\text{Fin}_H$.
\end{itemize}
\end{theorem}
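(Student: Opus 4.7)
The plan is to prove the nontrivial ``only if'' direction by constructing $X = F(G)$ and showing $F$ is naturally isomorphic to $-\times_G X$; the easy direction then follows by recognizing $-\times_G X$ as a product with the orbit set at the level of underlying sets. For the easy direction, freeness of the left $G$-action lets us write $X \cong G \times S$ as left $G$-sets, where $S = G \backslash X$ is a finite set, so $Y \times_G X \cong Y \times S$ as sets. Since $-\times S$ preserves pullbacks, epimorphisms, and coproducts in $\text{Set}$, and the forgetful functors from $\text{Fin}_G$ and $\text{Fin}_H$ to $\text{Set}$ preserve and reflect each of these, the required preservation in $\text{Fin}_H$ follows.

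For the nontrivial direction, set $X = F(G)$ where $G$ denotes the right regular $G$-set. Since $\text{Aut}_{\text{Fin}_G}(G) \cong G$ by left multiplication, functoriality endows $X$ with a left $G$-action commuting with the right $H$-action. I would then construct a natural transformation $\eta : -\times_G X \Rightarrow F$: for $y \in Y$, let $\tilde y : G \to Y$ send $g \mapsto yg$; the map $y \mapsto F(\tilde y)$ is right $G$-equivariant into $\text{Hom}_H(X, F(Y))$, and the adjunction $(-\times_G X) \dashv \text{Hom}_H(X,-)$ produces $\eta_Y : Y \times_G X \to F(Y)$, sending $[y,x]$ to $F(\tilde y)(x)$.

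The core step is showing each $\eta_Y$ is an isomorphism. For $Y = \amalg_i G$ finite free, this is immediate from coproduct preservation: both sides compute to $\amalg_i X$. For a transitive $Y = G/K$, I would use the reflexive coequalizer $G \times_{G/K} G \rightrightarrows G \to G/K$, noting that the kernel pair is itself a free right $G$-set of rank $|K|$, regardless of whether $K$ is normal, since the first projection is $G$-equivariant and $G$ acts freely on itself. The functor $F$ preserves this coequalizer because it preserves pullbacks and epimorphisms and every epimorphism in $\text{Fin}_H$ is the coequalizer of its kernel pair; the functor $-\times_G X$ preserves it because it is a left adjoint, hence preserves all colimits. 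Since $\eta$ is already an isomorphism on the two free $G$-sets appearing in the diagram, the universal property of the coequalizer forces $\eta_{G/K}$ to be an isomorphism, and coproduct preservation then handles arbitrary $Y = \amalg_i G/K_i$.

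Finally, I would verify that $X = F(G)$ actually lies in $^G\text{Fin}_H$, that is, that the left $G$-action on $X$ is free. Since $\eta$ is now a natural isomorphism, $-\times_G X$ inherits pullback preservation from $F$. Applying this to the pullback $G \times_\ast G = G \times G$ with diagonal right action, direct computation identifies $(G \times G) \times_G X \cong G \times X$ and $X \times_{F(\ast)} X = X \times_{G\backslash X} X$, with the canonical comparison map $(g,x) \mapsto (x, gx)$; a cardinality count fiber-by-fiber over the orbit set shows this is bijective precisely when every $G$-stabilizer in $X$ is trivial. The main conceptual obstacle is sequencing: the $\eta$-construction and the proof that it is an isomorphism must not assume freeness of $X$, which is why the transitive case leans on adjointness of $-\times_G X$ to preserve the relevant coequalizer, rather than on a pullback computation that would itself require freeness.
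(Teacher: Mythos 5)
Your proof is correct, but it takes a genuinely different route from the paper's. The paper deduces this statement in three formal lines from the duality machinery of \S\ref{5.1}: it rewrites $\text{Fin}_H$ as $\text{Fun}_\text{pce}(\null^H\text{Fin},\text{Fin})$, exchanges the two functor arguments, and invokes the perfect pairing of Theorem \ref{PerfPair} together with Proposition \ref{PropCoglo}; all of the actual content is concentrated in the torsor analysis (Lemma \ref{EpiTors}) and Lemma \ref{LemmaPhi}. You instead give a direct, self-contained argument in the style of the classical Eilenberg--Watts theorem: evaluate at the generator to get $X=F(G)$, build the comparison map by the tensor--Hom adjunction, check it on finite free $G$-sets, and propagate to orbits $G/K$ via the reflexive coequalizer $G\times_{G/K}G\rightrightarrows G\to G/K$, which $F$ preserves because it preserves pullbacks and epimorphisms and epimorphisms in $\text{Fin}_H$ are effective, and which $-\times_G X$ preserves because it is a left adjoint. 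Your kernel-pair observation (that $G\times_{G/K}G$ is free of rank $|K|$) is exactly the content of the paper's Lemma \ref{EpiTors} in different packaging, and your final freeness check via preservation of the pullback $G\times_\ast G$ is the hand-unwound version of the paper's argument that $F$ preserves the $G$-torsor $G\to\ast$ and hence $F(G)$ carries a free action (Example \ref{ExFinTors}). What each approach buys: the paper's route yields the stronger bilinear statement of Theorem \ref{PerfPair} and the groupoid-level version essentially for free, while yours avoids the separability bookkeeping entirely (it is genuinely not needed in this Hom-direction), is careful about the sequencing issue you flag (freeness of $X$ is established last, not assumed), and makes the analogy with Eilenberg--Watts transparent.
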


\begin{theorem}[Proposition \ref{PropCoglo}]
If $G$ and $H$ are finite groups and $F:\text{Fin}^G\rightarrow\text{Fin}^H$ a functor, the following are equivalent:
\begin{itemize}
\item $F$ preserves pullbacks, epimorphisms, and finite coproducts (disjoint unions);
\item $F=-\times_G X$ for some $X\in\null_G\text{Fin}^H$.
\end{itemize}
\end{theorem}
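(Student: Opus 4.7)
The structural fact driving the proof is that $\text{Fin}^G$ is the free completion under finite coproducts of the one-object groupoid $BG^{\text{op}}$: every object of $\text{Fin}^G$ is of the form $G^{\amalg n}$, and an endomorphism of $G$ in $\text{Fin}^G$ is a left $G$-equivariant map, uniquely determined by its value on $1 \in G$, with composition reversing the multiplication in $G$. Hence $\text{End}_{\text{Fin}^G}(G) \cong G^{\text{op}}$, and a finite-coproduct-preserving functor $\text{Fin}^G \to \mathcal{D}$ into any category admitting finite coproducts is determined, up to canonical natural isomorphism, by the image of $G$ equipped with its induced right $G^{\text{op}}$-action, equivalently a left $G$-action.

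For the easy direction, suppose $X \in {}_G\text{Fin}^H$. Distributivity of $\amalg$ over $\times_G$ is immediate. Pullbacks in $\text{Fin}^G$ are computed as pullbacks in sets (the diagonal $G$-action on a subset of a free left $G$-set is again free), and since $G$ acts freely, the quotient functor $(-)/G$ commutes with pullbacks; composing with the forgetful step down to sets and then inducing the residual right $H$-action gives pullback-preservation of $-\times_G X$. Surjections are preserved because forming quotients by group actions preserves epimorphisms. These are routine set-theoretic verifications.

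For the converse, given $F$ satisfying the three conditions, set $X := F(G) \in \text{Fin}^H$. Functoriality applied to the identification $\text{End}_{\text{Fin}^G}(G) \cong G^{\text{op}}$ endows $X$ with a right $G^{\text{op}}$-action, equivalently a left $G$-action, which automatically commutes with the free right $H$-action; thus $X \in {}_G\text{Fin}^H$. Both $F$ and $- \times_G X$ preserve finite coproducts and both send $G$ to $X$ with the same induced $G$-action (by construction of $X$ in the first case, and by direct inspection of $G \times_G X \cong X$ in the second). By the universal property of the free coproduct completion from the first paragraph, both functors are canonically naturally isomorphic to the same extension of this datum, so $F \cong - \times_G X$.

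The main conceptual obstacle, as anticipated by the parallel with Lemma \ref{PropGlo}, is not the construction itself but the bookkeeping of left versus right actions: one must use the inversion isomorphism $G \cong G^{\text{op}}$ to convert the naturally arising right $G^{\text{op}}$-action on $X$ into the left $G$-action required by the statement, and match conventions with the formula for $Y \times_G X$ when $Y$ is a free left $G$-set. Once these identifications are pinned down, the pullback- and epimorphism-preservation hypotheses turn out to be logically redundant for this implication (coproduct preservation alone suffices, since $\text{Fin}^G$ is generated under coproducts by a single object). They are retained in the statement to produce a symmetric formulation matching Lemma \ref{PropGlo}, which is what feeds into the duality statement of Corollary \ref{PerfPairCor}.
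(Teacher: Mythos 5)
Your proposal is correct and takes essentially the same route as the paper: Proposition \ref{PropCoglo} is deduced from Example \ref{GFinBG} (the case $H=\ast$ of Lemma \ref{GFinHsep}), which identifies $\text{Fun}_\text{pce}(\null^G\text{Fin},\mathcal{C})$ with $\text{Fun}(BG,\mathcal{C})$ by exactly your argument --- restrict a functor to the single generating orbit to extract an object with a $G$-action, and extend back using preservation of finite coproducts, the extension being the formula $-\times_G X$. Your closing observation that coproduct preservation alone already forces $F\cong -\times_G F(G)$ on this (free) side, whereas for $\text{Fin}_G$ the pullback condition genuinely cuts down the admissible bisets, is accurate and consistent with the remark following Theorem \ref{PerfPair}.
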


\noindent In fact, we will prove something more general, which subsumes both results. Say that a $(G,H)$-biset is \emph{separable} if it has the form $G^\text{op}\times T$, where $T$ is a right $H$-set, and the actions are the canonical ones, $g(a,x)h=(ga,xh)$. Let $^G\text{Fin}_H^\text{sep}$ be the full subcategory of $^G\text{Fin}_H$ spanned by the separable bisets.

\begin{theorem}[Theorem \ref{PerfPair}]
There is a perfect pairing $$-\times_{(H,G)}-:\null^H\text{Fin}_G\times\null^G\text{Fin}_H^\text{sep}\rightarrow\text{Fin}$$ in the sense that the induced functors $$^H\text{Fin}_G\rightarrow\text{Fun}_\text{pce}(\null^G\text{Fin}_H^\text{sep},\text{Fin})$$ $$^G\text{Fin}_H^\text{sep}\rightarrow\text{Fun}_\text{pce}(\null^H\text{Fin}_G,\text{Fin})$$ are both equivalences. Here $\text{Fun}_\text{pce}$ denotes functors that preserve pullbacks, (finite) coproducts, and epimorphisms.
\end{theorem}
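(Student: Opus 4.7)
The plan is to prove both equivalences by mimicking the strategy of the group-theoretic Eilenberg--Watts theorems (Lemma \ref{PropGlo} and Proposition \ref{PropCoglo}), treating $G^\text{op}\times H$ as a pce-generator of $\null^G\text{Fin}_H^\text{sep}$, and symmetrically $H\times G^\text{op}$ on the other side. First I would give a clean construction of the pairing: for $X\in\null^H\text{Fin}_G$ and $Y\in\null^G\text{Fin}_H^\text{sep}$, define $X\times_{(H,G)}Y$ as the double quotient $H\backslash(X\times_G Y)/H$, with the left $H$ acting through $X$ and the right $H$ acting through $Y$. Finiteness and bifunctoriality are immediate.

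Next I would verify that the pairing is pce in each variable. Coproducts and epimorphisms distribute through $\times_G$ and descend through finite quotients straightforwardly. Preservation of pullbacks is more delicate: it relies on freeness of the left $H$-action on $X$ together with freeness of the left $G$-action on the separable biset $Y=G^\text{op}\times T$, which guarantees that $X\times_G Y$ is computed as a pullback in each variable, and that the subsequent double quotient by free finite group actions preserves finite limits. This shows that the pairing induces functors in both directions landing in $\text{Fun}_\text{pce}$.

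The main content, and principal obstacle, is essential surjectivity. Given a pce functor $F\colon \null^G\text{Fin}_H^\text{sep}\to\text{Fin}$, I would build a candidate biset $X_F$ by setting $A_F = F(G^\text{op}\times H)$ and then $X_F = H\times A_F$; the right $G$-action on $A_F$ arises functorially from the endomorphism monoid $\text{End}_{\null^G\text{Fin}_H^\text{sep}}(G^\text{op}\times H)\cong G\times H$, and $H$ acts freely on $X_F$ on the left by construction. A direct computation gives $X_F\times_{(H,G)}(G^\text{op}\times H)\cong A_F$, matching $F$ on the generator. The key technical lemma needed to promote this to a natural isomorphism $F\cong X_F\times_{(H,G)}-$ is that every separable biset $G^\text{op}\times T$ can be reconstructed from $G^\text{op}\times H$ using finite coproducts, epimorphisms, and pullbacks --- which is precisely where the separability hypothesis becomes essential, since an arbitrary $(G,H)$-biset cannot be built from a single pce-generator. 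Fully faithfulness follows from the reconstruction $X\times_{(H,G)}(G^\text{op}\times H)\cong H\backslash X$, which recovers $X$ together with its right $G$-action (and hence $X$ itself, via its canonical factorization $X\cong H\times(H\backslash X)$), along with a comparison of the induced endomorphism actions. The symmetric claim --- that $\null^G\text{Fin}_H^\text{sep}\to\text{Fun}_\text{pce}(\null^H\text{Fin}_G,\text{Fin})$ is an equivalence --- follows by the same argument with the generator $H\times G^\text{op}$ in place of $G^\text{op}\times H$.
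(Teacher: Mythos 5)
Your pairing is not the one the theorem needs, and the discrepancy is fatal rather than notational. The correct pairing identifies $(hx,y)$ with $(x,yh)$, i.e.\ it is the \emph{balanced} product over $H$; your double quotient $H\backslash(X\times_G Y)/H$ quotients by the two $H$-actions independently. For separable $Y=G\times T$ one computes $X\times_G(G\times T)\cong X\times T$, so the balanced product gives $T\times_H X$ (the formula the argument actually runs on), whereas your double quotient gives $(H\backslash X)\times(T/H)$. The latter fails on both counts: $T\mapsto T/H$ does not preserve pullbacks (consider the pullback of $H\to\ast\leftarrow H$ in $\text{Fin}_H$), so your pairing does not even land in $\text{Fun}_\text{pce}$; and its value depends on $X$ only through $H\backslash X$ with its induced $G$-action, so for $H=G$ it cannot distinguish $G^\text{bi}$ from $G$ with trivial right action --- the induced functor on $\null^H\text{Fin}_G$ is not essentially injective. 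The same confusion propagates into your reconstruction step: $X\cong H\times(H\backslash X)$ holds as left $H$-sets (by freeness) but emphatically not as $(H,G)$-bisets; if it did, every object of $\null^H\text{Fin}_G$ would be separable and the asymmetry the theorem is designed to capture would vanish. With the balanced product the evaluation at the generator is $X\times_{(H,G)}(G\times H)\cong X$ itself, and the candidate biset attached to a pce functor $F$ is simply $F(G\times H)$ with biset structure induced by $\text{End}(G\times H)\cong G^\text{op}\times H$, not $H\times F(G\times H)$.

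Beyond the pairing, the step you flag as ``the key technical lemma'' --- that every separable biset is rebuilt from the generator by coproducts, epimorphisms, and pullbacks --- is where essentially all the work lives, and you leave it unproved. The mechanism is Lemma \ref{EpiTors}: every epimorphism from the generator onto an orbit is a $K$-torsor, i.e.\ sits in a pullback square exhibiting $X^{\amalg|K|}\cong X\times_Y X$; a pce functor must preserve that square, which both forces its value on the orbit to be $F(\text{generator})/K$ and forces freeness of the relevant action on $F(\text{generator})$ (Example \ref{ExFinTors}). Without the torsor argument, determination of $F$ by its value on the generator is an assertion, not a proof. Note also that the paper economizes by proving only one direction ($\Phi$) this way and deducing the other from the equivalence $\text{Fun}_\text{pce}(\null^G\text{Fin}_H^\text{sep},\mathcal{C})\cong\text{Fun}(BG,\text{Fun}_\text{pce}(\text{Fin}_H,\mathcal{C}))$ of Lemma \ref{GFinHsep}; running the symmetric argument twice, as you propose, is workable in principle but doubles the burden.
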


\noindent The key to the proof is a study of torsors internal to the category $^H\text{Fin}_G$ (\S\ref{5.1.1}). We find that $^H\text{Fin}_G$ is (roughly) \emph{generated by a single object $H^\text{op}\times G$ via disjoint unions and torsors}. \\

\noindent The Eilenberg-Watts Theorem is the tip of the iceberg for a rich body of ideas in noncommutative algebra, including Morita theory and noncommutative motives. In this chapter, we prove the above duality for finite groups, then discuss how it fits into a picture that unites \emph{noncommutative motivic homotopy theory} (arising from the Eilenberg-Watts theorem) with \emph{global equivariant homotopy theory}. The following table summarizes the analogy: \\
\begin{tabular}{|c|c|} \hline
Ring $R$ &Group $G$ \\ \hline
$\text{Mod}_R$ &$\text{Fin}_G$ \\ \hline
$_R\text{Mod}$ &$^G\text{Fin}$ \\ \hline
Morita category &Burnside category \\ \hline
Noncommutative motivic &Global equivariant \\ 
homotopy theory &homotopy theory \\ \hline
Algebraic K-theory &Equivariant sphere spectrum \\ \hline
\end{tabular} \\

\noindent We anticipate that this analogy could be made rigorous by an understanding of \emph{noncommutative motives over the field with one element}.

\begin{remark}
Of course, there is no such thing as a field with one element. However, we can speak of certain constructions over $\mathbb{F}_1$; for example, a finitely generated free $\mathbb{F}_1$-module is a finite set, suggesting that (if there is a Lawvere theory for $\mathbb{F}_1$-modules) $\text{Fin}^\text{op}$ is that Lawvere theory.

Similarly, a finitely generated free module over the hypothetical group algebra $\mathbb{F}_1[G]$ is a finite free $G$-set, and $^G\text{Fin}$ plays the role of the Lawvere theory for $\text{Mod}_{\mathbb{F}_1[G]}$.

In contrast, the \emph{noncommutative motive} associated to a ring $R$ is (roughly speaking) equivalent either to the $\infty$-category $\text{Mod}_R^\text{fgf}$ of finitely generated free $R$-modules or to $\text{Mod}_R$ itself. (The two are Morita equivalent.) Therefore, we may regard $^G\text{Fin}$ as a noncommutative motive over $\mathbb{F}_1$ corresponding to $\mathbb{F}_1[G]$.
\end{remark}

\noindent Let $\text{NMo}_{\mathbb{F}_1}$ denote a hypothetical $\infty$-category of noncommutative motives over $\mathbb{F}_1$. Summarizing the last remark, we are interested in comparing two types of objects of $\text{NMo}_{\mathbb{F}_1}$:
\begin{enumerate}
\item Any ring $R$ should give rise to a noncommutative motive over $\mathbb{F}_1$ via the composite $$\text{Ring}\rightarrow\text{NMo}_{\mathbb{Z}}\subseteq\text{NMo}_{\mathbb{F}_1}.$$ The functor $\text{Ring}\rightarrow\text{NMo}_{\mathbb{Z}}$ is understood by work of Blumberg, Gepner, and Tabuada \cite{BGT}. The full subcategory of $\text{NMo}_{\mathbb{Z}}$ spanned by rings is the \emph{Morita 2-category} \cite{MoritaCat}.
\item Any group $G$ should correspond to a `group algebra' $\mathbb{F}_1[G]$, and therefore a noncommutative motive over $\mathbb{F}_1$ via a functor $$\text{Gp}\rightarrow\text{NMo}_{\mathbb{F}_1}.$$ We present evidence that the full subcategory of $\text{NMo}_{\mathbb{F}_1}$ spanned by groups is the \emph{global Burnside 2-category}, in the form of the following result:
\end{enumerate}

\begin{theorem*}[Theorems \ref{CogloCat}, \ref{GloCat}]
Let $\text{Cat}_\text{pce}$ denote the 2-category of categories which admit pullbacks and finite coproducts, and functors between them which preserve pullbacks, finite coproducts, and epimorphisms. The full subcategory of $\text{Cat}_\text{pce}$ spanned by $^\mathcal{G}\text{Fin}$ (as $G$ varies over finite groupoids) is equivalent to the global effective Burnside 2-category $\text{Burn}_\text{glo}^\text{eff}$ (\S\ref{A.3}).

The full subcategory spanned by $\text{Fin}_\mathcal{G}$ (as $\mathcal{G}$ varies over finite groupoids) is equivalent to $(\text{Burn}_\text{glo}^\text{eff})^\text{op}$.
\end{theorem*}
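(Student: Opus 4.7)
The plan is to construct a 2-functor $\Phi\colon \text{Burn}_\text{glo}^\text{eff}\to\text{Cat}_\text{pce}$ sending a finite groupoid $\mathcal{G}$ to $^\mathcal{G}\text{Fin}$ and a span of groupoids $\mathcal{G}\xleftarrow{p}\mathcal{K}\xrightarrow{q}\mathcal{H}$ to the pull-push composite $q_!p^*\colon {}^\mathcal{G}\text{Fin}\to {}^\mathcal{K}\text{Fin}\to {}^\mathcal{H}\text{Fin}$, and then to show that $\Phi$ is fully faithful. Pullback $p^*$ preserves all limits and colimits, while pushforward $q_!$ along a functor of finite groupoids sends free actions to free actions and preserves finite coproducts, epimorphisms, and (by a direct check) pullbacks of free actions, so the composite lies in $\text{Cat}_\text{pce}$. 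The pseudofunctoriality of $\Phi$ with respect to span composition is the Beck-Chevalley isomorphism for pullback squares of finite groupoids.

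To prove full faithfulness I would first reduce to the one-object case. Every finite groupoid is equivalent to a disjoint union $\coprod_i BG_i$ of deloopings of finite groups, and correspondingly $^{\coprod_iBG_i}\text{Fin}\simeq\prod_i\,^{G_i}\text{Fin}$. Morphism categories in $\text{Burn}_\text{glo}^\text{eff}$ decompose compatibly because a span into (or out of) a disjoint union decomposes as a coproduct of spans. On the $\text{Cat}_\text{pce}$ side, any functor preserving pullbacks, finite coproducts, and epimorphisms out of a finite product category is determined by its restriction to each factor (using the orthogonal idempotents obtained by projection onto the terminal object of each factor), so morphism categories decompose as well. This reduces the claim to showing that $\Phi\colon\text{Hom}_{\text{Burn}_\text{glo}^\text{eff}}(BG,BH)\to\text{Fun}_\text{pce}({}^G\text{Fin},{}^H\text{Fin})$ is an equivalence for finite groups $G,H$.

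For one-object groupoids, Proposition \ref{PropCoglo} identifies $\text{Fun}_\text{pce}({}^G\text{Fin},{}^H\text{Fin})\simeq {}_G\text{Fin}^H$. On the Burnside side, a span $BG\leftarrow\mathcal{K}\to BH$ is classified up to equivalence by a $(G,H)$-biset (the set of objects of $\mathcal{K}$ with actions from the two legs), and the condition that $\mathcal{K}$ be an honest finite groupoid rather than a stack over $BH$ with extra stabilizer data is equivalent to freeness of the right $H$-action with finitely many orbits, matching ${}_G\text{Fin}^H$ on the nose. That span composition (pullback of groupoids over $BG$ and $BH$) agrees with the biset tensor product $-\times_H-$ (which is the composition of pull-push functors by Beck-Chevalley) is a classical computation. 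Essential injectivity of $\Phi$ on objects follows from full faithfulness: an invertible $(G,H)$-biset forces $G\cong H$. The opposite statement for $\text{Fin}_\mathcal{G}$ follows from the same argument using Lemma \ref{PropGlo} in place of Proposition \ref{PropCoglo}, noting that the description of functors by bisets in ${}^G\text{Fin}_H$ rather than ${}_G\text{Fin}^H$ reverses the variance; alternatively, the perfect pairing of Theorem \ref{PerfPair} directly produces the opposite 2-category by sending $^\mathcal{G}\text{Fin}$ to $\text{Fun}_\text{pce}({}^\mathcal{G}\text{Fin},\text{Fin})\simeq\text{Fin}_\mathcal{G}$.

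The main obstacle will be the 2-categorical bookkeeping: making $\Phi$ into an honest pseudofunctor requires a careful choice of Beck-Chevalley coherence data for groupoid pullback squares, and matching 2-morphisms between spans with natural transformations of biset-induced functors. Once this dictionary is in place, full faithfulness is essentially formal, reducing to the Eilenberg-Watts statements of Lemma \ref{PropGlo} and Proposition \ref{PropCoglo} together with the product decomposition for finite groupoids.
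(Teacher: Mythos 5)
Your proposal is correct, and its mathematical content coincides with the paper's: both arguments rest on the Eilenberg--Watts-type computation $\text{Fun}_\text{pce}({}^{\mathcal{G}}\text{Fin},{}^{\mathcal{H}}\text{Fin})\cong{}^{\mathcal{H}}\text{Fin}_{\mathcal{G}}$ (Proposition \ref{PropCoglo}, resp.\ Lemma \ref{PropGlo} for the $\text{Fin}_\mathcal{G}$ statement), together with the classical identification of spans of finite groupoids whose ingressive leg is a discrete fibration with bisets having one free action. The packaging differs. You build the equivalence from the Burnside side: an explicit pseudofunctor $\Phi$ defined span-by-span via pull-push, with coherence supplied by Beck--Chevalley data, followed by a reduction to one-object groupoids to check full faithfulness. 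The paper works from the $\text{Cat}_\text{pce}$ side and never constructs the span-to-functor assignment at all: by Example \ref{GFinBG} the full subcategory spanned by the ${}^{G}\text{Fin}$ \emph{is} Schwede's biset 2-category, Miller's result identifies that with the orbital part of $\text{Burn}_\text{glo}^\text{eff}$, and the extension to all finite groupoids is by semiadditivity (your ``orthogonal idempotents'' decomposition is the same observation). This buys the paper a two-line proof that sidesteps the Beck--Chevalley bookkeeping you rightly flag as the main burden of your route; what your route buys in exchange is an explicit formula for the functor attached to a span, which the paper never supplies. Two minor points: your reduction to one-object groupoids is unnecessary for the Hom-category computation, since Proposition \ref{PropCoglo} already covers arbitrary finite groupoids; and essential injectivity on objects is not needed to conclude that a fully faithful $\Phi$ is an equivalence onto the full subcategory it spans.
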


\noindent Therefore, we anticipate that (noncommutative) motivic homotopy theory reflects the part of $\text{NMo}_{\mathbb{F}_1}$ which includes rings $R$, while global equivariant homotopy theory reflects the part of $\text{NMo}_{\mathbb{F}_1}$ which includes group algebras $\mathbb{F}_1[G]$.

Here we are regarding $\text{Cat}_\text{pce}$ as a coarse approximation for $\text{NMo}_{\mathbb{F}_1}$.

\begin{remark}
Actually, the global Burnside 2-category, which we hypothesize is the part of $\text{NMo}_{\mathbb{F}_1}$ corresponding to group algebras $\mathbb{F}_1[G]$, governs \emph{stable} global equivariant homotopy theory.

In contrast, \emph{unstable} global equivariant homotopy theory is governed by a 2-category $\text{Gpoid}_\text{f}$ of finite groupoids (\S\ref{A.3}), which is expected to be roughly the full subcategory of $\text{Alg}_{\mathbb{F}_1}$ corresponding to group algebras $\mathbb{F}_1[G]$. This is summarized in the following table. ($\text{Gpoid}_\text{f}$ refers to finite groupoids; see Appendix \ref{A} for details.)
\end{remark}

\begin{tabular}{|c|c|c|c|} \hline
&$G$-equivariant &Global equivariant &$\mathbb{F}_1$-motivic? \\ \hline
Unstable &$\text{Fin}_G$ &$\text{Gpoid}_\text{f}$ &$\text{Alg}_{\mathbb{F}_1}$ \\ \hline
Stable &$\text{Burn}_G$ &$\text{Burn}_\text{glo}$ &$\text{NMo}_{\mathbb{F}_1}$ \\ \hline
\end{tabular}

\subsubsection{Equivariant homotopy theory}
\noindent There is one major structural difference between Theorem \ref{PerfPair} and the Eilenberg-Watts Theorem. While the Eilenberg-Watts Theorem provides a duality between $\text{Mod}_R$ and $_R\text{Mod}\cong\text{Mod}_{R^\text{op}}$, Theorem \ref{PerfPair} provides a duality between $\text{Fin}_G$ and $^G\text{Fin}\cong\text{Fin}^{G^\text{op}}$. That is, there is a loss of symmetry, reflecting the distinction between finite $G$-sets and finite \emph{free} $G$-sets.

The difference between $\text{Fin}_G$ and $\text{Fin}^G$ is also the difference between genuine and naive equivariant homotopy theory. In particular, the $\infty$-category of \emph{naive} equivariant right $G$-spaces is $$\text{Top}^G=\text{Fun}^\times((\text{Fin}^G)^\text{op},\text{Top}),$$ while that of \emph{genuine} equivariant right $G$-spaces is $$\text{Top}_G=\text{Fun}^\times(\text{Fin}_G^\text{op},\text{Top}).$$ In the language of Chapter \ref{4}, these are Lawvere-theoretic descriptions of equivariant spaces. The Lawvere theory for $\text{Top}^G$ is $(\text{Fin}^G)^\text{op}$. On the other hand, $\text{Top}_G$ does not arise from a Lawvere theory, but from the \emph{equivariant} Lawvere theory $\text{Fin}_G^\text{op}$. This is all discussed further in \S\ref{A.1}.

Compare to the situation for $R$-modules ($R$ a ring): the Lawvere theory for $\text{Mod}_R$ is $\text{Mod}_R^\text{fgf}$, the category of finitely generated free $R$-modules. In the language of \cite{Tabuada}, there are Morita equivalences of dg-categories $$BR\subseteq\text{Mod}_R^\text{fgf}\subseteq\text{Mod}_R,$$ which means that $\text{Mod}_R^\text{fgf}$ and $\text{Mod}_R$ are equivalent as noncommutative motives (the noncommutative motive associated to $R$).

Blumberg, Gepner, and Tabuada \cite{BGT} show that $\text{Mod}_R$ and $_R\text{Mod}$ are dual as noncommutative motives; this is essentially a dramatized version of the Eilenberg-Watts Theorem.

Since $^G\text{Fin}$ is the Lawvere theory for naive left $G$-spaces, and $\text{Fin}_G$ the equivariant Lawvere theory for genuine right $G$-spaces, we hypothesize that Theorem \ref{PerfPair} can be extended as follows:

\begin{conjecture}
\label{DualConj}
The $\infty$-categories $^G\text{Top}$ (of naive left $G$-spaces) and $\text{Top}_G$ (of genuine right $G$-spaces) are dual as noncommutative motives over $\mathbb{F}_1$.

That is, genuine equivariant $G$-spaces are modules over a formal dual $\mathbb{F}_1[G]^\vee$.
\end{conjecture}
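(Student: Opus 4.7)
The plan is to lift the perfect pairing of Theorem \ref{PerfPair} from finite bisets up through the algebraic Yoneda lemma to the level of equivariant presentable $\infty$-categories, and then interpret the resulting duality as living in a candidate $\infty$-category of noncommutative motives over $\mathbb{F}_1$.

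First, I would make precise what duality means in the 2-category $\text{Cat}_\text{pce}$: by analogy with Lurie's closed symmetric monoidal structures on $\text{Cat}(K)$ from \S\ref{4.1}, there ought to be a closed symmetric monoidal structure on $\text{Cat}_\text{pce}$ for which the internal Hom is $\text{Fun}_\text{pce}(-,-)$. With respect to this structure, Theorem \ref{PerfPair} literally asserts that $\null^G\text{Fin}$ and $\text{Fin}_G^\text{sep}$ are mutually dual via the evaluation pairing $-\times_{(G,1)}-\colon \null^G\text{Fin}\otimes\text{Fin}_G^\text{sep}\to\text{Fin}$, together with a coevaluation that I expect to extract from the generation-by-torsors analysis of \S\ref{5.1.1}. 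So the small case is essentially already in hand: perfect pairing and rigidity together give dualizability at the level of small categories.

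Second, I would propagate the duality to presheaves. Since $\null^G\text{Top}\cong\mathcal{P}_\amalg(\null^G\text{Fin})$ and $\text{Top}_G\cong\mathcal{P}_\amalg(\text{Fin}_G)$, and since $\mathcal{P}_\amalg$ (suitably extended to the pce setting) is symmetric monoidal by Proposition \ref{SMYoneda1}, the pairing above should extend via Day convolution to a pairing $\null^G\text{Top}\otimes\text{Top}_G\to\text{Top}$ realizing the two as mutual duals in the appropriate symmetric monoidal subcategory of $\text{Pr}^{\text{L},\otimes}$. Once duality holds at this level, one may \emph{define} $\mathbb{F}_1[G]^\vee$ to be the Lawvere-theoretic datum representing $\text{Top}_G$, so that $\text{Top}_G$ is its $\infty$-category of modules tautologically; the nontrivial content is then the identification of this formal dual with the opposite naive object $\null^G\text{Top}$.

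The main obstacle is the phrase \emph{as noncommutative motives over $\mathbb{F}_1$}, since no such $\infty$-category $\text{NMo}_{\mathbb{F}_1}$ has been constructed. A genuine proof therefore requires two additional ingredients: (i) a candidate $\text{NMo}_{\mathbb{F}_1}$, plausibly built by localizing or Morita-inverting within the commutative algebra of $\text{Pr}^{\text{L},\otimes}$ relative to $\text{Fin}^{\text{op},\amalg}$, which simultaneously houses ordinary noncommutative motives (reflecting rings $R$) and the objects $\null^G\text{Fin}$ (reflecting $\mathbb{F}_1[G]$); and (ii) verification that the comparison functor from $\text{Cat}_\text{pce}$ into $\text{NMo}_{\mathbb{F}_1}$ sends the concrete duality pairing above to the required Morita-theoretic duality, in parallel with the theorem of Blumberg--Gepner--Tabuada \cite{BGT} for rings. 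The first ingredient is the foundational open problem flagged repeatedly in the introduction, so until it is settled only the weak form of the conjecture — namely Theorem \ref{PerfPair} together with its presheaf extension just sketched — is accessible.
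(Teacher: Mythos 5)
This statement is an explicitly labelled conjecture that the paper does not prove; the text itself remarks that it ``is not even rigorously stated, pending a definition of $\text{NMo}_{\mathbb{F}_1}$.'' You correctly recognize this, and the weak form you fall back on --- the perfect pairing $-\times_G-:\text{Fin}_G\times\null^G\text{Fin}\rightarrow\text{Fin}$ --- is exactly the rigorous evidence the paper offers (Theorem \ref{PerfPair}, Corollary \ref{PerfPairCor}). In substance your assessment agrees with the paper's.

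Two cautions, though. First, you assert that Theorem \ref{PerfPair} ``literally asserts'' that $\null^G\text{Fin}$ and its partner are mutually dual, with a coevaluation to be extracted from the torsor analysis of \S\ref{5.1.1}. That overstates what is proved: the paper deliberately titles \S\ref{5.1} \emph{weak duality}, because a perfect pairing in the stated sense (both induced functors into $\text{Fun}_\text{pce}(-,\text{Fin})$ are equivalences) is weaker than dualizability in a symmetric monoidal $(\infty,2)$-category --- no symmetric monoidal structure on $\text{Cat}_\text{pce}$ is constructed, no coevaluation is produced, and no triangle identities are checked. So your first step already contains an unproved claim beyond the missing definition of $\text{NMo}_{\mathbb{F}_1}$. (Also, separability is a condition on the side carrying the free left action, so the notation $\text{Fin}_G^\text{sep}$ is misplaced; in the corollary with $H$ trivial the separable bisets are precisely the finite free left $G$-sets.) Second, the paper's further evidence is not the presheaf extension you sketch but Theorems \ref{CogloCat} and \ref{GloCat}: the full subcategories of $\text{Cat}_\text{pce}$ spanned by the $\null^\mathcal{G}\text{Fin}$ and by the $\text{Fin}_\mathcal{G}$ are, respectively, the global effective Burnside 2-category and its opposite. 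That covariant/contravariant pair of fully faithful embeddings is the paper's concrete stand-in for a duality of motives. Your Day-convolution route to a pairing $\null^G\text{Top}\otimes\text{Top}_G\to\text{Top}$ is plausible but is nowhere carried out in the paper; in particular $\mathcal{P}_\amalg$ is only shown to be symmetric monoidal on cocartesian monoidal $\infty$-categories, and extending it compatibly to the pce setting (where morphisms must additionally preserve pullbacks and epimorphisms) is a further unverified step.
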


\noindent Unfortunately, this conjecture is not even rigorously stated, pending a definition of $\text{NMo}_{\mathbb{F}_1}$.

\subsubsection{Organization}
\noindent In \S\ref{5.1}, we prove that $\text{Fin}_G$ and $^G\text{Fin}$ are dual, in the sense of a perfect pairing in $\text{Cat}_\text{pce}$.

Then in \S\ref{5.2}, we prove that the global effective Burnside category is a full subcategory of $\text{Cat}_\text{pce}$ in \emph{two ways}; one inclusion is covariant, and the other contravariant. As a result, product-preserving functors $\text{Cat}_\text{pce}\rightarrow\text{Sp}$ give rise to two different families of $G$-spectra. One of these families assembles into a global equivariant spectrum, and the other into a coglobal equivariant spectrum.

We will also see how some familiar families of equivariant spectra arise this way.

\section{Weak duality}\label{5.1}
\noindent Our goal in this section is to show that $\text{Fin}_G$ and $^G\text{Fin}$ exhibit a duality, as categories with pullbacks and finite coproducts.

In fact, we will prove something more general. Let $^G\text{Fin}_H$ denote the category of finite $(G,H)$-bisets whose left $G$-action is free. In other words, $$^G\text{Fin}_H\cong\text{Fun}(BH,\null^G\text{Fin}).$$

\begin{definition}
Say that $X\in\null^G\text{Fin}_H$ is \emph{separable} if it is of the form $G\times T$, where $T\in\text{Fin}_H$ and the $(G,H)$-biset structure is $g(g^\prime,t)h=(gg^\prime,th)$. Let $^G\text{Fin}_H^\text{sep}$ denote the full subcategory of $^G\text{Fin}_H$ spanned by separable bisets.
\end{definition}

\noindent We will show that $^H\text{Fin}_G$ is dual to $^G\text{Fin}_H^\text{sep}$. First we discuss what we mean by duality.

\begin{definition}
Let $\text{Cat}_\text{pce}$ denote the 2-category of categories which admit pullbacks and finite coproducts, and those functors between them which preserve pullbacks, finite coproducts, \emph{and epimorphisms}.

If $\mathcal{C},\mathcal{D}\in\text{Cat}_\text{pce}$, let $\text{Fun}_\text{pce}(\mathcal{C},\mathcal{D})$ denote the full subcategory of $\text{Fun}(\mathcal{C},\mathcal{D})$ spanned by such functors.
\end{definition}

\begin{definition}
Suppose $\mathcal{C},\mathcal{D}\in\text{Cat}_\text{pce}$, and $F:\mathcal{C}\times\mathcal{D}\rightarrow\text{Fin}$ is a functor which preserves pullbacks, finite coproducts, and epimorphisms in each variable separately.

We say that $F$ is a \emph{perfect pairing} if the induced functors $\mathcal{C}\rightarrow\text{Fun}_\text{pce}(\mathcal{D},\text{Fin})$ and $\mathcal{D}\rightarrow\text{Fun}_\text{pce}(\mathcal{C},\text{Fin})$ are both equivalences of categories.
\end{definition}

\noindent Now we will describe a perfect pairing $$-\times_{(H,G)}-:\null^H\text{Fin}_G\times\null^G\text{Fin}_H^\text{sep}\rightarrow\text{Fin}$$ for finite groups $G,H$. If $X\in\null^H\text{Fin}_G$, and $Y\in\null^G\text{Fin}_H$, we write $X\times_{(H,G)}Y=X\times Y/\sim$, where the equivalence relation $\sim$ is generated by $(xg,y)\sim(x,gy)$ and $(hx,y)\sim(x,yh)$.

\begin{example}
There is a $(G,H)$-biset $H^\text{op}\times G\in\null^H\text{Fin}_G$. The actions are given by $h(a,b)g=(ha,bg)$, and $(H^\text{op}\times G)\times_{(H,G)}X\cong X$.
\end{example}

\begin{example}
\label{ExGBi}
Any finite group $G$ is itself endowed with the structure of a canonical $(G,G)$-biset $G^\text{bi}$ with the actions given by left multiplication on the left and right multiplication on the right. Then $G^\text{bi}\times_{(G,G)}G^\text{bi}$ is the set of conjugacy classes of $G$.

More generally, for any group homomorphism $\phi:G\rightarrow H$, there is a biset $H_\phi\in\null^H\text{Fin}_G$ with underlying set $H$, left action given by left multiplication, and right action given by $ag=a\phi(g)$. If $X\in\null^G\text{Fin}_H$, we say that $x,y\in X$ are $\phi$-conjugate if $x=g^{-1}y\phi(g)$ for some $g\in G$. Then $H_\phi\times_{(H,G)}X$ is the set of $\phi$-conjugacy classes of $X$.
\end{example}

\begin{theorem}
\label{PerfPair}
For any finite groups $G$ and $H$, the functor $$-\times_{(H,G)}-:\null^H\text{Fin}_G\times\null^G\text{Fin}_H^\text{sep}\rightarrow\text{Fin}$$ is a perfect pairing.
\end{theorem}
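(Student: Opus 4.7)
The plan is to exploit the observation that $\null^H\text{Fin}_G$ is generated (in a pce-sense) by the distinguished biset $A := H^{\text{op}} \times G$, while $\null^G\text{Fin}_H^{\text{sep}}$ is generated by $B := G^{\text{op}} \times H$. Write $\Phi$ and $\Psi$ for the two functors induced by the pairing, namely $\Phi(X) = X \times_{(H,G)} (-)$ and $\Psi(Y) = (-) \times_{(H,G)} Y$. A direct orbit-rewriting computation---pushing every element to a unique representative of the form $[(e,e),y]$---shows that $A \times_{(H,G)} Y \cong Y$ naturally for $Y \in \null^G\text{Fin}_H^{\text{sep}}$, and symmetrically $X \times_{(H,G)} B \cong X$; so $\Phi(A)$ and $\Psi(B)$ are the respective forgetful functors down to $\text{Fin}$. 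The formula $X \times_{(H,G)} (G^{\text{op}} \times T) \cong (X \times T)/H$ (quotient by the free diagonal $H$-action $h\cdot(x,t)=(hx,th^{-1})$) makes it routine to verify that the pairing preserves pullbacks, coproducts, and epimorphisms separately in each variable.

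Next I establish a structure theorem: every $X \in \null^H\text{Fin}_G$ decomposes canonically into $(H\times G)$-orbits, each isomorphic to a biset of the form $X_{K,\phi} := A/K$ for some subgroup $K \leq G$ and group homomorphism $\phi: K \to H$, where $K$ acts freely on $A$ via $k\cdot(h,g) := (h\phi(k)^{-1}, kg)$. The pair $(K,\phi)$ is read off by selecting a basepoint in the orbit and recording its $H\times G$-stabilizer together with its free left $H$-shift behaviour. An analogous decomposition holds for $\null^G\text{Fin}_H^{\text{sep}}$, with $B$ in place of $A$. Crucially, the quotient map $A \twoheadrightarrow X_{K,\phi}$ is an effective epimorphism whose kernel pair is $A \times_{X_{K,\phi}} A \cong K \times A$, encoding the freeness of the $K$-action.

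Given $F \in \text{Fun}_{\text{pce}}(\null^H\text{Fin}_G, \text{Fin})$, set $S := F(A)$. The endomorphism monoid of $A$ in $\null^H\text{Fin}_G$ is isomorphic to $H \times G$, and acts on $S$ through $F$ to endow $S$ with a natural $(G,H)$-biset structure. Applying $F$ to the freeness-witnessing pullback $A \times_{A/G} A \cong \bigsqcup_G A$ over $A \twoheadrightarrow A/G = H^{\text{op}}$ shows the induced left $G$-action on $S$ is free; since every principal $G$-bundle over a finite set is trivializable, $S$ lies in $\null^G\text{Fin}_H^{\text{sep}}$. I then verify $F \cong \Psi(S)$ by evaluating on each $X_{K,\phi}$: pce-preservation applied to the effective quotient $A \twoheadrightarrow X_{K,\phi}$ yields $F(X_{K,\phi}) \cong S/K$, which agrees with $\Psi(S)(X_{K,\phi})$ by direct computation using the formula of the first paragraph. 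A parallel construction furnishes the quasi-inverse to $\Phi$, completing the proof.

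The main obstacle is torsor preservation---showing that $F$ sends the quotient $A \twoheadrightarrow X_{K,\phi}$ to $S \twoheadrightarrow S/K$. This rests on $\text{Fin}$ being an exact category: every epimorphism is the coequalizer of its kernel pair. Preservation of the kernel-pair pullback $A \times_{X_{K,\phi}} A \cong K \times A$ (automatic from pullbacks and finite coproducts) together with preservation of the epimorphism $A \twoheadrightarrow X_{K,\phi}$ forces $F(X_{K,\phi}) \cong F(A)/K = S/K$. A secondary subtlety is that $(K,\phi)$ is basepoint-dependent, but the ambiguity (an $H$-conjugation of $\phi$ combined with a $G$-translate of the basepoint) is exactly absorbed by the $H \times G$-action already present on $S$.
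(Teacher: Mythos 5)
Your architecture is essentially the paper's: generate $\null^H\text{Fin}_G$ by torsor quotients of $A=H^\text{op}\times G$ (your structure theorem is Lemma \ref{EpiTors}), evaluate a pce-functor $F$ at $A$, and reconstruct $F$ orbit by orbit using the fact that in $\text{Fin}$ an epimorphism is the coequalizer of its kernel pair (Example \ref{ExFinTors}). However, there is a genuine gap at the step where you conclude that $S=F(A)$ is \emph{separable}. You infer this from freeness of the left $G$-action together with trivializability of principal $G$-bundles over finite sets. That only produces an isomorphism $S\cong G\times(S/G)$ of left $G$-sets; separability is strictly stronger, requiring the right $H$-action to live entirely on the second factor, equivalently that no point satisfies $gsh=s$ with $g\neq 1$. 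Freeness does not imply this: the biset $G^\text{bi}$ of Example \ref{ExGBi} has free left $G$-action but is not separable (it would force the right action to be trivial), and $-\times_{(G,G)}G^\text{bi}$ computes conjugacy classes and fails to preserve pullbacks --- so if separability followed from freeness alone, the theorem would be false. To repair the step you must apply $F$ not only to the torsor $A\to A/G$ but to the torsor $A\to A/\langle(h,g)\rangle$ for every graph-type element $(h,g)$ of the group $H^\text{op}\times G$; freeness of each resulting cyclic action on $S$ rules out every relation $gsh=s$ with $(g,h)\neq(1,1)$, which is exactly separability. This is how the paper's proof of Lemma \ref{LemmaPhi} argues.

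Two lesser points. You verify $F\cong\Psi(S)$ only on objects; agreement on morphisms needs a sentence (a map of orbits $O_1\to O_2$ is determined by precomposing with the epimorphism $A\twoheadrightarrow O_1$, and $F$ of an epimorphism is an epimorphism, so agreement on the torsors forces agreement on all maps). And your ``parallel construction'' for the quasi-inverse to $\Phi$ is plausible but not automatic: you would need to check that the relevant torsor squares for $B=G^\text{op}\times H$ remain inside the full subcategory $\null^G\text{Fin}_H^\text{sep}$ (only quotients of $B$ by subgroups of the form $1\times L$ are separable). The paper avoids this by proving Lemma \ref{GFinHsep} instead, untangling the free $G$-coordinate as a $BG$-action and reducing to pce-functors out of $\text{Fin}_H$; your route can be made to work, but the paper's reduction is cleaner and does not require re-running the torsor analysis in the separable category.
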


\begin{corollary}
\label{PerfPairCor}
Choosing $H$ to be the trivial group, we see that $$-\times_G -:\text{Fin}_G\times\null^G\text{Fin}\rightarrow\text{Fin}$$ is a perfect pairing.
\end{corollary}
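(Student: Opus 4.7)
The plan is to derive the corollary directly by specializing Theorem \ref{PerfPair} to the case $H = 1$ (the trivial group), which reduces the statement to a matter of unpacking the relevant definitions. First I would identify the three pieces of the pairing after setting $H = 1$. On the left factor, $^1\text{Fin}_G$ is by definition the category of finite $(1,G)$-bisets whose trivial left action is ``free,'' but the trivial action is automatically free, so the forgetful functor gives an isomorphism of categories $^1\text{Fin}_G \cong \text{Fin}_G$. On the right factor, a separable $(G,1)$-biset is by definition one of the form $G \times T$ with $T \in \text{Fin}_1 = \text{Fin}$ and the canonical left action $g(g',t) = (gg',t)$; since every finite free left $G$-set is (non-canonically) of the form $G \times T$ for $T$ a set of orbit representatives, this gives an equivalence of categories $^G\text{Fin}_1^{\text{sep}} \cong {}^G\text{Fin}$.

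Next I would verify that the pairing reduces as claimed. For $X \in \text{Fin}_G$ and $Y \in {}^G\text{Fin}$, the equivalence relation defining $X \times_{(1,G)} Y$ is generated by $(xg,y) \sim (x,gy)$ together with $(hx,y) \sim (x,yh)$ for $h \in H$; but with $H = 1$ the second relation is vacuous, so $X \times_{(1,G)} Y = X \times_G Y$ on the nose. This identification is plainly natural in both variables, so the pairing $-\times_{(1,G)} -$ of Theorem \ref{PerfPair} is naturally isomorphic to $-\times_G -$ after the identifications of the previous paragraph.

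Finally, since equivalences of categories transport perfect pairings to perfect pairings (the defining property is the invertibility of the induced functors into $\text{Fun}_{\text{pce}}(-,\text{Fin})$, which is preserved under equivalence on either factor), the perfect pairing of Theorem \ref{PerfPair} for $H=1$ immediately yields that $-\times_G -: \text{Fin}_G \times {}^G\text{Fin} \to \text{Fin}$ is a perfect pairing. There is no real obstacle here beyond bookkeeping: the only point requiring a modicum of care is the identification $^G\text{Fin}_1^{\text{sep}} \cong {}^G\text{Fin}$, where one must check that the inclusion of separable bisets is essentially surjective onto all finite free left $G$-sets, but this follows from the standard orbit decomposition of a free $G$-set.
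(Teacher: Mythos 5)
Your proposal is correct and is exactly the specialization the paper intends: the corollary is stated as an immediate consequence of Theorem \ref{PerfPair} with $H$ trivial, and your identifications $^1\text{Fin}_G\cong\text{Fin}_G$, $^G\text{Fin}_1^{\text{sep}}\simeq{}^G\text{Fin}$, and $X\times_{(1,G)}Y=X\times_G Y$ are precisely the bookkeeping being left implicit. No gaps.
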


\begin{remark}
Notice there is an asymmetry in the theorem statement. For \emph{every} $X\in\null^G\text{Fin}_H$, there is a functor $-\times_{(H,G)}X:\null^H\text{Fin}_G\rightarrow\text{Fin}$. The theorem asserts two things:
\begin{itemize}
\item this functor preserves pullbacks, finite coproducts, and epimorphisms if and only if $X$ is separable;
\item all such functors arise from a separable biset $X$.
\end{itemize}
That is, in situations like $X=G_\phi$ (Example \ref{ExGBi}), $-\times_{(H,G)}X$ does not preserve pullbacks.
\end{remark}

\noindent The rest of the section constitutes a proof of Theorem \ref{PerfPair}. The key is that any functor which preserves pullbacks, finite coproducts, and epimorphisms also preserves torsors. The category $^H\text{Fin}_G$ is \emph{generated by torsors} in a particular sense, which ensures that any functor $F\in\text{Fun}_\text{pce}(\null^H\text{Fin}_G,\text{Fin})$ is completely determined by $F(H\times G)\in\null^G\text{Fin}_H^\text{sep}$.

In \S\ref{5.1.1}, we introduce torsors in a general context. In \S\ref{5.1.2}, we complete the proof of Theorem \ref{PerfPair}.

\subsection{Torsors}\label{5.1.1}
\begin{definition}
\label{KTors}
Suppose $\mathcal{C}$ is a category which admits pullbacks and finite coproducts, and $K$ is a finite group. A \emph{$K$-torsor} in $\mathcal{C}$ is an object $X\in\mathcal{C}$ with a $K$-action, and an epimorphism $X\xrightarrow{f}Y$ which is invariant under the $K$-action on $X$, such that the following square is a pullback $$\xymatrix{
X^{\amalg|K|}\ar[r]^{\pi}\ar[d]_{\tilde{\pi}} &X\ar[d]^f \\
X\ar[r]_f &Y.
}$$ The maps $\pi$ and $\tilde{\pi}$ are defined as follows: $\pi$ acts on each copy of $X$ by the identity, while $\tilde{\pi}$ acts on the copy of $X$ corresponding to $k\in K$ via multiplication by $k$.
\end{definition}

\begin{remark}
If $F:\mathcal{C}\rightarrow\mathcal{D}$ is a functor which preserves pullbacks, finite coproducts, and epimorphisms, then $F$ sends $K$-torsors to $K$-torsors.
\end{remark}

\begin{example}
\label{ExFinTors}
The $K$-torsors in Fin are precisely the maps $X\rightarrow X/K$, where $X$ is a free right $K$-set. (This is the reason for the term `torsor'; each fiber of the map is a $K$-torsor.)

Indeed, if $X\xrightarrow{f}Y$ is a $K$-torsor in Fin, since $X^{\amalg|K|}\cong X\times_Y X$, the fiber $X_y$ over $y\in Y$ must satisfy $|X_y|^2=|K||X_y|$. By assumption, $f$ is an epimorphism, so $|X_y|$ is nonempty. Therefore, $X_y$ has cardinality $K$. Moreover, the action of $K$ on $X_y$ must be free, because otherwise if there were $a\in K$ and $x\in X$ with $ax=x$, then $X^{\amalg|K|}\xrightarrow{\pi,\tilde{\pi}}X\times_Y X$ would identify $(x,1)$ with $(x,a)$ and therefore fail to be injective.
\end{example}

\begin{example}
In the category of topological spaces, if we replace the term `epimorphism' by `fiber bundle', then a $K$-torsor in Top is exactly a principal $K$-bundle.
\end{example}

\begin{example}
Suppose $K/F$ is a field extension. Then $\text{Spec}(K)\rightarrow\text{Spec}(F)$ is a torsor in the category of schemes if and only if $K/F$ is a Galois extension.
\end{example}

\begin{lemma}
\label{EpiTors}
Every epimorphism $f:H^\text{op}\times G\rightarrow O$ in $^H\text{Fin}_G$ is a $K$-torsor for some $K$.
\end{lemma}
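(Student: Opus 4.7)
The plan is to exhibit $f$ as a torsor under the automorphism group of the generic fiber. The key structural input is that $H^\text{op}\times G$ is the free $(H,G)$-biset on one generator, in the sense that biset maps $H^\text{op}\times G\to Y$ are in natural bijection with elements of $Y$ via $\phi\mapsto\phi(1,1)$, with inverse $y\mapsto((h,g)\mapsto hyg)$. In particular, our given $f$ is determined by the single element $o_0=f(1,1)\in O$, and satisfies the formula $f(h,g)=ho_0 g$.

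Next I would compute the automorphism group of $H^\text{op}\times G$ in $^H\text{Fin}_G$. A biset automorphism $\phi$ is determined by $\phi(1,1)=(a_0,b_0)\in H\times G$, and (using freeness of the left $H$-action) every such $(a_0,b_0)$ yields an automorphism $(h,g)\mapsto(ha_0,b_0g)$. Composing shows the automorphism group is $H\times G^\text{op}$, and this action is visibly free on $H^\text{op}\times G$. I would then define
\[
K=\{(a_0,b_0)\in H\times G^\text{op}\;:\;a_0o_0b_0=o_0\},
\]
the stabilizer of $o_0$ under the $(H\times G^\text{op})$-action on $O$ induced from the biset structure. By construction $K$ acts on $H^\text{op}\times G$ by biset automorphisms, and $f$ is $K$-invariant since $f(ha_0,b_0g)=ha_0o_0b_0g=ho_0g=f(h,g)$ whenever $(a_0,b_0)\in K$.

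It remains to verify the pullback condition of Definition \ref{KTors}, i.e., that the map
\[
(H^\text{op}\times G)^{\amalg|K|}\longrightarrow(H^\text{op}\times G)\times_O(H^\text{op}\times G)
\]
induced by $(\pi,\tilde\pi)$ is an isomorphism. By the free-generator principle and the formula $f(h,g)=ho_0g$, the fiber product consists of pairs $((h,g),(h',g'))$ with $ho_0g=h'o_0g'$; writing $(h',g')=(ha_0,b_0g)$ uniquely (using again that the $(H\times G^\text{op})$-action on $H^\text{op}\times G$ is simply transitive on fibers of the projection to the second factor), this condition becomes exactly $(a_0,b_0)\in K$. Hence each fiber of the first projection has cardinality $|K|$ and is freely parametrized by $K$, which is precisely the content of the pullback condition. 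Finally, since $f$ is an epimorphism by hypothesis, all of the structural conditions of Definition \ref{KTors} are met, and $f$ is a $K$-torsor. The one step requiring care — and the only real obstacle — is the clean identification of the diagonal $K$-action on $(H^\text{op}\times G)^{\amalg|K|}$ with the fiber product, which the parametrization above makes transparent.
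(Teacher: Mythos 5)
Your proof is correct and is essentially the paper's argument in different clothing: the paper takes $K=f^{-1}(f(1,1))$ as a subgroup of the group $H^\text{op}\times G$, which under the identification of $H^\text{op}\times G$ with its biset automorphism group is exactly your stabilizer of $o_0$, and both proofs then observe that the fibers of $f$ are the $K$-orbits and verify the pullback condition fiberwise. No gaps; the free-generator and simple-transitivity observations you make explicit are implicit in the paper's coset description.
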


\noindent This lemma is the key to Theorem \ref{PerfPair}. It means that $^H\text{Fin}_G$ is \emph{generated by torsors} in the sense that every biset is a disjoint union of orbits, and every map of orbits $O_1\rightarrow O_2$ extends to a diagram $O\rightarrow O_1\rightarrow O_2$ where both $O\rightarrow O_1$ and $O\rightarrow O_2$ are torsors.

\begin{proof}
Denote $\ast=f(1,1)$, and let $K=f^{-1}(\ast)$. Notice that $H^\text{op}\times G$ has the structure of a group, and $K$ is a subgroup (the subgroup of all $(h,g)$ with $h\ast g=\ast$). Also notice that $f(x,y)=f(x^\prime,y^\prime)$ if and only if $(x^{-1}x^\prime,y^\prime y^{-1})\in K$. Therefore, the fibers of $f$ are exactly right cosets $K(x,y)$, and $O=(H^\text{op}\times G)/K$\footnote{Notice $K$ and $O$ determine each other, but not all $K$ arise from an orbit $O$.}.

This means that $f$ is invariant under the action of $K$ on $H^\text{op}\times G$ by left multiplication (as a subgroup). All that remains to prove that $f$ is a $K$-torsor is to show that the square of Definition \ref{KTors} is a pullback, and we can check this just on the level of finite sets. For any $(x,y)\in H^\text{op}\times G$, we check that the part of the square lying over $f(x,y)$ is a fiber product. But this takes the form $$\xymatrix{
K(x,y)^{\amalg|K|}\ar[r]\ar[d] &K(x,y)\ar[d] \\
K(x,y)\ar[r] &\ast,
}$$ which is a pullback because $K(x,y)$ is a $K$-torsor in the usual sense.
\end{proof}

\subsection{Proof of weak duality}\label{5.1.2}
\begin{lemma}
The functor $-\times_{(H,G)}-:\null^H\text{Fin}_G\times\null^G\text{Fin}_H^\text{sep}\rightarrow\text{Fin}$ preserves pullbacks, finite coproducts, and epimorphisms separately in each variable.
\end{lemma}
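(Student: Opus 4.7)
The plan is to reduce the bifunctor $-\times_{(H,G)}-$, when evaluated on a separable biset, to a classical relative tensor product over the group $H$. First I would establish that for $X \in {}^H\text{Fin}_G$ and $T \in \text{Fin}_H$, the equivalence relations defining $X \times_{(H,G)}(G \times T)$ collapse to give a natural isomorphism
$$X \times_{(H,G)}(G \times T) \;\cong\; T \times_H X,$$
where on the right $X$ is viewed simply as a free left $H$-set, the right $G$-action having been quotiented away. The underlying point is that the first identification $(xg, g', t) \sim (x, gg', t)$ lets one bring every representative to the form $(x, 1, t)$, and the second identification restricted to such representatives becomes the standard left/right $H$-tensor relation $(hx, t) \sim (x, th)$.

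Next I would handle the second variable. Fixing $X \in {}^H\text{Fin}_G$, the equivalence ${}^G\text{Fin}_H^\text{sep} \simeq \text{Fin}_H$ via $G \times T \mapsto T$ reduces the claim to preservation by $T \mapsto T \times_H X$ of pullbacks, finite coproducts, and epimorphisms as a functor $\text{Fin}_H \to \text{Fin}$. Choosing orbit representatives for the free left $H$-action on $X$ gives $X \cong H \times S$ with $S = X/H$, and hence a natural (in $T$) isomorphism $T \times_H X \cong T \times S$. So this functor factors as $\text{Fin}_H \xrightarrow{\text{Fgt}} \text{Fin} \xrightarrow{-\times S} \text{Fin}$, and both factors preserve pullbacks, finite coproducts, and epimorphisms; so this variable is essentially formal.

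Finally I would handle the first variable, which I expect to be the main obstacle. Fixing $T \in \text{Fin}_H$, the functor $X \mapsto T \times_H X$ factors through the forgetful ${}^H\text{Fin}_G \to {}^H\text{Fin}$ (which preserves all three classes of diagrams, as pullbacks and coproducts of bisets are computed on underlying $H$-sets and epimorphisms are just surjections). The real content is that $T \times_H -:{}^H\text{Fin} \to \text{Fin}$ preserves pullbacks; coproducts and epimorphisms are immediate. For a cospan $X \to Z \leftarrow Y$ in ${}^H\text{Fin}$, I would verify by direct computation that the comparison map $T \times_H (X \times_Z Y) \to (T \times_H X) \times_{T \times_H Z} (T \times_H Y)$ is a bijection: surjectivity comes from lifting a matched pair $([t_1, x], [t_2, y])$---whose matching in $T \times_H Z$ is witnessed by some $h \in H$ relating the images of $x$ and $y$ in $Z$---to $[t_1, (x, h^{-1}y)]$, while injectivity uses precisely that the free left $H$-action on $Z$ makes this witness $h$ unique. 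Without this freeness the map fails to be injective in general, which is exactly why ${}^H\text{Fin}_G$ is defined to consist of bisets with free left $H$-action.
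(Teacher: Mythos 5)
Your reduction $X\times_{(H,G)}(G\times T)\cong T\times_H X$ is exactly the paper's, and your first-variable argument supplies precisely the detail the paper leaves implicit: the comparison map $T\times_H(X\times_Z Y)\rightarrow (T\times_H X)\times_{T\times_H Z}(T\times_H Y)$ is surjective by lifting a matching witness and injective because freeness of the $H$-action on $Z$ makes that witness unique. That part is correct and is the real content of the lemma.

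There is, however, a gap in the second variable: the claimed equivalence $\null^G\text{Fin}_H^\text{sep}\simeq\text{Fin}_H$ is false. Since $\null^G\text{Fin}_H^\text{sep}$ is a \emph{full} subcategory of $\null^G\text{Fin}_H$, a biset map $G\times T\rightarrow G\times T'$ is determined by $(1,t)\mapsto(\gamma(t),\phi(t))$ and hence is a pair consisting of an $H$-map $\phi:T\rightarrow T'$ \emph{and} an $H$-invariant twist $\gamma:T\rightarrow G$; already for $H$ trivial one has $\text{Hom}(G,G)\cong G$ rather than a point. (This is exactly why Lemma \ref{GFinHsep} identifies $\text{Fun}_\text{pce}(\null^G\text{Fin}_H^\text{sep},\mathcal{C})$ with $\text{Fun}(BG,\text{Fun}_\text{pce}(\text{Fin}_H,\mathcal{C}))$ and not with $\text{Fun}_\text{pce}(\text{Fin}_H,\mathcal{C})$.) Under your isomorphism, such a morphism acts by $[t,x]\mapsto[\phi(t),x\gamma(t)]$, using the right $G$-action on $X$, so your factorization through $\text{Fin}_H\xrightarrow{\text{Fgt}}\text{Fin}\xrightarrow{-\times S}\text{Fin}$ only accounts for the cospans with $\gamma\equiv 1$. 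The gap is easily closed: each twist $(g,t)\mapsto(g\gamma(t),t)$ is an automorphism of $G\times T$, so every morphism factors as an isomorphism followed by an untwisted morphism; precomposing the legs of a cospan with isomorphisms changes its pullback only by an isomorphism, and your functor preserves isomorphisms, so preservation of pullbacks for untwisted cospans implies it in general. (One should also note, as this implicitly requires, that the pullback of separable bisets is again separable with underlying $H$-set the pullback of the $T$'s.) With that repair your argument is complete and matches the paper's route.
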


\begin{proof}
It is immediate that $-\times_{(H,G)}-$ preserves finite coproducts and epimorphisms. Since we are only considering separable bisets in the second variable, the functor takes the form $$X\times_{(H,G)}(G\times T)\cong T\times_H X.$$ But $T\times_H X$ preserves pullbacks in both $T$ and $X$, when $X$ is restricted to \emph{free} $H$-sets.
\end{proof}

\begin{lemma}
\label{GFinHsep}
If $\mathcal{C}$ admits pullbacks and finite coproducts, define the functor $$\Theta:\text{Fun}_\text{pce}(\null^G\text{Fin}_H^\text{sep},\mathcal{C})\rightarrow\text{Fun}(BG,\text{Fun}_\text{pce}(\text{Fin}_H,\mathcal{C})),$$ by restriction along $G\times -:\text{Fin}_H\rightarrow\null^G\text{Fin}_H^\text{sep}$, with $G$-action on the restricted functor given by the action on the first coordinate of $G\times -$. Then $\Theta$ is an equivalence of categories.
\end{lemma}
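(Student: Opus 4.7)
First I would describe the morphisms in ${}^G\text{Fin}_H^\text{sep}$ explicitly. A $(G,H)$-biset map $f\colon G\times T\to G\times T'$ is determined by $f(1,t)=(p(t),q(t))$, and right $H$-equivariance forces $p\colon T\to G$ to be $H$-invariant (that is, a function on $T/H$) and $q\colon T\to T'$ to be $H$-equivariant; denote such a morphism by $(p,q)$. Every morphism factors as $(p,q)=(1,q)\circ(p,1)$, where $(1,q)=G\times q$ lies in the image of $G\times-\colon\text{Fin}_H\to{}^G\text{Fin}_H^\text{sep}$, and $(p,1)\colon G\times T\to G\times T$ is an isomorphism with inverse $(p^{-1},1)$. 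Along the orbit decomposition $T=\coprod_i T_i$, the map $(p,1)$ is the coproduct of natural automorphisms $r_{p_i}\colon G\times T_i\to G\times T_i$, $(a,t)\mapsto(ap_i,t)$; these $r_g$'s are precisely what produces the $G$-action that $\Theta$ records, and satisfy $r_g\circ r_h=r_{hg}$, so the resulting action on $F_0$ is a right $G$-action.

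Using this description, $\Theta$ is fully faithful: since any $F\in\text{Fun}_\text{pce}({}^G\text{Fin}_H^\text{sep},\mathcal{C})$ preserves finite coproducts, it is determined by its behavior on transitive $T$, where every morphism $(g,q)$ equals $(1,q)\circ r_g$, so $F$ is determined by $F_0:=F\circ(G\times-)$ together with the operators $F(r_g)$---which comprise $\Theta(F)$. A $G$-equivariant natural transformation extends uniquely since naturality against $(1,q)$ is naturality in $\text{Fin}_H$ and naturality against $r_g$ is $G$-equivariance. For essential surjectivity, given $F_0\in\text{Fun}_\text{pce}(\text{Fin}_H,\mathcal{C})$ equipped with a right $G$-action (natural automorphisms $g\cdot\colon F_0\to F_0$ with $(gh)\cdot=h\cdot\circ g\cdot$), I would define $F(G\times T)=F_0(T)$ and $F((p,q))=F_0(q)\circ P_T$, where $P_T:=\coprod_i(p_i\cdot_{F_0(T_i)})$ is an automorphism of $F_0(T)=\coprod_i F_0(T_i)$. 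Functoriality is a short calculation using naturality of the $G$-action (to slide operators past morphisms in the image of $F_0$) together with the right-action identity. Preservation of coproducts is immediate; preservation of epimorphisms follows from the factorization $(p,q)=(1,q)\circ(p,1)$ with $(p,1)$ an isomorphism, which reduces to preservation of epis by $F_0$.

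The main obstacle is verifying that $F$ preserves pullbacks. A direct computation shows that the pullback of $(p,q)\colon G\times A\to G\times C$ and $(p',q')\colon G\times B\to G\times C$ in ${}^G\text{Fin}_H^\text{sep}$ is $G\times(A\times_C B)$, with projections $(1,\pi_A)$ and $(p_{AB},\pi_B)$, where $p_{AB}(\alpha,\beta)=p(\alpha)p'(\beta)^{-1}$---notice the nontrivial twist in the second projection. After applying $F$, the resulting square has three of its sides twisted by operators $P_A$, $P_B$, and $P_{AB}$, and one must show it remains a pullback in $\mathcal{C}$. My plan is to exhibit a canonical isomorphism $\Phi=P_X$ of $F_0(A\times_C B)$, defined orbit-wise by $\Phi|_{F_0(X_k)}=p_{i(k)}\cdot$ (where the orbit $X_k$ lies over orbits $A_{i(k)}$ of $A$ and $B_{j(k)}$ of $B$), and check that $\Phi$ intertwines this twisted square with the image under $F_0$ of the ordinary $\text{Fin}_H$-pullback of $q$ and $q'$, which $F_0$ preserves by hypothesis. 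The two required compatibilities reduce to naturality of the $G$-action along $\pi_A|_{X_k}$ and $\pi_B|_{X_k}$ together with the cancellation $p'_{j(k)}\cdot\circ(p_{i(k)}p'^{-1}_{j(k)})\cdot=p_{i(k)}\cdot$, which holds precisely because the action is a \emph{right} action, so $(p_{i(k)}p'^{-1}_{j(k)})\cdot=p'^{-1}_{j(k)}\cdot\circ p_{i(k)}\cdot$ and the outer $p'_{j(k)}\cdot$ cancels with $p'^{-1}_{j(k)}\cdot$.
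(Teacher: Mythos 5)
Your proposal is correct and follows essentially the same route as the paper: restrict along $G\times -$, record the $G$-action via the twist automorphisms $r_g$, and reconstruct $F$ orbit-by-orbit from $F_0$ and the action. The one place you go beyond the paper is the explicit verification that the reconstructed functor preserves pullbacks (the paper dismisses this with ``which we can check over one orbit at a time''), and your computation of the twisted projection $(p_{AB},\pi_B)$ and the cancellation using the right-action identity is exactly the check that parenthetical is hiding.
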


\begin{example}
\label{GFinBG}
Choosing $H=\ast$, $\text{Fun}_\text{pce}(\null^G\text{Fin},\mathcal{C})\cong\text{Fun}(BG,\mathcal{C})$.
\end{example}

\begin{proof}
First, we show that $\Theta$ is essentially surjective. Suppose $F:\text{Fin}_H\rightarrow\mathcal{C}$ is a functor which preserves pullbacks, finite coproducts, and epimorphisms, and is acted on (via natural transformations) by $G$. Define $F^\prime:\null^G\text{Fin}_H^\text{sep}\rightarrow\mathcal{C}$ as follows:
\begin{itemize}
\item $F^\prime(G\times T)=F(T)$.
\item If $f:G\times S\rightarrow G\times T$ and $T$ is an $H$-orbit, then $f$ takes the form $g\times h$ where $g:G\rightarrow G$ is right multiplication by an element $g\in G$ and $h:S\rightarrow T$ is a map of $H$-sets. In this case $F^\prime(f)$ is the composition of $F(h)$ with the action of $g$ on $F$. (We can compose in either order since the action of $g$ on $F$ is a natural transformation.)
\item If $f:G\times S\rightarrow G\times T$ and $T$ is not an $H$-orbit, then $f$ decomposes as a coproduct of maps into orbits, so $F^\prime$ extends uniquely to all of $^G\text{Fin}_H^\text{sep}$ subject to the constraint that it preserve finite coproducts.
\end{itemize}

\noindent Note that $F^\prime$ preserves finite coproducts, epimorphisms, and pullbacks (which we can check over one orbit at a time), and $\Theta(F^\prime)\cong F$ by construction. Therefore $\Theta$ is essentially surjective.

Now we prove $\Theta$ is fully faithful. Suppose $F_1,F_2:\null^G\text{Fin}_H^\text{sep}\rightarrow\mathcal{C}$ preserve pullbacks, finite coproducts, and epimorphisms. Suppose $\zeta,\eta:F_1\rightarrow F_2$ are two natural transformations with $\Theta(\zeta)=\Theta(\eta)$. The latter two are natural transformations of functors $BG\rightarrow\text{Fun}_\text{pce}(\text{Fin}_H,\text{Fin})$, so they are determined by their value on the single object $\ast\in BG$, and this value is a natural transformation of functors $\text{Fin}_H\rightarrow\text{Fin}$. But these two natural transformations $\Theta(\zeta)_\ast$ and $\Theta(\eta)_\ast$ are the same, so they have the same value at each $T\in\text{Fin}_H$, which means $\zeta,\eta$ have the same value at each $G\times T\in\null^G\text{Fin}_H^\text{sep}$. Thus $\zeta=\eta$, so $\Theta$ is faithful.

On the other hand, suppose $\eta:\Theta(F_1)\rightarrow\Theta(F_2)$ is a natural transformation. Define $\eta^\prime:F_1\rightarrow F_2$ by $\eta^\prime_{G\times T}=(\eta_\ast)_T$, noting again that $\eta_\ast$ is a natural transformation of functors $\text{Fin}_H\rightarrow\text{Fin}$. Then $\eta^\prime$ is a natural transformation by the same argument that we used to show $\Theta$ is essentially surjective, and $\Theta(\eta^\prime)=\eta$ by construction, so $\Theta$ is full. This completes the proof.
\end{proof}

\begin{lemma}
\label{LemmaPhi}
The functor $-\times_{(H,G)}-$ induces an equivalence of categories $$\Phi:\null^G\text{Fin}_H^\text{sep}\rightarrow\text{Fun}_\text{pce}(\null^H\text{Fin}_G,\text{Fin}).$$
\end{lemma}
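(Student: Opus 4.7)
The plan is to construct an explicit quasi-inverse $\Psi\colon\text{Fun}_\text{pce}({}^H\text{Fin}_G,\text{Fin})\to{}^G\text{Fin}_H^\text{sep}$. Two ingredients drive the construction: Lemma \ref{EpiTors}, which exhibits $H^\text{op}\times G$ as a ``generator'' of ${}^H\text{Fin}_G$ via disjoint unions and torsor quotients, and the left-multiplication action of the group $H^\text{op}\times G$ on itself, which commutes with the $(H,G)$-biset structure and therefore lands in $\text{Aut}_{{}^H\text{Fin}_G}(H^\text{op}\times G)$. Decomposed, this gives commuting left $G$- and left $H^\text{op}$-multiplication autos, i.e., a natural $(G,H)$-biset structure on $F(H^\text{op}\times G)$ for any $F$ in the target.

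Given $F\in\text{Fun}_\text{pce}({}^H\text{Fin}_G,\text{Fin})$, set $\Psi(F):=F(H^\text{op}\times G)$ equipped with this biset structure. To check $\Psi(F)\in{}^G\text{Fin}_H^\text{sep}$, observe that the projection $p\colon H^\text{op}\times G\to H^\text{op}$ is a $G$-torsor in ${}^H\text{Fin}_G$ in the sense of Definition \ref{KTors} (corresponding via Lemma \ref{EpiTors} to the subgroup $\{1\}\times G$). Since $F$ preserves pullbacks, epimorphisms, and coproducts, $F(p)$ is a $G$-torsor in $\text{Fin}$, so by Example \ref{ExFinTors} the left $G$-action on $F(H^\text{op}\times G)$ is free with orbit set $F(H^\text{op})$. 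The commuting right $H$-action then yields an isomorphism $\Psi(F)\cong G\times F(H^\text{op})$ of $(G,H)$-bisets, exhibiting separability.

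For $\Psi\Phi\cong\text{id}$, I would compute $(H^\text{op}\times G)\times_{(H,G)}(G\times T)$ directly: the two equivalence relations reduce each class to a representative $((1,1),(g,t))$, and tracking the left- and right-multiplication autos through this bijection recovers the biset structure on $G\times T$. For $\Phi\Psi\cong\text{id}$, I would define $\eta_X\colon X\times_{(H,G)}F(H^\text{op}\times G)\to F(X)$ by $\eta_X(x,y)=F(\iota_x)(y)$, where $\iota_x\colon H^\text{op}\times G\to X$ is the unique biset map sending $(1,1)\mapsto x$ (using that $H^\text{op}\times G$ is the free $(H,G)$-biset on one generator). Well-definedness on the two quotient relations reduces to the identities $\iota_{xg_0}=\iota_x\circ m_{(1,g_0)}$ and $\iota_{h_0 x}=\iota_x\circ m_{(h_0,1)}$, where $m_{(h,g)}$ is left multiplication by $(h,g)$; this is precisely the calibration by which $\Psi(F)$ was given its biset structure, so $\eta$ is both well-defined and natural. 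On $X=H^\text{op}\times G$ the map $\eta$ reduces to the identity, and for general $X$ both sides preserve coproducts and torsor quotients, so Lemma \ref{EpiTors} propagates the isomorphism to all of ${}^H\text{Fin}_G$.

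The main obstacle I foresee is the diagram chase in that last step: verifying that when we extend $\eta$ across a torsor $H^\text{op}\times G\to(H^\text{op}\times G)/K$ for $K\le H^\text{op}\times G$, the $K$-action on $F(H^\text{op}\times G)$ arising from the pullback square of Definition \ref{KTors} coincides with the $K$-action coming from functoriality applied to the left-multiplication autos. This compatibility is what allows the isomorphism on the generator to descend consistently to every orbit. Once it is in hand, full faithfulness of $\Phi$ is automatic from the two-sided inverse: any natural transformation $\Phi(Y_1)\to\Phi(Y_2)$ is determined by its component at $H^\text{op}\times G$, which by naturality against the group autos is a $(G,H)$-biset map $Y_1\to Y_2$.
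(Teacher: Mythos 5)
Your overall strategy is the same as the paper's: evaluate at the generator $H^\text{op}\times G$, transport its automorphisms to get a biset structure on $F(H^\text{op}\times G)$, use torsor-preservation to control the action, and propagate the identification over all orbits via Lemma \ref{EpiTors}. But there is one genuine gap, and it sits at the most delicate point of the whole theorem: your verification that $\Psi(F)$ is \emph{separable}. You argue that since $F$ sends the $G$-torsor $H^\text{op}\times G\to H^\text{op}$ to a $G$-torsor in $\text{Fin}$, the left $G$-action on $X=F(H^\text{op}\times G)$ is free with orbit set $F(H^\text{op})$ --- correct so far --- and then assert that ``the commuting right $H$-action then yields an isomorphism $\Psi(F)\cong G\times F(H^\text{op})$ of $(G,H)$-bisets.'' That implication is false in general: a biset with free left $G$-action and a commuting right $H$-action need not be separable. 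The right $H$-action permutes the $G$-orbits, but separability requires an $H$-equivariant system of orbit representatives, which is obstructed exactly by elements $x$ with $gxh=x$ for some $g\neq 1$. Example \ref{ExGBi} ($G^\text{bi}$, or more generally $H_\phi$) is precisely a free-on-one-side biset that fails separability, and the Remark following Theorem \ref{PerfPair} stresses that separability is the nontrivial half of the characterization.

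To close the gap you must use pullback-preservation on more than the coordinate projection $p$. The paper's argument: if $gxh=x$ with $(g,h)$ nontrivial, let $K=\langle(h,g)\rangle\leq H^\text{op}\times G$ and consider the quotient map onto the orbit $O=(H^\text{op}\times G)/K$. By Lemma \ref{EpiTors} this is a $K$-torsor in ${}^H\text{Fin}_G$, so $F$ sends it to a $K$-torsor in $\text{Fin}$, and by Example \ref{ExFinTors} the element $(h,g)$ then acts freely on $X$ --- contradicting $gxh=x$. So the whole group $H^\text{op}\times G$, not just the subgroup $\{1\}\times G$, acts freely on $X$, and only then does your isomorphism $X\cong G\times F(H^\text{op})$ of bisets go through. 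The remainder of your proposal (the explicit unit $\eta_X(x,y)=F(\iota_x)(y)$, the well-definedness check against the two quotient relations, and the propagation over orbits) is sound and matches the paper's argument in substance, though the paper organizes it as fully-faithful-plus-essentially-surjective rather than exhibiting a two-sided inverse.
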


\begin{proof}
First we prove that $\Phi$ is fully faithful. If $f:X\rightarrow Y$ is a morphism in $^G\text{Fin}_H^\text{sep}$, then $\Phi(f)_{H^\text{op}\times G}=f$. Therefore, if $\Phi(f)=\Phi(g)$, necessarily $f=g$, so $\Phi$ is faithful.

On the other hand, if $\eta:\Phi(X)\rightarrow\Phi(Y)$ is a natural transformation, let $$f=\eta_{H^\text{op}\times G}:X\rightarrow Y.$$ For any orbit $O\in\null^H\text{Fin}_G$, the following square commutes by naturality $$\xymatrix{
X\ar[r]\ar[d]_f &X\times_{{G,H}}O\ar[d]^{\eta_O} \\
Y\ar[r] &Y\times_{(G,H)}O.
}$$ Since the horizontal maps are epimorphisms, $\eta_O$ is the unique morphism which fills in the square, and therefore $\eta_O=f\times_{(G,H)}O$ for all orbits $O$. It follows that $\eta=\Phi(f)$, so $\Phi$ is full.

It only remains to show that $\Phi$ is essentially surjective. Suppose $F:\null^H\text{Fin}_G\rightarrow\text{Fin}$ preserves pullbacks, finite coproducts, and epimorphisms. The canonical $H\times G^\text{op}$-action on $H^\text{op}\times G\in\null^H\text{Fin}_G$ endows $X=F(H^\text{op}\times G)$ with the structure of a $(G,H)$-biset.

Moreover, $H^\text{op}\times G\rightarrow H^\text{op}\times 1$ is a $G^\text{op}$-torsor in $^H\text{Fin}_G$ by Lemma \ref{EpiTors}, so the left $G$-action on $X$ is free by Example \ref{ExFinTors}. Hence $X\in\null^G\text{Fin}_H$.

We claim $X$ is separable. If not, there is some $x\in X$, $g\in G$, and $h\in H$ such that $gxh=x$. Moreover, there is an orbit $O\in\null^H\text{Fin}_G$ and a map $H^\text{op}\times G\rightarrow O$ which identifies $(h,g)$ with $(1,1)$. By Lemma \ref{EpiTors}, this means that $(g,h)$ actually acts freely on $X$, contradicting the assumption that $gxh=x$. Therefore $X$ is separable.

Now we claim that $F\cong-\times_{(H,G)}X$ are naturally isomorphic. First we show they agree on objects. In fact, since $F$ preserves finite coproducts, it suffices to prove they agree on orbits $T$; that is, when there is an epimorphism $H^\text{op}\times G\rightarrow T$. By Lemma \ref{EpiTors}, any such epimorphism is a $K$-torsor for some $K$, so $F(T)\cong X/K\cong T\times_{(H,G)}X$.

Finally, we claim that for any $f:S\rightarrow T$, $F(f)=f\times_{(H,G)}X$. This will complete the proof by showing that $F$ is in the image of $\Phi$ (therefore $\Phi$ is essentially surjective). Again, it suffices to prove this when $S$ and $T$ are orbits, since $F$ preserves coproducts. But then there are maps $H^\text{op}\times G\xrightarrow{g}S\xrightarrow{f}T$, and each of $g$ and $fg$ are torsors. Hence $F$ agrees with $-\times_{(H,G)}X$ on each of $g$ and $fg$ (Lemma \ref{EpiTors}). Since $F(g)$ is an epimorphism, $F(g)$ and $F(fg)$ uniquely determine $F(f)$, which must also therefore agree with $-\times_{(H,G)}X$. Therefore $F$ is in the image of $\Phi$, and we are done.
\end{proof}

\begin{proof}[Proof of Theorem \ref{PerfPair}]
We just need to show that each of the functors induced by $-\times_{(H,G)}-$ is an equivalence: $$\Phi:\null^G\text{Fin}_H^\text{sep}\rightarrow\text{Fun}_\text{pce}(\null^H\text{Fin}_G,\text{Fin})$$ $$\Psi:\null^H\text{Fin}_G\rightarrow\text{Fun}_\text{pce}(\null^G\text{Fin}_H^\text{sep},\text{Fin}).$$ In the case of $\Phi$, this is precisely Lemma \ref{LemmaPhi}. Therefore, we need only show that $\Psi$ is an equivalence. In fact, there is a commutative square $$\xymatrix{
^H\text{Fin}_G\ar[r]^-{\Psi}\ar[d]_{\sim} &\text{Fun}_\text{pce}(\null^G\text{Fin}_H^\text{sep},\text{Fin})\ar[d]^{\Theta} \\
\text{Fun}(BG,\null^H\text{Fin})\ar[r]_-{\Phi} &\text{Fun}(BG,\text{Fun}_\text{pce}(\text{Fin}_H,\text{Fin})).
}$$ The functor $\Theta$ is an equivalence by Lemma \ref{GFinHsep}, and $\Phi$ is an equivalence by Lemma \ref{LemmaPhi}. Therefore $\Psi$ is an equivalence.
\end{proof}

\begin{remark}
Glasman's \emph{epiorbital categories} \cite{Glasman} axiomatize many of the properties of $^H\text{Fin}_G$ we have used in this proof. It is possible that some version of Theorem \ref{PerfPair} can be generalized to that setting.
\end{remark}

\section{Global equivariant stable homotopy theory}\label{5.2}
\noindent By Example \ref{GFinBG}, $\text{Fun}_\text{pce}(\null^H\text{Fin},\null^G\text{Fin})\cong\null^G\text{Fin}_H$. Therefore, the full subcategory of $\text{Cat}_\text{pce}$ spanned by $^G\text{Fin}$ ($G$ ranges over finite groups) admits the following alternate description:
\begin{itemize}
\item an object is a finite group;
\item a morphism from $H$ to $G$ is a finite $(G,H)$-biset whose left $G$-action is free;
\item a 2-morphism is a map of bisets.
\end{itemize}

\noindent This is the global effective Burnside category of Schwede \cite{Schwede}, which we regard as a full subcategory of the larger (semiadditive) global effective Burnside category of Miller \cite{MillerBurnglo}. In this section, we will expand upon this observation to show that any product-preserving functor $F:\text{Cat}_\text{pce}\rightarrow\text{Sp}$ gives rise to two families of equivariant spectra: one by evaluating $F$ at $^G\text{Fin}\in\text{Cat}_\text{pce}$, and the other by evaluating $F$ at $\text{Fin}_G\in\text{Cat}_\text{pce}$.

We encourage the reader to begin by reading about global Burnside categories in \S\ref{A.3}. For more details on global equivariant homotopy theory, see Schwede's book \cite{Schwede}. He treats global equivariant homotopy theory for compact Lie groups, but the version for finite groups is very similar.

In \S\ref{5.2.1}, we prove that the global Burnside category (not just the smaller orbital version) is a full subcategory of $\text{Cat}_\text{pce}$, concluding that a finite-product-preserving functor $\text{Cat}_\text{pce}\rightarrow\text{Sp}$ gives rise to a family of $G$-spectra, one for each $G$. These assemble into a coglobal equivariant spectrum.

In \S\ref{5.2.2}, we prove that the \emph{opposite of the} global Burnside category is also a full subcategory of $\text{Cat}_\text{pce}$, utilizing the duality of Theorem \ref{PerfPair}. We conclude that such a functor gives rise to a second family of $G$-spectra, which assemble into a global equivariant spectrum.

Along the way, we will see that many basic examples in equivariant homotopy theory arise as functors out of $\text{Cat}_\text{pce}$.

\subsection{Coglobal spectra}\label{5.2.1}
\noindent Suppose that $\mathcal{G}$ is a finite groupoid (a groupoid with finitely many objects and finitely many morphisms). Let $^\mathcal{G}\text{Fin}$ denote the category obtained from $\mathcal{G}$ by freely adjoining finite coproducts.

\begin{remark}
Any finite groupoid is equivalent to a disjoint union of finite groups, via $\mathcal{G}\cong BG_1\amalg\cdots\amalg BG_n$. In this case, $$^\mathcal{G}\text{Fin}\cong\null^{G_1}\text{Fin}\times\cdots\times\null^{G_n}\text{Fin}.$$ That is, an object of $^\mathcal{G}\text{Fin}$ can be regarded as an $n$-tuple of finite sets, with $G_i$ acting freely on the left of the $i^\text{th}$ finite set.
\end{remark}

\begin{definition}
If $\mathcal{G}$ and $\mathcal{H}$ are finite groupoids, write $$\text{Fin}_\mathcal{G}=\text{Fun}(\mathcal{G},\text{Fin})$$ $$^\mathcal{H}\text{Fin}_\mathcal{G}=\text{Fun}(\mathcal{G},\null^\mathcal{H}\text{Fin}).$$
\end{definition}

\begin{remark}
\label{RmkBkGpoid}
If $\mathcal{H}\cong BH_1\amalg\cdots\amalg BH_m$ and $\mathcal{G}\cong BG_1\amalg\cdots\amalg BG_n$, then $$\text{Fin}_\mathcal{G}\cong\text{Fin}_{G_1}\times\cdots\times\text{Fin}_{G_n}$$ $$^\mathcal{H}\text{Fin}_\mathcal{G}\cong\bigtimes_{(i,j)\in[1,m]\times[1,n]}\null^{H_i}\text{Fin}_{G_j}.$$ That is, we may think of an object of $\text{Fin}_\mathcal{G}$ as an $n$-tuple of finite sets with right group actions, and an object of $^\mathcal{H}\text{Fin}_\mathcal{G}$ as an $m\times n$-matrix of finite sets with both (free) left and (arbitrary) right actions.
\end{remark}

\begin{proposition}
\label{PropCoglo}
For finite groupoids $\mathcal{G}$ and $\mathcal{H}$,  $$^\mathcal{H}\text{Fin}_\mathcal{G}\cong\text{Fun}_\text{pce}(\null^\mathcal{G}\text{Fin},\null^\mathcal{H}\text{Fin}).$$
\end{proposition}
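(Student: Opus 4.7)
The plan is to reduce to the case of groups already handled by Example \ref{GFinBG}, using the disjoint-union decompositions $\mathcal{G}\cong BG_1\amalg\cdots\amalg BG_n$ and $\mathcal{H}\cong BH_1\amalg\cdots\amalg BH_m$. The key structural observation is that $^\mathcal{G}\text{Fin}$ is a coproduct of the $^{G_j}\text{Fin}$ in the 2-category $\text{Cat}_\text{pce}$. Concretely, every object of $^\mathcal{G}\text{Fin}$ decomposes canonically as $X_1\amalg\cdots\amalg X_n$ with $X_j\in{}^{G_j}\text{Fin}$, and since there are no morphisms in $\mathcal{G}$ between different components, the hom-sets, pullbacks, coproducts, and epimorphisms in $^\mathcal{G}\text{Fin}$ are all computed componentwise. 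Hence for any $\mathcal{C}\in\text{Cat}_\text{pce}$,
$$\text{Fun}_\text{pce}({}^\mathcal{G}\text{Fin},\mathcal{C})\;\cong\;\prod_{j=1}^n\text{Fun}_\text{pce}({}^{G_j}\text{Fin},\mathcal{C}),$$
where the right-to-left map sends $(F_1,\ldots,F_n)$ to the functor $(X_1,\ldots,X_n)\mapsto F_1(X_1)\amalg\cdots\amalg F_n(X_n)$, and the left-to-right map is restriction along the inclusions $^{G_j}\text{Fin}\hookrightarrow{}^\mathcal{G}\text{Fin}$.

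Next I would take $\mathcal{C}={}^\mathcal{H}\text{Fin}$. By the same componentwise argument applied to $\mathcal{H}$, the category $^\mathcal{H}\text{Fin}$ decomposes as $\prod_i{}^{H_i}\text{Fin}$ with pullbacks and coproducts computed componentwise. Combining this with Example \ref{GFinBG} (which requires only that the target admit pullbacks and finite coproducts), we obtain
$$\text{Fun}_\text{pce}({}^{G_j}\text{Fin},{}^\mathcal{H}\text{Fin})\;\cong\;\text{Fun}(BG_j,{}^\mathcal{H}\text{Fin})\;\cong\;\prod_{i=1}^m\text{Fun}(BG_j,{}^{H_i}\text{Fin})\;=\;\prod_{i=1}^m{}^{H_i}\text{Fin}_{G_j}.$$

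Assembling, and invoking the product decomposition of $^\mathcal{H}\text{Fin}_\mathcal{G}$ from Remark \ref{RmkBkGpoid},
$$\text{Fun}_\text{pce}({}^\mathcal{G}\text{Fin},{}^\mathcal{H}\text{Fin})\;\cong\;\prod_{j}\prod_{i}{}^{H_i}\text{Fin}_{G_j}\;\cong\;{}^\mathcal{H}\text{Fin}_\mathcal{G},$$
and it is straightforward to check that this composite equivalence is given by the tautological formula sending a biset $X\in{}^\mathcal{H}\text{Fin}_\mathcal{G}$ to the functor $-\times_\mathcal{G} X\colon{}^\mathcal{G}\text{Fin}\to{}^\mathcal{H}\text{Fin}$.

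The only mildly delicate step is verifying that $^\mathcal{G}\text{Fin}$ really is a coproduct in $\text{Cat}_\text{pce}$ rather than merely a product of underlying categories; this amounts to checking that the cross-component coproduct $(X_1,0)\amalg(0,X_2)\cong(X_1,X_2)$ is preserved and that pullbacks of spans landing in different components are empty, both of which are immediate from the orbit decomposition. Everything else is formal bookkeeping, so I do not anticipate a serious obstacle beyond ensuring that the chain of equivalences is natural enough to identify the composite with $-\times_\mathcal{G}-$.
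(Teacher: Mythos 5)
Your proof is correct and takes essentially the same route as the paper, whose entire proof is ``combine Remark \ref{RmkBkGpoid} with Example \ref{GFinBG}''; you have simply spelled out the componentwise reassembly that the paper leaves implicit. The only quibble is that your intermediate claim $\text{Fun}_\text{pce}({}^\mathcal{G}\text{Fin},\mathcal{C})\cong\prod_j\text{Fun}_\text{pce}({}^{G_j}\text{Fin},\mathcal{C})$ needs coproducts in $\mathcal{C}$ to be disjoint and pullback-stable for the glued functor to preserve pullbacks, which holds for the actual target $\mathcal{C}={}^\mathcal{H}\text{Fin}$ but not for an arbitrary object of $\text{Cat}_\text{pce}$.
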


\begin{proof}
Combine Remark \ref{RmkBkGpoid} with Example \ref{GFinBG}.
\end{proof}

\noindent Recall from the appendix (\S\ref{A.3}) that the global effective Burnside 2-category $\text{Burn}_\text{glo}^\text{eff}$ is the 2-category of spans of finite groupoids, with all leftward maps required to be discrete fibrations.

A \emph{coglobal Mackey functor} is a finite-product-preserving functor $\text{Burn}_\text{glo}^\text{eff}\rightarrow\mathcal{C}$.

For each finite group $G$, there is a product-preserving functor $\text{Burn}_G^\text{eff}\rightarrow\text{Burn}_\text{glo}^\text{eff}$ given by the \emph{action groupoid} construction. Given a coglobal Mackey functor, precomposition by this functor produces a $G$-Mackey functor for each $G$.

For example, if $\mathcal{C}=\text{Sp}$, then any coglobal equivariant spectrum gives rise to genuine equivariant $G$-spectra for each finite group $G$.

\begin{theorem}
\label{CogloCat}
The full subcategory of $\text{Cat}_\text{pce}$ spanned by $^\mathcal{G}\text{Fin}$ (as $\mathcal{G}$ ranges over finite groupoids) is equivalent to $\text{Burn}_\text{glo}^\text{eff}$.
\end{theorem}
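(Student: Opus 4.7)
The plan is to exhibit the equivalence Hom-category by Hom-category, using Proposition \ref{PropCoglo} as the main input. The objects on both sides are already finite groupoids, so the content is entirely in the 1- and 2-morphisms. By Proposition \ref{PropCoglo}, the Hom-category in the full subcategory of $\text{Cat}_\text{pce}$ from $^\mathcal{G}\text{Fin}$ to $^\mathcal{H}\text{Fin}$ is canonically $^\mathcal{H}\text{Fin}_\mathcal{G} = \text{Fun}(\mathcal{G}, {}^\mathcal{H}\text{Fin})$, so the task is to produce a natural equivalence of categories
\[
^\mathcal{H}\text{Fin}_\mathcal{G} \;\simeq\; \text{Hom}_{\text{Burn}_\text{glo}^\text{eff}}(\mathcal{G},\mathcal{H}),
\]
where the right-hand side consists of spans $\mathcal{G} \leftarrow \mathcal{S} \to \mathcal{H}$ of finite groupoids with the left leg a discrete fibration, and 2-morphisms are equivalences of spans (keeping the endpoints fixed).

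The core construction is the Grothendieck correspondence specialized to groupoids. Given an object $X \in {}^\mathcal{H}\text{Fin}_\mathcal{G}$, which is a functor $\mathcal{G} \to {}^\mathcal{H}\text{Fin}$, I would first write $X$ as an $\mathcal{H}$-free biset and form the action groupoid $\mathcal{S}_X$ of $X$ viewed as a functor $\mathcal{G}^\text{op} \times \mathcal{H} \to \text{Fin}$. This produces a span $\mathcal{G} \leftarrow \mathcal{S}_X \to \mathcal{H}$ whose left leg is a discrete fibration precisely because the free $\mathcal{H}$-action ensures the fibers over $\mathcal{G}$ are honest sets. Conversely, given a span with left leg a discrete fibration, I would recover a functor $\mathcal{G} \to {}^\mathcal{H}\text{Fin}$ by pushing forward the fiber sets along the right leg and tracking the induced $\mathcal{H}$-action. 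A standard groupoid-Grothendieck argument shows these constructions are mutually inverse, and carry maps of bisets to equivalences of spans.

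The remaining step, and the main obstacle, is to verify that this equivalence is compatible with composition, so that it upgrades to a 2-functor of 2-categories. On the $\text{Cat}_\text{pce}$ side, the composition $^\mathcal{G}\text{Fin} \to {}^\mathcal{H}\text{Fin} \to {}^\mathcal{K}\text{Fin}$ corresponds via Proposition \ref{PropCoglo} to a biset tensor product $Y \otimes_\mathcal{H} X$, formed by taking coends of the free $\mathcal{H}$-actions. On the Burnside side, composition is pullback of spans. The equivalence of these two operations amounts to the claim that the action groupoid of the coend is the homotopy pullback of the two action groupoids over $B\mathcal{H}$, which is a standard fact about discrete fibrations (the pullback of a discrete fibration is again discrete, and Grothendieck's construction is monoidal for composition). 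This requires some careful diagram chasing, but I expect no essential difficulty, since both operations are computed by the same universal formula once the free $\mathcal{H}$-action is translated into a discrete fibration.

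Finally, I would remark that essential surjectivity on objects is immediate (every finite groupoid is of the form $\mathcal{G}$), and essential surjectivity on morphisms was established in the bijection above. Fully faithfulness on 2-morphisms follows because a natural transformation of functors $\mathcal{G} \to {}^\mathcal{H}\text{Fin}$ is exactly an equivalence of spans fixing the endpoints under the Grothendieck correspondence. Combining these gives the desired equivalence of 2-categories.
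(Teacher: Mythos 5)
Your proposal is correct in outline but follows a genuinely different route from the paper. You build the equivalence by hand: identify each Hom-category $\text{Fun}_\text{pce}(\null^\mathcal{G}\text{Fin},\null^\mathcal{H}\text{Fin})\cong\null^\mathcal{H}\text{Fin}_\mathcal{G}$ via Proposition \ref{PropCoglo}, match it with the span category $\text{Hom}_{\text{Burn}_\text{glo}^\text{eff}}(\mathcal{G},\mathcal{H})$ through the action-groupoid/Grothendieck dictionary, and then verify that balanced products of bisets correspond to (homotopy) pullbacks of spans. The paper instead reduces to single-object groupoids: by Example \ref{GFinBG} the full subcategory spanned by the $^G\text{Fin}$ for finite \emph{groups} $G$ is Schwede's biset category, whose identification with the orbital part $\text{Burn}_\text{glo}^\text{orb}\subseteq\text{Burn}_\text{glo}^\text{eff}$ --- including the compatibility of biset tensor product with span composition --- is cited to Miller \cite{MillerBurnglo}; the extension to all finite groupoids is then a formal semiadditivity argument, since by Remark \ref{RmkBkGpoid} and Proposition \ref{PropCoglo} the subcategory of $\text{Cat}_\text{pce}$ in question is semiadditive and $\text{Burn}_\text{glo}^\text{eff}$ is generated under biproducts by its orbital part. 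The step you correctly flag as ``the main obstacle'' --- 2-categorical compatibility of the Hom-equivalences with composition --- is exactly the content the paper outsources, so if you pursue your route you must actually carry out that coherence check rather than defer it; the key points are that the left legs are isofibrations (so strict and homotopy pullbacks agree) and that the Grothendieck construction takes composition of the classified functors to composition of discrete fibrations. Your approach buys self-containedness and an explicit description of the equivalence; the paper's buys brevity and avoids the coherence bookkeeping entirely. One small caution: the freeness of the left $\mathcal{H}$-action is what makes the right leg $\mathcal{S}_X\rightarrow\mathcal{H}$ well defined (via the torsor $X\rightarrow\mathcal{H}\backslash X$), not what makes the left leg a discrete fibration --- the left leg is a discrete fibration simply because the $\mathcal{G}$-action on the $\mathcal{H}$-orbit set is encoded by a functor $\mathcal{G}\rightarrow\text{Fin}$.
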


\begin{proof}
By Example \ref{GFinBG}, the full subcategory of $\text{Cat}_\text{pce}$ spanned by $^G\text{Fin}$ ($G$ ranges over finite groups) is Schwede's global Burnside category: that is, the objects are finite groups, and the morphisms from $G$ to $H$ are objects of $^H\text{Fin}_G$. This is also equivalent to the full subcategory $\text{Burn}_\text{glo}^\text{orb}$ of $\text{Burn}_\text{glo}^\text{eff}$ spanned by finite groups (single object groupoids) \cite{MillerBurnglo}.

By the explicit description of Remark \ref{RmkBkGpoid} and Proposition \ref{PropCoglo}, the full subcategory of $\text{Cat}_\text{pce}$ spanned by $^\mathcal{G}\text{Fin}$ ($\mathcal{G}$ ranges over finite groupoids) is semiadditive, and therefore the fully faithful functor $\text{Burn}_\text{glo}^\text{orb}\rightarrow\text{Cat}_\text{pce}$ extends to a semiadditive fully faithful functor $\text{Burn}_\text{glo}^\text{eff}\rightarrow\text{Cat}_\text{pce}$, as desired.
\end{proof}

\subsubsection{Constructing coglobal spectra}
\begin{corollary}
If $F:\text{Cat}_\text{pce}\rightarrow\mathcal{C}$ is a finite-product-preserving functor, then $F(\text{Fin})$ has the structure of a $G$-Mackey functor (valued in $\mathcal{C}$) for each $G$, all of which assemble into a coglobal Mackey functor.

For example, if $\mathcal{C}=\text{Sp}$, then we obtain genuine equivariant $G$-spectra for each $G$, which assemble into a coglobal equivariant spectrum. The $G$-fixed point spectrum is $F(\null^G\text{Fin})$.
\end{corollary}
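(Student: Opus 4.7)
The plan is to assemble the corollary almost entirely from Theorem \ref{CogloCat} together with standard facts about the relationship between $G$-equivariant and global equivariant Burnside categories. Since Theorem \ref{CogloCat} exhibits $\text{Burn}_\text{glo}^\text{eff}$ as a full subcategory of $\text{Cat}_\text{pce}$ via the semiadditive embedding $i\colon\mathcal{G}\mapsto\null^\mathcal{G}\text{Fin}$, the first step is simply to observe that $i$ preserves finite products (this is part of the content of Theorem \ref{CogloCat}, since semiadditive embeddings preserve finite biproducts, and products in $\text{Burn}_\text{glo}^\text{eff}$ are given by disjoint union of groupoids). Hence if $F\colon\text{Cat}_\text{pce}\to\mathcal{C}$ preserves finite products, so does $F\circ i\colon\text{Burn}_\text{glo}^\text{eff}\to\mathcal{C}$. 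By definition, this is a coglobal Mackey functor valued in $\mathcal{C}$, which is the main conclusion.

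Next I would recall the standard action-groupoid construction $\alpha_G\colon\text{Burn}_G^\text{eff}\to\text{Burn}_\text{glo}^\text{eff}$: a finite $G$-set $X$ goes to the action groupoid $X\sslash G$, and a span $X\leftarrow S\to Y$ of finite $G$-sets goes to the induced span of action groupoids, whose left leg is automatically a discrete fibration. This functor is product-preserving because disjoint unions of $G$-sets induce disjoint unions of action groupoids. Precomposition of $F\circ i$ with $\alpha_G$ then gives a product-preserving functor $\text{Burn}_G^\text{eff}\to\mathcal{C}$, which is the definition of a $G$-Mackey functor. By construction these $G$-Mackey functors for varying $G$ are all restrictions of the single coglobal Mackey functor $F\circ i$, so they assemble into such as claimed.

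For the identification of fixed points in the case $\mathcal{C}=\text{Sp}$, I would first invoke the Guillou--May theorem (Example \ref{EqEx} in the text), which identifies product-preserving functors $\text{Burn}_G^\text{eff}\to\text{Sp}$ with genuine $G$-spectra. Under this identification, the $G$-fixed point spectrum of a $G$-spectrum $E$ is the value $E(G/G)$ at the terminal transitive $G$-set. The action groupoid of $G/G$ (with trivial right $G$-action) is equivalent to $BG$, a single-object groupoid, and the embedding $i$ sends $BG$ to $\null^G\text{Fin}$. Chasing through $\alpha_G$, $i$, and $F$ therefore yields $F(\null^G\text{Fin})$ as the $G$-fixed points, completing the corollary.

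The only substantive point to check carefully is that $i$ and $\alpha_G$ are genuinely product-preserving in the relevant sense; everything else is a direct transcription of universal properties. I expect this to go through without difficulty because the product structure on both sides (disjoint union of groupoids on the Burnside side, product of categories of free presheaves on the $\text{Cat}_\text{pce}$ side) matches under the formula $\null^{\mathcal{G}_1\amalg\mathcal{G}_2}\text{Fin}\cong\null^{\mathcal{G}_1}\text{Fin}\times\null^{\mathcal{G}_2}\text{Fin}$ from Remark \ref{RmkBkGpoid}. No additional obstacles arise; the corollary is essentially a formal consequence of packaging Theorem \ref{CogloCat} with the action-groupoid functor.
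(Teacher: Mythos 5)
Your proposal is correct and follows essentially the same route as the paper: Theorem \ref{CogloCat} identifies $\text{Burn}_\text{glo}^\text{eff}$ with the full subcategory of $\text{Cat}_\text{pce}$ spanned by the $^\mathcal{G}\text{Fin}$, so restricting $F$ gives a coglobal Mackey functor, and precomposition with the (product-preserving) action groupoid functor $\text{Burn}_G^\text{eff}\rightarrow\text{Burn}_\text{glo}^\text{eff}$ yields the $G$-Mackey functors, exactly as in the discussion preceding the theorem. Your identification of the fixed points via $B_G(G/G)\simeq BG\mapsto\null^G\text{Fin}$ matches the paper's intended reading.
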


\begin{example}
\label{KCoglo}
For $\mathcal{C}\in\text{Cat}_\text{pce}$, let $K(\mathcal{C})$ denote the algebraic K-theory of the symmetric monoidal category $\mathcal{C}^\amalg$.

Then $K:\text{Cat}_\text{pce}\rightarrow\text{Sp}$ is a product-preserving functor, which induces a coglobal equivariant spectrum $E\mathcal{G}$. Since $^G\text{Fin}$ is freely generated by $BG$ under coproducts, we find $K(\null^G\text{Fin})\cong\Sigma^\infty_{+}BG$. Therefore, the associated $G$-spectrum is $\Sigma^\infty_{+}EG$.
\end{example}

\begin{example}
\label{SpCoglo}
For $\mathcal{C}\in\text{Cat}_\text{pce}$, let $\mathcal{P}_\amalg(\mathcal{C})=\text{Fun}^\times(\mathcal{C}^\text{op},\text{Top})$ denote algebraic presheaves. The resulting functor $$\mathcal{P}_\amalg:\text{Cat}_\text{pce}\rightarrow\text{Pr}^\text{L}$$ gives rise to a coglobal Mackey functor $^\mathcal{G}\text{Top}$ valued in presentable $\infty$-categories.

These in turn restrict to symmetric monoidal $G$-equivariant $\infty$-categories for each $G$ (Definition \ref{SymMonEq}).

Since $^G\text{Fin}$ is the free cocartesian monoidal $\infty$-category on $BG$, $^G\text{Top}\cong\text{Fun}(BG^\text{op},\text{Top})$, the $\infty$-category of \emph{naive equivariant left $G$-spectra}.

Thus, $^\mathcal{G}\text{Top}$ is the coglobal Mackey functor encoding `naive equivariant homotopy theory'.
\end{example}

\subsection{Global spectra}\label{5.2.2}
\noindent By Theorem \ref{CogloCat}, $\text{Burn}_\text{glo}^\text{eff}$ is equivalent to the full subcategory of $\text{Cat}_\text{pce}$ spanned by $^\mathcal{G}\text{Fin}$. Theorem \ref{PerfPair} gives a kind of duality between $^\mathcal{G}\text{Fin}$ and $\text{Fin}_\mathcal{G}$. Combining these:

\begin{theorem}
\label{GloCat}
The full subcategory of $\text{Cat}_\text{pce}$ spanned by $\text{Fin}_\mathcal{G}$ (as $\mathcal{G}$ ranges over finite groupoids) is equivalent to $(\text{Burn}_\text{glo}^\text{eff})^\text{op}$.
\end{theorem}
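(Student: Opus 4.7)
The strategy is to leverage the duality established by Theorem \ref{PerfPair} (in the form of Corollary \ref{PerfPairCor}) together with Theorem \ref{CogloCat}, which already identifies $\text{Burn}_\text{glo}^\text{eff}$ with the full subcategory of $\text{Cat}_\text{pce}$ spanned by $\null^\mathcal{G}\text{Fin}$. The plan is to show that the Hom-categories in the full subcategory spanned by $\text{Fin}_\mathcal{G}$ agree naturally with the Hom-categories in $(\text{Burn}_\text{glo}^\text{eff})^\text{op}$, with composition reversed; this is exactly the extra piece of information needed once we have Theorem \ref{CogloCat} in hand.

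The key computation runs as follows. For finite groupoids $\mathcal{G},\mathcal{H}$, I would first establish the groupoid-level analogue of Corollary \ref{PerfPairCor}, namely $\text{Fun}_\text{pce}(\text{Fin}_\mathcal{G},\text{Fin})\cong\null^\mathcal{G}\text{Fin}$. This reduces via the decomposition $\mathcal{G}\cong BG_1\amalg\cdots\amalg BG_n$ (and the induced product decomposition of $\text{Fin}_\mathcal{G}$) to the group case: coproduct-preserving functors out of a finite product of pce-categories split coordinate-wise, because $(X_1,X_2)\cong(X_1,\emptyset)\amalg(\emptyset,X_2)$ in the product. Then, since $\text{Fin}_\mathcal{H}=\text{Fun}(\mathcal{H},\text{Fin})$ and pullbacks, coproducts, and epimorphisms in $\text{Fin}_\mathcal{H}$ are all detected pointwise, I would deduce
\begin{equation*}
\text{Fun}_\text{pce}(\text{Fin}_\mathcal{G},\text{Fin}_\mathcal{H})\cong\text{Fun}(\mathcal{H},\text{Fun}_\text{pce}(\text{Fin}_\mathcal{G},\text{Fin}))\cong\text{Fun}(\mathcal{H},\null^\mathcal{G}\text{Fin})=\null^\mathcal{G}\text{Fin}_\mathcal{H}.
\end{equation*}
On the other hand, Proposition \ref{PropCoglo} gives $\text{Fun}_\text{pce}(\null^\mathcal{H}\text{Fin},\null^\mathcal{G}\text{Fin})\cong\null^\mathcal{G}\text{Fin}_\mathcal{H}$, which by Theorem \ref{CogloCat} is the Hom-category in $\text{Burn}_\text{glo}^\text{eff}$ from $\mathcal{H}$ to $\mathcal{G}$, equivalently the Hom-category in $(\text{Burn}_\text{glo}^\text{eff})^\text{op}$ from $\mathcal{G}$ to $\mathcal{H}$. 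So the Hom-categories match on the nose.

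The main obstacle will be upgrading this Hom-level comparison to a genuine 2-functor $\Phi\colon(\text{Burn}_\text{glo}^\text{eff})^\text{op}\rightarrow\text{Cat}_\text{pce}$ sending $\mathcal{G}\mapsto\text{Fin}_\mathcal{G}$, and verifying compatibility with composition. My preferred approach is to construct $\Phi$ directly on spans: a morphism in $\text{Burn}_\text{glo}^\text{eff}$ from $\mathcal{G}$ to $\mathcal{H}$ is represented by a span $\mathcal{G}\leftarrow\mathcal{K}\rightarrow\mathcal{H}$ whose left leg is a discrete fibration, and such data acts on $\text{Fin}_\mathcal{H}$ by restriction along $\mathcal{K}\rightarrow\mathcal{H}$ followed by left Kan extension along the discrete fibration $\mathcal{K}\rightarrow\mathcal{G}$, producing a pce functor $\text{Fin}_\mathcal{H}\rightarrow\text{Fin}_\mathcal{G}$. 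Pullback-stability of discrete fibrations, together with a Beck--Chevalley formula for Kan extensions along them, should ensure that composition of spans (via pullback, the composition law in $\text{Burn}_\text{glo}^\text{eff}$) corresponds to composition of these functors in the reverse order --- exactly what is needed for $\Phi$ to be contravariant. Once $\Phi$ is defined and shown to be a 2-functor, fully faithfulness is the Hom-level computation above, and the image is by definition the full subcategory spanned by the $\text{Fin}_\mathcal{G}$.

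As a sanity check, the pairing $-\times_{(H,G)}-$ of Theorem \ref{PerfPair} is precisely the shadow of this Beck--Chevalley combinatorics at the level of Hom-sets, so functoriality of $\Phi$ should be a formal consequence of naturality of the perfect pairing in both variables; the bulk of the work is organizing these naturalities into 2-categorical coherence data.
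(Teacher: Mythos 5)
Your proposal is correct, and the Hom-level computation you arrive at is exactly the content of the paper's Lemma \ref{PropGlo}, namely $\text{Fun}_\text{pce}(\text{Fin}_\mathcal{G},\text{Fin}_\mathcal{H})\cong\null^\mathcal{G}\text{Fin}_\mathcal{H}$ --- though you reach it by a slightly different route. The paper proves the lemma by a double-dualization swap, writing $\text{Fin}_\mathcal{H}\cong\text{Fun}_\text{pce}(\null^\mathcal{H}\text{Fin},\text{Fin})$ and exchanging the two $\text{Fun}_\text{pce}$'s, whereas you write $\text{Fin}_\mathcal{H}=\text{Fun}(\mathcal{H},\text{Fin})$ and use pointwise detection of pullbacks, coproducts, and epimorphisms; both work (your version does need the observation that epimorphisms of $\mathcal{H}$-sets are detected on underlying sets, which is true). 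The real divergence is in how you assemble the Hom-equivalences into an equivalence of 2-categories. The paper reduces to the full subcategory on single-object groupoids, identifies it with Schwede's orbital global Burnside category, and then invokes Miller's result that $\text{Burn}_\text{glo}^\text{eff}$ is the semiadditive completion of its orbital part, so that the fully faithful functor extends formally once one checks the target subcategory of $\text{Cat}_\text{pce}$ is semiadditive. You instead build the contravariant 2-functor directly on spans, sending $\mathcal{G}\leftarrow\mathcal{K}\rightarrow\mathcal{H}$ to restriction followed by left Kan extension along the discrete fibration, with Beck--Chevalley supplying compatibility with span composition. Your construction is sound: left Kan extension along a discrete fibration of groupoids is fiberwise a finite coproduct, hence pce by extensivity of $\text{Fin}$, and base change along discrete fibrations does satisfy Beck--Chevalley. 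What your route buys is an explicit, self-contained description of the functor on morphisms without appealing to Miller's universal property; what it costs is that the 2-categorical coherence of the span-to-Kan-extension assignment, which you correctly flag as the bulk of the work, must be verified by hand rather than obtained for free from the semiadditive completion. Given that the paper is itself quite terse on exactly this coherence point, your sketch is at a comparable level of rigor.
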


\noindent The key is the following lemma, which is dual to Proposition \ref{PropCoglo}.

\begin{lemma}
\label{PropGlo}
For any finite groupoids $\mathcal{G}$ and $\mathcal{H}$, there is an equivalence of categories $$^\mathcal{G}\text{Fin}_\mathcal{H}\cong\text{Fun}_\text{pce}(\text{Fin}_\mathcal{G},\text{Fin}_\mathcal{H}).$$
\end{lemma}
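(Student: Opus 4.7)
The plan is to reduce to the case of single-object groupoids (groups) and then invoke Corollary~\ref{PerfPairCor}. Decomposing $\mathcal{G}\cong\coprod_j BG_j$ and $\mathcal{H}\cong\coprod_i BH_i$ as disjoint unions of single-object groupoids, Remark~\ref{RmkBkGpoid} identifies both sides of the claimed equivalence as products indexed by pairs $(j,i)$: namely $^\mathcal{G}\text{Fin}_\mathcal{H}\cong\prod_{j,i}\null^{G_j}\text{Fin}_{H_i}$, and we want to match this decomposition on the right-hand side. So the lemma reduces to proving the chain of natural equivalences
\[\text{Fun}_\text{pce}(\textstyle\prod_j\text{Fin}_{G_j},\prod_i\text{Fin}_{H_i})\;\cong\;\prod_i\text{Fun}_\text{pce}(\prod_j\text{Fin}_{G_j},\text{Fin}_{H_i})\;\cong\;\prod_{i,j}\text{Fun}_\text{pce}(\text{Fin}_{G_j},\text{Fin}_{H_i}),\]
followed by the base case $\text{Fun}_\text{pce}(\text{Fin}_G,\text{Fin}_H)\cong\null^G\text{Fin}_H$ for groups.

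The first equivalence is a formal consequence of the universal property of products of categories: a functor into a product is a tuple of functors, and each coordinate projection $\text{Fin}_\mathcal{H}\to\text{Fin}_{H_i}$ preserves pullbacks, finite coproducts, and epimorphisms, so the pce condition on a tuple is checked coordinatewise. The second equivalence is subtler. Let $\iota_j:\text{Fin}_{G_j}\to\prod_j\text{Fin}_{G_j}$ be the inclusion sending $X$ to the tuple which is $X$ in coordinate $j$ and $\emptyset$ elsewhere; since every object of $\prod_j\text{Fin}_{G_j}$ canonically decomposes as $\coprod_j\iota_j(X_j)$, a coproduct-preserving functor out of $\prod_j\text{Fin}_{G_j}$ is determined by its restrictions $F_j=F\circ\iota_j$. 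The reconstruction $F(X_1,\dots,X_n)\cong\coprod_j F_j(X_j)$ preserves pullbacks because coproducts in $\text{Fin}_{H_i}$ are disjoint: pullbacks of coproducts decompose into a diagonal sum of pullbacks of summands, with the cross-term pullbacks vanishing. A similar calculation checks that epimorphisms are preserved.

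For the base case, combine the identification $\text{Fin}_H\cong\text{Fun}(BH,\text{Fin})$ with a currying argument:
\[\text{Fun}_\text{pce}(\text{Fin}_G,\text{Fun}(BH,\text{Fin}))\;\cong\;\text{Fun}(BH,\text{Fun}_\text{pce}(\text{Fin}_G,\text{Fin})).\]
This equivalence holds because pullbacks, coproducts, and epimorphisms in $\text{Fun}(BH,\text{Fin})$ are all computed pointwise, so the pce condition on $F:\text{Fin}_G\to\text{Fun}(BH,\text{Fin})$ is equivalent to the condition that its transpose $\tilde F:BH\to\text{Fun}(\text{Fin}_G,\text{Fin})$ factors through $\text{Fun}_\text{pce}(\text{Fin}_G,\text{Fin})$. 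Corollary~\ref{PerfPairCor} then identifies $\text{Fun}_\text{pce}(\text{Fin}_G,\text{Fin})$ with $\null^G\text{Fin}$, yielding $\text{Fun}(BH,\null^G\text{Fin})=\null^G\text{Fin}_H$ by definition.

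The main technical obstacle is the coproduct-decomposition step: the reconstruction formula $F(X_1,\dots,X_n)\cong\coprod_j F_j(X_j)$ is forced by coproduct-preservation, but verifying pullback-preservation genuinely requires the disjointness of coproducts in the target $\text{Fin}_{H_i}$. This is routine to check here, but it indicates why the lemma depends on remaining inside $\text{Cat}_\text{pce}$'s controlled setting rather than admitting fully general targets.
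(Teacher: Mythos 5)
Your proof is correct, but it follows a different path from the one in the paper. The paper's proof stays entirely at the groupoid level: it substitutes $\text{Fin}_\mathcal{H}\cong\text{Fun}_\text{pce}(\null^\mathcal{H}\text{Fin},\text{Fin})$ using Proposition \ref{PropCoglo}, exchanges the two arguments of the iterated $\text{Fun}_\text{pce}$, and then applies the duality $\text{Fun}_\text{pce}(\text{Fin}_\mathcal{G},\text{Fin})\cong\null^\mathcal{G}\text{Fin}$ --- three formal lines, at the cost of leaving the ``exchange'' step and the coordinatewise bookkeeping implicit inside Proposition \ref{PropCoglo} and Remark \ref{RmkBkGpoid}. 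You instead decompose both groupoids into single-object pieces up front, verify the product/coproduct decomposition of $\text{Fun}_\text{pce}$ by hand (correctly identifying that the reconstruction $F\cong\coprod_j F_j\circ\pi_j$ preserves pullbacks only because $\text{Fin}_{H_i}$ is extensive), and then handle the group case by currying over $BH$ rather than over $\null^H\text{Fin}$. Your currying step is easier to justify than the paper's exchange, since $BH$ is a groupoid with one object and the pce condition on $\text{Fun}(BH,\text{Fin})$-valued functors is detected by the conservative, limit- and colimit-reflecting forgetful functor to $\text{Fin}$. Both arguments bottom out in the same essential input, namely the half of Theorem \ref{PerfPair} asserting $\null^G\text{Fin}\xrightarrow{\sim}\text{Fun}_\text{pce}(\text{Fin}_G,\text{Fin})$ (Corollary \ref{PerfPairCor}); note this is the $\Phi$-direction of the perfect pairing, not literally Proposition \ref{PropCoglo}, so your citation is in fact the more accurate one. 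The trade-off is that your route is longer and repeats some of the content of Remark \ref{RmkBkGpoid}, but every step is elementary and self-contained, whereas the paper's is slicker but front-loads the work into earlier statements.
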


\begin{proof}
By Proposition \ref{PropCoglo}, $$\text{Fun}_\text{pce}(\text{Fin}_\mathcal{G},\text{Fin}_\mathcal{H})\cong\text{Fun}_\text{pce}(\text{Fin}_\mathcal{G},\text{Fun}_\text{pce}(\null^\mathcal{H}\text{Fin},\text{Fin})).$$ This in turn is equivalent to $$\text{Fun}_\text{pce}(\null^\mathcal{H}\text{Fin},\text{Fun}_\text{pce}(\text{Fin}_\mathcal{G},\text{Fin}))\cong\text{Fun}_\text{pce}(\null^\mathcal{H}\text{Fin},\null^\mathcal{G}\text{Fin})\cong\null^\mathcal{G}\text{Fin}_\mathcal{H},$$ with the last two steps again by Proposition \ref{PropCoglo}.
\end{proof}

\begin{proof}[Proof of Theorem \ref{GloCat}]
The proof is just like Theorem \ref{CogloCat}, substituting the lemma for Proposition \ref{PropCoglo}.
\end{proof}

\subsubsection{Constructing global spectra}
\begin{corollary}
If $F:\text{Cat}_\text{pce}\rightarrow\mathcal{C}$ is a finite-product-preserving functor, then $F(\text{Fin})$ has the structure of a $G$-Mackey functor (valued in $\mathcal{C}$) in a \emph{second} way for each $G$, all of which assemble into a \emph{global} Mackey functor.

For example, if $\mathcal{C}=\text{Sp}$, then we obtain genuine equivariant $G$-spectra for each $G$, which assemble into a global equivariant spectrum. The $G$-fixed point spectrum is $F(\text{Fin}_G)$.
\end{corollary}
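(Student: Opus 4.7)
The plan is to deduce the corollary from Theorem \ref{GloCat} by the same formal mechanism used for the coglobal corollary in \S\ref{5.2.1}, with the substantive difference being that Theorem \ref{GloCat} embeds $(\text{Burn}_\text{glo}^\text{eff})^\text{op}$ (rather than $\text{Burn}_\text{glo}^\text{eff}$ itself) as a full subcategory of $\text{Cat}_\text{pce}$.

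First, I would apply Theorem \ref{GloCat} to obtain a fully faithful embedding $\iota : (\text{Burn}_\text{glo}^\text{eff})^\text{op} \hookrightarrow \text{Cat}_\text{pce}$ sending a finite groupoid $\mathcal{G}$ to $\text{Fin}_\mathcal{G}$. Since this is a full subcategory inclusion between two semiadditive 2-categories (both having products and coproducts computed by disjoint union of groupoids), it automatically preserves finite products. Consequently the composite $M_F := F \circ \iota$ is a finite-product-preserving functor $(\text{Burn}_\text{glo}^\text{eff})^\text{op} \to \mathcal{C}$, which is by definition the global Mackey functor associated to $F$. Its value on the one-object groupoid $BG$ is $\iota(BG) = \text{Fin}_{BG} = \text{Fin}_G$, so the global $G$-fixed point object is $F(\text{Fin}_G)$ as advertised.

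Second, to extract a $G$-Mackey functor structure for each finite group $G$, I would reuse the action groupoid functor $\text{Burn}_G^\text{eff} \to \text{Burn}_\text{glo}^\text{eff}$ already discussed in \S\ref{5.2.1}. Passing to opposites, and exploiting the manifest self-duality $\text{Burn}_G^\text{eff} = \text{Span}(\text{Fin}_G) \simeq \text{Span}(\text{Fin}_G)^\text{op}$ obtained by reversing spans, yields a product-preserving functor $\text{Burn}_G^\text{eff} \to (\text{Burn}_\text{glo}^\text{eff})^\text{op}$. Precomposing $M_F$ with this functor delivers a product-preserving functor $\text{Burn}_G^\text{eff} \to \mathcal{C}$, i.e.\ a $G$-Mackey functor, whose value at the terminal orbit $G/G$ is precisely $F(\text{Fin}_G)$. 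When $\mathcal{C} = \text{Sp}$, this specializes to a genuine equivariant $G$-spectrum for each $G$, and these fit together into the desired global equivariant spectrum.

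The main obstacle, such as it is, is purely bookkeeping: tracking opposites carefully to confirm that the resulting $G$-Mackey functor structure is genuinely distinct from the coglobal one produced in \S\ref{5.2.1}. The distinction reflects the fact that $\text{Fin}_\mathcal{G}$ and $^\mathcal{G}\text{Fin}$ are inequivalent objects of $\text{Cat}_\text{pce}$ in general, and it is this inequivalence — together with the explicit duality between them established in \S\ref{5.1} — that accounts for the two different Mackey-functorial enhancements of $F(\text{Fin})$. Beyond this verification, the argument is entirely formal.
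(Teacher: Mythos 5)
Your proposal is correct and follows essentially the same route the paper intends: restrict $F$ along the fully faithful, product-preserving embedding $(\text{Burn}_\text{glo}^\text{eff})^\text{op}\hookrightarrow\text{Cat}_\text{pce}$ of Theorem \ref{GloCat} to obtain the global Mackey functor, then produce the local $G$-Mackey functors by precomposing with the (opposite of the) action groupoid functor together with the self-duality $\text{Burn}_G^\text{eff}\simeq(\text{Burn}_G^\text{eff})^\text{op}$, exactly as in \S\ref{A.3.2}. The identification of the $G$-fixed points as $F(\text{Fin}_G)$ via $\iota(BG)=\text{Fin}_G$ also matches the paper.
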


\begin{example}
\label{KGlo}
Consider the algebraic K-theory functor $K:\text{Cat}_\text{pce}\rightarrow\text{Sp}$. In Example \ref{KCoglo}, we saw that $K$ gives rise to a coglobal equivariant spectrum $E\mathcal{G}$.

It also gives rise to a global equivariant spectrum $\mathbb{S}_\mathcal{G}$. By the equivariant Barratt-Priddy-Quillen theorem \cite{EqBPQ}, the associated $G$-spectrum is the $G$-equivariant sphere spectrum $\mathbb{S}_G$ (the algebraic K-theory of $\text{Fin}_G$).
\end{example}

\begin{example}
\label{SpGlo}
Consider the algebraic presheaves functor $\mathcal{P}_\amalg:\text{Cat}_\text{pce}\rightarrow\text{Pr}^\text{L}$. In Example \ref{SpCoglo}, we saw that $\mathcal{P}_\amalg$ gives rise to a coglobal categorical Mackey functor $^\mathcal{G}\text{Top}$ which encodes naive equivariant homotopy theory.

It also gives rise to a global categorical Mackey functor $\text{Top}_\mathcal{G}$. The associated $G$-equivariant categorical Mackey functor is $\text{Top}_G$, the equivariant symmetric monoidal $\infty$-category of genuine equivariant right $G$-spaces (Definition \ref{ElmenDef}).

In this way, $\text{Top}_G$ is the global Mackey functor encoding `genuine equivariant homotopy theory'.
\end{example}

\begin{remark}
The last example demonstrates that $\text{Cat}_\text{pce}$ includes simultaneously information about genuine equivariant homotopy theory (determined by finite $G$-sets $\text{Fin}_G$) and naive equivariant homotopy theory (determined by finite \emph{free} $G$-sets $^G\text{Fin}$).

It also includes information about \emph{biequivariant} homotopy theory. If $G$ and $H$ are two finite groups, then we may regard $\mathcal{P}_\amalg(\null^G\text{Fin}_H)$ as the $\infty$-category of spaces endowed (simultaneously and compatibly) with a (naive) right $G$-action and a (genuine) left $H$-action. See Definition \ref{BieqDef}.
\end{remark}


\appendix
\chapter{Bestiary of equivariant homotopy theory}\label{A}
\noindent This appendix reviews the basic objects of equivariant homotopy theory and equivariant category theory, including spaces, spectra, $\infty$-categories, and symmetric monoidal $\infty$-categories. Each of these objects comes in a variety of flavors, including naive equivariant, genuine equivariant, biequivariant, and global equivariant.

The difference between some of these objects (particularly the naive and genuine equivariant) has been a source of confusion in homotopy theory for many years. We hope to address this confusion by collecting many of the existing constructions from equivariant homotopy theory, global equivariant homotopy theory, and equivariant category theory in one place.

A second goal is to demonstrate that the various notions of equivariant homotopy theory arise naturally out of the idea of an \emph{equivariant Lawvere theory}, via the following two general principles:

\begin{principle}
\label{PrEqLawv}
Each flavor of equivariant homotopy theory (naive $G$-equivariant, genuine $G$-equivariant, $(G,H)$-biequivariant, etc.) is built systematically from a category F of finite sets. $\text{F}^\text{op}$ acts as a \emph{universal Lawvere theory} for that flavor of equivariant homotopy theory.
\end{principle}

\begin{example}
Ordinary homotopy theory is built out of the category Fin of finite sets, and $\text{Fin}^\text{op}$ is the universal Lawvere theory. This is the subject of Chapter \ref{4}. For a finite group $G$:
\begin{itemize}
\item Genuine $G$-equivariant homotopy theory is built out of $\text{Fin}_G$ (finite $G$-sets).
\item Naive $G$-equivariant homotopy theory is built out of $\text{Fin}^G$ (finite \emph{free} $G$-sets).
\item Global equivariant homotopy theory is built out of the 2-category of finite groupoids.
\end{itemize}
\end{example}

\begin{principle*}[Principle \ref{LawvMachPr}]
Suppose $\mathcal{C}$ is an $\infty$-category with associated Lawvere theory $\mathcal{L}$, and $\mathcal{L}$ can be built by applying a combinatorial machine $\mathcal{M}$ to the category Fin of finite sets. Then the equivariant analogue of $\mathcal{C}$ is $$\text{Fun}^\times(\mathcal{M}(\text{F}),\text{Top}),$$ where F is the category of finite sets associated to the particular flavor of equivariant homotopy theory (as in Principle \ref{PrEqLawv}).
\end{principle*}

\begin{example*}[Example \ref{GMDef1}]
The Lawvere theory for connective spectra is the Burnside $\infty$-category, which is obtained from Fin via the virtual span construction $\text{Burn}=\text{Span}_\text{vir}(\text{Fin})$.

Let $\text{Burn}_G=\text{Span}_\text{vir}(\text{Fin}_G)$ denote the $\infty$-category of virtual spans of finite $G$-sets, and $\text{Burn}^G=\text{Span}_\text{vir}(\text{Fin}^G)$ the $\infty$-category of virtual spans of finite \emph{free} $G$-sets.

Then a genuine equivariant connective (right) $G$-spectrum is a product-preserving functor $\text{Burn}_G\rightarrow\text{Top}$, and a naive equivariant connective (right) $G$-spectrum is a product-preserving functor $\text{Burn}^G\rightarrow\text{Top}$.
\end{example*}

\noindent This appendix is \emph{not} intended to be a first introduction to equivariant homotopy theory, particularly because we do not include any theorems, proofs, or applications of the subject.

\subsubsection{Origins of equivariant homotopy theory}
\noindent Equivariant homotopy theory arises out of the following devastating observation: group actions on topological spaces do not interact well with homotopies.

Fix a finite group $G$ and a category or $\infty$-category $\mathcal{C}$. (The example that interests us is $\mathcal{C}=\text{Top}$.) Ordinarily, we would think of an object with a right (left) $G$-action as a functor $$BG^{(\text{op})}\rightarrow\mathcal{C},$$ where $BG$ is the category with a single object corresponding to $G$. By basic properties of the functor category, a morphism of $\text{Fun}(BG,\mathcal{C})$ is invertible (up to equivalence) if and only if the underlying morphism in $\mathcal{C}$ is invertible.

Now consider the following example: Let $EG$ be any contractible space with a free $G$-action. There is a continuous function between $G$-spaces $f:EG\rightarrow\ast$, where $\ast$ is a point with the trivial $G$-action. The underlying function is a homotopy equivalence (because $EG$ is contractible). The previous discussion suggests that there should be an equivariant map $f^{-1}:\ast\rightarrow BG$ inverse to $f$. However, any such function would correspond to a fixed point of $EG$, of which there are none! \\

\noindent The result is that, if we wish to study equivariant spaces which arise geometrically (for example, manifolds with group actions), we should not work in the $\infty$-category $\text{Fun}(BG,\text{Top})$ of \emph{naive} equivariant spaces, but in a more subtle $\infty$-category of \emph{genuine} equivariant spaces.

Due to Elmendorf's Theorem \cite{Elmendorf}, we know that this $\infty$-category takes the form $$\text{Top}_G\cong\text{Fun}(\text{Orb}_G^\text{op},\text{Top})\cong\text{Fun}^\times(\text{Fin}_G^\text{op},\text{Top}),$$ where $\text{Orb}_G$ is the category of $G$-orbits $G/H$ (as $H$ ranges over subgroups of $G$).

In effect, a genuine equivariant $G$-space $X$ includes (as specified data) an $H$-fixed point space $X^H$ for each subgroup $H<G$, along with restriction maps $X^H\rightarrow X^K$ for each subgroup inclusion $K<H<G$. We have to specify this data explicitly, because fixed point spaces are a 1-categorical notion which are not recorded by the $\infty$-category $\text{Fun}(BG,\text{Top})$ --- and it is precisely the data which distinguishes between (for example) $EG$ and $\ast$.

\subsubsection{Organization}
\noindent In \S\ref{A.1}, we discuss the various flavors of equivariant homotopy theory, guided by Principle \ref{PrEqLawv}. These include the naive equivariant, genuine equivariant, and biequivariant.

In \S\ref{A.2}, we introduce equivariant spaces and spectra, guided by Principle \ref{LawvMachPr}.

In \S\ref{A.3}, we discuss global equivariant homotopy theory. A standard reference for this material is Schwede's book \cite{Schwede}.

In \S\ref{A.4}, we discuss equivariant category theory. This is a subject introduced recently by Hill and Hopkins \cite{HillHopkins}, and Barwick et al. are preparing a book \cite{Barwick}.

\section{Flavors of equivariance}\label{A.1}
\noindent The basic building blocks of equivariant homotopy theory are categories of finite sets, endowed with various types of group actions. To each such category F of finite sets, there is an orbit category O, which is the full subcategory of objects which are indecomposable under disjoint union. In each case, F is freely obtained from O by formally adjoining finite coproducts. That is, if $\mathcal{C}$ is an $\infty$-category with finite products, then $$\text{Fun}(\text{O}^\text{op},\mathcal{C})\cong\text{Fun}^\times(\text{F}^\text{op},\mathcal{C}).$$ When $\mathcal{C}=\text{Top}$, this construction produces an $\infty$-category of \emph{equivariant spaces}.

\subsubsection{Naive equivariant homotopy theory}
\noindent If $G$ is a group, let $BG$ denote the corresponding category with a single object. 

\begin{definition}
A right $G$-object in an $\infty$-category $\mathcal{C}$ is a functor $BG\rightarrow\mathcal{C}$. A left $G$-object is a functor $BG^\text{op}\rightarrow\mathcal{C}$.
\end{definition}

\noindent Let $^G\text{Fin}$ denote the category of finite \emph{free} left $G$-sets. Then $BG$ is the orbit category for $^G\text{Fin}$.

\begin{definition}
A \emph{naive\footnote{Some authors use `Borel' instead of `naive'.} equivariant left $G$-space} is a functor $BG^\text{op}\rightarrow\text{Top}$, or equivalently a product-preserving functor $^G\text{Fin}^\text{op}\rightarrow\text{Top}$.
\end{definition}

\subsubsection{Genuine equivariant homotopy theory}
\noindent Let $\text{Fin}_G$ denote the category of finite right $G$-sets (not necessarily free). The corresponding orbit category $\text{Orb}_G$ is the full subcategory of $G$-orbits $G/H=\{Hg|g\in G\}$, as $H$ varies over all subgroups of $G$.

\begin{definition}
\label{ElmenDef}
A \emph{genuine equivariant right $G$-space} is a functor $\text{Orb}_G^\text{op}\rightarrow\text{Top}$, or equivalently a product-preserving functor $\text{Fin}_G^\text{op}\rightarrow\text{Top}$.
\end{definition}

\begin{remark}
\label{ElmenRmk}
We think of the functor $X:\text{Orb}_G^\text{op}\rightarrow\text{Top}$ as sending each orbit $G/H$ to a space of $H$-fixed points $X^H$. If $H<K$, there is a $G$-map $G/H\rightarrow G/K$ which corresponds to the inclusion of fixed point spaces $X^K\rightarrow X^H$, which we call a \emph{restriction}.

Traditionally, a genuine equivariant $G$-space is a topological space in the point-set sense (for example, a manifold) with a $G$-action. We put a model structure on the resulting category by declaring a continuous function compatible with the $G$-action is a weak equivalence precisely when it has an inverse up to homotopy.

Elmendorf's Theorem \cite{Elmendorf} asserts that this model category models the presentable $\infty$-category $\text{Fun}(\text{Orb}_G^\text{op},\text{Top})$.
\end{remark}

\subsubsection{Biequivariant homotopy theory}
\noindent The last two examples can be combined: If $G$ and $H$ are finite groups, let $$^G\text{Fin}_H=\text{Fun}(BH,\null^G\text{Fin})$$ denote the category of $(G,H)$-bisets such that the left $G$-action is free.

\begin{definition}
Let $^G\text{Orb}_H$ denote the following category: an object consists of a pair $(H/K,\phi)$ where $K$ is a subgroup of $H$ and $\phi:K\rightarrow G$ a group homomorphism. A morphism $(H/K,\phi)\rightarrow(H/K^\prime,\phi^\prime)$ consists of a map of $H$-orbits $H/K\rightarrow H/K^\prime$ (which means in particular that $K<K^\prime$) and a natural isomorphism $\eta:\phi\rightarrow\psi_K$ between the functors $\phi,\psi_K:BK\rightarrow BG$.
\end{definition}

\noindent There is a functor $^G\text{Orb}_H\rightarrow\null^G\text{Fin}_H$ which sends $(H/K,\phi)$ to the orbit $G\times H/K$, with action given by $$g(a,b)kh=(ga\phi(k),bh),$$ for any $g\in G$, $k\in K$, and $h\in H$. This exhibits $^G\text{Orb}_H$ as the orbit category of $\null^G\text{Fin}_H$ (a full subcategory).

\begin{definition}
\label{BieqDef}
A \emph{$(G,H)$-biequivariant space} is a functor $^G\text{Orb}_H^\text{op}\rightarrow\text{Top}$, or equivalently a product-preserving functor $^G\text{Fin}_H^\text{op}\rightarrow\text{Top}$.
\end{definition}

\begin{remark}
Biequivariant homotopy theory features, for example, in work of Bonventre and Pereira on genuine equivariant operads \cite{BonPer}. For them, a $G$-equivariant operad $\mathcal{O}$ includes (for each $n\geq 0$) the data of a space $\mathcal{O}(n)$ with a genuine $G$-action and a naive $\Sigma_n$-action.
\end{remark}

\section{Equivariance}\label{A.2}
\noindent For each flavor of equivariant homotopy theory, there are notions of equivariant space, equivariant spectrum, etc.

These are built following an informal principle.

\begin{principle}
\label{LawvMachPr}
Suppose $\mathcal{C}$ is an $\infty$-category with associated Lawvere theory $\mathcal{L}$, and $\mathcal{L}$ can be built by applying a combinatorial machine $\mathcal{M}$ to the category Fin of finite sets.

Then the equivariant analogue of $\mathcal{C}$ is $$\text{Fun}^\times(\mathcal{M}(\text{F}),\text{Top}),$$ where F is a category of finite sets from \S\ref{A.1} (such as $^G\text{Fin}$ for naive equivariant homotopy theory, or $\text{Fin}_G$ for genuine equivariant homotopy theory).
\end{principle}

\noindent In each case, $\mathcal{M}(\text{F})$ is an \emph{equivariant Lawvere theory}; that is, a cyclic $\text{F}^\text{op}$-module, or Lawvere theory relative to F.

\begin{example}
If $\mathcal{C}=\text{Top}$, the associated Lawvere theory is $\text{Fin}^\text{op}$. The combinatorial machine is the opposite category construction $\mathcal{M}(-)=(-)^\text{op}$.

If F is one of the categories of \S\ref{A.1} ($^G\text{Fin}$, $\text{Fin}_G$, etc.), then an equivariant space is a product-preserving functor $$\text{F}^\text{op}\rightarrow\text{Top}.$$ In general, if $\mathcal{C}$ is an $\infty$-category with finite products, a product-preserving functor $\text{F}^\text{op}\rightarrow\mathcal{C}$ is called a \emph{coefficient system}.
\end{example}

\begin{example}
If $\mathcal{C}=\text{CMon}_\infty$, the associated Lawvere theory is $\text{Burn}^\text{eff}$. The combinatorial machine is the (effective) span construction $\mathcal{M}(-)=\text{Span}(-)$.

If F is one of the categories of \S\ref{A.1}, an equivariant $\mathbb{E}_\infty$-space is a product-preserving functor $$\text{Span}(\text{F})\rightarrow\text{Top}.$$

In general, a product-preserving functor out of $\text{Span}(\text{F})$ is called a \emph{semi-Mackey functor}.
\end{example}

\noindent We usually write $\text{Burn}_G^\text{eff}=\text{Span}(\text{Fin}_G)$, $^G\text{Burn}^\text{eff}=\text{Span}(\null^G\text{Fin})$, etc. for these \emph{effective Burnside 2-categories}.

\begin{example}
\label{GMDef1}
If $\mathcal{C}=\text{Ab}_\infty$, the associated Lawvere theory is Burn. The combinatorial machine is the virtual span construction $\mathcal{M}(-)=\text{Span}(-)_\text{vir}$, obtained by group-completing Hom objects of $\text{Span}(-)$.

If F is one of the categories of \S\ref{A.1}, an equivariant connective spectrum is a product-preserving functor $$\text{Span}(\text{F})_\text{vir}\rightarrow\text{Top}.$$

In general, a product-preserving functor out of $\text{Span}(\text{F})_\text{vir}$ is called a \emph{Mackey functor}.
\end{example}

\noindent We write $\text{Burn}_G=\text{Span}(\text{Fin}_G)_\text{vir}$, etc. for these \emph{virtual Burnside $\infty$-categories} (or just Burnside $\infty$-categories).

\begin{definition}
\label{GMDef}
If F is one of the categories of \S\ref{A.1}, an equivariant spectrum (not necessarily connective) is a product-preserving functor $\text{Span}(\text{F})_\text{vir}\rightarrow\text{Sp}$ (a \emph{spectral Mackey functor}).
\end{definition}

\noindent Since Sp is an additive $\infty$-category and $\text{Span}(\text{F})_\text{vir}$ is the free additive $\infty$-category on the semiadditive 2-category $\text{Span}(\text{F})$, an equivariant spectrum is equivalently a product-preserving functor $$\text{Span}(\text{F})\rightarrow\text{Sp}.$$ This is often a more useful definition, since effective Burnside 2-categories can be described explicitly, while virtual Burnside $\infty$-categories are much more complicated.

These definitions are summarized in the following table:
\begin{flushleft}
\begin{tabular}{|c|c|c|c|}\hline
&naive left $G$- &genuine right $G$- &$(G,H)$-biequivariant \\ \hline
space &$\text{Fun}^\times(\null^G\text{Fin}^\text{op},\text{Top})$ &$\text{Fun}^\times(\text{Fin}_G^\text{op},\text{Top})$ &$\text{Fun}^\times(\null^G\text{Fin}_H^\text{op},\text{Top})$ \\ \hline
$\mathbb{E}_\infty$-space &$\text{Fun}^\times((\null^G\text{Burn}^\text{eff})^\text{op},\text{Top})$ &$\text{Fun}^\times((\text{Burn}^\text{eff}_G)^\text{op},\text{Top})$ &$\text{Fun}^\times((\null^G\text{Burn}_H^\text{eff})^\text{op},\text{Top})$ \\ \hline
connective &$\text{Fun}^\times(\null^G\text{Burn}^\text{op},\text{Top})$ &$\text{Fun}^\times(\text{Burn}_G^\text{op},\text{Top})$ &$\text{Fun}^\times(\null^G\text{Burn}_H^\text{op},\text{Top})$ \\ 
spectrum & & & \\ \hline
spectrum &$\text{Fun}^\times((\null^G\text{Burn}^{(\text{eff})})^\text{op},\text{Sp})$ &$\text{Fun}^\times((\null^G\text{Burn}_H^{(\text{eff})})^\text{op},\text{Sp})$ &$\text{Fun}^\times((\null^G\text{Burn}_H^{(\text{eff})})^\text{op},\text{Sp})$ \\ \hline
\end{tabular}
\end{flushleft}

\subsubsection{Genuine equivariant homotopy theory}
\noindent We have just asserted that genuine $G$-spaces are \emph{topological coefficient systems} (functors $\text{Fin}_G^\text{op}\rightarrow\text{Top}$) and genuine $G$-spectra are \emph{spectral Mackey functors} ($\text{Burn}_G\rightarrow\text{Sp}$). Actually, each of these statements is a deep theorem. The first, as we discussed in Remark \ref{ElmenRmk}, is due to Elmendorf \cite{Elmendorf}. The second is due to Guillou and May \cite{GMay2}.

In identifying a genuine $G$-spectrum with a spectral Mackey functor $E:\text{Burn}_G^\text{eff}\rightarrow\text{Sp}$, we think of $E(G/H)$ as the $H$-fixed points of $E$. If $H<K$, then the span $$\xymatrix{
 &G/H\ar[ld]\ar@{=}[rd] & \\
G/K &&G/H
}$$ is sent to the restriction $E^K\rightarrow E^H$, while the span $$\xymatrix{
 &G/H\ar[rd]\ar@{=}[ld] & \\
G/H &&G/K
}$$ is sent to the transfer $E^H\rightarrow E^K$.

In this way, a genuine $G$-spectrum encodes a family of fixed point spectra, along with restrictions and transfers. A genuine $G$-space encodes a family of fixed point spaces, along with (only) restrictions.

\subsubsection{Naive equivariant homotopy theory}
\noindent The case of naive equivariant homotopy theory is especially simple. In this case, $^G\text{Fin}$ is the free cocartesian monoidal category on $BG$, $^G\text{Burn}^\text{eff}$ is the free semiadditive $\infty$-category on $BG$ (\cite{Glasman} A.1), and $^G\text{Burn}$ is the free additive $\infty$-category on $BG$. Combining this information with the orbit categories of \S\ref{A.1}, some entries of the table above can be simplified: \\

\begin{tabular}{|c|c|c|c|}\hline
&naive left $G$- &genuine right $G$- &$(G,H)$-biequivariant \\ \hline
space &$\text{Fun}(BG^\text{op},\text{Top})$ &$\text{Fun}(\text{Orb}_G^\text{op},\text{Top})$ &$\text{Fun}(\null^G\text{Orb}_H^\text{op},\text{Top})$ \\ \hline
$\mathbb{E}_\infty$-space &$\text{Fun}(BG^\text{op},\text{Top})$ & & \\ \hline
connective spectrum &$\text{Fun}(BG^\text{op},\text{Top})$ & & \\ \hline
spectrum &$\text{Fun}(BG^\text{op},\text{Sp})$ & & \\ \hline
\end{tabular}

\section{Global equivariance}\label{A.3}
\subsection{Groupoids and global Burnside categories}\label{A.3.1}
\subsubsection{Unstable homotopy theory}
\noindent Let $\text{Gpoid}_\text{f}$ denote the 2-category of finite groupoids (groupoids with finitely many objects and finitely many morphisms), and $\text{Gp}_\text{f}$ the full subcategory of groupoids with a single object.

Note that $\text{Gp}_\text{f}$ is a 2-category: its objects are groups, and its morphisms are group homomorphisms, but its 2-morphisms are natural transformations of functors $BG\rightarrow BH$. These 2-morphisms encode information about conjugacy classes.

Just as $\text{Orb}_G$ is the orbit category for $\text{Fin}_G$, $\text{Gp}_\text{f}$ is the orbit category for $\text{Gpoid}_\text{f}$. That is, if $\mathcal{C}$ is an $\infty$-category with finite products, then $$\text{Fun}(\text{Gp}_\text{f}^\text{op},\mathcal{C})\cong\text{Fun}^\times(\text{Gpoid}_\text{f}^\text{op},\mathcal{C}).$$

\subsubsection{Stable homotopy theory}
\noindent Like the effective Burnside 2-category for a finite group $G$ can be described as the 2-category of spans of finite $G$-sets, the global effective Burnside 2-category is built out of spans of finite groupoids.

Unlike in `local' equivariant homotopy theory, this construction is asymmetric! That is, $\text{Burn}_\text{glo}\not\cong\text{Burn}_\text{glo}^\text{op}$. The reason for the asymmetry is that, for each span in $\text{Burn}_\text{glo}$, the \emph{ingressive} morphisms are required to be discrete fibrations.

\begin{definition}
A functor $F:\mathcal{H}\rightarrow\mathcal{G}$ between two groupoids is a \emph{discrete fibration} if for each morphism $f:X\rightarrow Y$ in $\mathcal{G}$ and lift of $X$ to $X^\prime\in\mathcal{H}$, there is a unique lift of $f$ to $f^\prime:X^\prime\rightarrow Y^\prime$ in $\mathcal{H}$.
\end{definition}

\noindent A discrete fibration is precisely a functor which is obtained via the Grothendieck construction applied to a functor $\mathcal{G}\rightarrow\text{Set}$. Thus, discrete fibrations of \emph{finite} groupoids over $BG$ correspond to objects of $\text{Fin}_G\cong\text{Fun}(BG,\text{Fin})$. The correspondence is via the action groupoid:

\begin{example}
\label{ActGpoid}
If $G$ is a finite group and $X$ is a $G$-set, then there is an \emph{action groupoid} $B_GX$ defined as follows: an object of $B_GX$ is an object of $X$, and for each relation $gx=y$ in $X$, there is a morphism $x\xrightarrow{g}y$ in $B_GX$.

There is a discrete fibration $B_GX\rightarrow BG$ which sends $x\xrightarrow{g}y$ to $g$. Moreover, $B_G$ produces an equivalence of categories between $\text{Fin}_G$ and finite discrete fibrations over $BG$.
\end{example}

\begin{definition}[\cite{MillerBurnglo}]
The global effective Burnside category $\text{Burn}_\text{glo}^\text{eff}$ is the $\infty$-category of spans of finite groupoids, where left-pointing (ingressive) morphisms in each span are required to be discrete fibrations. In other words, a morphism in $\text{Burn}_\text{glo}^\text{eff}$ is a span of finite groupoids $\mathcal{G}\leftarrow\mathcal{K}\rightarrow\mathcal{H}$, where $\mathcal{K}\rightarrow\mathcal{G}$ is a discrete fibration.

Although $\text{Burn}_\text{glo}^\text{eff}$ is a priori an $\infty$-category, in fact it is a 2-category.
\end{definition}

\noindent Miller published this construction in 2016, but he claims it has been known to experts for decades. See his paper \cite{MillerBurnglo} for details.

\subsubsection{The orbital category}
\begin{remark}
Most treatments of global equivariant homotopy theory consider a smaller version of the global Burnside category, which is equivalent to the full subcategory of $\text{Burn}_\text{glo}^\text{eff}$ spanned by groups (groupoids with a single object). See for example Schwede's book \cite{Schwede}. This full subcategory admits a surprising description as follows:
\begin{itemize}
\item objects are finite groups;
\item a morphism from $H$ to $G$ is a $(G,H)$-biset such that the left $G$-action is free;
\item a 2-morphism is a map of bisets.
\end{itemize}
\end{remark}

\subsection{Global equivariant homotopy theory}\label{A.3.2}
\begin{definition}
A \emph{global equivariant space} is (equivalently) a finite-product-preserving functor $\text{Gpoid}_\text{f}^\text{op}\rightarrow\text{Top}$, or an arbitrary functor $\text{Gp}_\text{f}^\text{op}\rightarrow\text{Top}$.
\end{definition}

\noindent Unlike in `local' homotopy theory, $\text{Burn}_\text{glo}$ is not equivalent to its opposite. As a result, there are now two \emph{dual} theories of equivariant spectra:

\begin{definition}
A \emph{global equivariant spectrum} is a finite-product-preserving functor $(\text{Burn}_\text{glo}^\text{eff})^\text{op}\rightarrow\text{Sp}$.

A \emph{coglobal equivariant spectrum} is a finite-product-preserving functor $\text{Burn}_\text{glo}^\text{eff}\rightarrow\text{Sp}$.
\end{definition}

\begin{remark}
Both global and coglobal spectra consist of spectra $E_G$ for each group $G$, along with restrictions $E_G\rightarrow E_H$ and transfers $E_H\rightarrow E_G$ for any subgroup inclusion $H<G$.

However, global spectra also include restrictions along \emph{any} homomorphism $H\rightarrow G$, while coglobal spectra include transfers along any group homomorphism.

The author has found the following intuition useful to distinguish between global and coglobal spectra:
\begin{itemize}
\item A global spectrum can be thought of as a compatible family of equivariant spectra, one for each group. The universal example is the equivariant sphere spectrum $\mathbb{S}_\mathcal{G}$. (See Example \ref{KGlo}.)
\item A coglobal spectrum can be thought of as a single (hypothetical) spectrum on which every group acts simultaneously; the value at $G\in\text{Burn}_\text{glo}^\text{orb}$ records the fixed points of the $G$-action. The universal example $E\mathcal{G}$ is the hypothetical `spectrum on which every group acts freely', for which the value at $G$ ($G$-fixed points) is $\Sigma^\infty_{+}BG$. The corresponding $G$-spectrum is $\Sigma^\infty_{+}EG$. (See Example \ref{KCoglo}.)
\end{itemize}
\end{remark}

\subsubsection{From global equivariance to local equivariance}
\noindent Fix a finite group $G$. The action groupoid construction (Example \ref{ActGpoid}) produces a semiadditive functor $$B_G:\text{Burn}_G^\text{eff}\rightarrow\text{Burn}_\text{glo}^\text{eff},$$ which sends an object $X$ to $B_GX$, and a span $X\leftarrow T\rightarrow Y$ to $B_GX\leftarrow B_GT\rightarrow B_GY$. Precomposition along this functor induces $$\text{Sp}_\text{coglo}\cong\text{Fun}^\times(\text{Burn}_G^\text{eff},\text{Sp})\rightarrow\text{Fun}^\times(\text{Burn}_G^\text{eff},\text{Sp})\cong\text{Sp}_G,$$ which assigns to any coglobal spectrum a genuine $G$-spectrum for each $G$.

Similarly, since $\text{Burn}_G^\text{eff}$ is equivalent to its opposite, there are functors $\text{Sp}_\text{glo}\rightarrow\text{Sp}_G$ which assign to any global spectrum a genuine $G$-spectrum for each $G$.

\section{Equivariant category theory}\label{A.4}
\noindent The idea to study equivariant categories is due to Hill and Hopkins \cite{HillHopkins}, and there is an upcoming book on equivariant higher category theory by Barwick et al. \cite{Barwick}.

If $H$ is a subgroup of $G$, there is a \emph{restriction} functor $\text{Sp}_G\rightarrow\text{Sp}_H$ and two wrong-way functors $\text{Sp}_H\rightarrow\text{Sp}_G$, the \emph{transfer} and the \emph{norm}. The external norm features prominently in Hill, Hopkins, and Ravenel's celebrated solution to the Kervaire invariant one problem \cite{HHR}.

Ignoring the norm for now, the restriction and transfer combine to give $\text{Sp}_{-}$ the structure of a Mackey functor valued in $\infty$-categories!

\begin{definition}
\label{SymMonEq}
A \emph{genuine equivariant $\infty$-category} is a categorical coefficient system. A \emph{genuine equivariant symmetric monoidal $\infty$-category} is a categorical semiMackey functor. That is, $$\text{Cat}_G=\text{Fun}^\times(\text{Fin}_G^\text{op},\text{Cat})$$ $$\text{SymMon}_G=\text{Fun}^\times(\text{Burn}_G^\text{eff},\text{Cat}).$$
\end{definition}

\noindent In particular, a genuine equivariant $G$-$\infty$-category consists of an $\infty$-category $\mathcal{C}_H$ for each subgroup $H<G$, along with restrictions. A genuine equivariant symmetric monoidal $\infty$-category also includes transfers.

\begin{remark}
These definitions can be reproduced for any of the flavors of equivariant homotopy theory (\S\ref{A.1}) or global equivariant homotopy theory (\S\ref{A.3}). That is, a \emph{global equivariant $\infty$-category} is a product-preserving functor $\text{Burn}_\text{glo}\rightarrow\text{Cat}$.

Essentially all examples that we are aware of are really global equivariant $\infty$-categories.
\end{remark}

\begin{example}
The universal example of an equivariant symmetric monoidal $\infty$-category is given by the corepresentable functor $$\text{Map}(\ast,-):\text{Burn}_G^\text{eff}\rightarrow\text{Top}\subseteq\text{Cat}.$$ The corresponding $\infty$-category of $H$-objects is $\text{Fin}_H^\text{iso}$.
\end{example}

\begin{example}
There is a global analogue of the last example: the representable functor $$\text{Map}(-,\ast):(\text{Burn}_\text{glo}^\text{eff})^\text{op}\rightarrow\text{Top}\subseteq\text{Cat}$$ is the universal global equivariant symmetric monoidal $\infty$-category. The corresponding $\infty$-category of $G$-objects is $\text{Fin}_G^\text{iso}$.

On the other hand, the universal \emph{coglobal} equivariant symmetric monoidal $\infty$-category is $$\text{Map}(\ast,-):\text{Burn}_\text{glo}^\text{eff}\rightarrow\text{Top}\subseteq\text{Cat},$$ and the corresponding $\infty$-category of $G$-objects is $^G\text{Fin}^\text{iso}$.
\end{example}

\subsection{\texorpdfstring{$(\infty,2)$-}{(infinity,2)-}Lawvere theories}\label{A.4.1}
\noindent The material of this section is conjectural, pending a better understanding of enriched higher category theory (and in particular $(\infty,2)$-Lawvere theories).

We have proven that the following properties of symmetric monoidal $\infty$-categories correspond to the structure of modules over a semiring $\infty$-category $\mathcal{R}$:
\begin{itemize}
\item ($\mathcal{R}=\text{Fin}$) cocartesian monoidal $\infty$-category (Theorem \ref{2T3});
\item ($\mathcal{R}=\text{Fin}^\text{op}$) cartesian monoidal $\infty$-category (Theorem \ref{2T3b});
\item ($\mathcal{R}=\text{Burn}^\text{eff}$) semiadditive $\infty$-category (Theorem \ref{2T4});
\item ($\mathcal{R}=\text{Burn}$) additive $\infty$-category (Corollary \ref{2T7b}).
\end{itemize}

\noindent By Theorem \ref{2T7}, for any semiring space $R$, there is a Lawvere theory $\text{Burn}_R$ for $R$-modules, which is uniquely determined by the properties that $\text{Burn}_R$ is semiadditive and the endomorphism semiring of the generating object is $R$.

If $\mathcal{R}$ is instead a semiring $\infty$-category, we anticipate that there is an \emph{$(\infty,2)$-Lawvere theory} $\text{Burn}_\mathcal{R}$ such that $$\text{Mod}_\mathcal{R}\cong\text{Fun}^\times(\text{Burn}_\mathcal{R},\text{Cat}).$$ Again, $\text{Burn}_\mathcal{R}$ should be semiadditive (therefore enriched in symmetric monoidal $\infty$-categories), and the endomorphism semiring of the generating object should be $\mathcal{R}$.

\begin{example}
If $\mathcal{R}=\text{Fin}$, $\text{Burn}_\text{Fin}$ is the \emph{effective Burnside $(\infty,2)$-category} $\text{Burn}^{(\infty,2)}$ (which is really a $(2,2)$-category):
\begin{itemize}
\item an object is a finite set;
\item a morphism from $X$ to $Y$ is a span $X\leftarrow T\rightarrow Y$;
\item a 2-morphism from $X\leftarrow T\rightarrow Y$ to $X\leftarrow T^\prime\rightarrow Y$ is a function $T\rightarrow T^\prime$ which is compatible with the functions down to $X$ and $Y$.
\end{itemize}
$\text{opBurn}^{(\infty,2)}=\text{Burn}_{\text{Fin}^\text{op}}$ is similar but with the direction of the 2-morphisms reversed.
\end{example}

\begin{example}
If $\mathcal{R}=\text{Burn}^\text{eff}$, $\text{Burn}_\mathcal{R}$ is the \emph{effective iterated Burnside $(\infty,2)$-category} $\text{itBurn}^\text{eff}$ (which is really a $(2,2)$-category):
\begin{itemize}
\item an object is a finite set;
\item a morphism from $X$ to $Y$ is a span $X\leftarrow T\rightarrow Y$;
\item a 2-morphism from $X\leftarrow T\rightarrow Y$ to $X\leftarrow T^\prime\rightarrow Y$ is a span $T\leftarrow S\rightarrow T^\prime$ which is compatible with the functions down to $X$ and $Y$.
\end{itemize}
If $\mathcal{R}=\text{Burn}$, $\text{Burn}_\mathcal{R}$ is the \emph{virtual iterated Burnside $(\infty,2)$-category} itBurn, obtained from $\text{itBurn}^\text{eff}$ by additivizing Hom-categories (which are already semiadditive $\infty$-categories).
\end{example}

\noindent In this way, each of $\text{Burn}^{(\infty,2)}$, $\text{opBurn}^{(\infty,2)}$, $\text{itBurn}^\text{eff}$, and itBurn can be described via a combinatorial machine applied to Fin. Applying Principle \ref{LawvMachPr}:

\begin{definition}
A genuine $G$-equivariant cocartesian monoidal $\infty$-category is a product-preserving functor $$\text{Burn}_G^{(\infty,2)}\rightarrow\text{Cat}.$$ A genuine equivariant cartesian monoidal $\infty$-category is a product-preserving functor $$\text{opBurn}_G^{(\infty,2)}\rightarrow\text{Cat}.$$ A genuine equivariant semiadditive $\infty$-category is a product-preserving functor $$\text{itBurn}^\text{eff}_G\rightarrow\text{Cat}.$$ A genuine equivariant additive $\infty$-category is a product-preserving functor $$\text{itBurn}_G\rightarrow\text{Cat}.$$
\end{definition}

\begin{example}
The universal example of a $G$-cocartesian monoidal $\infty$-category is the corepresentable functor $$\underline{\text{Fin}}_G:\text{Map}(\ast,-):\text{Burn}_G^{(\infty,2)}\rightarrow\text{Cat},$$ which sends the orbit $G/H$ to $\text{Fin}_H$. Similarly, the universal $G$-additive $\infty$-category is $\underline{\text{Burn}}_G$, etc.
\end{example}

\begin{definition}
If $\underline{\mathcal{C}}$ is a genuine equivariant symmetric monoidal $\infty$-category, then a commutative algebra in $\underline{\mathcal{C}}$ is a map $\underline{\text{Fin}}_G\rightarrow\underline{\mathcal{C}}$ in $\text{SymMon}_G$.
\end{definition}

\noindent A central theme of this thesis is the use of algebraic techniques in category theory. In a sense, these techniques are possible due to constructions involving the $\infty$-category $\text{Pr}^\text{L}$ of presentable $\infty$-categories (\S\ref{1.1.4}).

Therefore, if we want to study equivariant category theory, we might want to begin with a good notion of equivariant presentable $\infty$-category. In this spirit, we end with a conjecture suggested to the author by Clark Barwick.

\begin{conjecture}
A genuine $G$-equivariant presentable $\infty$-category is a product-preserving functor $$\text{Burn}_G^{(\infty,2)}\rightarrow\text{Pr}^\text{L};$$ that is, a genuine $G$-equivariant cocartesian monoidal $\infty$-category $\mathcal{C}$ such that each $\infty$-category $\mathcal{C}_H$ ($H<G$) is presentable, and each transfer and restriction has a right adjoint.
\end{conjecture}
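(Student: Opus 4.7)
The strategy is to identify $\text{Pr}^\text{L}$ as a (non-full) sub-$(\infty,2)$-category of $\text{Cat}$, whose objects are the presentable $\infty$-categories, whose 1-morphisms are left adjoints, and whose 2-morphisms are arbitrary natural transformations between such left adjoints. Under this identification, a product-preserving 2-functor $F\colon\text{Burn}_G^{(\infty,2)}\to\text{Pr}^\text{L}$ is the same datum as a product-preserving 2-functor $F'\colon\text{Burn}_G^{(\infty,2)}\to\text{Cat}$ (a genuine $G$-equivariant cocartesian monoidal $\infty$-category in the sense of Definition \ref{SymMonEq}) whose pointwise values are presentable and whose images on 1-morphisms are left adjoints. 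Product-preservation is compatible across the inclusion because small products in $\text{Pr}^\text{L}$ are computed in $\text{Cat}$ (HTT 5.5.3.13).

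Next I would reduce left-adjointness on all 1-morphisms to the generators. Every 1-morphism of $\text{Burn}_G^{(\infty,2)}$ is represented by a span $X\xleftarrow{p}T\xrightarrow{q}Y$, which factors as a pure restriction $X\xleftarrow{p}T=T$ followed by a pure transfer $T=T\xrightarrow{q}Y$. Since compositions of left adjoints are left adjoints, asking $F(f)$ to be a left adjoint for every span $f$ is equivalent to asking $F$ to send each restriction and each transfer to a left adjoint; by the adjoint functor theorem, a colimit-preserving functor between presentable $\infty$-categories is a left adjoint precisely when it admits a right adjoint, which recovers the second description in the conjecture. To upgrade this identification to an equivalence of $\infty$-categories, one identifies $\text{Fun}^\times(\text{Burn}_G^{(\infty,2)},\text{Pr}^\text{L})$ with the full sub-$\infty$-category of $\text{Fun}^\times(\text{Burn}_G^{(\infty,2)},\text{Cat})$ spanned by those $F'$ satisfying the two conditions; fullness on mapping spaces follows because 2-morphisms in $\text{Pr}^\text{L}$ are all natural transformations between the underlying left adjoints, so no additional compatibility is imposed.

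The main obstacle is foundational rather than structural. The $(\infty,2)$-category $\text{Burn}_G^{(\infty,2)}$ and its universal property are only sketched in \S\ref{A.4.1}, and making the above rigorous requires a developed theory of product-preserving 2-functors between such $(\infty,2)$-categories, together with an equivariant $(\infty,2)$-analogue of Glasman's universal property (\cite{Glasman} A.1) identifying $\text{Fun}^\times(\text{Burn}_G^{(\infty,2)},\mathcal{D})$ with an appropriate $\infty$-category of equivariant semiadditive structures in $\mathcal{D}$. A secondary issue is that $\text{Pr}^\text{L}$ lives in an enlarged universe (its mapping $\infty$-categories $\text{Fun}^\text{L}(\mathcal{C},\mathcal{D})$ are only locally small), so one must take care with size when forming $\text{Fun}^\times(\text{Burn}_G^{(\infty,2)},\text{Pr}^\text{L})$ itself. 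Once these foundations are in place, the reduction above should supply the conjecture without further obstruction.
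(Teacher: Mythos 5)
This statement is not proved in the paper: it appears in \S\ref{A.4.1}, which the author explicitly flags as conjectural ``pending a better understanding of enriched higher category theory (and in particular $(\infty,2)$-Lawvere theories),'' and the conjecture itself is attributed to a suggestion of Barwick. So there is no argument in the paper to compare yours against. Taken on its own terms, your reduction is the natural one and I see no structural error in it: you correctly promote $\text{Pr}^\text{L}$ to a locally full sub-$(\infty,2)$-category of $\widehat{\text{Cat}}$ (which is necessary for the statement even to parse, since the non-invertible 2-morphisms of $\text{Burn}_G^{(\infty,2)}$ must land somewhere); the factorization of a span $X\leftarrow T\rightarrow Y$ as a restriction followed by a transfer, together with closure of left adjoints under composition, is exactly how one checks factorization through a subcategory on generators; and HTT 5.5.3.13 does give that finite products in $\text{Pr}^\text{L}$ are computed in $\widehat{\text{Cat}}$, so product-preservation transfers across the inclusion. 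Two small quibbles: your appeal to the adjoint functor theorem is redundant (``is a left adjoint'' and ``admits a right adjoint'' are the same condition, so the AFT is not doing work there), and since presentable $\infty$-categories are large, the second description should be read against $\widehat{\text{Cat}}$ rather than $\text{Cat}$ in Definition \ref{SymMonEq}.

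The genuine gap is the one you name, and it is worth being precise about why it is not merely bureaucratic. Everything in your argument presupposes (i) a construction of $\text{Burn}_G^{(\infty,2)}$ as an $(\infty,2)$-category with a recognizable universal property, (ii) a functor $(\infty,2)$-category $\text{Fun}^\times(\text{Burn}_G^{(\infty,2)},-)$ with the expected behavior under locally full subcategory inclusions of the target, and (iii) an equivariant analogue of Glasman's freeness result so that ``product-preserving'' can be checked on orbits. None of these exist in the paper, which is precisely why the statement is a conjecture rather than a theorem; your proposal is a correct account of what the proof would look like once those foundations are supplied, not a proof. Note also that even granting the foundations, your argument would only establish the internal ``that is'' equivalence between the two phrasings; the deeper conjectural content --- that this notion deserves the name \emph{genuine equivariant presentable $\infty$-category}, e.g.\ that it supports an equivariant adjoint functor theorem or recovers the six-operations formalism alluded to immediately after the conjecture --- is not addressed by this reduction and would require separate justification.
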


\noindent This kind of structure may be related to the Grothendieck six-operations formalism in stable motivic homotopy theory \cite{Hoyois}.

\end{document}